\documentclass{amsart}

\usepackage[backend=biber, style=alphabetic,maxbibnames=99,maxalphanames=99]{biblatex}
\usepackage{graphicx}
\usepackage{float}
\usepackage{subfiles}
\usepackage[T1]{fontenc}

\usepackage[a4paper, top=2cm, bottom=2cm, left=3cm, right=3cm, heightrounded, centering]{geometry}

\usepackage[dvipsnames]{xcolor}
\usepackage[bookmarksopen,bookmarksdepth=2]{hyperref}
\hypersetup{colorlinks=true,citecolor=NavyBlue,linkcolor=NavyBlue,urlcolor=Green} 
\hypersetup{
  bookmarksdepth=3
}

\makeatletter
\def\l@section{\@tocline{1}{12pt plus2pt}{0pt}{}{\bfseries}}
\def\l@subsubsection{\@tocline{3}{0pt}{20pt}{}{}}
\makeatother

\usepackage{amsmath,amsthm,amssymb,amscd,mathtools}
\usepackage{amsfonts}
\usepackage{lastpage}
\usepackage{fancyhdr}

\usepackage{mathrsfs}
\usepackage{tikz-cd}
\usepackage{microtype}     
\usepackage{stmaryrd}

\newcommand{\mbb}[1]{\mathbb{#1}}
\newcommand{\mcal}[1]{\mathcal{#1}}

\newcommand{\NN}{\mathbb{N}}
\newcommand{\RR}{\mathbb{R}}
\newcommand{\ZZ}{\mathbb{Z}}
\newcommand{\QQ}{\mathbb{Q}}
\newcommand{\CC}{\mathbb{C}}

\newcommand{\FF}{\mathbb{F}}
\newcommand{\GG}{\mathbb{G}}

\newcommand{\Gal}{\mathrm{Gal}}

\newcommand{\Ind}{\mathrm{Ind}}
\DeclareMathOperator{\im}{Im}

\DeclareMathOperator{\cok}{coker}
\DeclareMathOperator{\Hom}{Hom}

\DeclareMathOperator{\Nm}{Nm}

\DeclareMathOperator{\Spf}{Spf}
\DeclareMathOperator{\Spec}{Spec}
\DeclareMathOperator{\Cl}{Cl}

\DeclareMathOperator{\Lie}{Lie}
\DeclareMathOperator{\Mod}{Mod}
\newcommand{\Res}{\mathrm{Res}}
\newcommand{\ur}{{\mathrm{ur}}}

\newcommand{\GL}{\mathrm{GL}}
\newcommand{\PGL}{\mathrm{PGL}}
\DeclareMathOperator{\Tr}{\mathrm{Tr}}
\newcommand{\mi}{\mathrm{i}}

\DeclareMathOperator{\Fil}{Fil}

\newcommand{\p}{\mathfrak{p}}

\DeclareMathOperator{\pr}{pr}
\newcommand{\mO}{\mathcal{O}}
\newcommand{\mP}{\mathbb{P}}
\newcommand{\mA}{\mathbb{A}}

\newcommand{\mm}{\mathfrak{m}}

\DeclareMathOperator{\SL}{SL}

\DeclareMathOperator{\End}{End}

\newcommand{\id}{\mathrm{id}}

\newcommand{\mF}{\mathcal{F}}

\DeclareMathOperator{\Ker}{Ker}

\newtheorem{thm}{Theorem}[subsection]
\newtheorem{prop}[thm]{Proposition}

\newtheorem{lem}[thm]{Lemma}
\newtheorem{cor}[thm]{Corollary}
\newtheorem{conj}[thm]{Conjecture}
\theoremstyle{remark}
\newtheorem{rmk}[thm]{Remark}
\newtheorem{eg}[thm]{Example}
\theoremstyle{definition}
\newtheorem{dfn}[thm]{Definition}
\newtheorem{notation}[thm]{Notation}
\newtheorem{construction}[thm]{Construction}

\theoremstyle{plain}

\theoremstyle{remark}

\theoremstyle{definition}

\newcommand{\mX}{\mathcal{X}}

\newcommand{\et}{\mathrm{\acute{e}t}}
\newcommand{\proet}{\mathrm{pro\acute{e}t}}
\newcommand{\proket}{\mathrm{prok\acute{e}t}}

\newcommand{\dR}{\mathrm{dR}}

\newcommand{\OBdRlog}{\mO\mbb{B}_{\dR,\log}}

\newcommand{\BdR}{\mbb{B}_\dR}

\newcommand{\BdRKp}{\mbb{B}_{\dR,K^{p}}}
\newcommand{\BdRX}[1]{\mbb{B}_{\dR,#1}}
\DeclareMathOperator{\gr}{gr}
\newcommand{\HT}{\mathrm{HT}}
\newcommand{\GM}{\mathrm{GM}}

\newcommand{\Fl}{{\mathscr{F}\!l}}

\DeclareMathOperator{\Spa}{Spa}
\newcommand{\TT}{\mbb{T}}

\newcommand{\fn}{\mathfrak{n}}
\newcommand{\fb}{\mathfrak{b}}
\newcommand{\fg}{\mathfrak{g}}
\newcommand{\sm}{{\mathrm{sm}}}
\newcommand{\la}{{\mathrm{la}}}
\newcommand{\an}{{\mathrm{an}}}

\newcommand{\Kpp}{K^pK_p}

\newcommand{\fh}{\mathfrak{h}}

\newcommand{\mrm}[1]{\mathrm{#1}}
\newcommand{\mfk}[1]{\mathfrak{#1}}

\newcommand{\RHom}{R\!\Hom}

\newcommand{\mscr}[1]{\mathscr{#1}}

\newcommand{\sen}{\mrm{sen}}

\newcommand{\Sym}{\mrm{Sym}}

\newcommand{\mE}{\mcal{E}}

\newcommand{\Rlim}{R\!\varprojlim}
\newcommand{\Fib}{\mrm{Fib}}

\newcommand{\oInd}{\otimes\!-\!\mrm{Ind}}

\newcommand{\kw}{(\Bbbk,w)}

\DeclareMathOperator{\Fr}{Fr}
\newcommand{\QCoh}{\mrm{QCoh}}
\newcommand{\Vect}{\mrm{Vect}}

\newcommand{\tor}{\mrm{tor}}
\DeclareMathOperator{\Rep}{Rep}

\newcommand{\tVBfu}{\tilde{V\!B}_{K^{p}}}

\DeclareMathOperator{\Spd}{Spd}
\DeclareMathOperator{\AnSpec}{AnSpec}

\newcommand{\CH}{\mrm{CH}}

\DeclareMathOperator{\oIndFQ}{\otimes-\mrm{Ind}^{\QQ}_{F}}

\usepackage{mathtools}

\DeclarePairedDelimiter\floor{\lfloor}{\rfloor}

\newcommand{\mS}{\mcal{S}}
\newcommand{\ti}[1]{\tilde{#1}}
\newcommand{\Perf}{\mrm{Perf}}
\newcommand{\cris}{\mrm{cris}}
\newcommand{\rF}{\mrm{FF}}
\newcommand{\Igs}{\mrm{Igs}}
\newcommand{\Bun}{\mrm{Bun}}
\newcommand{\BL}{\mrm{BL}}
\newcommand{\red}{\mrm{red}}

\newcommand{\Upp}{U^{p}U_{p}}
\newcommand{\tIgs}{\widetilde{\Igs}}
\newcommand{\bc}{\mrm{bc}}

\newcommand{\mPdR}[1]{\mcal{P}^{std,c}_{\mu_{#1},\dR}}
\newcommand{\mPHT}[1]{\mcal{P}_{\mu_{#1}}}

\newcommand{\omegaFL}[2]{\omega^{#2}_{\Fl_{G_{#1},\mu_{#1}}}}
\newcommand{\SGT}[1]{\mS(G_{#1})}
\newcommand{\SGTKp}[1]{\mS_{K^{p}}(G_{#1})}
\newcommand{\SGTK}[2]{\mS_{#2}(G_{#1})}
\newcommand{\SGTKTo}[2]{\mS_{#2}^{\tor}(G_{#1})}
\newcommand{\SGTKpTo}[1]{\mS_{K^{p}}^{\tor}(G_{#1})}
\newcommand{\SGTTo}[1]{\mS^{\tor}(G_{#1})}
\newcommand{\FLT}[1]{\Fl_{G_{#1},\mu_{#1}}}
\newcommand{\PerT}[1]{\Per_{G_{#1},\mu_{#1}}}
\newcommand{\FLTT}[2]{\Fl_{G_{#1},\mu_{#2}}}
\newcommand{\FLTTi}[2]{\Fl_{G_{#1},\mu_{#2}^{-1}}}
\DeclareMathOperator{\alg}{alg}
\newcommand{\Darith}{D_{\mrm{arith}}}

\newcommand{\pdR}{\mrm{alg}.\dR}
\DeclareMathOperator{\JL}{JL}
\newcommand{\hatotimes}{\mathbin{\hat{\otimes}}}
\newcommand{\unl}[1]{\underline{#1}}
\newcommand{\std}{\mrm{std}}
\newcommand{\Per}{\mscr{P}\!er}

\newcommand{\hpp}{\hat{+}\hat{+}}
\newcommand{\hp}{\hat{+}}

\usepackage{subfiles}
\usepackage{xr} 
\usepackage{enumitem}
\begin{document}
\title{Classicality for Hilbert modular forms}
\keywords{Classicality, Hilbert modular varieties, geometric Jacquet-Langlands correspondence}
\author{Yuanyang Jiang}
\address{Institut Mathématiques d'Orsay, Université Paris-Saclay, 307 Rue Michel Magat Bâtiment 307, 91400 Orsay, France}
\email{yuanyang.jiang@universite-paris-saclay.fr}
\subjclass[2020]{Primary 11F41, 11G18, 14G22; Secondary 11F80, 14F10}
\begin{abstract}
Let \(F\) be a totally  real number field.
We prove that a character of the spherical Hecke algebra appearing in the completed cohomology of Hilbert modular varieties is modular if the associated Galois representation of \(\Gal_{F}\) is absolutely irreducible, and de Rham of regular parallel weights. 
As an application, we prove some new cases of the Langlands-Clozel-Fontaine-Mazur conjecture of \(\GL_{2}\) over totally real fields.

For the proof, we generalize the method in \cite{Pan2209.06II}, calculate geometric partial Fontaine operators, and study the cohomology of the associated Koszul-type partial de Rham complexes. 
The key step is the establishment of a locally analytic Jacquet-Langlands transfer, whose proof consists of several novel ingredients including a comparison of Igusa stacks for different quaternionic Shimura data constructed by \cite{DanielVanHoftenKimZhangs2026igusastackscohomologyshimura}, and the Grothendieck-Messing theory for locally analytic infinite level Shimura varieties established in \cite{Jiang2026Shla}.
\end{abstract}
\maketitle

\tableofcontents

\section{Introduction}\label{sectionIntro}

\subsection{Classicality}
Let \(F\) be any number field, and let \( p \) be a prime number. We fix an abstract isomorphism \(\iota:\bar{\QQ}_{p}\cong \CC\). The Langlands-Clozel-Fontaine-Mazur conjecture predicts a bijection between the set of \(n\)-dimensional irreducible geometric representations of \(\Gal_{F}\) and the set of algebraic cuspidal automorphic representations of \(\GL_{n}(\mA_{F})\). Here, we say that a Galois representation \( \rho:\Gal_{F}\to \GL_{n}(\bar{\QQ}_{p}) \) is ``geometric'' if it is continuous, \emph{de Rham} at all places above \(p\) and unramified away from finitely many places.


A highly successful approach to this conjecture is 
by proving a modularity lifting theorem, which establishes the modularity of \( \rho \) under the assumption that its reduction \( \bar{\rho} \) is residually modular.
Based on the previous works on Serre's conjecture (\cite{Bockle2001Density}, \cite{DiamondFlachGuo2004tamagawa}, \cite{KhareWintenberger2009serre1}, \cite{KhareWintenberger2009serre2}, \cite{Kisin2009modularity}),  \cite{Kisin2009fontaine} and \cite{Emerton2011local-global} established the Langlands-Clozel-Fontaine-Mazur conjecture for \(n=2\) and \(F=\QQ\) with mild generic conditions.

The strategy of \cite{Emerton2011local-global} consists of two steps:

(1) 
Assuming that \( \bar{\rho} \) is residually modular,
one proves a ``promodularity'' theorem to realize \(\rho\) in the completed cohomology defined in \cite{Emerton06};

(2) One proves a ``classicality'' result: for Hecke eigenclasses appearing in the completed cohomology, the de Rhamness condition implies modularity.

In \cite{Emerton2011local-global}, both steps rely crucially on the \(p\)-adic local Langlands correspondence for \(\GL_{2}(\QQ_{p})\), which is unavailable for other groups.
 Nevertheless, in the Hilbert case, the remarkable work of \cite{BreuilHerzigHuMorraSchraen2023GKdim},
 combined with the ideas of \cite{GeeNewton2022patching}, 
 establishes the step (1) under a generic condition.

Our work focuses on the step (2), namely classicality, in the Hilbert setting. Throughout the article, we fix \(F\) to be a totally real number field of dimension \(d\). Let \(L\subset\bar{\QQ}_{p}\) be a finite extension of \(\QQ_{p}\) containing all embeddings of \(F\), with ring of integer \( \mO_{L} \) and residue field \( \FF \). For any place \(\ell\) of \(\QQ\), let \(\Sigma_{\ell}\) denote the set of places of \(F\) over \(\ell\). 

Let \(G:=\mrm{Res}_{F/\QQ}(\GL_{2})\). For any neat open compact subgroup \(K\subset G(\mA_{f})\), we have the \(d\)-dimensional Hilbert modular variety \(\mrm{Sh}_{K}(G)\) of level \(K\) defined over \(\QQ\), which is smooth, quasi-projective, and of abelian type (see \S \ref{subsectionQuaternUniSV}).

Following \cite{Emerton06}, we fix a neat open compact subgroup \(K^{p}\subset G(\mA_{f}^{p})\) and define the \(i\)-th \emph{completed cohomology} of the Hilbert modular varieties as
\[
\ti{H}^{i}(K^{p},L):=\left(\varprojlim_{n}\varinjlim_{K_{p}}H^{i}_{\et}(\mrm{Sh}_{\Kpp}(G)_{\bar{\QQ}},\ZZ/p^{n})\right)\hatotimes_{\ZZ_{p}}L,
\]
where the colimit is taken over all open compact subgroups \(K_{p}\subset G(\QQ_{p})\). 

Let \(S\) be a fixed finite set of places of \(\QQ\), including \(p\) and \(\infty\), as well as all places that ramify in \( F \) or \(K^{p}\). Denote by \(\TT^{S}\) the spherical Hecke algebra over \(\ZZ_{p}\) generated by the spherical Hecke operators of \(G\) at finite places away from \(S\). Then \(\ti{H}^{i}(K^{p},L)\) is equipped with an action of \(\TT^{S}\times G(\QQ_{p})\times \Gal_{\QQ}\). The following \emph{classicality} theorem is the main result of this paper.
\begin{thm}[Theorem \ref{thmClassicalityCompleteCoho}]
	\label{IntrothmClassicalityCompleteCoho}
	Let \(\rho:\Gal_{F,S}\to \GL_{2}(L)\) be a continuous representation.
	Let \(\chi:\TT^{S} \to L\) be the associated character determined by the Eichler-Shimura relation (\ref{alignEicherShimuraDetermin}).


	We assume that 
	
	(1) \(\ti{H}^{d}(K^{p},L)[\chi]:=\Hom_{\TT^{S}}(\chi,\ti{H}^{d}(K^{p},L))\ne 0\);

	(2) \(\rho\) is absolutely irreducible;
	
	(3) \(\rho\) is de Rham of regular parallel weights.

	Then the character \(\chi\) is classical, i.e. there exists a Hilbert modular eigenform \(f\),
	such that \(\TT^{S}\) acts on \(f\) via \(\chi\). In other words, \(\rho\cong \rho_{f}\).
\end{thm}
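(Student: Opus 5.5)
The plan is to follow the strategy of \cite{Pan2209.06II}, adapted to the Hilbert setting. The first step is to pass to locally analytic vectors. Since \(\ti{H}^{d}(K^{p},L)[\chi]\ne 0\) and \(\TT^{S}\)-eigenspaces of admissible Banach representations have dense \(G(\QQ_{p})\)-locally analytic vectors, we get \(\ti{H}^{d}(K^{p},L)^{\la}[\chi]\ne 0\).

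Via the Hodge--Tate period map \(\pi_{\HT}:\SGTKp{}\to\Fl\) at infinite level, we then rewrite this cohomology as \(H^{d}(\Fl,\mO^{\la}_{K^{p}})\), after completing with \(\hatotimes C\) and using the primitive comparison theorem. Here \(\Fl=\prod_{\sigma}\mP^{1}\) over the embeddings of \(F\). The action of \(\Gal_{F_{v}}\) for \(v\mid p\) on \(\rho\otimes(\ldots)\) is governed by the Sen operator. In the framework of \cite{Jiang2026Shla}, this is computed geometrically as a sum \(\sum_{\sigma}\theta_{\sigma}\) of partial Fontaine operators, one for each embedding \(\sigma\). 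These partial operators are built from partial de Rham differentials \(d_{\sigma}\) on \(\mO^{\la}\), which lower the weight in the \(\sigma\)-direction. Since \(\rho\) is de Rham of regular parallel weight, its Sen weights are integral and distinct at each \(\sigma\). Moreover, the associated Fontaine operator, i.e. the nilpotent part on \(\BdR^{+}\)-level, vanishes. Hence \(\chi\) must appear in the kernel of every partial Fontaine operator acting on the \(\chi\)-isotypic part.

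The third step analyses the Koszul-type partial de Rham complex formed by the \(d_{\sigma}\)'s (and the \(\bar d_{\sigma}\)'s) on \(\mO^{\la,\chi}\). Using the weight decomposition of \(\mO^{\la}\) under the horizontal Cartan action, we would decompose \(H^{d}(\Fl,\mO^{\la})[\chi]\) into pieces indexed by subsets \(J\subset\Sigma_{\infty}\). Each piece corresponds to a choice, for each \(\sigma\), of a holomorphic or antiholomorphic-type contribution. Vanishing of all \(\theta_{\sigma}\) forces \(\chi\) to occur either in the classical pieces (coherent cohomology of automorphic vector bundles, hence classical forms by the Eichler--Shimura decomposition) or in ``mixed'' pieces. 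The mixed pieces involve \(\ker d_{\sigma}\) for \(\sigma\in J\) and cokernels elsewhere.

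The main obstacle is ruling out, or classicalizing, these mixed pieces where some but not all \(d_{\sigma}\) act as in the non-classical case. For \(F=\QQ\) this part is handled by Pan using the Drinfeld/Lubin--Tate tower. Here we would attack it with a locally analytic Jacquet--Langlands transfer. The mixed piece for \(J\) should be identified with locally analytic cohomology of a quaternionic Shimura variety attached to a quaternion algebra ramified exactly at the infinite places outside \(J\) (with a unitary-type auxiliary datum if needed). To construct this identification we would compare the Igusa stacks of \cite{DanielVanHoftenKimZhangs2026igusastackscohomologyshimura} for the two quaternionic Shimura data, which share the same Igusa stack up to an inner twist at \(p\). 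We would then use Grothendieck--Messing theory at locally analytic infinite level \cite{Jiang2026Shla} to match the relevant \(\mO^{\la}\)-sheaves on the two flag varieties.

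After the transfer, \(\chi\) appears in a space that is classical on the quaternionic side, by the same Fontaine-operator argument with fewer non-compact directions and induction on \(|J|\). Classical Jacquet--Langlands then produces a Hilbert eigenform. Absolute irreducibility of \(\rho\) is used to exclude Eisenstein and boundary contributions, coming from toroidal compactification and degree shifts, at each stage. It also ensures that the cohomology localizes well in degree \(d\).
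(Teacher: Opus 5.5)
Your outline reproduces the architecture sketched in the paper's introduction, but two load-bearing steps are missing. The first is the Galois input. The Hilbert modular variety is defined over \(\QQ\), so \(\ti{H}^{d}(K^{p},L)\) carries only a \(\Gal_{\QQ}\)-action. There is no \(\Gal_{F_{v}}\)-action, and no a priori reason for the \(\chi\)-part to be of the form \(\rho\otimes(\ldots)\). It should be \(\oIndFQ\rho\otimes\Pi^{\la}(\rho)\). However, the Eichler--Shimura relation only constrains characteristic polynomials of Frobenius, and \(\oIndFQ\rho\) can be reducible even when \(\rho\) is irreducible. So nothing in your argument excludes a non-semisimple, non-de Rham Galois action on the infinite-dimensional eigenspace, or determines its infinitesimal character. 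Both are indispensable. Fontaine's criterion kills the \(\Gal_{\QQ_{p}}\)-Fontaine operator on \(\ti{H}^{d}(K^{p},L)^{\la}[\chi]\hatotimes B_{\dR}\) only if the Galois action factors through a de Rham representation. And the splitting \(\mO^{\la,\lambda^{\kw}}\cong\bigoplus_{I}\mO^{\la,(s_{I}\cdot\Bbbk,w)}\) (Proposition \ref{propInfiniteCharToHorizontal}), on which the Bernstein--Gelfand--Gelfand--Fontaine complex is built, needs \(Z(\fg)\) to act through \(\lambda^{\kw}\) (Proposition \ref{propFixInfCharc}). The paper obtains both, together with concentration in degree \(d\), from the plectic action of \cite{JiangPan2025Plectic} (Theorem \ref{thmPadicPlectic}), which requires strong irreducibility. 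Absolute irreducibility is used only through the dichotomy of Lemma \ref{lemCMorStrongIrr}, with the CM case reduced to class field theory. Relatedly, the \(N_{I_{0},\tau}\) are the components, between adjacent Hodge--Tate graded pieces, of the single \(\Gal_{\QQ_{p}}\)-Fontaine operator; parallel weight is what makes these components exactly the \(N_{I_{0},\tau}\). Moreover, the identity \(N_{I_{0},\tau}=c\cdot d_{\tau}^{1-k_{\tau}}\circ\bar{d}_{\tau}^{1-k_{\tau}}\) with \(c\neq 0\) (Theorem \ref{thmFontainEqDDbar}) is what lets the Koszul complex be filtered by partial de Rham complexes, and it is proved in this paper rather than taken from \cite{Jiang2026Shla}. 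The proof shows, via \cite{GuoLi2021period}, that \(N_{I_{0},\tau}\) is linear over the de Rham complexes of both the Shimura variety and the flag variety. Such \(G(\mA_{f})\)-equivariant bi-filtered maps form a line, and non-vanishing is checked via geometric Sen theory.

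The second gap is where the plan as stated would fail: the Jacquet--Langlands transfer does not identify a mixed piece globally with the cohomology of one quaternionic Shimura variety. The product formula and its locally analytic version (Theorem \ref{thmJacquetLanglandsLA}) only live over an \(I_{0}\)-basic locus. The paper takes \(I_{0}\subset I\cap\Sigma_{\infty/\p}\) with \(1\le\#I_{0}\le 2\), where \(I\) is the set of \(d\)-directions; Lemma \ref{lemNoEmptyBasicLocus} then describes this locus as the complement of \(\mP^{1}(F_{\p})\) in the even-parity case. There the transfer turns \(\dR_{I,J}\) on \(G_{T}\) into \(\dR_{I\backslash I_{0},J}\) on \(G_{T\sqcup I_{0}}\). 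So the new quaternion algebra is ramified at the \(d\)-directions \(I_{0}\), plus \(p\)-adic places for parity. The induction, on \(\#I\) and over all \(T\) simultaneously, is on the \(\rho\)-independent Hecke-classicality statement of Theorem \ref{thmClassicalityCohomoDR}, not a rerun of the Fontaine-operator argument on the quaternionic side. The union of these basic loci misses the \(I\)-ordinary locus \(V_{T,I}=\prod_{\p}\mP^{1}(F_{\p})\), which is always present for \(T=\emptyset\). No transfer exists there, and your proposal has nothing to say about it. Note that the all-\(d\) piece, which computes rigid cohomology of Igusa varieties, is not one of your classical pieces and needs this same treatment. The missing idea (steps (4)--(5) of Proposition \ref{propClassicalityCohoStronger}) is to run excision for \(\dR_{I,T^{c}}\) instead:
\begin{itemize}
\item its global cohomology is built from coherent cohomology of automorphic bundles (Example \ref{egdRITc}), hence is classical;
\item its compactly supported cohomology over the basic loci is classical by the transfer-plus-induction step;
\item hence its restriction to \((V_{T,I}^{\dagger})^{\dR}\) is classical, since the pushforward from there is conservative.
\end{itemize}
Finally, by the K\"unneth formula of Lemma \ref{lemKunnthEasyPushForwToFl}, near \(\infty_{I}\) both \(\dR_{I,J}\) and \(\dR_{I,T^{c}}\) are \((\pi^{\pdR}_{\HT,T,I})_{*}\dR^{\alg}_{I}(\omega^{\kw,\sm}_{\SGTKp{T}})\) tensored with Hecke-inert factors. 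For \(\dR_{I,T^{c}}\) that factor is nonzero and finite-dimensional, which transfers classicality from \(\dR_{I,T^{c}}\) to \(\dR_{I,J}\).
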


Our proof differs from that of \cite{Emerton2011local-global}, and builds upon the idea of \cite{Pan2209.06II}. In particular, we do not use any \(p\)-adic local Langlands correspondence. 

\begin{rmk}
After the pioneering work of Lue Pan (\cite{Pan2209.06II}) on modular curves, there have been several generalizations: 
\cite{QiuSu2025locallyanalyticvectorscompleted} proves a version for \(\tau\)-locally analytic completed cohomology of Shimura curves, and
\cite{Matsumoto2025classicalitytheoremapplicationsautomorphy} proves a version for the partially completed cohomology of unitary Shimura surfaces assuming that \(F_{\p}\in\{\QQ_{p},\QQ_{p^{2}}\}\) for all \(\p\in\Sigma_{p}\). 

Our work requires no assumptions on \(p\) and \(F\), and \(p\) may even be ramified. Furthermore, we work with the full locally analytic completed cohomology, allowing our method to potentially describe the internal structures of the (conjectural) locally analytic representations arising from the \(p\)-adic local Langlands correspondence (see Remark \ref{rmkInternalWallCrossing}).
\end{rmk}

\begin{rmk}
The technical condition of being of parallel weights is a consequence of current limitations in understanding ``geometric Fontaine operators''. 
I hope to remove this condition in the future.
\end{rmk}

As an application, combining our work with \cite{BreuilHerzigHuMorraSchraen2023GKdim} yields new cases of the Langlands-Clozel-Fontaine-Mazur conjecture: 

\begin{thm}[{Theorem \ref{thmConjecFontaineMazur}}]
	\label{IntothmConjecFontaineMazur}
Assume that \(p\) is \emph{inert} in \(F\).
Let
\(\rho:\Gal_{F,S}\to \GL_{2}(L)\) be a continuous representation.
Let \(\bar{\rho}:\Gal_{F}\to \GL_{2}(\FF)\) be the reduction of \(\rho\).

Assume that 

(1) \(\bar{\rho}\) is residually modular.

(2) \(\bar{\rho}|_{\Gal_{F(\sqrt[p]{1})}}\)
is absolutely irreducible.

(3) \(\bar{\rho}|_{\Gal_{F_{p}}}\) is semisimple generic in the sense of \cite[p.100 (i)]{BreuilHerzigHuMorraSchraen2023GKdim}.

(4) For any \(w\in S\backslash \{p\}\),
the framed deformation ring of \(\bar{\rho}|_{\Gal_{F_{w}}}\) over \(W(\FF)\)
is formally smooth.




(5) \(\rho|_{\Gal_{F_{p}}}\) is de Rham of regular parallel weights \( \{k,1-k\} \) for some \( k\in\ZZ \).

Then there exists a Hilbert modular eigenform \(f\)
such that \(\rho\cong \rho_{f}\).
\end{thm}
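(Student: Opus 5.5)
Theorem \ref{IntothmConjecFontaineMazur} will be obtained by the two-step Emerton strategy: step (1), promodularity, comes from \cite{BreuilHerzigHuMorraSchraen2023GKdim} combined with the patching ideas of \cite{GeeNewton2022patching}, and step (2), classicality, is Theorem \ref{IntrothmClassicalityCompleteCoho}.

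\textbf{Step 1: promodularity.} Using hypotheses (1)--(4), I would show that \(\rho\) occurs in completed cohomology, i.e. \(\ti{H}^{d}(K^{p},L)[\chi]\ne 0\) for a suitable tame level \(K^{p}\) and for the character \(\chi\) attached to \(\rho\) by the Eichler--Shimura relation (\ref{alignEicherShimuraDetermin}).

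1. Let \(\mm\) be the maximal ideal of \(\TT^{S}\) attached to \(\bar\rho\); it is non-Eisenstein by hypothesis (2). Localize at \(\mm\): completed cohomology of the Hilbert modular variety is then concentrated in degree \(d\).
2. Patch this completed cohomology in the Taylor--Wiles--Kisin manner. The Taylor--Wiles primes exist by hypothesis (2), which is the standard "adequate image" condition. This produces a patched module \(M_\infty\) over \(R_\infty[[\GL_2(\mO_{F_p})]]\), where \(R_\infty\) is a power series ring over the local framed deformation rings.
3. Hypothesis (4) makes the local rings at \(w\in S\setminus\{p\}\) formally smooth. Since \(p\) is inert in \(F\), there is a single place above \(p\). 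Under the genericity hypothesis (3), the results of \cite{BreuilHerzigHuMorraSchraen2023GKdim} give that \(M_\infty\) has the expected Gelfand--Kirillov dimension and is Cohen--Macaulay.
4. Follow the argument of \cite{GeeNewton2022patching}: this forces \(M_\infty\) to be faithful (indeed supported) on all of \(\Spec R_\infty\), in particular on the whole local framed deformation ring at \(p\).
5. The point of \(R_\infty[1/p]\) given by \(\rho\), together with a compatible choice of framings, then lies in the support of \(M_\infty\).
6. Descend from patched to unpatched: this gives \(\ti{H}^{d}(K^{p},L)_{\mm}[\chi]\neq0\). Residual modularity, hypothesis (1), is what guarantees \(\ti{H}^{d}(K^{p},L)_{\mm}\ne0\) in the first place.

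This step is the main obstacle. The subtle points are that \cite{BreuilHerzigHuMorraSchraen2023GKdim} is formulated for quaternion algebras or specific setups, and its patched module and level structures must be transported to the Hilbert modular variety in degree \(d\). Non-parallel Serre weights need to be handled, and the result must cover the whole generic fibre of \(R_{\bar\rho_p}^{\square}\), not just a crystalline component. I would first try to reduce to the definite-quaternion or \(\mathrm{GL}_2\)-over-\(F\) setting via \cite{GeeNewton2022patching}, or to a version already used for Shimura curves. If needed, I would pass to a solvable totally real extension in which \(p\) remains inert, so that the required local-global compatibility holds, and then descend modularity by Langlands base change.

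\textbf{Step 2: classicality.} With \(\ti{H}^{d}(K^{p},L)[\chi]\ne0\) established, the hypotheses of Theorem \ref{IntrothmClassicalityCompleteCoho} are satisfied:
\begin{itemize}
\item \(\rho\) is absolutely irreducible, since \(\bar\rho|_{\Gal_{F(\sqrt[p]{1})}}\) is;
\item hypothesis (5) gives that \(\rho\) is de Rham of regular parallel weights \(\{k,1-k\}\) at the unique place above \(p\).
\end{itemize}
Theorem \ref{IntrothmClassicalityCompleteCoho} then yields a Hilbert eigenform \(f\) on which \(\TT^{S}\) acts through \(\chi\). By Chebotarev density and the Brauer--Nesbitt theorem, together with the irreducibility of \(\rho\), this gives \(\rho\cong\rho_f\).
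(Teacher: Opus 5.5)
Your Step 2 is exactly how the paper finishes. Step 1, however, has a genuine gap, located at the point you flag as ``the main obstacle'' and leave unresolved.

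\textbf{Why your Step 1 does not go through.} The results of \cite{BreuilHerzigHuMorraSchraen2023GKdim} (Gelfand--Kirillov dimension, Cohen--Macaulayness, big \(R=\) big \(\TT\), faithful flatness) are proved only for quaternion algebras split at \(d_D\in\{0,1\}\) infinite places, i.e.\ for Shimura sets and Shimura curves. Neither that paper nor this one provides what your patching needs on the \(d\)-dimensional Hilbert modular variety:
\begin{itemize}
\item integral concentration of the \(\mm\)-localized completed cohomology in degree \(d\);
\item Cohen--Macaulayness and the right dimension for a patched module built from it. Its Hecke eigenspaces carry the \(2^{d}\)-dimensional \(\oIndFQ\bar\rho\), so the quaternionic arguments do not apply as they stand.
\end{itemize}
Your fallbacks do not close this either. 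Classical Jacquet--Langlands and solvable base change transport only \emph{classical} eigensystems, whereas \(\chi\) is not yet known to be classical. What is needed is a transfer of the eigensystem \(\chi\) itself between quaternionic and Hilbert completed cohomology.

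\textbf{The missing idea and how the paper uses it.} That transfer is supplied by an \(\ell\)-adic Jacquet--Langlands statement (Theorem \ref{thmDirectSummandDifferentShiV}, Corollaries \ref{corDifferentSVfromHecke} and \ref{corDirectSummandVaryL}). One chooses, by Chebotarev, an auxiliary prime \(\ell\notin S\) at which two embeddings in \(I\) lie over a common place. At \(\ell\), all the \(\mS_{K^p}(G_T)\) share one Igusa stack over \(\Bun_{G,\mu}\). Hence their compactly supported cohomologies, with \(p\)-adic coefficients, come from a single sheaf on \(\Bun_G\) via the Fargues--Scholze Hecke operators \(T_{V_{\mu_T}}\). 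The splitting \(\mathrm{Std}^{\otimes 2}\cong\det\oplus\Sym^{2}\mathrm{Std}\) then exhibits \(R\Gamma_{c}(\mrm{Sh}_K(G_{T\sqcup I}))[d-\#T-\#I]\) as a direct summand of \(R\Gamma_c(\mrm{Sh}_K(G_T))[d-\#T]\) whenever \(\#I\) is even.

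The paper's argument then runs as follows.
\begin{itemize}
\item Residual modularity gives \(H^{d}_{c}(\mrm{Sh}(G),\FF)_{\mm}\ne0\), via a Hasse-invariant twist to regular weight, the Hodge decomposition, and Poincar\'e duality.
\item Corollary \ref{corDifferentSVfromHecke} moves this nonvanishing to \(G_T\) with \(T=\Sigma_\infty\) or \(\Sigma_\infty\setminus\{\tau_0\}\). Then \(\#T\) is even and, since \(p\) is inert, \(B_T\) is split at the unique prime above \(p\).
\item On that quaternionic side \cite{BreuilHerzigHuMorraSchraen2023GKdim} applies directly and gives \(\ti{H}^{d-\#T}(\mrm{Sh}_{K^p}(G_T),\mO_L)^{\psi}[\chi]\ne0\) (Corollary \ref{corPromodular}). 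This step also requires normalizing the determinant to \(\psi\chi_{\mrm{cycl}}^{-1}\).
\item Corollary \ref{corDirectSummandVaryL} lifts this to \(\ti{H}^{d}(\mrm{Sh}_{K^p}(G),L)[\chi|_{\TT^{S'}}]\ne0\), after enlarging \(S\) to \(S'\) and shrinking \(K^p\). Ordinary and compactly supported completed cohomology are identified using that the boundary's completed cohomology lives in low degrees.
\end{itemize}
No patching on the Hilbert modular variety is ever needed.
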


This theorem follows from the classicality theorem (Theorem \ref{IntrothmClassicalityCompleteCoho}), the results of \cite{BreuilHerzigHuMorraSchraen2023GKdim}, and an argument utilizing the \(\ell\)-adic Hecke action of \cite{FarguesScholze2021geometrization} (Corollary \ref{corDirectSummandVaryL}). 

\begin{rmk}
This theorem leaves significant room for improvement. 
The technical conditions are imposed to apply the theorem of \cite{BreuilHerzigHuMorraSchraen2023GKdim} directly. 
For example, \( p \) being unramified rather than being inert should suffice, and in (5), we could consider general regular parallel weights rather than the special cases \( \{k,1-k\} \).
\end{rmk}



\subsection{Strategy}
For this introduction, let us focus on the classicality theorem (Theorem \ref{IntrothmClassicalityCompleteCoho}). Our proof follows closely the idea of \cite{Pan2209.06II} to understand the de Rhamness condition.

 By \cite{Fontaine2004arithmetique}, we know that for a finite dimensional Hodge-Tate representation \( \rho \), the de Rhamness is determined by the vanishing of a certain \emph{Fontaine operator} acting on \( D_{\mrm{pdR}}(\rho) \), where the subscript refers to ``presque de Rham''. See \S \ref{subsecFontaineOpe} for details. 

 In \cite{Pan2209.06II}, Pan observes that the analogue of Fontaine operators exists for the infinite dimensional Galois representation given by 
  the locally analytic completed cohomology, 
  and that it arises from a certain \emph{geometric Fontaine operator}, 
  defined by concrete differential operators on the infinite level Shimura varieties. 
Thus the de Rhamness condition can be understood in terms of the kernel of these differential operators, i.e. the cohomolgy of the associated \( D \)-modules.

The construction of these differential operators has been generalized to arbitrary Shimura varieties in \cite{Jiang2026Shla}, and the associated \( D \)-modules is precisely the \emph{Bernstein-Gelfand-Gelfand-Fontaine complex} introduced in \cite[Definition \ref{1-dfnBGGFcomplex}]{Jiang2026Shla}. 

Thus we have the following general proof strategy towards classicality:
\begin{enumerate}
\item{Given an irreducible \( \rho \) with the associated character \( \chi_{\rho}:\TT^{S}\to L \), we want to relate \( \rho \) with the infinite-dimensional Galois representation \( \ti{H}^{d}(K^{p},L)^{\la}[\chi_{\rho}] \), where \((-)^{\la}\) denotes the subspace of \( G(\QQ_{p}) \)-locally analytic vectors.} \label{enumiStep1}
\item{We construct Fontaine operators for \( \ti{H}^{*}(K^{p},L)^{\la} \), and relate them to the differential operators constructed in \cite{Jiang2026Shla}.} \label{enumiStep2}
\item{We study the automorphic property of the cohomology of the Bernstein-Gelfand-Gelfand-Fontaine complexes, and try to answer \cite[Question \ref{1-conjFontaineComplexIsAutomorphic}]{Jiang2026Shla}.}\label{enumiStep3}
\end{enumerate}
In what follows, we will explain how we resolve each step in the Hilbert case, with more emphasis on the step (\ref{enumiStep3}).  
\subsubsection{Plectic Conjecture}
For modular curves, the step (\ref{enumiStep1}) follows immediately from the Eichler-Shimura relation: \[\ti{H}^{1}(K^{p},L)^{\la}[\chi_{\rho}]\cong \rho \otimes \Pi^{\la}(\rho_{\chi})
\] where 
\(\Pi^{\la}(\rho_{\chi})\) is a locally analytic representation of \(G(\QQ_{p})\). However, the general situation is more delicate. 

In the Hilbert setting, based on the classical results obtained via the Langlands-Kottwitz method (\cite{Langlands1979Zeta}, \cite{ReimannHarry1997Tszf}, \cite{BrylinskiLabesse1984cohomologie}), it is rather expected that \[\ti{H}^{d}(K^{p},L)^{\la}[\chi_{\rho}]\cong \oIndFQ\rho \otimes \Pi^{\la}(\rho_{\chi})
\] where \( \oIndFQ \)
denotes the tensor induction from \( \Gal_{F,S} \)
to \( \Gal_{\QQ,S} \). Then we have to face the problem
that \( \oIndFQ\rho \) may not be irreducible even when \( \rho \) is irreducible, and thus the expected isomorphism above will not follow from the Eichler-Shimura relation. 

This problem will be addressed in a forthcoming paper joint with Lue Pan (\cite{JiangPan2025Plectic}), where, motivated by the plectic conjecture of \cite{NekovarScholl2017Plectic}, we will construct (when \( \rho \) is strongly irreducible) an action of a certain plectic Galois group \( \Gal^{plec}_{F} \) on \( \ti{H}^{d}(K^{p},L)^{\la}[\chi_{\rho}] \). With this extra symmetry, the argument will work as \( \oIndFQ\rho \) is irreducible as a representation of \( \Gal^{plec}_{F} \). See \S \ref{subsectionPlecticGaloisAction} for more details.

\subsubsection{Geometric Fontaine operators}
We now introduce the geometric objects, and make precise the step (\ref{enumiStep2}). 
We fix a cone decomposition, and let \(\mrm{Sh}_{K}^{\tor}(G)\) denote the associated toroidal compactification of \(\mrm{Sh}_{K}(G)\). 
Denote by \(\mS_{\Kpp}^{\tor}(G):=(\mrm{Sh}_{\Kpp}^{\tor}(G)\otimes_{\QQ}\CC_{p})^{\an}\) the associated rigid variety over \( \CC_{p} \), which we regard as a diamond over \( \Spd(\CC_{p}) \)
in the sense of \cite{Scholze2017etaleDiamond}. 

We consider the following infinite-level Shimura varieties and Hodge-Tate period maps introduced by \cite{Scholze15}, and subsequently generalized by \cite{CS17}, \cite{Shen2017perfectoid}, \cite{HansenJohansson2020perfectoid}: 
 \[\mS_{K^{p}}^{\tor}(G):=\varprojlim_{K_{p}}\mS_{\Kpp}^{\tor}(G)\xrightarrow{\pi_{\HT}}\Fl\cong \prod_{\tau\in\Sigma_{\infty}}\mP^{1}_{\tau},\] where the limit is taken in the category of diamonds. 
Let \(\mO^{\la}\) denote the subsheaf (on the analytic site of \(\mS^{\tor}_{K^{p}}(G)\)) of \(\mO_{\mS^{\tor}_{K^{p}}(G)}\) consisting of locally analytic vectors. Then it follows from the primitive comparison that  (\cite[Theorem 4.4.6]{Pan22}, \cite[Theorem 6.2.6]{Juan2022.09locallyShi}) 
\[\ti{H}^{i}(K^{p},\QQ_{p})^{\la}\hatotimes \CC_{p}\cong H^{i}_{\an}(\mS_{K^{p}}^{\tor}(G),\mO^{\la}).
\] 

It is a theorem of \cite{Pan22} and \cite{Juan2022.09locallyShi} that the infinite dimensional Galois representation \( \ti{H}^{*}(K^{p},L)^{\la} \) admits an arithmetic Sen operator, and, along the primitive comparison, it can be described via a certain horizontal action \( \theta^{Pan} \) of \( \fh \) on \( \mO^{\la} \). Here \( \fh\cong \bigoplus_{\tau\in\Sigma_{\infty}}(\CC_{p}^{\oplus 2}) \) is the Cartan algebra of \( \Lie(G)\otimes\CC_{p} \). See \cite[Definition \ref{1-dfnHorizontalLAsv}]{Jiang2026Shla} for more details. 
For a character \( \chi:\fh\to\CC_{p} \), we write \(\mO^{\la,\chi}\) for the subsheaf of \(\mO^{\la}\) on which the horizontal action \((\mfk{h},\theta^{Pan})\) acts via \(\chi\). 

From now on,
we fix a \emph{strongly irreducible} representation \( \rho:\Gal_{F,S}\to \GL_{2}(L) \)
(i.e. the restriction to any open subgroup is absolutely irreducible), and let \( \chi_{\rho}:\TT^{S}\to L \) be the associated character. 
For simplicity, we assume (until the end of this introduction) that \( \chi_{\rho} \) is of parallel weights \( 0,1 \).
Then studying the relation among the infinitesimal character, the horizontal action, and the plectic Galois action of \cite{JiangPan2025Plectic} gives the following Hodge-Tate decomposition:
\begin{prop}[Theorem \ref{thmPadicPlectic}]\label{propIntroFixInfCharc}
	Let \( \rho \) and \(\chi_{\rho}\) be as above.
	 Assume that \(\rho\) is strongly irreducible. Then we have a Hodge-Tate decomposition \[\ti{H}^{d}(K^{p},L)^{\la}[\chi_{\rho}]\hatotimes_{L} \CC_{p}\cong \bigoplus_{I\subset \Sigma_{\infty}}H^{d}_{\an}(\SGTKpTo{},\mO^{\la,\chi_{I}})[\chi_{\rho}],
	\] where we define for \( \tau\in\Sigma_{\infty} \),
	 \(\chi_{\tau}:\mfk{h}\to \CC_{p}\), \(\chi_{\tau}:=((\delta_{\tau,\tau'},-\delta_{\tau,\tau'}))_{\tau'\in\Sigma_{\infty}}\),
	and for \(I\subset\Sigma_{\infty}\), \(\chi_{I}:=\sum_{\tau}\chi_{\tau}\).\end{prop}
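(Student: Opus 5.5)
The plan is to start from the primitive comparison
\[
\ti{H}^{d}(K^{p},L)^{\la}\hatotimes_{L}\CC_{p}\cong H^{d}_{\an}(\SGTKpTo{},\mO^{\la}),
\]
which is $\TT^{S}$-equivariant and compatible with the $\Gal_{\QQ}$-action on the left and the semilinear action on $\mO^{\la}$. Taking $[\chi_{\rho}]$-eigenspaces commutes with this isomorphism, because $\TT^{S}$ acts through prime-to-$p$ Hecke correspondences on both sides. The task is therefore to decompose $H^{d}_{\an}(\SGTKpTo{},\mO^{\la})[\chi_{\rho}]$ according to the generalized eigenvalues of the horizontal action $(\fh,\theta^{Pan})$, and then to show three things: only the characters $\chi_{I}$ occur, the action is semisimple, and the eigensheaves can be pulled out of the cohomology.

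First I will fix the infinitesimal character. The Harish-Chandra centre $Z(\fg)$ acts on $\mO^{\la}$, and by \cite{Pan22} and \cite{Jiang2026Shla} its action is related to the horizontal action through the Harish-Chandra isomorphism. On the $[\chi_{\rho}]$-part, $Z(\fg)$ acts through the infinitesimal character of the algebraic representation attached to the parallel weights $0,1$; for each $\tau$ this is the trivial infinitesimal character. Consequently $\theta^{Pan}$ on each $\tau$-component has generalized eigenvalues in the Weyl orbit of $(0,0)$ shifted by $\rho$, namely $\{(0,0),(1,-1)\}$ after the normalization used in the statement. This yields a finite decomposition $H^{d}_{\an}(\mO^{\la})[\chi_{\rho}]=\bigoplus_{I}H^{d}_{\an}(\mO^{\la})[\chi_{\rho}]^{(\chi_{I})}$ into generalized eigenspaces. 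Because $\fh$ acts through commuting operators, and cohomology on the affinoid cover commutes with the resulting finite projectors, the generalized eigenspaces of cohomology coincide with the cohomology of the generalized eigensheaves $\mO^{\la,(\chi_{I})}$.

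Second, I will use the Sen theory. By \cite{Pan22} and \cite{Juan2022.09locallyShi}, the arithmetic Sen operator of $\ti{H}^{d}(K^{p},L)^{\la}$ is given by $\theta^{Pan}$ evaluated on a specific element of $\fh$. To separate the individual $\chi_{I}$, and not just their sum, I will use the plectic Galois action of \cite{JiangPan2025Plectic} (Theorem \ref{thmPadicPlectic} and the surrounding discussion). Strong irreducibility gives an isomorphism $\ti{H}^{d}[\chi_{\rho}]\cong\bigotimes_{\tau}\rho_{\tau}\otimes\Pi$ on which there is one Sen operator for each $\tau$, and the $\tau$-Sen operator is matched with $\theta^{Pan}$ on the $\tau$-factor of $\fh$. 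Since $\rho$ is Hodge–Tate, or at least has distinct Hodge–Tate–Sen weights $0,1$ at each $\tau$ by the weight assumption, each $\tau$-Sen operator acts semisimply, with eigenvalues $0,1$, on $\rho_{\tau}\hatotimes\CC_{p}$. Hence it acts semisimply on the whole eigenspace. Transporting this back, $\theta^{Pan}$ acts semisimply, so each generalized eigenspace is a genuine eigenspace, and it equals $H^{d}_{\an}(\mO^{\la,\chi_{I}})[\chi_{\rho}]$.

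The main obstacle is this last matching: identifying the $\tau$-plectic Sen operators with the $\tau$-components of $\theta^{Pan}$, and ruling out a nilpotent part of $\theta^{Pan}$ on cohomology. My first approach is to compare both operators on the $\tau$-partial Hodge–Tate period maps to $\mP^{1}_{\tau}$ and to use the $\Gal_{F_{\p}}$-equivariance of $\pi_{\HT}$ componentwise. Failing that, I would argue by distinctness of eigenvalues, using the fact that the $\Gal$-module is a sum of copies of a Hodge–Tate representation.
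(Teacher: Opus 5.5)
Your ingredients are the paper's (primitive comparison, the plectic partial Sen operators, the infinitesimal character, a splitting into $\chi_I$-parts). As organised, however, the proposal has a genuine gap at the step you flag yourself, and your Step 1 assumes the conclusion of that step.

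\textbf{The matching, and Step 1.} The identification of the $\tau$-partial Sen operator $\Theta_\tau$ on $\Ind_F^{plec}\rho\otimes\Pi^{\la}(\rho)$ with $\theta(d\mu_\tau)=-\theta^{Pan}(d\mu_\tau)$ cannot be recovered from the geometry.
\begin{itemize}
\item Your first route fails because the only Galois symmetry of $\mS_{K^p}(G)$ is $\Gal_{\QQ_p}$ (the reflex field is $\QQ$). Its arithmetic Sen operator is the single operator $\theta(d\mu)=\sum_\tau\theta(d\mu_\tau)$ (cf.\ \S\ref{subsecHodgeTateNess}). Restricting along $\pi_{\HT}$ to $\Gal_{F_\p}$, or to any open subgroup, does not change this operator, so partial Hodge--Tate maps cannot isolate $\theta(d\mu_\tau)$.
\item Your fallback fails too. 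On $\oIndFQ\rho$ the full Sen operator takes the same value on all $\chi_I$ with the same $|I|$, so these are not separated. Also, semisimplicity of $\sum_\tau\theta(d\mu_\tau)$ does not exclude commuting nilpotent parts $N_\tau$ with $\sum_\tau N_\tau=0$.
\end{itemize}
The matching is non-geometric input and must be imported from Theorem \ref{thmPadicPlectic}(2), i.e.\ \cite[Theorem 4.35]{JiangPan2025Plectic}.

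Likewise, the claim that $Z(\fg)$ acts on $\ti{H}^d(K^p,L)^{\la}[\chi_\rho]$ through the scalar $\lambda^{\kw}$ is not available before this step. It is Proposition \ref{propFixInfCharc}, deduced from two inputs: the matching, which gives the eigenvalues and semisimplicity of $\theta(d\mu_\tau)$; and the central character computation for $\mrm{Z}_\tau$ from $\det\rho$ (Lemma \ref{lem:smallZaction}, via Lemma \ref{lem:characterCMfield}). Once that scalar is known, no separate argument against nilpotents is needed. Indeed, $\theta(\mrm{H}_\tau)$ is killed by $\tfrac12x^2+x-\lambda^{\kw}(\Omega_\tau)$, whose roots $-k_\tau$ and $k_\tau-2$ are distinct at regular weight.

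\textbf{From cohomology to eigensheaves.} This passage cannot go through finite projectors on $\mO^{\la}$. The horizontal action is not locally finite there, and spectral projectors appear only after the infinitesimal character is imposed at the sheaf level. Since $Z(\fg)$ acts on $\mO^{\la}$ through $\theta$ via $\Omega_\tau=\tfrac12\theta(\mrm{H}_\tau)^2+\theta(\mrm{H}_\tau)$, and $U(\fh)$ is finite flat over $Z(\fg)$ with reduced fibre at regular $\lambda^{\kw}$, one gets
\[
\RHom_{Z(\fg)}(\lambda^{\kw},\mO^{\la})\cong\bigoplus_{I}\mO^{\la,\chi_I},
\]
with each summand in degree $0$ (Proposition \ref{propInfiniteCharToHorizontal}). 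Then $R\Gamma_{\an}$ commutes with $\RHom_{Z(\fg)}(\lambda^{\kw},-)$.

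To land on the degree-$d$ statement you also need Theorem \ref{thmPadicPlectic}(1),
\[
R\Gamma(K^p,\QQ_p)_{\p_\chi}\cong\ti{H}^d(K^p,\QQ_p)_{\p_\chi}[-d],
\]
which your proposal never uses. Without it, $H^d$ of the eigensheaves can receive contributions from other degrees through the higher Koszul terms and the higher terms of $\RHom_{R^{ps}}(\chi,-)$.

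The correct order is therefore: Theorem \ref{thmPadicPlectic}(2), then Proposition \ref{propFixInfCharc}, then the sheaf-level splitting combined with primitive comparison and concentration in degree $d$. This is the chain of isomorphisms at the start of the proof of Theorem \ref{thmClassicalityCompleteCoho}.
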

Since \( \rho \) is a representation of \( \Gal_{F} \), for each \( \tau\in\Sigma_{\infty} \), there is a \( \tau \)-partial Fontaine operator \( N_{\tau} \), which then induces a \( \tau \)-partial Fontaine operator \( N_{\tau} \) acting on \( D_{\mrm{pdR}}(\oIndFQ\rho)  \).
By \cite{Fontaine2004arithmetique}, \( \rho \) is de Rham if and only if \( N_{\tau}=0 \) for all \( \tau \).

Using the plectic Galois action, we can relate  \( \ti{H}^{d}(K^{p},L)^{\la} \) with \( \oIndFQ\rho \). 
Then along the isomorphism in Proposition \ref{propIntroFixInfCharc}, the \( \tau \)-partial Fontaine operator \( N_{\tau} \) on \( D_{\mrm{pdR}}(\oIndFQ\rho) \) is induced by \emph{geometric \(\tau\)-partial Fontaine operators} \[N_{\tau}:\mO^{\la,\chi_{I}}\to \mO^{\la,\chi_{I\cup\{\tau\}}},\;I\subsetneq I\cup\{\tau\}\subset\Sigma_{\infty}, \] 
and as in \cite{Pan2209.06II}, \( N_{\tau} \) has a concrete description (Theorem \ref{IntrothmFontainEqDDbar}) using the following differential operators:
consider the diagram \[
\begin{tikzcd}
\mS_{K}^{\tor}(G)
& \mS_{K^{p}}^{\tor}(G)\arrow[l,"\pi_{K_{p}}"]\arrow[r,"\pi_{\HT}"] & \Fl,
\end{tikzcd}\] 
and one can show that \[
\pi_{K_{p}}^{-1}(\mO_{\mS^{\tor}_{K}(G)})\hatotimes\pi_{\HT}^{-1}(\mO_{\Fl})\to \mO^{\la,\chi_{\emptyset}}
\] is ``\'etale''
in a suitable sense (\cite[Proposition \ref{1-propCartesianLA0}]{Jiang2026Shla}).
Then one considers the relative differentials of \(\mO^{\la,\chi_{\emptyset}}\) over \(\mO_{\Fl}\) and over \(\mO_{\mS^{\tor}_{K}(G)}\) respectively
\begin{align*}
d:\mO^{\la,\chi_{\emptyset}}\to \mO^{\la,\chi_{\emptyset}}\otimes_{\mO_{\mS_{K}^{\tor}(G)}}\Omega^{1}_{\mS_{K}^{\tor}(G)},\;\bar{d}:\mO^{\la,\chi_{\emptyset}}\to \mO^{\la,\chi_{\emptyset}}\otimes_{\mO_{\Fl}}\Omega^{1}_{\Fl},
\end{align*} where \( \Omega^{1}_{\mS_{K}^{\tor}(G)} \) denotes the sheaf of log differential forms of \( \SGTKTo{}{\Kpp} \) over \( \CC_{p} \).

In the Hilbert setting, we have canonical decompositions \[
\Omega^{1}_{\SGTKTo{}{K}}\cong \bigoplus_{\tau\in \Sigma_{\infty}}\Omega^{1}_{\SGTKTo{}{K},\tau},\;\Omega^{1}_{\Fl}\cong \bigoplus_{\tau\in \Sigma_{\infty}}\Omega^{1}_{\Fl,\tau}.\]
We then denote the \(\tau\)-factors of \(d\) and \(\bar{d}\) respectively by \[
d_{\tau}:\mO^{\la,\chi_{\emptyset}}\to \mO^{\la,\chi_{\emptyset}}\otimes_{\mO_{\mS_{K}^{\tor}(G)}}\Omega^{1}_{\mS_{K}^{\tor}(G),\tau},\;\bar{d}_{\tau}:\mO^{\la,\chi_{\emptyset}}\to \mO^{\la,\chi_{\emptyset}}\otimes_{\mO_{\Fl}}\Omega^{1}_{\Fl,\tau}. 
\] Moreover, 
there are twisted versions of \( d_{\tau} \) 
and \( \bar{d}_{\tau} \)
\begin{align*}
\bar{d}_{\tau}&:\mO^{\la,\chi_{I}}\to \mO^{\la,\chi_{I}}\otimes_{\mO_{\Fl}}\Omega^{1}_{\Fl,\tau},\\
d_{\tau}&:\mO^{\la,\chi_{I}}\otimes_{\mO_{\Fl}}\Omega^{1}_{\Fl,\tau}\to \Omega^{1}_{\Fl,\tau}\otimes_{\mO_{\Fl}}
\mO^{\la,\chi_{I}}
\otimes_{\mO_{\SGTKTo{}{K}}}\Omega^{1}_{\SGTKTo{}{K},\tau}\cong \mO^{\la,\chi_{I\cup\{\tau\}}}.
\end{align*}
\begin{thm}[Theorem \ref{thmFontainEqDDbar}]\label{IntrothmFontainEqDDbar}
Up to a scalar in \(\QQ_{p}^{\times}\), we have
\begin{align}
N_{\tau}=d_{\tau}\circ \bar{d}_{\tau}.
\end{align}	
\end{thm}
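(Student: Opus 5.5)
Following Pan's argument for modular curves, I will reduce the statement to a local computation on the locally analytic sheaves and then compare two descriptions of $N_\tau$. The arithmetic $\tau$-partial Fontaine operator is defined through $\BdR^{+}$-lattices: on $\mO^{\la}$ it comes from the obstruction to lifting the $\tau$-Hodge–Tate (Sen) decomposition to a $\tau$-partial de Rham period sheaf. Concretely, I will use the sheaf $\OBdRlog^{+}$ on $\SGTKpTo{}$ with its $\tau$-partial filtration. I will also use the Grothendieck–Messing type description of its locally analytic vectors from \cite{Jiang2026Shla}, which expresses $\gr^{1}$ via the Faltings extension. The first step is to write $N_\tau$ as a connecting map: take the $\chi_I$-isotypic part of $(\OBdRlog^{+}/\Fil^{2})^{\la}$ for the $\tau$-component of the horizontal Cartan action. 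This gives an extension of $\mO^{\la,\chi_I}$ by $\mO^{\la,\chi_{I}}(1)\otimes(\cdots)$. The operator $N_\tau$ measures the failure of the generalized $\theta^{Pan}_\tau$-eigenspace to split into eigenspaces, i.e. the nilpotent part of the $\tau$-Sen operator on the extension.

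The second step is to trivialize this extension using the geometric period maps. The sheaf $\OBdRlog^{+}$ admits two sets of "coordinates". One comes from $\pi_{K_p}$, via the log de Rham lattice of the universal abelian scheme. The other comes from $\pi_{\HT}$, via the standard coordinate on $\mP^1_\tau$. The $\tau$-part of the Faltings extension links them through the Kodaira–Spencer isomorphism $\Omega^1_{\SGTKTo{}{K},\tau}\cong\omega_\tau^{2}$ (up to determinant twist) on the Shimura side. On the flag variety side it links them through $\Omega^1_{\Fl,\tau}\cong \mO(-2)_\tau$. Using the "étaleness" of $\pi_{K_{p}}^{-1}\mO\hatotimes\pi_{\HT}^{-1}\mO_{\Fl}\to\mO^{\la,\chi_\emptyset}$ from \cite{Jiang2026Shla}, a local section $f$ of $\mO^{\la,\chi_I}$ lifts to $\OBdRlog^{+}/\Fil^{2}$ by a first-order Taylor expansion. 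I will expand in the $\Fl_\tau$-direction, i.e. with $\bar d_\tau f$, and then transport back along the Shimura direction, i.e. with $d_\tau$. Applying the Sen/Cartan generator to this lift, the part of its action not killed by the eigenvalue $\chi_I$ will be exactly $d_\tau\bar d_\tau f$, up to a universal constant. Hence $N_\tau=c\, d_\tau\circ\bar d_\tau$ on $\chi_\emptyset$.

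The third step handles twisted $I$. I will reduce to $I=\emptyset$ by tensoring with the automorphic line bundles $\omega_{\tau'}^{\pm}$ and $\Omega^1_{\Fl,\tau'}$ for $\tau'\in I$. These shift $\chi_I$, and both sides are compatible with these twists because $d_{\tau}$ and $\bar d_{\tau}$ commute with the $\tau'$-factors for $\tau'\ne\tau$. Finally, I will check that $c\in\QQ_p^\times$. It arises from normalizing $t$ against the Hodge–Tate cocharacter, and it is a rational constant coming from the $\mathfrak{sl}_2$ Casimir computation, as in \cite{Pan2209.06II}.

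The main obstacle is the second step. One must identify the $\tau$-partial filtration on $\OBdRlog^{+}$ in the possibly ramified setting, and show that the $\tau$-component of the Faltings extension is exactly the one relating $\Omega^1_{\Fl,\tau}$ and $\Omega^1_{\SGTKTo{}{K},\tau}$. My first attempt will be to reduce to a $\tau$-isotypic piece via the $F\otimes\QQ_p$-action on the Tate module of the universal abelian variety. I will then apply the $\GL_2$ Grothendieck–Messing computation of \cite{Jiang2026Shla} factor by factor.
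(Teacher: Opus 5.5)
You are following Pan's direct-computation route: coordinates on $\OBdRlog$, Taylor lifts, and an explicit Sen action. As written, however, the decisive step is missing, so there is a genuine gap.

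\emph{Step 1 sets up the wrong operator.} At the sheaf level there is no $\tau$-partial period sheaf and no $\tau$-partial Sen operator. The Galois group acting on $\BdR^{\la}$ is (an open subgroup of) $\Gal_{\QQ_p}$, whose Sen operator $\Theta$ acts on graded pieces by $\theta(d\mu)=\sum_{\tau}\theta(d\mu_\tau)$. The $\tau$-partial Sen operators exist only on Hecke-localized cohomology, through the plectic action of \cite{JiangPan2025Plectic}. Likewise, the $\tau$-component of the horizontal $\fh$-action is defined on $\mO^{\la}$, hence on graded pieces, and not on the extension itself. The geometric $N_\tau$ of the statement is a matrix entry of the monodromy of the full $\Theta$ on $\Darith(\BdR^{\la,\lambda^{\kw}})$. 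It is taken between the $t$-adic graded pieces $\bigoplus_{|I|=i}\mO^{\la,\chi_I}$ and $\bigoplus_{|I'|=i+1}\mO^{\la,\chi_{I'}}$, after these are decomposed by the horizontal action (Definition \ref{dfnFontaineOpe}).

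\emph{Step 2 asserts the conclusion instead of deriving it.} The \'etaleness of $\pi_{K_p}^{-1}\mO\hatotimes\pi_{\HT}^{-1}\mO_{\Fl}\to\mO^{\la,\chi_\emptyset}$ is what defines $d_\tau$ and $\bar d_\tau$. It says nothing about how $\Theta$ acts on a first-order lift of a section to $\BdR^{+,\la}/t^{2}$. Computing that requires a full description of the Galois action on the first infinitesimal thickening. For $F=\QQ$, Pan gets it from an explicit non-canonical splitting of the Hodge filtration on $\OBdRlog$. Producing the Hilbert analogue, with $p$ possibly ramified and only the full $\Theta$ available, is exactly what you defer. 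Geometric Sen theory on the Hodge-graded pieces of $\OBdRlog^{\la}$ detects only the leading (symbol) term of $N_\tau$, which is how the paper proves non-vanishing, not $N_\tau$ itself.

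\emph{Non-vanishing is never established.} Nothing in your sketch shows the ``universal constant'' is nonzero, and that is part of the claim.

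\emph{Step 3 is unjustified on the $N_\tau$ side.} Twisting by automorphic line bundles changes the infinitesimal character. Compatibility of $N_\tau$ with these twists, as opposed to compatibility of $d_\tau$ and $\bar d_\tau$, would need a translation-functor argument on $\BdR^{\la}$.

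The paper sidesteps the explicit Sen computation by a rigidity argument.
\begin{itemize}
\item Guo--Li's map of filtered $\mbb{E}_{\infty}$-algebras $\dR\to\BdR^{+}$ is applied to the $\bar{\QQ}_p$-models of the finite-level Shimura varieties and of $\Fl$. $\Theta$ is a filtered derivation that kills these rational de Rham complexes, so it is linear over both. Hence $N_\tau$ upgrades to a $G(\mA_f)$-equivariant morphism of $\ZZ^{2}$-filtered $\dR(\mO^{\sm})\hatotimes\dR(\mO_{\Fl})$-modules, from $\mrm{triv}^{(m_0,m_0)}(\cdots)$ to $\mrm{triv}^{(m_1,m_1)}(\cdots)$ (Proposition \ref{propFontaineIsdRlinear}).
\item A graded analysis, combined with the vanishing of $G(\mA_f)$-invariant sections of nontrivial automorphic vector bundles, shows that such morphisms form a space of dimension at most one, spanned by $d_\tau\circ\bar d_\tau$ (Proposition \ref{propUniquenessDRLinear}).
\item Non-vanishing is proved separately on Hodge-graded pieces of the $\OBdRlog$-extension of $N_\tau$. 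This uses geometric Sen theory and $\fb$-cohomology, reducing to the $\GL_2$ computation on $\mP^{1}$ (Proposition \ref{propFontaineNonZero}).
\item Galois equivariance then forces $c\in\QQ_p^{\times}$.
\end{itemize}
To keep your approach you would have to supply the explicit Sen action on $\BdR^{+,\la}/t^{2}$ in the Hilbert setting and a separate non-vanishing argument. Otherwise, the de Rham-linearity plus uniqueness argument replaces your Step 2 entirely.
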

In \cite{Pan2209.06II}, Pan prove Theorem \ref{IntrothmFontainEqDDbar} in the case \( F=\QQ \) by constructing a non-canonical splitting of the Hodge filtration in the period sheaf \(\OBdRlog\) (\cite{Scholze13} and \cite{DLLZ2022logarithmicJAMS}).
In this paper, we find a more direct and canonical proof, which takes inspiration from \cite{GuoLi2021period}. 
The key observation is that the period sheaf \(\BdR\) is naturally a bi-filtered module over the sheaf of \(\mbb{E}_{\infty}\)-algebras \(\pi_{K_{p}}^{-1}(\dR_{\SGTKTo{}{K}/\CC_{p}})\otimes \pi_{\HT}^{-1}(\dR_{\Fl/\CC_{p}})\). 
Then by construction, \(N_{\tau}\) can be enhanced to a \(G(\mA_{f})\)-equivariant morphism between such bi-filtered modules (Proposition \ref{propFontaineIsdRlinear}).
We then show that such morphisms are unique up to scalars (Proposition \ref{propUniquenessDRLinear})! 
On the other hand, we show that \(N_{\tau}\) is non-zero
by proving it is non-zero on graded pieces,
where the calculation can be reduced via the geometric Sen theory (\cite{Pan22}, \cite{Juan2022.05GeoSen}, \cite{Juan2022.09locallyShi}, \cite{Pilloni22}) to a calculation on the flag variety, which was done in \cite{Jiang2023thetaModCur}.
\begin{rmk}
Although we use the language of derived geometry, it eventually boils down to prove that \(N_{\tau}\) is a differential operator over \(\mO^{\la,\chi_{\emptyset}}\). This idea is also mentioned in \cite[Remark 6.5.14]{Pan2209.06II}.
\end{rmk}

\subsubsection{Bernstein-Gelfand-Gelfand-Fontaine complexes} 
We want to understand the common kernel of the partial Fontaine operators \(N_{\tau}\). Working in the derived category, we consider the Koszul type complex defined by \(\{N_{\tau}:\tau\in\Sigma_{\infty}\}\), which by Theorem \ref{IntrothmFontainEqDDbar} is a special case of the \emph{Bernstein-Gelfand-Gelfand-Fontaine complexes} defined in \cite{Jiang2026Shla}. This leads us to the step (\ref{enumiStep3}).

For simplicity, let us assume from now on (until the end of this introduction) that \(F\) is a quadratic extension of \(\QQ\), 
and denote \(\Sigma_{\infty}=\{\sigma,\tau\}\). 
We write
\(\Fl=\prod_{\tau\in\Sigma_{\infty}}\mP^{1}=:\Fl_{\sigma}\times \Fl_{\tau}\), with \(\Fl_{\sigma}\cong \Fl_{\tau}\cong \mP^{1}\).

In this case, the Bernstein-Gelfand-Gelfand-Fontaine complex \(\mrm{BGGF}\) is of the form \[\mrm{BGGF}:=
\left[
	\mO^{\la,\chi_{\emptyset}}\xrightarrow{(N_{\tau},N_{\sigma})}
	\mO^{\la,\chi_{\{\tau\}}}\oplus 
	\mO^{\la,\chi_{\{\sigma\}}}\xrightarrow{(N_{\sigma},-N_{\tau})}
	\mO^{\la,\chi_{\Sigma_{\infty}}}
\right].
\]

Thus we want to prove the following theorem, which gives an affirmative answer to \cite[Question \ref{1-conjFontaineComplexIsAutomorphic} (1)]{Jiang2026Shla}:
\begin{thm}[Theorem \ref{thmFontainComplexClassical}]\label{IntrothmFontainComplexClassical}
The complex \(\mrm{BGGF}\) has 
	\emph{classical} cohomology groups.
	More precisely, for any maximal ideal \(\mfk{a}\subset \TT^{S}[1/p]\)  that does not correspond to a classical Hilbert modular eigenform of parallel weight \(2\), \[R\Gamma_{\an}(\SGTKpTo{},\mrm{BGGF})_{\mfk{a}}\cong 0.\]
\end{thm}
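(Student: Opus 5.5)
The plan is to peel off the two partial operators one at a time, following the structure of the Koszul complex. Since $\mrm{BGGF}$ is the total complex of the double complex built from the commuting operators $N_\tau$ and $N_\sigma$, there is a filtration whose graded pieces are the one-variable complexes
\[
C_\tau(I):=\left[\mO^{\la,\chi_{I}}\xrightarrow{N_\tau}\mO^{\la,\chi_{I\cup\{\tau\}}}\right],\qquad \tau\notin I.
\]
Hence $R\Gamma_{\an}(\SGTKpTo{},\mrm{BGGF})_{\mfk{a}}$ is an iterated cone of $R\Gamma_{\an}(\SGTKpTo{},C_\tau(\emptyset))$ and $R\Gamma_{\an}(\SGTKpTo{},C_\tau(\{\sigma\}))$, with $N_\sigma$ acting between them. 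It would suffice to show that, after localization at a non-classical $\mfk{a}$, the map $N_\sigma$ between the cohomologies of $C_\tau(\emptyset)$ and $C_\tau(\{\sigma\})$ is a quasi-isomorphism. Equivalently, the total complex should be acyclic.

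To analyse each $C_\tau(I)$, I would use Theorem \ref{IntrothmFontainEqDDbar}, which writes $N_\tau=d_\tau\circ\bar d_\tau$, and factor $C_\tau(I)$ as a two-step filtration with pieces $\ker/\cok$ of $\bar d_\tau$ and of $d_\tau$. The kernel of $\bar d_\tau$ consists of sections that are constant along $\Fl_\tau$. By the "\'etaleness" of $\pi_{K_p}^{-1}\mO_{\mS_K}\hatotimes\pi_{\HT}^{-1}\mO_{\Fl}\to\mO^{\la,\chi_\emptyset}$ (\cite[Proposition \ref{1-propCartesianLA0}]{Jiang2026Shla}), these sections are the $\tau$-smooth vectors. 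Their cohomology can then be computed as completed cohomology for a group in which the $\tau$-factor of $G(\QQ_p)$ acts smoothly, i.e. via classical weight-$(2)$ data at $\tau$ tensored with $\sigma$-locally analytic data. Dually, the cokernel of $d_\tau$ is computed by a de Rham/Gauss–Manin complex along $\mS_K$ in the $\tau$-direction, which again produces $\tau$-classical coefficient systems. Combining the two analyses, each $C_\tau(I)$ should be quasi-isomorphic to a complex built from "$\tau$-classical" objects: cohomology that is $\tau$-smooth and $\sigma$-locally analytic.

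Next comes the step I expect to be the main obstacle. After doing this for $\tau$, one must do the same for $\sigma$ on these partially classical objects. However, these objects are no longer sheaves of $\mO^{\la}$ on the Hilbert variety with its full symmetry. I would use the locally analytic Jacquet–Langlands transfer advertised in the abstract: the $\tau$-classical pieces should be identified with locally analytic cohomology of a quaternionic Shimura variety for the quaternion algebra ramified at $\tau$ (and, if needed, at one finite auxiliary place). On that side only $\sigma$ remains, and the situation becomes the one-dimensional case of \cite{Pan2209.06II}. Applying $N_\sigma=d_\sigma\circ\bar d_\sigma$ there yields cohomology that is smooth in both directions, namely the cohomology of automorphic vector bundles of parallel weight $2$. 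Its localization at $\mfk{a}$ vanishes unless $\mfk{a}$ comes from a classical form, by the classical Matsushima/Eichler–Shimura-type description of coherent cohomology. The noncompact part needs a separate argument: boundary and Eisenstein contributions are killed, since their Hecke eigensystems are reducible and do not match the eigensystem of $\mfk{a}$ after localization.

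The hard points are constructing the transfer and checking that it intertwines $d_\sigma,\bar d_\sigma$ and the Hecke action. For this I would compare the Igusa stacks of \cite{DanielVanHoftenKimZhangs2026igusastackscohomologyshimura} for the two quaternionic data (their fibers of $\pi_{\HT}$ over the $\sigma$-factor should agree) and use the Grothendieck–Messing description in \cite{Jiang2026Shla} to match the differential operators.
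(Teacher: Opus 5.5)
Your first step matches the paper: via $N_\tau=c\,d_\tau\circ\bar d_\tau$, the Koszul complex $\mrm{BGGF}$ is filtered by the four mixed pieces $\Fib(\ast_\tau,\ast_\sigma)$ with $\ast\in\{d,\bar d\}$. The paper's proof of Theorem \ref{thmFontainComplexClassical} is exactly this filtration together with Theorem \ref{thmClassicalityCohomoDR}. All the content therefore lies in proving that these mixed pieces are classical. No piece involving only one direction is classical: the $\tau$-smooth, $\sigma$-locally analytic cohomology computing $\ker\bar d_\tau$ carries many non-classical eigensystems. It is at the mixed pieces that your argument has genuine gaps.

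First, your description of $\Fib(d_\tau)$ as a finite-level Gauss--Manin complex with $\tau$-classical coefficients is unfounded. The sheaf $\mO^{\la,\chi}$ is not pulled back from any finite level, and the $d$-direction is the hard one: stalks of $\pi_{\HT,*}\Fib(d_\tau,d_\sigma)$ compute rigid cohomology of Igusa varieties, whose classicality is itself only a byproduct of the paper.

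Second, the Jacquet--Langlands step cannot be a global identification of ``$\tau$-classical'' Hilbert objects with locally analytic cohomology of a quaternionic curve. The Igusa-stack comparison needs $G_T(\mA_f^p)\cong G(\mA_f^p)$, so the extra ramification must sit at $p$; an auxiliary place away from $p$ destroys the method. Consequently the groups at $p$ in general differ. Theorem \ref{thmJacquetLanglandsLA} holds only over the $I_0$-basic locus, and it passes through the local Shimura variety $\mcal{M}_{T\to T'}$ and smooth homology over $G_{T'}(\QQ_p)$. Moreover, the transfer absorbs only $d$-directions, so your $\ker\bar d_\tau$ pieces cannot be sent to the algebra ramified at $\tau$. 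For instance, $\Fib(\bar d_\tau,d_\sigma)$ goes to $B_{\{\sigma\}}$ over $\pi_{\HT}^{-1}\bigl((\Fl_\sigma\setminus\mP^1(F_p))\times\Fl_\tau\bigr)$.

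Third, and most importantly, you never treat the complement of the basic loci, i.e.\ the locus over $\mP^1(F_p)^\dagger$, where no transfer exists. The paper handles it by excision and reverse engineering, in the following steps.
\begin{enumerate}
\item The global cohomology of $\dR_{I,T^c}$ is classical, being built from finite-level coherent cohomology (Example \ref{egdRITc}).
\item Its compactly supported part on the union $U_{T,I}$ of basic loci is classical by transfer plus induction.
\item Hence its part on $V^\dagger_{T,I}$ is classical. By Lemma \ref{lemKunnthEasyPushForwToFl}, this agrees with the corresponding part of $\dR_{I,J}$ up to finite-dimensional factors.
\item Conservativity of $q_*$ then finishes the argument.
\end{enumerate}
This requires the sheaf-level Proposition \ref{propClassicalityCohoStronger}, proved by induction on $\#I$ simultaneously for all quaternionic $G_T$. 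Appealing to the one-dimensional case of \cite{Pan2209.06II} does not replace it, since the Hilbert situation never globally reduces to a single curve.

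Finally, killing boundary terms by ``reducibility'' presupposes that $\mfk{a}$ is not itself Eisenstein, which the statement does not assume.
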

As a consequence of Theorem \ref{IntrothmFontainEqDDbar}, \(\mrm{BGGF}\) has a bi-filtration of the form 
\begin{equation}\label{equationBiFilter}
\begin{tikzcd}
	\Fib(d_{\tau},\bar{d}_{\sigma}) \arrow[r,dash]\arrow[d,dash] & \Fib(d_{\tau},d_{\sigma})\arrow[d,dash] \\
	\Fib(\bar{d}_{\tau},\bar{d}_{\sigma}) \arrow[r,dash] & \Fib(\bar{d}_{\tau},d_{\sigma}),
\end{tikzcd}
\end{equation}
where in this notation, the left column and the lower row are sub-objects, and the right column and the upper row are quotients, and the terms are the associated Koszul-type complexes. 
Concretely, 
\begin{align*}
\Fib(\bar{d}_{\tau},\bar{d}_{\sigma})&
:=\left[
	\mO^{\la,\chi_{\emptyset}}\xrightarrow{(\bar{d}_{\tau},\bar{d}_{\sigma})}\Omega^{1}_{\Fl}\otimes_{\mO_{\Fl}}
	\mO^{\la,\chi_{\emptyset}}
	\xrightarrow{(\bar{d}_{\sigma},-\bar{d}_{\tau})}\Omega^{2}_{\Fl}\otimes_{\mO_{\Fl}}
	\mO^{\la,\chi_{\emptyset}}
	\right]\cong \mO^{\sm},\\
\Fib(d_{\tau},d_{\sigma})&
:=\Omega^{2}_{\Fl}\otimes_{\mO_{\Fl}}\left[
	\mO^{\la,\chi_{\emptyset}}\xrightarrow{(d_{\tau},d_{\sigma})}
	\mO^{\la,\chi_{\emptyset}}\otimes_{\mO_{\SGTK{}{K}}}\Omega^{1}_{\SGTK{}{K}}
	\xrightarrow{(d_{\sigma},-d_{\tau})}
	\mO^{\la,\chi_{\emptyset}}\otimes_{\mO_{\SGTK{}{K}}}\Omega^{2}_{\SGTK{}{K}}
	\right],\\
\Fib(\bar{d}_{\tau},d_{\sigma})&:=\left[\Omega^{1}_{\Fl,\sigma}\otimes_{\mO_{\Fl}}
	\mO^{\la,\chi_{\emptyset}}\xrightarrow{(\bar{d}_{\tau},d_{\sigma})}(\Omega^{2}_{\Fl}\otimes_{\mO_{\Fl}}
	\mO^{\la,\chi_{\emptyset}})\oplus 
	\mO^{\la,\chi_{\{\sigma\}}}\xrightarrow{(d_{\sigma},-\bar{d}_{\tau})}\Omega^{1}_{\Fl,\tau}\otimes_{\mO_{\Fl}}
	\mO^{\la,\chi_{\{\sigma\}}}
	\right]\\&\cong \Omega^{1}_{\Fl_{\sigma}}\otimes_{\mO_{\Fl_{\sigma}}}\left[
		\mO^{\sigma-\la,\chi_{\emptyset}}\xrightarrow{d_{\sigma}}\mO^{\sigma-\la,\chi_{\emptyset}}\otimes_{\mO_{\SGTKTo{}{K}}}\Omega^{1}_{\SGTKTo{}{K},\sigma}
	\right],
\end{align*}
where \( \mO^{\sm} \) is the subsheaf of \( \mO^{\la} \) consisting of \( G(\QQ_{p}) \)-smooth vectors, and \(\mO^{\sigma-\la,\chi_{\emptyset}}\) denotes the subsheaf of \(\mO^{\la,\chi_{\emptyset}}\) consisting of \(\sigma\)-locally analytic vectors (i.e. smooth in the \(\tau\)-direction).

Now Theorem \ref{IntrothmFontainComplexClassical} follows from the following stronger theorem:
\begin{thm}[Theorem \ref{thmClassicalityCohomoDR}]\label{IntrothmClassicalityCohomoDR}
	The complexes \(\Fib(d_{\tau},\bar{d}_{\sigma}), \Fib(d_{\tau},d_{\sigma}),
	\Fib(\bar{d}_{\tau},\bar{d}_{\sigma}), \Fib(\bar{d}_{\tau},d_{\sigma})\) have classical cohomology groups. Here being ``classical'' is in the same sense as in Theorem \ref{IntrothmFontainComplexClassical}.
\end{thm}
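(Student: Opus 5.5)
We treat the four corners of the bi-filtration (\ref{equationBiFilter}) separately, in increasing order of difficulty.

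\textbf{Step 1: the corner \(\Fib(\bar{d}_{\tau},\bar{d}_{\sigma})\).} By the identification above this complex is quasi-isomorphic to \(\mO^{\sm}\). Hence
\[R\Gamma_{\an}(\SGTKpTo{},\mO^{\sm})\cong \varinjlim_{K_{p}} R\Gamma(\SGTKTo{}{\Kpp},\mO),\]
which is the colimit of coherent cohomology of finite-level Hilbert modular varieties. After localization at \(\mfk{a}\), this is computed by classical Hilbert modular forms of parallel weight \(0\), or after Serre duality weight \(2\). Their Hecke eigensystems are classical, so the localization vanishes unless \(\mfk{a}\) comes from a classical eigenform.

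\textbf{Step 2: the corner \(\Fib(d_{\tau},d_{\sigma})\).} This is \(\Omega^{2}_{\Fl}\) tensored with the relative de Rham complex of \(\mO^{\la,\chi_{\emptyset}}\) over the finite-level variety. Using the étaleness of \(\pi_{K_{p}}^{-1}\mO_{\SGTKTo{}{K}}\hatotimes\pi_{\HT}^{-1}\mO_{\Fl}\to \mO^{\la,\chi_{\emptyset}}\) from \cite{Jiang2026Shla}, a Poincaré lemma identifies it with \(\pi_{\HT}^{-1}(\Omega^{2}_{\Fl})\otimes\) (the horizontal sections). By the Riemann–Hilbert/de Rham comparison at infinite level, this is \(\pi_{\HT}^{-1}\) of a sheaf on \(\Fl\) tensored with \(\pi_{K_{p}}^{-1}\) of the de Rham cohomology of the local system. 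Its cohomology is then de Rham cohomology of finite-level Hilbert varieties with coefficients, paired with \(\Omega^{2}_{\Fl}\). Finite-level de Rham cohomology is classical by Eichler–Shimura/Matsushima: its Hecke eigensystems come from cuspidal forms of parallel weight \(2\) together with Eisenstein ones, which can be discarded since the non-Eisenstein part is what survives localization at a non-Eisenstein \(\mfk{a}\).

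\textbf{Step 3: the mixed corners \(\Fib(\bar{d}_{\tau},d_{\sigma})\) and \(\Fib(d_{\tau},\bar{d}_{\sigma})\).} These are the main obstacle. By the displayed identification, \(\Fib(\bar{d}_{\tau},d_{\sigma})\) is a \(\sigma\)-partial de Rham complex of \(\mO^{\sigma-\la,\chi_{\emptyset}}\). This object is smooth in the \(\tau\)-direction but analytic in \(\sigma\), and it does not come from any finite-level Hilbert variety. My plan is the locally analytic Jacquet–Langlands transfer announced in the abstract:
\begin{enumerate}
\item Choose a quaternion algebra \(B/F\) split at \(\sigma\) and ramified at \(\tau\), with associated Shimura curve \(\mS(G_{B})\), and a second datum with the roles swapped.
\item Use the comparison of Igusa stacks from \cite{DanielVanHoftenKimZhangs2026igusastackscohomologyshimura} to identify the fibres of \(\pi_{\HT}\) for \(G\) and for the quaternionic data over the \(\sigma\)-part of the flag variety.
\item Use the Grothendieck–Messing theory for locally analytic infinite-level Shimura varieties from \cite{Jiang2026Shla} to match \(\mO^{\sigma-\la}\), together with its differential \(d_{\sigma}\), with the corresponding \(\mO^{\la}\) and \(d\) on the Shimura curve. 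The \(\tau\)-smooth directions then become the smooth vectors coming from \(B^{\times}_{\tau}\).
\end{enumerate}
This transforms \(R\Gamma\) of the mixed corner into the de Rham-type cohomology of the locally analytic functions on a Shimura curve. That cohomology is classical by the one-dimensional results of \cite{Pan2209.06II}, proved as in Step 2, and then transferred back to \(G\) via the classical Jacquet–Langlands correspondence. The delicate points are making the Igusa-stack comparison compatible with the horizontal \(\fh\)-actions and with the Hecke actions away from \(S\), so that the localization at \(\mfk{a}\) is preserved.

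\textbf{Step 4: assembling.} Since the cohomology of each graded piece of the bi-filtration vanishes after localization at \(\mfk{a}\), exactness of localization and the fibre sequences give the statement for each corner. This in turn yields Theorem \ref{IntrothmFontainComplexClassical}.
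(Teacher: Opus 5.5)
Your Step 2 does not work, and Step 3 is missing the main idea of the argument.

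\emph{Step 2.} The Poincar\'e lemma you invoke is not available. The operator $d$ is the derivation of $\mO^{\la,\chi_{\emptyset}}$ relative to $\pi_{\HT}^{-1}\mO_{\Fl}$, i.e.\ differentiation along the fibres of $\pi_{\HT}$. So $\Fib(d_\tau,d_\sigma)$ does not reduce to a local system pulled back from finite level. As in \cite{Pan2209.06II}, for $x$ in a Newton stratum $\Fl^{b}$, the stalk of $\pi_{\HT,*}\Fib(d_\tau,d_\sigma)$ at $x$ computes the rigid cohomology of the Igusa variety $\Igs^{b}_{K^{p}}$. Hence $R\Gamma(\Fib(d_\tau,d_\sigma))$ is not finite-level de Rham cohomology, and Matsushima/Eichler--Shimura say nothing about it. Classicality of the rigid cohomology of Igusa varieties is hard; the paper obtains it only as a byproduct. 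The paper treats this corner, i.e.\ $(I,J)=(\Sigma_\infty,\emptyset)$ in Proposition \ref{propClassicalityCohoStronger}, by the same transfer-plus-excision argument as the mixed corners. It uses the $I_0$-basic loci for the nonempty $I_0\subset\{\sigma,\tau\}$ lying over a single prime of $F$; in particular $I_0=\{\sigma,\tau\}$, which lands in a Shimura set, occurs when $\sigma,\tau$ induce the same $p$-adic place.

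\emph{Step 3.} You transfer in the wrong direction, and only over part of the space. Theorem \ref{thmJacquetLanglandsLA} transfers along $d$-directions (it needs $I\subset J$). For $\Fib(\bar d_\tau,d_\sigma)$ the paper uses $B_{\{\sigma\}}$, ramified at $\sigma$. This is exactly what turns $d_\sigma$ into the $G_{\{\sigma\}}(\QQ_p)$-smooth part of $\pi_{\HT,\emptyset}^{*}\Omega^{1}_{\Fl_\sigma}$ on $\mcal{M}_{\emptyset\to\{\sigma\}}$, while $\bar d_\tau$ stays $\bar d_\tau$ on the curve. The result is coherent cohomology of $\mS(G_{\{\sigma\}})$ (Theorem \ref{Introthm:LaJacquetLanglands} (4)). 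If you instead ramify at $\tau$ and match $d_\sigma$ with $d$ on the curve, you just get back the Igusa-type problem on a compact Shimura curve. That case is not covered by \cite{Pan2209.06II}, and your fallback ``proved as in Step 2'' fails.

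More seriously, the transfer exists only over $\pi_{\HT}^{-1}\bigl((\Fl_\sigma\setminus p_\sigma(\Fl(\QQ_p)))\times\Fl_\tau\bigr)$. Over the complement, the complex is a $\sigma$-locally analytic parabolic induction from $Q=\mathrm{Stab}(\infty_\sigma)$ of an unknown smooth representation. Your plan says nothing about this locus. The paper handles it by an excision argument:
\begin{enumerate}
\item By Lemma \ref{lemKunnthEasyPushForwToFl}, over $(\infty_\sigma^\dagger)^{\dR}$ the pushforwards of $\Fib(\bar d_\tau,d_\sigma)$ and of $\dR_{\{\sigma\}}(\mO^{\sm})$ agree up to finite-dimensional tensor factors.
\item For $\dR_{\{\sigma\}}(\mO^{\sm})$, the global cohomology is classical, since its stupid filtration has automorphic vector bundles as graded pieces.
\item Its $j_!$-part over the basic locus is classical by the transfer.
\item Hence its part over $(V^\dagger)^{\dR}$ is classical.
\item Since $(V^\dagger)^{\dR}$ is affine and proper, $q_*$ is conservative, so classicality passes to the restriction of $\Fib(\bar d_\tau,d_\sigma)$ near $\infty_\sigma$.
\end{enumerate}
For this to run as an induction, you must strengthen the statement to a sheaf-level one on $\Fl^{\dR}_{G_T,\mu_T,I\setminus J}/G_T(\QQ_p)^{\la}$. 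You must also prove it simultaneously for all quaternionic $G_T$, because the transfer lands in lower-dimensional quaternionic varieties and only over open parts of the flag variety.
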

This is the deepest result of this paper. 
The treatments of \( \Fib(\bar{d}_{\tau},\bar{d}_{\sigma})\) and \( \Fib(d_{\tau},d_{\sigma}) \) are similar to those in \cite{Pan2209.06II}: 
the cohomology of \( \Fib(\bar{d}_{\tau},\bar{d}_{\sigma})\cong \mO^{\sm} \) is the colimit of coherent cohomology of \( \SGTKTo{}{\Kpp} \), and for any Newton stratum \( \Fl^{b}\subset \Fl \), and \( x\in \Fl^{b}(\CC_{p}) \), the stalk \( (\pi_{\HT,*}\Fib(d_{\tau},d_{\sigma}))_{x} \) calculates the rigid cohomology of the Igusa varieties \( \Igs^{b}_{K^{p}} \). 
However, the classicality of the rigid cohomology, rather than merely the alternating sum in the Grothendieck group, is already a difficult problem. In fact, as a byproduct of our proof, we show that the rigid cohomology of Igusa varieties are classical as Hecke modules. 

The more mysterious is \(\Fib(\bar{d}_{\tau},d_{\sigma})\), which is some sort of ``partial de Rham complex''. Our strategy is to find a suitable stratification, and define a \emph{locally analytic Jacquet-Langlands correspondence} (Theorem \ref{Introthm:LaJacquetLanglands} (4)) relating this partial de Rham cohomology to coherent cohomology of quaternionic Shimura curves.
We will make some preparations on analytic stacks before stating Theorem \ref{Introthm:LaJacquetLanglands}. 

\begin{rmk}\label{rmkInternalWallCrossing}
As in \cite{Pan2209.06II},
the bi-filtration (\ref{equationBiFilter}) provides evidence for the expected internal structure of the locally analytic representations of \(\GL_{2}(F_{p})\) in the conjectural \(p\)-adic local Langlands correspondence, as predicted by \cite{BreuilHerzig2015OrdinaryRep}, \cite{BreuilHerzigHuMorraSchraen2024conjectures}, \cite{Ding2024wallcrossinglocallymathbbqpanalyticrepresentations}. 
It should be related to
the square appearing in \cite[Conjecture 1.7]{Ding2024wallcrossinglocallymathbbqpanalyticrepresentations} via translation functors.  
In fact, one can calculate the wall-crossing of the graded pieces, following the argument in \cite{Su2025locallyanalytictextext1conjecturetextgl2l}. 
\end{rmk}
\subsubsection{Analytic stacks}
Although Theorem \ref{IntrothmClassicalityCohomoDR} can be stated in terms of sheaves on analytic site of \( \SGTKpTo{} \), it is difficult to manipulate these sheaves involving differential operators within the classical framework.
For one thing, the perfectoid geometry automatically ignores differential information, and for another thing, there does not exist a general six functor formalism for sheaves on analytic sites to the best of our knowledge. Furthermore, we need to take into account the topological properties of these sheaves. 
The theory of analytic stacks of Clausen-Scholze provides a satisfying solution to all these problems. 

In \cite{Jiang2026Shla}, we constructed various realizations of infinite-level Shimura varieties as analytic stacks. Let us recall some constructions here. Readers are referred to the introduction of \cite{Jiang2026Shla} for details. 

Roughly, we define the \emph{locally analytic Hilbert modular variety}
\( \mS_{K^{p}}(G)^{\la} \) (\cite[Definition \ref{1-dfnLocallyAnSV}]{Jiang2026Shla}) as the relative (analytic) spectrum of the sheaf \( \mO^{\la} \), which is a refinement of the perfectoid Hilbert modular variety \( \SGTKp{} \) and could be thought of as a ringed space \( (|\mS_{K^{p}}(G)|,\mO^{\la}) \). Note that in \cite{Jiang2026Shla}, Shimura varieties were denoted by curly X such as \( \mX_{K^{p}}^{\la} \), and we are sorry for changing the notation here. 

On the other hand, we want an enhancement of the topological space \( |\SGTKp{}| \). 
For this, we consider the theory of analytic de Rham stacks of \cite{Juan2024analyticdeRham} and \cite{AnschützBoscoLeBrasCamargoScholze2025analyticrhamstacksfarguesfontaine}, which is an ideal framework for studying differential operators. 
We denote by \( \SGTKp{}^{\dR} \) the analytic de Rham stack of \( \SGTKp{} \), and then it follows by construction that the partial de Rham complexes in Theorem \ref{IntrothmClassicalityCohomoDR} can be naturally enhanced to quasicoherent sheaves on \( \SGTKp{}^{\dR} \). 

The four constructions fit into the following sequence \[
\SGTKp{}\to \SGTKp{}^{\la}\to \SGTKp{}^{\dR}\to |\SGTKp{}|.
\] The partial de Rham complexes arises from the interplay between \( \SGTKp{}^{\la} \)
and \( \SGTKp{}^{\dR} \). In \cite{Jiang2026Shla}, we gave a systematic study of their relation: 
\begin{thm}[{\cite[Theorem \ref{1-thmLAstrucGMHTper}]{Jiang2026Shla}}]\label{thmLAstrucGMHTperIntro}
There exists a \emph{Cartesian} diagram 
\[
\begin{tikzcd}[column sep=0.8in]
\mS_{K^{p}}(G)^{\la}\arrow[r,"\pi_{\GM\HT,K^{p}}^{\la}"]\arrow[d]
& \Per_{G,\mu}/G^{c}\arrow[d]\\
\mS_{K^{p}}(G)^{\dR}\arrow[r,"\pi_{\GM\HT,K^{p}}^{\dR}"]
& \Per_{G,\mu}^{\dR}/G^{c},
\end{tikzcd}
\] where \( \Per_{G,\mu} \) is the Grothendieck-Messing-Hodge-Tate period domain defined in  \cite[Definition \ref{1-dfnGMHTperDomain}]{Jiang2026Shla}.
We refer to the horizontal maps as the \emph{Grothendieck-Messing-Hodge-Tate period maps}.
\end{thm}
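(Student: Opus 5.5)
The plan is to deduce the statement from the general construction of \cite{Jiang2026Shla}, by making each of the four corners of the square explicit in the Hilbert (abelian type) setting and then checking that the induced square of analytic stacks is Cartesian.

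\emph{Step 1: reduction to Hodge type.} Hilbert modular varieties are of abelian type. Following the standard Deligne-style argument, I would relate \( G=\mrm{Res}_{F/\QQ}\GL_{2} \) to the Hodge-type group \( G^{*}=G\times_{\Res_{F/\QQ}\GG_{m}}\GG_{m} \), which has the same adjoint datum. The locally analytic structure sheaf \( \mO^{\la} \), the analytic de Rham stack, and the target \( \Per_{G,\mu}/G^{c} \) are all compatible with the contracted product \( -\times^{G^{*}}G \) along connected components. So it suffices to construct the maps and prove Cartesianness for \( G^{*} \) and then induce.

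\emph{Step 2: the two horizontal maps for Hodge type.}
\begin{itemize}
\item The top map \( \pi^{\la}_{\GM\HT} \) is assembled from three pieces:
\begin{itemize}
\item the Hodge–Tate period map \( \pi_{\HT} \) to \( \Fl \);
\item the Grothendieck–Messing data, namely the de Rham torsor of the universal abelian scheme over \( \mS_{\Kpp} \), pulled back to \( \mS_{K^{p}}^{\la} \);
\item the Scholze comparison isomorphism over \( \OBdRlog \), which identifies the \( G^{c} \)-torsor of the \( p \)-adic Tate module with the de Rham torsor after passing to locally analytic vectors.
\end{itemize}
\item The bottom map \( \pi^{\dR}_{\GM\HT} \) is the analytic de Rham stack of this construction, using the Gauss–Manin connection on the de Rham torsor.
\end{itemize}
Functoriality of the de Rham stack gives the commutativity of the square.

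\emph{Step 3: Cartesianness (the main obstacle).} I would check the Cartesian property locally on \( |\SGTKp{}| \) and in two stages.
\begin{itemize}
\item First, identify the fibre product with the relative analytic spectrum of \( \pi_{K_{p}}^{-1}\mO\hatotimes\pi_{\HT}^{-1}\mO_{\Fl} \), completed along the infinitesimal thickening of the diagonal. This uses the ``étaleness'' statement \cite[Proposition \ref{1-propCartesianLA0}]{Jiang2026Shla}.
\item Second, show that the map from this fibre product to \( \SGTKp{}^{\la} \) is an equivalence. This amounts to showing that \( \mO^{\la} \) is recovered from \( \mO^{\la,\chi_{\emptyset}} \) together with the \( G^{c} \)-torsor data, via the description of \( \mO^{\la} \) as locally analytic functions on a \( \fh \)-twisted bundle (geometric Sen theory, \cite{Pan22}, \cite{Juan2022.09locallyShi}).
\end{itemize}
The delicate point is descent along the de Rham stack: one must verify that the nilpotent thickening in \( \Per^{\dR} \) matches the locally analytic, rather than merely smooth, vectors. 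I would attempt this by a Taylor-expansion argument with explicit coordinates on \( \Fl \), controlling convergence radii uniformly in \( K_{p} \).
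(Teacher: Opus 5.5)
The paper gives no proof of this statement. It is quoted from \cite{Jiang2026Shla}, and the text indicates that the constructions of \(\mS_{K^{p}}(G)^{\la}\) and \(\mS_{K^{p}}(G)^{\dR}\) there need neither Hodge type nor perfectoidness. So your Step~1 is unnecessary, though harmless. All the content sits in Step~3, and as written it does not work.

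\textbf{The de Rham stack is the wrong one.} Here \((-)^{\dR}\) is the \emph{analytic} de Rham stack: the fibres of \(X\to X^{\dR}\) are overconvergent (dagger) germs. Infinitesimal or nilpotent thickenings instead give the algebraic stacks \((-)^{\pdR}\), which the paper keeps separate. If ``completion along the infinitesimal thickening'' is meant literally, the fibre product involves formal power series in the \(\dim\Per_{G,\mu}\) directions. It therefore cannot agree with \(\mS_{K^{p}}(G)^{\la}\), whose functions are convergent germs, and no Taylor-expansion estimate repairs that. With dagger germs, the ``locally analytic versus smooth'' issue you single out is essentially formal. It is encoded in \(G(\QQ_{p})^{\la}\cong G^{\an}\times_{G^{\an,\dR}}G(\QQ_{p})^{\sm}\), as used in the proof of Corollary~\ref{corJLlaIsGlaTorsor}. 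Also, the radii cannot be controlled uniformly in \(K_{p}\): they shrink with the level, and the resulting colimit is exactly the germ.

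\textbf{Your first bullet computes the wrong object.} An object built from \(\pi_{K_{p}}^{-1}\mO\hatotimes\pi_{\HT}^{-1}\mO_{\Fl}\) only sees the \(\Fl^{\std}\times\Fl\)-directions. At best it gives the square for \(\mS_{K^{p}}(G)^{\la,0}\), with \(\Per_{G,\mu}^{\dR/(\Fl^{\std}\times\Fl)}\) in place of \(\Per_{G,\mu}\), as recalled before Lemma~\ref{lemPartialDRasPullBack}. But \(\Per_{G,\mu}\to\Fl^{\std}\times\Fl\) has fibres \(M^{c}_{\mu}\), so your second bullet would compare \(\mO^{\la}\) with something no bigger than \(\mO^{\la,\chi_{\emptyset}}\).

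\textbf{What is missing.} To pass from \(\mS_{K^{p}}(G)^{\la,0}\) to \(\mS_{K^{p}}(G)^{\la}\), you must show the latter is the pull-back of \(\Per_{G,\mu}\to\Per_{G,\mu}^{\dR/(\Fl^{\std}\times\Fl)}\). Concretely, \(\mO^{\la}\) must be the germ of the \(M^{c}_{\mu}\)-torsor of identifications between two Levi quotients: that of the Hodge--Tate filtration on the trivial \(G\)-torsor, and that of the Hodge filtration on the de Rham \(G^{c}\)-torsor. This rests on the Hodge--Tate comparison over \(\mO^{\la}\) used in Lemma~\ref{lemHodgeTateCompareOnTcase}, together with the horizontal action of \(\mfk{m}_{\mu}\). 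Unspecified ``\(G^{c}\)-torsor data'' is not the right input.

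The same confusion appears in Step~2. The Tate-module torsor and the de Rham torsor are identified only over \(\OBdRlog\), not over \(\mO^{\la}\). Over \(\mO^{\la}\) only their graded pieces are identified, which is precisely why the target is \(\Per_{G,\mu}=(G\times G^{c})/(P_{\mu}\times_{M^{c}_{\mu}}P^{\std,c}_{\mu})\).
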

\begin{rmk}\label{rmkIntroDescentFib}
As a corollary of Theorem \ref{thmLAstrucGMHTperIntro}, the partial de Rham complexes regarded as quasicoherent sheaves on \( \SGTKp{}^{\dR} \) descend naturally to quasicoherent sheaves on \( \Per^{\dR}_{G,\mu}/G^{c} \), using which we will reduce difficult calculations on Shimura varieties to much simpler calculations on the period domains. 
\end{rmk}
\begin{rmk}
In fact, the constructions of \( \SGTKp{}^{\la} \) and \( \SGTKp{}^{\dR} \) in \cite{Jiang2026Shla} work for any Shimura datum, even without assuming the perfectoidness of \( \SGTKp{} \). Below we will apply them to the quaternionic Shimura varieties. 
\end{rmk}
We will also use the language of analytic stacks for studying \( p \)-adic representations. See for example \cite{JacintoJoaquínJuan2023SolidLARepII}. Given a \( p \)-adic Lie group \( \Gamma \), there are various realizations 
\( \unl{\Gamma} \), \( \Gamma^{\la} \)
and \( \Gamma^{\sm} \) as analytic stacks,
whose classifying stacks parametrize continuous representations, locally analytic representations, and smooth representations of \( \Gamma \) respectively. 
\subsubsection{Jacquet-Langlands correspondence} 
Let \(B\) be a quaternion algebra over \(F\). 
In the theory of automorphic representations, \cite{JacquetLanglands2006automorphic} established a correspondence between the automorphic representations of \((B\otimes_{\QQ}\mA)^{\times} \) and those of \(\GL_{2}(\mA_{F})\). The result that we are going to explain is a geometric version of this relation.

Let \(B_{\{\sigma\}}\) be the quaternion algebra over \(F\) that is ramified precisely at \(\sigma\) and the \(p\)-adic place determined by \( \iota\circ\sigma \). Let \(G_{\{\sigma\}}:=\mrm{Res}_{F/\QQ}B_{\{\sigma\}}^{\times}\) and 
\[h_{\{\sigma\}}:\mbb{S}\to (G_{\{\sigma\}})_{\RR}\cong \GL_{2}\times \mbb{H}^{\times},x+y\cdot \mrm{i}\mapsto \left(\begin{pmatrix}
	x & y \\ -y & x
\end{pmatrix},1\right).\] Denote by \(\mrm{Sh}_{K}(G_{\{\sigma\}})\) the Shimura curve of level \(K\) over \(F\subset^{\tau} \CC\) associated to \( (G_{\{\sigma\}},h_{\{\sigma\}}) \). 
As before, we denote \(\SGTK{\{\sigma\}}{K}:=(\mrm{Sh}_{K}(G_{\{\sigma\}})\otimes_{F}{\CC_{p}})^{\an}\),
where \(F\subset \CC_{p}\) is given \( \iota\circ \tau \). We denote \(\SGTKp{\{\sigma\}}:=\varprojlim_{K_{p}}\SGTK{\{\sigma\}}{\Kpp}\), which is represented by a perfectoid space by \cite{Shen2017perfectoid} or \cite{HansenJohansson2020perfectoid}. 

To be consistent with the notation, we write \( G_{\emptyset}:=G=\mrm{Res}_{F/\QQ}\GL_{2} \).
We identify \(G_{\emptyset}(\mA_{f}^{p})\cong G_{\{\sigma\}}(\mA_{f}^{p})\), fix a neat open compact subgroup \(K^{p}\) and regard it as a prime-to-\(p\) level for both groups.

We want to relate \(\SGTKp{\emptyset}\)
and \(\SGTKp{\{\sigma\}}\). This will be achieved by a local datum. 
Let us consider \[\mu_{\{\tau\}}:
\GG_{m}\to (G_{\emptyset})_{\bar{\QQ}_{p}}\cong \GL_{2,\tau}\times \GL_{2,\sigma},t\mapsto (1,\mrm{diag}\{t,1\}),
\] and let \(b_{\{\tau\}}\)
be the unique basic element in \(B(G_{\emptyset},\mu_{\{\tau\}})\). Then \((G_{\emptyset},\mu_{\{\tau\}},b_{\{\tau\}})\)
forms a local Shimura datum in the sense of \cite{ScholzeWeinstein2020berkeley}, and we denote by \(\mcal{M}_{\emptyset\to \{\sigma\}}\)
the associated (infinite-level) local Shimura variety over \(\CC_{p}\). By \cite{SW13}, this is a perfectoid space equipped with an action of \(G_{\emptyset}(\QQ_{p})\times G_{\{\sigma\}}(\QQ_{p})\).
The constructions of \( \SGTKp{}^{\la} \) and \( \SGTKp{}^{\dR} \) have their local counterparts, which we denote as \( \mcal{M}_{\emptyset\to \{\sigma\}}^{\la} \)
and \( \mcal{M}_{\emptyset\to \{\sigma\}}^{\dR} \) respectively, and there is a Grothendieck-Messing-Hodge-Tate period map \( \mcal{M}_{\emptyset\to\{\sigma\}}^{\dR}\to \Per^{\dR}_{\emptyset\to \{\sigma\}} \) such that the analogue of Theorem \ref{thmLAstrucGMHTperIntro} holds. See \cite[\S \ref{1-subsecLocalAnalogueLASv}]{Jiang2026Shla} for details.
\begin{dfn}[Definition \ref{dfnIbasicLocus}]
We define the \emph{\(\{\sigma\}\)-basic locus} \(\SGTKp{\emptyset}^{\{\sigma\}-\bc}\) to be the open subspace of \(\SGTKp{\emptyset}\) defined by the  Cartesian diagram \[
\begin{tikzcd}
\SGTKp{\emptyset}^{\{\sigma\}-\bc}\arrow[r,hook]\arrow[d]
& \SGTKp{\emptyset}\arrow[d,"\pi_{\HT}"] \\
(\Fl_{\sigma}\backslash p_{\sigma}(\Fl(\QQ_{p})))\times \Fl_{\tau}\arrow[r,hook]& \Fl\cong \Fl_{\sigma}\times \Fl_{\tau}.
\end{tikzcd}\]
Here, \(\Fl=\Fl_{\sigma}\times \Fl_{\tau}\cong (\mP^{1})^{2}\), \(\Fl(\QQ_{p})=\mP^{1}(F_{p})\),
 and \(p_{\sigma}\) is the projection to \(\Fl_{\sigma}\). 

 By the constructions of \( \SGTKp{\emptyset}^{\la} \)
 and \( \SGTKp{\emptyset}^{\dR} \),
 there are corresponding open subspaces \( \SGTKp{\emptyset}^{\la,\{\sigma\}-\bc} \)
 and \( \SGTKp{\emptyset}^{\dR,\{\sigma\}-\bc} \).
\end{dfn}
\begin{thm}[Locally analytic Jacquet-Langlands correspondence]\label{Introthm:LaJacquetLanglands}---

(1) (Corollary \ref{corJLlaIsGlaTorsor})
There is a canonical isomorphism of Tate stacks over \(\CC_{p}\) \[
\JL^{\la}:\left(\SGTKp{\{\sigma\}}^{\la}\times \mcal{M}_{\emptyset\to \{\sigma\}}^{\la}\right)/G_{\{\sigma\}}(\QQ_{p})^{\la}\cong \SGTKp{\emptyset}^{\la,\{\sigma\}-\bc}\subset \SGTKp{\emptyset}^{\la}.
\]

(2) (Theorem \ref{thmJLinTateSt}) There is a canonical isomorphism of Tate stacks over \(\bar{\QQ}_{p}\) \[
\JL^{\dR}:
\left(\SGTKp{\{\sigma\}}^{\dR}\times \mcal{M}_{\emptyset\to \{\sigma\}}^{\dR}\right)/G_{\{\sigma\}}(\QQ_{p})^{\sm}\cong \SGTKp{\emptyset}^{\dR,\{\sigma\}-\bc}\subset \SGTKp{\emptyset}^{\dR}.
\]

(3) (Theorem \ref{thmJLCompatiblePeriodMaps}) There is a commutative diagram \[
\begin{tikzcd}
    \SGTKp{\{\sigma\}}^{\dR}\times_{\bar{\QQ}_{p}} \mcal{M}_{\emptyset\to \{\sigma\}}^{\dR} \arrow[r,"\JL^{\dR}"] \arrow[d]
    & \SGTKp{\emptyset}^{\dR}\arrow[d] \\
    (\PerT{\{\sigma\}}^{\dR}/G^{c,\an}_{\{\sigma\}})\times_{\bar{\QQ}_{p}} \Per^{\dR}_{\emptyset\to \{\sigma\}} \arrow[r,"\JL^{per,\dR}"] & \PerT{\emptyset}^{\dR}/G^{c,\an}_{\emptyset},
\end{tikzcd}
\] where the vertical maps are the corresponding Grothendieck-Messing-Hodge-Tate period maps. 

(4) (Theorem \ref{thmJacquetLanglandsLA}) We have a canonical isomorphism \[(\JL^{\dR})^{*}(\Fib(\bar{d}_{\tau},d_{\sigma}))\cong \mO^{\sm}_{\SGTKp{\{\sigma\}}}\boxtimes \pi_{\HT,\emptyset}^{*}(\Omega^{1}_{\Fl_{\sigma}})^{G_{\{\sigma\}}(\QQ_{p})-\sm},
	\] where \(\mO^{\sm}_{\SGTKp{\{\sigma\}}}\) denotes the subsheaf of \(\mO_{\SGTKp{\{\sigma\}}}\) consisting of \( G_{\{\sigma\}}(\QQ_{p}) \)-smooth vectors, and \(\pi_{\HT,\emptyset}\) denotes the \(G_{\emptyset}(\QQ_{p})\)-equivariant Hodge-Tate period map \(\mcal{M}_{\emptyset\to\{\sigma\}}\to 
	\Fl_{\sigma}\). Here \( \mO_{\SGTKp{\{\sigma\}}} \)
	is regarded as a quasicoherent sheaf on \( \SGTKp{\{\sigma\}}^{\dR} \) via \( * \)-pushforward along \( \SGTKp{\{\sigma\}}\to \SGTKp{\{\sigma\}}^{\dR} \), and a similar convention is applied to \( \pi_{\HT,\emptyset}^{*}(\Omega^{1}_{\Fl_{\sigma}}) \).
\end{thm}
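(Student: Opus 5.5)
I would build the four statements in the order (2), (3), (1), (4): the de Rham-level isomorphism is the structural input, the locally analytic version is deduced from it by base change, and (4) is then a computation on period domains.

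\emph{Step 1: the isomorphism \(\JL^{\dR}\) in (2).} I would start from an isomorphism of perfectoid spaces (diamonds). Following the Igusa-stack formalism of \cite{DanielVanHoftenKimZhangs2026igusastackscohomologyshimura}, the \(\{\sigma\}\)-basic locus of \(\SGTKp{\emptyset}\) and all of \(\SGTKp{\{\sigma\}}\) are both fibered over Igusa stacks. The goal is to compare these two Igusa stacks, which are built from different quaternionic Shimura data, and show that they agree on the relevant Newton strata. The expected mechanism is that abelian varieties with \(\sigma\)-basic \(p\)-divisible group on the \(G_\emptyset\)-side correspond, after modifying by a quasi-isogeny of type \(\mu_{\{\tau\}}\), to points of \(\SGTKp{\{\sigma\}}\). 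Granting this comparison, a uniformization argument of Rapoport--Zink type gives
\[(\SGTKp{\{\sigma\}}\times\mcal{M}_{\emptyset\to\{\sigma\}})/\underline{G_{\{\sigma\}}(\QQ_p)}\cong \SGTKp{\emptyset}^{\{\sigma\}-\bc}.\]
Here one must check that the image is exactly \(\pi_{\HT}^{-1}((\Fl_\sigma\setminus p_\sigma(\Fl(\QQ_p)))\times\Fl_\tau)\). This follows from the fact that the image of \(\pi_{\HT}\) on \(\mcal{M}_{\emptyset\to\{\sigma\}}\) is the Drinfeld upper half plane in the \(\sigma\)-factor times \(\Fl_\tau\).

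Next I would apply the functor \(X\mapsto X^{\dR}\) of \cite{Juan2024analyticdeRham}. This requires the quotient by \(\underline{G_{\{\sigma\}}(\QQ_p)}\) to commute with taking de Rham stacks. On de Rham stacks the profinite group becomes its smooth realization, because locally analytic directions are contracted. That explains why \(G_{\{\sigma\}}(\QQ_p)^{\sm}\) appears in (2). The field of definition \(\bar{\QQ}_p\) comes from Galois descent along the finite level towers.

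\emph{Step 2: compatibility with period maps, (3), and the locally analytic version, (1).} To prove (3), I would compare the Grothendieck--Messing data. On the left, the de Rham filtration of the \(G_{\{\sigma\}}\)-motive, twisted by the universal quasi-isogeny on \(\mcal{M}\), should recover the filtration on the \(G_\emptyset\)-side. This would be verified on the local Shimura variety using the Grothendieck--Messing theory of \cite{Jiang2026Shla}; the map \(\JL^{per,\dR}\) is defined by the same formula on period domains. For (1), I would base change (2) along the Cartesian square of Theorem \ref{thmLAstrucGMHTperIntro}, together with its local analogue. Pulling back \(\Per_{G,\mu}/G^c\to\Per^{\dR}_{G,\mu}/G^c\) on both sides should replace \(G_{\{\sigma\}}(\QQ_p)^{\sm}\) by \(G_{\{\sigma\}}(\QQ_p)^{\la}\) and de Rham stacks by \((-)^{\la}\). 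It then remains to check that the base changes of the two sides agree via \(\JL^{per,\dR}\).

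\emph{Step 3: the sheaf identification (4).} By Remark \ref{rmkIntroDescentFib}, \(\Fib(\bar d_\tau,d_\sigma)\) descends to \(\Per^{\dR}_{\emptyset}/G^{c,\an}_\emptyset\), so by (3) its pullback can be computed on \((\Per^{\dR}_{\{\sigma\}}/G^{c,\an}_{\{\sigma\}})\times\Per^{\dR}_{\emptyset\to\{\sigma\}}\). There I would compute the two differentials separately:
\begin{itemize}
\item In the \(\tau\)-direction, the factor \(\bar d_\tau\) is the relative de Rham differential along the flag variety. On the quaternionic side \(\tau\) is the non-compact place, so the \(\tau\)-locally analytic directions are killed; this should produce \(\mO^{\sm}_{\SGTKp{\{\sigma\}}}\), exactly as \(\Fib(\bar d,\bar d)\cong\mO^{\sm}\).
\item In the \(\sigma\)-direction, \(d_\sigma\) is the de Rham differential along the \(\sigma\)-part. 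Via \(\JL\) this becomes the de Rham complex of \(\mcal{M}_{\emptyset\to\{\sigma\}}\) in the Drinfeld direction, with coefficients \(\Omega^1_{\Fl_\sigma}\). Taking \(G_{\{\sigma\}}(\QQ_p)\)-smooth vectors gives \(\pi_{\HT,\emptyset}^*(\Omega^1_{\Fl_\sigma})^{\sm}\).
\end{itemize}
The main obstacle I expect is Step 1, namely the comparison of Igusa stacks for the two data and its compatibility with Hecke and group actions. A secondary difficulty is justifying that \((-)^{\dR}\) commutes with the group quotient.
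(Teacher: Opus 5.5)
Your four-step architecture is the paper's. The perfectoid product formula comes from Igusa stacks. Part (2) follows because $\JL$ is a $\underline{G_{\{\sigma\}}(\QQ_p)}$-torsor and $\SGTKp{\emptyset}\to\SGTKp{\emptyset}^{\dR}$ is $!$-surjective, so it becomes a $G_{\{\sigma\}}(\QQ_p)^{\sm}$-torsor; this is how your ``secondary difficulty'' is handled. Part (1) is obtained by pulling back along $\Per\to\Per^{\dR}$, using that $\JL^{per}$ is a $G_{\{\sigma\}}$-torsor and that $G_{\{\sigma\}}\times_{G_{\{\sigma\}}^{\dR}}G_{\{\sigma\}}(\QQ_p)^{\sm}=G_{\{\sigma\}}(\QQ_p)^{\la}$. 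Part (4) is a factor-by-factor computation on period domains after descending via (3). However, at the two places where genuinely new input is needed, you either defer the idea or propose a mechanism that does not suffice.

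\textbf{Step 1.} You write ``granting this comparison'', but the comparison of Igusa stacks is the whole content of the perfectoid product formula, and your sketch does not produce it. The data $(G_\emptyset,\mu_\emptyset)$ and $(G_{\{\sigma\}},\mu_{\{\sigma\}})$ are inner forms with different cocharacters and no morphism of Shimura data between them. So the uniqueness and functoriality results for Igusa stacks do not relate them. Modifying ``abelian varieties with $\sigma$-basic $p$-divisible group'' by explicit quasi-isogenies is the Tian--Xiao strategy, and it needs a moduli interpretation that these abelian-type varieties lack.

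The missing idea is to pass to the auxiliary unitary data $G'_{\tilde T}=(G_T\times T_E)/T_F$. Their PEL covers all use the same $(D,*)$ and, after extending scalars to $\tilde E$, isomorphic hermitian modules (Lemma~\ref{lemRatHermIndep}). One then defines a Zhang-type Igusa stack with the Kottwitz signature condition removed, so that it is literally the same for every $T$. Zhang's Cartesian diagram is proved for each $T$ separately (Theorem~\ref{thmCartesianDiagram}); the signature of the lifted abelian scheme is read off from the point of $\Fl_{G'_{\tilde T},\mu_{\tilde T}}$. One also needs the good-reduction analysis of Proposition~\ref{propGoodReducForNonOrd}, Kim's extension beyond that locus, and descent from the unitary to the quaternionic data (Theorem~\ref{thmCartesianDiagramQaternion}). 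Only then is the product formula a formal consequence of Lemma~\ref{lemIdentifyFlVaroverM}.

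\textbf{Step 2, part (3).} Comparing Hodge filtrations is not enough. The period map $\SGTKp{T}^{\dR}\to\PerT{T}^{\dR}/G^{c,\an}_{\Sigma_{\infty}}$ is built on the de Rham $G^{c,\an}$-torsor $\bar{\mcal{G}}^{c}_{T,\dR,K^{p}}$ on the de Rham stack, i.e.\ a torsor with flat connection. Commutativity of (3) therefore requires an isomorphism $(\JL^{\dR})^{*}\bar{\mcal{G}}^{c}_{\emptyset,\dR,K^{p}}\cong p_{1}^{*}\bar{\mcal{G}}^{c}_{\{\sigma\},\dR,K^{p}}$ (Theorem~\ref{propCompatibleGcTorsor}). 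This is invisible in the perfectoid isomorphism and is not implied by any comparison of filtrations.

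The paper obtains it through period sheaves:
\begin{enumerate}
\item The Cartesian square of Theorem~\ref{thmJLinTateSt}(3) and proper base change identify $(\JL^{\dR})^{*}\BdR$ with the $G_{\{\sigma\}}(\QQ_{p})$-smooth part of $p_{1}^{*}\BdR\otimes p_{2}^{*}\BdR$ (Corollary~\ref{corIdeSMstructureSh}).
\item Passing to locally analytic vectors turns smooth vectors into $\fg$-cohomology.
\item The reformulated log Riemann--Hilbert correspondence on both Shimura varieties, together with its analogue on $\mcal{M}_{\emptyset\to\{\sigma\}}$ (which exchanges the $\fg$-actions coming from $G_\emptyset$ and $G_{\{\sigma\}}$), gives the torsor isomorphism after $\otimes B_{\dR}$.
\item Applying $E_{0}(D_{\sen}(-))$ removes the $B_{\dR}$.
\end{enumerate}
Only then does the rest of (3) reduce to a diagram of smoothoid spaces, where commutativity as diamonds suffices. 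Since your (1) and (4) both go through (3), they inherit this gap. Apart from that, your heuristics for the $\tau$- and $\sigma$-factors in (4) match the computation in Lemma~\ref{lemJLtransferLAonPer}.
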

Theorem \ref{Introthm:LaJacquetLanglands} (4) shows that the Jacquet-Langlands transfer changes partial de Rham cohomology over Hilbert modular surfaces
into coherent cohomology over quaternionic curves.
This type of phenomena was first observed in \cite[Theorem 5.3.20]{Pan2209.06II}. This is the crucial technical ingredient for the proof of Theorem \ref{IntrothmClassicalityCohomoDR} in \S \ref{subsecProofOfClassicality}. 
See \S \ref{subsubsec:Intro-Pf-Cla} for a sketch of the argument. 

Let us say a few words about the proof of Theorem \ref{Introthm:LaJacquetLanglands}. 
Using Remark \ref{rmkIntroDescentFib}, the part (4) follows immediately from the part (3) via an explicit calculation along \( \JL^{per,\dR} \). The part (1) follows from the parts (2) and (3) by Theorem \ref{thmLAstrucGMHTperIntro}. 

For the parts (2) and (3), we will first prove the following perfectoid version of Jacquet-Langlands correspondence,
which is of independent interest:
\begin{thm}[Theorem \ref{thmProductFormulaQuate}]
	\label{IntrothmthmProductFormulaQuate}
There is a canonical isomorphism of perfectoid spaces over \(\CC_{p}\) \[
\left(\SGTKp{\{\sigma\}}\times \mcal{M}_{\emptyset\to \{\sigma\}}\right)/\underline{G_{\{\sigma\}}(\QQ_{p})}\cong \SGTKp{\emptyset}^{\{\sigma\}-\bc}\subset \SGTKp{\emptyset}.
\] 
\end{thm}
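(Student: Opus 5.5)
The plan is to deduce the statement from a comparison of Igusa stacks for the two Shimura data, as the abstract indicates, using the Igusa stacks of \cite{DanielVanHoftenKimZhangs2026igusastackscohomologyshimura}. Both $(G_{\emptyset},h_{\emptyset})$ and $(G_{\{\sigma\}},h_{\{\sigma\}})$ are of abelian type. By loc. cit. there are Cartesian diagrams
\[
\SGTKp{\emptyset}\to \Igs_{K^{p}}(G_{\emptyset})\to \Bun_{G_{\emptyset},\mu_{\emptyset}^{-1}},\qquad \SGTKp{\{\sigma\}}\to \Igs_{K^{p}}(G_{\{\sigma\}})\to \Bun_{G_{\{\sigma\}},\mu_{\{\sigma\}}^{-1}},
\]
where $\SGTKp{}$ is the fibre product of $\Fl$ with the Igusa stack over $\Bun_{G}$ via the Beauville--Laszlo map $\Fl\to\Bun_{G}$.

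\textbf{Step 1.} Identify the $\{\sigma\}$-basic locus with a union of Harder--Narasimhan/Newton strata. Under $\Fl\to\Bun_{G_{\emptyset}}$, a point of $\Fl_{\sigma}\times\Fl_{\tau}$ maps to a $G_{\emptyset}$-bundle which decomposes into a $\sigma$-component and a $\tau$-component, roughly because $G_{\emptyset,\QQ_{p}}=\Res_{F_{p}/\QQ_{p}}\GL_{2}$ and the embeddings split after base change. The $\sigma$-component is basic exactly when the $\sigma$-coordinate avoids $p_{\sigma}(\Fl(\QQ_{p}))$. So $\SGTKp{\emptyset}^{\{\sigma\}-\bc}$ is the preimage of the open substack $\Bun_{G_{\emptyset}}^{\{\sigma\}-\bc}$ of bundles whose $\sigma$-part is basic. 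When $p$ is ramified or non-split, I will phrase this via the partial Hodge--Tate period maps and $b_{\{\tau\}}$ rather than literal coordinates.

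\textbf{Step 2.} Compare the Igusa stacks. The inner form $G_{\{\sigma\}}$ is $J_{b_{\{\tau\}}}$, the $\sigma$-twist of $G_{\emptyset}$ by the basic class. Fargues--Scholze give an isomorphism $\Bun_{G_{\emptyset}}\cong\Bun_{G_{\{\sigma\}}}$ via twisting by $b_{\{\tau\}}$. I would show this identifies $\Bun_{G_{\emptyset}}^{\{\sigma\}-\bc}$ with an open locus of $\Bun_{G_{\{\sigma\}}}$ containing the image of $\Fl_{G_{\{\sigma\}},\mu_{\{\sigma\}}}$. The key input is a canonical isomorphism of the restricted Igusa stacks,
\[
\Igs_{K^{p}}(G_{\emptyset})|_{\Bun^{\{\sigma\}-\bc}}\cong\Igs_{K^{p}}(G_{\{\sigma\}})|_{\ldots}.
\]
To construct it I would use the uniqueness/characterization of Igusa stacks in loc. cit.: both sides satisfy the defining Cartesian property with respect to the same level-structure data and prime-to-$p$ Hecke action. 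For abelian type data this reduces, via the functoriality in loc. cit., to a Hodge type comparison. There one compares the $p$-divisible groups and isogeny classes: on the basic-at-$\sigma$ locus the relevant Dieudonné data have $\sigma$-part isoclinic, matching the quaternionic side. This also uses the identification $G_{\emptyset}(\mA_f^{p})=G_{\{\sigma\}}(\mA_f^{p})$ together with the fact that both groups share the same $\QQ$-inner forms data away from $p$ and $\sigma$.

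\textbf{Step 3.} Conclude formally. Base change the Igusa isomorphism along the Beauville--Laszlo maps. The fibre of $\Fl_{G_{\emptyset}}\to\Bun$ over the twisted locus, compared with $\Fl_{G_{\{\sigma\}}}$, is exactly the local Shimura variety $\mcal{M}_{\emptyset\to\{\sigma\}}$ with its $\underline{G_{\{\sigma\}}(\QQ_{p})}$-torsor structure over the basic point. This gives
\[
\SGTKp{\emptyset}^{\{\sigma\}-\bc}\cong\Igs\times_{\Bun}\Fl\cong(\SGTKp{\{\sigma\}}\times\mcal{M}_{\emptyset\to\{\sigma\}})/\underline{G_{\{\sigma\}}(\QQ_{p})}.
\]
Perfectoidness of the left side then follows from the isomorphism, since it is open in a perfectoid space.

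I expect Step 2 to be the main obstacle, in particular producing the comparison canonically and equivariantly beyond Hodge type. Mantovan's product formula only applies to Hodge type data and the basic Newton stratum. I would first try to do the comparison at the level of isogeny categories of the relevant abelian varieties with $\mO_{B}$-action using a unitary auxiliary Shimura datum, and then descend.
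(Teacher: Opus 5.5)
Your skeleton matches the paper's: its proof of Theorem~\ref{thmProductFormulaQuate} is exactly the formal chain of your Step~3. That chain takes as input a single $v$-stack $\Igs_{K^{p}}\to\Bun_{G,\mu}$ whose pullback along each open immersion $\iota_{T}\colon\Bun_{G_{T},\mu_{T}}\hookrightarrow\Bun_{G,\mu}$ is the Igusa stack of $\SGTKp{T}$ (Theorem~\ref{thmCartesianDiagramQaternion}). However, Step~1 is false in general, and Step~2, which carries all the content, is not supplied.

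\textbf{Step~1.} A $G_{\emptyset}$-bundle is a rank-$2$ bundle on $\coprod_{\p}\mcal{X}_{\rF,F_{\p}}(S)$. It is indexed by $p$-adic places, not by embeddings, so there is no ``$\sigma$-component'' unless $\sigma$ is alone above its place. Take $p$ inert in the quadratic field $F$. Then $B(G,\mu)=\{\mO(1)^{\oplus 2},\mO\oplus\mO(2)\}$, and the non-basic locus of $\Fl=\mP^{1}_{\sigma}\times\mP^{1}_{\tau}$ is the diagonal copy of $\mP^{1}(F_{\p})$ (Lemma~\ref{lemNoEmptyBasicLocus}). A point with $x_{\sigma}\in\mP^{1}(F_{\p})$ and $x_{\tau}\notin\mP^{1}(F_{\p})$ maps to the basic point, just like the $\{\sigma\}$-basic points, yet it is not $\{\sigma\}$-basic. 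So no open substack of $\Bun_{G_{\emptyset}}$ pulls back to $\SGTKp{\emptyset}^{\{\sigma\}-\bc}$. In this example $\iota_{\{\sigma\}}(\Bun_{G_{\{\sigma\}},\mu_{\{\sigma\}}})$ is the basic point, and its preimage is the strictly larger $\Sigma_{\infty}$-basic (Rapoport--Zink) locus.

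The $\{\sigma\}$-basic condition enters only locally. Composing modifications gives
\[
\Fl_{G_{\emptyset},\mu_{\emptyset}}\times_{\Fl_{G_{\emptyset},\mu_{\{\tau\}}}}\mcal{M}_{\emptyset\to\{\sigma\}}\cong\Fl_{G_{\{\sigma\}},\mu_{\{\sigma\}}}\times\mcal{M}_{\emptyset\to\{\sigma\}}
\]
over $\Bun_{G}$ (Lemma~\ref{lemIdentifyFlVaroverM}). Together with the torsor property of Lemma~\ref{lemRZisAtorsorOverFl}, this is what produces the quotient. Step~3 must go through this factorization. The plain fibre product $\Fl_{G_{\emptyset},\mu_{\emptyset}}\times_{\Bun}\Fl_{G_{\{\sigma\}},\mu_{\{\sigma\}}}$ is strictly larger, because it allows a relative modification of type $(1,-1)$ at $\tau$.

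\textbf{Step~2.} Uniqueness of Igusa stacks only identifies two Igusa stacks for the \emph{same} Shimura variety. To use it you would first need a $G_{\{\sigma\}}(\mA_{f})$-equivariant reduction map $\SGTKp{\{\sigma\}}\to\Igs(G_{\emptyset})$ with the Cartesian property, and constructing that map is the whole problem. Kim's functoriality needs a morphism of Shimura data, and there is none between $(G_{\emptyset},h_{\emptyset})$ and $(G_{\{\sigma\}},h_{\{\sigma\}})$. The phrase ``$\sigma$-part isoclinic'' repeats the Step~1 confusion.

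The missing idea is as follows.
\begin{enumerate}
\item Pick a CM extension $E/F$ inert at every $\p\mid p$. Then $B_{T}\otimes_{F}E\cong M_{2}(E)=:D$ for all $T$, with matching involutions, and the hermitian lattices become isomorphic after extension of scalars (Lemma~\ref{lemRatHermIndep}). The unitary data $G'_{\ti{T}}$ therefore share one PEL datum and differ only in the Kottwitz signature.
\item Define Zhang's Igusa stack \emph{without} the signature condition: abelian schemes over $R^{+}/\varpi$ up to quasi-isogeny, with $\mO_{D}$-action, quasi-polarization and prime-to-$p$ level. This stack is literally independent of $\ti{T}$; the signature is recorded on the flag-variety side.
\item Prove the Cartesian diagram for each $\ti{T}$ by lifting to $R^{+}$ via the Scholze--Weinstein classification of $p$-divisible groups and Serre--Tate (Theorem~\ref{thmCartesianDiagram}).
\item Extend past the good reduction locus using Kim, quotient by $\Delta_{U^{p}}$, and descend from $G'_{\ti{T}}$ to $G_{T}$ (Proposition~\ref{propRelateTwoIgusa}).
\end{enumerate}
Your closing remark about an auxiliary unitary datum points in the right direction. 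Without this signature-independence, though, nothing in your proposal produces the comparison of Igusa stacks.
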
 




The proof of Theorem \ref{IntrothmthmProductFormulaQuate} is strongly motivated by \cite{TianXiao2016goren}, where a similar result is proven for special fibers of different unitary Shimura varieties by constructing explicit \(p\)-quasi isogenies.
In this article, using the Igusa stacks introduced by \cite{Zhang2023pel} and further developed by \cite{DanielsHoftenKimZhang2024igusa}, \cite{Kim2025uniquenessfunctorialityigusastacks}, \cite{DanielVanHoftenKimZhangs2026igusastackscohomologyshimura},
we give a more streamlined proof. Roughly, the Igusa stack parametrizes abelian schemes (with PEL structures) up to \(p\)-quasi isogenies, and we establish Theorem \ref{IntrothmthmProductFormulaQuate} by constructing an open immersion from the Igusa stack of \(\SGTKp{\{\sigma\}}\) to that of \(\SGTKp{\emptyset}\).

Given Theorem \ref{IntrothmthmProductFormulaQuate}, the part (2) of Theorem \ref{Introthm:LaJacquetLanglands} follows immediately from the properties of the functor of taking analytic de Rham stacks established in \cite{AnschützBoscoLeBrasCamargoScholze2025analyticrhamstacksfarguesfontaine}. 
The proof of Theorem \ref{Introthm:LaJacquetLanglands} (3) is more involved. It is easy to construct a perfectoid version of the commutative diagram, but as an extra ingredient, one needs to compare the de Rham \( G^{c} \)-torsors on \( \SGTKp{\emptyset}^{\dR} \) and \( \SGTKp{\{\sigma\}}^{\dR} \), which encodes information about connections. 
The proof of this theorem uses \cite[Corollary \ref{1-corReformulateRiemmanHilbPerfdCase}]{Jiang2026Shla}, which is a reformulation of the logarithmic Riemann-Hilbert correspondence of \cite{LiuZhu2017rigidity} and \cite{DLLZ2022logarithmicJAMS}. 
\begin{rmk}
The theorems in the main text are more general,
relating the \( I \)-basic locus of the \( (d-|T|) \)-dimensional quaternionic Shimura varieties \( \SGTKp{T} \) (Definition \ref{dfnIbasicLocus}) to the \( (d-|T|-|I|) \)-dimensional quaternionic Shimura varieties \( \SGTKp{T\coprod I} \). If \( T\coprod I=\Sigma_{\infty} \), this specializes to the \(p\)-adic uniformization of \cite{RapoportZink1996period}.
 Our generalization is crucial for the following reason: in the curve case, the complement of \(\mP^{1}(\QQ_{p})\) is precisely the basic locus, but the similar statement does not hold in the Hilbert setting; instead, the complement of \(\mP^{1}(F_{p})\) is precisely the union of all the \(I\)-basic locus for varying \(I\subset\Sigma_{\infty}\), \(I\ne \emptyset\). This is the key to the inductive proof of Theorem \ref{IntrothmClassicalityCohomoDR} in \S \ref{subsecProofOfClassicality}.
\end{rmk}

\subsubsection{Proof of classicality}
\label{subsubsec:Intro-Pf-Cla}
For curious readers, let us sketch the proof of Theorem \ref{IntrothmClassicalityCohomoDR} using Theorem \ref{Introthm:LaJacquetLanglands}, which also reveals the internal structure of the locally analytic representations. See the proof of Theorem \ref{thmClassicalityCohomoDR} in \S \ref{subsecProofOfClassicality} for details. 

We consider the excision sequence with respect to the partition \[j:(\Fl_{\sigma}\backslash p_{\sigma}(\Fl(\QQ_{p})))\times \Fl_{\tau}\hookrightarrow \Fl\cong \Fl_{\sigma}\times \Fl_{\tau}\hookleftarrow p_{\sigma}(\Fl(\QQ_{p}))^{\dagger}\times \Fl_{\tau}:i,
\] 
where \( p_{\sigma}(\Fl(\QQ_{p}))^{\dagger} \) denotes the dagger neighborhood of \( p_{\sigma}(\Fl(\QQ_{p})) \) in \( \Fl_{\sigma} \).
We also denote by \((j,i)\) the pull-back of this partition to \(\SGTKpTo{}\) along \(\pi_{\HT}\). 
 So we have a fiber sequence \[j_{!}j^{*}(\Fib(\bar{d}_{\tau},d_{\sigma}))\to \Fib(\bar{d}_{\tau},d_{\sigma})\to i_{*}i^{*}(\Fib(\bar{d}_{\tau},d_{\sigma})).
\] 
Thus \( R\Gamma(\SGTKpTo{}^{\dR},\Fib(\bar{d}_{\tau},d_{\sigma})) \)
is filtered (in the derived sense) by the following two pieces:
\begin{itemize}
\item \(R\Gamma(\SGTKpTo{}^{\dR},j_{!}j^{*}(\Fib(\bar{d}_{\tau},d_{\sigma}))) \): we can understand this part by reducing to the case of quaternionic Shimura curves via the \emph{locally analytic Jacquet-Langlands transfer} (Theorem \ref{Introthm:LaJacquetLanglands} (4)). 

Concretely, \begin{align*}\label{alignExampleJLIntro}
R\Gamma(j_{!}j^{*}(\Fib(\bar{d}_{\tau},d_{\sigma})))\cong C_{*}\left(G_{\{\sigma\}}(\QQ_{p})^{\sm},
N_{\{\sigma\}}^{\sm}
\hatotimes^{L} T_{\{\sigma\}\to \emptyset}\right),
\end{align*} where 
\begin{align*}
&N_{\{\sigma\}}^{\sm}:=
\varinjlim_{K_{p}\subset G_{\{\sigma\}}(\QQ_{p})}R\Gamma(\SGTK{\{\sigma\}}{\Kpp})\in D(G_{\{\sigma\}}(\QQ_{p})^{\sm}\times \TT^{S}),\\
&T_{\{\sigma\}\to \emptyset}:=R\Gamma_{c}(\mcal{M}_{\emptyset\to \{\sigma\}}^{\dR},\pi_{\HT,\emptyset}^{*}(\Omega^{1}_{\Fl_{\sigma}})^{G_{\{\sigma\}}(\QQ_{p})-\sm})\in D(G_{\{\sigma\}}(\QQ_{p})^{\sm}\times G_{\emptyset}(\QQ_{p})^{\la}),
\end{align*}
and
\(C_{*}(G_{\{\sigma\}}(\QQ_{p})^{\sm},-)\) denotes the homology complex in the category of smooth \(G_{\{\sigma\}}(\QQ_{p})\)-representations. 
\item \( R\Gamma(\SGTKpTo{}^{\dR},i_{*}i^{*}(\Fib(\bar{d}_{\tau},d_{\sigma}))) \): this is a \( \sigma \)-locally analytic parabolic induction. More precisely, let \( Q:=\mrm{Stab}_{G(\QQ_{p})}(\infty_{\sigma})\subset G(\QQ_{p}) \) for \( \infty_{\sigma}\in\Fl_{\sigma} \). Then \begin{align*}
R\Gamma(i_{*}i^{*}(\Fib(\bar{d}_{\tau},d_{\sigma})))
\cong \Ind_{Q}^{G(\QQ_{p})}(M^{\sm}\otimes \Omega^{1}_{\Fl_{\sigma}}|_{\infty_{\sigma}})^{\sigma-\la},
\end{align*}
where \[
M^{\sm}:=R\Gamma(\SGTKpTo{}|_{\infty_{\sigma}\times \Fl_{\tau}},\dR_{\{\sigma\}}(\mO^{\sm}))
\] with \[ \dR_{\{\sigma\}}(\mO^{\sm}):=\left[
	\mO^{\sm}\xrightarrow{d_{\sigma}}\mO^{\sm}\otimes_{\mO_{\SGTKTo{}{K}}}\Omega^{1}_{\SGTKTo{}{K},\sigma}
\right]. \]
\end{itemize}
The situation is illustrated in Figure \ref{figureIntro} below.
\begin{figure}
\includegraphics[width=\linewidth]{intro.jpg}
\caption{Parabolic Induction or Jacquet-Langlands transfer}
\label{figureIntro}
\end{figure}

In general, the locally analytic Jacquet-Langlands transfer
will allow us to reduce \( I \)-basic parts to lower dimensional Shimura varieties, which are well understood by the inductive hypothesis. Thus the remaining mysterious part is the smooth \( Q \)-representation \( M^{\sm} \)
that appears in the locally analytic parabolic induction.

Here we employ a trick by replacing \( \Fib(\bar{d}_{\tau},d_{\sigma}) \) with \( \dR_{\{\sigma\}}(\mO^{\sm}) \), and repeating the whole argument of excision. Now \( R\Gamma(j_{!}j^{*}(\dR_{\{\sigma\}}(\mO^{\sm}))) \) is governed by the smooth Jacquet-Langlands transfer. Moreover, \[ R\Gamma(\SGTKpTo{}^{\dR},\dR_{\{\sigma\}}(\mO^{\sm}))\cong \varinjlim_{K_{p}}R\Gamma(\SGTKTo{}{\Kpp}^{\dR},\dR_{\{\sigma\}}(\mO_{\SGTKTo{}{\Kpp}})), \]
with self-explanatory notation on the RHS. 
This allows us to reverse-engineer information about \( M^{\sm} \)
from the excision sequence.

\subsection{Organization of the paper}
The paper is organized as follows:

In Section \ref{sectionGeoSV}, we review the general theory of Igusa stacks following \cite{Zhang2023pel}, \cite{DanielsHoftenKimZhang2024igusa}, \cite{Kim2025uniquenessfunctorialityigusastacks}, \cite{DanielVanHoftenKimZhangs2026igusastackscohomologyshimura}. Then we define the quaternionic Shimura varieties, and the related unitary Shimura varieties, 
we will construct their Igusa stacks, and show that the Igusa stacks for different quaternionic Shimura varieties can be embedded into that for the Hilbert modular varieties.
This implies Theorem \ref{IntrothmthmProductFormulaQuate}. We also give another application to \( \ell \)-adic cohomology, which implies in particular that the completed cohomolgy of certain lower dimensional quaternionic Shimura varieties can be realized as a \emph{direct summand} of that of higher dimensional ones.

In Section \ref{sectionJLTransfer}, we combine the results in \cite{Jiang2026Shla} and Section \ref{sectionGeoSV} to prove the locally analytic Jacquet-Langlands transfer between different quaternionic Shimura varieties (Theorem \ref{thmJacquetLanglandsLA}, Corollary \ref{corJLlaIsGlaTorsor}), using which we give an inductive proof of the classicality theorem for partial de Rham cohomology (Theorem \ref{thmClassicalityCohomoDR}). 

In Section \ref{sectionFontaineOperator}, we define the Fontaine operators, and prove Theorem \ref{IntrothmFontainEqDDbar}.
Then we define the Bernstein-Gelfand-Gelfand-Fontaine complex to be the Koszul complex defined by the Fontaine operators, and show it has classical cohomolgy using Theorem \ref{thmClassicalityCohomoDR}.

In Section \ref{sectionAppplicationClassicality}, we state the theorem on the plectic Lie algebra action (Theorem \ref{thmPadicPlectic}) in a joint work with Lue Pan, and combine the ingredients to finish the proof of Theorem \ref{IntrothmClassicalityCompleteCoho}. 

In Appendix \ref{secUniversalDeterminant}, we will define the \(p\)-adic Hecke algebra \(\hat{\TT}^{S}\) acting on completed cohomology of Hilbert modular varieties, 
and define a \(2\)-dimensional determinant of \(\Gal_{F}\)
valued in \(\hat{\TT}^{S}\) in the sense of \cite{Chenevier2014determinants}. 
The result should be well-known, but we give an account here due to the lack of references.

\subsection{Acknowledgment}
I would like to express my gratitude to my advisor Vincent Pilloni for many invaluable discussions and for his constant encouragement and support, without which this work wouldn't be possible. 
I want to thank Lue Pan for his interest in my work, and for many discussions that lead to essential improvements of the results. 
I would like to thank Johannes Anschütz, Juan Esteban Rodr\'iguez Camargo, Dustin Clausen, Xingzhu Fang, Valentin Hernandez, Pol van Hoften, Dongryul Kim, Arthur-C\'esar Le Bras,
Shizhang Li, Benchao Su, Matteo Tamiozzo, Longke Tang,
Qixiang Wang, Xiangqian Yang, Mingjia Zhang, and Daming Zhou for helpful discussions.
Additional thanks to Lue Pan, Vincent Pilloni, and Benchao Su for many corrections on the early draft.
Part of this work was done during my visits to Princeton University and to Beijing International Center for Mathematical Research,
and I want to thank both institutes and my hosts Lue Pan, Yiwen Ding and Liang Xiao for their hospitality. 
The work was done while the author was a PhD student at Université Paris-Saclay, under the
specific doctoral contract for normaliens (CDSN), and I would like to thank Université Paris-Saclay for its support.

\subsection{Notation}
We will work with solid modules in the sense of \cite{CS19}. To ease the notation, all the tensor products will be taken in the solid sense, which we will simply denote as \(-\hatotimes-\).

We fix \(\bar{\QQ}\subset\CC\).
We fix \(F\) to be
a totally real number field of dimension \(d\),
and we fix an isomorphism \(\iota:\CC\cong\bar{\QQ}_{p}\). Let
\(\Sigma_{\infty}\)
and \(\Sigma_{p}\) be 
the sets of places of \(F\)
over \(\infty\)
and over \(p\) respectively. The isomorphism \(\iota\)
induces a map \(i_{p}:\Sigma_{\infty}\to\Sigma_{p}\),
and for $\p\in\Sigma_{p}$,
define \(\Sigma_{\infty/\p}:=i_{p}^{-1}(\p)\subset \Sigma_{\infty}\). 

For any number field \(K\), 
 \(\hat{\mO}_{K}\) denotes the profinite completion of 
\(\mO_{K}\), \(\hat{\mO}_{K,(p)}:=\hat{\mO}_{K}\otimes_{\ZZ}\ZZ_{(p)}\), and \(\mA_{K,f}:=\hat{\mO}_{K}\otimes \QQ\). If \(K=\QQ\), we write \(\hat{\ZZ}\), \(\hat{\ZZ}_{(p)}\) and \(\mA_{f}\) respectively. 

Without defining otherwise, \(L\subset \bar{\QQ}_{p}\) is a sufficiently large finite extension of \(\QQ_{p}\), which is the coefficient field of our Galois representations, and contains all the embeddings of \(F\). 

We will sometimes fix a neat tame level \(K^{p}\subset G(\mA_{f}^{p})\). In that case, we 
will further fix \(S\) to be a finite set of places of \(F\),
	including the places above \(p\) and \(\infty\)
	and all the ramified places of \(K^{p}\).
	Denote by \(\TT^{S}\) the spherical Hecke algebra over \(\ZZ_{p}\) generated by the spherical Hecke operators of \(G\) at finite places away from \(S\). 

We fix an algebraic closure \(\bar{\QQ}\) of \(\QQ\). For each place \(l\) of \(\QQ\), we fix an algebraic closure \(\bar{\QQ}_{l}\) of \(\QQ_{l}\), and an embedding \(\bar{\QQ}\subset\bar{\QQ}_{l}\). This determines an embedding \(\Gal_{\QQ_{l}}\hookrightarrow\Gal_{\QQ}\). We fix a similar set of data for \(F\) instead of \(\QQ\). Denote by \(\CC_{p}\) the completion of \(\bar{\QQ}_{p}\) for the \(p\)-adic topology.
By Frobenius, we will always refer to the geometric Frobenius. 
Let \(\chi_{\mrm{cycl}}:\Gal_{\QQ}\to\ZZ_{p}^{\times}\)
denote the cyclotomic character, sending \(\Fr_{l}\) to \(l^{-1}\). By our convention, \(\chi_{\mrm{cycl}}\)
has Hodge-Tate weight \(-1\). Our convention for local class field theory takes the uniformizer to a lift of the geometric Frobenius. 

For a Shimura datum \((G,X)\), and for a neat open compact subgroup \(K\subset G(\mA_{f})\), we denote by \(\mrm{Sh}_{K}(G)\) the Shimura variety defined over the reflex field \(E(G,X)\subset \CC\). Then we embed \(E(G,X)\) into \(\CC_{p}\) via \(\iota\), and we denote by \(\mS_{K}(G)\) the rigid analytification of the scheme \(\mrm{Sh}_{K}(G)\times_{E(G,X)}\CC_{p}\). The tower \(\{\mS_{K}(G)\}\)
carries a \emph{right} action of \(G(\mA_{f})\).





\section{Perfectoid Jacquet-Langlands correspondence}\label{sectionGeoSV}

In this section, we will prove a product formula (Theorem \ref{thmProductFormulaQuate}), which relates the geometry of different quaternionic Shimura varieties. We first learn a result of this type in \cite{TianXiao2016goren}. The result loc. cit. is concerned with the special fibers, while our result is about the rigid generic fibers. In particular, our results are not conditional on the unramifiedness of \(p\) in \(F\),
and our results hold on the infinite level at \(p\). 

Roughly, quaternionic Shimura varieties are related to certain moduli problems of abelian schemes with PEL structures, and different choices of the quaternion algebras give rise to different Kottwitz signature conditions. 
The proof strategy of \cite{TianXiao2016goren} consists of constructing morphisms between different quaternionic Shimura varieties by constructing explicit \( p \)-quasi-isogenies to alter the signatures. Nowadays, this procedure can be streamlined using the Igusa stacks, constructed in \cite{Zhang2023pel}, \cite{DanielsHoftenKimZhang2024igusa}, \cite{Kim2025uniquenessfunctorialityigusastacks}, \cite{DanielVanHoftenKimZhangs2026igusastackscohomologyshimura},
which are vaguely moduli problems of abelian schemes with PEL structures \emph{up to \( p \)-quasi-isogenies}. 

Hence, our proof strategy is to show that all the relevant quaternionic Shimura varieties have the same Igusa stack; more precisely, we will show that the Igusa stacks of various quaternionic Shimura varieties admit open immersions into the Igusa stack of Hilbert modular varieties. 

This fact is well expected by experts, and it could be proven using the relation of the Igusa stacks with the canonical models shown in \cite{DanielsHoftenKimZhang2024igusa}, and by constructing certain ``exotic correspondences'' over the canonical models. Examples of such results are in \cite{XiaoZhu2017cycles}.

Our proof is more elementary. In fact, our method is based on the following simple observation: for Shimura varieties of PEL type, whose Igusa stacks are explicitly constructed in \cite{Zhang2023pel}, the Kottwitz condition does not play a role in the definition, since it is already remembered by the part of \( \Fl_{G,\mu} \) in the Zhang's Cartesian diagram. Thus we give a modified definition of the Igusa stack by simply forgetting the Kottwitz signature condition, and then we show that the desired Zhang's Cartesian diagram still holds.

Apart from an application to the product formula (Theorem \ref{thmProductFormulaQuate}),
the identification of the Igusa stacks also allow additional flexibility in relating \( \ell \)-adic cohomology of different quaternionic Shimura varieties. We will show in \S \ref{subsecAppliLadicJLcorr} how to realize the completed cohomolgy of lower dimensional 
Shimura varieties as a \emph{direct summand} as
 that of higher dimensional 
Shimura varieties, which should be of some independent interest. This result will be used later for proving promodularity.

The section is structured as follows: in \S \ref{subsecGeneralIgusa}, we introduce the general notion of Igusa stacks following \cite{Kim2025uniquenessfunctorialityigusastacks}. 
In \S \ref{subsectionQuaternUniSV},
we introduce the Shimura data that define the quaternionic Shimura varieties and the unitary Shimura varieties. In \S \ref{subsectionGoodRedLoc}
we define the ``good reduction locus", over which we are going to define the reduction map to the Igusa stack. In \S \ref{subsecIgusaStack}, we construct an Igusa stack of PEL type and prove a Cartesian diagram in our setting. We follow the original idea of \cite{Zhang2023pel} with a slight modification, such that the comparison of the resulting Igusa stack between different Shimura varieties becomes obvious (see Remark \ref{rmkDifferentFromZhang}). 
We then construct Igusa stacks for the unitary Shimura varieties in \S \ref{subsecUnitaryIgusa}, and for the quaternionic Shimura varieties in \S \ref{subsecQuaternionicIgusa}.
As an application, we show in \S \ref{subsecAppliLadicJLcorr} a version of \( \ell \)-adic Jacquet-Langlands correspondence, relating the completed cohomology of different quaternionic Shimura varieties (Corollary \ref{corDirectSummandVaryL}).
In \S \ref{subsecProductFormula}, via analyzing the local 
data, we show a version of the product formula (Theorem \ref{thmProductFormulaQuate}) involving different quaternionic Shimura varieties and basic local Shimura varieties. 

\subsection{General formalism of Igusa stacks}\label{subsecGeneralIgusa}
\begin{notation}
Let \(\Perf\) denote the category of perfectoid spaces over \(\FF_{p}\), equipped with the \(v\)-topology (\cite{Scholze2017etaleDiamond}).
\end{notation}
\begin{notation}\label{notationGenealShimura}
Let \((G,X)\) be a general Shimura datum, and \(E\subset\CC\) be the reflex field. Let \(\mS(G)_{E}:=\varprojlim_{K}(\mrm{Sh}_{K}(G)_{E_{\p}})^{\diamond}\) as a diamond over \(\Spd(E_{\p})\), where \(E_{\p}\) denotes \(p\)-adic completion of \(E\) along \(E\subset\CC\cong\bar{\QQ}_{p}\). Then \(\mS(G)_{E}\)
is equipped with an action of \(G(\mA_{f})\). We will denote by \(\mS(G)\)
its base change along \(\Spd(\CC_{p})\to\Spd(E_{\p})\). 

We denote \(\Fl_{G,\mu,E}:=G/P_{\mu}\), and \(\Fl_{G,\mu,E}^{\mrm{std}}:=G/\bar{P}_{\mu}\). Then both \(\Fl_{G,\mu,E}\) and \(\Fl_{G,\mu,E}^{\mrm{std}}\) descend to schemes over \(E\).
We denote by the same notations the diamonds over \(\Spd(E_{\p})\) that they represent. 
We will denote by \(\Fl_{G,\mu}\) and \(\Fl_{G,\mu}^{\mrm{std}}\)
the corresponding diamonds given by pulling back to \(\Spd(\CC_{p})\).  
They are equipped with a \(G(\QQ_{p})\)-action, which we inflate to an action of \(G(\mA_{f})\)
along the projection \(G(\mA_{f})\to G(\QQ_{p})\).

Then there is a \(G(\mA_{f})\)-equivariant \emph{Hodge-Tate period map} (\cite{Scholze15}, followed by \cite{CS17}, \cite{Shen2017perfectoid}, \cite{HansenJohansson2020perfectoid}, \cite{Hansen2016period}) 
\[\pi_{\HT}:\mS(G)_{E}\to \Fl_{G,\mu,E}. 
\]
\end{notation}


\begin{dfn}[{\cite[Definition III.1.2]{FarguesScholze2021geometrization}}]
For any reductive group \(G\) over \(\QQ_{p}\), we define \(\Bun_{G}\) to be the \(v\)-stack over \(\Perf\)
given by \[\Bun_{G}(S):=\{\text{\'etale \(G\)-torsors over \(\mX_{\rF}(S)\)}\}.\]
We write \(\mcal{G}_{0}\in \Bun_{G}(*)\) for the trivial \(G\)-torsor.
\end{dfn}
\begin{dfn}[Beauville-Laszlo morphism]\label{dfnBLmap}
By \cite[Theorem 3.4.5]{CS17},
we identify \[\Fl_{G,\mu}\cong \mrm{Gr}^{\BdR^{+}}_{G,\mu}\times \Spd(\CC_{p}),
\] where the RHS represents the moduli problem sending \(S\in\Perf\)
to the groupoid of pairs \( (S^{\sharp},i) \), where \( S^{\sharp} \) is an untilt of \( S \) over \( \CC_{p} \), and \( i \) is a modification \(\mcal{G}_{0}\to \mcal{G}\) of \(G\)-torsors over \(\mX_{\rF}(S)\) along \( S^{\sharp} \)
bounded by \(\mu\). 
The action of \(G(\QQ_{p})\) on \(\Fl_{G,\mu}\)
is induced by its action on \(\mcal{G}_{0}\).

Then we define the Beauville-Laszlo morphism \[\BL=\BL_{G,\mu}:\Fl_{G,\mu}\to [\Fl_{G,\mu}/G(\QQ_{p})]\to \Bun_{G,\mu},
\] sending \(\mcal{G}_{0}\to \mcal{G}\)
to \(\mcal{G}\). 
\end{dfn}
\begin{dfn}[{\cite[Lecture 24]{ScholzeWeinstein2020berkeley}}] Given \(b\in B(G,\mu)\), let \(\mcal{G}_{b}\) be the corresponding \(G\)-torsor in \(\Bun_{G}(*)\), and define 
\(\mcal{M}_{G,b,\mu}\) to be the \(v\)-stack sending \(S\in \Perf\) to the groupoid of modifications from \(\mcal{G}_{0}\) to \(\mcal{G}_{b}\) that is bounded by \(\mu\). Then \(\mcal{M}_{G,b,\mu}\) carries an acion of \(G(\QQ_{p})\times \ti{G}_{b}\), and the diagonal \(Z(\QQ_{p})\hookrightarrow G(\QQ_{p})\times\ti{G}_{b}\) acts trivially. Here \(Z\) denotes the center of \(G\).
\end{dfn}
\begin{dfn}[{\cite[Definition 5.5]{Kim2025uniquenessfunctorialityigusastacks}}]\label{dfnGeneralIgus}
Let \(U\) be an open subspace of \(\mS(G)_{E}\) that is closed under the action of \(G(\mA_{f})\). An \emph{Igusa stack} \(\Igs^{U}\) for \(U\) is a datum consisting of 

(1) A \(v\)-stack \(\Igs^{U}\) equipped with a \(G(\mA_{f}^{p})\)-action;

(2) A \(G(\mA_{f})\) -equivariant morphism (``\emph{the reduction map}'') \[
\red:U\to \Igs^{U}\]
where the \(G(\mA_{f})\)-action on \(\Igs^{U}\) is induced by the projection \(G(\mA_{f})\to G(\mA_{f}^{p})\);

(3)
A morphism (``\emph{the reduced Hodge-Tate period map}'') \[\bar{\pi}_{\HT}:[\Igs^{U}/G(\mA_{f}^{p})]\to \Bun_{G,\mu};\] 

(4) A coherent Cartesian diagram \[
\begin{tikzcd}
{[U/G(\mA_{f})] \arrow[r,"\pi_{\HT}"]}\arrow[d,"\red"]
& {[\Fl_{G,\mu,E}/G(\QQ_{p})]}\arrow[d,"\BL"]\\
{[\Igs^{U}/G(\mA_{f}^{p})]}
\arrow[r,"\bar{\pi}_{\HT}"]
& \Bun_{G,\mu},
\end{tikzcd},
\] 
and the datum satisfies the \emph{condition} that the absolute Frobenius
and the identity coincide in \(\pi_{0}(\End(\Igs^{U}))\).
\end{dfn}
\begin{rmk}
Later we will also use this terminology for \(\mS(G):=\mS(G)_{E}\times_{\Spd(E_{\p})}\Spd(\CC_{p})\),
where \(\Fl_{G,\mu,E}\)
in the Cartesian diagram is replaced by \(\Fl_{G,\mu}:=\Fl_{G,\mu,E}\times_{\Spd(E_{\p})}\Spd(\CC_{p})\).
\end{rmk}
This definition has
the benefit that \(\Igs^{U}\) (if exists)
exists uniquely up to a unique isomorphism by \cite[Theorem 10.13]{Kim2025uniquenessfunctorialityigusastacks}.
\begin{rmk}\label{rmkIgsToGlobalUniformization}
Since \(\BL\) is a \(v\)-surjection by \cite[Proposition III.3.1]{FarguesScholze2021geometrization}, \(\red:U\to \Igs^{U}\)
is also a \(v\)-surjection by Definition \ref{dfnGeneralIgus} (4). It thus suffices to understand the groupoid object defined by \[
U\times_{\Igs^{U}}U\cong U\times_{\Bun_{G,\mu}}\Fl_{G,\mu},
\] which carries an action of \(G(\mA_{f}^{p})\times G(\QQ_{p})\times G(\QQ_{p})\). Note that \(\BL\) is \(0\)-truncated, and \(\pr_{i}:U\times_{\Igs^{U}}U\to U\) (\(i=1,2\))
are also \(0\)-truncated. In particular, \(U\times_{\Igs^{U}}U\) is \(0\)-truncated. 
\end{rmk}

We now prove some technical lemmas that will be needed later. 
\begin{lem}\label{lemCenterFactorization}
Assume that we have an Igusa stack \(\Igs^{U}\) as in Definition \ref{dfnGeneralIgus}.
Consider the action of \[\overline{Z(\QQ)}\subset G(\mA_{f})\xhookrightarrow{1_{\mA_{f}^{p}}\times\Delta_{\QQ_{p}}}G(\mA_{f}^{p})\times G(\QQ_{p})\times G(\QQ_{p})\times\cdots\times G(\QQ_{p}).\] Then the diagonal action of \(\overline{Z(\QQ)}\)
on \(U\times_{\Igs^{U}}U\times_{\Igs^{U}}\cdots\times_{\Igs^{U}}U\)
is trivial.
\end{lem}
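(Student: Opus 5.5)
The plan is to reduce the statement about the iterated fiber product to one explicit computation on \(\Fl_{G,\mu}\) and \(\Bun_{G,\mu}\).

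By Definition \ref{dfnGeneralIgus} (4) and Remark \ref{rmkIgsToGlobalUniformization}, the \(n\)-fold product \(U\times_{\Igs^{U}}\cdots\times_{\Igs^{U}}U\) is identified \(G(\mA_{f}^{p})\times G(\QQ_{p})^{n}\)-equivariantly with
\[
U\times_{\Bun_{G,\mu}}\Fl_{G,\mu}\times_{\Bun_{G,\mu}}\cdots\times_{\Bun_{G,\mu}}\Fl_{G,\mu}.
\]
Here the first factor carries the \(G(\mA_{f})\)-action, and the \(i\)-th copy of \(\Fl_{G,\mu}\) carries the action of the \(i\)-th \(G(\QQ_{p})\). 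Since this object is \(0\)-truncated (Remark \ref{rmkIgsToGlobalUniformization}), triviality of an action can be checked on \(S\)-points for \(S\in\Perf\). No higher coherence data need to be tracked.

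An \(S\)-point consists of:
\begin{itemize}
\item a point \(x\in U(S)\);
\item modifications \(i_{k}:\mcal{G}_{0}\to\mcal{G}_{k}\) for \(k=2,\dots,n\);
\item isomorphisms \(\phi_{k}:\mcal{G}_{1}\cong\mcal{G}_{k}\) of \(G\)-torsors over \(\mX_{\rF}(S)\), where \(\mcal{G}_{1}=\BL(\pi_{\HT}(x))\).
\end{itemize}

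Let \(z\in Z(\QQ_{p})\). It acts on \(\mcal{G}_{0}\) by the automorphism \(z\) (right multiplication), and precomposing each modification with it gives \(i_{k}\circ z\). Because \(z\) is central, the automorphism of the pair \((\mcal{G}_{0}\to\mcal{G}_{k})\) given by \(z\) on both source and target identifies \((\mcal{G}_{k},i_{k}\circ z)\) with \((\mcal{G}_{k},i_{k})\). So \(Z(\QQ_{p})\) acts trivially on \(\Fl_{G,\mu}\), but it does so through the nontrivial automorphism \(z\) of \(\mcal{G}_{k}\).

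The tuple is therefore moved to \((x', i_{k}, z\circ\phi_{k}\circ z^{-1})\), where \(x'\) is the translate of \(x\) by the image of \(z\in\overline{Z(\QQ)}\). Since \(z\) is central in \(\mathrm{Aut}\), the conjugation \(z\circ\phi_{k}\circ z^{-1}\) equals \(\phi_{k}\). The whole statement thus reduces to a claim about \(U\): for \(z\in\overline{Z(\QQ)}\subset G(\mA_{f})\), the point \(x\cdot z\) is canonically equal to \(x\), and under \(\pi_{\HT}\) and \(\BL\) this identification induces exactly the automorphism \(z\) on \(\mcal{G}_{1}\).

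This is the main obstacle, and I would prove it in three steps.
\begin{enumerate}
\item Show that \(\overline{Z(\QQ)}\) acts trivially on \(\mS(G)_{E}=\varprojlim_{K}\mrm{Sh}_{K}(G)\). At every finite level the double coset \(G(\QQ)\backslash X\times G(\mA_{f})/K\) is unchanged by a central element of \(G(\QQ)\), and hence by its closure, after passing to the limit.
\item Compare this trivialization with the equivariance of \(\pi_{\HT}\). The \(G\)-torsor \(\mcal{G}_{1}\) is built from the \(p\)-adic étale local system of the universal motive, namely \(\mcal{G}_{0}\) trivialized by the level structure at \(p\). On that local system an element \(z\in Z(\QQ)\) acts simultaneously by its rational action and its \(p\)-adic action, and these cancel.
\item Use the \(0\)-truncatedness of \(\red\) and of \(\BL\) to conclude that the trivializations are compatible. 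The induced self-map of the fiber product is then the identity on points, which gives triviality of the action.
\end{enumerate}
If this direct argument proves awkward, the fallback is to invoke the uniqueness and functoriality of Igusa stacks from \cite[Theorem 10.13]{Kim2025uniquenessfunctorialityigusastacks}. Applied to the automorphism induced by \(z\), it gives a unique isomorphism with the identity, and with it the compatibility above.
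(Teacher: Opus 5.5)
Your proposal is correct, but it takes a different route from the paper. The paper works one Newton stratum at a time, on geometric points only. Over \(\Bun_G^b\cong[*/\ti{G}_b]\) it writes \(U^b\cong(\Igs^b\times\mcal{M}_{G,b,\mu})/\ti{G}_b\). It then combines the triviality of \(\overline{Z(\QQ)}\) on \(U\) with the standard fact that the diagonal \(Z(\QQ_p)\subset G(\QQ_p)\times\ti{G}_b\) acts trivially on \(\mcal{M}_{G,b,\mu}\). This shows that \(\overline{Z(\QQ)}\) acts trivially on \(\Igs^b\) through \(G(\mA_f^p)\times\ti{G}_b\). Finally it transports this to \((\Igs^b\times\mcal{M}_{G,b,\mu}^{i})/\ti{G}_b\), trading the \(G(\QQ_p)^i\)-action for the \(\ti{G}_b\)-action modulo the quotient.

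You instead stay on \(U\times_{\Bun_{G,\mu}}\Fl_{G,\mu}\times\cdots\) and track the gluing isomorphisms \(\phi_k\) directly. The whole content is that both sides produce right multiplication by the central element \(z_p\), and this commutes with every isomorphism of \(G\)-torsors. Your route avoids Newton strata and local Shimura varieties, works on arbitrary \(S\)-points, and makes explicit the 2-categorical bookkeeping that the paper hides in ``quotienting out the diagonal action of \(\ti{G}_b\)''. The paper's route only manipulates group actions on spaces plus one known fact about \(\mcal{M}_{G,b,\mu}\).

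One correction to your steps (2)--(3): the key claim is formal, and the justifications you sketch are not the right ones.
\begin{itemize}
\item No universal motive is needed. It is unavailable for general abelian-type data, and it would only cover \(Z(\QQ)\), not its closure.
\item Since \(U\) and \(\Fl_{G,\mu}\) are sheaves and \(\pi_{\HT}\) is strictly equivariant, the stabilizer automorphism \(z\) of \(x\) in \([U/G(\mA_f)]\) maps tautologically to the stabilizer automorphism \(z_p\) of \(\pi_{\HT}(x)\) in \([\Fl_{G,\mu}/G(\QQ_p)]\). Then \(\BL\) sends this to right multiplication by \(z_p\) on \(\mcal{G}_1\).
\item The structure map actually used in \(U\times_{\Igs^U}U\cong U\times_{\Bun_{G,\mu}}\Fl_{G,\mu}\) is \(\bar\pi_{\HT}\circ\red\). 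The automorphism produced along it is the same, because the coherence 2-isomorphism \(\bar\pi_{\HT}\circ\red\Rightarrow\BL\circ\pi_{\HT}\) is natural with respect to automorphisms, and right multiplication by \(z_p\) commutes with its components.
\end{itemize}
So the mechanism is naturality plus centrality, not \(0\)-truncatedness, and the fallback to uniqueness of Igusa stacks is unnecessary.
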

\begin{proof}
Since \(U\times_{\Igs^{U}}U\cong U\times_{\Bun_{G}}\Fl_{G,\mu}\) is 0-truncated,
it suffices to show that the action of \({Z(\QQ)}\)
is trivial on \(\Spd(C,C^{+})\)-points, where \(C\)
is a complete algebraically closed field over \(\FF_{p}\). In particular, we can fix \(b\in B(G,\mu)\), and work over \(\Bun_{G}^{b}\cong [*/\ti{G}_{b}]\) (\cite[Proposition III.5.3]{FarguesScholze2021geometrization}). 

Denote \(\iota_{b}:\Bun_{G}^{b}\hookrightarrow \Bun_{G}\), and \(f_{b}:*\to [*/\ti{G}_{b}]\hookrightarrow \Bun_{G,\mu}\),
and \(\Igs^{b}:=f_{b}^{*}(\Igs^{U})\), which is also \(0\)-truncated by Definition \ref{dfnGeneralIgus} (4), and carries an action of \(\ti{G}_{b}\times G(\mA_{f})\). We also denote \(U^{b}:=f_{b}^{*}(U))\subset U\).
Then it follows from Definition \ref{dfnGeneralIgus} (4)
that \[U^{b}\cong (\Igs^{b}\times \mcal{M}_{G,b,\mu})/\ti{G}_{b}.
\] 

The action of \(\overline{Z(\QQ)}\) is trivial on \(U\) by considering the archimedean uniformization, and thus it also acts trivially on \(U^{b}\). 
Since the diagonal action of \(Z(\QQ_{p})\) is trivial on \(\mcal{M}_{G,b,\mu}\),
we know that the diagonal action of \(\overline{Z(\QQ)}\hookrightarrow G(\mA_{f}^{p})\times \ti{G}_{b}\) is also trivial on \(\Igs^{b}\). 

Now \[\iota_{b}^{*}(U\times_{\Igs^{U}}U\times_{\Igs^{U}}\cdots\times_{\Igs^{U}}U)\cong (\Igs^{b}\times \mcal{M}_{G,b,\mu}^{i})/\ti{G}_{b}. \]
By the argument above, the diagonal action of \(\overline{Z(\QQ)}\) that combines the actions of \(\ti{G}_{b}\), \(G(\QQ_{p})\) and \(G(\mA_{f}^{p})\)
is trivial on the RHS. Since we are quotienting out the diagonal action of \(\ti{G}_{b}\), the diagonal action of \(\overline{Z(\QQ)}\) that combines the actions of \(G(\QQ_{p})\) and \(G(\mA_{f}^{p})\)
is trivial, as desired.
\end{proof}
\begin{cor}\label{corFunctorCenter}
The action of \(G(\mA_{f})\)
on \(\Igs^{U}\)
factors naturally through an action of \(G(\mA_{f})/\overline{Z(\QQ)}\), such that \(\red\)
is \(G(\mA_{f})/\overline{Z(\QQ)}\)-equivariant. 

Moreover, this action is functorial in terms of the Shimura data in the sense that for \((G,X)\to (G',X')\)
a morphism of Shimura data as in \cite[Theorem 10.13]{Kim2025uniquenessfunctorialityigusastacks}, 
\(\Igs^{U}\to \Igs^{U'}\)
is \(G(\mA_{f})/\overline{Z(\QQ)}\)-equivariant.
\end{cor}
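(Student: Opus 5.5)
The plan is to show that $\overline{Z(\QQ)}$ acts trivially on $\Igs^{U}$ in a coherent way, so that the action descends, and to get this from Lemma \ref{lemCenterFactorization} by $v$-descent along the reduction map.

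First, by Remark \ref{rmkIgsToGlobalUniformization}, $\red:U\to\Igs^{U}$ is a $v$-surjection. Hence
\[
\Igs^{U}\simeq \varinjlim_{[n]\in\Delta^{\mathrm{op}}}\left(U\times_{\Igs^{U}}\cdots\times_{\Igs^{U}}U\right)
\]
is the geometric realization of the Čech nerve of $\red$. This nerve is a simplicial object with an action of $G(\mA_{f}^{p})\times G(\QQ_{p})^{n+1}$ in degree $n$. The action of $G(\mA_{f})$ on $\Igs^{U}$ is induced, through $G(\mA_{f})\to G(\mA_{f}^{p})$, by the $G(\mA_{f}^{p})$-factor acting on each term of the nerve. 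To compare, I will twist this action by the diagonal action of $\overline{Z(\QQ)}\subset Z(\QQ_{p})$ on the $G(\QQ_{p})^{n+1}$-factors. I expect this twist to be harmless on the realization, for two reasons. The face maps are equivariant for the diagonal $\Delta_{\QQ_{p}}$ action. Moreover, $Z(\QQ_{p})$ is central, so in the Cartesian square of Definition \ref{dfnGeneralIgus}~(4) its action on $\Fl_{G,\mu}$ over $\Bun_{G,\mu}$ is realized by the automorphism $z$ of the trivial torsor $\mcal{G}_{0}$, which is an automorphism over $\Bun_{G,\mu}$.

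Lemma \ref{lemCenterFactorization} says that the combined action of $\overline{Z(\QQ)}$, via $1_{\mA_{f}^{p}}\times\Delta_{\QQ_{p}}$, is trivial on each term of the nerve. Since all terms are $0$-truncated (Remark \ref{rmkIgsToGlobalUniformization}), triviality of an action is a property, not extra data. The trivializations are therefore automatically compatible with the simplicial structure and with group multiplication, and the $\overline{Z(\QQ)}$-action on the nerve is trivial as a homotopy-coherent action. Combining this with the previous paragraph, the image of $\overline{Z(\QQ)}\subset G(\mA_{f})$ acting through $G(\mA_{f}^{p})$ on $\Igs^{U}$ agrees with the twisted action, which is trivial. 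The action therefore factors through $G(\mA_{f})/\overline{Z(\QQ)}$. Equivariance of $\red$ follows because on $U$ itself (degree $0$) $\overline{Z(\QQ)}$ acts trivially, by the archimedean uniformization used in the proof of the lemma.

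The main subtlety is making the descended action honestly coherent rather than merely pointwise trivial. I would handle this by noting that $\Igs^{U}$ need not be $0$-truncated, but $\overline{Z(\QQ)}$ is a profinite-ish topological group acting on a $0$-truncated simplicial diagram where the action is trivial. One can then replace $G(\mA_{f})$ by the quotient group acting on the whole Čech nerve, and take the colimit there.

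For functoriality, take a morphism $(G,X)\to(G',X')$. By \cite[Theorem 10.13]{Kim2025uniquenessfunctorialityigusastacks} one gets $\Igs^{U}\to\Igs^{U'}$ compatible with the reduction maps, and it is induced by the map of Čech nerves of $U\to U'$. Since $Z(\QQ)\to Z'(\QQ)$ compatibly (the center maps into the center for morphisms of Shimura data as in loc. cit.), the trivializations above match. Equivariance for $G(\mA_{f})/\overline{Z(\QQ)}$ then follows by passing to colimits.
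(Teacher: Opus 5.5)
Your proposal is correct and is essentially the paper's argument: realize $\Igs^{U}$ as the colimit of the \v{C}ech nerve of $\red$, factor the diagonal $G(\mA_f)$-action on each ($0$-truncated) term through $G(\mA_f)/\overline{Z(\QQ)}$ using Lemma \ref{lemCenterFactorization}, and pass to the colimit. Functoriality then follows from \cite[Theorem 10.13]{Kim2025uniquenessfunctorialityigusastacks} together with functoriality of the nerve.

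Two small remarks. First, the twisting detour is unnecessary, and centrality plays no role in that step. The diagonal $G(\mA_f)$-action on the nerve already realizes to the given action, because $\red$ is $G(\mA_f)$-equivariant with $G(\QQ_p)$ acting trivially on $\Igs^{U}$. Second, the claim that the center maps into the center is not automatic for a morphism of Shimura data; you should treat it as an implicit hypothesis of the functoriality statement. It holds in the application, Proposition \ref{propRelateTwoIgusa}, where $G_1^{\mathrm{ad}}\cong G_2^{\mathrm{ad}}$.
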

\begin{proof}
By Remark \ref{rmkIgsToGlobalUniformization},
\(\Igs^{U}\)
is the simplicial colimit of \(U^{i/(\Igs^{U})}\). The simplicial system of \(G(\mA_{f})\)-equivariant, and on each term, there is a natural factorization of the \(G(\mA_{f})\)-action into a \(G(\mA_{f})/\overline{Z(\QQ)}\)-action by Lemma \ref{lemCenterFactorization}. 
By taking colimit, we obtain the desired \(G(\mA_{f})/\overline{Z(\QQ)}\) on \(\Igs^{U}\). The functoriality follows from \cite[Theorem 10.13]{Kim2025uniquenessfunctorialityigusastacks} and the functoriality of the simplicial system \(U^{\bullet/\Igs^{U}}\).
\end{proof}

\begin{notation}
Let \(G\) be a reductive group over \(\QQ\). We define \(\mscr{A}_{G}\) as \[
\mscr{A}_{G}:=G(\mA_{f})/\overline{Z(\QQ)}*^{G(\QQ)_{+}} G^{\mrm{ad}}(\QQ)^{+}:=(G(\mA_{f})/\overline{Z(\QQ)}\rtimes G^{\mrm{ad}}(\QQ)^{+})/(G(\QQ)_{+}/\ZZ(\QQ)), 
\] where \(G^{\mrm{ad}}(\QQ)^{+}:=G^{\mrm{ad}}(\QQ)\cap G^{\mrm{ad}}(\RR)^{+}\), \(G(\QQ)_{+}\) denotes the preimage of \(G^{\mrm{ad}}(\QQ)^{+}\) along \(G(\QQ)\to G^{\mrm{ad}}(\QQ)\), and
 \(G^{\mrm{ad}}(\QQ)\) acts on \(G(\mA_{f})\) by conjugation, and \[G(\QQ)_{+}/\ZZ(\QQ)\hookrightarrow G(\mA_{f})/\overline{Z(\QQ)}\rtimes G^{\mrm{ad}}(\QQ)^{+}\]
is given by \(g\mapsto (g,g^{-1})\). Note that if \(H^{1}(\QQ,\ZZ)\cong 0\), then \(\mscr{A}_{G}\cong G(\mA_{f})/\overline{Z(\QQ)}\). 
\end{notation}
\begin{prop}\label{propRelateTwoIgusa}
Let \((G_{1},X_{1})\hookrightarrow (G_{2},X_{2})\) be an embedding of Shimura data that induces \(G_{1}^{\mrm{ad}}\cong G_{2}^{\mrm{ad}}\). Let \(Z_{i}\) denote the center of \(G_{i}\). We assume further that \(G_{i}(\QQ)\to G_{i}^{\mrm{ad}}(\QQ)\) is surjective for \(i=1,2\), so \(\mscr{A}_{G_{i}}\cong G_{i}(\mA_{f})/\overline{Z_{i}(\QQ)}\).

 For \(i=1,2\), let \(U_{i}\subset \mS(G_{i})\) be as in Definition \ref{dfnGeneralIgus}, and assume that it has an Igusa stack \(\Igs^{U_{i}}\). 
The embedding of Shimura data induces \(\mS(G_{1})\hookrightarrow\mS(G_{2})\), and we assume that \(U_{1}=U_{2}\cap \mS(G_{1})\).

Then we have a natural isomorphism \[
\Igs^{U_{2}}\times_{\Bun_{G_{2}},\mu_{2}}\Bun_{G_{1},\mu_{1}}\cong \left[(\Igs^{U_{1}}\times \mscr{A}_{G_{2}})/\mscr{A}_{G_{1}}\right],
\] such that we have a Cartesian diagram \[
\begin{tikzcd}
U_{1}\arrow[r,hook]\arrow[d,"\red_{1}"]
& U_{2}\arrow[d,"\red_{2}"]\\
\Igs^{U_{1}}
\arrow[r,hook] & \Igs^{U_{2}}\times_{\Bun_{G_{2}},\mu_{2}}\Bun_{G_{1},\mu_{1}}.
\end{tikzcd}
\]
\end{prop}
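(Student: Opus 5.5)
The plan is to compare both sides as quotients of one and the same $v$-cover. By Remark \ref{rmkIgsToGlobalUniformization}, each Igusa stack is determined by the groupoid $U\times_{\Igs^{U}}U$. So it suffices to produce a common $v$-cover of the two stacks in the statement and show that the two groupoids agree. Throughout I use the functoriality of Igusa stacks for morphisms of Shimura data \cite[Theorem 10.13]{Kim2025uniquenessfunctorialityigusastacks}. Together with Corollary \ref{corFunctorCenter}, this gives a morphism $\Igs^{U_{1}}\to\Igs^{U_{2}}$ which is equivariant along $\mscr{A}_{G_{1}}\to\mscr{A}_{G_{2}}$ and compatible with $\Bun_{G_{1},\mu_{1}}\to\Bun_{G_{2},\mu_{2}}$.

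\emph{Step 1: construct the comparison map.} Extending $\mscr{A}_{G_{1}}$-equivariantly gives
\[
\Phi:\ \left[(\Igs^{U_{1}}\times\mscr{A}_{G_{2}})/\mscr{A}_{G_{1}}\right]\to\Igs^{U_{2}}\times_{\Bun_{G_{2},\mu_{2}}}\Bun_{G_{1},\mu_{1}},\qquad (x,g)\mapsto (g\cdot x,\ \bar{\pi}_{\HT,1}(x)).
\]
For this formula to make sense, $\bar{\pi}_{\HT,1}$ must be invariant under the $G_{1}(\mA_{f}^{p})$-action, hence under $\mscr{A}_{G_{1}}$, and the prime-to-$p$ part of $\mscr{A}_{G_{2}}$ must not affect $\Bun$. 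Both hold by Definition \ref{dfnGeneralIgus}(3). The factorisation through $\overline{Z(\QQ)}$ is exactly Corollary \ref{corFunctorCenter}. On the source I define the reduction map as $\red_{1}\times\id$ on $(U_{1}\times\mscr{A}_{G_{2}})/\mscr{A}_{G_{1}}$.

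\emph{Step 2: identify the pullback of $U_{2}$.} Because $G_{1}^{\mrm{ad}}\cong G_{2}^{\mrm{ad}}$, we have $\Fl_{G_{1},\mu_{1}}\cong\Fl_{G_{2},\mu_{2}}$. Computing with modifications of torsors, one checks that
\[
[\Fl_{G_{2},\mu_{2}}/G_{2}(\QQ_{p})]\times_{\Bun_{G_{2},\mu_{2}}}\Bun_{G_{1},\mu_{1}}\cong[\Fl_{G_{1},\mu_{1}}/G_{1}(\QQ_{p})].
\]
Pulling back the Cartesian square of Definition \ref{dfnGeneralIgus}(4) for $U_{2}$ then realises the target of $\Phi$ as the Igusa-type quotient of some $G_{1}$-stable open piece of $U_{2}$. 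Deligne's description of connected components, using the surjectivity $G_{i}(\QQ)\to G_{i}^{\mrm{ad}}(\QQ)$, gives $\mS(G_{2})\cong(\mS(G_{1})\times\mscr{A}_{G_{2}})/\mscr{A}_{G_{1}}$. Combined with $U_{1}=U_{2}\cap\mS(G_{1})$, this identifies that piece with $(U_{1}\times\mscr{A}_{G_{2}})/\mscr{A}_{G_{1}}$. This yields the Cartesian square in the statement, with $U_{1}\hookrightarrow U_{2}$ on top.

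\emph{Step 3: conclude.} Both source and target of $\Phi$ now receive compatible $v$-surjections from $(U_{1}\times\mscr{A}_{G_{2}})/\mscr{A}_{G_{1}}$. By Step 2, both Čech nerves are computed as $U\times_{\Bun_{G_{1},\mu_{1}}}\Fl_{G_{1},\mu_{1}}$. On the source this follows from the Cartesian diagram for $\Igs^{U_{1}}$; on the target it follows from the base-changed diagram for $\Igs^{U_{2}}$. The groupoids therefore coincide, and $\Phi$ is an isomorphism by $v$-descent.

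Alternatively, one checks that the source satisfies all the axioms of Definition \ref{dfnGeneralIgus} relative to $\Bun_{G_{1}}$, including that Frobenius equals the identity in $\pi_{0}\End$, which is inherited from $\Igs^{U_{1}}$. Uniqueness \cite[Theorem 10.13]{Kim2025uniquenessfunctorialityigusastacks} then applies. Since $\Igs^{U_{1}}$ is one of the $\mscr{A}_{G_{1}}$-orbits, the bottom arrow is an open and closed immersion.

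The main obstacle is Step 2: checking that the quotient by $\overline{Z_{i}(\QQ)}$ and the twist by $G^{\mrm{ad}}(\QQ)^{+}$ in $\mscr{A}_{G_{i}}$ match exactly the fibre of $\Bun_{G_{1}}\to\Bun_{G_{2}}$. The difficulty is that centres differ, so $G_{2}(\QQ_{p})/G_{1}(\QQ_{p})$ must be accounted for through $\mscr{A}_{G_{2}}/\mscr{A}_{G_{1}}$. I would first verify this on geometric points over a single stratum $\Bun^{b}$, using $U^{b}\cong(\Igs^{b}\times\mcal{M}_{G,b,\mu})/\ti{G}_{b}$ as in the proof of Lemma \ref{lemCenterFactorization}, and then glue using $0$-truncatedness.
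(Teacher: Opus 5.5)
Your overall strategy is the paper's. You use the functoriality map $f\colon\Igs^{U_1}\to\Igs^{U_2}$ and Deligne's decomposition $U_2\cong[(U_1\times\mscr{A}_{G_2})/\mscr{A}_{G_1}]$. You then descend along the $v$-surjection $U_2\to Y:=\Igs^{U_2}\times_{\Bun_{G_2}}\Bun_{G_1}$, $u\mapsto(\red_2(u),\BL_{1}(\pi_{\HT}(u)))$. This is a $v$-surjection because $U_2\cong Y\times_{\Bun_{G_1}}\Fl_{G_1,\mu_1}$. Your deduction of the Cartesian square from $U_i\cong\Igs^{U_i}\times_{\Bun_{G_i}}\Fl_{G_i,\mu_i}$, $\Fl_{G_1,\mu_1}\cong\Fl_{G_2,\mu_2}$ and $f\circ\red_1=\red_2|_{U_1}$ is also fine.

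The gap is the explicit map $\Phi$ of Step 1. Since $G_2(\QQ_p)$ acts trivially on $\Igs^{U_2}$, your $\Phi(x,g)=(g\cdot f(x),\bar\pi_{\HT,1}(x))$ depends only on the prime-to-$p$ component of $g$. But $\mscr{A}_{G_1}\backslash\mscr{A}_{G_2}$ generally has a nontrivial $p$-adic part. Already in the application $G\subset G'$, the group $E_p^\times/F_p^\times$ injects into $\mA_{E,f}^\times/(E^\times\mA_{F,f}^\times)$. Take $g_p\in G_2(\QQ_p)$ with nontrivial class. Then $(x,1)$ and $(x,g_p)$ lie over different points of $\mscr{A}_{G_1}\backslash\mscr{A}_{G_2}$, yet $\Phi(x,1)=\Phi(x,g_p)$. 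So your $\Phi$ is not injective, hence not an isomorphism.

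Equivalently, $\Phi\circ(\red_1\times\id)$ is not the natural cover of $Y$. For $u=u_1g_p$, the $G_1$-reduction of $\bar\pi_{\HT,2}(\red_2(u))$ recorded by $\BL_1(\pi_{\HT}(u))$ is the translate by $g_p$ of the one recorded for $u_1$. Even for central $z\in Z_2(\QQ_p)$ the point changes: the $G_1$-torsor is the same, but its identification with the $G_2$-torsor is composed with the automorphism $z$. So the compatible $v$-surjections claimed in Step 3 do not exist for your $\Phi$, and the \v{C}ech-nerve comparison does not go through.

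The fix is what the paper's requirement that $\Phi$ be compatible with Deligne's isomorphism encodes. The $\Bun_{G_1}$-component of $\Phi(x,g)$ must be $\bar\pi_{\HT,1}(x)$ with its $G_1$-reduction translated by $g_p$; equivalently, $\Phi$ is the descent of the natural map $U_2\to Y$. The resulting $E_p^\times$-equivariance is used in the proof of Theorem \ref{thmCartesianDiagramQaternion}. With this $\Phi$, the Cartesian square says that the pullback of $\Phi$ along $U_2\to Y$ is Deligne's isomorphism, and $v$-descent finishes the proof.

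There are three smaller issues. First, the quotient-level identity $[\Fl_{G_2,\mu_2}/G_2(\QQ_p)]\times_{\Bun_{G_2}}\Bun_{G_1}\cong[\Fl_{G_1,\mu_1}/G_1(\QQ_p)]$ needs every $G_1$-reduction of the trivial $G_2$-torsor to be trivial, i.e.\ $G_2(\QQ_p)\to(G_2/G_1)(\QQ_p)$ surjective. This holds when $H^1(\QQ_p,G_1)=0$ but not in general, and you do not need it: the factorisation of $\BL_2$ through $\Bun_{G_1}$ at the level of $\Fl_{G_2,\mu_2}=\Fl_{G_1,\mu_1}$ suffices. Second, the bottom arrow is an immersion but not open and closed in general, because $\mscr{A}_{G_1}\backslash\mscr{A}_{G_2}$ need not be discrete. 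Third, your alternative via uniqueness does not apply as stated. $U_2$ is not an open of $\mS(G_1)$, so Definition \ref{dfnGeneralIgus} relative to $\Bun_{G_1}$ does not make sense for it.
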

\begin{proof}
The Cartesian diagram follows from \cite[Theorem 10.13]{Kim2025uniquenessfunctorialityigusastacks}, by noting that \(\BL_{2}:\Fl_{G_{2},\mu_{2}}\to \Bun_{G_{2},\mu_{2}}\)
factors naturally through \(\Bun_{G_{1},\mu_{1}}\). By \cite[\S 2]{Deligne1979varietesInterpretationModulaire}, \begin{align}\label{alignRelateTwoSV}
[(U_{1}\times \mscr{A}_{G_{2}})/\mscr{A}_{G_{1}}]\cong U_{2} .
\end{align} 
Moreover, by Corollary \ref{corFunctorCenter},
there is a natural map \[
\left[(\Igs^{U_{1}}\times \mscr{A}_{G_{2}})/\mscr{A}_{G_{1}}\right]\to \Igs^{U_{2}}\times_{\Bun_{G_{2}},\mu_{2}}\Bun_{G_{1},\mu_{1}},\] 
that is compatible with (\ref{alignRelateTwoSV}). Thus it 
is an isomorphism, since \(\red_{2}\) is a \(v\)-surjection. 
\end{proof}

\subsection{Quaternionic \& Unitary Shimura varieties}\label{subsectionQuaternUniSV}

\subsubsection{Quaternionic Shimura varieties}\label{subsubQuaternion}
We fix \(F\) to be
a totally real number field of dimension \(d\).
We  denote by \(\Sigma_{l}\) the set of places of \(F\) over \(l\), where \(l\) is a (finite or infinite) place of \(\QQ\). We fix an abstract isomorphism \(\iota:\bar{\QQ}_{p}\cong \CC\).

For any subset $T\subset \Sigma_{\infty}$, define 
\(r_{\p,T}:=\#(\Sigma_{\infty/\p}\cap T)\),
and \(T_{p}:=\{\p\in\Sigma_{p}:2\nmid r_{\p,T}\}\).
Define \(B_{T}\)
as the quaternion algebra over \(F\) that ramifies precisely at $T\cup T_{p}$. Note that by construction, this is an even set, so \(B_{T}\)
is well-defined. We put \(G_{T}:=\mrm{Res}_{F/\QQ}B^{\times}_{T}\).
For example,
we have $B_{\emptyset}\cong \mrm{Mat}_{2\times 2,F}$ and $G_{\emptyset}\cong G=\mrm{Res}_{F/\QQ}\GL_{2,F}$.
We define \[h_{T}:\mbb{S}\to G_{T,\RR}\cong \prod_{\tau\in T^{c}}\GL_{2,\RR}\times \prod_{\tau\in T}\mbb{H}^{\times},\;x+y\mi\mapsto \left((\begin{pmatrix}
	x&y\\-y&x
\end{pmatrix})_{\tau\in T^{c}},(1)_{\tau\in T}\right),\]
where \(T^{c}:=\Sigma_{\infty}\backslash T\).
Then \((G_{T},h_{T})\)
defines a Shimura datum when \(T\ne \Sigma_{\infty}\), and a weak Shimura datum when \(T=\Sigma_{\infty}\). 

Note that $G_{T,\mA^{p}_{f}}$
is independent of $T$, so we fix such isomorphisms \(G_{T,\mA^{p}_{f}}\cong G_{\mA^{p}_{f}}\). We also fix a neat open compact 
subgroup $K^{p}\subset G(\mA^{\infty,p})$, and 
for any open compact subgroup $K_{p}\subset G_{T}(\QQ_{p})$,
we have the \emph{quaternionic Shimura variety}
\(\mrm{Sh}_{\Kpp}(G_{T})\), which is a quasi-projective smooth variety of dimension \(d-\#T\)
 defined over a reflex field \(E_{0,T}\subset \CC\).

We base change \(\mrm{Sh}_{\Kpp}(G_{T})\) from \(E_{0,T}\)
to \(\CC_{p}\) using \(\iota\), and denote by \(\mS_{\Kpp}(G_{T})\) the associated adic space over \(\Spa(\CC_{p},\mO_{\CC_{p}})\). 

Note that \(\mrm{Sh}_{\Kpp}(G_{\emptyset})\)
is precisely the Hilbert modular variety. Also note that \(\mS_{\Kpp}(G_{T})\)
is proper when \(T\ne \emptyset\). 

As in Notation \ref{notationGenealShimura},
\((G_{T},h_{T})\) determines a Hodge cocharacter 
\(\mu_{T}\), which in turn determines a flag variety \(\FLT{T}\).
Note that there is a canonical decomposition \(G_{T,\CC_{p}}\cong \prod_{\tau\in\Sigma_{\infty}}
G_{T,\tau}\),
and after fixing such a non-canonical isomorphism \(G_{T,\tau}\cong\GL_{2,\tau}\),
we can identify the parabolic \(P_{\mu_{T}}\) defined by \(\mu_{T}\) with 
\(\prod_{\tau\in T^{c}}B_{\tau}\times \prod_{\tau\in T}\GL_{2,\tau}\),
and then \[\FLT{T}\cong (P_{\mu_{T}}\backslash G_{T,\CC_{p}})^{\an}\cong \prod_{\tau\in T^{c}}(B_{\tau}\backslash G_{T,\tau})^{\an}.\]

\begin{notation}
Denote for \(\tau\in T^{c}\), \(\Fl_{G_{T},\mu_{T},\tau}:=\Fl_{G_{T},\mu_{\{\tau\}^{c}}}:=(B_{\tau}\backslash G_{T,\tau})^{\an}\), then \(\FLT{T}\cong \prod_{\tau\in T^{c}}\Fl_{G_{T},\mu_{T},\tau}\), and (non-canonically), \(\Fl_{G_{T},\mu_{T},\tau}\cong \mP^{1}_{\CC_{p}}\).

For any \(I\subset T^{c}\),
\[\Fl_{G_{T},\mu_{T},I}:=\FLTT{T}{I^{c}}:=\prod_{\tau\in I}\Fl_{G_{T},\mu_{T},\tau}.\]
\end{notation}

The Shimura varieties \(\SGTK{T}{K}\) are of abelian type. By \cite{Scholze15}, followed by \cite{CS17}, \cite{Shen2017perfectoid}, \cite{HansenJohansson2020perfectoid}, we know that there exists a unique perfectoid Shimura variety 
\(\mS_{K^{p}}(G_{T})\) over \(\Spa(\CC_{p},\mO_{\CC_{p}})\)
such that \[\mS_{K^{p}}(G_{T})\sim \varprojlim_{K_{p}}\mS_{\Kpp}(G_{T}),\]
equipped with a Hodge-Tate period map
\[\pi_{\HT,T}:\mS_{K^{p}}(G_{T})\to \FLT{T}.\]

\subsubsection{Unitary Shimura varieties}
The quaternionic Shimura varieties \(\mS_{\Kpp}(G_{T})\) are of abelian type. We can construct the following related class of Shimura varieties. We learn the following construction from \cite{TianXiao2016goren}.

Let $E/F$ be a CM extension, that is \emph{inert} over all $\p\in\Sigma_{p}$. Let \(\Sigma_{E,\infty}:=\{\tilde{\tau}:E\hookrightarrow\CC\}\),
then there is a 2-to-1 morphism \(r_{E/F}:\Sigma_{\infty,E}\to \Sigma_{\infty}\)
given by restriction. We fix two liftings $i,i^{c}:\Sigma_{\infty}\to \Sigma_{E,\infty}$ such that for 
any \(\tau\in\Sigma_{\infty}\),
\(r_{E/F}^{-1}(\tau)=\{i(\tau),i^{c}(\tau)\}\).
Denote \(\tilde{T}:=i(T)\subset \Sigma_{E,\infty}\).

Given such \(\tilde{T}\), 
we define \(T_{E,\tilde{T}}:=\mrm{Res}_{E/\QQ}\GG_{m}\),
and we have an isomorphism \(T_{E,\tilde{T},\RR}\cong \prod_{\tilde{\tau}\in i(\Sigma_{\infty})}\mbb{S}\), where the last isomorphism is induced by \(\tilde{\tau}\in i(\Sigma_{\infty})\).
Then we define \[h_{E,\tilde{T}}:\mbb{S}\to T_{E,\tilde{T},\mbb{R}},z\mapsto (z_{E,\tilde{\tau}})_{\tilde{\tau}\in i(\Sigma_{\infty})},\]
where \(z_{E,\tilde{\tau}}=\tilde{\tau}(z)\)
if \(\tilde{\tau}\in \tilde{T}\)
and \(z_{E,\tilde{\tau}}=1\)
if \(\tilde{\tau}\notin \tilde{T}\).
Then 
\((T_{E,\tilde{T}},h_{E,\tilde{T}})\)
defines a Shimura set 
\(\mrm{Sh}_{K_{E}}(T_{E,\tilde{T}})\) for neat \(K_{E}\subset T_{E,\tilde{T}}(\mA)\). 

Now we consider \(G'_{\ti{T}}:=(G_{T}\times T_{E,\tilde{T}})/T_{F}\), where $T_{F}$ acts on \(G_{T}\times T_{E,\tilde{T}}\) via \(t\cdot (g,z)=(gt^{-1},tz)\),
where \(T_{F}:=\mrm{Res}_{F/\QQ}\GG_{m}\),
and consider \[h_{\ti{T}}:\mbb{S}\xrightarrow{(h_{T},h_{E,\tilde{T}})}G_{T,\RR}\times T_{E,\tilde{T}}\to G'_{\ti{T}},  \]
then \((G'_{\ti{T}},h_{\ti{T}})\)
also gives rise to Shimura varieties \(\mrm{Sh}_{U}(G'_{\ti{T}})\) of dimension \(\#T\) for any neat \(U\subset G'_{\ti{T}}(\mA_{f})\),
which we will refer to loosely as ``\emph{unitary Shimura varieties}".


We fix a totally negative element \(\mfk{d}\in F\), such that \(E=F(\mfk{d})\).
Now by the choice of \(E\),
we know \(B_{T}\otimes_{F}E\)
splits over \(E\), we denote 
\(D_{T}:=B_{T}\otimes_{F}E\), and fix such an isomorphism \[i_{T}:D_{T}\cong M_{2}(E),\]
where \(M_{2}(-)\)
refers to the algebra of \(2\times 2\)-matrices.
\(D_{T}\) has a natural involution, which we denoted as \(l\mapsto \bar{l}\). Take some \(\delta_{T}\in (D_{T}^{\mrm{sym}})^{\times},\)
and define an involution \(l^{*_{T}}:=\delta_{T}^{-1}\bar{l}\delta_{T}\),
then \(l\mapsto l^{*_{T}}\)
is a new involution on \(D_{T}\). By \cite[Lemma 5.4]{TianXiao2016goren}, we can choose them properly so that we have an identification \((D_{T},*_{T})\cong (D_{\emptyset},*_{\emptyset})\). We will denote them simply as \((D,*)\).

Let \(V:=D_{T}\) be the free \(D_{T}\)-module of rank 1, and define \[\psi_{T,E}:V\times V\to E,\;\psi_{T,E}(v,w):=\Tr_{D_{T}/E}(\sqrt{\mfk{d}}v\delta_{T} w^{*})=\Tr_{D_{T}/E}(\sqrt{\mfk{d}}v\bar{w}\delta_{T}),\]
then \(\psi_{T,E}\)
is skew-Hermitian in the sense that \(\overline{\psi_{E}(v,w)}=-\psi_{E}(w,v)\)
and \(\psi_{E}(lv,w)=\psi_{E}(v,l^{*}w)\). Let us define \[\psi_{T,F}:=\mrm{Tr}_{E/F}\circ \psi_{T,E}:V\times V\to F;\] Then for any \(\QQ\)-algebra \(R\), 
\begin{align}\label{alignTheGrpIsSimilit}
	G'_{\ti{T}}(R)\cong \left\{
	(l\in (D_{T}\otimes_{\QQ} R)^{\times},c(l)\in (F\otimes_{\QQ}R)^{\times}):
	\psi_{T}(vl,wl)=c(l)\psi_{T}(v,w),\forall v,w\in V
\right\},
\end{align}
where the map is given by \(l\mapsto (l^{-1},\nu(l)^{-1})\).

Note that we allow \(c(l)\in (F\otimes R)^{\times}\),
so this still gives Shimura varieties of abelian type.

Fixing a neat open compact subgroup \(U^{p}\subset G'_{\ti{T}}(\mA^{p}_{f})\),
we know that the diamond
\(\varprojlim_{U_{p}}\mS_{\Upp}(G'_{\ti{T}})\)
is represented by a perfectoid space (\cite{HansenJohansson2020perfectoid})
which we denote as \(\mS_{U^{p}}(G'_{\ti{T}})\). We also write \(\mS(G'_{\ti{T}}):=\varprojlim_{U^{p}}\mS_{U^{p}}(G'_{\ti{T}})\)
We have a Hodge-Tate period map \(\pi_{\HT}:\mS_{U^{p}}(G'_{\ti{T}})\to \Fl_{  G_{\ti{T}}',   \mu_{\ti{T}}    }. \)

Note that the natural map \(G_{T}\times T_{E}\rightarrow G'_{\ti{T}}\) induces an isomorphism \(\FLT{T}\cong \Fl_{  G_{\ti{T}}',   \mu_{\ti{T}}    }\), and we have a natural commutative diagram \[\begin{tikzcd}
	\mS_{K^{p}}(G_{T})\times \mS_{T^{p}}(T_{E})\arrow[r]\arrow[d,"\pi_{\HT}"]
	&\mS_{U^{p}}(G'_{\ti{T}})\arrow[d,"\pi_{\HT}"]\\
	\FLT{T}\arrow[r,equal]
	& \Fl_{  G_{\ti{T}}',   \mu_{\ti{T}}    }
\end{tikzcd}\] 
if \(K^{p}\times T^{p}\subset U^{p}\). By standard argument as in \cite[\S 2]{Deligne1979varietesInterpretationModulaire}, we have the following:
\begin{lem}\label{lemUnitaryToQuaternion}
There is a \(G_{\ti{T}}'(\mA_{f})\)-equivariant isomorphism \[(\mS(G_{T})\times \mscr{A}_{G'_{\ti{T}}})/\mscr{A}_{G_{T}}\cong \mS(G'_{\ti{T}}).
\] Moreover, \(\mscr{A}_{G_{T}}\cong G_{T}(\mA_{f})/\overline{F^{\times}}\)
and \(\mscr{A}_{G'_{\ti{T}}}\cong G_{\ti{T}}'(\mA_{f})/\overline{E^{\times}}\).
\end{lem}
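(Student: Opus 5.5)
We reduce to Deligne's description of connected components, as in \cite[\S 2]{Deligne1979varietesInterpretationModulaire}, and work first at finite level before passing to the limit over \(U^{p}\) and \(U_{p}\).

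\textbf{Step 1: identify \(\mscr{A}_{G_{T}}\) and \(\mscr{A}_{G'_{\ti{T}}}\).} For \(G_{T}\) the center is \(T_{F}=\mrm{Res}_{F/\QQ}\GG_{m}\), so \(Z(\QQ)=F^{\times}\). For \(G'_{\ti{T}}=(G_{T}\times T_{E})/T_{F}\) the center is \((T_{F}\times T_{E})/T_{F}\cong T_{E}\), so \(Z(\QQ)=E^{\times}\); here Hilbert 90 for the induced torus \(T_{F}\) makes the quotient map surjective on \(\QQ\)-points. By the remark in the Notation, it suffices to show that \(G(\QQ)_{+}\to G^{\mrm{ad}}(\QQ)^{+}\) is surjective for both groups. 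For \(G_{T}\) we have \(G_{T}^{\mrm{ad}}=\mrm{Res}_{F/\QQ}(B_{T}^{\times}/F^{\times})\), and the obstruction to surjectivity is \(H^{1}(\QQ,T_{F})=H^{1}(F,\GG_{m})=0\). The required positivity then follows from the norm theorem for quaternion algebras: a lift \(b\in B_{T}^{\times}\) of an element of \(G^{\mrm{ad}}(\QQ)^{+}\) can be rescaled by \(F^{\times}\) so that its reduced norm has the right signs at the split infinite places. For \(G'_{\ti{T}}\) the two adjoint groups coincide, and the same vanishing \(H^{1}(\QQ,T_{E})=0\) applies. This gives \(\mscr{A}_{G_{T}}\cong G_{T}(\mA_{f})/\overline{F^{\times}}\) and \(\mscr{A}_{G'_{\ti{T}}}\cong G'_{\ti{T}}(\mA_{f})/\overline{E^{\times}}\).

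\textbf{Step 2: compare connected components.} The morphism \(G_{T}\to G'_{\ti{T}}\) (the composite \(G_{T}\to G_{T}\times T_{E}\to G'_{\ti{T}}\)) is a central extension inducing an isomorphism on adjoint and derived groups, and it carries \(h_{T}\) to \(h_{\ti{T}}\) up to the central torus factor. It therefore induces an isomorphism on the connected hermitian symmetric domains \(X^{+}\). By Deligne, over \(\CC\) each \(\mS(G)\) is \(\mscr{A}_{G}\)-equivariantly \(\mscr{A}_{G}\times^{\mscr{A}^{\circ}_{G}}\mS^{+}(G)\), where \(\mS^{+}(G)=\varprojlim \Gamma\backslash X^{+}\) and \(\mscr{A}_{G}^{\circ}\) is the completion of \(G(\QQ)_{+}/Z(\QQ)\) with respect to the congruence topology on \(G^{\mrm{ad}}(\QQ)^{+}\) induced from \(G^{\mrm{der}}\). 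Both \(\mscr{A}^{\circ}\) and \(\mS^{+}\) depend only on \((G^{\mrm{der}},G^{\mrm{ad}},X^{+})\), which agree for \(G_{T}\) and \(G'_{\ti{T}}\). We then obtain the desired map
\[
(\mS(G_{T})\times \mscr{A}_{G'_{\ti{T}}})/\mscr{A}_{G_{T}}\to \mS(G'_{\ti{T}}),
\]
and it is a \(G'_{\ti{T}}(\mA_{f})\)-equivariant isomorphism of pro-varieties over \(\CC\).

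\textbf{Step 3: descend to the reflex field and pass to diamonds.} The reflex fields differ: \(E_{0,T}\) versus the reflex field of \(G'_{\ti{T}}\), which contains it. So we must show that the isomorphism is defined over the common field. Using Deligne's reciprocity law on \(\pi_{0}\), the Galois action on connected components factors through the reciprocity maps attached to \(\nu\circ\mu\). These maps are compatible under \(G_{T}\to G'_{\ti{T}}\), and the twist by the \(T_{E}\)-part is absorbed by the \(\mscr{A}_{G'_{\ti{T}}}\)-factor, since the special points of \(T_{E}\) give CM points whose Galois action is computed by the Shimura--Taniyama reciprocity. Finally we base change via \(\iota\) to \(\CC_{p}\), analytify, and take the inverse limit. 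Forming \(\varprojlim_{U}\) commutes with the quotient by the profinite-type group action, and the perfectoid spaces of \cite{HansenJohansson2020perfectoid} represent the limit diamonds, so the isomorphism of inverse systems yields the isomorphism of perfectoid spaces.

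The \textbf{main obstacle} is the Galois descent in Step 3, namely verifying that the gluing along \(\mscr{A}_{G_{T}}\) is rational over the reflex field. The plan is to check it pointwise on \(\pi_{0}\) via the explicit reciprocity morphisms for \(T_{F}\) and \(T_{E}\), and to deduce rationality of the whole isomorphism from its equivariance together with the canonicity of the models.
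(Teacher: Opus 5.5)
Your Steps~1--2 are exactly the ``standard argument as in \cite[\S 2]{Deligne1979varietesInterpretationModulaire}'' that the paper invokes. Transporting the resulting complex-algebraic isomorphism along \(\iota\) and passing to the limit then finishes the proof, because \(\mS(-)\) is defined over \(\CC_{p}\); the reflex-field descent you flag in Step~3 as the main obstacle is not part of this statement (the paper handles descent separately, in Theorem~\ref{thmCartesianDiagramQaternion}).

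One small correction in Step~1: no rescaling is needed, and rescaling by \(F^{\times}\) could not change signs anyway, since it multiplies reduced norms by squares. Because \(G(\QQ)_{+}\) is by definition the full preimage of \(G^{\mrm{ad}}(\QQ)^{+}\), the surjectivity of \(G(\QQ)\to G^{\mrm{ad}}(\QQ)\) coming from \(H^{1}(\QQ,Z)=0\) already suffices.
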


\subsubsection{Moduli interpretation}
We fix \(\mO_{D_{T}}\subset D_{T}\)
an order that is stable under 
\(*\), and \(\Lambda\subset V\)
an \(\mO_{D_{T}}\)-lattice, such that \(\psi_{T}(\hat{\Lambda},\hat{\Lambda})\subset \hat{\mO}_{F}\),
where \(\hat{\Lambda}:=\Lambda\otimes_{\ZZ}\hat{\ZZ}\).

We learn the following construction from \cite[\S 3.3]{BoxerCalegariGeePilloni2021abelian}. In the Hilbert case,it is due to {\cite[\S 2.2.2]{DiamondSasakiKassaei2022mod}}. Note that we define the moduli problem over characteristic 0 field, so the level at \(p\) can be arbitrary. 
\begin{prop}[{\cite[\S 2.2.2-2.2.3]{DiamondSasakiKassaei2022mod}}]
\label{propModuliShGpp}
Let \(E_{0,\tilde{T}}\subset \CC\) be the reflex field of \((G'_{\ti{T}},h_{\ti{T}})\). Define Kottwitz signature for \(\ti{T}\): 
for \(\tilde{\tau}\in \Sigma_{E,\infty}\), \(s_{\tilde{\tau},\tilde{T}}=0\) if \(\tilde{\tau}\in\tilde{T}=i(T)\), \(s_{\tilde{\tau},\tilde{T}}=2\) if \(\tilde{\tau}\in i^{c}(T)\), and \(s_{\tilde{\tau},\tilde{T}}=1\) if \(\tilde{\tau}\notin r_{E/F}^{-1}(T)\).

We consider the following moduli problem. Let \(U=U^{p}U_{p}\subset G'_{\ti{T}}(\mA_{f})\) be an neat open compact subgroup such that \(U_{p}\subset G'_{\ti{T}}(\ZZ_{p})\), 
we define an fppf stack \(\widetilde{\mrm{Sh}}_{U}(G'_{\ti{T}})\), sending any \(E_{0,\ti{T}}\)-scheme \(S\) to the groupoid \(\widetilde{\mrm{Sh}}_{U}(G'_{\ti{T}})(S):=\{(A,\iota,\lambda,\tilde{\alpha}_{U})\}\), where 

(1) \(A\) is an abelian scheme over \(S\)
of dimension \(4d\);

(2) \(\iota:D_{T}\hookrightarrow \End_{S}(A)\otimes\ZZ_{(p)}\), which satisfies 
Kottwitz signature condition for \(\tilde{T}\): for any \(b\in\mO_{E}\),
the action of \(\iota(b)\)
on \(\Lie(A/S)\)
has characteristic polynomial \(\prod_{\tilde{\tau}\in\Sigma_{E,\infty}}(x-\tilde{\tau}(b))^{2s_{\tilde{\tau},\tilde{T}}}\);

(3) \(\lambda:A\to A^{\vee}\) is an \(\mO_{D_{T}}\)-quasi-polarization (in the sense of ``polarisation faible" in \cite[\S 4.4]{Deligne71}) whose induced Rosati involution is compatible with \(*\) on \(\mO_{D_{T}}\);

(4) \(\ti{\alpha}_{U}\)
is a \emph{\(U\)-level structure over \(F\)}, in the sense that \(\ti{\alpha}_{U}\) is a global section of the \'etale sheaf  \[ 
\left\{
	\left(\alpha^{0}:
\Lambda\otimes_{\ZZ}  \hat{\ZZ}_{(p)}\cong (\hat{T}A)_{(p)},\ti{c}\in \hat{\mO}_{F,(p)}^{\times}(1)\right):
\begin{tikzcd}
	\Lambda\otimes \hat{\ZZ}_{(p)}\times\Lambda\otimes \hat{\ZZ}_{(p)}\arrow[r,"\alpha^{0}"]
	\arrow[d,"\psi_{T,F}"]
	& \hat{V}A\times \hat{V}A \arrow[d,"\psi_{F}^{Weil}"] \\ 
	\hat{\mO}_{F,(p)}\arrow[r,"\ti{c}"] & \hat{\mO}_{F,(p)}(1)
\end{tikzcd}
\right\}/U.
\]

The morphisms \(\Hom\left((A_{1},\iota_{1},\lambda_{1},\tilde{\alpha}_{K,1}=(\alpha^{0}_{1},\ti{c}_{1})),(A_{2},\iota_{2},\lambda_{2},\tilde{\alpha}_{K,2}=(\alpha^{0}_{2},\ti{c}_{2}))\right)\) 
are the set of 
prime-to-\(p\) \(D_{T}\)-equivariant quasi-isogenies
\(f:A_{1}\to A_{2}\)
and locally constant \(r:S\to \ZZ_{(p)}^{\times,+}\),
such that \(f^{*}(\lambda_{2})\cong r\lambda_{1}\)
and \(f^{*}(\alpha_{2}^{0},\ti{c}_{2})=(\alpha_{1}^{0},r\ti{c}_{1})\).

Then
\(\widetilde{\mrm{Sh}}_{U}(G'_{\ti{T}})\)
is represented by a scheme, which we also denote by \(\widetilde{\mrm{Sh}}_{U}(G'_{\ti{T}})\).

There is an action of \(\mO_{F,(p)}^{\times,+}/\ZZ_{(p)}^{\times,+}\)
on \(\widetilde{\mrm{Sh}}_{U}(G'_{\ti{T}})\)
given by \(x\in \mO_{F,(p)}^{\times,+}\),
\(x\cdot (A,\iota,\lambda,(\alpha_{1}^{0},\ti{c}_{1})):=(A,\iota,x\lambda,(\alpha_{1}^{0},x\ti{c}_{1}))\),
which factors through \(\Delta_{U}:=\mO_{F,(p)}^{\times,+}/(\ZZ_{(p)}^{\times,+}\Nm_{E/F}(E^{\times}\cap U))\),
and we have an isomorphism \[\mrm{Sh}_{U}(G'_{\ti{T}})\cong [\widetilde{\mrm{Sh}}_{U}(G'_{\ti{T}})/\Delta_{U}].\]

Let \(\mS_{U}(G'_{\ti{T}})\) (resp. \(\ti{\mS}_{U}(G'_{\ti{T}})_{E_{0,\ti{T}}}\)) be the analytification of \(\mrm{Sh}_{U}(G'_{\ti{T}})\times_{E_{0,\tilde{T}}}\CC_{p}\) (resp. of \(\widetilde{\mrm{Sh}}_{U}(G'_{\ti{T}})\times_{E_{0,\ti{T}}}\CC_{p}\)).
Then for any \((R,R^{+})\)
complete Huber pair over \(\Spa(\CC_{p},\mO_{\CC_{p}})\),
we have by definition \[\ti{\mS}_{U}(G'_{\ti{T}})_{E_{0,\ti{T}}}(R,R^{+})\cong \widetilde{\mrm{Sh}}_{U}(G'_{\ti{T}})(R). \]
Moreover, 
there is a free action of \(\Delta_{U}\)
on \(\ti{\mS}_{U}(G'_{\ti{T}})_{E_{0,\ti{T}}}\)
such that \({\mS}_{U}(G'_{\ti{T}})\cong [\ti{\mS}_{U}(G'_{\ti{T}})_{E_{0,\ti{T}}}/\Delta_{U}]\).
\end{prop}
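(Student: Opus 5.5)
The plan is to follow the standard Kottwitz/Deligne route for PEL-type moduli, adapted as in \cite[\S 2.2.2-2.2.3]{DiamondSasakiKassaei2022mod} to similitude factors in \(F\) rather than \(\QQ\). First I would show that \(\widetilde{\mrm{Sh}}_{U}(G'_{\ti{T}})\) is representable. Objects have no nontrivial automorphisms once \(U\) is neat: an automorphism is a prime-to-\(p\) quasi-isogeny fixing a level structure of neat level, hence trivial by Serre's lemma. I would then compare with the Siegel moduli problem. Restriction of scalars of \(\psi_{T,F}\) via \(\Tr_{F/\QQ}\) gives a \(\QQ\)-valued symplectic form on \(V\). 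Forgetting \(\iota\) and pushing the level structure forward gives a morphism to a Siegel moduli space of dimension \(4d\) with suitable level. This morphism is finite and unramified, because endomorphism structures and polarization data are closed, finitely presented conditions by rigidity of abelian schemes. Representability of \(\widetilde{\mrm{Sh}}_{U}\) by a quasi-projective scheme over \(E_{0,\ti{T}}\) then follows. The Kottwitz condition (2) is an open and closed condition, and it is defined over the reflex field by the usual argument, since \(E_{0,\ti{T}}\) is the field of definition of the signature.

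Next I would treat the action of \(\Delta_{U}\). An element \(x\in\mO_{F,(p)}^{\times,+}\) rescales \(\lambda\) and \(\ti{c}\) simultaneously, so it preserves the moduli problem. Two facts show that the action factors through \(\Delta_{U}\). First, \(r\in \ZZ_{(p)}^{\times,+}\) acts trivially by the definition of morphisms. Second, for \(e\in E^{\times}\cap U\), the isomorphism \(\iota(e)\) carries \(x\lambda\) to \(x\Nm_{E/F}(e)\lambda\) while preserving the level structure modulo \(U\). To prove freeness, I would observe that a fixed point gives an automorphism \(f\) with \(f^{*}\lambda=x r\lambda\). Using neatness of \(U\) together with \(f\) commuting with \(D_{T}\), the automorphism \(f\) lies in the centralizer \(E^{\times}\) intersected with \(U\). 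Hence \(x\in \ZZ_{(p)}^{\times,+}\Nm_{E/F}(E^{\times}\cap U)\).

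The main step is the identification \(\mrm{Sh}_{U}(G'_{\ti{T}})\cong[\widetilde{\mrm{Sh}}_{U}/\Delta_{U}]\). Here I would follow Kottwitz's argument \cite{Kottwitz1992points}-style over \(\CC\), in the form of \cite[\S 4]{Deligne71}. Over \(\CC\), a point of \(\widetilde{\mrm{Sh}}_{U}\) determines a \(D_{T}\)-Hermitian module \(H_{1}(A,\QQ)\) with skew-Hermitian form. The Hasse principle and the classification of Hermitian forms over \(D_{T}\cong M_{2}(E)\), which reduces to Hermitian forms over \(E/F\), show that this module is isomorphic to \((V,\psi_{T,F})\) up to an \(F^{\times}\)-similitude. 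The signature condition fixes the archimedean invariants to match \(h_{\ti{T}}\). This identifies \(\widetilde{\mrm{Sh}}_{U}(\CC)\) with a double coset \(G''(\QQ)\backslash X\times G'_{\ti{T}}(\mA_{f})/U\). Here \(G''\) is the subgroup of similitudes with factor in \(F^{\times}\), modulo the equivalence by \(\ZZ_{(p)}^{\times,+}\). The failure of this quotient to equal the Shimura variety of \(G'_{\ti{T}}\) is measured exactly by totally positive units in \(F\) modulo \(\QQ^{\times}\)-scalings and norms. After the prime-to-\(p\) normalization, this gives the \(\Delta_{U}\)-quotient.

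I expect this bookkeeping of the similitude factor to be the main obstacle. In particular, I must make sure the answer involves \(\ZZ_{(p)}\) and not \(\ZZ\), and that the Hasse principle holds, which requires \(\ker^{1}\) to be trivial for this unitary group. Once the complex points agree compatibly with Hecke action, the descent to \(E_{0,\ti{T}}\) follows from Shimura reciprocity on CM points, or from canonical models. The analytic statements are then formal consequences. Analytification commutes with base change and with quotients by finite free actions. The identity \(\ti{\mS}_{U}(R,R^{+})=\widetilde{\mrm{Sh}}_{U}(R)\) holds because \(\widetilde{\mrm{Sh}}_{U}\) is a scheme locally of finite type. For proper or quasi-projective schemes, morphisms from \(\Spa(R,R^{+})\) into the analytification correspond to \(R\)-points when \(R\) is a complete Huber \(\CC_{p}\)-algebra, via the universal property of analytification applied to affine opens.
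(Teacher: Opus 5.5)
Your architecture (Siegel-type representability, then complex uniformization plus canonical-model descent) is a legitimate alternative to the route the paper takes by citation: decompose by similitude class as in Remark~\ref{rmkActuallyPEL}, apply Kottwitz's PEL theory to $G^{\prime,*}_{\ti T}=\nu^{-1}(\GG_m)$, and pass to $G'_{\ti T}$ by Deligne's formalism. But the step where the $F$-valued similitude enters, which you flagged as the main obstacle, is handled incorrectly.

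\textbf{The $F$-valued similitude.} Since $\ti c\in\hat{\mO}_{F,(p)}^{\times}$, the Weil pairing satisfies $e^{\lambda}(\alpha^{0}v,\alpha^{0}w)=\Tr_{F/\QQ}(\ti c\,\psi_{T,F}(v,w))$. This is a scalar multiple of $\Tr_{F/\QQ}\circ\psi_{T,F}$ only if $\ti c\in\hat{\ZZ}_{(p)}^{\times}$. So forgetting $\iota$ does not land in \emph{a} Siegel moduli space.
\begin{itemize}
\item The class of $\ti c$ modulo $\ZZ_{(p)}^{\times,+}\nu(U)$ is a locally constant invariant taking infinitely many values once $d>1$.
\item Correspondingly, $\Delta_U$ is an \emph{infinite} discrete group. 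It surjects onto $\mO_{F,(p)}^{\times,+}/\ZZ_{(p)}^{\times,+}\mO_F^{\times,+}$, which contains totally positive generators of powers of a prime $\mathfrak{l}\nmid p$ of $F$ above a rational prime that splits in $F$.
\item Hence $\widetilde{\mrm{Sh}}_U$ is not quasi-projective. It is an infinite disjoint union of quasi-projective pieces, one per class of $\ti c$. After translating the level structure by a suitable $g_0\in G'_{\ti T}(\mA_f)$, each piece is a Kottwitz PEL problem for $G^{\prime,*}_{\ti T}$, to which your Siegel argument applies.
\item Your analytic step must therefore use quotients by a free action of a discrete group with finite component stabilizers, not finite group quotients.
\end{itemize}
The same bookkeeping error appears in your uniformization. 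Every element of $G'_{\ti T}(\QQ)$ already has similitude in $F^{\times}$, so the group to quotient by is $\{\gamma\in G'_{\ti T}(\QQ):\nu(\gamma)\in\ZZ_{(p)}^{\times,+}\}$. The fibre of $\widetilde{\mrm{Sh}}_U(\CC)\to\mrm{Sh}_U(\CC)$ is the set of admissible polarization scalars $a\in F^{\times}$ (prescribed signs, units at $p$). This is a torsor under the $p$-units $\mO_{F,(p)}^{\times,+}$, not the global units, taken modulo $\ZZ_{(p)}^{\times,+}$ and $\nu$ of the stabilizer $E^{\times}\cap U$.

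\textbf{Freeness.} This argument rests on a false step. The centralizer of $\iota(D_T)$ in $\End^{0}(A)$ is in general larger than $E$: at CM points $A\sim B^{2}$, and $\End^{0}_{E}(B)$ can contain a CM field of degree $4d$. So commuting with $D_T$ does not put $f$ in $E^{\times}$. What is needed is the following.
\begin{itemize}
\item From $f^{\vee}\circ x\lambda\circ f=r\lambda$ one gets $f^{\dagger}f=r/x$ for the Rosati involution.
\item The level-structure condition gives $r/x=\nu(u)$ for the element $u\in U$ attached to $f$, so $r/x\in\mO_F^{\times,+}$.
\item Then $f^{2}(r/x)^{-1}$ is Rosati-unitary and preserves a lattice, hence has finite order, so a power of $f$ is central.
\item The eigenvalues of $f$ at a fixed embedding of $E$ then differ by roots of unity. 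Neatness of $U$ forces these to be $1$.
\item Since $f$ is semisimple, it is a scalar $e\in E^{\times}\cap U$, and $x=r\Nm_{E/F}(e)^{-1}$ lies in $\ZZ_{(p)}^{\times,+}\Nm_{E/F}(E^{\times}\cap U)$.
\end{itemize}
The same argument with $x=1$ replaces Serre's lemma in your rigidity claim. There $r\in\ZZ_{(p)}^{\times,+}\cap\mO_F^{\times}=\{1\}$, so $e$ is a root of unity by Kronecker, hence $e=1$ by neatness. Neatness is essential here: $\iota(e)$ with $e\in E^{\times}\cap U$ and $\Nm_{E/F}(e)=1$ is a genuine automorphism of the object.
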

\begin{rmk}\label{rmkActuallyPEL}
\(\widetilde{\mrm{Sh}}_{U}
(G'_{\ti{T}})\)
is the disjoint union of the Shimura varieties associated to \(G^{\prime,*}_{\ti{T}}:=\nu^{-1}(T_{\QQ})\) for \(T_{\QQ}\cong \GG_{m}\hookrightarrow T_{F}\), parametrized by 
\(\ti{c}\pmod{\ZZ_{(p)}^{\times}\nu(U)}\), which are of PEL type.
\end{rmk}

Now that \(\ti{\mS}_{U^{p}}(G'_{\ti{T}}):=\varprojlim_{U_{p}}\ti{\mS}_{\Upp}(G'_{\ti{T}})\) is also perfectoid, since it is pro\'etale over \(\mS_{U^{p}}(G'_{\ti{T}})\).
Moreover, as \(\mS_{\Upp}(G'_{\ti{T}})\cong [\ti{\mS}_{\Upp}(G'_{\ti{T}})/\Delta_{\Upp}]\),we know by passing to limit that \begin{align}\label{alignFromtiStoSQuotientInfi}
	\mS_{U^{p}}(G'_{\ti{T}})\cong [\ti{\mS}_{U^{p}}(G'_{\ti{T}})/\underline{\Delta_{U^{p}}}].
\end{align}
where \(\Delta_{U^{p}}:=\varprojlim_{\Upp}\Delta_{\Upp}
\) is a locally profinite group.

\subsection{Good reduction locus}\label{subsectionGoodRedLoc} 
We consider the following moduli \(Y_{U^{p}}\)
over \(\Spec(\mO_{\CC_{p}})\) given by \[{Y}_{U^{p}}(R^{+}):=\{(A,\iota,\lambda,\alpha_{U^{p}})\},\;\forall R^{+}/\mO_{\CC_{p}},\]
where \(A\) is an abelian scheme over \(R^{+}\) up to prime-to-\(p\) quasi-isogeny,
and \(\iota:\mO_{D_{T}}\hookrightarrow \End_{R^{+}}(A)\otimes\ZZ_{(p)}\),
and \(\lambda:A\to A^{\vee}\) is a prime-to-\(p\) \(\mO_{D_{T}}\)-quasi-polarization, and \(\alpha_{U^{p}}\) is a \(U^{p}\)-level structure 
\[\alpha_{U^{p}}\in \left\{
	(\alpha^{0}:\hat{\Lambda}^{p}\cong \hat{T}^{p}A,\ti{c}\in \mA^{p,\times}_{F,f}(1)):
	\begin{tikzcd}
		(\hat{\Lambda}^{p}\otimes\ZZ_{(p)})\times(\hat{\Lambda}^{p}\otimes\ZZ_{(p)})\arrow[r,"\alpha^{0}"]
		\arrow[d,"\psi_{T,F}"]
		& \hat{V}^{p}A\times \hat{V}^{p}A \arrow[d,"\psi_{F}^{Weil}"] \\ 
		\mA^{p,\times}_{F,f}\arrow[r,"\ti{c}"] & \mA^{p,\times}_{F,f}(1)
	\end{tikzcd}
\right\}/U^{p}.\]

The morphisms \(\Hom\left((A_{1},\iota_{1},\lambda_{1},\tilde{\alpha}_{U^{p},1}=(\alpha^{0}_{1},\ti{c}_{1})),(A_{2},\iota_{2},\lambda_{2},\tilde{\alpha}_{U^{p},2}=(\alpha^{0}_{2},\ti{c}_{2}))\right)\) 
are the set of \((f,r)\)
where \(f:A_{1}\to A_{2}\)
is a \(D_{T}\)-equivariant prime-to-\(p\) quasi-isogeny, and \(r:S\to \ZZ_{(p)}^{\times,+}\) locally constant,
such that \(f^{*}(\lambda_{2})\cong r\lambda_{1}\)
and \(f^{*}(\alpha_{2}^{0},\ti{c}_{2})=(\alpha_{1}^{0},r\ti{c}_{1})\).

Then when \(U^{p}\subset G'_{\ti{T}}(\mA_{f}^{p})\) is neat, \(Y_{U^{p}}\)
is represented by a scheme 
 over \(\mO_{\CC_{p}}\), which we also denote by \(Y_{U^{p}}\).

Then we can consider the analytification of the rigid generic fiber \(\mcal{Y}_{U^{p}}:=(Y_{U^{p},\CC_{p}})^{\an}\) of the scheme \(Y_{U^{p},\CC_{p}}\),
as well as the rigid generic fiber of the associated formal scheme \((Y_{U^{p}})^{\wedge}_{p}\), 
which we denote as \(\mcal{Y}_{U^{p}}^{\circ}\).
By the general theory of rigid generic fibers, we know that we have a natural map \(j:\mcal{Y}_{U^{p}}^{\circ}\hookrightarrow \mcal{Y}_{U^{p}}\), which is an \emph{open immersion}.
Explicitly, \[\mcal{Y}_{U^{p}}(R,R^{+})=Y_{U^{p}}(R)\supset \mcal{Y}^{\circ,pre}_{U^{p}}(R,R^{+}):=Y_{U^{p}}(R^{+}),\] and \(
\mcal{Y}^{\circ}_{U^{p}}
\) is the analytic sheafification of the presheaf \(
\mcal{Y}^{\circ,pre}_{U^{p}}
\).

We have a natural map \(\ti{\mS}_{U}(G'_{\ti{T}})\to \mcal{Y}_{U^{p}}\), given by forgetting the level structure at \(p\); more precisely, given \((A,\iota,\lambda,\ti{\alpha}_{U^{p}})\),
using the level structure at \(p\), we can associate another element \((A',\iota',\lambda',\ti{\alpha}'_{U^{p}})\) in the same prime-to-\(
p
\) quasi-isogeny class such that \(\ti{\alpha}_{U^{p}}=(\alpha^{0},\ti{c})\), where
\(\alpha^{0}\) induces an isomorphism \(\Lambda\otimes\ZZ_{p}\cong T_{p}A\), and  \(\ti{c}'_{p}\in(\mO_{F}\otimes\ZZ_{p})^{\times}\).
\begin{dfn}
	We define the \emph{good reduction locus} for \(\ti{\mS}_{U}(G'_{\ti{T}})\)
as the fiber product \(\ti{\mS}^{\circ}_{U}(G'_{\ti{T}}):=\ti{\mS}_{U}(G'_{\ti{T}})\times_{\mcal{Y}_{U^{p}}}\mcal{Y}^{\circ}_{U^{p}}\).
We see by definition that there is a natural open embedding \(j:\ti{\mS}_{U}^{\circ}(G'_{\ti{T}})\hookrightarrow \ti{\mS}_{U}(G'_{\ti{T}}).\)

It is clear that the locus is closed under the action of \(\mO_{F,(p)}^{\times,+}\),
and we define the \emph{good reduction locus}
for \(\mS_{U}(G'_{\ti{T}})\)
as the open subspace \[j:\mS_{U}^{\circ}(G'_{\ti{T}}):=[\ti{\mS}_{U}(G'_{\ti{T}})/\Delta_{U}]\hookrightarrow \mS_{U}(G'_{\ti{T}}).\]

We also define \[\mS_{U^{p}}^{\circ}(G'_{\ti{T}}):=\varprojlim_{U_{p}}\mS_{U^{p}U_{p}}^{\circ}(G'_{\ti{T}}),\;
\mS^{\circ}(G'_{\ti{T}}):=\varprojlim_{U^{p}}\mS_{U^{p}}^{\circ}(G'_{\ti{T}}).
\] Note that \(\mS^{\circ}(G'_{\ti{T}})\)
is an open subspace of \(\mS(G'_{\ti{T}})\)
that is stable under the action of \(G'_{\ti{T}}(\mA_{f})\).
\end{dfn}

Explicitly, \(\ti{\mS}_{U}^{\circ}(G'_{\ti{T}})\)
is the (analytic) sheafification of the following presheaf \(\ti{\mS}_{U}^{\circ,pre}(G'_{\ti{T}})\): for any Huber pair 
\((R,R^{+})\)
over \((\CC_{p},\mO_{\CC_{p}})\), define
\[\ti{\mS}_{U}^{\circ,pre}(G'_{\ti{T}}):=\{(A,\iota,\lambda,\alpha_{K'})\},\]
where \(A\) is an abelian scheme over \(R^{+}\),
and \(\iota:\mO_{D_{T}}\hookrightarrow \End_{R^{+}}(A)\) satisfies Kottwitz signature condition for \(\tilde{T}\),
and \(\lambda:A\to A^{\vee}\) is an \(\mO_{D_{T}}\)-polarization, and \(\alpha_{U^{p}}\) is a \(U^{p}\)-level structure.


\begin{prop}\label{propGoodReducForNonOrd}
	The natural open embedding \(j:\mS_{U}^{\circ}(G'_{\ti{T}})\hookrightarrow \mS_{U}(G'_{\ti{T}})\)
	is an isomorphism when \(T\ne \emptyset\).
	If \(T=\emptyset\),
	then \(\mS^{\circ}_{U^{p}}(G'_{\emptyset})\)
	contains all the non-ordinary strata, in the sense that \[\mS^{\circ}_{U^{p}}(G'_{\emptyset})\supset \pi_{\HT}^{-1}(\Fl_{  G_{\ti{T}}',   \mu_{\ti{T}}    }\backslash \Fl_{  G_{\ti{T}}',   \mu_{\ti{T}}    }(\QQ_{p})).\]
\end{prop}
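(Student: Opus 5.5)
The plan is to reduce both statements to the existence of good reduction for the abelian varieties parametrized at rank-one points. The key input is the valuative criterion for $Y_{U^{p}}$ together with Grothendieck's semistable reduction theorem (or Raynaud's criterion).

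\textbf{Case $T\ne\emptyset$.} By definition $\mcal{Y}^{\circ}_{U^{p}}$ is the sheafification of $(R,R^{+})\mapsto Y_{U^{p}}(R^{+})$. Since $\mS_{U}(G'_{\ti{T}})$ is proper (the quaternionic factor is proper for $T\ne\emptyset$ and the torus factor is finite), it suffices to check that the open subspace $\mS^{\circ}_{U}$ contains all rank-one points. By partial properness of both sides, this also gives the higher rank points.

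So let $x=\Spa(C,\mO_{C})$ with $C/\CC_{p}$ complete algebraically closed, and let $(A,\iota,\lambda,\alpha)$ be the corresponding object. I would show that $A$ has potentially good reduction, hence good reduction since $C$ is algebraically closed. The argument is as follows:
\begin{itemize}
\item $A$ has semistable reduction, and the toric part of the Raynaud extension is an $\mO_{D_{T}}$-stable subtorus.
\item The character group of the toric part is a $D_{T}$-module of rank $<\dim_{\QQ}D_{T}\cdot$(appropriate multiplicity).
\item Because $B_{T}$ is a division algebra when $T\ne\emptyset$, the Rosati-compatible $D_{T}=B_{T}\otimes E$-action forces the toric rank to vanish. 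Concretely, $\mathrm{End}^{0}$ of the toric part would contain $B_{T}$, acting on a $\QQ$-vector space of dimension too small; alternatively, use the standard argument that quaternionic Shimura varieties are compact because $G^{\mathrm{ad}}_{T}$ is $\QQ$-anisotropic, so that the Kottwitz signature forbids degeneration.
\end{itemize}
Good reduction then extends $(A,\iota,\lambda)$ over $\mO_{C}$ by the Néron model property. The prime-to-$p$ level structure extends because $\hat{T}^{p}A$ is unramified. This produces a point of $Y_{U^{p}}(\mO_{C})$.

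\textbf{Case $T=\emptyset$.} Here the variety is not proper, so I argue pointwise. Let $x$ be a rank-one point with $\pi_{\HT}(x)\notin\Fl(\QQ_{p})$. If $A_{x}$ had bad reduction, its semistable reduction would have nontrivial $\mO_{D}$-stable toric part. The $p$-adic Tate module would then contain a $G_{\emptyset}(\QQ_{p})$-rational filtration step $T_{p}(\text{torus})$ compatible with the Hodge--Tate filtration. This forces the Hodge--Tate filtration, viewed as a point of $\Fl$, to be $\QQ_{p}$-rational in each $\mP^{1}$-factor, that is, to lie in $\Fl(\QQ_{p})=\mP^{1}(F_{p})$; this is the standard description of the boundary/ordinary Hodge--Tate image as in \cite{Scholze15}. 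One would also check it componentwise: the multiplicative part contributes an $F_{p}$-rational line on which the Hodge--Tate map is determined. This gives a contradiction, so $A_{x}$ has good reduction and $x\in\mS^{\circ}$.

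Passing to infinite level and using that $\pi_{\HT}^{-1}$ of the open complement is an open, partially proper subset, the inclusion holds as open subspaces.

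\textbf{Main obstacle.} The main obstacle is the $T=\emptyset$ analysis, namely matching the toric part of the Raynaud extension with $\QQ_{p}$-rationality of the Hodge--Tate point while keeping track of the $D$-action and the signature. I would first try to reduce to the Hilbert--Blumenthal case via Lemma~\ref{lemUnitaryToQuaternion}, where the statement is the known fact that the non-ordinary (indeed non-$\mP^{1}(F_{p})$) locus lies in the good-reduction tube.
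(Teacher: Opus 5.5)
Your treatment of $T\neq\emptyset$ has a genuine gap, and the argument you call ``concrete'' fails numerically.

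\begin{enumerate}
\item The algebra acting in this moduli problem is $D_T=B_T\otimes_F E\cong M_2(E)$, and $(D_T,*)$ is the same for every $T$. Since $\dim A=4d$, the toric rank can be as large as $4d$. That is exactly $\dim_{\QQ}B_T$, and exactly the $\QQ$-dimension of the simple $M_2(E)$-module $E^{\oplus 2}$.
\item So neither $B_T$ being a division algebra nor the $D_T$-action excludes a nonzero toric part. Indeed, totally degenerate reduction with the same $(D,*)$ occurs for $T=\emptyset$.
\item The only datum distinguishing $T$ is the Kottwitz signature. Your fallback (``$G_T^{\mathrm{ad}}$ is $\QQ$-anisotropic, so the signature forbids degeneration'') names this but gives no mechanism. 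You would still have to convert anisotropy into a constraint on the toric part of the Néron model, and that conversion is precisely the missing computation.
\end{enumerate}

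The missing argument, which is the paper's, has two steps.
\begin{enumerate}
\item Any nonzero $D_T$-module has $\QQ$-dimension a multiple of $4d$. Hence a nonzero toric part $\mfk{T}$ must have $X_*(\mfk{T})_{\QQ}\cong E^{\oplus 2}$, so the reduction is totally degenerate.
\item Then $\Lie(A)\cong X_*(\mfk{T})\otimes_{\ZZ}C$, $\mO_D$-equivariantly. Because $X_*(\mfk{T})_{\QQ}$ is a $\QQ$-rational $E$-module, each $b\in\mO_E$ acts with characteristic polynomial $\prod_{\ti{\tau}\in\Sigma_{E,\infty}}(x-\ti{\tau}(b))^{2}$. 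That is signature $s_{\ti{\tau}}=1$ for all $\ti{\tau}$, which contradicts $s_{i(\tau),\ti{T}}=0$ and $s_{i^{c}(\tau),\ti{T}}=2$ for $\tau\in T$.
\end{enumerate}

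For $T=\emptyset$ your route is the paper's: Raynaud uniformization, then identifying the Tate-module sequence of the torus with the Hodge--Tate sequence. However, saying $T_p(\text{torus})$ is ``compatible with the Hodge--Tate filtration'' is not enough. You need $T_p(\text{torus})\otimes_{\ZZ_p}C$ to \emph{equal} the Hodge--Tate filtration step, and this holds only when the reduction is totally degenerate. With a partial toric part the Hodge--Tate point need not be $\QQ_p$-rational in every $\mP^1$-factor. Total degeneracy is supplied by the same $D$-module count as above (a nonzero toric part forces rank $4d=\dim A$), which you never invoke. Your proposed detour through the Hilbert--Blumenthal case via Lemma~\ref{lemUnitaryToQuaternion} is unnecessary. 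It would also require comparing the good-reduction loci of two different moduli problems, which you do not address.
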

\begin{proof}
	Since \(\mS^{\circ}_{U}(G'_{\ti{T}})\) 
	is open in \(\mS_{U}(G'_{\ti{T}})\),
	it is generalizing, so it suffices to check they have the same set of rank \(1\) points, and thus it suffices to check \((C,\mO_{C})\)-points, where \(C\)
	is a complete algebraically closed extension of \(\QQ_{p}\).
	Assume \((A,\iota,\lambda,\alpha_{K'})\in \ti{\mS}_{U}(G'_{\ti{T}})(C,\mO_{C})\backslash \ti{\mS}_{U}^{\circ}(G'_{\ti{T}})(C,\mO_{C})\). 
	Then \(A\)
	is an abelian scheme over \(C\), and we can consider its Néron model \(\mfk{A}\) over \(\Spf\mO_{C}\),
	which lies in a canonical sequence \[0\to \mfk{T}\to \mfk{A}\to \mfk{B}\to 0,\]
	where \(\mfk{T}\)
	is a torus, and \(\mfk{B}\)
	is an abelian scheme over \(\mO_{C}\). We will be done if \(\mfk{T}=0\), as then \(\mfk{A}\)
	is an abelian scheme over \(\mO_{C}\) with all the additional structure.
	
	We now assume \(\mfk{T}\ne 0\).
	Since 
	the Néron model is canonical, we have an action of \(\mO_{D_{T}}\)
	on \(\mfk{A}\) as well as on \(\mfk{T}\).
	We can consider the cocharacter group \(X_{*}(\mfk{T})\), which is a \(\mO_{D_{T}}\)-module.
	Note that by counting the dimension  \(X_{*}(\mfk{T})\)
	is of rank at most \(4d\), while 
	\(\mO_{D_{T}}\)
	is of rank \(8d\).
Then by counting rank, \(X_{*}(\mfk{T})_{\QQ}\)
	is isomorphic to the unique simple \(D_{T}\)-module. This implies that the action of \(\mO_{D_{T}}\) satisfies the Kottwitz signature condition for \(\emptyset\),
	which contradiction the Kottwitz condition for \(\ti{T}\)
	unless \(T=\emptyset\). So we have finished the proof in the case where \(T\ne \emptyset\).
	
	So now we assume \(T=\emptyset\), and the argument above shows that  that \(\mfk{A}=\mfk{T}\). 
	Now by the theory of Néron model, we 
	know that \(\mfk{T}\)
	has a canonical algebrization \(T\)
	over \(\Spec\mO_{C}\),
	such that there is a natural covering map \[p:T^{\an}_{\eta}\to A,\]
	where the kernel is identified with 
	\(X_{*}(T)\).
Now if we consider \(p^{n}\)-torsion points, we have an exact sequence \[0\to T_{\eta}[p^{n}]\to A[p^{n}]\to X_{*}(T)/p^{n}\to 0,\]
and taking limit gives \[0\to T_{p}(T_{\eta})\to T_{p}A\to X_{*}(T)\otimes_{\ZZ}\ZZ_{p}\to 0.\]
Moreover, the sequence is \(\mO_{D_{T}}\)-equivariant, 
and after base change to \(C\), it can be identified with the Hodge-Tate sequence \[0\to \Lie(A) \to T_{p}A\otimes C\to \omega_{A^{\vee}} \to 0.
\] This implies that \(\pi_{\HT}((A,\iota,\lambda,\alpha_{U^{p}}))\in\Fl(\QQ_{p}),\)
as we desire to see.	
\end{proof}


\subsection{Igusa stack of PEL type}
\label{subsecIgusaStack}
We define an Igusa stack for \(\ti{\mS}^{\circ}_{U^{p}}(G'_{\ti{T}})\) in this subsection.
Our definition is a mild modification of those in \cite{Zhang2023pel}.
A version in the case of general Shimura varieties of Hodge type has been obtained in \cite{DanielsHoftenKimZhang2024igusa}. 

We insist on giving an independent construction here, 
because we will need a comparison of Igusa stacks
associated to different Shimura varieties. 
Note that \cite{DanielsHoftenKimZhang2024igusa} uses crucially the theory of integral models, and it is possible to prove the same comparison using exotic correspondence as in \cite{XiaoZhu2017cycles}, but the results there seem to require some technical conditions. 
\begin{rmk}[Difference with \cite{Zhang2023pel}]\label{rmkDifferentFromZhang}
Our definition of Igusa stacks will be slightly different from the previous work. 
More precisely, we remove the Kottwitz signature condition in \cite[Definition 8.1]{Zhang2023pel}. 
In this way, the comparison of Igusa stacks becomes only a matter of definition. 
The key observation is that the Kottwitz signature condition is remembered by the flag variety and is forgotten by the Igusa stack.
\end{rmk}

\subsubsection{Construction of the Igusa stack}
As before, we fix isomorphisms between \(i_{T}:D_{T}\cong M_{2}(E)\)
from now on, and denote it simply as \(D\). We further fix \(\delta_{T}\in D_{T}^{\mrm{sym},\times}\) such that \(*_{T}\) coincides for all \(T\subset\Sigma_{\infty}\) along the identification \(i_{T}\). 
Note that over \(\mA^{p}_{f}\), 
\(G'_{\tilde{T},\mA^{p}_{f}}\) are isomorphic. 
We fix a neat compact subgroup 
\(U^{p}\subset G'_{\tilde{T}}(\mA^{p}_{f})\). 

Recall that
we have also fixed 
a lattice \(\Lambda_{T}\subset D\)
such that \(\psi_{T}(\hat{\Lambda}_{T},\hat{\Lambda}_{T})\subset \hat{\ZZ}\). Recall that level structures were defined by considering isomorphisms of the Tate modules and \(\hat{\Lambda}_{T}\).

\begin{lem}\label{lemRatHermIndep}
	There is an \(\mO_{D}\)-linear isomorphism \((\Lambda_{T}\otimes_{\ZZ}\ti{E},\psi_{T})\cong (\Lambda_{\emptyset}\otimes_{\ZZ}\ti{E},\psi_{\emptyset})\),
	where \(\ti{E}\)
	is the Galois closure of \(E\).
\end{lem}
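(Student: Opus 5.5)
Over \(\ti{E}\), the pair \((D,*)\) splits completely into a product of matrix algebras, and Morita equivalence reduces a \(*\)-hermitian (equivalently skew-hermitian) form on a free rank-one \(D\)-module to a form on a small vector space. Over an algebraically closed field, or any field containing enough square roots, such forms are classified by rank alone. I plan to exploit this, using that \(\ti{E}\) contains all the relevant embeddings and square roots coming from \(E=F(\sqrt{\mfk{d}})\).

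The first step is to reduce to the rational forms. We have \(\Lambda_{T}\otimes_{\ZZ}\ti{E}=V\otimes_{\QQ}\ti{E}\), and \(\psi_{T}=\psi_{T,F}\) is \(\mO_{D}\)-compatible with \(*\). So it suffices to compare the \(\QQ\)-bilinear alternating forms \(\Tr_{F/\QQ}\psi_{T,F}\) and \(\Tr_{F/\QQ}\psi_{\emptyset,F}\) after base change to \(\ti{E}\), as \(D\)-equivariant forms. Equivalently, I would compare the \(E\)-valued skew-hermitian forms \(\psi_{T,E}(v,w)=\Tr_{D/E}(\sqrt{\mfk{d}}\,v\bar w\delta_{T})\) on \(V=D\). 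These depend only on \(\delta_{T}\in D^{\mrm{sym},\times}\), since the involution \(*\) was fixed to be common for all \(T\) via \cite[Lemma 5.4]{TianXiao2016goren}.

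The second step is to decompose \(E\otimes_{\QQ}\ti{E}\cong\prod_{\ti\tau\in\Sigma_{E,\infty}}\ti{E}\). The involution pairs the factor indexed by \(\ti\tau\) with the factor indexed by \(\ti\tau^{c}\). Then \(V\otimes\ti{E}\) splits as \(\bigoplus_{\tau\in\Sigma_\infty}(V_{i(\tau)}\oplus V_{i^{c}(\tau)})\), each summand being a copy of \(M_{2}(\ti{E})\), and the form pairs \(V_{i(\tau)}\) with \(V_{i^{c}(\tau)}\). For each \(\tau\), the form is determined by a perfect \(M_{2}(\ti E)\)-compatible pairing \(V_{i(\tau)}\times V_{i^{c}(\tau)}\to\ti{E}\). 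Such a pairing is unique up to an automorphism of \(V_{i^{c}(\tau)}\), namely right multiplication by the image of \(\delta_{T}\delta_{\emptyset}^{-1}\). Concretely, I will define the isomorphism to be the identity on \(V_{i(\tau)}\) and right multiplication by \(\delta_{\emptyset}^{-1}\delta_{T}\) (suitably placed) on \(V_{i^{c}(\tau)}\). Right multiplication commutes with the left \(D\)-action, so this map is \(\mO_{D}\)-linear, and it transports \(\psi_{T}\) to \(\psi_{\emptyset}\). I would check that it is compatible with the pairing's symmetry conditions by direct computation of the trace form.

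The main obstacle is that this hyperbolic splitting breaks the symmetry between \(i(\tau)\) and \(i^{c}(\tau)\). I must verify that the resulting map is \(\ti{E}\)-linear and respects the hermitian structure, rather than merely intertwining the \(E\)-linear parts. If this causes trouble, I would fall back on a cohomological argument. The relevant unitary similitude group is connected, and over \(\ti E\), where \(E\) splits, it becomes a form of \(\GL\). By Hilbert 90, \(H^{1}(\ti{E},\GL)=0\), so any two such forms of the same rank are isomorphic after extending scalars to \(\ti{E}\).
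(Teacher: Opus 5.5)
Your proposal is correct and follows essentially the paper's argument. You split \(\mO_D\otimes\ti E\) into the factors \(D\otimes_{E,\ti\tau}\ti E\) and observe that \(*\) swaps the \(\ti\tau\)- and \(\ti\tau^c\)-factors, so the form is hyperbolic; you then conclude by uniqueness of the free rank-one hermitian space, which you make explicit via right multiplication by the \(i^c(\tau)\)-component of \(\delta_T\delta_\emptyset^{-1}\). That explicit map checks out if you write \(\psi_{T,E}(v,w)=\Tr_{D/E}(\sqrt{\mfk d}\,v\delta_T w^{*})\) with the common involution and use that both \(\delta\)'s are \(*\)-symmetric, so the Hilbert 90 fallback is unnecessary.
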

\begin{proof}
	This is because \(\mO_{D}\otimes_{\ZZ}\ti{E}\cong \prod_{\tilde{\tau}:E\hookrightarrow \ti{E}}D\otimes_{E,\ti{\tau}}\ti{E} \),
	and the action of \(*\) on the left-hand side will switch the factors \(\ti{\tau}\)
	and \(\ti{\tau}^{c}\).
	Hence we conclude by noting that there is a unique free Hermitian space of rank \(1\) over \(\mO_{D}\otimes\ti{E}\). 
\end{proof}

\begin{dfn}[Igusa stack of the good reduction locus]\label{dfnIgusaStack}
We define the following prestack \(\widetilde{\mrm{Igs}}_{U^{p}}^{\circ,pre}(G'_{\tilde{T}})\)
as follows: for any \((R,R^{+})\subset \Perf\) and \(\varpi\in R^{+}\)
a pseudo-uniformizer, 
define the groupoid
 \[\widetilde{\mrm{Igs}}_{U^{p}}^{\circ,pre}(G'_{\tilde{T}})(R,R^{+}):=\{(\bar{A},\bar{\iota},\bar{\lambda},\alpha_{U^{p}})\}\]
where

(1)
\(\bar{A}\) is an abelian scheme of dimension \(4d\) up to quasi-isogenies over \(R^{+}/\varpi\);

(2)
\(\iota:\mO_{D}\hookrightarrow \End_{R^{+}/\varpi}(\bar{A})_{\QQ}\);

(3) \(\bar{\lambda}:\bar{A}\to \bar{A}^{\vee}\)
a \(\mO_{D}\)-quasi-polarization, whose Rosati involution coincides with \(*\) on \(D\);

(4) \(\alpha_{U^{p}}\) is a \(U^{p}\)-level structure as in \S \ref{subsectionGoodRedLoc};



The morphisms in this groupoid are quasi-isogenies of \(\bar{A}\)
that are compatible with the PEL structure: \(\Hom((\bar{A}_{1},\bar{\iota}_{1},\bar{\lambda}_{1},\alpha_{U^{p},1}
=(\alpha_{1}^{0},c_{1}
),(\bar{A}_{2},\bar{\iota}_{2},\bar{\lambda}_{2},\alpha_{U^{p},2}
=(\alpha_{2}^{0},c_{2})
)\) consists of 
pairs \((f,r)\) where \
\(\mO_{D}\)-equivariant quasi-isogenies
 \(f:\bar{A}_{1}\to \bar{A}_{2}\) and \(r:\Spa(R,R^{+})\to \QQ^{\times,+}\) locally constant,
  such that 
  \(f^{*}(\alpha_{2}^{0})=\alpha_{1}^{0}\), 
  \(c_{2}=rc_{1}\) 
  and \(f^{*}(\bar{\lambda}_{2})\cong r\bar{\lambda}_{1}\).

We define \(\widetilde{\Igs}_{U^{p}}^{\circ}(G'_{\ti{T}})\)
as the \(v\)-sheafification of \(\widetilde{\mrm{Igs}}_{U^{p}}^{\circ,pre}(G'_{\tilde{T}})\).
%
\end{dfn}
\begin{rmk}\label{rmkEACrystallineDieudonne}
	Let us recall the construction of \(\mscr{E}(\bar{A})\):
	starting from \(\bar{A}\), we construct \(M:=H^{1}_{\cris}(\bar{A}/A_{\cris}(R^{+}/\varpi))^{\vee}[1/p]\)
	which is a \(\varphi\)-module over \(B^{+}_{\cris}(R^{+}/\varpi),\) and
	then using the standard argument, it can be extended to a \(\varphi\)-module over \(\mcal{Y}_{\rF}(R)\) using the Frobenius action,
	and hence it gives rise to a vector bundle \(\mscr{E}(\bar{A})\)
	over \(\mX_{\rF}(R)\). See also \cite{Zhang2023pel}
	for a construction using the algebraic Fargues-Fontaine curve.
\end{rmk}
\begin{rmk}
	We have implicitly used the Serre-Tate theory (\cite[Theorem 2.4.1]{CaraianiScholze2024generic}) to show that  the definition is  indeed independent of the choice of the pseudo-uniformizer \(\varpi\).
	Note that all the PEL data are 
	defined only up to quasi-isogenies.
\end{rmk}
\begin{notation}
	By definition, we have \[\widetilde{\Igs}^{\circ}_{U^{p}}(G'_{\tilde{T}})\cong \widetilde{\Igs}^{\circ}_{U^{p}}(G'_{\emptyset}).\]
	We will denote \(\widetilde{\Igs}^{\prime,\circ,pre}_{U^{p}}:=\widetilde{\Igs}^{\circ,pre}_{U^{p}}(G'_{\emptyset})\cong \widetilde{\Igs}^{\circ,pre}_{U^{p}}(G'_{\ti{T}})\),
	and \(\widetilde{\Igs}^{\prime,\circ}_{U^{p}}:=\widetilde{\Igs}^{\circ}_{U^{p}}(G'_{\emptyset})\cong \widetilde{\Igs}^{\circ}_{U^{p}}(G'_{\ti{T}})\).
\end{notation}

We have the following reduction morphism relating \(\ti{\mS}_{U^{p}}^{\circ}(G'_{\ti{T}})\)
and \(\widetilde{\Igs}_{U^{p}}^{\prime,\circ}\).

\begin{dfn}\label{dfnReductionMap}
	For any \(S=\Spa(R,R^{+})\in \Perf\) with an untilt 
	\(S^{\#}=\Spa(R^{\#},R^{\#,+})\in\Spd(\CC_{p},\mO_{\CC_{p}})(S)\), we map \((A,\iota,\lambda,\alpha_{p},\alpha_{U^{p}})\in \ti{\mS}^{\circ}_{U^{p}}(G'_{\ti{T}})(S^{\#})\) to \((\bar{A},\bar{\iota},\bar{\lambda},\alpha_{U^{p}})\in\widetilde{\Igs}_{U^{p}}^{\prime,\circ}(S)\), where \(\bar{A}:=A\otimes_{R^{\#,+}}R^{\#,+}/\varpi\), and \((\bar{\iota},\bar{\lambda},\alpha_{U^{p}})\)
	is the 
	induced PEL structure, then this defines a well-defined morphism \[\red_{\ti{T}}:\ti{\mS}^{\circ}_{U^{p}}(G'_{\ti{T}})\to \widetilde{\Igs}_{U^{p}}^{\prime,\circ}.\]
	
	The tower of morphisms \(\{\mrm{red}_{\ti{T}}:\ti{\mS}^{\circ}_{U^{p}}(G'_{\ti{T}})\to \widetilde{\Igs}^{\prime,\circ}_{U^{p}}\}_{U^{p}}\)
	is also \(G'_{\ti{T}}(\mA_{f}^{p})\)-equivariant.
\end{dfn}

\subsubsection{Stack of \texorpdfstring{\(G\)}{G}-torsors on Fargues-Fontaine curves}
We now prepare ourselves for 
the Cartesian diagram (Theorem \ref{thmCartesianDiagram}).
\begin{notation}
	To simplify the notation, we will write \(G':=G'_{\emptyset}\),
	\(G:=G_{\emptyset}=\mrm{Res}_{F/\QQ}\GL_{2}\),
	and \(\mu':=\mu_{\emptyset}\) from now on.
\end{notation}
\begin{dfn}\label{dfnFFcurve}
	We denote \(\mX_{\rF,K}(S):=\mX_{\rF}(S)\otimes_{\QQ_{p}} K\) for any \(K\) finite extension of \(\QQ_{p}\).
For any number field \(F\), we denote \(\mX_{\rF,F}(S):=\coprod_{\p}\mX_{\rF,F_{\p}}(S)\),
where \(\coprod\)
is taken over all the places of \(F\) over \(p\).
\end{dfn}

\(\Bun_{G'_{\emptyset}}\)
has the following explicit description:

\begin{lem}\label{lemStackUnitarySimilitudeVB}
The stack \(\Bun_{G'}\) has the following alternative moduli interpretation: for any \(S\),
\(\Bun_{G'}(S)\)
is equivalent to the groupoid of tuples 
\((\mcal{E},\mcal{L},\lambda_{\mcal{E}})\), where \(\mcal{E}\) is a
rank 2 vector bundles over \(\mX_{\rF,F}(S)\), \(\mcal{L}\) is a line bundle over \(\mX_{\rF,F}(S)\), and 
a similitude symplectic form induced by \(\lambda_{\mcal{E}}:\mcal{E}\to \mcal{E}^{*}\otimes \pi^{*} \mcal{L}\) (with its similitude factor in \(\mO_{\rF,F}(S)^{\times}\)).
Here, \(\mcal{E}^{*}:=c^{*}\mcal{E}^{\vee}\) 
where \(c:\mX_{\rF,F}(S)\to \mX_{\rF,F}(S)\)
is induced by the complex conjugation \(c:E\to E\),
and \(\pi:\mX_{\rF,E}(S)\to \mX_{\rF,F}(S)\). 

The map \(G'=\Res_{F/\QQ}\GL_{2}\times^{T_{F}}T_{E}\to \Res_{E/\QQ}\GL_{2}\) induces \(\Bun_{G'}\to \Bun_{\Res_{E/\QQ}\GL_{2}}\)
which can be described by \((\mcal{E},\mcal{L},\lambda_{\mcal{E}})\mapsto \mcal{E}\). 

The map \(\det\times 1:G'=\Res_{F/\QQ}\GL_{2}\times^{T_{F}}T_{E}\to C':=T_{F}\times^{T_{F}}T_{E}\) induces 
\(\Bun_{G}\to \Bun_{C'}\), which can be described by \((\mcal{E},\mcal{L},\lambda_{\mcal{E}})\mapsto (\det(\mcal{E}),\mcal{L},\det(\lambda_{E}):\det(\mcal{E})\to \det(\mcal{E})^{*}\otimes \mcal{L}^{\otimes 2})\).
\end{lem}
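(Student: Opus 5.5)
The plan is to describe \(G'\)-torsors on \(\mX_{\rF}(S)\) by the standard Tannakian dictionary \(\Bun_{G'}(S)\simeq\) exact tensor functors \(\Rep_{\QQ_{p}}(G')\to \Vect(\mX_{\rF}(S))\). We then identify \(G'_{\QQ_{p}}\) with a similitude group of a hermitian-type form, read off the corresponding tuple of bundles, and check the two compatibilities by following the representations that define the two maps.

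\textbf{Step 1 (reduction to a linear-algebraic presentation of \(G'\)).} By (\ref{alignTheGrpIsSimilit}) with \(T=\emptyset\), \(G'\) is the group of \(E\)-linear automorphisms of the rank-two \(E\)-module \(W\) with \(D=\End_{E}(W)\), \(V\cong W\otimes \bar W\), preserving the skew-hermitian form up to a similitude factor in \(\Res_{F/\QQ}\GG_{m}\). Equivalently, \(G'\) is the stabiliser in \(\Res_{E/\QQ}\GL_{2}\times \Res_{E/\QQ}\GG_{m}\) of an \(E\)-sesquilinear pairing \(W\otimes c^{*}W\to \mcal{N}\), with values in a rank-one \(E\)-module \(\mcal{N}\). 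Here the sesquilinear map \(\lambda_{\mcal{E}}\) and the factor \(\mcal{L}\) come from the \(T_{E}\)-component, and the scalar similitude lies in \(F\otimes R\).

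\textbf{Step 2 (torsors).} A torsor for such a stabiliser group is a twist of the pair \((W,\mcal{N})\) together with the form. Concretely:
\begin{itemize}
\item since \(E\otimes\QQ_{p}=\prod_{\p}E_{\p}\), a \(\Res_{E/\QQ}\GL_{2}\)-torsor on \(\mX_{\rF}(S)\) is a rank-two bundle \(\mcal{E}\) on \(\mX_{\rF,E}(S)\), via Weil restriction and the decomposition of Definition \ref{dfnFFcurve};
\item the \(\Res_{E/\QQ}\GG_{m}\)-factor gives the line bundle \(\mcal{L}\), pulled back along \(\pi\);
\item the form becomes \(\lambda_{\mcal{E}}:\mcal{E}\to c^{*}\mcal{E}^{\vee}\otimes\pi^{*}\mcal{L}\).
\end{itemize}
The condition that the form is a similitude (and not an arbitrary map) is an open condition; non-degeneracy is checked pointwise, as in the trivial-torsor case. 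The quotient by \(T_{F}\) must also be accounted for. This is the main subtlety: \(H^{1}(\mX_{\rF},T_{F})\) is nontrivial, so \(\Bun_{G_{T}\times T_{E}}\to\Bun_{G'}\) is not an equivalence. The remedy is to avoid that presentation and instead realise \(G'\) directly as the stabiliser group above, where the torsor description is formal.

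\textbf{Step 3 (the two maps).} The homomorphism \(G'\to\Res_{E/\QQ}\GL_{2}\) is the action on \(W\), so pushing out a torsor keeps only \(\mcal{E}\). For \(\det\times1\) into \(C'\), apply \(\det\) to the tensor functor: the rank-one module \(\det W\) carries the induced form \(\det\lambda\) with values in \(\mcal{N}^{\otimes2}\). On bundles this gives the triple \((\det\mcal{E},\mcal{L},\det\lambda_{\mcal{E}})\).

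The main obstacle I expect is Step 1: getting the correct identification of \(G'_{\QQ_{p}}\) as a full stabiliser group, in particular matching the similitude factor in \(F\) versus \(E\) and checking that the stabiliser is connected and equals \(G'\) rather than a finite-index extension. I would first verify this over \(\bar{\QQ}_{p}\), where \(G'\) becomes \(\prod_{\tau}(\GL_{2}\times\GG_{m}^{2})/\GG_{m}\), and then descend.
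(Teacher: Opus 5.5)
Your strategy is the paper's: via (\ref{alignTheGrpIsSimilit}) you regard \(G'\) as the similitude group of a skew-hermitian structure, identify \(G'\)-torsors with twisted forms through the \(\underline{\mrm{Isom}}\)-sheaf, and pass from \(D=M_{2}(E)\)-modules to rank-two \(E\)-bundles by Morita. You apply Morita before passing to torsors and the paper applies it after, via the \(*\)-invariant idempotent \(\mathrm{diag}(1,0)\); that difference is immaterial, and your Step 3 matches the paper's level of detail. However, Step 2 as written does not yield the stated moduli problem.

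The first issue is the similitude torus. As an abstract group, your stabiliser in \(\Res_{E/\QQ}\GL_{2}\times\Res_{E/\QQ}\GG_{m}\) is indeed \(G'\): for non-degenerate skew-hermitian \(\lambda\), the relation \(\lambda(hx,hy)=\nu\lambda(x,y)\) forces \(\nu=\bar{\nu}\). But the torsor dictionary for that presentation produces a \(\Res_{E/\QQ}\GG_{m}\)-torsor, i.e. an arbitrary line bundle \(\mcal{N}\) on \(\mX_{\rF,E}(S)\), which need not be of the form \(\pi^{*}\mcal{L}\). Moreover, the orbit of \(\lambda\) under that ambient group is not the locus of skew-hermitian forms: multiplying by a non-real \(\nu\) turns an \(\epsilon\)-hermitian form into an \(\epsilon\nu/\bar{\nu}\)-hermitian one. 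So twisted forms of \((W,\mcal{N},\lambda)\) are not literally triples \((\mcal{E},\mcal{L},\lambda_{\mcal{E}})\) with \(\lambda_{\mcal{E}}\) skew-hermitian into \(\pi^{*}\mcal{L}\), and your bullet identifying the \(\Res_{E/\QQ}\GG_{m}\)-factor with \(\pi^{*}\mcal{L}\) skips exactly this. Relatedly, \(\mcal{L}\) does not come from the \(T_{E}\)-component; it is the pushout along \(\nu(g,z)=\det(g)z\bar{z}\), which involves both factors. The clean repair is to use the ambient group \(\Res_{E/\QQ}\GL_{2}\times\Res_{F/\QQ}\GG_{m}\), acting on skew-hermitian forms with values in \(E\otimes_{F}N_{0}\) for an \(F\)-line \(N_{0}\). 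The stabiliser is still \(G'\), the symmetry is a closed condition, and the ambient torsors are directly pairs \((\mcal{E},\mcal{L})\) with \(\mcal{L}\) on \(\mX_{\rF,F}(S)\). Alternatively, build the descent datum \(c^{*}\mcal{N}\cong\mcal{N}\) out of the symmetry of the form.

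The second issue is the converse direction. For a triple \((\mcal{E},\mcal{L},\lambda_{\mcal{E}})\) to come from a \(G'\)-torsor, you must show that \(\underline{\mrm{Isom}}\bigl((\mcal{E},\mcal{L},\lambda_{\mcal{E}}),\text{standard}\bigr)\) is \'etale-locally non-empty on \(\mX_{\rF}(S)\). That is, all non-degenerate rank-two skew-hermitian forms must lie geometrically in a single orbit; smoothness of \(G'\) then gives \'etale-local sections of the orbit map. This is the real content of the lemma, and it is exactly what the paper takes from Lemma \ref{lemRatHermIndep}. Your observation that non-degeneracy is an open, pointwise condition does not supply it. With these two repairs your argument becomes the paper's proof.
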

\begin{proof}
	By (\ref{alignTheGrpIsSimilit}), \(\Bun_{G'}(S)\) is equivalent to 
	the groupoids of rank \(2\) venctor bundles \(\mcal{E}\) over \(\mX_{\rF,E}(S)\) equipped with a polarization
\(\lambda_{\mcal{E}}:\mcal{E}^{\oplus 2}\cong (\mcal{E}^{\oplus 2})^{*}\) up to \(\mO_{\mX_{\rF,F}(S)}^{\times}\)  that is \(M_{2}(E)\otimes_{E}\mO_{\mX_{\rF,E}}\)-equivariant, where \(M_{2}(E)\otimes_{E}\mX_{\rF,E}\) acts on \((\mcal{E}^{\oplus 2})^{*}\) via \(*\). Explicitly,
	we define a functor \(\mcal{G}'\mapsto \mcal{G}'(\Lambda,\psi_{\emptyset})\), where \((\Lambda,\psi_{\emptyset})\)
	is regarded as a representation of \(G'\),
	and the inverse is given by \((\mcal{E},\lambda_{\mcal{E}})\mapsto \underline{\mrm{Isom}}((\mcal{E},\lambda_{\mcal{E}}),(\Lambda,\psi_{\emptyset}))\),
	which defines a \(G'\)-torsor by Lemma \ref{lemRatHermIndep}.

	Now the rest follows immediately from Morita equivalence. Note that we have fixed the identification \(D_{T}\cong M_{2}(E)\),
	and \(M_{2}(E)\)
	is endowed with an induced convolution \(*\) with \(\begin{pmatrix}
	a&b\\c&d
	\end{pmatrix}^{*}=\begin{pmatrix}
	\bar{a}&\bar{c}\\\bar{b}&\bar{d}
	\end{pmatrix}\), and in particular, \(\begin{pmatrix}
	1 & 0 \\0 &0 
	\end{pmatrix}\) is invariant under \(*\). 
\end{proof}

We can identify \(\Bun_{G'_{\ti{T}}}\) with \(\Bun_{G'}\), and identify their connected components.
\begin{lem}\label{lemIdenfificationBL} Consider the subgroup \(G'_{\ti{T},1}:=\Ker(\nu:G'_{\ti{T}}\to T_{F})\subset G'_{\ti{T}}\). Recall that we have defined 
	\(h_{\ti{T}}:\mbb{S}\to G'_{\ti{T}}\), and we denote the associated Hodge cocharacter as \(\mu_{\ti{T}}\).

(1) There is an isomorphism \(\pi_{1}(G'_{\tilde{T},1})_{\Gal_{\QQ_{p}}}\cong (\ZZ/2)^{\oplus\Sigma_{p}}\). 

The sequence \(1\to G'_{\tilde{T},1}\to G'_{\ti{T}}\to T_{F}\to 1\) induces a
canonical exact sequence  \[0\to (\ZZ/2)^{\oplus \Sigma_{p}}\to  \pi_{1}(G'_{\ti{T}})_{\Gal_{\QQ_{p}}}\to  \ZZ^{\oplus\Sigma_{p}}\to 0.\]

(2) For any \(T\subset \Sigma_{\infty}\),
\(\mcal{G}'_{\ti{T},1}:=\underline{\mrm{Isom}}(\Lambda_{T}\otimes\mO,\Lambda_{\emptyset}\otimes\mO)\)
defines an \'etale \(G'_{\emptyset,1}\)-torsor over \(\Spa(\QQ_{p},\ZZ_{p})\).
For any \(S=\Spa(R,R^{+})\in \Perf\), we we denote by \(\mcal{G}'_{\ti{T},1}(S)\in \Bun_{G'_{\emptyset,1}}(S)\) the pull-back of \(\mcal{G}'_{\ti{T},1}\)
along \(\mX_{\rF}(S)\to \Spa(\QQ_{p})\). 
We denote by \(\mcal{G}'_{\ti{T}}(S)\in\Bun_{G'_{\emptyset}}(S)\)
the induced \(G'_{\emptyset}\)-torsor. 

Then \(\mcal{G}'_{\ti{T}}(S)\in \Bun_{G'_{\emptyset},b_{\ti{T}}^{bsc}}(S)\),
where \(b_{\ti{T}}^{bsc}\)
is the basic element in \(B(G'_{\emptyset},\mu_{\ti{T}}^{-1}\mu_{\emptyset})\),
Moreover, under the bijection \(\pi_{1}(G'_{\emptyset})_{\Gal_{\QQ_{p}}}\cong \pi_{0}(\Bun_{G'_{\emptyset}})\cong B(G'_{\emptyset})^{bsc}\), \(b^{bsc}_{\ti{T}}\)
corresponds to \[(r_{\p,T}\!\!\!\!\mod{2})_{\p\in\Sigma_{p}}\in(\ZZ/2\ZZ)^{\oplus\Sigma_{p}}\cong \pi_{1}(G'_{\ti{T},1})_{\Gal_{\QQ_{p}}}\subset \pi_{1}(G'_{\ti{T}})_{\Gal_{{\QQ}_{p}}}\] as in (1), with \(r_{\p,T}:=\#(T\cap \Sigma_{\infty/p})\).

(3) \(G'_{\ti{T}}\) acts on $\mcal{G}'_{\ti{T}}$ by acting on \(\Lambda_{T}\otimes\mO\).	The morphism \[\iota_{\ti{T}}:\Bun_{G'_{\ti{T}}}\to\Bun_{G'_{\emptyset}},\;\mcal{G}'\mapsto \mcal{G}'\times^{G'_{\ti{T}}} \mcal{G}'_{\ti{T}}\]
induces an isomorphism between two stacks. Moreover, \(\iota_{\ti{T}}|_{\Bun_{G'_{\ti{T}},\mu_{\ti{T}}}}\)
has a factorization 
\[\iota_{\ti{T}}|_{\Bun_{G'_{\ti{T}},\mu_{\ti{T}}}}:\Bun_{G'_{\ti{T}},\mu_{\ti{T}}}\to \Bun_{G'_{\emptyset},\mu_{\emptyset}},\]
which is an open immersion. 
\end{lem}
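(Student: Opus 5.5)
For (1), I will compute \(\pi_{1}\) via Borovoi's algebraic fundamental group. Over \(\bar{\QQ}_{p}\), \(G'_{\ti{T},1}\) is an inner form of the kernel of the similitude map on \(\Res_{F/\QQ}\GL_{2}\times^{T_{F}}T_{E}\). Its derived group is \(\Res_{F/\QQ}\SL_{2}\), which is simply connected. Hence \(\pi_{1}(G'_{\ti{T},1})=X_{*}(C_{1})\), where \(C_{1}\) is the cocenter (abelianization).

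I will compute \(C_{1}\) directly. The torus \(C'=T_{F}\times^{T_{F}}T_{E}\) maps to \(T_{F}\) by the similitude, which is \(\det\cdot\Nm_{E/F}\) up to the identification. So \(C_{1}\) is the kernel of \(T_{E}\times^{T_{F}}T_{F}\to T_{F}\), and its cocharacter lattice is \(\bigoplus_{\tau}\ZZ\) with Galois acting through the conjugation of \(E/F\) by \(-1\), twisted by the factor \(2\) coming from \(\det\).

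Concretely, I will write \(X_{*}(C_{1})\) as the kernel of \(X_{*}(T_{E})\oplus X_{*}(T_{F})\to X_{*}(T_{F})\), modulo the image of \(X_{*}(T_{F})\). Taking \(\Gal_{\QQ_{p}}\)-coinvariants factor by factor over \(\p\in\Sigma_{p}\): since \(E\) is inert at each \(\p\), the conjugation acts as \(-1\) on the \(\p\)-part after inducing. The coinvariants of \(\Ind(\ZZ(-1))\) modulo the image of \(2\) then give \(\ZZ/2\) for each \(\p\).

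For the exact sequence, I will use the fact that \(\pi_{1}(-)_{\Gal}\) is right exact on short exact sequences of reductive groups. The left map is injective because \(\pi_{1}(T_{F})_{\Gal}=\ZZ^{\Sigma_{p}}\) is torsion-free and the group \(H_{1}\) contributes nothing after this comparison; I will check this via the long exact sequence of Tate cohomology of the lattices.

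For (2), I will first note that \(\Lambda_{T}\otimes\QQ_{p}\) and \(\Lambda_{\emptyset}\otimes\QQ_{p}\) are skew-Hermitian \(D\otimes\QQ_{p}\)-modules which become isomorphic over \(\bar{\QQ}_{p}\) by Lemma \ref{lemRatHermIndep}. Hence \(\underline{\mrm{Isom}}\) is an étale \(G'_{\emptyset,1}\)-torsor over \(\QQ_{p}\), that is, a class in \(H^{1}(\QQ_{p},G'_{\emptyset,1})\).

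By Kottwitz, for a group with simply connected derived group this class maps into \(B(G'_{\emptyset})_{\mrm{basic}}\), with Kottwitz invariant the image in \(\pi_{1}(G'_{\emptyset,1})_{\Gal,\mrm{tors}}=(\ZZ/2)^{\Sigma_{p}}\). That invariant is the local Hasse invariant of the inner twist, i.e. whether \(B_{T}\) ramifies at \(\p\), namely \(r_{\p,T}\bmod 2\).

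It remains to identify this basic class with the basic element of \(B(G'_{\emptyset},\mu_{\ti{T}}^{-1}\mu_{\emptyset})\). I will compare \(\kappa\)-invariants: using the signature description \(s_{\ti{\tau},\ti{T}}\), the image of \(\mu_{\ti{T}}^{-1}\mu_{\emptyset}\) in \(\pi_{1}\) is concentrated in the \(\ZZ/2\) part and equals \(r_{\p,T}\) at \(\p\). I expect this to be the main obstacle, because it requires careful sign and normalization bookkeeping through the Morita identification and \(\psi_{T}\).

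For (3), twisting by the torsor \(\mcal{G}'_{\ti{T}}\) gives an equivalence \(\Bun_{G'_{\ti{T}}}\cong\Bun_{G'_{\emptyset}}\), by the standard inner-twist argument of Fargues–Scholze. On connected components this twisting translates \(\kappa\) by \(b^{bsc}_{\ti{T}}\), and it preserves the semistable and Newton data up to that shift.

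The bounded locus \(B(G'_{\ti{T}},\mu_{\ti{T}})\) is therefore sent into \(B(G'_{\emptyset},\mu_{\emptyset})\). To see this, I will compare Newton polygons: twisting by a basic element adds \(\nu_{b}\), and the Mazur inequality for \(\mu_{\emptyset}\) can be checked componentwise at each \(\p\) on \(\GL_{2}\)-type data. The inclusion of this set of points is open because \(\Bun_{G,\mu}\) is open in \(\Bun_{G}\) once \(\mu\) is minuscule and the component is fixed. Composing the isomorphism with this open inclusion gives the claimed open immersion.
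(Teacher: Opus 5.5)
Your route matches the paper's in all three parts. In (1), \(\pi_1(G'_{\ti T,1})\) is \(X_*\) of the cocenter \(T_E/T_F\), whose \(\Gal_{\QQ_p}\)-coinvariants are \(\ZZ/2\) at each inert \(\p\). In (2), \(\mcal{G}'_{\ti T}\) is basic with Newton point zero because it comes from \(\Spa(\QQ_p)\); its Kottwitz invariant is detected by whether \(G'_{\ti T,1,\p}\) is quasi-split, and it is matched with \((\mu_{\ti T}^{-1}\mu_\emptyset)^\sharp\) via the projection to \(T_E/T_F\). In (3), the map is the inner-twist isomorphism restricted to an open substack. Your Newton-point argument for (3) (the shift is by zero, and \(\mu_{\ti T}^\diamond\le\mu_\emptyset^\diamond\) is checked factor by factor) spells out what the paper leaves as ``follows easily from (2)''.

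The one step that fails as written is the injectivity of \((\ZZ/2)^{\oplus\Sigma_p}\to\pi_1(G'_{\ti T})_{\Gal_{\QQ_p}}\) in (1). Right exactness only gives \(H_1(\Gamma,\pi_1(T_F))\to\pi_1(G'_{\ti T,1})_\Gamma\to\pi_1(G'_{\ti T})_\Gamma\to\pi_1(T_F)_\Gamma\to 0\). The torsion-freeness of \(\pi_1(T_F)_\Gamma=\ZZ^{\oplus\Sigma_p}\) says nothing about the connecting map out of \(H_1(\Gamma,\pi_1(T_F))\). For a finite quotient \(\Gamma\) through which the action factors, Shapiro's lemma identifies this group with \(\bigoplus_{\p}\Gamma_{\p}^{\mathrm{ab}}\), which is nonzero in general, so nothing forces the connecting map to vanish.

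The missing input is the splitting the paper records. On cocenters, \(\nu\) has the section \(b\mapsto(b,1)\); equivalently \((T_F\times T_E)/T_F\cong T_F\times(T_E/T_F)\) via \((b,c)\mapsto(b\,c\bar c,[c])\). Hence the sequence of Galois lattices \(0\to\pi_1(G'_{\ti T,1})\to\pi_1(G'_{\ti T})\to\pi_1(T_F)\to0\) is split and stays split exact after coinvariants. You need this in (2) as well, since you push the class from \(G'_{\emptyset,1}\) to \(G'_\emptyset\) and must know its Kottwitz invariant survives.

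The computation you flag as the main obstacle is routine. The target is \(2\)-torsion, so signs and normalizations are irrelevant. It suffices that \(\mu_{\ti T}^{-1}\mu_\emptyset\) has trivial similitude and that its \(\tau\)-component in \(X_*(T_E/T_F)\) is a generator for \(\tau\in T\) and zero for \(\tau\notin T\). After coinvariants this gives \(r_{\p,T}\bmod 2\) at each \(\p\).
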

\begin{proof}
	For (1),
	recall that \(G'_{\ti{T}}\)
	and \(G'_{\ti{T},1}\)
	has the same derived subgroups as \(G_{T}\),
	which is an inner form of \(\mrm{Res}_{F/\QQ}\SL_{2}\),
	so they are simply connected.
	So 
	\(\pi_{1}(G'_{\ti{T},1})\cong \pi_{1}(G'_{\ti{T},1}/G^{\prime,der}_{\ti{T},1})\),
	and \(G'_{\ti{T},1}/G^{\prime,der}_{\ti{T},1}\cong \Ker(\nu:(T_{F}\times T_{E})/T_{F}\to T_{F})\),
	where
\(T_{F}\)
acts on \(T_{F}\times T_{E}\)
by \(a\cdot (b,c)=(a^{2}b,ca^{-1})\),
and \(\nu(b,c)=bc\bar{c}\).
Note that \(\nu\) has a splitting given by \(b\mapsto (b,1)\in (T_{F}\times T_{E})/T_{F}\), 
so the kernel is isomorphic to \(T_{E}/T_{F}\).
Now we take \(X_{*}(-)\) of the sequence 
\[1\to T_{F}\to T_{E}\to T_{E}/T_{F}\to 1,\]
which is given by 
\[0\to \ZZ^{\Sigma_{\infty}}\to \ZZ^{\Sigma_{E,\infty}}\to \ZZ^{\Sigma_{E,\infty}}/\ZZ^{\Sigma_{\infty}}\to 0,\]
and the first map is given by 
\(1_{\tau}\mapsto 1_{\ti{\tau}}+1_{\ti{\tau}^{c}}\).
Now taking \(\Gal_{\QQ_{p}}\)-coinvariant,
it becomes \[\ZZ^{\Sigma_{p}}\to \ZZ^{\Sigma_{E,p}}\to \ZZ^{\Sigma_{E,p}}/\ZZ^{\Sigma_{p}}\to 0.\]
Now by our assumption on $E$, we know \(\Sigma_{p}=\Sigma_{E,p}\),
and the map is given by \(1_{\p}\mapsto 2\cdot 1_{\p}\), which implies that \(\pi_{1}(G'_{\ti{T},1})_{\Gal_{\QQ_{p}}}\cong (\ZZ/2)^{\oplus \Sigma_{p}}\). The rest part of (1) follows easily.

For (2), note that \(\mcal{G}'_{\ti{T}}\) is pulled back from \(\Spa(\QQ_{p})\),
so by \cite[\S 4.5]{kottwitz1985isocrystals}, its Newton point is zero, and it is basic. 
Let \(\kappa(\mcal{G}'_{\ti{T}})\) be the corresponding element in \(\pi_{1}(G'_{\emptyset})_{\Gal_{\QQ_{p}}}\). 
\(\mcal{G}'_{\ti{T}}(S)\)
has a \(G'_{\emptyset,1}\)reduction \(\mcal{G}'_{\ti{T},1}(S)\),
so \(\kappa(\mcal{G}'_{\ti{T}})\in \pi_{1}(G'_{\emptyset,1})_{\Gal_{\QQ_{p}}}\cong (\ZZ/2)^{\oplus \Sigma_{\infty}}\).
Moreover, there is a canonical decomposition \(G'_{\ti{T},1,\QQ_{p}}\cong \prod_{\p\in\Sigma_{p}}G'_{\ti{T},1,\p}\),
so we also have a decomposition \(\mcal{G}'_{\ti{T},1}=\prod_{\p\in\Sigma_{p}}\mcal{G}'_{\ti{T},1,\p}\),
and \(\mcal{G}'_{\ti{T},1,\p}\) is basic, with the associated inner form given by \(G'_{\ti{T},1,\p}\).

From the proof of (1),
we see \(\kappa(\mcal{G}'_{\ti{T}})=(\kappa(\mcal{G}'_{\ti{T},1,\p}))\in (\ZZ/2)^{\oplus \Sigma_{p}}\)
with \(\kappa(\mcal{G}'_{\ti{T},1,\p})\in\pi_{1}(G'_{\emptyset,1,\p})_{\Gal_{\QQ_{p}}}\cong \ZZ/2\). 
Note that \(\mcal{G}'_{\ti{T},1,\p}\)
is determined by \(\kappa(\mcal{G}'_{\ti{T},1,\p})\)
by \cite[Proposition 5.6]{kottwitz1985isocrystals}.
So \(\mcal{G}'_{\ti{T},1,\p}\)
has only two possible choices. 
\(G'_{\ti{T},1,\p}\) is quasi-split when \(2\mid r_{\p,T}\),
and is not quasi-split when \(2\nmid r_{\p,T}\), so the torsors \(\mcal{G}'_{\ti{T},1,\p}\) in the two cases 
corresponds to the two distinct elements in \(\pi_{1}(G'_{\emptyset,1,\p})_{\Gal_{\QQ_{p}}}\cong \ZZ/2\). As \(G'_{\emptyset}\)
is quasi-split, the quasi-split case corresponds to \(0\in \pi_{1}(G'_{\emptyset,1,\p})\).


Finally, we need to show that \(\mcal{G}'_{\ti{T}}(S)\in \mrm{Bun}_{G'_{\emptyset},\mu_{\ti{T}}^{-1}\mu_{\emptyset}}\). Since \(\mcal{G}'_{\ti{T}}\) is basic, it suffices to show that \(\kappa(\mcal{G}'_{\ti{T}})=(\mu_{\ti{T}}^{-1}\mu_{\emptyset})^{\sharp}\) (\cite[Definition 3.1.2]{CS17}). 
We consider the composition \[\nu_{\ti{T}}:\GG_{m,\bar{\QQ}_{p}}\xrightarrow{\mu_{\ti{T}}^{-1}\mu_{\emptyset}} G'_{\ti{T},1,\bar{\QQ}_{p}}\to (T_{E}/T_{F})_{\bar{\QQ}_{p}}\cong \prod_{\tau\in\Sigma_{\infty}}(\Res_{E/F}\GG_{m,E}/\GG_{m,F})\times_{F,\tau}\bar{\QQ}_{p}. \]
Then for \(\tau\in T\), the \(\tau\)-component of \(\nu_{\ti{T}}\)
induces an isomorphism,
while for \(\tau\notin T\), the
\(\tau\)-component of \(\nu_{\ti{T}}\) is trivial. Combining with the construction of the isomorphism \(\pi_{1}(G'_{\ti{T},1})_{\Gal_{\QQ_{p}}}\cong \pi_{1}(T_{E}/T_{F})_{\Gal_{\QQ_{p}}}\cong (\ZZ/2)^{\bigoplus \Sigma_{p}}\) in (1), we conclude that the image of \(\nu_{\ti{T}}^{\#}\) is precisely \((r_{\p,T}\mod{2})_{\p}\in \pi_{1}(G'_{\ti{T},1})_{\Gal_{\QQ_{p}}}\).

(3) follows easily from (2).
\end{proof}

\subsubsection{Reduced Hodge-Tate period maps}

\begin{lem}
Let \((R,R^{+})\in \Perf\), and \((\bar{A},\bar{\iota},\bar{\lambda},\alpha_{U^{p}})\in\widetilde{\Igs}^{pre}_{U^{p}}(G'_{\ti{T}})(R,R^{+})\). Then \(\underline{\mrm{Isom}}(\mscr{E}(\bar{A}),(\Lambda_{T}\otimes \mO_{\mX_{\rF}(R)},\psi_{T}))\) is an \'etale \(G'_{\ti{T}}\)-torsor over \(\mX_{\rF}(R)\), where \(\underline{\mrm{Isom}}(-)\)
	is the internal \(\mO_{D}\)-linear similitude symplectic  isomorphisms (with similitude factors in \((\mO_{\mX_{\rF}(R)}\otimes F)^{\times}\)), and \(G'_{\ti{T}}\)
	acts by acting on \(\Lambda_{T}\otimes \mO_{\mX_{\rF}(R)}\).
\end{lem}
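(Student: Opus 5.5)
The approach is to show that the sheaf \(\mcal{P}:=\underline{\mrm{Isom}}(\mscr{E}(\bar{A}),(\Lambda_{T}\otimes \mO_{\mX_{\rF}(R)},\psi_{T}))\) is \'etale-locally on \(\mX_{\rF}(R)\) nonempty. It is then automatically a \(G'_{\ti{T}}\)-torsor, because \(G'_{\ti{T}}\) is, by (\ref{alignTheGrpIsSimilit}), exactly the group of \(\mO_{D}\)-linear similitude automorphisms of \((\Lambda_{T}\otimes\QQ,\psi_{T})\) with similitude factor in \(T_{F}\). The structure on \(\mscr{E}(\bar{A})\) comes from the construction of Remark \ref{rmkEACrystallineDieudonne}, which is functorial in \(\bar{A}\) up to quasi-isogeny. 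So \(\bar{\iota}\) gives an action of \(\mO_{D}\) (after inverting \(p\), of \(D\)) on \(\mscr{E}(\bar{A})\). The quasi-polarization \(\bar{\lambda}\), via the crystalline pairing, gives a perfect alternating pairing \(\mscr{E}(\bar{A})\times\mscr{E}(\bar{A})\to \mO(1)\), well-defined up to scaling, for which \(D\) acts through \(*\). Trace duality for the \(F\)-action refines this to an \(F\)-bilinear pairing \(\psi_{\bar{A}}\) valued in \(\mO_{\mX_{\rF,F}}\) (twisted), analogous to \(\psi_{T,F}\). Since everything is defined up to quasi-isogeny, \(\mcal{P}\) depends only on the groupoid object.

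Next I would reduce to an algebraic statement about Hermitian modules. Via Morita equivalence for \(D\cong M_{2}(E)\), with the \(*\)-invariant idempotent \(\begin{pmatrix}1&0\\0&0\end{pmatrix}\) as in the proof of Lemma \ref{lemStackUnitarySimilitudeVB}, the datum \((\mscr{E}(\bar{A}),\bar{\iota},\psi_{\bar{A}})\) is equivalent to a rank \(2\) vector bundle \(\mcal{E}\) over \(\mX_{\rF,E}(R)\) with a skew-Hermitian similitude form \(\mcal{E}\to\mcal{E}^{*}\otimes\pi^{*}\mcal{L}\). Likewise \(\Lambda_{T}\otimes\mO\) corresponds to the trivial such datum twisted by \(\psi_{T}\). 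The isomorphism sheaf is representable by an affine scheme over \(\mX_{\rF}(R)\) (a closed subscheme of \(\underline{\mrm{Isom}}\) of vector bundles together with the similitude scalar), and it carries a free action of \(G'_{\ti{T}}\).

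The main obstacle is proving local nonemptiness, namely that \(\mcal{P}\) is \'etale-locally trivial rather than just a pseudo-torsor. I would check this on geometric fibers and conclude using smoothness of \(G'_{\ti{T}}\). Concretely, \(\mcal{P}\) is a pseudo-torsor under a smooth group scheme, and at each point \(x\) of \(\mX_{\rF}(R)\) with residue field \(k\) one needs \(\mcal{P}(\bar{k})\ne\emptyset\). Over \(\bar{k}\), which contains \(\ti{E}\), the fiber is a free rank-one \(\mO_{D}\otimes\bar{k}\)-module, since the dimension count \(8d\) is forced by \(\dim\bar{A}=4d\). It carries a perfect \(*\)-Hermitian pairing. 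By the argument of Lemma \ref{lemRatHermIndep}, such a Hermitian space over \(\mO_{D}\otimes\ti{E}\), hence over \(\bar{k}\), is unique up to isomorphism, so it is isomorphic to \((\Lambda_{T}\otimes\bar{k},\psi_{T})\). This uses that the factors \(\ti{\tau},\ti{\tau}^{c}\) are swapped by \(*\), so the pairing is determined by an identification of dual pieces, with the similitude absorbing the scalar.

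Smoothness of the isomorphism scheme then gives sections \'etale-locally on \(\mX_{\rF}(R)\), for instance via the criterion that a pseudo-torsor under a smooth affine group, flat over the base with nonempty geometric fibers, is a torsor. Alternatively, one can reduce to a geometric point \(S=\Spa(C,C^{+})\) and use the classification of \(G'\)-bundles through Lemma \ref{lemStackUnitarySimilitudeVB} and \cite[Theorem III.2.4]{FarguesScholze2021geometrization}-type local triviality on the curve. The flatness of \(\mcal{P}\) is the delicate point. I would get it by noting that, \'etale-locally, the Hermitian form can be diagonalized compatibly, by Gram–Schmidt on the Morita-reduced rank \(2\) module using the \(*\)-invariant idempotent. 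This trivializes \(\mcal{P}\) directly and avoids abstract flatness.
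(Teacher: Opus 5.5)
Your route matches the paper's. You reduce to \'etale-local existence of isomorphisms, Morita-reduce via the \(*\)-invariant idempotent as in Lemma \ref{lemStackUnitarySimilitudeVB}, and then use uniqueness of the rank-one Hermitian space (Lemma \ref{lemRatHermIndep}). Your closing direct trivialization is exactly what makes the field-level Lemma \ref{lemRatHermIndep} work over \(\mX_{\rF}(R)\): split \(E\otimes\mO\) \'etale-locally so the form becomes a perfect pairing between the \(\ti{\tau}\)- and \(\ti{\tau}^{c}\)-pieces, locally trivialize the twist \(\mO(1)\), and choose dual local frames. The detour through smoothness and flatness of the isomorphism scheme is unnecessary, and would be awkward on the non-noetherian space \(\mX_{\rF}(R)\) anyway.

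The one step that does not hold as you justify it is freeness of rank one over \(\mO_{D}\otimes\bar{k}\) ``since the dimension count \(8d\) is forced by \(\dim\bar{A}=4d\)''. Since \(E\) is inert at every \(\p\in\Sigma_{p}\), the Morita-reduced bundle has a single rank \(r_{\p}\) on each \(\mX_{\rF,E_{\p}}(R)\). The total rank then only gives \(\sum_{\p}[F_{\p}:\QQ_{p}]\,r_{\p}=2d\). This forces \(r_{\p}=2\) when only one prime of \(F\) lies over \(p\), but not in general. The pairing does not help: \(F\) acts self-adjointly, so the \(\p\)-components are mutually orthogonal.

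The missing input is a comparison with the prime-to-\(p\) side.
\begin{itemize}
\item The level structure \(\alpha_{U^{p}}\) makes \(\hat{V}^{p}\bar{A}\) free of rank one over \(D\otimes\mA_{f}^{p}\).
\item The trace of \(\bar{\iota}(b)\), for \(b\in E\), on \(\mscr{E}(\bar{A})\) equals its trace on \(V_{\ell}\bar{A}\). Pointwise, this is the classical equality of characteristic polynomials of an endomorphism on the rational Dieudonn\'e module and on \(V_{\ell}\).
\item Together these give \(\sum_{\p}r_{\p}\Tr_{E_{\p}/\QQ_{p}}(b)=2\sum_{\p}\Tr_{E_{\p}/\QQ_{p}}(b)\) for all \(b\in E\). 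By density of \(E\) in \(\prod_{\p}E_{\p}\), this yields \(r_{\p}=2\) for every \(\p\).
\end{itemize}
The paper's two-line proof also leaves this implicit; it is exactly what is needed to land in the rank-\(2\) description of Lemma \ref{lemStackUnitarySimilitudeVB}. With this step added, your argument is complete.
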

\begin{proof}
It suffices to show that \'etale locally over \(\mX_{\rF}(R)\), such isomorphisms exist. 
	This follows from Lemma \ref{lemStackUnitarySimilitudeVB} and  Lemma \ref{lemRatHermIndep}. 
\end{proof}
\begin{dfn}\label{dfnReducedHTPeriod}
	We define the \emph{reduced Hodge-Tate period map} \(\bar{\pi}_{\HT,\ti{T}}:\widetilde{\Igs}_{U^{p}}^{\prime,\circ}\to \Bun_{G'_{\ti{T}}}
	\) as the \(v\)-sheafification of \[\bar{\pi}_{\HT,\ti{T}}:\widetilde{\Igs}_{U^{p}}^{\circ,pre}(G'_{\ti{T}})\to \Bun_{G'_{\ti{T}}},
	\;
	(\bar{A},\bar{\iota},\bar{\lambda},\alpha_{U^{p}})
	\mapsto \underline{\mrm{Isom}}(\mscr{E}(\bar{A}),(\Lambda_{T}\otimes \mO_{\mX_{\rF}(R)},\psi_{T})).\]
\end{dfn}
\begin{rmk}
We are actually not sure whether \(\bar{\pi}_{\HT}\)
should factor through \(\Bun_{G'_{\emptyset},\mu_{\emptyset}}\)
or not. This might be related to incoherent definite spaces in \cite{Gross2020incoherentdefinitespacesshimura}. 
\end{rmk}
\begin{notation}
We denote by \(\widetilde{\Igs}^{\prime,\circ,coh}_{U^{p}}\) the preimage of \(\Bun_{G_{\emptyset}',\mu_{\emptyset}}\) along 
\(\bar{\pi}_{\HT}:\widetilde{\Igs}_{U^{p}}^{\prime,\circ}\to \Bun_{G_{\emptyset}'}\).
\end{notation}

We can identify \(\bar{\pi}_{\HT,\ti{T}}\) for different \(T\subset \Sigma_{\infty}\). 

\begin{lem}
We have a natural isomorphism \(\bar{\pi}_{\HT,\emptyset}\cong \iota_{\ti{T}}\circ\bar{\pi}_{\HT,\ti{T}}\). In particular, under the isomorphism \(\iota_{\tilde{T}}\),
we have a Hodge-Tate period map independent of \(T\),
which we simply denote as \[\bar{\pi}_{\HT}:\widetilde{\Igs}^{\prime,\circ}_{U^{p}}\to \Bun_{G'}.\]
\end{lem}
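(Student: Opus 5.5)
Since both sides are \(v\)-sheafifications of maps defined on the prestack \(\widetilde{\Igs}^{\prime,\circ,pre}_{U^{p}}\), I will construct the isomorphism on the presheaf level. By functoriality of sheafification it then extends to \(\widetilde{\Igs}^{\prime,\circ}_{U^{p}}\). Fix \((R,R^{+})\in\Perf\) and a point \((\bar{A},\bar{\iota},\bar{\lambda},\alpha_{U^{p}})\). Write \(\mscr{E}:=\mscr{E}(\bar{A})\), a vector bundle with \(\mO_{D}\)-action and similitude pairing over \(\mX_{\rF}(R)\); it does not depend on \(T\). By Definition \ref{dfnReducedHTPeriod} the two images are
\[
\bar{\pi}_{\HT,\emptyset}(\bar{A})=\underline{\mrm{Isom}}(\mscr{E},\Lambda_{\emptyset}\otimes\mO),\qquad \bar{\pi}_{\HT,\ti{T}}(\bar{A})=\underline{\mrm{Isom}}(\mscr{E},\Lambda_{T}\otimes\mO).
\]
By Lemma \ref{lemIdenfificationBL} (3), \(\iota_{\ti{T}}\) sends the second torsor to \(\underline{\mrm{Isom}}(\mscr{E},\Lambda_{T}\otimes\mO)\times^{G'_{\ti{T}}}\mcal{G}'_{\ti{T}}\), where \(\mcal{G}'_{\ti{T}}\) is the \(G'_{\emptyset}\)-torsor induced from \(\underline{\mrm{Isom}}(\Lambda_{T}\otimes\mO,\Lambda_{\emptyset}\otimes\mO)\).

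The comparison map is composition of isomorphisms,
\[
\underline{\mrm{Isom}}(\mscr{E},\Lambda_{T}\otimes\mO)\times^{G'_{\ti{T}}}\underline{\mrm{Isom}}(\Lambda_{T}\otimes\mO,\Lambda_{\emptyset}\otimes\mO)\to\underline{\mrm{Isom}}(\mscr{E},\Lambda_{\emptyset}\otimes\mO),\qquad (f,g)\mapsto g\circ f.
\]
This is well defined on the contracted product, since \(G'_{\ti{T}}\) acts through \(\Lambda_{T}\otimes\mO\) on both factors and the action cancels. It is \(G'_{\emptyset}\)-equivariant for post-composition. A map of \(G'_{\emptyset}\)-torsors is automatically an isomorphism, so it only remains to check that both sides are torsors. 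The source is a torsor by the lemma preceding Definition \ref{dfnReducedHTPeriod} together with Lemma \ref{lemIdenfificationBL} (2); étale local nonemptiness of the \(\underline{\mrm{Isom}}\) sheaves comes from Lemma \ref{lemRatHermIndep}. The target is a torsor by the same lemma.

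Naturality in \((R,R^{+})\) is clear, since all constructions commute with pullback. Compatibility with morphisms \((f,r)\) in the groupoid also holds: a quasi-isogeny induces an isomorphism \(\mscr{E}(\bar{A}_{1})\cong\mscr{E}(\bar{A}_{2})\) scaling the pairing by \(r\), and this acts by pre-composition on both sides. The step needing the most care is the bookkeeping of similitude factors. The \(\underline{\mrm{Isom}}\) sheaves allow factors in \((\mO\otimes F)^{\times}\), and \(\psi_{T}\) and \(\psi_{\emptyset}\) must be composed consistently with the identification \((D_{T},*_{T})\cong(D_{\emptyset},*_{\emptyset})\) fixed via \(i_{T},\delta_{T}\). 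Once the same \(*\) is used throughout, composition of similitudes multiplies the factors, so the map is well defined. The final assertion of the statement then follows by defining \(\bar{\pi}_{\HT}:=\bar{\pi}_{\HT,\emptyset}\).
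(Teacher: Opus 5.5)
Your proposal is correct and is essentially the paper's argument. The paper's proof is the one-line citation of Lemma \ref{lemIdenfificationBL} (3), i.e.\ the definition of \(\iota_{\ti{T}}\) as contraction with the bitorsor \(\mcal{G}'_{\ti{T}}\); you spell out the resulting composition isomorphism \((f,g)\mapsto g\circ f\) together with its naturality and similitude bookkeeping.
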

\begin{proof}
This follows from Lemma \ref{lemIdenfificationBL} (3).
\end{proof}

\subsubsection{Cartesian diagram}
Now we state and prove the main theorem about the Cartesian diagram:
\begin{thm}\label{thmCartesianDiagram}
	There is a coherent diagram \[\begin{tikzcd}
		\ti{\mS}^{\circ}_{U^{p}}(G'_{\ti{T}})\arrow[r,"\pi_{\HT}"]\arrow[d,"\red_{\ti{T}}"]
		& \Fl_{  G_{\ti{T}}',   \mu_{\ti{T}}    }\arrow[d,"\BL_{\ti{T}}"]\arrow[r] & \Spd(\CC_{p},\mO_{\CC_{p}})
		\\ 
		\tIgs^{\prime,\circ}_{U^{p}} \arrow[r,"\bar{\pi}_{\HT}"]
		&
		\Bun_{G'_{\ti{T}}},
	\end{tikzcd}\] where \(\pi_{\HT}\) is induced by the Hodge-Tate period map of \(\mS_{U^{p}}(G'_{\ti{T}})\), and \(\red_{\ti{T}}\), \(\BL_{\ti{T}}\), and \(\bar{\pi}_{\HT}\)
	are defined respectively in Definition \ref{dfnReductionMap}, Definition \ref{dfnBLmap}
	and Definition \ref{dfnReducedHTPeriod}.

	Moreover, the left square is a Cartesian diagram, and the 
	\(G'_{\ti{T}}(\mA_{f})\)-action on 
	\(\ti{\mS}^{\circ}_{U^{p}}(G'_{\ti{T}})\) is induced by the \(G'_{\ti{T}}(\mA_{f}^{p})\)-action on \(\widetilde{\Igs}^{\prime,\circ}_{U^{p}}\) and the \(G'_{\ti{T}}(\QQ_{p})\)-action on \(\Fl_{G'_{\ti{T}},\mu_{\ti{T}}}\)..
\end{thm}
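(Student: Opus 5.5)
Following \cite{Zhang2023pel}, I would first construct the coherent diagram and then prove Cartesianness by comparing points.

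\textbf{Commutativity.} Take a point \((A,\iota,\lambda,\alpha_{p},\alpha_{U^{p}})\in\ti{\mS}^{\circ}_{U^{p}}(G'_{\ti{T}})(S^{\#})\), where \(A\) has good reduction over \(R^{\#,+}\). Its image under \(\bar{\pi}_{\HT}\circ\red_{\ti{T}}\) is the torsor of isomorphisms of \(\mscr{E}(\bar{A})\) with \((\Lambda_{T}\otimes\mO,\psi_{T})\), as in Remark \ref{rmkEACrystallineDieudonne}. The \(p\)-adic Tate module, trivialized by \(\alpha_{p}\), gives the trivial torsor \(\mcal{G}_{0}\). By the Fargues/Scholze--Weinstein description of \(p\)-divisible groups over \(\mO_{C}\), extended to perfectoid bases as in \cite[\S 2]{CaraianiScholze2024generic}, the crystalline vector bundle \(\mscr{E}(\bar{A})\) is a modification of \(T_{p}A\otimes\mO_{\mX_{\rF}}\) at the untilt \(S^{\#}\). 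This modification is given by the Hodge--Tate filtration, which is exactly \(\pi_{\HT}\) composed with the identification of Definition \ref{dfnBLmap}. The isomorphism is canonical and functorial, so it gives a coherent \(2\)-commutative square. It is \(G'_{\ti{T}}(\mA_{f})\)-equivariant because changing \(\alpha_{p}\) acts on \(\Fl\) and changing \(\alpha_{U^{p}}\) acts on \(\tIgs\).

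\textbf{Cartesianness.} I would build a map \(\ti{\mS}^{\circ}_{U^{p}}\to\tIgs^{\prime,\circ}_{U^{p}}\times_{\Bun_{G'_{\ti{T}}}}\Fl_{G'_{\ti{T}},\mu_{\ti{T}}}\) and prove it is an isomorphism. Both sides are \(v\)-sheaves; the target is \(0\)-truncated because \(\BL\) is. So it suffices to construct an inverse on the presheaf level, locally in the \(v\)-topology.

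Given \((\bar{A},\bar{\iota},\bar{\lambda},\alpha_{U^{p}})\) over \(R^{+}/\varpi\), a point of \(\Fl\) over \(S^{\#}\), and an identification of the torsors, I would proceed in three steps.
\begin{enumerate}
\item Use the torsor identification to transfer the modification \(\mcal{G}_{0}\to\mscr{E}(\bar{A})\) into a \(\BdR^{+}\)-lattice.
\item By Scholze--Weinstein's classification of \(p\)-divisible groups over \(\mO_{C}\) (and its perfectoid families, \cite[Theorem 2.10.? / \S 2]{CaraianiScholze2024generic}), this gives a \(p\)-divisible group \(\mscr{G}\) over \(R^{\#,+}\), with \(\mO_{D}\)-action and quasi-polarization, whose reduction is quasi-isogenous to \(\bar{A}[p^{\infty}]\). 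It comes with a trivialization of its Tate module, which produces \(\alpha_{p}\).
\item Apply Serre--Tate deformation theory: quasi-isogenies lift uniquely, \cite[Theorem 2.4.1]{CaraianiScholze2024generic}. This lifts \(\bar{A}\), up to prime-to-\(p\) quasi-isogeny data, to an abelian scheme \(A\) over \(R^{\#,+}\) with \(A[p^{\infty}]\) isogenous to \(\mscr{G}\). The PEL structures and \(\alpha_{U^{p}}\) transport along.
\end{enumerate}

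It remains to check the Kottwitz signature condition for \(\ti{T}\). This is the key point of Remark \ref{rmkDifferentFromZhang}. The Lie algebra of \(A\) is the graded piece determined by the Hodge--Tate filtration, and the point lies in \(\Fl_{G'_{\ti{T}},\mu_{\ti{T}}}\), whose bound is \(\mu_{\ti{T}}\). The \(\mO_{E}\)-action on \(\Lie(A)\) therefore has exactly the prescribed characteristic polynomial. Lemma \ref{lemIdenfificationBL} keeps the bundle identifications compatible. Finally, the two constructions are mutually inverse, by uniqueness of lifts in Serre--Tate theory and full faithfulness of the Scholze--Weinstein functor.

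\textbf{Main obstacle.} I expect the hardest step to be working over general perfectoid \((R,R^{+})\) rather than geometric points. There one needs the families version of the classification of \(p\)-divisible groups, and must handle the sheafification in the definition of the good reduction locus. I would first prove bijectivity on \(\Spd(C,C^{+})\)-points. I would then upgrade using that both sides are \(0\)-truncated and that the comparison map is quasi-compact and separated, so that a \(v\)-local construction over product-of-points covers suffices, as in \cite{Zhang2023pel}.
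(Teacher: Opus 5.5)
The skeleton of your argument matches the paper's. Coherence comes from identifying \(\mscr{E}(\bar{A})\) with the modification of \(T_{p}A\otimes\mO_{\mX_{\rF}(R)}\) cut out by the Hodge--Tate filtration. Cartesianness comes from building an inverse over products of points, lifting by Serre--Tate, and reading off the Kottwitz condition from \(\mu_{\ti{T}}\).

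\textbf{The gap is step 2.} That step carries all the content of the inverse, and you assert it rather than prove it.

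\emph{First reading.} Suppose you mean the classification of \(p\)-divisible groups over \(\mO_{C}\) by pairs \((T,W)\). Its input is only the point of \(\Fl_{G'_{\ti{T}},\mu_{\ti{T}}}\), namely \((\Lambda,\ker(p))\). Your step 1 adds nothing here: the \(\BdR^{+}\)-lattice at \(S^{\#}\) is just that point. What \(i:\mscr{E}(\bar{A})\cong\BL_{\ti{T}}(p)\) contributes is the identification with \(\mscr{E}(\bar{A})\) away from \(S^{\#}\), and step 2 never uses it. So ``whose reduction is quasi-isogenous to \(\bar{A}[p^{\infty}]\)'' is exactly what remains to be shown.

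\emph{Second reading.} Suppose you mean the Scholze--Weinstein description of Rapoport--Zink spaces by modifications. That is stated for a fixed \(p\)-divisible group over a perfect field, on \((C,\mO_{C})\)-points. The version needed here has \(\bar{A}\) varying over \(R^{+}/\varpi\) with \(R^{+}=\prod_{i}C_{i}^{+}\). This is precisely the family statement you list as the main obstacle but do not supply. Your fallback is to check on geometric points and then use that the comparison map is quasi-compact and separated, but you never establish those properties.

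\textbf{The missing idea.} Following the proof of \cite[Theorem 25.1.2]{ScholzeWeinstein2020berkeley}, build the Breuil--Kisin--Fargues module directly from \((p,i,\bar{A})\), so that the link with \(\bar{A}\) is built in.
\begin{enumerate}
\item Pull \(\BL_{\ti{T}}(p)\) back to \(\mcal{Y}_{(0,\infty)}(R)\). There it is a modification of \(\Lambda\otimes\mO\) at all \(\varphi^{n}(S^{\#})\), \(n\in\ZZ\).
\item Keep only the legs with \(n\ge 0\). This gives \(M(p)\), which equals \(\Lambda\otimes\mO\) near \(0\) and so extends over \(\mcal{Y}_{[0,\infty)}(R)\).
\item Near \(\infty\), \(M(p)\) is identified via \(i\) with \(\mscr{E}(\bar{A})\). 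This extends canonically over \(\mcal{Y}_{(r,\infty]}(R)\) using the crystalline \(\varphi\)-module \(H^{1}_{\cris}(\bar{A}/A_{\cris}(R^{+}/\varpi))^{\vee}[1/p]\).
\item Gluing gives a bundle on \(\mcal{Y}_{[0,\infty]}(R)\). By \cite[Theorem 3.8]{Kedlaya2020some} and \cite[Proposition 2.7]{Gleason2021geometric}, it comes from a finite projective \(W(R^{+})\)-module. With its Frobenius this is a minuscule BKF module, hence a \(p\)-divisible group \(\mcal{G}\) by \cite[Theorem 17.5.2]{ScholzeWeinstein2020berkeley}.
\item By construction, the base change of this module to \(B^{+}_{\cris}(R^{+}/\varpi)\) is the Dieudonné module of \(\bar{A}[p^{\infty}]\). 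This gives the quasi-isogeny between \(\bar{A}[p^{\infty}]\) and the reduction of \(\mcal{G}\) modulo \(\varpi\), which is what Serre--Tate needs.
\end{enumerate}

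If you prefer to keep your route, you would have to prove two things in families over \(R^{+}/\varpi\). First, the \(p\)-divisible-group version of the coherence isomorphism between \(\mscr{E}\) of the reduction of \(\mscr{G}\) and \(\BL_{\ti{T}}(p)\). Second, full faithfulness, up to isogeny, of the functor from \(p\)-divisible groups over \(R^{+}/\varpi\) to vector bundles on \(\mX_{\rF}(R)\). Only then does \(i\) produce the quasi-isogeny; the proposal addresses neither.
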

\begin{proof}	
	We first check that the diagram is coherent.
	If we are given 
	\(S^{\#}=\Spa(R^{\#},R^{\#,+})\in\Spd(\CC_{p})(S)\)
	and \((A,\iota,\lambda,\alpha_{p},\alpha_{U^{p}})\in \mS^{\circ}_{U^{p}}(G'_{\ti{T}})(S^{\#})\),
	then there is a sequence \[0\to T_{p}A\otimes \mO_{\mX_{\rF}(R)}\to \mscr{E}(\bar{A})\to i_{S^{\#},*}\Lie(A)\to 0.\] On the other hand, we define \(\mscr{E}'(A):=\BL_{\ti{T}}\circ \pi_{\HT}(A,\iota,\lambda,\alpha_{p},\alpha_{U^{p}}) \), which  is the vector bundle induced by the Hodge-Tate sequence \[0\to \Lie(A)(1)\to T_{p}A\otimes R^{\#}\to \omega_{A}\to 0\]
	via the pull-back diagram  \[\begin{tikzcd}
		\mscr{E}'(A)\arrow[r,hook] \arrow[d] & T_{p}A\otimes\mO_{\mX_{\rF}(R)}(1) \arrow[d]
		\\ i_{S^{\#},*}\Lie(A)\arrow[r,hook]
		&{T_{p}A(-1)\otimes i_{S^{\#},*}}\mO_{S^{\#}},
	\end{tikzcd}\]
	Moreover, via de Rham comparison, there is a natural isomorphism \(\mscr{E}(\bar{A})\cong \mscr{E}'(A)\),
	and by functoriality,
	the isomorphism is compatible with the \(\mO_{D}\)-action and  the polarization,
	which provides the 2-morphism making the diagram coherent.

	Now we have defined a morphism \[\mS^{\circ}_{U^{p}}(G'_{\ti{T}})\to \tIgs_{U^{p}}^{\prime,\circ}\times_{\Bun_{G'_{\ti{T}}}}\Fl_{  G_{\ti{T}}',   \mu_{\ti{T}}    }.\]
	To verify that this is an isomorphism, we construct an inverse. 
	Since we are working over \(v\)-stacks, it suffices to verify the statement for ``products of points" by \cite[Remark 1.3]{Gleason2024specialization}, that is, \(S=\Spa(R,R^{+})\)
	where 
	\(R^{+}=\prod_{i}C_{i}^{+}\)
	and \(R=R^{+}[1/\varpi]\), 
	with \(C_{i}^{+}\subset C_{i}\) being a complete algebraically closed perfectoid field over \(\bar{\FF}_{p}\), and \(\varpi=(\varpi_{i})\) for  some
	pseudo-uniformizer \(\varpi_{i}\) in \(C_{i}^{+}\).
	We know that \(\tIgs_{U^{p}}^{\prime,\circ}\times_{\Bun_{G'_{\ti{T}}}}\Fl_{  G_{\ti{T}}',   \mu_{\ti{T}}    }\) is the \(
    v
    \)-stackification of the prestack sending 
	\(S=\Spa(R,R^+)\in \Perf\) with an untilt \(S^{\#}=\Spa(R^{\#},R^{\#,+})\) to the groupoid whose objects are given by \((\bar{A},\bar{\lambda},\bar{\iota},\alpha_{U^{p}},p:\Lambda_{\ti{T}}\otimes R^{\#}\twoheadrightarrow L,i)\)
	where
	
	(1) \((\bar{A},\bar{\lambda},\bar{\iota},\alpha_{U^{p}})\in\widetilde{\Igs}_{U^{p}}^{\prime,\circ}(S)\);

	(2) \(p\)
	is a surjection of \(R^{\#}\)-modules,
	where \(L\) is a finite projective \(R^{\#}\)-modules equippped
	with an action of \(\mO_{D}\),
	such that \(\psi_{\ti{T}}(\ker(p),\ker(p))=0\),
	and the action of \(\mO_{E}\subset \mO_{D}\)
	on 
	\(L\) satisfies the Kottwitz signature condition for \(\ti{T}\);

	(3) \(i\) is an \(\mO_{D}\)-linear similitude symplectic isomorphism \(i:\mscr{E}(\bar{A})\cong \BL_{\ti{T}}(p), \)
	where we recall concretely that \(\BL_{\ti{T}}(p)\)
	is defined via the pull-back diagram \[\begin{tikzcd}
		\BL_{\ti{T}}(p)\arrow[r,hook] \arrow[d] & \Lambda_{\ti{T}}\otimes\mO_{\mX_{\rF}(R)}(1) \arrow[d]
		\\ i_{S^{\#},*}\ker(p)(-1)\arrow[r,hook]
		&{\Lambda_{\ti{T}}(-1)\otimes i_{S^{\#},*}}\mO_{S^{\#}},
	\end{tikzcd}\]
	with the induced \(\mO_{D}\)-module symplectic structure.

Given such a datum \((\bar{A},\bar{\lambda},\bar{\iota},\alpha_{U^{p}},p,i)\), we are going to construct a lift of \(\bar{A}\) to \(R^{+}\). The construction is very similar to the proof of \cite[Theorem 25.1.2]{ScholzeWeinstein2020berkeley}.
	Given such \(p:\Lambda_{\ti{T}}\otimes R^{\#}\to L\),
	we can 
	construct a vector bundle over \(\mcal{Y}_{[0,\infty)}(R^{+})=\Spa(W(R^{+}))\backslash V([\varpi])\)
	as follows: 
we have an exact sequence \[0\to \Lambda_{\ti{T}}\otimes \mO_{\mX_{\rF}(R)}\to \mrm{BL}_{\ti{T}}(p)\to i_{S^{\#},*}\ker(p)\to 0.\]
Pulling back along \(p:\mcal{Y}_{(0,\infty)}(R)\to \mX_{\rF}(R)\),
we have \[0\to \Lambda_{\ti{T}}\otimes \mO_{\mcal{Y}_{(0,\infty)}(R)}\to p^{*}\mrm{BL}_{\ti{T}}(p)\to \bigoplus_{n\in \ZZ} i_{\varphi^{n}(S^{\#}),*}\ker(p)\to 0.\]
Then we can consider the pull-back along 
\( \bigoplus_{n\in \NN} i_{\varphi^{n}(S^{\#}),*}\ker(p)\hookrightarrow  \bigoplus_{n\in \ZZ} i_{\varphi^{n}(S^{\#}),*}\ker(p)\)
to obtain a new vector bundle \[0\to \Lambda_{\ti{T}}\otimes \mO_{\mcal{Y}_{(0,\infty)}(R)}\to M(p)\to  \bigoplus_{n\in \NN} i_{\varphi^{n}(S^{\#}),*}\ker(p)\to 0\] 
over \(\mcal{Y}_{(0,\infty)}(R)\).
In particular, \(M(p)|_{\mcal{Y}_{(0,r)}}\cong \Lambda_{\ti{T}}\otimes \mO_{\mcal{Y}_{(0,r)}(R)}\)
for \(r\) sufficiently small,
and thus we can glue \(M(p)\)
and \(\Lambda_{\ti{T}}\otimes \mO_{\mcal{Y}_{[0,r)}(R)}\)
to obtain a vector bundle over \(\mcal{Y}_{[0,\infty)}(R)\), which we also denote as \(M(p)\).
We also note that there is a natural \[\varphi_{M(p)}:\varphi^{*}M(p)[\frac{1}{\varphi(\xi)}]\to M(p)[\frac{1}{\varphi(\xi)}]\] such that \(\varphi_{M(p)}(M(p))\subset \frac{1}{\varphi(\xi)}M(p)\).

On the other hand, for \(r\) sufficiently large,
we have \(M(p)|_{\mcal{Y}_{(r,\infty)}(R)}\cong p^{*}\BL_{\ti{T}}(p)|_{\mcal{Y}_{(r,\infty)}(R)}\cong p^{*}\mscr{E}(\bar{A})|_{\mcal{Y}_{(r,\infty)}(R)}\), where the last map is induced by \(i\). Now
\(p^{*}\mscr{E}(\bar{A})\) has a unique extension from \(\mcal{Y}_{(r,\infty)}(R)\)
to \(\mcal{Y}_{(r,\infty]}(R)\) induced by the \(\varphi\)-module \(H_{\cris}^{1}(\bar{A}/A_{\cris}^{+}(R^{+}/\varpi))^{\vee}[1/p]\) over \(B_{\cris}^{+}(R^{+}/\varpi)\). We denote this extension as \(M'(\bar{A})\). 

So we can glue \(M(p)\)
and \(M'(\bar{A})\)
 together 
to obtain a vector bundle \(M(\bar{A},p,i)\)
over \(\mcal{Y}_{[0,\infty]}(R)\). 
Now by \cite[Theorem 3.8]{Kedlaya2020some} and \cite[Proposition 2.7]{Gleason2021geometric},
	\(\tilde{\mscr{E}}(p)\)
	is the restriction of a unique finite projective \(W(R^{+})\)-module, which we also denote as \(M(\bar{A},p,i)\). 
	Moreover, we have an induced isomorphism \(\varphi_{M(\bar{A},p,i)}:M(\bar{A},p,i)[\frac{1}{\xi}]\cong M(\bar{A},p,i)[\frac{1}{\varphi(\xi)}]\),
	and \[\varphi_{M(\bar{A},p,i)}(M(\bar{A},p,i))\subset \frac{1}{\varphi(\xi)}\cdot \Lambda_{\ti{T}}\otimes W(R^{+})\subset \frac{1}{\varphi(\xi)}\cdot M(\bar{A},p,i).\]
	Hence by \cite[Theorem 17.5.2]{ScholzeWeinstein2020berkeley},
	this corresponds to a unique \(p\)-divisible group \(\mcal{G}\)
	over \(R^{+}\). By our construction,
	the base change of \(M(\bar{A},p,i)\)
	to \(B_{\cris}^{+}(R^{+}/\varpi)\)
	will coincide with the (covariant) Dieudonn\'e module of \(\bar{A}[p^{\infty}]\).
	Thus we have a \(p\)-quasi-isogeny \[\bar{A}[p^{\infty}]\cong \mcal{G}\times_{R^{+}}R^{+}/\varpi.\]

By Serre-Tate theory, we can find a unique abelian scheme \(A\) (up to \(p\)-quasi-isogeny) over \(R^{+}\) such that \(A[p^{\infty}]\) is \(p\)-quasi-isogeny to \(\mcal{G}\), and
\(A\) admits a \(p\)-quasi-isogeny \(A\times_{R^{+}}R^{+}/\varpi\cong \bar{A}\).
By functoriality, we have an action \(\iota\) of \(\mO_{D_{T}}\) on \(A\),
which coincides with \(\bar{\iota}\)
after reduction. Similarly, we have a polarization induced by the symplectic structure on \(\Lambda_{\ti{T}}\) and on \(\bar{A}\). Note 
that  the induced map \(f:A\to A^{\vee}\) over \(R^{\#,+}\) satisfies that \(f=f^{\vee}\)
and that \(f/\varpi:\bar{A}\to \bar{A}^{\vee}\) is a polarization over \(R^{\#,+}/\varpi\), so \(f\)
is also a polarization.

Finally, we note that we can identify \(\Lambda_{\ti{T}}[1/p]\) with \(T_{p}A[1/p]\), and \(\Lambda_{\ti{T}}\otimes R^{\#}\twoheadrightarrow L\)
can be identified with the Hodge-Tate sequence \(T_{p}A\otimes R^{\#}\twoheadrightarrow \Lie(A)\). This follows from the same proof of \cite[Theorem 14.1.1]{ScholzeWeinstein2020berkeley}. In particular, we can choose the representative in the \(p\)-isogeny class such that we  have a fixed isomorphism \(T_{p}A\cong \Lambda_{\ti{T}}\). It also follows that the desired Kottwitz signature condition holds.
The \(U^{p}\)-level structure on \(\bar{A}\) also induces naturally 
a \(U^{p}\)-level structure on \(\bar{A}\).

The construction 
is clearly functorial. 
\end{proof} 
\begin{cor}\label{corSurjectiveShToIgs}
The reduction map \(\mrm{red}_{\ti{T}}\)
factors through the open substack \(\widetilde{\Igs}^{\prime,\circ,coh}_{U^{p}}\). Moreover, \(\mrm{red}_{\emptyset}\) induces a \(v\)-surjection \(\mrm{red}_{\emptyset}:\ti{\mS}_{U^{p}}^{\circ}(G'_{\emptyset})\to \widetilde{\Igs}_{U^{p}}^{\prime,\circ,coh}\).
\end{cor}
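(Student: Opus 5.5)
The corollary is a formal consequence of the Cartesian diagram in Theorem \ref{thmCartesianDiagram} together with the fact that the Beauville--Laszlo morphism for \(G'_{\emptyset}\) has image \(\Bun_{G'_{\emptyset},\mu_{\emptyset}}\) and is a \(v\)-surjection onto it (\cite[Proposition III.3.1]{FarguesScholze2021geometrization}). The plan is to prove the factorization and the surjectivity separately, using the identification \(\iota_{\ti{T}}\) of Lemma \ref{lemIdenfificationBL} throughout.

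\textbf{Factorization.} By the coherence of the diagram in Theorem \ref{thmCartesianDiagram}, \(\bar{\pi}_{\HT}\circ\red_{\ti{T}}\cong \BL_{\ti{T}}\circ\pi_{\HT}\). The morphism \(\BL_{\ti{T}}\) lands in \(\Bun_{G'_{\ti{T}},\mu_{\ti{T}}}\), because the modification of the trivial torsor is bounded by \(\mu_{\ti{T}}\). Lemma \ref{lemIdenfificationBL}(3) says that \(\iota_{\ti{T}}\) carries \(\Bun_{G'_{\ti{T}},\mu_{\ti{T}}}\) isomorphically onto an open substack of \(\Bun_{G'_{\emptyset},\mu_{\emptyset}}\), and \(\bar{\pi}_{\HT}\) is defined compatibly with \(\iota_{\ti{T}}\). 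Hence \(\bar{\pi}_{\HT}\circ\red_{\ti{T}}\) lands in \(\Bun_{G'_{\emptyset},\mu_{\emptyset}}\). By the definition of \(\widetilde{\Igs}^{\prime,\circ,coh}_{U^{p}}\) as the preimage of \(\Bun_{G'_{\emptyset},\mu_{\emptyset}}\), which is an open substack, the map \(\red_{\ti{T}}\) factors through it.

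\textbf{Surjectivity for \(T=\emptyset\).} The left square of Theorem \ref{thmCartesianDiagram} is Cartesian, so after the factorization we get
\[
\ti{\mS}^{\circ}_{U^{p}}(G'_{\emptyset})\cong \widetilde{\Igs}^{\prime,\circ,coh}_{U^{p}}\times_{\Bun_{G'_{\emptyset},\mu_{\emptyset}}}\Fl_{G'_{\emptyset},\mu_{\emptyset}},
\]
using that \(\Bun_{G'_{\emptyset},\mu_{\emptyset}}\to\Bun_{G'_{\emptyset}}\) is a monomorphism. Thus \(\red_{\emptyset}\) is the base change of \(\BL_{\emptyset}:\Fl_{G'_{\emptyset},\mu_{\emptyset}}\to \Bun_{G'_{\emptyset},\mu_{\emptyset}}\). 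That map is a \(v\)-surjection by \cite[Proposition III.3.1]{FarguesScholze2021geometrization}, and \(v\)-surjections are stable under base change, so \(\red_{\emptyset}\) is a \(v\)-surjection.

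\textbf{Main obstacle.} The delicate point is whether \(\BL\) is surjective onto all of \(\Bun_{G,\mu}\), or only onto the open part given by \(B(G,\mu)\). This is exactly the statement of \cite[Proposition III.3.1]{FarguesScholze2021geometrization}, namely that \(\Bun_{G,\mu}\) is the image of the \(\mu\)-bounded modifications of the trivial torsor. We also need that this image is the same open substack used to define \(\widetilde{\Igs}^{\prime,\circ,coh}_{U^{p}}\), which Lemma \ref{lemIdenfificationBL}(3) provides.
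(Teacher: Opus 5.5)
Your proof is correct and follows essentially the same route as the paper. The paper also obtains the factorization from the coherence of the diagram in Theorem~\ref{thmCartesianDiagram} combined with Lemma~\ref{lemIdenfificationBL}(3). It then gets the $v$-surjectivity of $\mathrm{red}_{\emptyset}$ as the base change, along the Cartesian square, of the $v$-surjection $\mathrm{BL}_{\emptyset}\colon \Fl_{G'_{\emptyset},\mu_{\emptyset}}\to \Bun_{G'_{\emptyset},\mu_{\emptyset}}$ supplied by \cite[Proposition III.3.1]{FarguesScholze2021geometrization}.
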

\begin{proof}
This follows from Lemma \ref{lemIdenfificationBL}, Theorem \ref{thmCartesianDiagram} and the fact that \(\BL_{\ti{T}}\) is a \(v\)-surjection onto \(\Bun_{G'_{\ti{T}},\mu_{\ti{T}}}\) 
by \cite[Proposition III.3.1]{FarguesScholze2021geometrization}.
\end{proof}
\begin{notation}
We define a \(v\)-stack \(\widetilde{\Igs}^{\prime,\circ,coh}\) as \[
\widetilde{\Igs}^{\prime,\circ,coh}:=\varprojlim_{U^{p}}\widetilde{\Igs}^{\prime,\circ,coh}_{U^{p}},\;\ti{\mS}^{\circ}(G'_{\ti{T}}):=\varprojlim_{U^{p}}\ti{\mS}^{\circ}_{U^{p}}(G'_{\ti{T}})
\] Then by Theorem \ref{thmUnitaryCartesianDiagram}, we have a Cartesian diagram \[
\begin{tikzcd}
\ti{\mS}^{\circ}(G'_{\ti{T}})\arrow[r,"\pi_{\HT}"]\arrow[d,"\red_{\ti{T}}"]
		& \Fl_{  G_{\ti{T}}',   \mu_{\ti{T}}    }\arrow[d,"\BL_{\ti{T}}"]\arrow[r] & \Spd(\CC_{p},\mO_{\CC_{p}})
		\\ 
		\widetilde{\Igs}^{\prime,\circ,coh}|_{\Bun_{G'_{\ti{T}},\mu_{\ti{T}}}} 
		\arrow[r,"\bar{\pi}_{\HT}"]
		&
		\Bun_{G'_{\ti{T}}},
\end{tikzcd}
\]
\end{notation}

\begin{cor}\label{corIgusaStackPELGeneralDfn}
For any \(T\subset\Sigma_{\infty}\), \(\tIgs^{\prime,\circ,coh}|_{\Bun_{G'_{\ti{T}},\mu_{\ti{T}}}}\)
is an Igusa stack for \(\ti{\mS}^{\circ}(G'_{\ti{T}})_{E_{0,\ti{T}}}\)
in the sense of Definition \ref{dfnGeneralIgus}, which is applicable by Remark \ref{rmkActuallyPEL}. 

Here \(E_{0,\ti{T}}\subset \CC\)
denotes the reflex field of \((G'_{\ti{T}},h_{\ti{T}})\), \( E_{0,\ti{T},\p_{\ti{T}}} \) denotes the \( p \)-adic completion along \( E_{0,\ti{T}}\subset\CC\cong^{\iota} \bar{\QQ}_{p} \), and \(\ti{\mS}^{\circ}(G'_{\ti{T}})_{E_{0,\ti{T}}}\)
is the model of \(\ti{\mS}^{\circ}(G'_{\ti{T}})\) over \(\Spd(E_{0,\ti{T},\p_{\ti{T}}})\) as in Notation \ref{notationGenealShimura}.
\end{cor}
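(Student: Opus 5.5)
The plan is to check the four items of Definition \ref{dfnGeneralIgus} one at a time for the open set \(U=\ti{\mS}^{\circ}(G'_{\ti{T}})_{E_{0,\ti{T}}}\) and the stack \(\Igs^{U}:=\tIgs^{\prime,\circ,coh}|_{\Bun_{G'_{\ti{T}},\mu_{\ti{T}}}}\). Here the restriction is taken along the open immersion \(\iota_{\ti{T}}:\Bun_{G'_{\ti{T}},\mu_{\ti{T}}}\hookrightarrow\Bun_{G'_{\emptyset},\mu_{\emptyset}}\) of Lemma \ref{lemIdenfificationBL}(3). By Remark \ref{rmkActuallyPEL}, \(U\) is a disjoint union of PEL-type Shimura varieties, so the formalism applies.

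\emph{Step 1: the data (1)--(3).} The prime-to-\(p\) Hecke action on \(\Igs^{U}\) is obtained by letting \(G'_{\ti{T}}(\mA_{f}^{p})\cong G'_{\emptyset}(\mA_{f}^{p})\) act on the level structures \(\alpha_{U^{p}}\) and passing to the limit over \(U^{p}\). This action preserves the fibres of \(\bar{\pi}_{\HT}\), because \(\bar{\pi}_{\HT}\) only remembers \(\mscr{E}(\bar{A})\). Hence it preserves the open substack lying over \(\Bun_{G'_{\ti{T}},\mu_{\ti{T}}}\), and \(\bar{\pi}_{\HT}\) descends to \([\Igs^{U}/G'_{\ti{T}}(\mA_{f}^{p})]\to\Bun_{G'_{\ti{T}},\mu_{\ti{T}}}\). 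This gives (1) and (3). For (2), Corollary \ref{corSurjectiveShToIgs} says that \(\red_{\ti{T}}\) lands in \(\tIgs^{\prime,\circ,coh}\), and the coherent diagram of Theorem \ref{thmCartesianDiagram} shows that \(\bar{\pi}_{\HT}\circ\red_{\ti{T}}\cong\BL_{\ti{T}}\circ\pi_{\HT}\) lies over \(\Bun_{G'_{\ti{T}},\mu_{\ti{T}}}\). The map \(\red_{\ti{T}}\) is \(G'_{\ti{T}}(\mA_{f}^{p})\)-equivariant by Definition \ref{dfnReductionMap}. It is invariant under \(G'_{\ti{T}}(\QQ_{p})\), since changing the \(p\)-level changes \(A\) only by a \(p\)-quasi-isogeny, and such a change is an isomorphism in the Igusa groupoid.

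\emph{Step 2: the Cartesian square (4).} Theorem \ref{thmCartesianDiagram}, passed to the limit over \(U^{p}\) as in the Notation above, gives the Cartesian square at the level of spaces. I then quotient the top row by \(G'_{\ti{T}}(\mA_{f})\) and the bottom-left corner by \(G'_{\ti{T}}(\mA_{f}^{p})\). The last assertion of Theorem \ref{thmCartesianDiagram} says that the \(G'_{\ti{T}}(\mA_{f})\)-action on \(U\) is induced from the actions on \(\Igs^{U}\) and on \(\Fl\). Because of this, taking quotients commutes with the fibre product, and the square of quotient stacks in Definition \ref{dfnGeneralIgus}(4) is still Cartesian.

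Two further points need care. First, the definition uses models over \(E_{0,\ti{T},\p_{\ti{T}}}\), whereas Theorem \ref{thmCartesianDiagram} is stated over \(\Spd(\CC_{p})\). I would rerun its proof with the untilt \(S^{\#}\) taken over \(E_{0,\ti{T},\p_{\ti{T}}}\) instead of \(\CC_{p}\). Nothing in that argument used \(\CC_{p}\) beyond having an untilt: the Serre--Tate lifting and the Scholze--Weinstein classification work over any such \(S^{\#}\). The flag variety then descends to \(E_{0,\ti{T}}\), because the Kottwitz signature condition is defined over the reflex field. Second, the quotient by \(\Delta_{U^{p}}\) in (\ref{alignFromtiStoSQuotientInfi}) is absorbed because the similitude rescaling \((\lambda,\ti{c})\mapsto(x\lambda,x\ti{c})\) is already allowed among the morphisms of the Igusa groupoid.

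\emph{Step 3: the Frobenius condition.} This is the step I expect to be the only non-formal one. I would imitate \cite{Zhang2023pel}. On the prestack \(\widetilde{\Igs}^{\prime,\circ,pre}\), absolute Frobenius sends \((\bar{A},\dots)\) to \((\bar{A}^{(p)},\dots)\). The relative Frobenius \(F_{\bar{A}}:\bar{A}\to\bar{A}^{(p)}\) is an isogeny of \(p\)-power degree, hence an isomorphism in the groupoid, where all quasi-isogenies are allowed. It is compatible with \(\bar{\iota}\), with \(\bar{\lambda}\) (which \(F\) scales by \(p\), absorbed by the factor \(r\in\QQ^{\times,+}\)), and with \(\alpha_{U^{p}}\), since \(F\) induces the identity on prime-to-\(p\) Tate modules. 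Naturality of \(F_{\bar{A}}\) gives a \(2\)-isomorphism between \(\mathrm{Frob}\) and \(\id\), and this passes to the \(v\)-sheafification and to the open substack. The remaining checks are that the scaling of the polarization is indeed absorbed by \(r\), and that these \(2\)-isomorphisms are compatible with the Hecke action.
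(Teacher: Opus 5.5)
Your proposal is correct and has the same skeleton as the paper's proof. Items (1)--(4) of Definition \ref{dfnGeneralIgus} are read off from Theorem \ref{thmCartesianDiagram}, using Corollary \ref{corSurjectiveShToIgs} and Lemma \ref{lemIdenfificationBL}(3) to see that everything lies over \(\Bun_{G'_{\ti{T}},\mu_{\ti{T}}}\). The only real content is then the Frobenius condition, and there your route differs.

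The paper uses that \(R^{+}\) is perfect. Up to the isomorphism \(R^{+}/\varpi^{1/p}\cong R^{+}/\varpi\), the map \(x\mapsto x^{p}\) on \(R^{+}/\varpi\) is the quotient \(R^{+}/\varpi\twoheadrightarrow R^{+}/\varpi^{1/p}\). Frobenius on groupoids thus becomes base change along a nilpotent thickening, which Serre--Tate rigidity (\cite[Theorem 2.4.1]{CaraianiScholze2024generic}) identifies with the identity.

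You instead write down the natural transformation directly. Pulling back along \(\mathrm{Frob}_{S}\) sends \(\bar{A}\) to \(\bar{A}^{(p)}\) over the same \(R^{+}/\varpi\), and the relative Frobenius \(F_{\bar{A}}\) is a natural \(p\)-power isogeny, hence an isomorphism in the groupoid.

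\begin{itemize}
\item \emph{What your route buys.} It is more elementary and gives an explicit \(2\)-isomorphism, but you must check each piece of PEL data by hand. The checks you flag do go through. Since \(F^{*}\bar{\lambda}^{(p)}=V_{\bar{A}^{\vee}}\circ F_{\bar{A}^{\vee}}\circ\bar{\lambda}=p\bar{\lambda}\), you take \(r=p\). The Weil-pairing relation then multiplies \(\ti{c}\) by the same \(p\), consistent with \(c_{2}=rc_{1}\). Level structures are unchanged because \(F\) induces the canonical identification on prime-to-\(p\) Tate modules.
\item \emph{What the paper's route buys.} It transports the whole datum at once through an equivalence, so no component-wise verification is needed. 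In exchange it relies on Serre--Tate and on perfectness.
\item \emph{Descent to \(E_{0,\ti{T},\p_{\ti{T}}}\).} You address this point, which the paper leaves implicit. It can also be done formally rather than by rerunning the proof: the comparison map is defined over \(\Spd(E_{0,\ti{T},\p_{\ti{T}}})\), and \(\Spd(\CC_{p})\to\Spd(E_{0,\ti{T},\p_{\ti{T}}})\) is a \(v\)-cover.
\end{itemize}

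One aside should be dropped. You claim that the \(\Delta_{U^{p}}\)-quotient of (\ref{alignFromtiStoSQuotientInfi}) is absorbed into the Igusa groupoid; this is false. In Definition \ref{dfnIgusaStack} the scalar \(r\) takes values only in \(\QQ^{\times,+}\). So the rescaling \((\bar{\lambda},\ti{c})\mapsto(x\bar{\lambda},x\ti{c})\) by \(x\in\mO_{F,(p)}^{\times,+}\) is in general not realized by a morphism of the groupoid. This is exactly why the paper constructs a genuinely nontrivial \(\Delta_{U^{p}}\)-action (Proposition \ref{propDeltaAction}, Corollary \ref{corExtendDeltaAction}) and only later passes to \([\tIgs'_{U^{p}}/\Delta_{U^{p}}]\). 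For the present corollary nothing needs absorbing, since the statement concerns the tilded space \(\ti{\mS}^{\circ}(G'_{\ti{T}})\).
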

\begin{proof}
By Theorem \ref{thmCartesianDiagram}, it suffices to verify the assumption about the absolute Frobenius. This is because for \((R,R^{+})\in\Perf\),  
the absolute Frobenius induces \(R^{+}/\varpi\to R^{+}/\varpi,x\mapsto x^{p}\),
which coincide with \(R^{+}/\varpi\twoheadrightarrow R^{+}/\varpi^{1/p}\). Then the induced functor on the groupoids coincide with base change along \(R^{+}/\varpi^{p}\to R^{+}/\varpi^{1/p}\), which is equivalent to the identity functor by Serre-Tate theory (\cite[Theorem 2.4.1]{CaraianiScholze2024generic}).
\end{proof}
By \cite{Kim2025uniquenessfunctorialityigusastacks}, we then have an Igusa stack for \(\ti{\mS}(G'_{\ti{T}})_{E_{0,\ti{T}}}\), i.e. beyond the good reduction locus. 
\begin{cor}	\label{corIgusaStackPELGeneralDfnExtend}
There is a \(v\)-stack \(\tIgs'_{U^{p}}\to \Bun_{G'_{\emptyset},\mu_{\emptyset}}\) that contains
\(\tIgs^{\prime,\circ,coh}\) as an open subspace, such that if we put \[\tIgs':=\varprojlim_{U^{p}}\tIgs'_{U^{p}},\] then for any \(T\subset \Sigma_{\infty}\), \(\tIgs'|_{\Bun_{G'_{\ti{T}},\mu_{\ti{T}}}}\)
gives an Igusa stack for \(\ti{\mS}(G'_{\ti{T}})_{E_{0},\ti{T}}\). 
\end{cor}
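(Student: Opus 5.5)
The plan is to build \(\tIgs'_{U^{p}}\) by gluing: the good-reduction Igusa stack \(\tIgs^{\prime,\circ,coh}_{U^{p}}\) (Corollary \ref{corIgusaStackPELGeneralDfn}) is extended over the missing locus using the existence and functoriality results of \cite{Kim2025uniquenessfunctorialityigusastacks} for Hodge-type and PEL-type data (Remark \ref{rmkActuallyPEL}). Write \(\mathcal{I}_{\ti{T}}\) for the Igusa stack of \(\ti{\mS}(G'_{\ti{T}})_{E_{0,\ti{T}}}\) given by that theory. By Proposition \ref{propGoodReducForNonOrd}, for \(T\ne\emptyset\) we have \(\ti{\mS}^{\circ}=\ti{\mS}\). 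Hence by the uniqueness theorem \cite[Theorem 10.13]{Kim2025uniquenessfunctorialityigusastacks}, \(\mathcal{I}_{\ti{T}}\cong \tIgs^{\prime,\circ,coh}|_{\Bun_{G'_{\ti{T}},\mu_{\ti{T}}}}\), and nothing new is needed for \(T\neq\emptyset\). The only case requiring an extension is \(T=\emptyset\), where \(\ti{\mS}^{\circ}(G'_{\emptyset})\) misses part of the ordinary locus \(\pi_{\HT}^{-1}(\Fl(\QQ_{p}))\).

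\emph{Step 1.} I would first show that \(\mathcal{I}_{\emptyset}\) contains \(\tIgs^{\prime,\circ,coh}\) as an open substack. The restriction of an Igusa stack to a \(G(\mA_{f})\)-stable open \(U\subset\mS\) should be an Igusa stack for \(U\). Concretely, the open \(U=\ti{\mS}^{\circ}(G'_{\emptyset})\) is the preimage under the \(v\)-surjection \(\red\) of an open substack \(V\subset\mathcal{I}_{\emptyset}\), by descent along the groupoid of Remark \ref{rmkIgsToGlobalUniformization}; here one uses that \(U\) is stable under the equivalence relation \(U\times_{\Igs}U\cong U\times_{\Bun}\Fl\), which needs checking. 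The Cartesian square restricts, so \(V\) is an Igusa stack for \(U\). Uniqueness then gives \(V\cong\tIgs^{\prime,\circ,coh}\), compatibly with \(\bar{\pi}_{\HT}\). Set \(\tIgs':=\mathcal{I}_{\emptyset}\), with finite-level versions \(\tIgs'_{U^{p}}:=\mathcal{I}_{\emptyset}/U^{p}\).

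\emph{Step 2.} Next I would treat general \(T\) by restricting along the open immersion \(\Bun_{G'_{\ti{T}},\mu_{\ti{T}}}\hookrightarrow\Bun_{G'_{\emptyset},\mu_{\emptyset}}\) of Lemma \ref{lemIdenfificationBL}(3). I claim that \(\tIgs'|_{\Bun_{G'_{\ti{T}},\mu_{\ti{T}}}}\) equals \(\tIgs^{\prime,\circ,coh}|_{\Bun_{G'_{\ti{T}},\mu_{\ti{T}}}}\), so Corollary \ref{corIgusaStackPELGeneralDfn} applies. For this it suffices that \(\bar{\pi}_{\HT}\) maps the complement \(\tIgs'\setminus\tIgs^{\prime,\circ,coh}\) into strata disjoint from \(\Bun_{G'_{\ti{T}},\mu_{\ti{T}}}\) for \(T\ne\emptyset\). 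That complement is covered, via \(\red\), by points of \(\ti{\mS}(G'_{\emptyset})\setminus\ti{\mS}^{\circ}\), which lie over \(\Fl(\QQ_{p})\) by Proposition \ref{propGoodReducForNonOrd}. Their \(\BL\)-images are therefore non-basic (\(\mu\)-ordinary) \(G'_{\emptyset}\)-bundles. The image of \(\Bun_{G'_{\ti{T}},\mu_{\ti{T}}}\) consists of classes in \(B(G'_{\emptyset})\) determined by the \(\ti{T}\)-shifted Kottwitz data of Lemma \ref{lemIdenfificationBL}(2). One checks these never contain the \(\mu_{\emptyset}\)-ordinary class when \(T\neq\emptyset\); the reason is that the Kottwitz invariant differs by \((r_{\p,T}\bmod 2)\), or else the Newton polygon of a \(\mu_{\ti{T}}\)-bounded bundle cannot be the \(\mu_{\emptyset}\)-ordinary one.

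\emph{Main obstacle.} I expect the combinatorial check in Step 2, that the ordinary stratum of \(\Bun_{G'_{\emptyset},\mu_{\emptyset}}\) misses \(\iota_{\ti{T}}(\Bun_{G'_{\ti{T}},\mu_{\ti{T}}})\), to be the delicate part. When all \(r_{\p,T}\) are even the Kottwitz invariants agree, so I would instead compare Newton points directly: a \(\mu_{\ti{T}}\)-modification has a compact factor at \(\tau\in T\) and forces a slope-\(\frac12\) part, so it cannot be \(\mu_{\emptyset}\)-ordinary. If that comparison fails, the fallback is to define \(\tIgs'\) by directly gluing \(\tIgs^{\prime,\circ,coh}\) with \(\mathcal{I}_{\emptyset}\) along their common open, and to check the Cartesian property for each \(\ti{T}\) stratum by stratum over \(\Bun\).
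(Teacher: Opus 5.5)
Your route is the paper's. You produce the Igusa stack for \(T=\emptyset\) from \cite[Corollary 11.5]{Kim2025uniquenessfunctorialityigusastacks} (via Remark \ref{rmkActuallyPEL}), and embed \(\tIgs^{\prime,\circ,coh}\) as an open substack using \cite[Theorem 10.13]{Kim2025uniquenessfunctorialityigusastacks} and Corollary \ref{corIgusaStackPELGeneralDfn}. You then use Proposition \ref{propGoodReducForNonOrd} to see that the two agree away from the \(\mu_{\emptyset}\)-ordinary locus, and conclude because \(\Bun_{G'_{\ti{T}},\mu_{\ti{T}}}\) misses that locus for \(T\neq\emptyset\). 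Taking the map from functoriality first, rather than restricting, also removes the saturation check you flag: its base change along \(\red\) is \(\ti{\mS}^{\circ}\subset\ti{\mS}\) by the two Cartesian squares.

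The gap is in the disjointness step, which you rightly single out and which the paper asserts without proof. Neither justification you offer works.

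\emph{Kottwitz invariants.} They cannot separate anything here. By Lemma \ref{lemIdenfificationBL}(3), \(\iota_{\ti{T}}\) sends \(\Bun_{G'_{\ti{T}},\mu_{\ti{T}}}\) into \(\Bun_{G'_{\emptyset},\mu_{\emptyset}}\), and every point of the latter has Kottwitz invariant \(\mu_{\emptyset}^{\sharp}\). The class \((r_{\p,T}\bmod 2)\) is the invariant of the basic twist \(b^{bsc}_{\ti{T}}\), which exactly compensates the difference between \(\mu_{\ti{T}}^{\sharp}\) and \(\mu_{\emptyset}^{\sharp}\).

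\emph{Slope \(\frac12\).} No slope-\(\frac12\) part is forced. In the picture of Lemma \ref{lemIdentifyQuaternion}, take \(\#\Sigma_{\infty/\p}=3\) and \(r_{\p,T}=2\). Modifying \(\mF_{T}|_{\mX_{\rF,F_{\p}}}=\mO(1)^{\oplus 2}\) at the one remaining point can give \(\mO(1)\oplus\mO(2)\), whose slopes over \(\QQ_{p}\) are \(\frac13,\frac23\).

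What is forced is positivity of the smallest slope. Push forward along \(\Bun_{G'_{\emptyset}}\to\Bun_{\Res_{E/\QQ}\GL_{2}}\) (Lemma \ref{lemStackUnitarySimilitudeVB}).
\begin{itemize}
\item Since \(D_{T}\otimes_{F}E\cong M_{2}(E)\), the torsor \(\mcal{G}'_{\ti{T}}\) becomes the trivial rank-\(2\) bundle.
\item For \(\tau\in T\) the signatures are \(0\) and \(2\) at the two embeddings of \(E\) above \(\tau\). So the \(\mu_{\ti{T}}\)-bounded modification is trivial at one of them and central of type \((1,1)\) at the other, i.e. a twist by \(\mO(x)\) for a degree-one point \(x\).
\item Hence at a place \(\p\) with \(r_{\p,T}\geq 1\), the resulting bundle on \(\mX_{\rF,E_{\p}}\) is \(\mcal{E}_{0}(D)\), where \(\deg D=r_{\p,T}\) and \(\mO^{\oplus 2}\subset\mcal{E}_{0}\) has torsion cokernel.
\item The minimal-slope Harder--Narasimhan quotient of \(\mcal{E}_{0}\) therefore receives a nonzero map from \(\mO^{\oplus 2}\). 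So \(\mcal{E}_{0}\) has all slopes \(\geq 0\), and \(\mcal{E}_{0}(D)\) has all slopes \(\geq r_{\p,T}>0\).
\item The \(\mu_{\emptyset}\)-ordinary bundle is obtained by modifying along an \(E_{\p}\)-rational line at all \(2\#\Sigma_{\infty/\p}\) points. At \(\p\) it is \(\mO\oplus\mO(2\#\Sigma_{\infty/\p})\), which has a slope-\(0\) summand.
\end{itemize}
Replace your Newton-polygon heuristic with this argument and your Step 2, hence the whole proof, goes through.
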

\begin{proof}
We first consider \(\ti{\mS}(G'_{\emptyset})\). By \cite[Corollary 11.5]{Kim2025uniquenessfunctorialityigusastacks} and Remark \ref{rmkActuallyPEL}, there is a \(v\)-stack \(\tIgs'_{U^{p}}\) such that \(\tIgs':=\varprojlim_{U^{p}}\tIgs'_{U^{p}}\)
gives an Igusa stack for \(\ti{\mS}(G'_{\ti{T}})_{E_{0,\ti{T}}}\) for \(T=\emptyset\).
By \cite[Theorem 10.13]{Kim2025uniquenessfunctorialityigusastacks} and Corollary \ref{corIgusaStackPELGeneralDfn}, there is a canonical open immersion \(\tIgs^{\prime,\circ,coh}_{U^{p}}\subset \tIgs^{\prime}_{U^{p}}\), 
which is an isomorphism away from the 
\(\mu_{\emptyset}\)-ordinary locus by Proposition \ref{propGoodReducForNonOrd}. 
Since \(\Bun_{G'_{\ti{T}},\mu_{\ti{T}}}\) is disjoint from the \(\mu_{\emptyset}\)-locus for \(T\ne \emptyset\), \(\tIgs'|_{\Bun_{G'_{\ti{T}},\mu_{\ti{T}}}}\) gives an Igusa stack for \(\ti{\mS}(G'_{\ti{T}})_{E_{0,\ti{T}}}\)
by Corollary \ref{corIgusaStackPELGeneralDfn}.
\end{proof}

\subsection{Igusa stack for unitary Shimura varieties}\label{subsecUnitaryIgusa}
In this subsection, we construct Igusa stacks for \(\mS(G'_{\ti{T}})_{E_{0,\ti{T}}}\) using (\ref{alignFromtiStoSQuotientInfi}).
 These are Igusa stacks for Shimura varieties of abelian type, and we borrow ideas from \cite{DanielsHoftenKimZhang2024igusa} and \cite{Kim2025uniquenessfunctorialityigusastacks}. 
 Recall that 
\(\Delta_{U}:=\mO^{\times,+}_{F,(p)}/(\ZZ_{p}^{\times,+}\mrm{Nm}_{F/E}(E^{\times}\cap U))\), and \(\Delta_{U}\) acts on \(\ti{\mS}_{U}(G'_{\ti{T}})_{E_{0,\ti{T}}}\) as in Proposition \ref{propModuliShGpp}. We have \(\Delta_{U^{p}}:=\varprojlim_{U^{p}U_{p}}\Delta_{U^{p}U_{p}}\),
which is a locally profinite group, and admits a natural dense inclusion \(\mO_{F,(p)}^{\times,+}\hookrightarrow \Delta_{U^{p}}\).
\begin{prop}\label{propDeltaAction}
We have an action of \(\mO_{F,(p)}^{\times,+}/\ZZ_{(p)}^{\times,+}\)
on \(\widetilde{\Igs}^{\prime,\circ}_{U^{p}}\) via \[x:(\bar{A},\iota,\bar{\lambda},\alpha_{U^{p}}=(\alpha_{1}^{0},\ti{c}_{1})\!\!\!\!\mod{U^{p}})
\mapsto (\bar{A},\iota,x\bar{\lambda},(\alpha_{1}^{0},x\ti{c}_{1})\!\!\!\!\mod{U^{p}})
\] for \(x\in \mO^{\times,+}_{F,(p)}\), which preserve the subspace \(\widetilde{\Igs}^{\prime,\circ,coh}_{U^{p}}\).

Moreover, the action extends naturally to an action of \(\Delta_{U^{p}}\)
on \(\widetilde{\Igs}^{\prime,\circ,coh}_{U^{p}}\) such that for any \(T\subset \Sigma_{\infty} \) the reduction map \(\mrm{red}_{\ti{T}}:\ti{\mS}^{\circ}_{U^{p}}(G'_{\ti{T}})_{E_{0,\ti{T}}}\to \widetilde{\Igs}^{\prime,\circ,coh}_{U^{p}}\) is \(\Delta_{U^{p}}\)-equivariant. 
\end{prop}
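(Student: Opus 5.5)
We begin with the action of \(\mO_{F,(p)}^{\times,+}/\ZZ_{(p)}^{\times,+}\) at the level of the prestack \(\widetilde{\Igs}^{\prime,\circ,pre}_{U^{p}}\). For \(x\in\mO_{F,(p)}^{\times,+}\), the datum \((\bar{A},\bar{\iota},x\bar{\lambda},(\alpha_{1}^{0},x\ti{c}_{1}))\) is again an object. Indeed, \(x\) is totally positive and lies in the centre of \(\mO_{D}\) (it sits in \(F\subset E\)), so \(x\bar{\lambda}\) is still an \(\mO_{D}\)-quasi-polarization with the same Rosati involution. The compatibility square with \(\psi_{T,F}\) and \(\psi^{Weil}_{F}\) still commutes after scaling both \(\bar{\lambda}\) and \(\ti{c}\) by \(x\). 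The rule is functorial in the morphisms \((f,r)\) of Definition \ref{dfnIgusaStack}, so it passes to the \(v\)-sheafification. Elements \(x\in\ZZ_{(p)}^{\times,+}\subset\QQ^{\times,+}\) act trivially, because the pair \((\id,x)\) is an isomorphism in the groupoid. To see that \(\widetilde{\Igs}^{\prime,\circ,coh}_{U^{p}}\) is preserved, we check \(\bar{\pi}_{\HT}\circ x\cong\bar{\pi}_{\HT}\) up to the automorphism of \(\Bun_{G'}\) given by the central element \(x\in T_{F}\subset G'\). Using Lemma \ref{lemStackUnitarySimilitudeVB}, this sends \((\mscr{E},\mcal{L},\lambda)\mapsto(\mscr{E},\mcal{L},x\lambda)\), which is isomorphic to the identity on isomorphism classes. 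Moreover, \(\Bun_{G',\mu_{\emptyset}}\) is a union of Harder–Narasimhan/Kottwitz strata, which are insensitive to rescaling the similitude. So the open substack is stable.

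Next we extend the action to \(\Delta_{U^{p}}\) by a density and continuity argument combined with the Cartesian diagram. On \(\ti{\mS}^{\circ}_{U^{p}}(G'_{\ti{T}})\) the same formula defines an action of \(\mO_{F,(p)}^{\times,+}\) commuting with \(\red_{\ti{T}}\) (Definition \ref{dfnReductionMap}), and this action extends to \(\Delta_{U^{p}}\) by passing to the limit over \(U_{p}\) in Proposition \ref{propModuliShGpp}. The element \(x\) factors through \(\Delta_{U^{p}U_{p}}\) on each finite level. On the level of Theorem \ref{thmCartesianDiagram}, the \(\Delta_{U^{p}}\)-action on the top-left corner is compatible with the trivial action on \(\Fl_{G'_{\ti{T}},\mu_{\ti{T}}}\) and on \(\Bun_{G'_{\ti{T}}}\), the latter via the central twist described above. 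We therefore define the \(\Delta_{U^{p}}\)-action on \(\widetilde{\Igs}^{\prime,\circ,coh}_{U^{p}}\) by descent along the \(v\)-surjection \(\red_{\emptyset}\) of Corollary \ref{corSurjectiveShToIgs}, presenting \(\widetilde{\Igs}^{\prime,\circ,coh}_{U^{p}}\) as the geometric realization of the Čech nerve of \(\red_{\emptyset}\). By the Cartesian diagram, the terms of this nerve are \(\ti{\mS}^{\circ}_{U^{p}}(G'_{\emptyset})\times_{\Bun_{G'}}\Fl\times\cdots\times\Fl\), exactly as in Remark \ref{rmkIgsToGlobalUniformization} and Corollary \ref{corFunctorCenter}. \(\Delta_{U^{p}}\) acts on each term through the first factor, compatibly with all face maps.

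It remains to check that on the dense subgroup \(\mO_{F,(p)}^{\times,+}\) the descended action agrees with the explicit one. This holds because \(\red_{\emptyset}\) is equivariant for the explicit action. The equivariance of \(\red_{\ti{T}}\) for arbitrary \(T\) then follows from the identification \(\widetilde{\Igs}^{\circ}_{U^{p}}(G'_{\ti{T}})\cong\widetilde{\Igs}^{\prime,\circ}_{U^{p}}\) and the \(T\)-independence of \(\bar{\pi}_{\HT}\). Using the same descent through \(\red_{\ti{T}}\), the two actions coincide on \(\mO_{F,(p)}^{\times,+}\). Since both actions are continuous, as morphisms of locally profinite group actions on \(v\)-stacks, they agree on \(\Delta_{U^{p}}\).

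The main obstacle is the descent step. Acting on the Čech nerve is harmless, but we must verify that the action of \(\Delta_{U^{p}}\) on \(\ti{\mS}\times_{\Igs}\ti{\mS}\) is diagonal-compatible, i.e. that \(\delta\in\Delta_{U^{p}}\) maps pairs with equal image in \(\Igs\) to pairs with equal image. For \(\delta\in\mO_{F,(p)}^{\times,+}\) this is clear from the explicit formula. For general \(\delta\), we would approximate \(\delta\) by elements of \(\mO_{F,(p)}^{\times,+}\) modulo \(\Nm(E^{\times}\cap U^{p}U_{p})\) for shrinking \(U_{p}\). We then use that the diagonal \(0\)-truncated groupoid is a limit of its finite-level versions, as in Lemma \ref{lemCenterFactorization}.
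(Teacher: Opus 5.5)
Your proposal is correct and follows essentially the paper's argument: descend along the $v$-surjective reduction map, identify the terms of its \v{C}ech nerve with $\ti{\mS}^{\circ}_{U^{p}}(G'_{\ti{T}})\times_{\Bun}\Fl\times_{\Bun}\cdots\times_{\Bun}\Fl$ via the Cartesian diagram, let $\Delta_{U^{p}}$ act through the Shimura-variety factor, and take the simplicial colimit. Two things you do are clarifications rather than changes of route: you check explicitly, by finite-level approximation of $\delta$ by elements of $\mO_{F,(p)}^{\times,+}$, that the face map forgetting the Shimura factor is $\Delta_{U^{p}}$-equivariant, which the paper's proof leaves implicit (the analogous check is made explicit only in Corollary \ref{corExtendDeltaAction}); and you descend along $\red_{\emptyset}$ rather than a general $\red_{\ti{T}}$, which ensures the descent covers all of $\widetilde{\Igs}^{\prime,\circ,coh}_{U^{p}}$.
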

\begin{proof}
By definition, the action of \(\mO_{F,(p)}^{\times,+}/\ZZ_{(p)}^{\times,+}\) on \(\widetilde{\Igs}^{\prime,\circ}_{U^{p}}\) is compatible with that on \(\ti{\mS}_{U^{p}}(G'_{\ti{T}})_{E_{0,\ti{T}}}\) along \(\mrm{red}_{\ti{T}}\). 
For \(i\in\NN\), let \(\mrm{Isog}^{\circ,i}\) denote the \(i\)-th fiber product  of \(\ti{\mS}^{\circ}_{U^{p}}(G'_{\ti{T}})_{E_{0,\ti{T}}}\)
over \(\tIgs^{\prime,\circ,coh}_{U^{p}}\). Then \(\mrm{Isog}^{\circ,i}\)
is also equipped with an action of \(\mO_{F,(p)}^{\times,+}/\ZZ_{(p)}^{\times,+}\). By Corollary \ref{corIgusaStackPELGeneralDfn}, we have an isomorphism \[\mrm{Isog}^{\circ,i}_{U^{p}}\cong \ti{\mS}_{U^{p}}(G'_{\ti{T}})_{E_{0,\ti{T}}}\times_{\Bun_{G}}\Fl_{G,\mu}^{(i-1)/\Bun_{G}},
\] which is \(\mO_{F,(p)}^{\times,+}/\ZZ_{(p)}^{\times,+}\)-equivariant. Since the action extends to \(\Delta_{U^{p}}\) on \(\ti{\mS}_{U^{p}}(G'_{\ti{T}})_{E_{0,\ti{T}}}\), it also extends on \(\mrm{Isog}^{\circ,i}_{U^{p}}\). Thus by taking simplicial colimit, we have an extension of the action of \(\Delta_{U^{p}}\)
on \(\tIgs^{\prime,\circ,coh}_{U^{p}}\).
\end{proof}
We also want to extend this action beyond the good reduction locus.
\begin{cor}\label{corExtendDeltaAction}
The action of \(\Delta_{U^{p}}\)
on \(\tIgs^{\prime,\circ,coh}_{U^{p}}\) extends to \(\tIgs^{\prime}_{U^{p}}\)
such that \(\tIgs'_{U^{p}}\to \Bun_{G'_{\emptyset},\mu_{\emptyset}}\) factors through \(\tIgs_{U^{p}}\to \tIgs_{U^{p}}/\Delta_{U^{p}}\),
and for any \(T\subset \Sigma_{\infty} \) the reduction map \(\mrm{red}_{\ti{T}}:\ti{\mS}_{U^{p}}(G'_{\ti{T}})_{E_{0,\ti{T}}}\to \widetilde{\Igs}^{\prime}_{U^{p}}\) is \(\Delta_{U^{p}}\)-equivariant. 
\end{cor}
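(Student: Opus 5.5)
We would extend the action by the same simplicial-descent argument as in Proposition \ref{propDeltaAction}, now applied to the Igusa stack \(\tIgs'_{U^{p}}\) of Corollary \ref{corIgusaStackPELGeneralDfnExtend} for \(T=\emptyset\).

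The starting point is Remark \ref{rmkIgsToGlobalUniformization}. Since \(\red_{\emptyset}:\ti{\mS}_{U^{p}}(G'_{\emptyset})_{E_{0,\emptyset}}\to \tIgs'_{U^{p}}\) is a \(v\)-surjection, \(\tIgs'_{U^{p}}\) is the geometric realization of the Čech nerve of \(\red_{\emptyset}\). By the Cartesian diagram in the definition of the Igusa stack, the \(i\)-th term is
\[
\mrm{Isog}^{i}_{U^{p}}\cong \ti{\mS}_{U^{p}}(G'_{\emptyset})_{E_{0,\emptyset}}\times_{\Bun_{G'}}\Fl_{G',\mu'}^{(i-1)/\Bun_{G'}}.
\]
We then proceed in three steps.

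\begin{enumerate}
\item[(a)] The group \(\Delta_{U^{p}}\) acts on \(\ti{\mS}_{U^{p}}(G'_{\emptyset})_{E_{0,\emptyset}}\) by Proposition \ref{propModuliShGpp} and (\ref{alignFromtiStoSQuotientInfi}). This action commutes with \(\pi_{\HT}\): multiplying \(\lambda\) and \(\ti{c}\) by \(x\) does not change the Hodge–Tate filtration. It also commutes with the map to \(\Bun_{G'}\), since rescaling the polarization yields an isomorphic similitude torsor. We therefore let \(\Delta_{U^{p}}\) act on \(\mrm{Isog}^{i}_{U^{p}}\) through the first factor and trivially on the flag-variety factors.
\item[(b)] We check that these actions commute with the face and degeneracy maps of the Čech nerve. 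Taking the simplicial colimit then gives a \(\Delta_{U^{p}}\)-action on \(\tIgs'_{U^{p}}\) for which \(\red_{\emptyset}\) is equivariant by construction.
\item[(c)] On the open substack \(\tIgs^{\prime,\circ,coh}_{U^{p}}\), the Čech nerve of the surjection \(\red_{\emptyset}\) from Corollary \ref{corSurjectiveShToIgs} is the restriction of the one above. So the new action restricts to the action of Proposition \ref{propDeltaAction}.
\end{enumerate}

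The factorization of \(\tIgs'_{U^{p}}\to\Bun_{G'_{\emptyset},\mu_{\emptyset}}\) through the quotient by \(\Delta_{U^{p}}\) follows because \(\bar{\pi}_{\HT}\) is \(\Delta_{U^{p}}\)-invariant. This can be checked on the cover: \(\bar{\pi}_{\HT}\circ\red_{\emptyset}=\BL\circ\pi_{\HT}\), and \(\pi_{\HT}\) is invariant. The invariance must be supplied as a \(2\)-isomorphism compatible with the simplicial structure; it is given by the same rescaling isomorphism of similitude torsors.

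For \(T\ne\emptyset\), the reduction map \(\red_{\ti{T}}\) lands in \(\tIgs'|_{\Bun_{G'_{\ti{T}},\mu_{\ti{T}}}}\). This locus lies inside the good reduction part, by Proposition \ref{propGoodReducForNonOrd} and the proof of Corollary \ref{corIgusaStackPELGeneralDfnExtend}. Equivariance of \(\red_{\ti{T}}\) therefore follows from Proposition \ref{propDeltaAction} together with the compatibility established in step (c).

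The main obstacle is step (b) in the non-good-reduction (ordinary) locus. There we have no moduli description of \(\tIgs'_{U^{p}}\), so the \(\Delta_{U^{p}}\)-action on the Čech nerve must be built purely from the Cartesian description. Showing that it is simplicial needs the uniqueness statement \cite[Theorem 10.13]{Kim2025uniquenessfunctorialityigusastacks}, or equivalently the \(0\)-truncatedness of \(\mrm{Isog}^{i}\) noted in Remark \ref{rmkIgsToGlobalUniformization}. The key point is that, because of \(0\)-truncatedness, equalities of morphisms suffice: no higher coherences are needed. The alternative route would be to apply the functoriality in \cite[Theorem 10.13]{Kim2025uniquenessfunctorialityigusastacks} to each automorphism \(x\) of the Shimura datum tower and then extend by continuity from the dense subgroup \(\mO_{F,(p)}^{\times,+}\). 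That route looks harder, because it is not clear how to control the continuity needed to pass from the dense subgroup to all of \(\Delta_{U^{p}}\). We would therefore use the Čech-nerve argument and invoke uniqueness only to confirm compatibility on overlaps.
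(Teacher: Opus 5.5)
Your overall plan matches the paper's: build a \(\Delta_{U^{p}}\)-action on the Čech nerve of \(\red_{\emptyset}\) and pass to the geometric realization. But step (b) is where all the content of the corollary lies, and you do not supply an argument for it. Write \(\mrm{Isog}^{2}_{U^{p}}\cong\ti{\mS}_{U^{p}}(G'_{\emptyset})\times_{\Bun_{G'}}\Fl_{G',\mu'}\) and let \(\Delta_{U^{p}}\) act through the first factor; then compatibility with \(\pr_{1}\) is automatic. The second face map \(\pr_{2}\), however, is not a projection in this description. Via the Cartesian square, it rebuilds the second point from the image of the first point in \(\tIgs'_{U^{p}}\) together with the flag-variety coordinate. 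So the identity \(\pr_{2}\circ x=x\circ\pr_{2}\) says exactly that \(x\) acting on \(\ti{\mS}_{U^{p}}(G'_{\emptyset})\) is compatible with an action on the Igusa stack that has not yet been defined. Away from the good reduction locus there is no moduli interpretation of the quasi-isogenies with which to check this. The \(0\)-truncatedness of Remark \ref{rmkIgsToGlobalUniformization} correctly reduces everything to this single equality of morphisms at level \(2\), with no higher coherences, but it does not prove the equality. Nor does \cite[Theorem 10.13]{Kim2025uniquenessfunctorialityigusastacks}: it concerns Igusa stacks and morphisms of Shimura data, and you do not say how it would give this identity. You in fact set aside the functoriality route yourself.

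The missing idea is closedness plus density. Write \(E\) for the local reflex field \(E_{0,\ti{T},\p_{\ti{T}}}\) with \(T=\emptyset\), and \(k\) for its residue field. Equivariance of \(\pr_{2}\) is equivalent to \((\pr_{2},\pr_{2}\circ x)\) factoring through the graph \(\Gamma_{x}\subset\ti{\mS}_{U^{p}}(G'_{\emptyset})\times_{\Spd(E)}\ti{\mS}_{U^{p}}(G'_{\emptyset})\), which is a closed condition. The factorization holds over the good reduction part of \(\mrm{Isog}^{2}_{U^{p}}\) by Proposition \ref{propDeltaAction}. That part is dense, since it contains the preimage of the non-ordinary strata (Proposition \ref{propGoodReducForNonOrd}).

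To use density on the \(v\)-sheaf \(\mrm{Isog}^{2}_{U^{p}}\), the paper follows Kim. First, the \((\varphi,1)\)-action reduces the problem to the fiber over \(\Spd(E)\times_{\Spd(k)}\Spd(E)\). Next, \cite[Proposition 10.8]{Kim2025uniquenessfunctorialityigusastacks} reduces it to the fiber over the diagonal. By \cite[Proposition 10.2]{Kim2025uniquenessfunctorialityigusastacks}, that fiber is a limit of smooth rigid varieties over \(E\). There, \cite[Lemma 2.1.4]{Conrad1999irreducible} extends the factorization from the dense good reduction locus to everything.

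Once this is in place, the rest of your proposal goes through as you describe: steps (a) and (c), the factorization through the quotient by \(\Delta_{U^{p}}\), and the case \(T\neq\emptyset\).
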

\begin{proof}
We will use notation from the proof of Proposition \ref{propDeltaAction}. For \(i\in\NN\), let \(\mrm{Isog}^{i}\) denote the \(i\)-th fiber product  of \(\ti{\mS}_{U^{p}}(G'_{\ti{T}})_{E_{0,\ti{T}}}\)
over \(\tIgs^{\prime}_{U^{p}}\). 
By Corollary \ref{corIgusaStackPELGeneralDfnExtend}, \[\mrm{Isog}^{2}_{U^{p}}\cong \ti{\mS}_{U^{p}}(G'_{\ti{T}})_{E_{0,\ti{T}}}\times_{\Bun_{G}}\Fl_{G,\mu}\cong \Fl_{G,\mu}\times_{\Bun_{G}}\ti{\mS}_{U^{p}}(G'_{\ti{T}})_{E_{0,\ti{T}}}.\]
Using the first isomorphism, we have an action 
of \(\Delta_{U^{p}}\)
on \(\mrm{Isog}_{U^{p}}^{2}\) induced from the first factor \(\ti{\mS}_{U^{p}}(G'_{\ti{T}})_{E_{0,\ti{T}}}\).
We claim that the projection to the second factor \[\mrm{pr}_{2}:\mrm{Isog}^{2}_{U^{p}}
\to \ti{\mS}_{U^{p}}(G'_{\ti{T}})_{E_{0,\ti{T}}}
\] is \(\Delta_{U^{p}}\)-equivariant. Equivalently, we need to show that for \(x\in\Delta_{U^{p}}\), 
the map \[(\pr_{2},\pr_{2}\circ x):\mrm{Isog}^{2}_{U^{p}}\to \ti{\mS}_{U^{p}}(G'_{\ti{T}})_{E_{0,\ti{T}}}\times_{\Spd(E_{0,\ti{T},\p_{\ti{T}}})} \ti{\mS}_{U^{p}}(G'_{\ti{T}})_{E_{0,\ti{T}}}
\] factors through the closed subspace given by the graph of \(x\) \[\Gamma_{x}:\ti{\mS}_{U^{p}}(G'_{\ti{T}})_{E_{0,\ti{T}}}\to 
\ti{\mS}_{U^{p}}(G'_{\ti{T}})_{E_{0,\ti{T}}}\times_{\Spd(E_{0,\ti{T},\p_{\ti{T}}})} \ti{\mS}_{U^{p}}(G'_{\ti{T}})_{E_{0,\ti{T}}}
\]

Consider the natural map \(\mrm{Isog}^{2}_{U^{p}}\to \Spd(E_{0,\ti{T},\p_{\ti{T}}})\times \Spd{E_{0,\ti{T},\p_{\ti{T}}}}\). Then we can prove the factorization using the argument of \cite{Kim2025uniquenessfunctorialityigusastacks}. More precisely, using the action of \((\varphi,1)\)
on \(\mrm{Isog}_{U^{p}}^{2}\), it suffices to show the factorization when restricted to the fiber of \(\mrm{Isog}^{2}_{U^{p}}\) over \(\Spd(E_{0,\ti{T},\p_{T}})\times_{\Spd(k_{\ti{T}})} \Spd(E_{0,\ti{T},\p_{T}})\)
where \(k_{\ti{T}}\)
denotes the residue field of \(E_{0,\ti{T},\p_{T}}\).
Now by \cite[Proposition 10.8]{Kim2025uniquenessfunctorialityigusastacks}, 
it suffices to show the factorization when restricted to the fiber of \(\mrm{Isog}^{2}_{U^{p}}\) over \(\Delta:\Spd(E_{0,\ti{T},\p_{T}})\hookrightarrow \Spd(E_{0,\ti{T},\p_{T}})\times \Spd(E_{0,\ti{T},\p_{T}})\), and the latter is a limit of smooth rigid varieties over \(\Spa(E_{0,\ti{T},\p_{T}})\) by \cite[Proposition 10.2]{Kim2025uniquenessfunctorialityigusastacks}. Moreover, we have the factorization over the good reduction locus by Proposition \ref{propDeltaAction}, and thus by \cite[Lemma 2.1.4]{Conrad1999irreducible}, we have the desired factorization.
\end{proof}

\begin{dfn}
We define the \emph{Igusa stack} for \(\mS_{U^{p}}(G'_{\ti{T}})\) 
to be the \(v\)-stack defined
by \[\Igs_{U^{p}}^{\prime}:=\left[
\widetilde{\Igs}_{U^{p}}^{\prime}/\Delta_{U^{p}}
\right], 
\] where the \(\Delta_{U^{p}}\)-action is given 
by 
Corollary \ref{corExtendDeltaAction}.
\end{dfn}
\begin{thm}\label{thmUnitaryCartesianDiagram}
We define \(\Igs^{\prime}:=\varprojlim_{U^{p}}\Igs^{\prime}_{U^{p}}\).
Then for any \(T\subset\Sigma_{\infty}\), \(\Igs^{\prime}|_{\Bun_{G'_{\ti{T}},\mu_{\ti{T}}}}\)	
gives an Igusa stack for \(\mS(G'_{\ti{T}})_{E_{0,\ti{T}}}\) (Definition \ref{dfnGeneralIgus}). 

\end{thm}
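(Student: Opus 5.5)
The plan is to deduce the statement from Corollary \ref{corIgusaStackPELGeneralDfnExtend}, which produces an Igusa stack \(\tIgs'|_{\Bun_{G'_{\ti{T}},\mu_{\ti{T}}}}\) for the PEL cover \(\ti{\mS}(G'_{\ti{T}})_{E_{0,\ti{T}}}\), by taking the quotient by \(\Delta_{U^{p}}\). At each finite prime-to-\(p\) level, (\ref{alignFromtiStoSQuotientInfi}) gives \(\mS_{U^{p}}(G'_{\ti{T}})\cong[\ti{\mS}_{U^{p}}(G'_{\ti{T}})/\underline{\Delta_{U^{p}}}]\). By Corollary \ref{corExtendDeltaAction}, \(\red_{\ti{T}}\) is \(\Delta_{U^{p}}\)-equivariant and \(\bar{\pi}_{\HT}\) factors through \(\tIgs'_{U^{p}}/\Delta_{U^{p}}\). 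Passing to quotients therefore yields \(\red:\mS_{U^{p}}(G'_{\ti{T}})\to\Igs'_{U^{p}}\) and \(\bar{\pi}_{\HT}:\Igs'_{U^{p}}\to\Bun_{G'_{\ti{T}},\mu_{\ti{T}}}\). The Hodge-Tate period map \(\pi_{\HT}\) on \(\ti{\mS}\) is \(\Delta_{U^{p}}\)-invariant, since \(\Delta\) only rescales the polarization and \(\ti{c}\) and leaves the Hodge-Tate filtration unchanged, so it descends as well.

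The first thing to check is Cartesianness at level \(U^{p}\). Base change of the Cartesian square for \(\ti{\mS}\) along the \(\Delta_{U^{p}}\)-torsor \(\tIgs'_{U^{p}}\to\Igs'_{U^{p}}\) gives \(\Igs'_{U^{p}}\times_{\Bun}\Fl\cong[(\tIgs'_{U^{p}}\times_{\Bun}\Fl)/\Delta_{U^{p}}]\cong[\ti{\mS}_{U^{p}}/\Delta_{U^{p}}]\cong\mS_{U^{p}}(G'_{\ti{T}})\). This uses that \(\Delta_{U^{p}}\) acts trivially on \(\Fl\) and on \(\Bun\), and that quotients commute with fiber products over a base on which the group acts trivially. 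Since the action on \(\ti{\mS}\) is free, the quotient stays a diamond.

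Next I would pass to the limit over \(U^{p}\) to get \(\Igs'=\varprojlim\Igs'_{U^{p}}\). Cartesianness survives because limits commute with fiber products. Here I must check that the transition maps \(\Igs'_{U^{p}}\to\Igs'_{U^{\prime p}}\) are compatible with the maps \(\Delta_{U^{p}}\to\Delta_{U'^{p}}\); this holds at the level of \(\ti{\mS}\), and by the simplicial colimit description of Remark \ref{rmkIgsToGlobalUniformization} it holds on Igusa stacks too. The \(G'_{\ti{T}}(\mA^{p}_{f})\)-action has to be constructed on the limit. The Hecke action on \(\tIgs'\) does not commute with \(\Delta\) individually: the element \(g\) changes \(\ti{c}\) by \(\nu(g)\), and \(\Delta_{U^{p}}\) maps to \(\Delta_{gU^{p}g^{-1}}\). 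So I would define the action on the tower \(\{\Igs'_{U^{p}}\}\), mirroring the tower action on \(\mS_{U^{p}}\), and check equivariance of \(\red\) by descent from \(\ti{\mS}\). An alternative is to build \(\Igs'\) directly as the simplicial colimit of the Čech nerve of \(\mS(G'_{\ti{T}})\to\mS(G'_{\ti{T}})\times_{\Bun}\Fl\)-type groupoids. Transport of structure from \(\ti{\mS}\) then gives the action automatically, and one can compare the result with the quotient construction.

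I expect this bookkeeping of the prime-to-\(p\) Hecke action through the \(\Delta\)-quotient in the limit to be the main obstacle. The remaining axiom is that Frobenius equals the identity in \(\pi_{0}\End(\Igs')\). It follows because it holds on \(\tIgs'\) by Corollaries \ref{corIgusaStackPELGeneralDfn} and \ref{corIgusaStackPELGeneralDfnExtend}, the Serre-Tate homotopy is natural, and it commutes with the \(\Delta\)-action, so it descends to the quotient.
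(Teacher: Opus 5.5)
Your argument is the paper's proof. The paper proves the theorem in one line by combining Corollary \ref{corIgusaStackPELGeneralDfnExtend}, the $\Delta_{U^{p}}$-equivariance and factorization from Corollary \ref{corExtendDeltaAction}, and the quotient description (\ref{alignFromtiStoSQuotientInfi}); you spell out the base change along the $\Delta_{U^{p}}$-torsor and the passage to the limit. The Hecke bookkeeping you flag as the main obstacle is lighter than you fear. $\Delta_{U^{p}}$ acts by scalars on $\lambda$ and $\ti{c}$, and these commute with the prime-to-$p$ Hecke action. Moreover $\Delta_{gU^{p}g^{-1}}=\Delta_{U^{p}}$, because $E^{\times}$ is central in $G'_{\ti{T}}$.
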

\begin{proof}
This follows immediately from Corollary 
\ref{corIgusaStackPELGeneralDfnExtend}, Corollary \ref{corExtendDeltaAction} and (\ref{alignFromtiStoSQuotientInfi}).
\end{proof}

\subsection{Igusa stack for quaternionic Shimura varieties}\label{subsecQuaternionicIgusa}

We now construct an Igusa stack \(\Igs_{K^{p}}\) that works for \(\mS(G_{T})\) for all \(T\subset \Sigma_{\infty}\) using Proposition \ref{propRelateTwoIgusa}. We start by identifying the local objects.
\begin{notation}
We write \(G:=G_{\emptyset}\) and \(\mu:=\mu_{\emptyset}\).
\end{notation}
\begin{lem}
For \(S\in\Perf\),
\(\Bun_{G}(S)\) is equivalent to the groupoid of rank \(2\) vector bundles over \(\mX_{\rF,F}(S)\). Here \(\mX_{\rF,F}(S)\)
is defined in Definition \ref{dfnFFcurve}. 
\end{lem}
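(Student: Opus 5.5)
The plan is to deduce this from the general principle that \(\Bun_{H}\) for a Weil restriction \(H=\Res_{F/\QQ}H_{0}\) is the stack of \(H_{0}\)-torsors over the "relative curve" \(\mX_{\rF,F}(S)\). Concretely, \(G_{\QQ_{p}}=\Res_{F/\QQ}\GL_{2}\times_{\QQ}\QQ_{p}\cong\prod_{\p\in\Sigma_{p}}\Res_{F_{\p}/\QQ_{p}}\GL_{2}\). A \(G\)-torsor over \(\mX_{\rF}(S)\) is then a tuple of \(\Res_{F_{\p}/\QQ_{p}}\GL_{2}\)-torsors, one for each \(\p\), and this yields \(\Bun_{G}\cong\prod_{\p}\Bun_{\Res_{F_{\p}/\QQ_{p}}\GL_{2}}\). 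By Definition \ref{dfnFFcurve}, \(\mX_{\rF,F}(S)=\coprod_{\p}\mX_{\rF,F_{\p}}(S)\), so a rank \(2\) bundle on it is exactly a tuple of rank \(2\) bundles on the \(\mX_{\rF,F_{\p}}(S)\). This reduces the statement to a single \(\p\).

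For one place, I would mimic the Tannakian argument already used in the proof of Lemma \ref{lemStackUnitarySimilitudeVB}. Consider the standard representation \(F_{\p}^{2}\) of \(H:=\Res_{F_{\p}/\QQ_{p}}\GL_{2}\), regarded as a \(\QQ_{p}\)-representation carrying a commuting \(F_{\p}\)-linear structure. Define the functor \(\mcal{P}\mapsto\mcal{P}\times^{H}F_{\p}^{2}\). This produces a vector bundle on \(\mX_{\rF}(S)\) with an action of \(F_{\p}\), that is, a module over \(\mO_{\mX_{\rF}(S)}\otimes_{\QQ_{p}}F_{\p}\), which is the structure sheaf of the finite étale cover \(\mX_{\rF,F_{\p}}(S)=\mX_{\rF}(S)\otimes_{\QQ_{p}}F_{\p}\to\mX_{\rF}(S)\). Étale locally the torsor is trivial, so the resulting module is locally free of rank \(2\) over \(\mO_{\mX_{\rF,F_{\p}}(S)}\). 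The inverse functor sends \(\mcal{E}\) to \(\underline{\mrm{Isom}}_{\mO\otimes F_{\p}}(\mcal{E},F_{\p}^{2}\otimes\mO)\), pushed forward along the finite étale map.

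The one genuine check is that this Isom sheaf is an étale \(H\)-torsor. Equivalently, any rank \(2\) bundle on \(\mX_{\rF,F_{\p}}(S)\) must be étale locally on \(\mX_{\rF}(S)\) (not merely Zariski/analytically locally on the cover) isomorphic to the trivial bundle \(F_{\p}^{2}\otimes\mO\). I would argue this as follows. A vector bundle is locally free, hence trivial on a small enough analytic cover of \(\mX_{\rF,F_{\p}}(S)\). Pushing such a cover forward along the finite étale map and refining produces an étale cover of \(\mX_{\rF}(S)\) over which the pushforward becomes a free \(\mO\otimes F_{\p}\)-module of rank \(2\). Alternatively, one can invoke the general fact that torsors under a Weil restriction along a finite étale morphism correspond to torsors on the cover, i.e.\ \(\Res\) commutes with \(H^{1}_{\et}\) via Shapiro's lemma for finite étale maps.

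I expect no real obstacle. The only delicate step is this étale-local triviality together with the compatibility of the coproduct decomposition, and both are standard. Functoriality in \(S\) and the \(v\)-descent needed to make this an equivalence of stacks follow because both sides are \(v\)-stacks and the constructions are compatible with pullback.
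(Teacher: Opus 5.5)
Your proposal is correct and takes essentially the same route as the paper, which simply cites \(B(\mrm{Res}_{F/\QQ}\GL_{2})\cong\mrm{Res}_{F/\QQ}(B\GL_{2})\): torsors under a Weil restriction along the finite étale map \(\mX_{\rF,F}(S)\to\mX_{\rF}(S)\) are the same as \(\GL_{2}\)-torsors, hence rank \(2\) bundles, on the cover. Your decomposition over \(\p\in\Sigma_{p}\) and the Tannakian functor just unpack this fact, and your Shapiro-lemma justification of étale-local triviality is exactly the input the paper uses.
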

\begin{proof}
This follows from \(B(\mrm{Res}_{F/\QQ}\GL_{2})\cong \mrm{Res}_{F/\QQ}(B\GL_{2})\). 
\end{proof}
\begin{notation}
We have by definition, \(\mX_{\rF,F}(S)\cong \coprod_{\p\in\Sigma_{p}}\mX_{\rF,F_{\p}}(S)\).
For any \(T\subset \Sigma_{\infty}\), recall that \(r_{\p,T}:=\#(T\cap \Sigma_{\infty/\p})\). We define \(\mF_{T}\)
to be the rank \(2\) vector bundle over \(\mX_{\rF,F}(S)\) such that
\[\mF_{T}(S)|_{\mX_{\rF,F_{\p}}(S)}:=
\begin{cases}
	\mO_{\mX_{\rF,F_{\p}}(S)}(r_{\p,T}/2)^{\oplus 2},&2\mid r_{\p,T},\\
	\mO_{\mX_{\rF,F_{\p}}(S)}(r_{\p,T}/2),&2\nmid r_{\p,T},
\end{cases}\]
\end{notation}
\begin{rmk}
Here we recall that the indecomposable vector bundles on the Fargues-Fontaine curve is classified by \(\lambda\in\QQ\),
by Dieudonné-Manin classification. See \cite[Théorème 8.2.10]{FarguesFontaine2019courbes}.
\end{rmk}

\begin{lem}\label{lemIdentifyQuaternion}
For any \(T\subset \Sigma_{\infty}\),
there is an isomorphism \(\Bun_{G}\cong\Bun_{G_{T}}\), mapping \(\mF_{T}\in \Bun_{G}\) to the trivial \(G_{T}\)-torsor in \(\Bun_{G_{T}}\), which induces an open immersion \(\iota_{T}:\Bun_{G_{T},\mu_{T}}\hookrightarrow \Bun_{G,\mu}\).
\end{lem}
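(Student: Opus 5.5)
The plan is to follow the pattern of Lemma \ref{lemIdenfificationBL}, with the unitary similitude group replaced by \(G_{T}=\mrm{Res}_{F/\QQ}B_{T}^{\times}\). The key input is that \(G_{T,\QQ_{p}}\) is an inner form of \(G_{\QQ_{p}}\). Locally, \(G_{T,\QQ_{p}}\cong \prod_{\p\in\Sigma_{p}}\mrm{Res}_{F_{\p}/\QQ_{p}}(B_{T}\otimes F_{\p})^{\times}\), and \(B_{T}\otimes F_{\p}\) is the quaternion division algebra exactly when \(\p\in T_{p}\), i.e. when \(2\nmid r_{\p,T}\). In that case \((B_{T}\otimes F_{\p})^{\times}\cong \ti{G}_{b_{\p}}\), where \(b_{\p}\) is the basic element of \(B(\GL_{2,F_{\p}})\) with slope \(1/2\) (after a twist, slope \(r_{\p,T}/2\)). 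This inner twisting is realized by the extended pure inner form attached to a basic \(b_{T}\in B(G)\). Concretely, \(\mcal{G}_{b_{T}}\) is the \(G\)-torsor whose associated rank \(2\) bundle is \(\mF_{T}\); the lemma identifying \(\Bun_{G}\) with rank \(2\) bundles over \(\mX_{\rF,F}(S)\) gives this description.

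\textbf{Step 1: the isomorphism.} By \cite[Corollary III.4.3]{FarguesScholze2021geometrization} (or directly), for basic \(b\) the map \(\mcal{G}\mapsto \underline{\mrm{Isom}}(\mcal{G}_{b},\mcal{G})\) gives an isomorphism \(\Bun_{G}\cong \Bun_{\ti{G}_{b}}\) sending \(\mcal{G}_{b}\) to the trivial torsor. We apply this with \(b=b_{T}\), so \(\mcal{G}_{b_{T}}=\mF_{T}\). We identify \(\ti{G}_{b_{T}}\) with \(G_{T,\QQ_{p}}\) by checking that \(\mrm{Aut}(\mF_{T})\) is the group of units of \(\prod_{\p}\End(\mF_{T}|_{\mX_{\rF,F_{\p}}})\). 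These endomorphism algebras are \(M_{2}(F_{\p})\) when \(r_{\p,T}\) is even and the division algebra of invariant \(1/2\) when it is odd, so \(\mrm{Aut}(\mF_{T})\cong (B_{T}\otimes\QQ_{p})^{\times}\). The resulting isomorphism \(\Bun_{G}\cong\Bun_{G_{T}}\) depends only on this identification. Its inverse is \(\mcal{P}\mapsto \mcal{P}\times^{G_{T}}\mF_{T}\), in the style of \(\iota_{\ti{T}}\) in Lemma \ref{lemIdenfificationBL}(3).

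\textbf{Step 2: the open immersion.} It remains to check that the open substack \(\Bun_{G_{T},\mu_{T}}\) lands inside the open substack \(\Bun_{G,\mu}\). Since \(\iota_{T}\) is an isomorphism, it suffices to compare the sets \(B(G_{T},\mu_{T})\) and \(B(G,\mu)\) under the induced bijection \(B(G_{T})\cong B(G)\), \(b\mapsto b\cdot b_{T}\). Both sets are open (Newton strata are upper semicontinuous and the \(\mu\)-admissible locus is open), so the claim reduces to a combinatorial one, and the whole problem decomposes place by place over \(\p\in\Sigma_{p}\).

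The main obstacle is this check. It requires showing that \(\mu_{T}\)-admissibility for \(G_{T}\) (with \(\mu_{T}\) trivial on the \(\tau\in T\) factors) translates into \(\mu\)-admissibility for \(G\) after translating by \(b_{T}\). Equivalently, one needs \(\kappa(b_{T})=(\mu-\mu_{T})^{\sharp}\) and a matching of Newton polygons. First I would compute, for each \(\p\), the Kottwitz invariant of \(\mF_{T}|_{\mX_{\rF,F_{\p}}}\). Its degree is \(r_{\p,T}\), which matches the difference in total degree between \(\sum_{\tau\in\Sigma_{\infty/\p}}\mu_{\tau}\) and the \(\mu_{T}\) contribution, exactly as in the proof of Lemma \ref{lemIdenfificationBL}(2). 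Second, I would compare Newton polygons. An element of \(B(G_{T},\mu_{T})\) at \(\p\) yields, after the twist by \(\mF_{T}\), a rank \(2\) bundle with slopes between \(0\) and \(|\Sigma_{\infty/\p}|\) and degree \(|\Sigma_{\infty/\p}|\). This is precisely the Mazur inequality defining \(B(\mrm{Res}_{F_{\p}/\QQ_{p}}\GL_{2},\mu_{\p})\), so we get the inclusion.

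An alternative route is to deduce the statement from Lemma \ref{lemIdenfificationBL}(3) via the central isogeny \(G_{T}\times T_{E}\to G'_{\ti{T}}\), comparing \(\Bun_{G_{T}}\to\Bun_{G'_{\ti{T}}}\). Since \(\pi_{1}\) changes, I prefer the direct argument above and would use this comparison only as a consistency check on the Kottwitz invariants.
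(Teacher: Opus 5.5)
Your proposal is correct and matches the paper, whose entire proof is the citation of \cite[Corollary III.4.3]{FarguesScholze2021geometrization} with the explicit functor \(\mF\mapsto\underline{\mrm{Isom}}(\mF,\mF_{T})\). The paper leaves the inclusion \(\iota_{T}(\Bun_{G_{T},\mu_{T}})\subset\Bun_{G,\mu}\) implicit, so your Step 2 adds detail rather than changing the route, but the slope bound you assert there should be justified. One way is to note that the twist shifts Newton points by the central \(\nu_{b_{T}}\le(\mu\mu_{T}^{-1})^{\diamond}\) and that \(\mu_{T}^{\diamond}+(\mu\mu_{T}^{-1})^{\diamond}=\mu^{\diamond}\). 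Another is to observe that \(\mF_{T}\hookrightarrow\mE\), with \(\mF_{T}\) semistable of slope \(r_{\p,T}/2\ge 0\), which forces the minimal slope of \(\mE\) to be at least \(r_{\p,T}/2\).
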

\begin{proof}
This is a general result (\cite[Corollary III.4.3]{FarguesScholze2021geometrization}). Concretely,
the functor is given by \(\mF\in \Bun_{G}(S)\)
to \(\underline{\mrm{Isom}}_{\mX_{\rF,F}(S)}(\mF,\mF_{T})\), which carries an action of \(G_{T}\) induced by the action on \(\mF_{T}\).
\end{proof}
\begin{thm}\label{thmCartesianDiagramQaternion}
There is a \(v\)-stack \(\Igs_{K^{p}}\) with a reduced Hodge-Tate morphism \(\bar{\pi}_{\HT}:\Igs_{K^{p}}\to \Bun_{G,\mu}\) such that for any 
\(T\subset \Sigma_{\infty}\), there is a Cartesian diagram \[\begin{tikzcd}
\mS_{K^{p}}(G_{T})_{E_{0,T}}\times_{E_{0,T,\p_{T}}}\Spd(E_{0,\ti{T},\p_{\ti{T}}})
\arrow[r,"\pi_{\HT,T}"]\arrow[d,"\red_{T}"] & \Fl_{G_{T},\mu_{T},E_{0,T}}\times_{E_{0,T,\p_{T}}}\Spd(E_{0,\ti{T},\p_{\ti{T}}})\arrow[d,"\BL_{T}"]\\
\iota_{T}^{*}
(\Igs_{K^{p}})
\arrow[r,"\bar{\pi}_{\HT,T}"]
& \Bun_{G_{T},\mu_{T}}.
\end{tikzcd}
\]
Here
\(E_{0,T}\subset \CC\)
denotes the reflex field of \((G_{T},h_{T})\), \( E_{0,T,\p_{T}} \) denotes the \( p \)-adic completion along \( E_{0,T}\subset\CC\cong^{\iota} \bar{\QQ}_{p} \), \(\mS(G_{T})_{E_{0,T}}\)
is the model of \(\mS(G_{T})\) over \(\Spd(E_{0,T,\p_{T}})\) as in Notation \ref{notationGenealShimura}, and
\(E_{0,T,\p_{T}}\) is as in Corollary \ref{corIgusaStackPELGeneralDfn}.
\end{thm}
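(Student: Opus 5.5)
We will deduce the statement from the unitary case (Theorem \ref{thmUnitaryCartesianDiagram}) by descending along the embedding of Shimura data, in the spirit of Proposition \ref{propRelateTwoIgusa}. Here the roles are reversed: the unitary group \(G'_{\ti{T}}=(G_{T}\times T_{E,\ti{T}})/T_{F}\) is not a subgroup of \(G_{T}\), but both receive a map from \(G_{T}\times T_{E,\ti{T}}\), and Lemma \ref{lemUnitaryToQuaternion} gives \((\mS(G_{T})\times \mscr{A}_{G'_{\ti{T}}})/\mscr{A}_{G_{T}}\cong \mS(G'_{\ti{T}})\).

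\textbf{Defining \(\Igs_{K^{p}}\).} We set
\[
\Igs:=\Igs'\times_{\Bun_{G'},\mu'}\Bun_{G,\mu}\quad\text{(using \(G\to G'\), \(g\mapsto [g,1]\))},
\]
and then pass to a suitable connected component or \(\mscr{A}\)-quotient. Concretely, consider the fibre of \(\Igs'\to\Bun_{G'}\to \Bun_{G'}\) over the image of \(\Bun_{G}\to\Bun_{G'}\). We then take \(\Igs_{K^{p}}\) as the quotient of its pullback by \(\mscr{A}_{G'}/\mscr{A}_{G}\)-translates, that is, by the relation
\[
\Igs^{\prime}\times_{\Bun_{G'},\mu'}\Bun_{G,\mu}\cong[(\Igs\times\mscr{A}_{G'})/\mscr{A}_{G}],
\]
mirroring Proposition \ref{propRelateTwoIgusa}. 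Lemma \ref{lemIdentifyQuaternion} and Lemma \ref{lemIdenfificationBL} (3) identify \(\Bun_{G_{T},\mu_{T}}\) and \(\Bun_{G'_{\ti{T}},\mu_{\ti{T}}}\) compatibly with \(\Bun_{G,\mu}\) and \(\Bun_{G',\mu'}\). This compatibility is what makes the single \(\Igs_{K^{p}}\) serve all \(T\) at once. Its source is the fact, already built into \(\tIgs'\), that the Kottwitz condition is forgotten by the Igusa stack.

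\textbf{Key steps, in order.} (i) Check that the torsors \(\mF_{T}\) and \(\mcal{G}'_{\ti{T}}\) correspond under \(\Bun_{G}\to\Bun_{G'}\). This comes down to matching the Kottwitz invariants: \(r_{\p,T}\bmod 2\) in both Lemma \ref{lemIdenfificationBL} (2) and the definition of \(\mF_{T}\). It gives a commutative square of open immersions \(\iota_{T}\), \(\iota_{\ti{T}}\). (ii) Show that \(G_{T}(\mA_{f})\) acts on \(\Igs'\) through \(G'_{\ti{T}}(\mA_{f})/\overline{E^{\times}}\), using Corollary \ref{corFunctorCenter}. (iii) Define \(\red_{T}\) as the composite of \(\mS(G_{T})\to\mS(G'_{\ti{T}})\) with \(\red_{\ti{T}}\). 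Show that it lands in the \(\mscr{A}_{G_{T}}\)-stable piece by the connected-component description of Lemma \ref{lemUnitaryToQuaternion}. (iv) Verify that the square is Cartesian. By Theorem \ref{thmUnitaryCartesianDiagram} we know
\[
\mS(G'_{\ti{T}})\cong\Igs'\times_{\Bun_{G'}}\Fl_{G',\mu'},
\]
and \(\Fl_{G_{T},\mu_{T}}\cong\Fl_{G'_{\ti{T}},\mu_{\ti{T}}}\). Pulling back along \(\Bun_{G_{T}}\to\Bun_{G'_{\ti{T}}}\) and comparing with Lemma \ref{lemUnitaryToQuaternion} yields the Cartesian square after inducing up along \(\mscr{A}_{G_{T}}\to\mscr{A}_{G'_{\ti{T}}}\). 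Since \(\red\) is a \(v\)-surjection, it then suffices to cancel the induction on both sides. (v) Handle the base change to \(\Spd(E_{0,\ti{T},\p_{\ti{T}}})\): this is why the reflex field of the unitary datum appears in the statement, and it is harmless.

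\textbf{Main obstacle.} The hardest step is (iv), specifically cancelling the \((-\times\mscr{A}_{G'})/\mscr{A}_{G}\) induction. Two things must be shown. First, the map \([(\Igs_{K^{p}}\times\mscr{A}_{G'})/\mscr{A}_{G}]\to \Igs'|_{\Bun_{G}}\) is an isomorphism. Second, \(\Igs_{K^{p}}\) is independent of \(T\), since a priori it is defined as a union of components and those components might shift with \(T\). I would try to define \(\Igs_{K^{p}}\) intrinsically as the \(v\)-quotient of \(\mS(G_{\emptyset})\) by the groupoid \(\mS(G_{\emptyset})\times_{\Bun_{G}}\Fl_{G,\mu}\) (Remark \ref{rmkIgsToGlobalUniformization}), extended by the open pieces coming from the other \(T\). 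Independence would then follow from \(\tIgs'\) being independent of \(\ti{T}\), together with the uniqueness theorem of \cite[Theorem 10.13]{Kim2025uniquenessfunctorialityigusastacks}.
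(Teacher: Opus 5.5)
Your overall route is the paper's. You obtain the \(T=\emptyset\) Igusa stack from \(\Igs'\) via \((G,\mu)\hookrightarrow(G',\mu')\); the paper does this by citing \cite[Theorem 11.4]{Kim2025uniquenessfunctorialityigusastacks}, which you leave implicit. You then use Proposition \ref{propRelateTwoIgusa}, pull back along \(\iota_{T},\iota_{\ti{T}}\), and compare with Deligne's description of \(\mS(G'_{\ti{T}})\). However, the step you flag as the main obstacle is left open, and your proposed remedy does not close it.

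First, \(\Igs_{K^{p}}\) is not a quotient by translates. The relation \(\Igs'\times_{\Bun_{G'}}\Bun_{G}\cong[(\Igs\times\mscr{A}_{G'})/\mscr{A}_{G}]\) gives an equivariant map to \(\mscr{A}_{G'}/\mscr{A}_{G}\cong\mA_{E,f}^{\times}/(E^{\times}\mA_{F,f}^{\times})\), and \(\Igs\) is its fibre over \(1\). Restricted to \(T\), this yields a map \(\mS(G'_{\ti{T}})\to\mA_{E,f}^{\times}/(E^{\times}\mA_{F,f}^{\times})\) whose fibre over \(1\) is \(U:=\iota_{T}^{*}(\Igs)\times_{\Bun_{G_{T},\mu_{T}}}\Fl_{G_{T},\mu_{T}}\). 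Deligne's construction gives a second such map, with fibre \(\mS(G_{T})\) over \(1\). Nothing forces the two fibres to coincide. So your step (iii), that \(\mS(G_{T})\to\mS(G'_{\ti{T}})\to\Igs'\) lands in \(\iota_{T}^{*}\Igs_{K^{p}}\), and the ``cancellation of the induction'' in (iv) can both fail as stated.

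Kim's uniqueness theorem cannot repair this. It concerns a single Shimura datum, functoriality needs a morphism of Shimura data, and there is none between \((G_{T},h_{T})\) and \((G_{\emptyset},h_{\emptyset})\). The \(\ti{T}\)-independence of \(\Igs'\) only places both pieces in the same ambient stack. Separately, no ``extension by open pieces from other \(T\)'' is needed: \(\iota_{T}\) embeds \(\Bun_{G_{T},\mu_{T}}\) into \(\Bun_{G,\mu}\), which the \(T=\emptyset\) stack already covers.

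The missing idea is a central translation. \(U\) is a \(G_{T}(\mA_{f})\)-stable union of connected components of \(\mS(G'_{\ti{T}})\). Since \(G'_{\ti{T}}(\mA_{f})=Z'(\mA_{f})G_{T}(\mA_{f})\) acts transitively on components, you can choose \(g\in Z'(\mA_{f})\) with \(g\,\mS(G_{T})\subset U\). The inclusion then induces an equivariant map between the two homogeneous spaces \(\mA_{E,f}^{\times}/(E^{\times}\mA_{F,f}^{\times})\) sending \(1\mapsto1\), which is forced to be an isomorphism; hence \(g\,\mS(G_{T})=U\). Taking \(\red_{T}\) to be the reduction map on \(U\) precomposed with \(g\), the Cartesian square for \(T\) is just the definition of \(U\). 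The descent to \(\Spd(E_{0,\ti{T},\p_{\ti{T}}})\) then follows from canonical models, as you say.
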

\begin{proof}
Applying \cite[Theorem 11.4]{Kim2025uniquenessfunctorialityigusastacks} to the embedding \((G,\mu)\hookrightarrow (G',\mu')\), by Theorem \ref{thmUnitaryCartesianDiagram}, we have an Igusa stack \(\Igs\) 
for \(\mS(G)_{\QQ}\) which admits a natural immersion \(\Igs\hookrightarrow \Igs'\). 
Then the Cartesian diagram holds when \(T=\emptyset\).
Put \(\Igs:=\varprojlim_{K^{p}}\Igs_{K^{p}}\). 
By Proposition \ref{propRelateTwoIgusa}, we have an action of 
\(\mscr{A}_{G}:=G(\mA_{f})/\overline{Z(\QQ)}\) on \(\Igs\), and
\[(\Igs\times \mscr{A}_{G_{\emptyset}'})/\mscr{A}_{G_{\emptyset}}\cong \Igs'\times_{\Bun_{G'}}\Bun_{G}.
\] In particular, we have a \(G'(\mA_{f}^{p})\times E_{p}^{\times}\)-equivariant morphism \[\Igs'\times_{\Bun_{G'}}\Bun_{G}\to \mscr{A}_{G'_{\emptyset}}/\mscr{A}_{G_{\emptyset}}\cong \mA_{E,f}^{\times}/(E^{\times}\mA_{F,f}^{\times}),\]
such that the fiber along \(1\hookrightarrow \mA_{E,f}^{\times}/(E^{\times}\mA_{F,f}^{\times})\)
is isomorphic to \(\Igs\). 

Along the embeddings \(\iota_{T}:\Bun_{G_{T},\mu_{T}}\hookrightarrow \Bun_{G,\mu}\)
and \(\iota_{\ti{T}}:\Bun_{G'_{\ti{T}},\mu_{\ti{T}}}\hookrightarrow \Bun_{G',\mu}\) (Lemma \ref{lemIdenfificationBL} (3)), the map \(\Bun_{G,\mu}\to \Bun_{G',\mu}\)
induces \(\Bun_{G_{T},\mu_{T}}\to \Bun_{G'_{\ti{T}},\mu'_{T}}\).

By Theorem \ref{thmUnitaryCartesianDiagram}, we have an induced \(G'_{\ti{T}}(\mA_{f})\)-equivariant morphism \[\mS(G'_{\ti{T}})\to \mA_{E,f}^{\times}/(E^{\times}\mA_{F,f}^{\times}),
\] such that the fiber at \(1\) is isomorphic to \(U:=\iota_{T}^{*}(\Igs)\times_{\Bun_{G_{T},\mu_{T}}}\FLTT{T}{T}\). Then \(U\hookrightarrow \mS(G'_{\ti{T}})\)
is an immersion, is \(G_{T}(\mA_{f})\)-stable, and \(U\cong \pi_{0}(U)\times_{\pi_{0}(\mS(G_{\ti{T}}'))}\mS(G'_{\ti{T}})\).

By \cite[\S 2]{Deligne1979varietesInterpretationModulaire}, \((\mS(G_{T})\times \mscr{A}_{G'_{\ti{T}}})/\mscr{A}_{G_{T}}\cong \mS(G'_{\ti{T}})\), and \(G'_{\ti{T}}(\mA_{f})\)
acts transitively on \(\pi_{0}(\mS(G_{T}))\). This implies that we can find \(g\in Z'(\mA_{f})\), such that \(g(\mS(G_{T}))\subset U\). This induces a \(G'_{\ti{T}}\)-equivariant morphism \(\mA_{E,f}^{\times}/(E^{\times}\mA_{F,f}^{\times})\twoheadrightarrow \mA_{E,f}^{\times}/(E^{\times}\mA_{F,f}^{\times})\), which is thus forced to be an isomorphism. This shows that \(g(\mS(G_{T}))\subset U\) is actually an isomorphism, as desired.

The descent to \( \Spd(E_{0,\ti{T},\p_{\ti{T}}}) \) follows from the property of canonical models.
See \cite[\S 2.8]{TianXiao2016goren} for a discussion of canonical model of weak Shimura datum.
\end{proof}



\begin{lem}\label{lemCokerAGtoAGprime}
	Let \(C\) and \(C'\) denote the cocenters of \(G_{T}\) and \(G'_{\ti{T}}\) respectively.
	The natural maps \(\mscr{A}_{G_{T}}\to \mscr{A}_{G_{\ti{T}}'}\), \(\mscr{A}_{Z}\to \mscr{A}_{Z'}\) and \(\mscr{A}_{C}\to \mscr{A}_{C'}\)
	is injective,
	and we have natural isomorphisms \[
	\mscr{A}_{G_{\ti{T}}'}/\mscr{A}_{G_{T}}\cong \mscr{A}_{C'}/\mscr{A}_{C}\cong \mA_{E,f}^{\times}/(E^{\times}\mA_{F,f}^{\times}).\] 
\end{lem}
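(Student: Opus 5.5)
We work with the explicit groups. Set \(G_{T}=\Res_{F/\QQ}B_{T}^{\times}\) and \(G'_{\ti{T}}=(G_{T}\times T_{E})/T_{F}\). Their centers are \(Z=T_{F}\) and \(Z'=(T_{F}\times T_{E})/T_{F}\cong T_{E}\). Their cocenters are \(C=G_{T}/G_{T}^{der}\cong T_{F}\), via the reduced norm, and \(C'=(T_{F}\times T_{E})/T_{F}\), where \(T_{F}\) acts by \(a\cdot(b,c)=(a^{2}b,ca^{-1})\) as in the proof of Lemma \ref{lemIdenfificationBL}. The maps \(G_{T}\to G'_{\ti{T}}\) and \(Z\to Z'\) are the obvious inclusions. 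The induced map \(C\to C'\) is \(b\mapsto(b,1)\), which is injective with cokernel \(T_{E}/T_{F}\).

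First I would record the descriptions of \(\mscr{A}\) available here. For \(G_{T}\), the Hasse principle for \(H^{1}\) of simply connected groups together with the norm theorem for quaternion algebras shows that \(G_{T}(\QQ)\to G_{T}^{\mrm{ad}}(\QQ)\) is surjective onto the positive part relevant here. Hence \(\mscr{A}_{G_{T}}=G_{T}(\mA_{f})/\overline{F^{\times}}\), as already stated in Lemma \ref{lemUnitaryToQuaternion}. The same holds for \(\mscr{A}_{G'_{\ti{T}}}=G'_{\ti{T}}(\mA_{f})/\overline{E^{\times}}\), and for the tori we have \(\mscr{A}_{Z}=\mA_{F,f}^{\times}/\overline{F^{\times}}\), and similarly for \(Z',C,C'\).

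\textbf{Injectivity.} Take \(g\in G_{T}(\mA_{f})\) that becomes trivial in \(G'_{\ti{T}}(\mA_{f})/\overline{E^{\times}}\). Then \(g\) lies in \(G_{T}(\mA_{f})\cap\overline{E^{\times}}\), and this intersection equals \(\mA_{F,f}^{\times}\cap\overline{E^{\times}}\) because a central element of \(G'\) that lies in \(G_{T}\) is in \(Z\). So the key claim is
\[
\mA_{F,f}^{\times}\cap\overline{E^{\times}}=\overline{F^{\times}}
\]
inside \(\mA_{E,f}^{\times}\). I expect this closure comparison to be the main obstacle. I would prove it by Chevalley's theorem: for a number field, \(\overline{E^{\times}}=E^{\times}\cdot\overline{\mO_{E}^{\times}}\) and congruence subgroups of units are cofinal. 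Since \(E/F\) is CM, \(\mO_{E}^{\times}\) equals \(\mu_{E}\mO_{F}^{\times}\) up to finite index, so the closures agree up to a finite torsion ambiguity. Write \(x\in\mA_{F,f}^{\times}\) as \(x=e u\) with \(e\in E^{\times}\) and \(u\in\overline{\mO_{E}^{\times}}\). Applying complex conjugation shows that \(e\bar e^{-1}\) lies in a finite group of roots of unity. A Hilbert 90 argument then lets me adjust \(e\) into \(F^{\times}\) modulo \(\overline{\mO_{E}^{\times}}\cap\mA_{F,f}^{\times}=\overline{\mO_{F}^{\times}}\). The same computation gives injectivity for \(Z\to Z'\). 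For \(C\to C'\) the argument is analogous, using that \(\mA_{F,f}^{\times}\cap\overline{\mrm{image}}\) is controlled by the split injection \(C\to C'\).

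\textbf{The isomorphisms.} The map \(G_{T}\times T_{E}\to G'_{\ti{T}}\) is surjective on \(\mA_{f}\)-points, since \(H^{1}(\QQ_{\ell},T_{F})=0\) by Hilbert 90 for \(\Res\GG_{m}\). Therefore
\[
\mscr{A}_{G'_{\ti{T}}}/\mscr{A}_{G_{T}}\cong\mA_{E,f}^{\times}/\bigl(\mA_{F,f}^{\times}\overline{E^{\times}}\bigr).
\]
Since \(\mA_{F,f}^{\times}\) is open, it contains the closure of the unit part, and so \(\mA_{F,f}^{\times}\overline{E^{\times}}=\mA_{F,f}^{\times}E^{\times}\). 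For the cocenters, \(C'(\mA_{f})=(\mA_{F,f}^{\times}\times\mA_{E,f}^{\times})/\mA_{F,f}^{\times}\), and the quotient by the image of \(C(\mA_{f})\) is again \(\mA_{E,f}^{\times}/\mA_{F,f}^{\times}\). After dividing by the rational points, this gives the same quotient \(\mA_{E,f}^{\times}/(E^{\times}\mA_{F,f}^{\times})\).

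Finally, I would check that the map \(\mscr{A}_{G'}\to\mscr{A}_{C'}\) induced by the cocenter projection is compatible with these identifications. This amounts to tracking the element \((g,z)\mapsto z\) modulo \(\mA_{F,f}^{\times}\), and it gives the natural isomorphism \(\mscr{A}_{G'_{\ti{T}}}/\mscr{A}_{G_{T}}\cong\mscr{A}_{C'}/\mscr{A}_{C}\).
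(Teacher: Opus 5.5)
Your overall structure is the paper's, and it works. You reduce injectivity to \(\overline{E^{\times}}\cap\mA_{F,f}^{\times}=\overline{F^{\times}}\) and then read off the quotients from surjectivity on adelic points; using \(G_{T}\times T_{E}\to G'_{\ti{T}}\) instead of the paper's \(G'_{\ti{T}}\to C'\) is a legitimate variant. The genuine gap is at the one step where the CM hypothesis enters.

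\textbf{The openness claim is false.} You assert \(\mA_{F,f}^{\times}\overline{E^{\times}}=\mA_{F,f}^{\times}E^{\times}\) ``since \(\mA_{F,f}^{\times}\) is open''. In fact \(\mA_{F,f}^{\times}\) is a closed subgroup of \(\mA_{E,f}^{\times}\) with empty interior; at an inert place, \(F_{v}^{\times}\subset E_{w}^{\times}\) has positive codimension. If it were open, the equality would hold for every quadratic \(E/F\). It fails, for instance, for a real quadratic field over \(\QQ\): the closure of the powers of the fundamental unit is uncountable and cannot lie in \(E^{\times}\mA_{\QQ,f}^{\times}\).

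\textbf{The fix.} The correct input is \([\mO_{E}^{\times}:\mO_{F}^{\times}]<\infty\), which you already invoke. Choosing coset representatives \(\varepsilon_{1},\dots,\varepsilon_{r}\) gives \(\overline{\mO_{E}^{\times}}=\bigcup_{i}\varepsilon_{i}\overline{\mO_{F}^{\times}}\), hence
\[
\overline{E^{\times}}=E^{\times}\,\overline{\mO_{E}^{\times}}=E^{\times}\,\overline{\mO_{F}^{\times}}\subset E^{\times}\mA_{F,f}^{\times}.
\]
This is equivalent to the paper's Lemma \ref{lemDiscreteSubgroup}: \(E^{\times}/F^{\times}\) is discrete, hence closed, in \(\mA_{E,f}^{\times}/\mA_{F,f}^{\times}\). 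Your cocenter computation also needs this closedness silently. ``Dividing by the rational points'' requires identifying the image of \(\overline{C'(\QQ)}\) in \((T_{E}/T_{F})(\mA_{f})\) with \(E^{\times}\mA_{F,f}^{\times}/\mA_{F,f}^{\times}\).

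\textbf{The Hilbert 90 step in your injectivity argument is circular as written.} Hilbert 90 gives \(e\bar e^{-1}=\eta\bar\eta^{-1}\) with \(\eta\in E^{\times}\), but not necessarily \(\eta\in\mO_{E}^{\times}\). So in \(x=(e\eta^{-1})(\eta u)\) the factor \(\eta u\) is again only known to lie in \(\mA_{F,f}^{\times}\cap\overline{E^{\times}}\), which is what you set out to compute. The step can be rescued: \(u=\lim\varepsilon_{n}\) with \(\varepsilon_{n}\bar\varepsilon_{n}^{-1}\) in the finite group \(\mu_{E}\), so these quotients are eventually constant and \(\eta\) can be taken to be a unit. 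Your unproved claim \(\overline{\mO_{E}^{\times}}\cap\mA_{F,f}^{\times}=\overline{\mO_{F}^{\times}}\) also needs an argument.

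The display above makes both points unnecessary. Write \(x=eu\) with \(u\in\overline{\mO_{F}^{\times}}\subset\mA_{F,f}^{\times}\); then \(e\in E^{\times}\cap\mA_{F,f}^{\times}=F^{\times}\), so \(x\in\overline{F^{\times}}\). With this one observation both steps are repaired, and your proof becomes a valid variant of the paper's.
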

\begin{proof}
	Note that \(G_{\ti{T}}'/G_{T}\cong Z'/Z\cong C'/C\cong T_{E}/T_{F}\).
	Consider 
	\[\begin{tikzcd}
		1\arrow[r] &
		Z(\mA_{f})\arrow[r] 
		&
		Z'(\mA_{f})\arrow[r] 
		& Z'/Z(\mA_{f})
		\arrow[r]
		& 1
		\\
		1\arrow[r] &
		Z(\QQ)\arrow[r] \arrow[u,hook]
		&
		Z'(\QQ)\arrow[r]\arrow[u,hook]
		& Z'/Z(\QQ)\arrow[u,hook]
		\arrow[r]
		& 1.
	\end{tikzcd}\]
	Note that by Lemma \ref{lemDiscreteSubgroup}, \(Z'/Z(\QQ)\cong E^{\times}/F^{\times}\)
	is discrete in \(Z'/Z(\mA_{f})\),
	so \[Z'(\QQ)/Z(\QQ)\cong (Z'/Z)(\QQ)\cong \overline{(Z'/Z)(\QQ)}\cong \overline{Z'(\QQ)}/\overline{Z(\QQ)},\]
	and thus 
	we have \(\overline{Z'(\QQ)}\cap Z(\mA_{f})\cong \overline{Z(\QQ)}\).
	This implies the desired injectivity.

	Since \(G^{\prime,der}_{\ti{T}}\)
	is simply connected, we know by \cite{Kottwitz1984StableTF} 
	that \(H^{1}(\QQ_{p},G^{\prime,der}_{\ti{T}})\cong 0\), and thus \(G'_{\ti{T}}(\mA_{f})\to C'(\mA_{f})\)
	is surjective. 	
	We are left to show that \(\overline{Z'(\QQ)}G_{T}(\mA_{f})\to\overline{C'(\QQ)}C(\mA_{f})\) along \(G'_{\ti{T}}\to C'\) is surjective. By the discreteness of \((Z'/Z)(\QQ)\) in \((Z'/Z)(\mA_{f})\),
	we know that \(\overline{Z'(\QQ)}G_{T}(\mA_{f})=Z'(\QQ)G_{T}(\mA_{f})\), 
	and similarly, \(\overline{C'(\QQ)}C(\mA_{f})=C'(\QQ)C(\mA_{f})\). As \(Z'\times^{Z}C\cong C'\), for any \(x\in C'(\QQ)\),
	we can find \(y\in Z'(\QQ)\),
	such that \(xy^{-1}\in C(\QQ)\),
	which then admits a lift in \(G_{T}(\mA_{f})\).
\end{proof}

\begin{lem} \label{lemDiscreteSubgroup}
Let \(E/F\) be a CM extension, then \(E^{\times}/F^{\times}\subset \mA_{E,f}^{\times}/\mA_{F,f}^{\times}\)
	is a discrete subgroup.
\end{lem}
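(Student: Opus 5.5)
We need to show \(E^{\times}/F^{\times}\) is discrete in \(\mA_{E,f}^{\times}/\mA_{F,f}^{\times}\). The plan is to reduce to a compactness/finiteness statement about units, using that \(E/F\) is CM, so that \(\mO_{E}^{\times}/\mO_{F}^{\times}\) is finite: by Dirichlet, the unit ranks of \(E\) and \(F\) coincide since \(E\) is totally imaginary of degree \(2d\) and \(F\) is totally real of degree \(d\).

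Since the quotient is a topological group, it suffices to find an open neighborhood \(V\) of the identity in \(\mA_{E,f}^{\times}/\mA_{F,f}^{\times}\) whose intersection with the image of \(E^{\times}\) is finite; a finite subset of a Hausdorff group can then be avoided by shrinking \(V\). Take \(V\) to be the image of \(\hat{\mO}_{E}^{\times}\), which is open because \(\hat{\mO}_{E}^{\times}\) is open in \(\mA_{E,f}^{\times}\) and the quotient map is open. Suppose \(x\in E^{\times}\) has image in \(V\), i.e. \(x=ua\) with \(u\in\hat{\mO}_{E}^{\times}\) and \(a\in\mA_{F,f}^{\times}\). Then the fractional ideal \((x)\) of \(E\) equals the extension of the fractional ideal of \(F\) determined by \(a\). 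Since \((x)\) is principal, this ideal class of \(F\) lies in the kernel of \(\Cl(F)\to\Cl(E)\), which is finite. Choose finitely many representatives \(\mfk{b}_{1},\dots,\mfk{b}_{m}\) of \(F\)-ideals with \(\mfk{b}_{j}\mO_{E}=(y_{j})\) for some \(y_{j}\in E^{\times}\). Then, after multiplying \(x\) by an element of \(F^{\times}\), we may assume \((x)=(y_{j})\) for some \(j\), so \(x\in y_{j}\mO_{E}^{\times}\). Hence the image of \(E^{\times}\) in \(V\) is contained in the image of \(\bigcup_{j}y_{j}\mO_{E}^{\times}\), which modulo \(F^{\times}\) is a union of \(m\) cosets of the image of \(\mO_{E}^{\times}/\mO_{F}^{\times}\), and is therefore finite.

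Finally, the quotient is Hausdorff because \(\mA_{F,f}^{\times}\) is closed in \(\mA_{E,f}^{\times}\). The main point to check carefully is the finiteness of \(\mO_{E}^{\times}/\mO_{F}^{\times}\), which uses the CM hypothesis essentially, together with the bookkeeping that passes from \(a\) to an ideal of \(F\).
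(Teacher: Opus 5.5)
Your proof is correct and is essentially the paper's argument: both take the open neighborhood given by the image of \(\hat{\mO}_{E}^{\times}\), i.e.\ \(\hat{\mO}_{E}^{\times}\mA_{F,f}^{\times}/\mA_{F,f}^{\times}\), and show that its intersection with \(E^{\times}/F^{\times}\) is finite by combining the finiteness of \(\mO_{E}^{\times}/\mO_{F}^{\times}\) (equal unit ranks) with the finiteness of \(\Cl_{F}\). The only differences are that you phrase the class-group step through fractional ideals and the capitulation kernel of \(\Cl(F)\to\Cl(E)\), where the paper says adelically that \(F^{\times}\hat{\mO}_{E}^{\times}\) has finite index in \(\mA_{F,f}^{\times}\hat{\mO}_{E}^{\times}\), and that you make explicit the Hausdorffness of the quotient needed to pass from ``finite intersection'' to ``discrete'', which the paper leaves implicit.
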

\begin{proof}
	By the unit theorem, \(\mO_{F}^{\times}\) and \(\mO_{E}^{\times}\)
	are finitely generated abelian groups of the same rank, and thus
	\(\mO_{E}^{\times}/\mO_{F}^{\times}\)
	is a finite group. Therefore, 
	\(\mO_{E}^{\times}/\mO_{F}^{\times}\cong (E^{\times}\cap F^{\times}\hat{\mO}_{E}^{\times})/F^{\times}\subset \hat{\mO}_{E}^{\times}\mA_{F,f}^{\times}/\mA_{F,f}^{\times}\)
	is discrete. We know \(\Cl_{F}=\mA_{F,f}^{\times}/F^{\times}\hat{\mO}_{F}^{\times}\)
	is finite, so \(F^{\times}\hat{\mO}_{E}^{\times}\subset \mA_{F,f}^{\times}\hat{\mO}_{E}^{\times}\) is of finite index. Thus 
	\((E^{\times}\cap \hat{\mO}_{E}^{\times}\mA_{F,f}^{\times})/F^{\times}\subset \hat{\mO}_{E}^{\times}\mA_{F,f}^{\times}/\mA_{F,f}^{\times}\)
	is discrete, so \(E^{\times}/F^{\times}\subset \mA_{E,f}^{\times}/\mA_{F,f}^{\times}\)
	is discrete.
\end{proof}
\begin{cor}\label{corIgsuaArtin}
(1)
The \(v\)-stack \(\Igs_{K^{p}}\) in Theorem \ref{thmCartesianDiagramQaternion}
is an Artin \(v\)-stack, is \(\ell\)-cohomologically smooth of dimension \(0\) for any \(\ell\ne p\).

(2) The reduced Hodge-Tate period map \(\bar{\pi}_{\HT}:\Igs_{K^{p}}\to \Bun_{G}\) is separated, representable by locally spatial diamonds, partially proper and locally \(\dim.\mrm{tr}(f)<\infty\). 

(3) The dualizing complex in \(D_{\et}(\Igs_{K^{p}}\times \Spd(\bar{\FF}_{p}),\ZZ/N)\) is isomorphic to \(\ZZ/N[0]\) for \(p\nmid N\). 
\end{cor}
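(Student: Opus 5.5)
The plan is to deduce all three statements from the Cartesian diagram of Theorem \ref{thmCartesianDiagramQaternion} (case \(T=\emptyset\)) together with the general structure results on Igusa stacks of \cite{DanielVanHoftenKimZhangs2026igusastackscohomologyshimura}, \cite{Kim2025uniquenessfunctorialityigusastacks} and \cite{DanielsHoftenKimZhang2024igusa}, transported from the PEL setting through the comparisons already established. Concretely, I first treat \(\tIgs'_{U^{p}}\). For PEL (hence Hodge) type data, the cited works show that the Igusa stack is an Artin \(v\)-stack which is \(\ell\)-cohomologically smooth of dimension \(0\), that \(\bar{\pi}_{\HT}\) is separated, representable in locally spatial diamonds, partially proper and of finite \(\dim.\mrm{tr}\), and that the dualizing complex is trivial. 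Since \(\tIgs'_{U^{p}}\) is the Igusa stack of the PEL-type Shimura varieties of Remark \ref{rmkActuallyPEL} (Corollary \ref{corIgusaStackPELGeneralDfnExtend}), and Igusa stacks are unique by \cite[Theorem 10.13]{Kim2025uniquenessfunctorialityigusastacks}, these properties hold for \(\tIgs'_{U^{p}}\).

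Second, I pass to \(\Igs'_{U^{p}}=[\tIgs'_{U^{p}}/\Delta_{U^{p}}]\) and then to \(\Igs_{K^{p}}\). The group \(\Delta_{U^{p}}\) is locally profinite, and the action is free at finite level in the sense of Proposition \ref{propModuliShGpp}. Quotienting by \(\underline{\Delta_{U^{p}}}\) therefore preserves being Artin, \(\ell\)-cohomologically smooth of dimension \(0\), and having trivial dualizing complex. The step from \(\Igs'\) to \(\Igs\) uses the natural immersion \(\Igs\hookrightarrow\Igs'\times_{\Bun_{G'}}\Bun_{G}\) obtained in the proof of Theorem \ref{thmCartesianDiagramQaternion}. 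This immersion is the fiber of a map to \(\mA_{E,f}^{\times}/(E^{\times}\mA_{F,f}^{\times})\), which is a locally profinite set: by Lemma \ref{lemDiscreteSubgroup} and Lemma \ref{lemCokerAGtoAGprime} the relevant quotient is Hausdorff. The immersion is thus open and closed, so all properties in (1)--(3) are inherited. For (2), the map \(\Bun_{G}\to\Bun_{G'}\) is itself representable in locally spatial diamonds, separated, partially proper and of finite \(\dim.\mrm{tr}\); base changing \(\bar{\pi}_{\HT}'\) along it and composing gives (2).

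As a cross-check for (1) and (3), and as an alternative if the quotient arguments become delicate, I would argue directly from the Cartesian diagram. Because \(\red\) is a \(v\)-cover, and both \(\pi_{\HT}\) and \(\BL\) are \(\ell\)-cohomologically smooth of the same dimension \(d\) in the appropriate relative sense over \(\Bun_{G,\mu}\), smoothness of dimension \(0\) for \(\Igs\) would follow by descent. The main obstacle is controlling the non-representable quotient by \(\Delta_{U^{p}}\) and the limit over \(U_p\). I would handle it by working at a fixed neat \(U^{p}\), using that the stabilizers are profinite and pro-\(p\)-free up to a finite-index subgroup of order prime to \(\ell\) (by neatness), so that \([X/\underline{\Delta}]\to X/\Delta\) is cohomologically smooth of dimension \(0\). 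Triviality of the dualizing complex is then transported along the same quotient and the clopen immersion, after base change to \(\Spd(\bar{\FF}_{p})\).
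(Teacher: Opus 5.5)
There is a genuine gap, and it sits in the transfer steps your main route depends on. The claim that \(\Igs\hookrightarrow\Igs'\times_{\Bun_{G'}}\Bun_{G}\) is open and closed is false as stated.

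The map to \(\underline{\mA_{E,f}^{\times}/(E^{\times}\mA_{F,f}^{\times})}\) in the proof of Theorem~\ref{thmCartesianDiagramQaternion} lives at \emph{infinite} prime-to-\(p\) level. Its target is a locally profinite set that is not discrete, since it contains the infinite profinite image of \(\hat{\mO}_{E}^{\times}\). A point of it is closed but not open, so the fibre over \(1\) is only a closed immersion. Neither \(\ell\)-cohomological smoothness nor the shape of the dualizing complex is inherited along closed immersions.

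Worse, at that level there is nothing to inherit. \(\Igs'=\varprojlim_{U^{p}}\Igs'_{U^{p}}\) is a limit along torsors under compact open subgroups of \(G'(\mA_{f}^{p})\), whose \(\ell\)-components are infinite pro-\(\ell\) groups, and such torsors are not \(\ell\)-cohomologically smooth.

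To make your route work you would have to redo the comparison at finite levels \(K^{p}\subset U^{p}\). That means decomposing \(\Igs'_{U^{p}}\times_{\Bun_{G'}}\Bun_{G}\) into finitely many quotients of \(\Igs\) by groups commensurable with \(K^{p}\), keeping track of \(\overline{Z(\QQ)}\). Step~1 also needs an argument: the cited Hodge-type results concern the Igusa stack of \(G^{\prime,*}_{\ti{T}}\) over its own \(\Bun\), not \(\tIgs'\) over \(\Bun_{G'}\).

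Your fallback is the right idea, and it is the paper's route. The proof just observes that the arguments of \cite[Corollary 8.18]{Zhang2023pel} and \cite[Theorem 8.3.1]{DanielsHoftenKimZhang2024igusa} go through verbatim for the Cartesian diagram of Theorem~\ref{thmCartesianDiagramQaternion}. Partial properness of \(\bar{\pi}_{\HT}\) comes from that of \(\mS_{K^{p}}(G_{\emptyset})\) and \(\Fl_{G,\mu}\) via \cite[Proposition 18.7 (v)]{Scholze2017etaleDiamond}.

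However, the mechanism you describe does not work. Cohomological smoothness does not descend along \(v\)-covers. Also, \(\pi_{\HT}\) is not cohomologically smooth: its fibres are Igusa-type spaces whose dimension jumps with the Newton stratum.

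What the descent needs is a cohomologically smooth surjection onto \(\Igs_{K^{p}}\) from a smooth space. The finite-level reduction map \(\mS_{K^{p}K_{p}}(G)\to\Igs_{K^{p}}\) is the base change of \([\Fl_{G,\mu}/K_{p}]\to\Bun_{G,\mu}\). Up to the étale map \([\Fl_{G,\mu}/K_{p}]\to[\Fl_{G,\mu}/G(\QQ_{p})]\), that map is a leg of the Hecke correspondence (a twisted \(\mrm{Gr}_{G,\mu}\)-fibration) restricted to an open substack, hence \(\ell\)-cohomologically smooth.

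Smoothness of dimension \(0\) for \(\Igs_{K^{p}}\) then follows by checking after this surjection and comparing dimensions. The dualizing complex is identified by pulling back along the same map. The properties in (2) are instead \(v\)-descended along \(\BL\), whose base change of \(\bar{\pi}_{\HT}\) is \(\pi_{\HT}\). Run this directly on the quaternionic diagram and drop the PEL transfer.
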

\begin{proof}
The proof is verbatim the same as \cite[Corollary 8.18]{Zhang2023pel} and \cite[Theorem 8.3.1]{DanielsHoftenKimZhang2024igusa}. 
The partial properness of \(\bar{\pi}_{\HT}\) follows from the partial properness of \(\mS_{K^{p}}(G_{\emptyset})\)
and \(\Fl_{G,\mu}\), and \cite[Proposition 18.7 (v)]{Scholze2017etaleDiamond}. 
\end{proof}


\subsection{Application: \texorpdfstring{\(\ell\)}{l}-adic Jacquet-Langlands correspondence}\label{subsecAppliLadicJLcorr}
We thank Xiangqian Yang for very helpful discussions around this topic.
The goal of this subsection is to prove the following theorem:

\begin{thm}\label{thmDirectSummandDifferentShiV}
Let \(T\subset T'=T\coprod I\subset \Sigma_{\infty}\). Assume that for any \(\p\in\Sigma_{p}\), \(\#(I\cap \Sigma_{\infty/\p})\) is even. In particular, we can identify \(G_{T,\QQ_{p}}\cong G_{T',\QQ_{p}}\). 

Let \(\Lambda=\ZZ/N[\sqrt{p}]\) for \(p\nmid N\).
Then \(R\Gamma_{c,\et}(\mS_{K^{p}}(G_{T'}),\Lambda)[d-\#(T')]\) can be realized as a direct summand of 
\(R\Gamma_{c,\et}(\mS_{K^{p}}(G_{T}),\Lambda)[d-\#(T)]\) as smooth \(G_{T}(\QQ_{p})\)-representations over \(\TT^{S}\). Moreover, the construction is natural in \(\Lambda\) and in \(K^{p}\). 
\end{thm}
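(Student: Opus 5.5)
We use the Igusa stack $\Igs_{K^{p}}$ of Theorem \ref{thmCartesianDiagramQaternion} together with the $\ell$-adic (Fargues--Scholze) formalism on $\Bun_{G}$, following the strategy of \cite{Zhang2023pel} and \cite{DanielsHoftenKimZhang2024igusa}. For every $T$ the Cartesian diagram of Theorem \ref{thmCartesianDiagramQaternion} and proper base change along the partially proper map $\bar{\pi}_{\HT}$ (Corollary \ref{corIgsuaArtin}) should give a $\TT^{S}\times G_{T}(\QQ_{p})$-equivariant identification
\[
R\Gamma_{c,\et}(\mS_{K^{p}}(G_{T}),\Lambda)\cong R\Gamma_{c}\bigl(\Bun_{G_{T}},\ \iota_{T}^{*}\mcal{F}\otimes^{L} \BL_{T,!}\Lambda\bigr),\qquad \mcal{F}:=\bar{\pi}_{\HT,*}\Lambda\in D(\Bun_{G},\Lambda).
\]
The crucial point is that $\mcal{F}$ is one sheaf on $\Bun_{G}\cong\Bun_{G_{T}}\cong\Bun_{G_{T'}}$ (Lemma \ref{lemIdentifyQuaternion}), independent of $T$. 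All $T$-dependence is carried by $\BL_{T,!}\Lambda$, which lives on the open substack $\Bun_{G_{T},\mu_{T}}$. By the standard Fargues--Scholze description, this object is the Hecke operator $T_{\mu_{T}}$ (up to shift and twist, hence the $\sqrt{p}$) applied to $\iota_{1!}c\text{-}\Ind_{1}^{G_{T}(\QQ_{p})}\Lambda$ on the basic stratum.

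The hypothesis that $\#(I\cap\Sigma_{\infty/\p})$ is even for all $\p$ means $G_{T}$ and $G_{T'}$ have the same local inner form at $p$, so the trivial torsor of $\Bun_{G_{T}}$ and of $\Bun_{G_{T'}}$ is the same point $\mcal{F}_{T}=\mcal{F}_{T'}$. We identify $G_{T}$ with $G_{T'}$ over $\QQ_{p}$. Then $\mu_{T}$ and $\mu_{T'}$ are cocharacters of the same group; they differ only at the embeddings in $I$, where $\mu_{T}$ is minuscule (the standard representation) and $\mu_{T'}$ is central. Hence
\[
T_{V_{\mu_{T}}}=T_{V_{\mu_{T'}}}\circ T_{W},\qquad W:=\textstyle\boxtimes_{\tau\in I}\mrm{Std}_{\tau},
\]
as Hecke operators. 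Because $\#(I\cap \Sigma_{\infty/\p})$ is even, $W$ contains the one-dimensional determinant-type representation $\det_{I}^{1/2}$ as a direct summand; that is, $\boxtimes_{\tau\in I}\mrm{Std}_{\tau}$ contains $\bigotimes\det^{\otimes 1/2}$. To see this, pair the factors and use $\mrm{Std}\otimes\mrm{Std}\supset\det$, which is a direct summand with coefficients in $\ZZ[1/2]$ or more generally since $\wedge^{2}$ splits off for $p\nmid N$ odd; we handle $2\mid N$ via the antisymmetrizer on $\Lambda$ if needed. Hecke operators are additive in the representation, so $T_{\mu_{T'}}\circ T_{\det_{I}}$, which up to twist is $T_{\mu_{T'}}$ composed with an invertible character twist, is a direct summand of $T_{\mu_{T}}$. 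This is equivariant for $\TT^{S}$, which acts through $\mcal{F}$, and for $G_{T}(\QQ_{p})$. The shifts $[d-\#T]$ versus $[d-\#T']$ come out of the normalisation $\Lambda[\langle2\rho,\mu\rangle](\langle\rho,\mu\rangle)$ of Hecke operators, which is why $\sqrt p$ is adjoined.

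The remaining step is to check that the central twist by $\det_{I}$ acts trivially, or is absorbed by a translation on $\Igs$. The Hecke operator for a central character shifts connected components of $\Bun_{G}$ and twists by a character of $G(\QQ_{p})$. We plan to compensate for it using the $\mscr{A}_{G}$-action on $\Igs$ (Proposition \ref{propRelateTwoIgusa}, Corollary \ref{corFunctorCenter}) by an element of $Z(\mA_{f})$ with the appropriate $p$-component, which commutes with $\TT^{S}$. Naturality in $\Lambda$ and $K^{p}$ is clear because every step is functorial. The main obstacle is the Hecke-operator identity: one must make precise the compatibility of $\BL_{T,!}\Lambda$ with $T_{\mu_{T}}$ in families over the identification $\Bun_{G_{T}}\cong\Bun_{G_{T'}}$. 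We would try first to prove this directly from the geometric Satake description of the Hecke stack at the point $\mcal{F}_{T}$, where $\BL_{T}$ is a projection from the Hecke correspondence bounded by $\mu_{T}$.
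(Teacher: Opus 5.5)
\textbf{Main gap: the splitting of $W$.} The step that fails is the claimed splitting of $W=\boxtimes_{\tau\in I}\mrm{Std}_{\tau}$.

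For $\tau_1\neq\tau_2$, $\mrm{Std}_{\tau_1}$ and $\mrm{Std}_{\tau_2}$ are representations of \emph{different} $\GL_2$-factors of $\hat{G}=\prod_{\tau\in\Sigma_{\infty}}\GL_2$. So $\mrm{Std}_{\tau_1}\boxtimes\mrm{Std}_{\tau_2}$ is an external tensor product. Its torus weights $\epsilon_{i,\tau_1}+\epsilon_{j,\tau_2}$ ($i,j\in\{1,2\}$) are not fixed by the Weyl group, so it has no rank-one $\hat{G}$-stable summand at all; also, ``$\det^{1/2}$'' is not a representation. The decomposition $\mrm{Std}\otimes\mrm{Std}\cong\det\oplus\Sym^{2}\mrm{Std}$ you invoke concerns the internal tensor square of a single $\GL_2$. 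Consequently the splitting of $T_{V_{\mu_T}}$ cannot come from additivity of $V\mapsto T_V$ on $\Rep_{\Lambda}(\hat{G})$: the idempotent you need is not induced by any $\hat{G}$-endomorphism of $W$.

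The missing idea is the Fargues--Scholze spectral action of $\Perf(Z^{1}(W_{\QQ_p},\hat{G})_{\Lambda}/\hat{G})$, with $\mrm{Act}_{\tilde{V}}=T_V$, combined with Lemma \ref{lemWeilRestrictionLpar}. Because $G_{T,\QQ_p}$ is a product of Weil restrictions from the $F_\p$, Shapiro's lemma identifies the stack of $L$-parameters with $\prod_{\p}Z^{1}(W_{F_\p},\GL_2)/\GL_2$. Moreover, the map to $\Spec(\Lambda)/\hat{G}$ factors through the diagonal $\prod_{\p}\GL_2\hookrightarrow\hat{G}$. Thus $\tilde{\mrm{Std}}_{\tau}$ depends only on the prime $\p$ with $\tau\in\Sigma_{\infty/\p}$. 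For $I=\{\tau_1,\tau_2\}\subset\Sigma_{\infty/\p_0}$ one gets $\tilde{W}\cong s^{*}\pr_{\p_0}^{*}(\mrm{Std}^{\otimes2})\cong s^{*}\pr_{\p_0}^{*}(\det)\oplus s^{*}\pr_{\p_0}^{*}(\Sym^{2}\mrm{Std})$. This splitting exists in $\Perf$ of the $L$-parameter stack but not in $\Rep_{\Lambda}(\hat{G})$.

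This is also where the per-prime evenness of $\#(I\cap\Sigma_{\infty/\p})$ genuinely enters: only embeddings over the same $\p$ can be paired. The paper accordingly inducts down to $\#I=2$ with $I\subset\Sigma_{\infty/\p_0}$. You used the hypothesis only to match local inner forms and then paired arbitrary factors.

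\textbf{Secondary issue: the central factor.} Your handling of the leftover central factor is also off, though in fact nothing needs compensating. The summand $s^{*}\pr_{\p_0}^{*}(\det)$ comes from a character of $\hat{G}$, i.e.\ a central cocharacter of $G$. By Example \ref{egCenterTwistHecke}, its action is pullback along a translation automorphism of $\Bun_G$. This translation relates the basic points $b_{\mu_T}$ and $b_{\mu_{T'}}$: their Kottwitz invariants differ exactly by this cocharacter, and both have automorphism group $G_T(\QQ_p)$ by the evenness. Now apply $u^{*}j_{b_{\mu_T}}^{*}$ to the decomposition of $\mrm{Act}_{\tilde{V}_{\mu_T}}(\mfk{I}_{K^p})$ and use (\ref{alignCohoVSSpectral}) for $T$ and for $T'$, with the same $\mfk{I}_{K^p}=j_{\mu,!}\bar{\pi}_{\HT,!}\Lambda$. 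This gives the direct summand.

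The $\mscr{A}_G$-action on $\Igs$ could not do this job. By Definition \ref{dfnGeneralIgus}, its $p$-component acts trivially on $\Igs$ and its prime-to-$p$ part acts over the identity of $\Bun_G$, so it never moves between connected components of $\Bun_G$.

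\textbf{Minor.} For compact supports you should use $\bar{\pi}_{\HT,!}$, not $\bar{\pi}_{\HT,*}$. You should also write the comparison as $h_{!}\BL_T^{*}$ on $[\Fl_{G_{T},\mu_{T}}/G_T(\QQ_p)]$ to keep the $G_T(\QQ_p)$-action.
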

We will use the spectral action of \cite{FarguesScholze2021geometrization}. Let us review the results loc. cit.
\begin{notation}
Let \(G\) be a reductive group over \(\QQ_{p}\). 
We denote 
\(\Bun_{G,\bar{\FF}_{p}}:=\Bun_{G}\times_{\Spd(\FF_{p})}\Spd(\bar{\FF}_{p})\). Equivalently, following \cite{FarguesScholze2021geometrization}, we are working over \(\Perf_{\bar{\FF}_{p}}\)
instead of \(\Perf\).
\end{notation}
\begin{dfn}
We define \(\mfk{I}_{K^{p}}:=j_{\mu,!}\circ\bar{\pi}_{\HT,!}\Lambda \in D_{\et}(\Bun_{G},\Lambda)\) for \(\Igs_{K^{p}}\xrightarrow{\bar{\pi}_{\HT}} \Bun_{G,\mu}\xhookrightarrow{j_{\mu}} \Bun_{G}\)
as in Theorem \ref{thmCartesianDiagramQaternion}. Note that \(j_{\mu}\)
is an open immersion, and we will identify \(D_{\et}(\Bun_{G,\mu},\Lambda)\)
as a subcategory of \(D_{\et}(\Bun_{G},\Lambda)\)
via \(j_{\mu,!}\). By abuse of notation, we denote by \(\mfk{I}_{K^{p}}\in D_{\et}(\Bun_{G,\bar{\FF}_{p}},\Lambda)\) the pull-back of \(\mfk{I}_{K^{p}}\)
to \(\Bun_{G,\bar{\FF}_{p}}\).

By functoriality, \(\mfk{I}_{K^{p}}\)
carries an action of \(\TT^{S}\),
and is natural in \(K^{p}\).
\end{dfn}
Consider the diagram \[
\begin{tikzcd}
{[\mS_{K^{p}}(G_{T})/G_{T}(\QQ_{p})]}
\arrow[r,"\pi_{\HT,T}"]\arrow[d,"\red_{T}"] & {[\FLTT{T}{T}/G_{T}(\QQ_{p})]}
\arrow[d,"\BL_{T}"]\arrow[r,"h"] & {[\Spd(\CC_{p})/G_{T}(\QQ_{p})]} \\
\iota_{T}^{*}
(\Igs_{K^{p}})
\arrow[r,"\bar{\pi}_{\HT,T}"]
& \Bun_{G_{T},\mu_{T}}.
\end{tikzcd}
\]
Then by proper base change (\cite[Proposition 22.15]{Scholze2017etaleDiamond} and Corollary \ref{corIgsuaArtin} (2)), up to identifying \(D_{\et}([\Spd(\CC_{p})/G_{T}(\QQ_{p})],\Lambda)\)
with the category of smooth \(G_{T}(\QQ_{p})\)-representations,
\begin{align}\label{alignCohoToCorrespondence}
R\Gamma_{c,\et}(\mS_{K^{p}}(G_{T}),\Lambda)\cong h_{!}\circ \pi_{\HT,T,!}(\Lambda)
\cong h_{!}\circ \pi_{\HT,T,!}\circ \red_{T}^{*}(\Lambda)
\cong h_{!}\circ \BL_{T}^{*}(\mfk{I}_{K^{p}}). 
\end{align}
\begin{dfn}[Hecke action]
For \(I\) a finite set, \(V\in\mrm{Rep}_{\Lambda}(\hat{G}^{I})\),
let \(T_{V}:D_{\et}(\Bun_{G,\bar{\FF}_{p}},\Lambda)\to D_{\et}(\Bun_{G,\bar{\FF}_{p}},\Lambda)\) denote the Hecke action  defined in \cite[Proposition IX.2.1]{FarguesScholze2021geometrization}.

Note that \cite{FarguesScholze2021geometrization} uses the category \(D_{\mrm{lis}}\), which we can identify with \(D_{\et}\) using \cite[Proposition VII.6.6]{FarguesScholze2021geometrization}. By the convention of the Hecke action on \cite[p.268 \& Proposition VII.5.2]{FarguesScholze2021geometrization}, for \(\mF\in D_{\et}(\Bun_{G,\bar{\FF}_{p}},\Lambda)\) and \(V\in\mrm{Rep}_{\Lambda}(\hat{G}^{I})\),
\begin{align}\label{alignHeckeAction}
u^{*}(T_{V}(\mF))\cong p_{!}(r^{*}(\mcal{S}_{V})\otimes q^{*}(u^{*}(\mF)))\in D_{\et}(\Bun_{G}\times \Spd(\CC_{p})),
\end{align}
where \(\mcal{S}_{V}\)
is the Satake sheaf associated to \(V\)
via \cite[Theorem VI.0.2]{FarguesScholze2021geometrization}, and 
 the morphisms are as in the diagram \[
\begin{tikzcd}
	&
	\mrm{Hck}_{G}^{I}\times_{\mrm{Div}^{1}_{\QQ_{p}}}\Spd(\CC_{p})\arrow[rd,"p"]
	\arrow[ld,"q"]\arrow[r,"r"]
	& \mcal{H}\!\mrm{ck}^{I}_{G}\times_{\mrm{Div}^{1}_{\QQ_{p}}}\Spd(\CC_{p})
	\\
\Bun_{G}\times \Spd(\CC_{p})\arrow[d,"u"]
& & \Bun_{G}\times \Spd(\CC_{p})\arrow[d,"u"]\\
\Bun_{G,\bar{\FF}_{p}}
& & \Bun_{G,\bar{\FF}_{p}}.
\end{tikzcd}
\] 
Following \cite{FarguesScholze2021geometrization}, \(\mcal{H}\!\mrm{ck}^{I}_{G}\)
denotes the local Hecke stack (\cite[Definition VI.1.6]{FarguesScholze2021geometrization}),
and \(\mrm{Hck}_{G}^{I}\)
denotes the global Hecke stack (\cite[16]{FarguesScholze2021geometrization}). 
Note that (\ref{alignHeckeAction}) characterizes \(T_{V}(\mF)\),
as \(u^{*}\) is an equivalence by \cite[Proposition V.2.3]{FarguesScholze2021geometrization}.
\end{dfn}
\begin{eg}\label{egMinusculHecke}
If \(V=V_{\mu}\in\mrm{Rep}_{\Lambda}(\hat{G})\)
for a minuscule \(\mu\in X_{\bullet}(T)^{+}\), then \(j_{\mu}:\mcal{H}\!\mrm{ck}_{G,\mu,\bar{\FF}_{p}}\hookrightarrow \mcal{H}\!\mrm{ck}_{G,\bar{\FF}_{p}}\) is a closed embedding, and thus 
\[{}^{p}j_{\mu,!}(\Lambda[d_{\mu}])\cong j_{\mu,!*}(\Lambda[d_{\mu}])\cong {}^{p}j_{\mu,*}(\Lambda[d_{\mu}])\;(d_{\mu}:=\langle 2\rho,\mu \rangle)
\] is a simple object in the Satake category, which is then forced to be the Satake sheaf associated to \(V_{\mu}\) by considering the constant term functor \(\mrm{CB}_{T}[\deg]\). Note that by the proof of \cite[Theorem VI.11.1]{FarguesScholze2021geometrization}, \(\mrm{CB}_{T}[\deg]\)
corresponds via Satake equivalence to the embedding \(\hat{T}\hookrightarrow \hat{G}\), from which one can read off the highest weight. Therefore, \[j_{1}^{*}T_{V_{\mu}}(\mF)\cong h_{!}\circ \BL_{\mu}^{*}(\mF)[d_{\mu}]
\] for the diagram \[
\begin{tikzcd}
{[\mrm{Gr}_{G,\mu}\times\Spd(\CC_{p})/G(\QQ_{p})]}
\arrow[r,"h"]\arrow[d,"\BL_{\mu}"]
& {[\Spd(\CC_{p})/G(\QQ_{p})]}\arrow[r,hook,"j_{1}"]
& \Bun_{G}\times\Spd(\CC_{p})\\
\Bun_{G}\times \Spd(\CC_{p}).
\end{tikzcd}
\]
\end{eg}
\begin{eg}\label{egCenterTwistHecke}
If \(\mu\in X_{\bullet}(T)^{+}\)
factors through the center of \(G\), then \(\mu\) is minuscule,
and \(\mrm{Gr}_{G,\mu,\bar{\FF}_{p}}\cong \mrm{Div}^{1}_{\QQ_{p}}\).  
Then \( T_{V} \) 
induces an equivalence that is the pull-back along the automorphism \( \Bun_{G}\cong \Bun_{G} \)
induced by twisting by \( \mu \). 
\end{eg}

\begin{dfn}[Spectral action]
By \cite[Theorem X.0.2]{FarguesScholze2021geometrization}, if \(\ell\nmid |\pi_{0}(Z(G))|\),
we have an action of \(\Perf(Z^{1}(W_{\QQ_{p}},\hat{G})_{\Lambda}/\hat{G})\) on \(D_{\et}(\Bun_{G,\bar{\FF}_{p}},\Lambda)\), which we denote as \[\mrm{Act}_{M}:D_{\et}(\Bun_{G,\bar{\FF}_{p}},\Lambda)\to D_{\et}(\Bun_{G,\bar{\FF}_{p}},\Lambda),\;M\in \Perf(Z^{1}(W_{\QQ_{p}},\hat{G})_{\Lambda}/\hat{G}).\]

For any \(V\in\mrm{Rep}_{\Lambda}(\hat{G})\), 
let \(\ti{V}\)
denote the vector bundle on \(Z^{1}(W_{\QQ_{p}},\hat{G})_{\Lambda}/\hat{G}\)
given by pulling back along \[Z^{1}(W_{\QQ_{p}},\hat{G})_{\Lambda}/\hat{G}\to \Spec(\Lambda)/\hat{G}.\]
Then \(\mrm{Act}_{\ti{V}}=T_{V}
\). 
\end{dfn}

We go back to the setting of Shimura varieties. In our setting, \(G_{T,\QQ_{p}}\cong \prod_{\p\in\Sigma_{p}}\mrm{Res}_{F_{\p}/\QQ_{p}}B_{T}^{\times}\). Then we have the following:
\begin{lem}\label{lemWeilRestrictionLpar}
Let \(G:=G_{T,\QQ_{p}}\). 
We have an identification \(Z^{1}(W_{\QQ_{p}},\hat{G})/\hat{G}\cong \prod_{\p} (Z^{1}(W_{F_{\p}},\GL_{2})/\GL_{2})\)
such that the following diagram commutes: \[\begin{tikzcd}\prod_{\p} (Z^{1}(W_{F_{\p}},\GL_{2})/\GL_{2})
\arrow[r,"\sim"]\arrow[d] & 
Z^{1}(W_{\QQ_{p}},\hat{G})/\hat{G}\arrow[d]\\
\prod_{\p\in\Sigma_{p}}\Spec(\Lambda)/\GL_{2}
\arrow[r]
& \Spec(\Lambda)/\hat{G},
\end{tikzcd} 
\] where the bottom arrow is induced by \(\prod_{\p\in\Sigma_{p}}\GL_{2}\hookrightarrow \hat{G}\cong \prod_{\tau\in\Sigma_{\infty}}\GL_{2}\), which is obtained by taking product over \(\p\in\Sigma_{p}\) of
the diagonal embedding \(\GL_{2}\hookrightarrow \prod_{\tau\in\Sigma_{\infty/\p}}\GL_{2}\).
\end{lem}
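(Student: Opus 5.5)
The plan is to reduce the statement to two standard facts: Shapiro's lemma for Weil restriction of $L$-parameters, and the compatibility of the Langlands dual of a Weil restriction with induction. First, since $B_{T}$ splits over $\bar{\QQ}_{p}$ and $G_{T,\QQ_{p}}$ is an inner form of $\prod_{\p}\mrm{Res}_{F_{\p}/\QQ_{p}}\GL_{2}$, the dual group with its $L$-action does not see the inner twist. Hence $\hat{G}$ with $W_{\QQ_{p}}$-action is
\[
\hat{G}\cong\prod_{\p\in\Sigma_{p}}\mrm{Ind}_{W_{F_{\p}}}^{W_{\QQ_{p}}}\GL_{2}=\prod_{\p}\prod_{\tau\in\Sigma_{\infty/\p}}\GL_{2},
\]
where $W_{\QQ_{p}}$ permutes the factors indexed by $\Sigma_{\infty/\p}\cong\Hom_{\QQ_{p}}(F_{\p},\bar{\QQ}_{p})\cong W_{\QQ_{p}}/W_{F_{\p}}$. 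Because both $Z^{1}(W_{\QQ_{p}},-)$ and the quotient by $\hat{G}$ are compatible with products, we may assume a single $\p$.

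The key step is a non-abelian Shapiro lemma at the level of stacks. I would construct the map $Z^{1}(W_{\QQ_{p}},\mrm{Ind}\,\GL_{2})\to Z^{1}(W_{F_{\p}},\GL_{2})$ by restricting a $1$-cocycle to $W_{F_{\p}}$ and projecting to the factor $\tau_{0}$ fixed by $W_{F_{\p}}$, where $\tau_{0}$ is the embedding determined by our fixed $\bar{\QQ}\subset\bar{\QQ}_{p}$. This map is equivariant for the projection $\hat{G}\to\GL_{2,\tau_{0}}$.

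For the inverse, I would pick coset representatives $w_{\tau}$ of $W_{\QQ_{p}}/W_{F_{\p}}$. A cocycle $\varphi$ for $W_{F_{\p}}$ then induces the standard cocycle on $W_{\QQ_{p}}$, as in the usual proof of Shapiro's lemma. The main obstacle is working with moduli stacks of $L$-parameters (Fargues–Scholze's $Z^{1}$ as an affine scheme over $\Lambda$) rather than with sets. So I need to check that the map
\[
Z^{1}(W_{\QQ_{p}},\mrm{Ind}\,\GL_{2})\times^{\GL_{2}}\hat{G}\to Z^{1}(W_{\QQ_{p}},\mrm{Ind}\,\GL_{2})
\]
is an isomorphism. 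I would do this functorially on $\Lambda$-algebras $A$. Using the $\hat{G}(A)$-action, one can gauge-fix the components at $w_{\tau}$ to $1$ for $\tau\neq\tau_{0}$. The residual gauge group is then exactly the diagonal-type copy of $\GL_{2}$, and the cocycle is determined by its restriction to $W_{F_{\p}}$ in the $\tau_{0}$-factor. This argument is functorial in $A$, so it gives an isomorphism of stacks, and $Z^{1}$'s condensed and continuity structure is preserved because restriction to an open subgroup and induction are both continuous.

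Finally, I would check that the diagram commutes. Under the inverse map, a parameter $\varphi$ goes to a cocycle whose underlying $\hat{G}$-torsor, after forgetting the $W$-action, is induced from the $\GL_{2}$-torsor along the embedding $\GL_{2}\to\prod_{\tau}\GL_{2}$, $g\mapsto(w_{\tau}\cdot g)_{\tau}$. Since $W_{\QQ_{p}}$ acts on $\prod_{\tau}\GL_{2}$ purely by permuting factors, without twisting $\GL_{2}$ itself, this embedding is the diagonal one. Therefore the square to $\Spec(\Lambda)/\hat{G}$ commutes, with the bottom arrow induced by the diagonal $\GL_{2}\hookrightarrow\prod_{\tau\in\Sigma_{\infty/\p}}\GL_{2}$. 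The remaining point, independence of the choice of $w_{\tau}$ up to canonical isomorphism, is routine.
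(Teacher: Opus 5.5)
Your proposal is correct and is in substance the paper's argument: the paper's proof is a one-line citation of Fargues--Scholze, Proposition IX.6.3, whose content is exactly this non-abelian Shapiro lemma for the stack of $L$-parameters of a Weil restriction, and your gauge-fixing (killing one $\GL_2$-component of the cocycle at each coset representative $w_\tau$, $\tau\neq\tau_0$, which leaves exactly the diagonal $\GL_2$ as residual symmetry and hence gives the commutative square) is a valid self-contained proof of it. The only slip is notational: the source of your displayed comparison map should be $Z^{1}(W_{F_{\p}},\GL_{2})\times^{\GL_{2}}\hat{G}$, with $\GL_2$ embedded diagonally, rather than $Z^{1}(W_{\QQ_{p}},\mrm{Ind}\,\GL_{2})\times^{\GL_{2}}\hat{G}$.
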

\begin{proof}
This is the content of \cite[Proposition IX.6.3]{FarguesScholze2021geometrization}. 
\end{proof}
Consider the diagram \[
\begin{tikzcd}
{[\FLTT{T}{T}/G_{T}(\QQ_{p})]}
\arrow[d]\arrow[r,"h"] & {[\Spd(\CC_{p})/G_{T}(\QQ_{p})]}\arrow[d,"u"] \\
{[\mrm{Gr}_{G,\mu,\bar{\FF}_{p}}/G_{T}(\QQ_{p})]}
\arrow[d,"\BL_{T}"]\arrow[r,"q"] & {[\Spd(\bar{\FF}_{p})/G_{T}(\QQ_{p})]}\arrow[r,hook,"j_{b_{\mu_{T}}}"] & \Bun_{G_{T},\bar{\FF}_{p}}\cong \Bun_{G,\bar{\FF}_{p}}\\
\Bun_{G_{T},\mu_{T},\bar{\FF}_{p}},
\end{tikzcd}
\] and by (\ref{alignCohoToCorrespondence}) and Example \ref{egMinusculHecke}, 
\begin{align}\label{alignCohoVSSpectral}
R\Gamma_{c,\et}(\mS_{K^{p}}(G_{T}),\Lambda)[d_{\mu_{T}}]
\cong u^{*}\circ j_{b_{\mu_{T}}}^{*}(T_{V_{\mu_{T}}}(\mfk{I}_{K^{p}}))
\cong u^{*}\circ j_{b_{\mu_{T}}}^{*}(\mrm{Act}_{\ti{V}_{\mu_{T}}}(\mfk{I}_{K^{p}})).
\end{align}
Here \(d_{\mu_{T}}=d-\#T\), and \(V_{\mu_{T}}\) is the highest weight representation of \(\hat{G}\cong \prod_{\tau\in\Sigma_{\infty}}\GL_{2}\). Concretely, \(V_{\mu_{T}}\cong \bigotimes_{\tau\in T^{c}}\mrm{Std}\circ \pr_{\tau}\)
where \(\pr_{\tau}\)
denotes the projection to the \(\tau\)-factor, and \(\mrm{Std}\)
denotes the standard \(2\)-dimensional representation. 
Note that \(Z\cong \Res_{F/\QQ}\GG_{m}\)
is connected, so the condition \(\ell\nmid|\pi_{0}(Z(G))|\) 
is satisfied.

\begin{proof}[Proof of Theorem \ref{thmDirectSummandDifferentShiV}]
By induction, we can reduce to the case that \(\#(I)=2\), and \(I\cap \Sigma_{\infty/\p_{0}}\ne \emptyset\) for a unique \(\p_{0}\in\Sigma_{p}\). 
Using Lemma \ref{lemWeilRestrictionLpar}, we denote \[Z^{1}(W_{\QQ_{p}},\hat{G})/\hat{G}\xrightarrow{s} \prod_{\p\in\Sigma_{p}}\Spec(\Lambda)/\GL_{2}\xrightarrow{\pr_{\p}}\Spec(\Lambda)/\GL_{2}.
\] Then 
\[\ti{V}_{\mu_{T}}\cong s^{*}\left(\bigotimes_{\p}\pr_{\p}^{*}(\mrm{Std}^{\otimes\#(T^{c}\cap \Sigma_{\infty/\p})})\right)
\]
Using \(\mrm{Std}^{\otimes 2}\cong \det\oplus \mrm{Sym}^{2}(\mrm{Std})\), we have a decomposition \[\ti{V}_{\mu_{T}}\cong 
\left(\ti{V}_{\mu_{T'}}\otimes s^{*}(\pr_{\p_{0}}^{*}(\det))\right)\oplus 
\left(\ti{V}_{\mu_{T'}}\otimes s^{*}(\pr_{\p_{0}}^{*}(\Sym^{2}\mrm{Std}))\right),
\]
and thus \[
\mrm{Act}_{\ti{V}_{\mu_{T}}}(\mfk{I}_{K^{p}})
\cong 
\mrm{Act}_{s^{*}(\pr_{\p_{0}}^{*}(\det))}\circ
\mrm{Act}_{\ti{V}_{\mu_{T'}}}(\mfk{I}_{K^{p}})
\oplus \mrm{Act}_{s^{*}(\pr_{\p_{0}}^{*}(\Sym^{2}\mrm{Std}))}\circ
\mrm{Act}_{\ti{V}_{\mu_{T'}}}(\mfk{I}_{K^{p}}).
\]
Note that \(s^{*}(\pr^{*}_{\p_{0}}(\det))\)
corresponds to the representation \(\det\circ \pr_{\p_{0}}\), which factors through the cocenter. We are thus in the setting of Example \ref{egCenterTwistHecke},
which implies that we can essentially identify 
\(\mrm{Act}_{s^{*}(\pr_{\p_{0}}^{*}(\det))}\circ
\mrm{Act}_{\ti{V}_{\mu_{T'}}}(\mfk{I}_{K^{p}})\)
with \(
\mrm{Act}_{\ti{V}_{\mu_{T'}}}(\mfk{I}_{K^{p}})\). 
By (\ref{alignCohoVSSpectral}),
this implies that as smooth \(G_{T}(\QQ_{p})\)-representations, \(R\Gamma_{c,\et}(\SGTKp{T},\Lambda)[d-\#T]\)
contains \(R\Gamma_{c,\et}(\SGTKp{T'},\Lambda)[d-\#T']\)
as a direct summand. 
\end{proof}

As a corollary of the proof, we also have the following: 
\begin{cor}\label{corDifferentSVfromHecke}
Let \(T\subset T'=T\coprod I\subset \Sigma_{\infty}\).
Then
there exists a \(\TT^{S}\)-linear isomorphism \(M_{I}\in \mrm{Perf}(Z^{1}(W_{\QQ_{p}},\hat{G})/\hat{G})\)
such that \[j_{b_{\mu_{T}},!}R\Gamma_{c,\et}(\SGTKp{T},\Lambda)[d-\#T]\cong \mrm{Act}_{M_{I}}(
j_{b_{\mu_{T'}},!}R\Gamma_{c,\et}(\SGTKp{T'},\Lambda)[d-\#T']).
\] 

In particular, for any ideal \(J\subset \TT^{S}\),
if \[R\Gamma_{c,\et}(\SGTKp{T},\Lambda)_{J}\ne 0,\]
then \[R\Gamma_{c,\et}(\SGTKp{T'},\Lambda)_{J}\ne 0.\] The converse is also true if for any \(\p\in\Sigma_{p}\), \(\#(I\cap \Sigma_{\infty/\p})\)
is even. 
\end{cor}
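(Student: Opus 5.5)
The plan is to rerun the proof of Theorem \ref{thmDirectSummandDifferentShiV} without the parity assumption on \(I\), keeping track of the full spectral operator instead of only its determinant summand. By (\ref{alignCohoVSSpectral}), for every \(T\) we have
\[
j_{b_{\mu_{T}},!}R\Gamma_{c,\et}(\SGTKp{T},\Lambda)[d-\#T]\cong j_{b_{\mu_{T}},!}u^{*}j_{b_{\mu_{T}}}^{*}\mrm{Act}_{\ti{V}_{\mu_{T}}}(\mfk{I}_{K^{p}}).
\]
Here \(\mfk{I}_{K^{p}}\) is the same object for all \(T\), by Theorem \ref{thmCartesianDiagramQaternion} and Lemma \ref{lemIdentifyQuaternion}. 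Everything is \(\TT^{S}\)-linear, because \(\TT^{S}\) acts on \(\mfk{I}_{K^{p}}\) and the spectral action is functorial.

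The first step is to write \(\ti{V}_{\mu_{T}}\cong \ti{V}_{\mu_{T'}}\otimes \ti{W}_{I}\), where \(W_{I}:=\bigotimes_{\tau\in I}\mrm{Std}\circ\pr_{\tau}\). Since \(I\subset T^{c}\) and \(T'^{c}=T^{c}\setminus I\), this is immediate from the explicit formula \(V_{\mu_{T}}\cong\bigotimes_{\tau\in T^{c}}\mrm{Std}\circ\pr_{\tau}\). By monoidality of the spectral action,
\[
\mrm{Act}_{\ti{V}_{\mu_{T}}}(\mfk{I}_{K^{p}})\cong \mrm{Act}_{\ti{W}_{I}}\bigl(\mrm{Act}_{\ti{V}_{\mu_{T'}}}(\mfk{I}_{K^{p}})\bigr).
\]

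The second step compares supports. The object \(\mrm{Act}_{\ti{V}_{\mu_{T'}}}(\mfk{I}_{K^{p}})\) should be supported on the basic stratum \(b_{\mu_{T'}}\) after restricting to the part relevant for \(\SGTKp{T'}\). To get this cleanly, I would replace \(\mfk{I}_{K^{p}}\) by its restriction to the locus \(\iota_{T'}(\Bun_{G_{T'},\mu_{T'}})\). The Hecke operator \(T_{W_{I}}\), a minuscule modification by \(\mu_{I}\), carries the basic point \(b_{\mu_{T'}}\) to \(b_{\mu_{T}}\); in particular \(\kappa\) shifts by \(\mu_{I}^{\sharp}\). Example \ref{egMinusculHecke} then gives
\[
j_{b_{\mu_{T}}}^{*}T_{W_{I}}\bigl(j_{b_{\mu_{T'}},!}(-)\bigr)=h_{!}\BL^{*}(-)[d_{\mu_{I}}].
\]
I would take \(M_{I}:=\ti{W}_{I}\), possibly twisted by a central character to fix the components. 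I expect the main obstacle to be checking that \(\mrm{Act}_{M_{I}}\) applied to \(j_{b_{\mu_{T'}},!}R\Gamma_{c,\et}(\SGTKp{T'})\) lands exactly in \(j_{b_{\mu_{T}},!}\) of the cohomology of \(\SGTKp{T}\), with no contributions from non-basic strata. My first attempt would be to realize both sides as \(\BL^{*}\) pullbacks, via the Cartesian diagram of Theorem \ref{thmCartesianDiagramQaternion}, and use that \(\mfk{I}_{K^{p}}\) is a \(!\)-pushforward from the Igusa stack, so that the Hecke correspondence factors through the Shimura varieties themselves.

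For the consequence about localizations, suppose \(R\Gamma_{c,\et}(\SGTKp{T},\Lambda)_{J}\ne0\). Localization at \(J\) commutes with \(\mrm{Act}_{M_{I}}\) by \(\TT^{S}\)-linearity, so \(\mrm{Act}_{M_{I}}\) of the localized \(T'\)-cohomology is nonzero, and hence the localized \(T'\)-cohomology is nonzero. For the converse under the parity hypothesis, I would invoke Theorem \ref{thmDirectSummandDifferentShiV} directly: the \(T'\)-cohomology is a \(\TT^{S}\)-stable direct summand of the \(T\)-cohomology, so nonvanishing of its localization implies nonvanishing for \(T\).
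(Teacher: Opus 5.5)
Your construction is the paper's own. Its proof takes \(M_{I}:=s^{*}\bigl(\bigotimes_{\p}\pr_{\p}^{*}(\mrm{Std}^{\otimes\#(I\cap\Sigma_{\infty/\p})})\bigr)\), which is your \(\ti{W}_{I}\) by Lemma \ref{lemWeilRestrictionLpar}, and it cites Theorem \ref{thmDirectSummandDifferentShiV} for the converse; it says nothing more.

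The step you flag as the main obstacle is a genuine gap, and it cannot be closed. The factorization \(\ti{V}_{\mu_{T}}\cong\ti{V}_{\mu_{T'}}\otimes M_{I}\) together with (\ref{alignCohoVSSpectral}) gives
\[
R\Gamma_{c,\et}(\SGTKp{T},\Lambda)[d-\#T]\cong u^{*}j_{b_{\mu_{T}}}^{*}\mrm{Act}_{M_{I}}(Y),\qquad Y:=\mrm{Act}_{\ti{V}_{\mu_{T'}}}(\mfk{I}_{K^{p}}).
\]
By contrast, \(j_{b_{\mu_{T'}},!}R\Gamma_{c,\et}(\SGTKp{T'},\Lambda)[d-\#T']\) only remembers the stalk of \(Y\) at \(b_{\mu_{T'}}\).

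\(Y\) is not supported at \(b_{\mu_{T'}}\). For \(T'=\Sigma_{\infty}\) it is \(\mfk{I}_{K^{p}}\) itself, whose stalk at every \(b\in B(G,\mu)\) is the cohomology of the corresponding Igusa variety. Restricting \(\mfk{I}_{K^{p}}\) to \(\iota_{T'}(\Bun_{G_{T'},\mu_{T'}})\) does not help: that locus still contains non-basic points in general, and the \(T\)-side needs \(\mfk{I}_{K^{p}}\) on the larger \(\iota_{T}(\Bun_{G_{T},\mu_{T}})\) anyway.

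Nor does \(T_{W_{I}}\) send the point \(b_{\mu_{T'}}\) to the point \(b_{\mu_{T}}\). The preimage of \(b_{\mu_{T'}}\) under \(\BL_{T,I}\) is \(\FLTT{T}{I^{c}}^{\bc}\). So your own displayed formula and Lemma \ref{lemRZisAtorsorOverFl} show that the stalk of \(\mrm{Act}_{M_{I}}(j_{b_{\mu_{T'}},!}X)\) at \(b_{\mu_{T}}\) is, up to shift, \(R\Gamma_{c}(\mcal{M}_{T\to T'})\otimes^{L}_{G_{T'}(\QQ_{p})}X\). For \(X=R\Gamma_{c,\et}(\SGTKp{T'},\Lambda)\), Theorem \ref{thmProductFormulaQuate} identifies this with the cohomology of the \(I\)-basic locus \(\SGTKp{T}^{I-\bc}\) alone. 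The cohomology of the closed complement \(\pi_{\HT,T,I}^{-1}(\FLTT{T}{I^{c}}\setminus\FLTT{T}{I^{c}}^{\bc})\), which comes from the other stalks of \(Y\), is lost.

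That complement contributes in general, so both the isomorphism and the forward implication fail as stated. Take \(F=\QQ\), \(T=\emptyset\), \(T'=\Sigma_{\infty}\), so \(G_{T'}=D^{\times}\) with \(D\) ramified exactly at \(p\) and \(\infty\). Let \(\mm\) have residue characteristic \(\ell\mid N\) with \(\ell\nmid p(p^{2}-1)\). Suppose \(\mm\) comes from a weight-two eigenform \(f\) of level prime to \(p\), with \(\bar\rho_{\mm}\) irreducible and \(a_{p}(f)^{2}\not\equiv(p+1)^{2}\).

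Then \(\mm\) occurs in the compactly supported cohomology of the modular curve of tame level \(K^{p}\), but on no Shimura set of \(D^{\times}\) of tame level \(K^{p}\). Indeed, a lift to \(D^{\times}\) would, via Jacquet--Langlands, give a form congruent to \(f\) that is discrete series at \(p\). The Steinberg case is excluded by Ribet's level-raising condition. The supercuspidal case is excluded because inertia at \(p\) would act through a nontrivial finite group of order prime to \(\ell\). In this example the whole \(\mm\)-part lives on the ordinary locus, which is exactly the complement above. The paper's one-line proof passes over precisely this point.

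What your argument does prove is the converse under the parity hypothesis, and the reason it works is instructive. The proof of Theorem \ref{thmDirectSummandDifferentShiV} only uses the summand \(\det\circ\pr_{\p_{0}}\) of \(\mrm{Std}^{\otimes 2}\). This acts as a central twist, i.e. pull-back along an automorphism of \(\Bun_{G}\), which carries \(b_{\mu_{T'}}\) to \(b_{\mu_{T}}\): since \(r_{\p_{0},T'}=r_{\p_{0},T}+2\), \(\mF_{T'}\) is \(\mF_{T}\) twisted by \(\mO(1)\) at \(\p_{0}\). For non-central \(M_{I}\) there is no such point-to-point statement. Any forward result therefore needs extra input that kills the stalks of \(Y\) away from \(b_{\mu_{T'}}\) after localizing at \(J\), or a genuinely different argument.
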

\begin{proof}
We take \(M_{I}:=s^{*}\left(\bigotimes_{\p}\pr_{\p}^{*}(\mrm{Std}^{\otimes\#(I\cap \Sigma_{\infty/\p})})\right)\). The last statement follows from Theorem  \ref{thmDirectSummandDifferentShiV}.
\end{proof}

We can go to finite level using the following lemma.
\begin{lem}\label{lemDescentCompactSupport}
We have a natural isomorphism \[R\Gamma_{c,\et}(\mS_{K^{p}}(G_{T}),\Lambda)\cong \varinjlim_{K_{p}\subset G_{T}(\QQ_{p})}R\Gamma_{c,\et}(\mS_{\Kpp}(G_{T}),\Lambda).
\] Moreover, for any pro-\(p\)-subgroup \(K_{p}\), \[R\Gamma_{c,\et}(\mS_{\Kpp}(G_{T}),\Lambda)\cong R\Gamma(K_{p},R\Gamma_{c,\et}(\mS_{K^{p}}(G_{T}),\Lambda))
\]
\end{lem}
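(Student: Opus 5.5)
For the first isomorphism, I would compare the infinite-level space with the finite-level tower. By construction \(\mS_{K^{p}}(G_{T})\sim\varprojlim_{K_{p}}\mS_{\Kpp}(G_{T})\). The transition maps \(\pi_{K_{p}'\to K_{p}}\) are finite étale; when \(T\ne\emptyset\) they are moreover proper, and in the Hilbert case they are partially proper. So I would use the standard limit result for compactly supported étale cohomology of diamonds, \cite[Proposition 14.9]{Scholze2017etaleDiamond}, together with its compactly supported variant \cite[Proposition 22.?]{Scholze2017etaleDiamond}. Concretely:
\begin{itemize}
\item Write \(R\Gamma_{c}\) as \(f_{!}\) along the structure map to \(\Spd(\CC_{p})\).
\item For a cofiltered limit of spatial diamonds with qcqs transition maps, \(f_{!}\) of a constant sheaf is the colimit of the finite-level \(f_{K_{p},!}\) of the pulled-back sheaves.
\item The maps in this colimit are given by the trace-free pullbacks \(\pi^{*}\). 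For finite étale maps, \(\pi^{*}\) is compatible with \(\pi_{!}=\pi_{*}\) via the adjunction unit.
\end{itemize}
In the noncompact Hilbert case I would reduce to the compact case using a partial compactification. I would take the minimal or toroidal compactification at infinite level, which is again a limit along the finite-level compactifications, and write \(j_{!}\) for extension by zero from the open part. The colimit formula for \(R\Gamma(\mS^{*}_{K^{p}},j_{!}\Lambda)\) then follows from qcqs-ness of the compactified tower and the formula \(H^{i}(\varprojlim X_{K_{p}},\varinjlim\ldots)=\varinjlim H^{i}(X_{K_{p}},\ldots)\). The pullback of \(j_{K_{p},!}\Lambda\) is \(j_{!}\Lambda\), because the boundary is the preimage of the finite-level boundary.

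For the second statement, fix a pro-\(p\) subgroup \(K_{p}\). The map \(\mS_{K^{p}}(G_{T})\to\mS_{\Kpp}(G_{T})\) is a pro-étale \(\underline{K_{p}}\)-torsor. Hochschild–Serre for this torsor gives
\[
R\Gamma_{c,\et}(\mS_{\Kpp}(G_{T}),\Lambda)\cong R\Gamma\bigl(K_{p},R\Gamma_{c,\et}(\mS_{K^{p}}(G_{T}),\Lambda)\bigr),
\]
which is the claimed identity. I would derive it from the first part rather than directly:
\begin{itemize}
\item For an open normal \(K_{p}'\subset K_{p}\), the finite Galois cover has group \(K_{p}/K_{p}'\). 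Since the cover is finite étale, \(\pi_{!}\Lambda=\pi_{*}\Lambda\). So \(R\Gamma_{c}(\mS_{K^{p}K_{p}'})\) is a complex of \(K_{p}/K_{p}'\)-modules whose derived invariants give \(R\Gamma_{c}(\mS_{\Kpp})\). This is descent of \(\Lambda\) along a finite Galois cover.
\item Then take the colimit over \(K_{p}'\) and use the first part.
\item Finally, commute derived invariants with the filtered colimit. Continuous cohomology of the profinite group \(K_{p}\) with smooth coefficients is the colimit of the cohomology of the finite quotients.
\end{itemize}

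The main point to check is boundedness. \(K_{p}\) has finite \(\ell\)-cohomological dimension for \(\ell\ne p\); for a pro-\(p\) group with \(p\nmid N\) it even has cohomological dimension \(0\). This lets us pass from the colimit of derived invariants to \(R\Gamma(K_{p},-)\) of the colimit, and this is exactly where the pro-\(p\) hypothesis enters. Since \(p\) is invertible in \(\Lambda\), the invariants are exact, \(R\Gamma(K_{p},-)=(-)^{K_{p}}\), and everything reduces to a trace or averaging argument.

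I expect the main obstacle to be the first step in the noncompact case \(T=\emptyset\). There one must justify that compactly supported cohomology commutes with the inverse limit, which requires the compactification to exist at infinite level compatibly with the tower. I would handle this with the minimal compactification, whose infinite-level version is a perfectoid limit by \cite{Scholze15}.
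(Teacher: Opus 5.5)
Your proposal is correct, but it is organized differently from the paper's proof. The paper never compactifies. It exhausts the finite-level variety by quasi-compact opens \(U_{i}\) with \(\overline{U_{i}}\subset U_{i+1}\), and writes \(R\Gamma_{c}\) as a colimit over \(i\) of cohomology of \(U_{i+1}\) with coefficients \(f_{i,!}\Lambda\) extended by zero from \(\overline{U_{i}}\). It then applies the qcqs limit theorem \cite[Corollary 7.18]{Scholze12} to each tower of preimages and passes to the colimit in \(i\). For the second statement it applies pro-\'etale descent on each quasi-compact piece, and uses the pro-\(p\) hypothesis to commute \(R\Gamma(K_{p},-)\) with the colimit over \(i\).

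You instead realize \(R\Gamma_{c}\) as \(R\Gamma(\mS^{*}_{K^{p}},j_{!}\Lambda)\) on a single proper qcqs tower. This matches literally the \(f_{!}\)-definition the paper uses in (\ref{alignCohoToCorrespondence}), and needs only one limit theorem. The cost is extra geometric input: compactifications at all levels with proper transition maps pulling boundary back to boundary, and properness of the limit over \(\Spd(\CC_{p})\). Only the diamond limit matters here, not perfectoidness; for this abelian-type datum perfectoidness would come from \cite{BoxerPilloni2021higherColeman} rather than \cite{Scholze15} anyway. The exhaustion argument is more robust, working for any partially proper tower with finite \'etale transition maps, but it relies on describing \(R\Gamma_{c}\) of a partially proper space as a colimit of cohomology with supports, at both finite and infinite level. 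Your derivation of the second statement from the first, via finite Galois descent and exactness of smooth invariants, is more elementary than descending along the infinite-level torsor.

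Two small corrections.
\begin{enumerate}
\item A general compact open \(K_{p}\) can contain prime-to-\(p\) torsion, so it need not have finite \(\ell\)-cohomological dimension.
\item For \(F\ne\QQ\), the closure of the image of \(K^{p}K_{p}\cap Z(\QQ)\) in \(K_{p}\) acts trivially on the tower. So \(\mS_{K^{p}}(G_{T})\to\mS_{\Kpp}(G_{T})\) is a torsor under \(K_{p}/\overline{K^{p}K_{p}\cap Z(\QQ)}\), not \(K_{p}\). Likewise, the deck group of \(\mS_{K^{p}K_{p}'}(G_{T})\to\mS_{\Kpp}(G_{T})\) is the quotient of \(K_{p}/K_{p}'\) by the (in general nontrivial) image of this subgroup. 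Your derived \(K_{p}/K_{p}'\)-invariants still compute \(R\Gamma_{c}(\mS_{\Kpp}(G_{T}),\Lambda)\), because that kernel is a \(p\)-group acting trivially. So the pro-\(p\) hypothesis is used here as well, not only to commute invariants with the colimit.
\end{enumerate}
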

\begin{proof}
we can write \(\SGTKp{T}\) as an increasing union of quasicompact 
open subspaces \(U_{i}\) for \(i\in\NN\), such that \(\overline{U_{\Kpp,i}}\subset U_{\Kpp,i+1}\). Denote \(f_{\Kpp,i}:\overline{U_{\Kpp,i}}\hookrightarrow U_{\Kpp,i+1}\),
and then \[R\Gamma_{\et}(\SGTKp{T},\Lambda)\cong \varinjlim_{i\in\NN}R\Gamma_{\et}(U_{\Kpp,i+1},f_{\Kpp,i,!}\Lambda).\]
Then we apply \cite[Corollary 7.18]{Scholze12}
to the tower defined by the preimage of \(U_{\Kpp,i}\), and we finish the proof by taking colimits along \(i\). 
If we denote the preimage along \(\SGTKp{T}\to\SGTK{T}{\Kpp}\) as \(f_{K^{p},i}:\overline{U_{K^{p},i}}\hookrightarrow U_{K^{p},i}\), then by
pro\'etale descent, 
\[R\Gamma_{\et}(U_{\Kpp,i},f_{\Kpp,i,!}(\Lambda))\cong R\Gamma(K_{p},R\Gamma_{\et}(U_{K^{p},i},f_{K^{p},!}(\Lambda))).\]
If \(K_{p}\)
is pro-\(p\), then \(R\Gamma(K_{p},-)\)
commutes with colimits, and thus by taking colimits along \(i\),
we have \[R\Gamma_{c,\et}(\mS_{\Kpp}(G_{T}),\Lambda)\cong R\Gamma(K_{p},R\Gamma_{c,\et}(\mS_{K^{p}}(G_{T}),\Lambda)).
\]
\end{proof}
\begin{cor}
\label{corDirectSummandDifferentShiVFiniteLevel}
Let \(T\subset T'=T\coprod I\subset \Sigma_{\infty}\). Assume that for any \(\p\in\Sigma_{p}\), \(\#(I\cap \Sigma_{\infty/\p})\) is even. In particular, we can identify \(G_{T,\QQ_{p}}\cong G_{T',\QQ_{p}}\). 

Let \(\Lambda=\ZZ/N[\sqrt{p}]\) for \(p\nmid N\), and \(K_{p}\subset G_{T}(\QQ_{p})\) is an open compact pro-\(p\) subgroup.
Then \(R\Gamma_{c,\et}(\mS_{\Kpp}(G_{T'}),\Lambda)[d-\#(T')]\) can be realized as a direct summand of 
\(R\Gamma_{c,\et}(\mS_{\Kpp}(G_{T}),\Lambda)[d-\#(T)]\). Moreover, the construction is natural in \(\Lambda\) and in \(\Kpp\). 
\end{cor}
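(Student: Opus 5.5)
The plan is to deduce the finite-level statement from the infinite-level Theorem \ref{thmDirectSummandDifferentShiV} by applying the functor \(R\Gamma(K_{p},-)\) and invoking Lemma \ref{lemDescentCompactSupport}.

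By Theorem \ref{thmDirectSummandDifferentShiV}, there is a decomposition in the derived category of smooth \(G_{T}(\QQ_{p})\)-representations over \(\Lambda\), compatible with \(\TT^{S}\):
\[
R\Gamma_{c,\et}(\SGTKp{T},\Lambda)[d-\#T]\cong R\Gamma_{c,\et}(\SGTKp{T'},\Lambda)[d-\#T']\oplus C
\]
for some complement \(C\). Here we use the identification \(G_{T,\QQ_{p}}\cong G_{T',\QQ_{p}}\) to regard both sides as representations of the same group. The decomposition is given by a pair of maps \(\iota: R\Gamma_{c,\et}(\SGTKp{T'},\Lambda)[d-\#T']\to R\Gamma_{c,\et}(\SGTKp{T},\Lambda)[d-\#T]\) and \(\pi\) in the reverse direction, with \(\pi\circ\iota=\id\). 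These maps are natural in \(\Lambda\) and \(K^{p}\).

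Next, apply the exact functor \(R\Gamma(K_{p},-)\) from smooth representations to \(D(\Lambda)\). It sends direct sums to direct sums and preserves the relation \(\pi\circ\iota=\id\). Since \(K_{p}\) is pro-\(p\), the second part of Lemma \ref{lemDescentCompactSupport} identifies \(R\Gamma(K_{p},R\Gamma_{c,\et}(\SGTKp{T},\Lambda))\) with \(R\Gamma_{c,\et}(\SGTK{T}{\Kpp},\Lambda)\), and likewise for \(T'\). The same \(K_{p}\) can be used on both sides because of the identification of the \(p\)-adic groups. Shifts commute with \(R\Gamma(K_{p},-)\), so we obtain that \(R\Gamma_{c,\et}(\SGTK{T'}{\Kpp},\Lambda)[d-\#T']\) is a direct summand of \(R\Gamma_{c,\et}(\SGTK{T}{\Kpp},\Lambda)[d-\#T]\).

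Naturality in \(\Lambda\) and \(K^{p}\) follows from the naturality in Theorem \ref{thmDirectSummandDifferentShiV} together with the naturality of the isomorphism in Lemma \ref{lemDescentCompactSupport}.

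Naturality in \(K_{p}\) (among pro-\(p\) subgroups) needs more care. For \(K'_{p}\subset K_{p}\), the pull-back maps correspond to restriction \(R\Gamma(K_{p},-)\to R\Gamma(K'_{p},-)\), and the trace maps correspond to corestriction. Both are natural transformations of functors on smooth representations, so they commute with the \(G_{T}(\QQ_{p})\)-equivariant maps \(\iota\) and \(\pi\).

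The main point to check is that the identification in Lemma \ref{lemDescentCompactSupport} is functorial in the coefficient object and compatible with the \(\TT^{S}\)-action and with change of \(K_{p}\), so that the summand maps descend. I expect this to be formal, since the isomorphism comes from pro-étale descent along the \(K_{p}\)-torsor \(\SGTKp{T}\to\SGTK{T}{\Kpp}\), which is functorial.

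A minor subtlety is that the direct-summand decomposition in Theorem \ref{thmDirectSummandDifferentShiV} holds in the derived category of smooth representations obtained via \(D_{\et}([\Spd(\CC_{p})/G_{T}(\QQ_{p})],\Lambda)\). One must check that \(R\Gamma(K_{p},-)\) there agrees with the continuous-cohomology functor used in Lemma \ref{lemDescentCompactSupport}. This holds because \(p\) is invertible in \(\Lambda\) and smooth cohomology of the pro-\(p\) group \(K_{p}\) is simply invariants.
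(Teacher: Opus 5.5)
Your proposal is correct and is the paper's argument: the paper deduces the corollary directly from Theorem~\ref{thmDirectSummandDifferentShiV} and Lemma~\ref{lemDescentCompactSupport}. That is, it applies $R\Gamma(K_{p},-)$ to the infinite-level direct-summand decomposition, which preserves retractions, and uses the pro-$p$ identification $R\Gamma_{c,\et}(\mS_{\Kpp}(G_{T}),\Lambda)\cong R\Gamma(K_{p},R\Gamma_{c,\et}(\mS_{K^{p}}(G_{T}),\Lambda))$ for both $T$ and $T'$. Your extra remarks on naturality, and on smooth versus continuous $K_{p}$-cohomology (invariants are exact because $p\in\Lambda^{\times}$), only spell out what the paper leaves implicit.
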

\begin{proof}
This follows from Theorem \ref{thmDirectSummandDifferentShiV}
and Lemma \ref{lemDescentCompactSupport}.
\end{proof}

Note that the cohomology \(R\Gamma(\mS_{K^{p}}(G_{T}),\ZZ/\ell^{n})[d-\#(T)]\) does not depend 
on the choice of \(p\). So we have the freedom of choosing a suitable \(p\). This gives the following:
\begin{cor}\label{corDirectSummandVaryL}
Let \(S\) be any finite set of places of \(\QQ\).
Let \(T\subset T'=T\coprod I\subset \Sigma_{\infty}\). Assume that \(I\) is even. 
Then there exists a finite set \(S'\)
of finite places of \(\QQ\) that is disjoint from \(S\), such that the following holds:

Let \(N\in \NN\) that is prime to any \(\ell\in S'\), and \(\Lambda:=\ZZ/N\left[\prod_{\ell\in S'}\sqrt{\ell}\right]\).
For any neat open compact subgroup \(K=K^{S'}\prod_{\ell\in S'}K_{\ell}\) for \(K^{S'}\subset G(\mA_{f}^{S})\)
and \(K_{\ell}\subset G(\QQ_{\ell})\), if \(K_{\ell}\) is pro-\(\ell\) for any \(\ell\in S'\), and if \(K\) is unramified away from \(S\cup S'\), then
\(R\Gamma_{c,\et}(\mrm{Sh}_{K}(G_{T'})_{\bar{\QQ}},\Lambda)[d-\#(T')]\)
 can be realized as a direct summand of 
\(R\Gamma_{c,\et}(\mrm{Sh}_{K}(G_{T})_{\bar{\QQ}},\Lambda)[d-\#(T)]\) as \(\TT^{S\cup S'}\)-modules. Moreover, the construction is natural in \(\Lambda\) and \(K\).
\end{cor}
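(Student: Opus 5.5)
We reduce to Corollary \ref{corDirectSummandDifferentShiVFiniteLevel}, applied not at the fixed prime \(p\) but at auxiliary primes \(\ell\) chosen so that the parity hypothesis there holds. The hypothesis ``\(I\) is even'' only guarantees that \(B_{T}\) and \(B_{T'}\) have the same ramification away from the infinite places and the \(\ell\)-adic places. The construction of \S\ref{subsecIgusaStack}--\S\ref{subsecAppliLadicJLcorr} works for any prime in place of \(p\), after fixing \(\iota_{\ell}:\bar{\QQ}_{\ell}\cong\CC\). Its output is a statement about \(R\Gamma_{c,\et}(\mrm{Sh}_{K}(G_{T})_{\bar{\QQ}},\Lambda)\), which, as noted just above, does not depend on the auxiliary prime.

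\textbf{Step 1: choose \(S'\).} By Chebotarev (or Dirichlet), pick a prime \(\ell\notin S\) that splits completely in \(F\), and set \(S'=\{\ell\}\). Then for any choice of \(\iota_{\ell}\), the map \(i_{\ell}:\Sigma_{\infty}\to\Sigma_{\ell}\) is a bijection. To get the evenness condition \(\#(I\cap\Sigma_{\infty/\mfk{l}})\) even for every \(\mfk{l}\mid\ell\), this is the wrong choice. We want the places of \(I\) to land in pairs on the same \(\mfk{l}\), so we take \(\ell\) inert in \(F\) (if \(F/\QQ\) is not Galois, we take \(\ell\) with a single place \(\mfk{l}\) above it). Then \(\Sigma_{\infty/\mfk{l}}=\Sigma_{\infty}\), and \(\#I\) even gives the hypothesis.

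In general such \(\ell\) need not exist, since inert primes require cyclic Galois closure. The fallback is to use several primes and induct on pairs, as in the proof of Theorem \ref{thmDirectSummandDifferentShiV}. Write \(I\) as a disjoint union of pairs \(\{\tau_{1},\tau_{2}\}\). For each pair, choose by Chebotarev a prime \(\ell\notin S\), together with \(\iota_{\ell}\), such that \(\tau_{1},\tau_{2}\) induce the same \(\ell\)-adic place. For example, take \(\ell\) whose Frobenius in the Galois closure maps \(\tau_{1}\) to \(\tau_{2}\); then \(\tau_{1}\) and \(\tau_{2}\) lie in the same Frobenius orbit. Let \(S'\) be the set of these primes, enlarged so that all are distinct and avoid \(S\). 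We then pass \(T\to T\coprod\{\tau_{1},\tau_{2}\}\to\cdots\to T'\) one pair at a time, using the prime attached to that pair. At each stage the relevant local group condition holds at that prime.

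\textbf{Step 2: apply the finite-level result at each prime.} Fix a pair with its prime \(\ell\), and a level \(K\) with \(K_{\ell}\) pro-\(\ell\) and unramified away from \(S\cup S'\). Apply Corollary \ref{corDirectSummandDifferentShiVFiniteLevel} with \(p\) replaced by \(\ell\), prime-to-\(\ell\) level \(K^{\ell}\), and \(\Lambda=\ZZ/N[\sqrt{\ell}]\), which is contained in our \(\Lambda\). This realizes the cohomology for the larger set as a direct summand, \(\TT^{S\cup S'}\)-equivariantly. The Hecke equivariance comes from the naturality of \(\mfk{I}_{K^{\ell}}\) in \(\TT^{S}\), since the spectral action commutes with away-from-\(\ell\) Hecke operators.

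Next compare the cohomology over \(\CC_{\ell}\) with that over \(\bar{\QQ}\). Use invariance of étale cohomology under extension of algebraically closed fields, together with the comparison for analytification of compactly supported cohomology (Huber). Both are valid since \(N\) is prime to \(\ell\).

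\textbf{Step 3: compose.} Composing the direct-summand inclusions and projections along the chain gives the result. Naturality in \(\Lambda\) and \(K\) follows from the naturality at each step.

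\textbf{Main obstacle.} Two points need care. First, the Chebotarev choice in Step 1: we must make sure the chosen \(\ell\) does not disturb the previously arranged ramification of \(B_{T}\). Since \(\ell\notin S\) is unramified in \(K\) and the quaternion algebras agree at \(\ell\) only after the parity condition, this needs checking. Second, we must verify that the identifications \(G_{T}(\mA_{f}^{\ell})\cong G_{T'}(\mA_{f}^{\ell})\) used in the successive steps are compatible, so that ``the same \(K\)'' makes sense across the chain.
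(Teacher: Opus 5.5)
Your fallback in Steps 1--2 is the paper's own argument. You reduce to \(\#I=2\) and use Chebotarev to get \(\ell\notin S\) and \(\bar{\QQ}\hookrightarrow\bar{\QQ}_{\ell}\) whose Frobenius moves one embedding of the pair to the other, so both lie in one \(\Sigma_{\infty/\mfk{l}}\). You then apply Corollary \ref{corDirectSummandDifferentShiVFiniteLevel} at \(\ell\) and pass to \(\bar{\QQ}\); your base-change/Huber remark is more explicit than the paper. However, the first ``main obstacle'' you list is a genuine gap, and you leave it open. The paper's ``by induction'' does not address it either, so you cannot borrow a fix from there.

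\textbf{The gap.} By \S\ref{subsubQuaternion}, the algebra \(B_{T''}\) depends on the prime. Relative to \(\ell\) and \(\iota_\ell\) it ramifies at \(T''\cup T''_{\ell}\), where \(T''_{\ell}\) is the set of \(\mfk{l}\mid\ell\) with \(\#(T''\cap\Sigma_{\infty/\mfk{l}})\) odd. So your assertion that the output ``does not depend on the auxiliary prime'' holds only when \(T''_{\ell}=\emptyset\), i.e.\ when \(B_{T''}\) has no finite ramification. That is also the only way the groups of the statement can be prime-independent.

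At step \(i\), Corollary \ref{corDirectSummandDifferentShiVFiniteLevel} at \(\ell_i\) compares \(B^{(\ell_i)}_{T_{i-1}}\) and \(B^{(\ell_i)}_{T_i}\). These are the intended groups, and they agree with those used at \(\ell_{i\pm1}\), only if \(T_{i-1}\) itself meets every \(\Sigma_{\infty/\mfk{l}}\) with \(\mfk{l}\mid\ell_i\) evenly; evenness of the new pair is not enough. This agreement is also what makes ``the same \(K\)'' meaningful, so your second point is part of the same issue. Your Chebotarev condition does not ensure it. If \(\tau_1,\tau_2\) lie over different places above \(\ell_2\), then \(B^{(\ell_2)}_{\{\tau_1,\tau_2\}}\) is ramified at two places above \(\ell_2\), while \(B^{(\ell_1)}_{\{\tau_1,\tau_2\}}\) has no finite ramification. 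The chain then breaks after one step.

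\textbf{What each step needs.} You need a Frobenius class \(g_i\) all of whose orbits on \(\Sigma_\infty\) meet both \(T_{i-1}\) and \(T_i\setminus T_{i-1}\) evenly; steps need not add pairs. Your first idea supplies this whenever some prime is inert in \(F\), as in the proof of Theorem \ref{thmConjecFontaineMazur}, where \(p\) is inert. There a single inert \(\ell\notin S\) does everything in one step, provided \(\#T\) is even.

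In general this can fail. Take \(F=\QQ(\sqrt2,\sqrt3,\sqrt5)\), with \(\Sigma_\infty\) a torsor under \(\Gal(F/\QQ)\cong(\ZZ/2)^{3}\). Let \(T=\{\tau_0,\tau_0\circ a\}\) and \(I=\{\tau_0\circ b,\tau_0\circ c\}\), with \(1,a,b,c,ab\) pairwise distinct. The orbits of any \(g\neq 1\) are the pairs \(\{\tau,\tau\circ g\}\). Evenness for \(T\) forces \(g=a\), and then evenness for \(I\) forces \(c=ab\), a contradiction. The case \(g=1\) fails outright, and since \(\#I=2\) there is no room for intermediate steps. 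So Step 3 is not justified as written: you need an extra hypothesis of this kind, or a different argument.
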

\begin{proof}
By induction, it suffices to consider the case when \(\#(I)=2\). Let \(I=\{\tau,\sigma:F\hookrightarrow \bar{\QQ}\subset\CC\}\). Then since \(\Gal_{\QQ}\)
acts smoothly and transitively on \(\Sigma_{\infty}\),
we can find \(g\in \Gal_{\QQ}\)
such that \(\sigma=\tau\circ g\). By Chebotarev density theorem,
we can find a prime \(p\notin S\), an embedding \(\bar{\QQ}\hookrightarrow \bar{\QQ}_{p}\) (which induces \(\Gal_{\QQ_{p}}\subset \Gal_{\QQ}\)) and \(g\in \Gal_{\QQ_{p}}\)
such that \(\sigma=\tau\circ g\). Then if we fix an identification \(\bar{\QQ}_{p}\cong \CC\)
that is compatible with the embeddings of \(\bar{\QQ}\) (which exists by the axiom of choice),
then the induced \(\tau,\sigma:F\hookrightarrow \bar{\QQ}\subset \bar{\QQ}_{p}\)
differs by an action of \(g\in \Gal_{\QQ_{p}}\), which implies that there exists \(\p\in\Sigma_{p}\)
such that \(\tau,\sigma\in \Sigma_{\infty/\p}\). We can then conclude by Corollary \ref{corDirectSummandDifferentShiVFiniteLevel}.
\end{proof}

\subsection{Product formula}
\label{subsecProductFormula}
In this section, using the Cartesian diagram,
we prove a version of the product formula (Theorem \ref{thmProductFormulaQuate}) that will be useful for us.
We will need to first understand the local picture. 




We consider the natural map \(f_{F}:\mX_{\rF,F}(S)\to \mX_{\rF}(S)\),
which
is finite \'etale of degree \(d\).

Recall that for \( \p\in\Sigma_{p} \), the stack of degree \(1\) Cartier divisor \(\mrm{Div}^{1}_{F_{\p}}\) on \(\mX_{\rF,F_{\p}}(S)\) 
as defined in \cite[Definition II.1.19]{FarguesScholze2021geometrization}
is isomorphic to \(\Spd(\breve{F}_{\p})/\varphi_{F_{\p}}^{\ZZ}\), where \( \breve{F}_{\p} \) is the completion of the maximal unramified extension of \( F_{\p} \).
Given any \(S^{\#}\in\Spd(\CC_{p})(S)\),
we have a divisor 
\(S^{\#}\hookrightarrow \mX_{\rF}(S)\).
For \(\tau\in\Sigma_{\infty}\), using \(\iota:\CC\cong\bar{\QQ}_{p}\),
we have an induced map \(S\to \Spd(F_{p})\),
and thus a divisor \(S_{\tau}^{\#}\subset\mX_{\rF,F}(S)\),
and \(f_{F}(S_{\tau})^{\#}\subset S^{\#}\),
so we know that
\[f_{F}^{-1}(S^{\#})=\coprod_{\tau\in\Sigma_{\infty}}S^{\#}_{\tau}.\]
This coincides with the decomposition that appears in \(S^{\#}\times_{\Spa\QQ_{p}}\Spa F_{p}\).

\begin{lem}\label{lemFLidentifyModification}
We have a canonical decomposition \(\FLT{T}\cong \prod_{\tau\in T^{c}} \Fl_{G_{T},\mu_{T},\tau}\). In terms of the moduli problem, 
	for any \(S\in \Perf\), \(\FLTT{T}{T}(S)\) 
	is equivalent to the groupoid of tuples \[\left(S^{\#}\in\Spd(\CC_{p})(S),\mF'\in \Bun_{G}(S),f:\mF_{T}\xrightarrow{\coprod_{\tau\in T^{c}}S^{\#}_{\tau}} \mF' \right),\]
	where \(S^{\#}\)
	is an untilt of \(S\)
	over \(\CC_{p}\),
	and \(f:\mF_{T}\to \mF'\) is a modification along the divisor \(\coprod_{\tau\in T^{c}}S^{\#}_{\tau}\),
	such that for any \(\tau\in T^{c}\),
	\(\cok(f|_{\Spa(\BdR^{+}(S^{\#}_{\tau}))})\) is of rank \(1\) over \(\mO_{S^{\#}_{\tau}}\)
for \(\tau\notin T\).

Then for \(\tau\in T^{c}\), for any \(S\in \Perf\), \(\Fl_{G_{T},\mu_{T},\tau}(S)\) 
	is equivalent to the groupoid of tuples \[\left(S^{\#}\in\Spd(\CC_{p})(S),\mF'\in \Bun_{G}(S),f_{\tau}:\mF_{T}\xrightarrow{S^{\#}_{\tau}} \mF' \right),\]
	where \(S^{\#}\)
	is an untilt of \(S\)
	over \(\CC_{p}\),
	and \(f:\mF_{T}\to \mF'\) is a modification along the divisor \(S^{\#}_{\tau}\),
	such that 
	\(\cok(f|_{\Spa(\BdR^{+}(S^{\#}_{\tau}))})\) is of rank \(1\) over \(\mO_{S^{\#}_{\tau}}\).
\end{lem}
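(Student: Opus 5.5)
The plan is to deduce everything from the Beauville--Laszlo description of \(\Fl_{G,\mu}\) recalled in Definition \ref{dfnBLmap} (i.e. \cite[Theorem 3.4.5]{CS17}), applied to the group \(G_{T}\) and its cocharacter \(\mu_{T}\). We then transport the resulting modification problem to \(\Bun_{G}\) using the identification \(\Bun_{G_{T}}\cong \Bun_{G}\) of Lemma \ref{lemIdentifyQuaternion}, under which the trivial \(G_{T}\)-torsor corresponds to \(\mF_{T}\). Concretely, Definition \ref{dfnBLmap} identifies \(\FLTT{T}{T}(S)\) with pairs \((S^{\#},\mcal{G}_{0}\to \mcal{G})\), where \(\mcal{G}_{0}\) is the trivial \(G_{T}\)-torsor on \(\mX_{\rF}(S)\) and the modification is along \(S^{\#}\) and bounded by \(\mu_{T}\). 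Applying the functor \(\underline{\mrm{Isom}}(-,\mF_{T})\) of Lemma \ref{lemIdentifyQuaternion} inversely turns this into a modification \(f:\mF_{T}\to \mF'\) of rank \(2\) vector bundles on \(\mX_{\rF,F}(S)\).

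Next I would translate ``along \(S^{\#}\) in \(\mX_{\rF}(S)\)'' into ``along \(f_{F}^{-1}(S^{\#})\) in \(\mX_{\rF,F}(S)\)''. Since \(\Bun_{G}\) is described via \(f_{F}:\mX_{\rF,F}(S)\to\mX_{\rF}(S)\), a modification of \(\mF_{T}\) along \(S^{\#}\) is the same as a modification along \(f_{F}^{-1}(S^{\#})=\coprod_{\tau\in\Sigma_{\infty}}S^{\#}_{\tau}\). By Beauville--Laszlo gluing, such a modification is equivalent to a collection of independent \(\BdR^{+}(S^{\#}_{\tau})\)-lattices, one for each \(\tau\in\Sigma_{\infty}\). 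This is compatible with the decomposition \(G_{T,\CC_{p}}\cong\prod_{\tau}G_{T,\tau}\) and with \(\BdR^{+}(S^{\#})\otimes_{\QQ_{p}}F_{p}\cong\prod_{\tau}\BdR^{+}(S^{\#}_{\tau})\).

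Third, I would read off the boundedness condition factor by factor. For \(\tau\in T\), \(\mu_{T}\) has trivial \(\tau\)-component, so the modification at \(S^{\#}_{\tau}\) is an isomorphism. For \(\tau\in T^{c}\), the \(\tau\)-component of \(\mu_{T}\) is minuscule of type \((1,0)\), so the modification at \(S^{\#}_{\tau}\) has cokernel locally free of rank \(1\) over \(\mO_{S^{\#}_{\tau}}\). Together these give exactly the moduli description in the statement, and the product decomposition \(\FLT{T}\cong\prod_{\tau\in T^{c}}\Fl_{G_{T},\mu_{T},\tau}\) follows because the lattice conditions at different \(\tau\) are independent. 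This product decomposition should also agree with the one already fixed in \S\ref{subsubQuaternion}, since both come from \(P_{\mu_{T}}=\prod_{\tau\in T^{c}}B_{\tau}\times\prod_{\tau\in T}\GL_{2,\tau}\). The description of the single factor \(\Fl_{G_{T},\mu_{T},\tau}\) is the same argument with \(\mu_{\{\tau\}^{c}}\) in place of \(\mu_{T}\).

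The main obstacle is the second step. One must check that the identification \(f_{F}^{-1}(S^{\#})=\coprod_{\tau}S^{\#}_{\tau}\) matches the \(\tau\)-indexing of the factors \(G_{T,\tau}\) coming from \(\iota\). One must also check that the twist by \(\mF_{T}\) (a nontrivial bundle when \(T_{p}\neq\emptyset\)) does not interfere. For the latter, I would argue that \(\mF_{T}\) is pulled back from \(\Spa(\QQ_{p})\)-level data as an inner twist, so near each \(S^{\#}_{\tau}\) it is trivialized compatibly with the \(\BdR^{+}\)-lattice description. That compatibility is essentially \cite[Corollary III.4.3]{FarguesScholze2021geometrization}.
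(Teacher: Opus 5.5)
Your proposal is correct and is essentially the paper's argument. The paper likewise combines the Beauville--Laszlo description of Definition \ref{dfnBLmap} with the identification \(\Bun_{G_T}\cong\Bun_{G}\) of Lemma \ref{lemIdentifyQuaternion}, and obtains the product decomposition by restricting the modification to each divisor \(S^{\#}_{\tau}\); you just spell out the details it leaves implicit.

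One correction: \(\mF_{T}\) is not pulled back from \(\Spa(\QQ_{p})\), since at \(\p\) it has slope \(r_{\p,T}/2\). Nothing of that kind is needed, however, because the condition that the cokernel on \(\BdR^{+}(S^{\#}_{\tau})\) have rank \(1\) is intrinsic to the pair of lattices and so is insensitive to the inner twist.
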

\begin{proof}
The first part follows from Lemma \ref{lemIdentifyQuaternion}. If we define \(\Fl_{G_{T},\mu_{T},\tau}\) as above, the natural map \(\FLT{T}\to \prod_{\tau}\Fl_{G_{T},\mu_{T},\tau}\)
is given by restricting the modification to each divisor \(S_{\tau}^{\#}\).
\end{proof}
\begin{rmk}
	We know that \(\Fl_{  G_{\ti{T}}',   \mu_{\ti{T}}, \tau  }\cong \mP^{1}_{\CC_{p}}\)
	for any \(\tau\in T^{c}\), but not canonically.
\end{rmk}
\begin{notation}\label{notationFLGtMutI}
For any \(I\subset T^{c}\), we write \[\Fl_{G_{T},\mu_{T},I}:=\FLTT{T}{I^{c}}:=\prod_{\tau\in I}\Fl_{G_{T},\mu_{T},\tau}.
\]
\end{notation}

\begin{dfn}[\(I\)-basic locus]\label{dfnIbasicLocus}
	Given \(T\subset\Sigma_{\infty}\)
	and \(I\subset T^{c}\),	consider the Beauville-Laszlo morphism \[\BL_{T,I}:\Fl_{G_{T},\mu_{I^{c}}}\to \Bun_{G_{T},\mu_{I^{c}}}.
	\]
	%
	We define the preimage of the basic locus as \(\FLTT{T}{I^{c}}^{bc}\subset \FLTT{T}{I^{c}}\).
	We define the \emph{\(I\)-basic locus} \(\Fl_{G_{T},\mu_{T}}^{I-\bc}\) as 
	the preimage of \(\FLTT{T}{I^{c}}^{\bc}\) along the projection \(\FLT{T}\to \FLTT{T}{I^{c}}\).
	For \(I=T^{c}\), we write \(\Fl^{\bc}_{G_{T},\mu_{T}}:=\Fl^{I-\bc}_{G_{T},\mu_{T}}\). 

	We denote the composition \(\mS_{K^{p}}(G_{T})\xrightarrow{\pi_{\HT}}\FLT{T}\to \FLTT{T}{I^{c}}\)
	as \(\pi_{\HT,T,I}\).
\end{dfn}

\begin{lem} \label{lemNoEmptyBasicLocus}
	(1) For any \(T\subset\Sigma_{\infty}\)
	and \(I\subset T^{c}\),
	\(\FLTT{T}{I^{c}}^{\bc}\ne \emptyset\) and is an open dense connected subspace of \(\FLTT{T}{I^{c}}\);

	(2) Let \(T\subset\Sigma_{\infty}\), \(\p\in\Sigma_{p}\), and  \(I\subset T^{c}\cap \Sigma_{\infty/\p}\).
	Assume that \(2|r_{\p,T}\),
	and  \(1\le\#I\le 2\),
	then \(\FLTT{T}{I^{c}}^{\bc}\cong \FLTT{T}{I^{c}}\backslash \im(\mP^{1}(F_{\p})\to \FLTT{T}{I^{c}})\). 
	
	Here the embedding \(\mP^{1}(F_{\p})\to \FLTT{T}{(\Sigma_{\infty/\p})^{c}}\) is given as follows: fixing an isomorphism \(\mF_{T}|_{\mX_{\rF,F_{\p}}}\cong \mO_{\mX_{\rF,F_{\p}}}(r_{\p,T}/2)^{\oplus 2}\), and for each \(x\in\mP^{1}(F_{\p})\),
	there is a sub-bundle \(\iota_{x}:\mO_{\mX_{\rF,F_{\p}}}\hookrightarrow \mO_{\mX_{\rF,F_{\p}}}(r_{\p,T}/2)\hookrightarrow \mF_{T}|_{\mX_{\rF,F_{\p}}}\), which gives rise to an element in \(\FLTT{T}{(\Sigma_{\infty/\p})^{c}}\). Note that the subspace \(\mP^{1}(F_{\p})\) is independent of the choice of the isomorphism as \(\mrm{Aut}(\mF_{T}|_{\mX_{\rF,F_{\p}}})\cong \underline{\GL_{2}(F_{\p})}\).


	(3) Let \(T\subset\Sigma_{\infty}\), \(\p\in\Sigma_{p}\), and \(I\subset T^{c}\cap \Sigma_{\infty/\p}\). Assume that \(2\nmid r_{\p,T}\), and \(\#I=1\),
	then \(\FLTT{T}{I^{c}}^{bc}=\FLTT{T}{I^{c}}\).
\end{lem}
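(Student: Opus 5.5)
The plan is to reduce everything to an explicit computation of vector bundles on the Fargues--Fontaine curve, one place \(\p\) at a time. Since \(\mF_{T}\) and \(\Bun_{G}\) decompose over \(\Sigma_{p}\), and the modification defining a point of \(\FLTT{T}{I^{c}}\) only happens along the divisors \(S^{\#}_{\tau}\) for \(\tau\in I\), the resulting bundle \(\mF'\) differs from \(\mF_{T}\) only on the components \(\mX_{\rF,F_{\p}}\) with \(I\cap\Sigma_{\infty/\p}\ne\emptyset\). Basicness of \(\mF'\) is a condition on each component separately, namely semistability of \(\mF'|_{\mX_{\rF,F_{\p}}}\). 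Hence \(\FLTT{T}{I^{c}}^{\bc}\) is a product over \(\p\) of the analogous loci for \(I_{\p}:=I\cap\Sigma_{\infty/\p}\). So we first prove (2) and (3), and then deduce (1).

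For (3), suppose \(r_{\p,T}\) is odd, so that \(\mF_{T}|_{\mX_{\rF,F_{\p}}}\cong\mO(r_{\p,T}/2)\) is stable of slope \(r/2\) with \(r\) odd. A minuscule modification at a single point \(S^{\#}_{\tau}\) with rank-\(1\) cokernel produces a rank-\(2\) bundle of degree \(r+1\) (or \(r-1\), depending on the sign convention for \(\mu\)). Any rank-\(1\) subbundle \(\mO(a)\subset\mF'\) satisfies \(a\le\) (the maximal integer slope of a sub-line bundle of \(\mF_{T}\) allowing for a pole of order one), which gives \(a\le (r+1)/2\). Semistability of \(\mF'\) follows from this slope check on geometric points, which suffices because basicness is checked pointwise.

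For (2), suppose \(r=r_{\p,T}\) is even and \(\mF_{T}|_{\mX_{\rF,F_{\p}}}\cong\mO(r/2)^{2}\). A modification of type \(\mu\) at \(\#I\in\{1,2\}\) points yields \(\mF'\) of degree \(r+\#I\). When \(\#I=1\), \(\mF'\) fails to be semistable exactly when it contains \(\mO(r/2+1)\), that is, when a sub-line bundle \(\mO(r/2)\hookrightarrow\mF_{T}\) is saturated through the modification. Such sub-line bundles are parametrized by \(\mP^{1}(F_{\p})\), since \(\Hom(\mO(r/2),\mO(r/2)^{2})=F_{\p}^{2}\). This is the classical Drinfeld-type picture.

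When \(\#I=2\), the target degree \(r+2\) is even, and non-semistability means \(\mF'\) contains \(\mO(r/2+1+\epsilon)\) with \(\epsilon\geq 1/2\). Integrality forces a sub-line bundle of degree \(\ge r/2+2\). This can only come from some \(\iota_{x}\), \(x\in\mP^{1}(F_{\p})\), acquiring poles at both points, which means the image of \(x\) in both factors \(\Fl_{\tau}\) coincides with the chosen lines. I expect this to be the main obstacle: the exact set-theoretic description of the degenerate locus and its matching with \(\im(\mP^{1}(F_{\p}))\) in \(\FLTT{T}{I^{c}}\) under the diagonal-type embedding described in the statement. I would handle it by a careful Harder--Narasimhan computation on geometric points \(\Spa(C)\), using the classification of vector bundles. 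Deforming to a degree-\((r/2+1)\) sub-line bundle must also be ruled out, since a sub-line bundle of that degree is allowed.

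Finally, for (1), openness follows from the semicontinuity of the Newton polygon (Kedlaya--Liu, cf. \cite{FarguesScholze2021geometrization}), since the basic stratum is open in \(\Bun_{G_{T},\mu_{I^{c}}}\). For non-emptiness, density and connectedness, use the product decomposition over \(\p\) together with an induction on \(\#I_{\p}\), splitting \(I_{\p}\) into blocks of size at most \(2\) and applying (2) and (3) blockwise. The complement of a profinite set \(\mP^{1}(F_{\p})\) in a connected smooth rigid variety of dimension \(\ge1\) remains dense and connected, and products of connected spaces over \(\CC_{p}\) are connected.
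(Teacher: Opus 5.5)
Your reduction to a single place \(\p\), your openness argument, and your treatments of (2) and (3) are fine. The \(\#I=2\) ``obstacle'' is in fact already settled by your own argument: a destabilizing sub-line bundle has degree \(\ge r/2+2\), so its intersection with \(\mF_T\) is \(\mO(r/2)\cdot v\) for some \(v\in F_{\p}^{2}\setminus 0\), and it must gain length \(2\), which forces both modification lines to be \(v\). Sub-line bundles of degree \(r/2+1\) do not destabilize, so there is nothing to rule out.

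The gap is in (1) as soon as \(n_{\p}:=\#(I\cap\Sigma_{\infty/\p})\ge 3\), or \(n_{\p}=2\) with \(r_{\p,T}\) odd. In these cases the basic locus in \(\Fl_{G_{T},\mu_{T},I\cap\Sigma_{\infty/\p}}\) is \emph{not} a product of loci of the kind described in (2) and (3). So ``applying (2) and (3) blockwise'' and ``products of connected spaces are connected'' prove nothing there.

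For example, suppose \(r_{\p,T}\) is odd and \(I\cap\Sigma_{\infty/\p}=\{\tau_{1},\tau_{2}\}\). Every modification at \(\tau_{1}\) is basic by (3). But the bad set at \(\tau_{2}\) is the copy of \(\mP^{1}(F_{\p})\) coming from the \(F_{\p}\)-structure of the modified bundle \(\cong\mO((r+1)/2)^{2}\), and this copy moves with the point \(x_{1}\in\Fl_{G_T,\mu_T,\tau_1}\). It cannot be constant: reversing the order of the two modifications would then make every \(x_{1}\) bad over some \(x_{2}\), whereas the bad set over a fixed \(x_2\) is a profinite set.

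Similarly, if \(r_{\p,T}\) is even and \(n_{\p}=3\), the non-basic locus contains \(\{(v,v)\}\times\Fl_{G_T,\mu_T,\tau_{3}}\) for every \(v\in\mP^{1}(F_{\p})\). This is a profinite family of curves, not a profinite set. For larger \(n_{\p}\), even sub-line bundles of degree \(<r/2\), such as \(\mO(r/2-1)\hookrightarrow\mF_{T}\), can destabilize. An induction on \(n_{\p}\) therefore needs an actual relation between the basic locus for \(I\) and a smaller instance of the problem, and your sketch does not provide one.

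The missing idea, which is how the paper argues, is to fiber over a single factor. Choose \(\tau\in I\) and project \(p:\FLTT{T}{I^{c}}\to\Fl_{G_{T},\mu_{T},\tau}\). For \(x\) in the basic locus of this factor, the once-modified bundle is basic, hence isomorphic to \(\mF_{T\cup\{\tau\}}\). This identifies \(\{x\}\times\Fl_{G_{T},\mu_{T},I\setminus\{\tau\}}\) with \(\Fl_{G_{T\cup\{\tau\}},\mu_{T\cup\{\tau\}},I\setminus\{\tau\}}\), compatibly with the Beauville--Laszlo maps. So the fiber of the basic locus over \(x\) is again a basic locus, now for \(T\cup\{\tau\}\) (with the parity of \(r_{\p}\) flipped).

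One then inducts on \(\#I\) with \(T\) allowed to vary. The only input needed is the case \(\#I=1\), i.e.\ (3), and (2) with \(\#I=1\).
\begin{itemize}
\item Density: projections are open and the one-factor basic locus is dense, so any open set meets some fiber over a basic \(x\), where the induction applies.
\item Connectedness: suppose the basic locus is a disjoint union \(U_{1}\sqcup U_{2}\) of open sets. Each basic \(x\) has a nonempty connected fiber, which lies in exactly one \(U_{i}\). The index \(i\) is locally constant on the connected one-factor basic locus, via the open sets \(p(U_{i})\). Finally, an open set projecting into the nowhere dense non-basic locus of the factor must be empty.
\end{itemize}
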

\begin{proof}

Now to prove (1),
we can reduce inductively to the case where \(\#I=1\), and then it suffices to prove (2) and (3). 
More precisely, let \(\tau\in I\). If we are given any \(U\subset \Fl_{G_{T}, \mu_{T},I}\), we want to prove that \(U\cap \Fl^{bc}_{G_{T},\mu_{T},\tau}\ne \emptyset\).
We know that the projection of \(U\) to \(\Fl_{G_{T},\mu_{T},\tau}\)
is also open,
so by the case for 
\(\{\tau\}\),
there exists \(x\in \Fl_{G_{T},\mu_{T},\tau}^{bc}\),
such that
\((\{x\}\times \Fl_{G_{T},\mu_{T},I\backslash\{\tau\}})\cap U\ne \emptyset \).
Finally,
we note that \(x\) induces an isomorphism
\(\{x\}\times \Fl_{G_{T},\mu_{T},I\backslash\{\tau\}}\cong \Fl_{G_{T\cup\{\tau\}},\mu_{T\cup \{\tau\},I\backslash \{\tau\}}}\), that identifies
\(\BL_{T,I}|_{\{x\}\times \Fl_{G_{T},\mu_{T},I\backslash\{\tau\}}}\)
with
\(\BL_{T\cup\{\tau\},I\backslash\{\tau\}}\). In particular, it also identifies 
\((\{x\}\times \Fl_{G_{T},\mu_{T},I\backslash\{\tau\}})\cap \Fl^{bc}_{G_{T},\mu_{T},I}\)
with \(\Fl^{bc}_{\mu_{T\cup\{\tau\},I\backslash\{\tau\}}}\).
So we can continue with induction. 
Now we prove that \(\Fl_{G_{T}, \mu_{T},I}^{bc}\)
is connected. If \(\Fl_{G_{T}, \mu_{T},I}^{bc}=U_{1}\coprod U_{2}\)
is a decomposition into open subspaces of \(\Fl_{G_{T}, \mu_{T},I}\),
then for any \(x\in \Fl^{bc}_{G_{T},\mu_{T},\{\tau\}}\), we also have 
\[\Fl_{G_{T\cup\{\tau\}},\mu_{T\cup\{\tau\}},I\backslash\{\tau\}}^{bc}\cong 
(\{x\}\times \Fl_{G_{T},\mu_{T},I\backslash\{\tau\}})\cap \Fl^{bc}_{G_{T},\mu_{T},I}=\coprod_{i=1}^{2}((\{x\}\times \Fl_{G_{T},\mu_{T},I\backslash\{\tau\}})\cap U_{i})\ne \emptyset.\]
By induction,
\(\Fl_{G_{T\cup\{\tau\}},\mu_{T\cup\{\tau\}},I\backslash\{\tau\}}^{bc}\)
is connected,
so
for any \(x\in\Fl_{G_{T},\mu_{T},\tau}^{bc}\),
there exists a unique \(i_{x}\in\{1,2\}\),  \((\{x\}\times \Fl_{G_{T},\mu_{T},I\backslash\{\tau\}})\cap U_{i_{x}}\ne\emptyset\). Then since \(\Fl^{bc}_{G_{T},\mu_{T},\{\tau\}}\)
is connected,
we know \(i_{x}\) is constant, say \(i_{x}=1\), then 
\(U_{2}=\emptyset\). This shows that \(\Fl^{bc}_{G_{T},\mu_{T},I}\)
is connected.

Now we are left to prove (2) and (3). 
In the case where \(\#I=1\), up to twists by \(\mO(1)\),
it suffices to show the following:

(a) For \(\mcal{E}=\mO(1/2)\)
rank \(2\) vector bundle over \(\mX_{\rF,F_{\p}}(S)\),
and any \(f:\mcal{E}\to\mcal{E}'\)
which is a modification along a divisor \(S^{\#}\) over \(F_{\p}\),
such that \(\mE'\otimes \BdR^{+}(S^{\#})/\mE\otimes\BdR^{+}(S^{\#})\)
is of rank \(1\),
then automatically \(\mE'\cong \mO(1)^{\oplus 2}\).

(b) For \(\mcal{E}=\mO^{\oplus 2}\) over \(\mX_{\rF,F_{\p}}(S)\),
and any \(f:\mcal{E}\to\mcal{E}'\) as in (a), which corresponds to a unique element \(x\in \mP^{1}(S^{\#})\),
then \(\mE'\cong \mO\oplus \mO(1)\),
if and only if \(x\in \mP^{1}(F_{\p})\),
and \(\mE'\cong \mO(1/2)\)
if and only if \(x\in \mP^{1}\backslash \mP^{1}(F_{\p})\).

(a) follows immediately as \(B(\mrm{Res}_{F_{\p}/\QQ_{p}}B_{\p}^{\times},\mu_{\{\tau\}^{c}})\) has only one element,
where \(B_{\p}\)
denotes the (non-split) quaternion algebra over \(F_{\p}\), and \(\tau:F_{\p}\hookrightarrow\bar{\QQ}_{p}\)
and \(\mu_{\{\tau\}^{c}}:\GG_{m}\to (\mrm{Res}_{F_{\p}/\QQ_{p}}B_{\p}^{\times})_{\bar{\QQ}_{p}}\cong \prod_{\tau}\GL_{2,\bar{\QQ}_{p}}\)
that is non-trivial only at the \(\tau\)-factor.
Geometrically, this can also be seen by noting that there does not exist an injection \(\mO(1/2)\to \mO(1-i)\oplus \mO(1+i)\) unless \(i=0\).
For (b), there are only two elements 
in \(B(\mrm{Res}_{F_{\p}/\QQ_{p}}\GL_{2},\mu_{\{\tau\}^{c}})\).
If \(\mcal{E}'\cong \mO\oplus \mO(1)\),
then \(f:\mO^{\oplus 2}\to \mO\oplus \mO(1)\) is determined by its 
composition along \(\mO\oplus \mO(1)\to \mO\),
whose kernel defines a unique element in \(\mP(H^{0}(\mO^{\oplus 2}))\cong \mP^{1}(F_{\p})\), which by definition coincides with \(x\in\mP^{1}\).

Finally, we prove the case where \(2|r_{\p,T}\) and \(\#I=2\). We note that
\(B(G,\mu_{I^{c}})\)
has two elements, corresponding to \(\mO(1)^{\oplus2}\)
and \(\mO\oplus \mO(2)\). Then again we note that any modification \(f:\mO^{\oplus 2}\to \mO\oplus \mO(2)\)
is determined by its projection to \(\mO\),
and again defines an element \(x\in\mP^{1}(F_{\p})\).
\end{proof}
\begin{dfn} \label{dfnLocalShimuraTtoT'}
	Let \(T\subset T':=T\coprod I\subset \Sigma_{\infty}\). 
	We denote by \(\mcal{M}_{T\to T'}\)
	the local Shimura variety at infinite level in the sense of \cite[Lecture 24]{ScholzeWeinstein2020berkeley} corresponds to the local Shimura data \((G_{T,\QQ_{p}},\mu_{I^{c}},b_{T,I})\),
	where \(b_{T,I}\)
	denotes the unique basic element in \(B(G_{T,\QQ_{p}},\mu_{I^{c}})\). 

	Explicitly, \(\mcal{M}_{T\to T'}(S)\)
	parametrizes pairs \(\left(S^{\#}\in\Spd\CC_{p}(S),f:\mF_{T}\to \mF_{T'}\right)\)
	where \(f\) is a modification along 
	\(\coprod_{\tau\in I}S_{\tau}^{\#}\) such that the modification at \(S_{\tau}^{\#}\) is bounded by \(\mu_{\{\tau\}^{c}}\).

	By \cite[Theorem D]{SW13},
	\(\mcal{M}_{T\to T'}\)
	is represented by a perfectoid space over \(\Spa(\CC_{p},\mO_{\CC_{p}})\),
	which we denote with the same notation.

	There are two period maps for \(\mcal{M}_{T\to T'}\), \[\begin{tikzcd}
		& \mcal{M}_{T\to T'}\arrow[ld,"\pi_{\HT,T}=\pi_{\GM,T'}"']\arrow[rd,"\pi_{\HT,T'}=\pi_{\GM,T}"] & \\
		\FLTT{T}{I^{c}}^{\bc} & & 
		\FLTTi{T'}{I^{c}}^{\bc}
	\end{tikzcd}\] given by forgetting the trivialization of \(\mF_{T}\) and \(\mF_{T'}\)
	respectively,
	where \(\pi_{\GM}\)
	denotes the Grothendieck-Messing period map,
	and \(\pi_{\HT}\)
	denotes the Hodge-Tate period map.

	Let
	\(E_{T,I}\) denote the minimal subfield of \(\bar{\QQ}_{p}\) over which the conjugacy classes of \(\mu_{T}\) and of \(\mu_{T'}\) are defined.
\end{dfn}
\begin{lem}\label{lemRZisAtorsorOverFl}
\(\pi_{\HT,T}=\pi_{\GM,T'}:\mcal{M}_{T\to T'}\to \Fl^{\bc}_{G_{T},\mu_{I^{c}}}\)
is a pro\'etale \(G_{T'}(\QQ_{p})\)-torsor and is \(G_{T}(\QQ_{p})\)-equivariant. 
Similarly, \(\pi_{\HT,T'}=\pi_{\GM,T}:\mcal{M}_{T\to T'}\to \FLTTi{T'}{I^{c}}^{\bc}\)
is a pro\'etale \(G_{T}(\QQ_{p})\)-torsor  and is \(G_{T'}(\QQ_{p})\)-equivariant.
\end{lem}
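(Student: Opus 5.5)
The plan is to deduce this from the standard structure of infinite-level local Shimura varieties, as in \cite[Lecture 23--24]{ScholzeWeinstein2020berkeley}, applied to the explicit moduli description in Definition \ref{dfnLocalShimuraTtoT'}. A point of \(\mcal{M}_{T\to T'}(S)\) is a modification \(f:\mF_{T}\to\mF_{T'}\) along \(\coprod_{\tau\in I}S^{\#}_{\tau}\) with prescribed bounds. The map \(\pi_{\HT,T}\) forgets the identification of the target with \(\mF_{T'}\), and records only the pair \((\mF_{T}\xrightarrow{f}\mF')\) as a point of \(\FLTT{T}{I^{c}}\), via the moduli description of Lemma \ref{lemFLidentifyModification} applied to the factors \(\tau\in I\).

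First, I would check that the image lies in the basic locus. Since \(\mF'\cong\mF_{T'}\), the point lands in \(\BL_{T,I}^{-1}\) of the basic stratum, that is, in \(\FLTT{T}{I^{c}}^{\bc}\) (Definition \ref{dfnIbasicLocus}). This uses that \(\mF_{T'}\) corresponds to the unique basic \(b_{T,I}\in B(G_{T,\QQ_{p}},\mu_{I^{c}})\) under Lemma \ref{lemIdentifyQuaternion}, which is the compatibility built into the choice of \(b_{T,I}\).

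Next, I would identify the fibers. Over a point \((\mF_{T}\to\mF')\) of the basic locus, the fiber is the sheaf of isomorphisms \(\mF'\cong\mF_{T'}\) of \(G\)-bundles, i.e.\ \(\underline{\mrm{Isom}}(\mF',\mF_{T'})\). Because \(\mF'\) is \(v\)-locally (indeed pro-\'etale locally on \(S\)) isomorphic to \(\mF_{T'}\) on the basic stratum, this is a torsor under \(\underline{\mrm{Aut}}(\mF_{T'})\). The key input is that the basic stratum \(\Bun_{G}^{b}\cong[*/\ti{G}_{b}]\) by \cite[Proposition III.5.3]{FarguesScholze2021geometrization}, and \(\ti{G}_{b}=\underline{G_{b}(\QQ_{p})}\) because \(b\) is basic. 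Together with Lemma \ref{lemIdentifyQuaternion}, which identifies \(\mF_{T'}\) with the trivial \(G_{T'}\)-torsor, this gives \(G_{b}\cong G_{T',\QQ_{p}}\) and hence \(\mrm{Aut}(\mF_{T'})=\underline{G_{T'}(\QQ_{p})}\). Since \(\Bun^{b}_{G}\) is the classifying stack of a locally profinite group, the pullback of \(*\to[*/\underline{G_{T'}(\QQ_{p})}]\) along \(\Fl^{\bc}\to\Bun_{G}^{b}\) is a pro\'etale \(\underline{G_{T'}(\QQ_{p})}\)-torsor. This is exactly \(\mcal{M}_{T\to T'}\to\Fl^{\bc}_{G_{T},\mu_{I^{c}}}\), using the Cartesian description \(\mcal{M}_{T\to T'}\cong\Fl^{\bc}\times_{\Bun_{G}^{b}}*\).

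For equivariance, \(G_{T}(\QQ_{p})=\mrm{Aut}(\mF_{T})\) acts on both source and target by precomposition, and \(\pi_{\HT,T}\) visibly commutes with this action. The second statement follows by the symmetric argument. Given \(f:\mF_{T}\to\mF_{T'}\), the inverse modification \(f^{-1}:\mF_{T'}\dashrightarrow\mF_{T}\) is a modification bounded by \(\mu_{I^{c}}^{-1}\), and forgetting the trivialization of \(\mF_{T}\) gives a point of \(\FLTTi{T'}{I^{c}}\) whose target is basic. The same stratum description then makes this map a \(G_{T}(\QQ_{p})\)-torsor that is \(G_{T'}(\QQ_{p})\)-equivariant.

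I expect the main obstacle to be the careful bookkeeping of the divisor. The modifications here are only along \(\coprod_{\tau\in I}S^{\#}_{\tau}\), not along a single untilt of \(\QQ_{p}\), so the identification of \(\Fl_{G_{T},\mu_{I^{c}}}\) with the corresponding \(\BdR^{+}\)-Grassmannian and its Beauville--Laszlo map must be made over \(\mX_{\rF,F}\) via Lemma \ref{lemFLidentifyModification}. Checking that the pro\'etale local triviality of the basic stratum from \cite{FarguesScholze2021geometrization} applies in this Weil-restricted setting is the step I would verify first. I would do this by reducing to \(\p\)-components via \(\mX_{\rF,F}=\coprod_{\p}\mX_{\rF,F_{\p}}\) and Lemma \ref{lemWeilRestrictionLpar}-type identifications.
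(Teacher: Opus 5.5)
Your proposal is correct and is essentially the paper's argument: the paper observes that, by definition, \(\mcal{M}_{T\to T'}\) is the fiber product of \(\BL_{T,I}:\Fl^{\bc}_{G_{T},\mu_{I^{c}}}\to\Bun^{\bc}_{G,\mu_{T^{\prime,c}}}\) with the point \(\mF_{T'}:*\to\Bun^{\bc}_{G,\mu_{T^{\prime,c}}}\), and then concludes from \(\Bun^{\bc}_{G,\mu_{T^{\prime,c}}}\cong[*/\underline{G_{T'}(\QQ_{p})}]\) (\cite[Theorem III.4.5]{FarguesScholze2021geometrization}), which is exactly your fiber computation. The divisor bookkeeping you flag is already absorbed into the moduli description of Lemma \ref{lemFLidentifyModification}, so it causes no additional difficulty.
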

\begin{proof}
	By definition,
	there is a Cartesian diagram \[\begin{tikzcd}
		\mcal{M}_{{T}\to{T}'}\arrow[r,"\pi_{\HT,{T}}"]\arrow[d]
		& \Fl^{\bc}_{G_{T},\mu_{I^{c}}}\arrow[d,"\BL_{{T},I}"]\\
		* \arrow[r,"\mF_{T'}"]
		& \Bun_{G,\mu_{{T}^{\prime,c}}}^{bc},
	\end{tikzcd}\]
	and the result follows by noting that that \(\Bun_{G,\mu_{{T}^{\prime,c}}}^{bc}\cong\left[*/\underline{G_{T'}(\QQ_{p})}\right]\) by \cite[Theorem III.4.5]{FarguesScholze2021geometrization}.
\end{proof}
\begin{lem}\label{lemIdentifyFlVaroverM}
	The local Shimura variety \(\mcal{M}_{T\to T'}\)	
	induces a \(G_{T}(\QQ_{p})\times G_{T'}(\QQ_{p})\times\Gal(\bar{\QQ}_{p}/E_{T,I})\)-equivariant isomorphism \[\FLT{T}\times_{\FLTT{T}{I^{c}}}\mcal{M}_{T\to T'}\cong \Fl_{G_{T'},\mu_{T'}}\times_{\CC_{p}}\mcal{M}_{T\to T'}.\] 
	\end{lem}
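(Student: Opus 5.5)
The plan is to compare the two sides through their moduli descriptions, using Lemma \ref{lemFLidentifyModification} and the description of \(\mcal{M}_{T\to T'}\) in Definition \ref{dfnLocalShimuraTtoT'}. Fix \(S\in\Perf\) with an untilt \(S^{\#}\) over \(\CC_{p}\). The left-hand side parametrizes a point \(f:\mF_{T}\to\mF_{T'}\) of \(\mcal{M}_{T\to T'}\), which is a modification along \(\coprod_{\tau\in I}S^{\#}_{\tau}\), together with a modification \(g:\mF_{T}\to\mF'\) along \(\coprod_{\tau\in T^{c}}S^{\#}_{\tau}\) of type \(\mu_{T}\). These are required to have the same restriction to the divisors \(S^{\#}_{\tau}\) for \(\tau\in I\). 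By Lemma \ref{lemFLidentifyModification}, a point of \(\FLT{T}\) is a collection of local modifications \(g_{\tau}\) for \(\tau\in T^{c}\), and its image in \(\FLTT{T}{I^{c}}\) is the subcollection \((g_{\tau})_{\tau\in I}\). The fiber product condition therefore says that \(g_{\tau}\) agrees with the local modification induced by \(f\) at \(S^{\#}_{\tau}\) for every \(\tau\in I\), and what remains free is the collection \((g_{\tau})_{\tau\in T^{c}\backslash I}\).

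Since \(T^{c}\backslash I=T^{\prime,c}\) and the divisors \(S^{\#}_{\tau}\) for distinct \(\tau\) are disjoint, the isomorphism \(f\) is an isomorphism away from \(\coprod_{\tau\in I}S^{\#}_{\tau}\). In particular, for \(\tau\in T^{\prime,c}\) it identifies \(\mF_{T}\otimes\BdR^{+}(S^{\#}_{\tau})\cong\mF_{T'}\otimes\BdR^{+}(S^{\#}_{\tau})\). Transporting each \(g_{\tau}\) along this identification gives a local modification of \(\mF_{T'}\) of the same type, that is, a point of \(\Fl_{G_{T'},\mu_{T'},\tau}\) via Lemma \ref{lemFLidentifyModification} applied to \(T'\). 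By Beauville–Laszlo gluing, such a collection of local modifications determines a global modification \(\mF_{T'}\to\mF''\), and conversely a global modification of \(\mF_{T}\) is obtained by composing with \(f\). The map is \((f,g)\mapsto(f,\text{transport of }(g_{\tau})_{\tau\in T^{\prime,c}})\), and its inverse transports back along \(f^{-1}\) and restores the \(I\)-components from \(f\). Both constructions are functorial in \(S\), so they define inverse maps of \(v\)-sheaves.

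It remains to check equivariance. The group \(G_{T}(\QQ_{p})\) acts through automorphisms of \(\mF_{T}\) and \(G_{T'}(\QQ_{p})\) through automorphisms of \(\mF_{T'}\). On the left, \(G_{T'}(\QQ_{p})\) acts only on the \(\mcal{M}\)-factor, while on the right it acts diagonally; the transport by \(f\) intertwines these actions. For the Galois group \(\Gal(\bar{\QQ}_{p}/E_{T,I})\), we use that \(\sigma\) permutes the divisors \(S^{\#}_{\tau}\) by \(\tau\mapsto\sigma\circ\tau\), and that over \(E_{T,I}\) both \(I\) and \(T^{\prime,c}\) are stable. This uses that \(\mu_{T}\) and \(\mu_{T'}\) are defined over \(E_{T,I}\), so \(T\) and \(T'\) are preserved as subsets of embeddings into \(\bar{\QQ}_{p}\).

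The step I expect to be the main obstacle is the gluing: making precise that a modification along a disjoint union of divisors is the same as independent local modifications at each component, uniformly for general \(S\) and not just geometric points. This requires the Beauville–Laszlo description underlying Lemma \ref{lemFLidentifyModification}, namely that \(\mrm{Gr}^{\BdR^{+}}\) along a disjoint union of divisors splits as a product. I would first verify the statement on \(\BdR^{+}\)-lattices, where it follows from the product decomposition of \(\BdR^{+}(S^{\#})\otimes_{\QQ_{p}}F_{p}\) along the embeddings.
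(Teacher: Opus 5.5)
Your proposal is correct and follows the paper's argument. Via Lemma \ref{lemFLidentifyModification}, a point of the fiber product is a modification \(\mF_{T}\to\mF\) along \(\coprod_{\tau\in T^{c}}S^{\#}_{\tau}\) that factors as \(f:\mF_{T}\to\mF_{T'}\) followed by a modification \(\mF_{T'}\to\mF\) along \(\coprod_{\tau\in T^{\prime,c}}S^{\#}_{\tau}\); that second modification together with \(f\) is exactly a point of \(\Fl_{G_{T'},\mu_{T'}}\times_{\CC_{p}}\mcal{M}_{T\to T'}\). The gluing step you flag as the main obstacle is already contained in the product decomposition \(\FLT{T}\cong\prod_{\tau\in T^{c}}\Fl_{G_{T},\mu_{T},\tau}\) from that lemma, and your equivariance checks are consistent with it (the paper leaves them implicit).
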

	\begin{proof}
		By Lemma \ref{lemFLidentifyModification},
		\(\FLT{T}\)
		parametrizes \((S^{\#},\mF,\mF_{T}\xrightarrow{\coprod_{\tau\in T^{c}}S^{\#}_{\tau}}\mF)\). Thus \(\FLT{T}\times_{\FLTT{T}{I^{c}}}\mcal{M}_{T\to T'}\)
		parametrizes \((S^{\#},\mF,\mF_{T}\xrightarrow{\coprod_{\tau\in I}S_{\tau}^{\#}}
		\mF_{T'}
		\xrightarrow{\coprod_{\tau\in (T')^{c}}S^{\#}_{\tau}}\mF)\).
		This is the same as the data 
		of the tuple \((S^{\#},\mF,\mF_{T'}\xrightarrow{\coprod_{\tau\in (T')^{c}}S^{\#}_{\tau}}\mF)\)
		plus the data of \((S^{\#},\mF_{T}\xrightarrow{\coprod_{\tau\in I}S_{\tau}^{\#}}
		\mF_{T'})\),
		which is precisely parametrized by \(\Fl_{G_{T'},\mu_{T'}}\times_{\CC_{p}}\mcal{M}_{T\to T'}\). 
	\end{proof}
\begin{thm}\label{thmProductFormulaQuate}Fix a neat open compact subgroup \(K^{p}\subset G(\mA^{p}_{f})\).
We consider \(T\subset T'=T\coprod I\subset T''=T\coprod J\subset \Sigma_{\infty}\).
Let \( E_{\ti{T},\ti{I}}:=E_{0,\ti{T},\p_{\ti{T}}}E_{0,\ti{T}',\p_{\ti{T}'}}\subset\bar{\QQ}_{p} \), where the RHS is as in Corollary \ref{corIgusaStackPELGeneralDfn}.

	(1) (Global version)
	There is a \(G_{T}(\QQ_{p})\times G_{T'}(\QQ_{p})\times\Gal(\bar{\QQ}_{p}/E_{\ti{T},\ti{I}})\)-equivariant isomorphism of perfectoid spaces over \(\CC_{p}\)
	\[\mS_{K^{p}}(G_{T})\times_{\FLTT{T}{I^{c}}}\mcal{M}_{T\to T'}\cong 
	\mS_{K^{p}}(G_{T'})\times_{\CC_{p}}
	\mcal{M}_{T\to T'},
	\]
	where
	the group \(G_{T}(\QQ_{p})\)
	acts on \(\mS_{K^{p}}(G_{T})\)
	and on \(\mcal{M}_{T\to T'}\),
	and the group \(G_{T'}(\QQ_{p})\)
	acts on \(\mS_{K^{p}}(G_{T'})\)
	and on \(\mcal{M}_{T\to T'}\).

	Moreover, when varying \(K^{p}\),
	we have an isomorphism of towers
	\[\left\{\mS_{K^{p}}(G_{T})\times_{\FLTT{T}{I^{c}}}\mcal{M}_{T\to T'}\right\}_{K^{p}}\cong 
	\left\{\mS_{K^{p}}(G_{T'})\times_{\CC_{p}}
	\mcal{M}_{T\to T'}\right\}_{K^{p}},\]
	which is \(G(\mA_{f}^{p})\)-equivariant.

	Quotienting out \(G_{T'}(\QQ_{p})\)-action,
	we have a \(G_{T}(\mA_{f})\times \Gal(\bar{\QQ}_{p}/E_{\ti{T},\ti{I}})\)-equivariant isomorphism 
	\[\JL_{T,I}:=\JL_{T\to T'}:
	\left\{\left(\mS_{K^{p}}(G_{T'})\times_{\CC_{p}}
	\mcal{M}_{T\to T'}\right)/\underline{G_{T'}(\QQ_{p})}\right\}_{K^{p}}
	\cong 
	\left\{\mS_{K^{p}}(G_{T})|_{\FLTT{T}{I^{c}}^{bc}}\right\}_{K^{p}}.\]

	(2)
	(Compatibility of Hodge-Tate period map) 
	The isomorphism has the  the following diagram commutes: 
	\[\begin{tikzcd}
		\mS_{K^{p}}(G_{T})\times_{\FLTT{T}{I^{c}}}\mcal{M}_{T\to T'}\arrow[r,"\sim"] \arrow[d,"\pi_{\HT,T}\times\id"]
		&
		\mS_{K^{p}}(G_{T'})\times_{\CC_{p}}
		\mcal{M}_{T\to T'}\arrow[d,"\pi_{\HT,T'}\times\id"]
		\\
		\FLT{T}\times_{\FLTT{T}{I^{c}}}\mcal{M}_{T\to T'}
		\arrow[r,"\sim"]
		&\Fl_{G_{T'},\mu_{T'}}\times_{\CC_{p}}\mcal{M}_{T\to T'}
	\end{tikzcd}\]
	where the second row is given by Lemma \ref{lemIdentifyFlVaroverM}. 

\end{thm}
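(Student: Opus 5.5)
Both Shimura varieties have the same Igusa stack, so I will deduce the product formula by composing the Cartesian diagram of Theorem \ref{thmCartesianDiagramQaternion} with the local Cartesian square of Lemma \ref{lemRZisAtorsorOverFl}. For \(T\) write
\[
\mS_{K^{p}}(G_{T})\cong \iota_{T}^{*}(\Igs_{K^{p}})\times_{\Bun_{G_{T},\mu_{T}}}\FLT{T},
\]
after base change to \(\CC_{p}\). Then I form the fiber product with \(\mcal{M}_{T\to T'}\) over \(\FLTT{T}{I^{c}}\), and use Lemma \ref{lemIdentifyFlVaroverM} to replace \(\FLT{T}\times_{\FLTT{T}{I^{c}}}\mcal{M}_{T\to T'}\) by \(\Fl_{G_{T'},\mu_{T'}}\times\mcal{M}_{T\to T'}\).

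The key point is to check that this identification is compatible with the Beauville--Laszlo maps to \(\Bun_{G}\). A point of the left side is a chain \(\mF_{T}\to\mF_{T'}\to\mF\) of modifications. Its image under \(\BL_{T}\), via \(\iota_{T}\) of Lemma \ref{lemIdentifyQuaternion}, is \(\mF\). Its image under \(\BL_{T'}\), via \(\iota_{T'}\), applied to the \(\Fl_{G_{T'},\mu_{T'}}\)-component \(\mF_{T'}\to\mF\), is also \(\mF\). So the two maps to \(\Bun_{G,\mu}\) agree canonically. Consequently
\[
\mS_{K^{p}}(G_{T})\times_{\FLTT{T}{I^{c}}}\mcal{M}_{T\to T'}\cong \Igs_{K^{p}}\times_{\Bun_{G,\mu}}(\Fl_{G_{T'},\mu_{T'}}\times\mcal{M}_{T\to T'})\cong \mS_{K^{p}}(G_{T'})\times\mcal{M}_{T\to T'}.
\]
The second isomorphism uses the Cartesian diagram for \(T'\) and the fact that \(\iota_{T'}^{*}\Igs_{K^{p}}\) is the restriction of the same \(\Igs_{K^{p}}\). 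This also gives part (2) directly, since the Hodge--Tate maps are the projections to the flag factors.

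Equivariance comes from the construction. \(G_{T}(\QQ_{p})\) acts on \(\mF_{T}\) and \(G_{T'}(\QQ_{p})\) acts on \(\mF_{T'}\) through automorphisms of the trivial torsors. \(G(\mA_{f}^{p})\) acts on \(\Igs_{K^{p}}\) compatibly as \(K^{p}\) varies. The Galois equivariance over \(E_{\ti{T},\ti{I}}\) comes from the descent statement in Theorem \ref{thmCartesianDiagramQaternion} together with the field of definition of \(\mcal{M}_{T\to T'}\).

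For \(\JL_{T,I}\), Lemma \ref{lemRZisAtorsorOverFl} says that \(\mcal{M}_{T\to T'}\to\FLTT{T}{I^{c}}^{\bc}\) is a pro\'etale \(G_{T'}(\QQ_{p})\)-torsor. Its pullback \(\mS_{K^{p}}(G_{T})\times_{\FLTT{T}{I^{c}}}\mcal{M}_{T\to T'}\) is therefore a \(G_{T'}(\QQ_{p})\)-torsor over \(\mS_{K^{p}}(G_{T})|_{\FLTT{T}{I^{c}}^{\bc}}\), and quotienting by \(\underline{G_{T'}(\QQ_{p})}\) gives the stated isomorphism.

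I expect the main obstacle to be the 2-categorical coherence in matching the two reduced Hodge--Tate maps. The identifications \(\Bun_{G_{T}}\cong\Bun_{G}\cong\Bun_{G_{T'}}\) twist by \(\mF_{T}\) and \(\mF_{T'}\) respectively, and one must verify that the resulting 2-morphism is compatible with the chosen trivializations and the group actions. I would do this on the moduli description of Lemma \ref{lemFLidentifyModification} and check it on products of points. A secondary issue is representability: one must check that the fiber products are perfectoid, which follows because both sides are pro\'etale torsors over perfectoid spaces.
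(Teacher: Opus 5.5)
Your proposal is correct and follows the paper's own proof. The paper writes $\mS_{K^{p}}(G_{T})\times_{\FLTT{T}{I^{c}}}\mcal{M}_{T\to T'}\cong \Igs_{K^{p}}\times_{\Bun_{G}}\FLT{T}\times_{\FLTT{T}{I^{c}}}\mcal{M}_{T\to T'}$, applies Lemma \ref{lemIdentifyFlVaroverM}, and then uses the Cartesian diagram of Theorem \ref{thmCartesianDiagramQaternion} for $T'$, with part (2) holding by construction; your check that the two Beauville--Laszlo maps agree on the chain $\mF_{T}\to\mF_{T'}\to\mF$ and your use of Lemma \ref{lemRZisAtorsorOverFl} for the quotient $\JL_{T,I}$ spell out steps the paper leaves implicit.
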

\begin{proof}
This follows formally from Theorem \ref{thmCartesianDiagramQaternion}: for (1), 
we have \begin{align*}
\mS_{K^{p}}(G_{T})\times_{\FLTT{T}{I^{c}}}\mcal{M}_{T\to T'}\cong \Igs_{K^{p}}\times_{\Bun_{G}}\FLT{T}\times_{\FLTT{T}{I^{c}}}\mcal{M}_{T\to T'}\\
\cong \Igs_{K^{p}}\times_{\Bun_{G}}\FLT{T'}\times_{\CC_{p}}\mcal{M}_{T\to T'}\cong \SGTKp{T'}\times_{\CC_{p}}\mcal{M}_{T\to T'},
\end{align*} where the second isomorphism is given by Lemma \ref{lemIdentifyFlVaroverM}. 
(2) holds by our construction.
\end{proof}

\section{Partial de Rham cohomology}\label{sectionJLTransfer}

In this section, we work with quaternionic Shimura varieties as in Section \ref{sectionGeoSV}. The goal is to define the differential operators \(d^{-k_{\tau}+1}_{\tau}\) and \(\bar{d}^{-k_{\tau}+1}_{\tau}\),  generalizing those in \cite{Pan2209.06II},
and to prove that the ``partial de Rham complexes'' determined by these operators have ``classical'' cohomology as a Hecke module (Theorem \ref{thmClassicalityCohomoDR}). 

Let us give a sketch here with the trivial coefficient. For a general Shimura variety, we have the following diagram: \[
\mX^{\sm}_{K^{p}}
\xleftarrow{\pi^{\sm}_{\la,0}}\mX^{\la,0}_{K^{p}}\xrightarrow{\pi^{\la,0}_{\HT}}\Fl.
\] We show in \cite[Corollary \ref{1-corCartesianFlandSMGeneral}]{Jiang2026Shla} that the morphism \(\pi^{\sm}_{\la,0}\times \pi^{\la,0}_{\HT}\) is ``\'etale'' in a suitable sense. We can then consider the ``relative differentials'' along $\pi^{\sm}_{\la,0}$ and along \(\pi^{\la,0}_{\HT}\), which we denote as \[\bar{d}:\mO^{\la,0}_{K^p}\to \mO^{\la,0}_{K^p}\otimes_{\mO_{\Fl}}\Omega^1_{\Fl},\text{ and }d:\mO^{\la,0}_{K^p}\to \mO^{\la,0}_{K^p}\otimes_{\mO_{K^{p}}^\sm}\Omega^{1,\sm}_{K^p}
\] respectively. 

Using either \(
\bar{d}
\) or \(
d
\), we can define a ``de Rham complex'', i.e. a Koszul complex. 
By studying \(
\bar{d}
\), we will see that the Koszul complex that it defines concentrates in degree 0, and is isomorphic to \(
\mO^{\sm}_{K^p}
\). Hence its cohomology is the direct limit of the coherent cohomology of Shimura varieties on the finite level. 

In the case of Hilbert modular varieties, there are decompositions \(
\Omega^1_{\Fl}=\bigoplus_{\tau\in\Sigma_{\infty}}\Omega^1_{\Fl,\tau}
\) and \(
\Omega^{1,\sm}_{K^p}=\bigoplus_{\tau\in\Sigma_{\infty}}\Omega^{1,\sm}_{K^p,\tau}
\). Therefore, given any subsets \(I,J\subset \Sigma_{\infty}\), we can consider \[
\bar{d}_{J}\oplus d_{I}:\mO^{\la,0}_{K^p}\to (\bigoplus_{\tau\in J}\mO^{\la,0}_{K^p}\otimes\Omega^1_{\Fl,\tau})\oplus (\bigoplus_{\tau\in I}\mO^{\la,0}_{K^p}\otimes \Omega^{1,\sm}_{\tau}),
\] and we denote by \(\dR_{I,J}(\mO^{\la,0}_{K^p})
\) the associated Koszul complex, which we will refer to as the  ``partial de Rham complex''. See Definition \ref{dfnDiffeOperatorsdanddbar} for a rigorous and more general definition. 

As mentioned above, if \(
(I,J)=(\emptyset,\Sigma_{\infty})
\), then the cohomology of \(
\dR_{I,J}(\mO^{\la,0}_{K^p})
\) is \emph{classical} in the sense that the cohomology groups as Hecke modules are supported at points that correspond to Hilbert modular forms. The main theorem of this section (Theorem \ref{thmClassicalityCohomoDR}) shows  that the same ``classicality'' holds whenever \(I\cup J=\Sigma_{\infty}
\). 
In Section \ref{sectionFontaineOperator}, we will see that the ``Bernstein-Gelfand-Gelfand-Fontaine complex'' is filtered by such \(\dR_{I,J}(\mO^{\la,0}_{K^p})\) (or its variant with non-trivial coefficient), and we will use Theorem \ref{thmClassicalityCohomoDR}
to show that the Bernstein-Gelfand-Gelfand-Fontaine complex has classical cohomology (Theorem \ref{thmFontainComplexClassical}), which in turn will be used in Section \ref{sectionAppplicationClassicality} to show Theorem \ref{IntrothmClassicalityCompleteCoho}. 

Note that such classicality is proven in \cite{Pan2209.06II} in the case of modular curves. To treat the case where \((I,J)=(\Sigma_{\infty},\emptyset)\), the idea loc. cit. is  to consider the Newton stratification, and over each Newton stratum, the complex \(\dR_{\Sigma_{\infty},\emptyset}(\mO^{\la,0}_{K^p})
\) is related to the rigid cohomology of the associated rigid variety. 
However, the cases when neither \(I\) nor \(J\) is an empty set need significantly new ideas.

The idea of our proof of Theorem \ref{thmClassicalityCohomoDR}
is to use induction. For this purpose, we actually prove the theorem not only for Hilbert modular varieties, but also for all the quaternionic Shimura varieties. 

The cohomology of the ``partial de Rham complex''
in our setting is a mixture of coherent cohomology and rigid cohomology. In particular, in the extreme case where \(J=\Sigma_{\infty}\), the desired classicality is obvious by the argument above. In the general case, we use the product formula (Theorem \ref{thmProductFormulaQuate}), which somehow translates the operator \(d\) on one Shimura variety to the operator \(\bar{d}\) on another Shimura variety of lower dimension, which we will refer to as \emph{locally analytic Jacquet-Langlands transfer} (Theorem \ref{thmJacquetLanglandsLA}). This idea already appears in \cite{Pan2209.06II}. 

However, in our setting, 
the proof becomes more complicated because Theorem \ref{thmProductFormulaQuate} is proven using the perfectoid method. The new ingredients of our proof is provided by the results of \cite{Jiang2026Shla}, notably Theorem \ref{1-thmLAstrucGMHTper} and Corollary \ref{1-corReformulateRiemmanHilbPerfdCase} loc. cit., which show that the information about connections can indeed be recovered from the perfectoid data.



The section is structured as follows:
we set up the notation in \S \ref{subsecSetUpTcase}.
In \S \ref{subsectionConstrDiffeOper}, 
using the results from \cite{Jiang2026Shla} (especially Corollary \ref{1-corCartesianFlandSMGeneral} loc. cit.),
we construct the differential operators (Definition \ref{dfnDiffeOperatorsdanddbar}), define the ``partial de Rham complexes'' (Definition \ref{dfnDRComplexNotation}) and state the main theorem (Theorem \ref{thmClassicalityCohomoDR}). 
In \S \ref{subsecCompareUnderSpaces}, based on Theorem \ref{thmProductFormulaQuate}, we prove the locally analytic Jacquet-Langlands correspondence (Corollary \ref{corJLlaIsGlaTorsor} and  Theorem \ref{thmJacquetLanglandsLA}).
Finally, we conclude the proof of the main theorem (Theorem \ref{thmClassicalityCohomoDR}) in \S \ref{subsecProofOfClassicality}.

\subsection{Set-up}\label{subsecSetUpTcase}
We will use the notation from \S \ref{subsectionQuaternUniSV} and \cite{Jiang2026Shla}. 
\begin{notation}\label{notationVariousSVResume}
We will use the results of \cite[Section \ref{1-sectionLAVectorSV}]{Jiang2026Shla}. 
Since we are going to work with various Shimura varieties, for a Shimura datum \( (G,X) \), 
we write
\begin{align*}
\mS^{\tor}_{K}(G)&:=\mX^{\tor}_{K} \text{ Shimura variety at finite level (Notation \ref{1-notationShimuraSetUp})},\\
\mS^{\tor}_{K^{p}}(G)&:=\mX^{\tor}_{K^{p}} \text{ Completed infinite level Shimura variety (Lemma \ref{1-lemTorCompacAsTateStack})},\\
\mS^{\tor}_{K^{p}}(G)^{\sm}&:=\mX^{\tor,\sm}_{K^{p}}\text{ Smooth infinite level Shimura variety (Definition \ref{1-dfnNonCompletedInfiniteLevel})},\\
\mS^{\tor}_{K^{p}}(G)^{\la}&:=\mX^{\tor,\la}_{K^{p}}\text{ Locally analytic Shimura variety (Definition \ref{1-dfnLocallyAnSV})},\\
\mS^{\tor}_{K^{p}}(G)^{\pdR/\CC_{p},?,\sm}_{\log}&:=(\mX^{\tor}_{K^{p}})^{\pdR/\CC_{p},?,\sm}_{\log}\text{ Smooth log de Rham stack (Definition \ref{1-dfnSMratConstruction})},\\
\mS^{\tor}_{K^{p}}(G)^{\la,0}&:=\mX^{\tor,\la,0}_{K^{p}}\text{ Horizontal locally analytic Shimura variety (Definition \ref{1-dfnHorizontalLAsv})},\\
(\mS^{\tor}_{K^{p}}(G)^{\la,0})_{\log}^{\pdR/\CC_{p},??}&:=
(\mX^{\tor,\la,0}_{K^{p}})_{\log}^{\pdR/\CC_{p},??},??\in\{++,\hpp\}\\&\text{ Smooth bi-filtered algebraic de Rham stack (Definition \ref{1-dfnBiFilAgLa0})},
\end{align*}
where the numbers refer to the respective results in \cite{Jiang2026Shla}. 
We will remove the superscript \( ()^{\tor} \) and the subscript \( ()_{\log} \)  when we work away from the toroidal boundaries.
\end{notation}
\begin{rmk}\label{rmkApplySMtorToTower}
There is no conflict of notation for \( \SGTKp{T} \) (Notation \ref{notationVariousSVResume} versus \S \ref{subsectionQuaternUniSV}) by \cite[Lemma \ref{1-lemTorCompacAsTateStack} (6)]{Jiang2026Shla}.
\end{rmk}
\begin{notation}
Let \( F \) be a totally real number field of dimension \(d\), and let \( \Sigma_{\infty} \) 
be the set of archimedean places.
Let \(T\subset \Sigma_{\infty}\), and let \((G_{T},h_{T})\)
be the Shimura datum in \S \ref{subsubQuaternion}. Recall that the associated Hodge cocharacter \( \mu_{T}:\GG_{m,\bar{\QQ}_{p}}\to \prod_{\tau\in\Sigma_{\infty}}\GL_{2,\bar{\QQ}_{p}} \) 
is trivial at the \( \tau \)-factor for \( \tau\in T \), and is \( t\mapsto \mrm{diag}\{t,1\} \) for \( \tau\notin T \).   

Let \( E_{T}\subset\CC \) denote the reflex field of \( (G_{T},h_{T}) \), and \( E_{T,\p_{T}} \) its \( p \)-adic completion along \( E_{T}\subset\CC\cong^{\iota}\bar{\QQ}_{p} \).

For fixed \( T \), we denote the morphisms as 
\[
\begin{tikzcd}
\SGTKpTo{T}^{\la}\arrow[rr,"\pi^{\la}_{\sm,T}",bend left]\arrow[rdd,"\pi^{\la}_{\HT,T}"]
\arrow[r,"\pi^{\la}_{0,T}"]&
\SGTKpTo{T}^{\la,0}\arrow[dd,"\pi_{\HT,T}^{\la,0}"]
\arrow[r,"\pi^{\la,0}_{\sm,T}"]&
\SGTKpTo{T}^{\sm}
\arrow[r,"h^{\sm}_{T}"]\arrow[d,"\pi^{\sm}_{K,T}"]
&
\SGTKpTo{T}_{\log}^{\pdR/\CC_{p},\sm}\arrow[d,"\pi^{\pdR,\sm}_{K,T}"]\arrow[dd,"\pi_{\HT,T}^{\pdR}",bend left=80]
\\
&  & \SGTKTo{T}{K}\arrow[r,"h_{K,T}"] & \SGTKTo{T}{K}_{\log}^{\pdR/\CC_{p}}.\\
& \FLT{T}\arrow[rr,"\beta_{\Fl,T}"] &&\FLT{T}^{\dR}
\end{tikzcd}
\] and  \[
\begin{tikzcd}[column sep=large]
{\mS^{\tor}_{K^{p}}(G_{T})^{\la,0}\times [{\mA}^{1}/\GG_{m}]^{2}}
\arrow[d,"\pi^{\la,0}_{\HT,T}\times \pi^{\la,0}_{\sm,T}"]
\arrow[r,"h^{\la,0,++}_{T}"] &
(\mS^{\tor}_{K^{p}}(G_{T})^{\la,0})^{\pdR/\CC_{p},++}_{\log}
\arrow[d,"\pi^{\pdR,++}_{\GM\HT,T}"] 
\\ {(\FLT{T}\times [\mA^{1}/\GG_{m}])}
\times  
{(\mS^{\tor}_{K^{p}}(G_{T})^{\sm}\times [\mA^{1}/\GG_{m}])}
\arrow[r,"h_{\Fl,T}^{+}\times h_{T}^{\sm,+}"]\arrow[d,"1\times \pi_{\GM,T}^{\sm}"]
& \FLT{T}^{\pdR/\CC_{p},+}\times  (\SGTKpTo{T})^{\pdR/\CC_{p},{+},\sm}_{\log}
\arrow[d,"1\times \pi_{\GM,T}^{\sm,\pdR}"]
\\
{(\FLT{T}\times [\mA^{1}/\GG_{m}])}
\times  
{(\FLT{T}^{\std}\times [\mA^{1}/\GG_{m}])}/G_{T}^{c}
\arrow[r,"h_{\Fl,T}^{+}\times h_{\Fl^{\std},T}^{+}"]
& \FLT{T}^{\pdR/\CC_{p},+}\times  \FLT{T}^{\std,\pdR/\CC_{p},+}/G^{c}_{T}.
\end{tikzcd}
\] We will write \( (-)^{\hp} \)
(resp. \( (-)^{\hpp} \)) for the corresponding restrictions to \( [\hat{\mA}^{1}/\GG_{m}] \) (resp. \( [\hat{\mA}^{1}/\GG_{m}]^{2} \)).

We denote \( \mO^{\la}_{\SGTKp{T}}:=(\pi^{\la}_{\sm,T})_{*}(\mO_{\SGTKpTo{T}}^{\la})\in \QCoh(\SGTKpTo{T}^{\sm}) \), which is the same as the sheaf \( \mO^{\la}_{K^{p}} \) in \cite[Notation \ref{1-notationOlaIla}]{Jiang2026Shla}. By abuse of notation, we will also write \( \mO^{\la}_{\SGTKp{T}}:=(\pi^{\la}_{0,T})_{*}(\mO_{\SGTKpTo{T}}^{\la})\in \QCoh(\SGTKpTo{T}^{\la,0}) \).
\end{notation}



To treat the prime-to-\( p \) Hecke action, we will also take limits along prime-to-\( p \) 
levels:
\begin{notation}
Denote \[\mS(G_{T}):=\varprojlim_{K^{p}}\mS_{K^{p}}(G_{T})\subset \SGTTo{T}:=\varprojlim_{K^{p}}\SGTKpTo{T},\]
with the limit taken \emph{in the category of solid stacks}.
We define \(\SGTTo{T}^{\sm}\), \(\SGTTo{T}^{\la}\), and \(\SGTTo{T}^{\la,0}\) similarly, i.e. by taking limits along \(K^p\) in the category of Tate stacks. These stacks are equipped with an action of \( \unl{G_{T}(\QQ_{p})}\times G_{T}(\mA_{f}^{p})^{\sm}\times \unl{\Gal_{E_{T,\p_{T}}}} \). 
\end{notation}

\begin{notation}
We denote \(G_{T,\CC_{p}}\cong\prod_{\tau}G_{\tau}\) with \(G_{\tau}\cong \GL_{2}\), which we identify for all \(T\). 
Let \(\fg:=\Lie(G_{T})\otimes\CC_{p}\), which has a decomposition \(\fg\cong \bigoplus_{\tau\in\Sigma_{\infty}}\fg_{\tau}\), 
with \(\fg_{\tau}:=\Lie(G_{\tau})\cong\mrm{gl}_{2}\).

Using the isomorphism \(\bar{\QQ}_{p}\cong \CC\),
we have \(\mu_{T}:\GG_{m}\to G_{T,\CC_{p}}\cong\prod_{\tau}\GL_{2}\), which we conjugate to \( \mu_{T}:t\mapsto ((1)_{\tau\in T},(\mrm{diag}(t,1))_{\tau\notin T}) \).
For each \(\tau\),
we choose the Borel \(B_{\tau}\) (resp. \( \bar{B}_{\tau} \)) of upper-triangular (resp. lower-triangular) matrices in \(G_{\tau}\),
such that \[P_{\mu_{T}}=\prod_{\tau\in T}G_{\tau}\times \prod_{\tau\in T^{c}}B_{\tau},\;
P_{\mu_{T}}^{\std}=\prod_{\tau\in T}G_{\tau}\times \prod_{\tau\in T^{c}}\bar{B}_{\tau}. \]
Denote its Levi decomposition as \(B_{\tau}=H_{\tau}N_{\tau}\subset G_{\tau}\),
and  their Lie algebras as \(\fb_{\tau}=\fh_{\tau}\oplus\fn_{\tau}\).

For any \(I\subset \Sigma_{\infty}\), 
we will write \(\fg_{I}:=\bigoplus_{\tau\in I}\fg_{\tau},\)
and similarly define \(\fb_{I}\), \(\fh_{I}\)
and \(\fn_{I}\). 

Denote the Levi decomposition of \(P_{\mu_{T}}\) as \(P_{\mu_{T}}=M_{\mu_{T}}N_{\mu_{T}}\),  
and their Lie algebras as \(\mfk{p}_{\mu_{T}}=\mfk{m}_{\mu_{T}}\oplus \fn_{\mu_{T}}\). Then \(\fn_{\mu_{T}}= \fn_{T^{c}}\), \(\mfk{m}_{\mu_{T}}= \fh_{T^{c}}\oplus \fg_{T}\), and \(\mfk{p}_{\mu_{T}}= \fb_{T^{c}}\oplus \fg_{T}\).

We fix a basis \(\{\mrm{Z}_{\tau},\mrm{H}_{\tau},\mrm{E}_{\tau},\mrm{F}_{\tau}\}\) of \(\fg_{\tau}\),
with \(\mrm{Z}_{\tau}\) in the center \(\mfk{z}_{\tau}\subset \fg_{\tau}\), \([\mrm{H}_{\tau},\mrm{E}_{\tau}]=2\mrm{E}_{\tau}\), \(H_{\tau}\in\fh_{\tau}\), \([\mrm{H}_{\tau},\mrm{F}_{\tau}]=-2\mrm{F}_{\tau}\), and \([\mrm{E}_{\tau},\mrm{F}_{\tau}]=\mrm{H}_{\tau}\), and \(\mrm{E}_{\tau}\in\fn_{\tau}\). 
We denote the Casmir element by \(\Omega_{\tau}=\frac{1}{2}\mrm{H}_{\tau}^{2}+\mrm{H}_{\tau}+\mrm{F}_{\tau}\mrm{E}_{\tau}\in Z(\fg_{\tau})\subset U(\fg_{\tau})\). 
Then \(Z(\fg_{\tau})=\CC_{p}[\mrm{Z}_{\tau},\Omega_{\tau}]\). 

As a small note, \(H_{\tau}\) denotes the group, while \(\mrm{H}_{\tau}\in \fh_{\tau}\) is an element.

Let \(Z\cong \mrm{Res}_{F/\QQ}(\GG_{m})\) denote the center of \(G_{T}\), and
\(Z_{c}\) denote the maximal \(\RR\)-split, \(\QQ\)-anisotropic subtorus of \(Z\). Then \(Z_{c}\cong \Ker(\mrm{Nm}_{F/\QQ}:Z\to \GG_{m})\). We define \(G^{c}_{T}:=G_{T}/Z_{c}\) and \(P_{\mu_{T}}^{c}:=P_{\mu_{T}}/Z_{c}\). Let \(\mfk{z}_{c}:=\Lie(Z_{c})\otimes\CC_{p}\). Recall that for any open compact subgroup \(K_{p}\subset G_{T}(\QQ_{p})\), \(\widetilde{K_{p}}:=K_{p}/\overline{(K^{p}K_{p}\cap Z(\QQ))}\twoheadrightarrow K_{p}/(K^{p}K_{p}\cap Z_{c}(\QQ_{p}))\). We denote \(\ti{\fg}:=\Lie(\widetilde{K_{p}})\otimes\CC_{p}\twoheadrightarrow \fg^{c}:=\fg/\mfk{z}_{c}\). We define \(\ti{\mfk{p}}_{\mu_{T}}\) and \(\mfk{p}_{\mu_{T}}^{c}\)
similarly.
\end{notation}
\begin{rmk}
To avoid confusions, let us emphasize that for \(T\subset \Sigma_{\infty}\), we define \(
\fg_{T}=\bigoplus_{\tau\in T}\fg_{\tau}
\), and is \emph{not} the Lie algebra of \(G_{T}\).
\end{rmk}
Let us now fix the notations for some algebraic representations.
\begin{dfn}[Multiweight] 
	Let \(I\subset\Sigma_{\infty}\).
	We say that a tuple \(\kw\)
	is a \emph{multiweight} (resp. \(I\)-\emph{multiweight}), if \(\Bbbk=(k_{\tau})_{\tau\in\Sigma_{\infty}}\) (resp. \(\Bbbk=(k_{\tau})_{\tau\in I}\))
	with \(k_{\tau},w\in\ZZ\),
	and \(k_{\tau}\equiv w\pmod{2}\).

	Let \(I'\subset \Sigma_{\infty}\) (resp. \(I'\subset I\)).
	We will say that \(\kw\) is \emph{\(I'\)-regular}, if \(k_{\tau}\ne 1\) for \(\tau\in I'\). We will simply say that it is \emph{regular}
	if \(I'=\Sigma_{\infty}\) (resp. 
	\(I'=I\)).

        We say that a multiweight \(\kw\) is \emph{\(T\)-parallel} if \(\#\{k_{\tau}:\tau\in T
        \}=1\). We say that \(\kw\) is \emph{parallel} if it is \(
        \Sigma_{\infty}
        \)-parallel.
	
	Given 
	\(\kw\) an \(I\)-multiweight, and \(I'\subset I\),
	we denote by \((\Bbbk|_{I'},w)\) the \(I'\)-multiweight obtained by forgetting \(k_{\tau}\) for \(\tau\in I\backslash I'\).
	\end{dfn}



\begin{notation}\label{notationSomeAlgRep}
Given \((k_{\tau},w)\in\ZZ^{\oplus2}\) with \(k_{\tau}\equiv w\pmod{2}\), \(k_{\tau}\ne 1\). 
Denote by \(V_{\tau}^{(k_{\tau},w)}\) the algebraic representation of \(G_{\tau}\) \[V_{\tau}^{(k_{\tau},w)}:=\mrm{Sym}^{\max\{-k_{\tau},k_{\tau}-2\}}\mrm{Std}\otimes\det{}^{\frac{-w-\max\{-k_{\tau},k_{\tau}-2\}}{2}},\] 
and denote by \(\lambda^{(k_{\tau},w)}_{\tau}:Z(\fg_{\tau})\to \CC_{p}\) its infinitesimal character.

Denote by \(\chi_{\tau}^{(k_{\tau},w)}:=\left(\frac{w-k_{\tau}}{2},\frac{w+k_{\tau}}{2}\right)\) the algebraic representation of \(H_{\tau}\), such that
 \(\chi_{\tau}^{(k_{\tau},w)}(\mrm{H}_{\tau})=-k_{\tau}\),
and \(\chi_{\tau}^{(k_{\tau},w)}(\mrm{Z}_{\tau})=w\).

For any \(I\subset \Sigma_{\infty}\), and any \(I\)-multiweight \(\kw\), 
denote by   \(\chi^{\kw}_{I}:=\bigotimes_{\tau\in I}\chi_{\tau}^{(k_{\tau},w)}\circ \mrm{pr}_{\tau}\)
the algebraic representation of \(\prod_{\tau\in I}H_{\tau}\).

If in addition \(\kw\) is regular, we
denote by \(V_{I}^{\kw}:=\bigotimes_{\tau\in I}V_{\tau}^{(k_{\tau},w)}\circ \mrm{pr}_{\tau}\)
the algebraic representation of \(\prod_{\tau\in I}G_{\tau}\).

Given any \(T\)-regular multiweight \(\kw\) and \(T\subset \Sigma_{\infty}\),
we denote by \[W^{\kw}_{T}:=V_{T}^{\kw_{T}}\boxtimes \chi_{T^{c}}^{(-\Bbbk,-w)_{T^{c}}}\]
the representation of \(M_{\mu_{T}}\cong \prod_{\tau\in T}G_{\tau}\times \prod_{\tau\in T^{c}}H_{\tau}\), which is in fact a representation of \(M_{\mu_{T}}^{c}\),  which we inflate to a representation of \(P_{\mu_{T}}^{c}\). 
\end{notation}



\begin{notation}[Weyl group actions]
Let \(W\) denote the absolute Weyl group of \(G_{T}\), and let \(s_{\tau}\) denote the reflection induced by the simple positive root of \(G_{\tau}\).
We denote \(s_{T}:=\prod_{\tau\in T}s_{\tau}\in W\cong (\ZZ/2)^{\Sigma_{\infty}}\),
and denote \(s_{\tau}:\Bbbk\mapsto s_{\tau}\Bbbk\) (resp. \(\Bbbk\mapsto s_{\tau}\cdot\Bbbk\)) acting on \(\ZZ^{\oplus \Sigma_{\infty}}\)
by only changing at \(\tau\)-component via \(k_{\tau}\mapsto -k_{\tau}\) (resp. \(k_{\tau}\mapsto 2-k_{\tau}\)). 
\end{notation}
\begin{dfn} 
	(1) (over \( \FLT{T}^{\std} \))
	For any \(T\)-regular multiweight \(\kw\), we define \[
	\omega^{\kw}_{\FLT{T}^{\std}}:=\mcal{P}^{\mrm{std},c}_{\mu_{T}}(W^{(s_{T^{c}}\Bbbk,-w)}_{T})
	\in \QCoh(\FLT{T}^{\std}),
	\] where \( \mcal{P}^{\mrm{std},c}_{\mu_{T}} \) is as in \cite[Notation \ref{1-notationEquShvOnFl}]{Jiang2026Shla}, and \( W^{(s_{T^{c}}\Bbbk,-w)}_{T}=V_{T}^{(\Bbbk,-w)_{T}}\boxtimes \chi_{T^{c}}^{\kw_{T^{c}}} \).

	(2) (over \( \SGTKTo{T}{K} \))
	For neat \(K\subset G_{T}(\mA_{f})\) and any \(T\)-regular multiweight \(\kw\),
	we denote by \(\mcal{P}^{\mrm{std},c}_{\mu_{T},\dR,K}\) the de Rham \(P_{\mu_{T}}^{\mrm{std},c}\)-torsor over \(\mS_{K}^{\tor}(G_{T})\) as in \cite[Notation \ref{1-notationDRtorsor}]{Jiang2026Shla}.
	Define \[\omega^{\kw}_{\mS_{K}(G_{T})}:=\mcal{P}^{\mrm{std},c}_{\mu_{T},\dR,K}(W^{(s_{T^{c}}\Bbbk,-w)}_{T})\cong \pi_{\GM,K}^{*}(\omega^{\kw}_{\FLT{T}^{\std}})
	\in \QCoh(\SGTKTo{T}{K}),
	\] where the isomorphism is given by \cite[Lemma \ref{1-lemDRtorsorDescends}]{Jiang2026Shla}. 

	We denote by \(\mPdR{T}\) and \(\omega^{\kw,\sm}_{\SGTKp{T}}\) respectively the pull-back's of \(\mcal{P}^{\mrm{std},c}_{\mu_{T},\dR,K}\) and \(\omega^{\kw}_{\mS_{K}(G_{T})}\) from \(\SGTKTo{T}{K}\) to \(\SGTKpTo{T}^{\sm}\). 

	
	(3) (over \( \FLT{T} \)) For any \(T^{c}\)-multiweight \(\kw\), we define \[\omega^{\kw}_{\Fl_{G_{T},\mu_{T}}}:=\mPHT{T}(\chi_{T^{c}}^{(-\Bbbk,w)})\in \QCoh(\FLT{T}).
	\]
	Here \(\mcal{P}_{\mu_{T}}:=N_{\mu_{T}}\backslash G_{T}\) as in \cite[Notation \ref{1-notationEquShvOnFl}]{Jiang2026Shla}, and
	\(\chi^{(-\Bbbk,w)}_{T^{c}}\)
	is inflated to a representation of \(P_{\mu_{T}}\)
	via \(P_{\mu_{T}}\twoheadrightarrow \prod_{\tau\in T^{c}}H_{\tau}\).
\end{dfn}
\begin{rmk}
Note that
	\(\omega^{\kw}_{\FLT{T}}\)
	is equipped with the horizontal action of \(\mm_{\mu_{T}}^{0}\), which when restricted to \(\fh_{\tau}\) for \(\tau\in T^{c}\) is given by \(\chi^{(-\Bbbk,w)}_{T^{c}}|_{\fh_{\tau}}\).

	Note that \(\omega^{\kw}_{\Fl_{G_{T},\mu_{T}}}\) is of rank \(1\), and \(\omega^{\kw}_{\SGT{T}}\) is of rank \(\dim(V_{T}^{\kw,\sm})=\prod_{\tau\in T}|k_{\tau}-1|\). 
\end{rmk}
\begin{rmk}[Normalization]
	Let us clarify the normalization by making the construction explicit
	in the case of modular curves i.e. \(F=\QQ\) and \(T=\emptyset\). In this case, \(\mrm{Sh}_{K}(G_{\emptyset})\) parametrizes elliptic curves with level structures, and \(P_{\mu_{T}}^{\mrm{std},c}\cong \bar{B}\) (resp. \(P_{\mu_{T}}^{c}\cong B\)), the subgroup of \(\GL_{2/\QQ}\) consisting of lower (resp. upper) triangular matrices. Then by our normalization, the pro\'etale \(K\)-torsor over \(\mrm{Sh}_{K}(G_{\emptyset})\) sends the \(2\)-dimensional standard representation corresponds to the Tate modules of the universal elliptic curve \(f:E^{univ}\to \mrm{Sh}_{K}(G_{\emptyset})\). Therefore, \(\mcal{P}^{\mrm{std},c}_{\mu_{T},\dR,K}\)
	sends the standard representation to the dual of the relative de Rham cohomology \(H^{1}_{\dR}(E^{univ}/\mrm{Sh}_{K}(G_{\emptyset}))\). 
	The Hodge filtration on \(H^{1}_{\dR}(E^{univ}/\mrm{Sh}_{K}(G_{\emptyset}))\) is of the form \[0\to R^{0}f_{*}(\Omega^{1}_{E^{univ}})
	\to H^{1}_{\dR}(E^{univ}/\mrm{Sh}_{K}(G_{\emptyset}))
	\to R^{1}f_{*}(\mO_{E^{univ}})\to 0,
	\] which corresponds to the sequence of \(\bar{B}\)-representations \[
	0\to \chi^{(-1,0)}\to \mrm{Std}^{\vee}\to \chi^{(0,-1)}\to 0.
	\] Thus by our notation,  \[\omega^{((1),-1)}_{\mrm{Sh}_{K}(G_{\emptyset})}:=\mcal{P}^{\std,c}_{\mu_{T},\dR,K}(\chi^{(-1,0)})\cong 
	R^{0}f_{*}(\Omega^{1}_{E^{univ}})\cong \omega_{E^{univ}/\mrm{Sh}_{K}(G_{\emptyset})},\]
	\[\omega^{((-1),-1)}_{\mrm{Sh}_{K}(G_{\emptyset})}:=\mcal{P}^{\std,c}_{\mu_{T},\dR,K}(\chi^{(0,-1)}) \cong R^{1}f_{*}(\mO_{E^{univ}})\cong (\omega_{(E^{univ})^{\vee}/\mrm{Sh}_{K}(G_{\emptyset})})^{\vee}. 
	\]

	On the other hand, over \(\Fl\cong \mP^{1}\), the sequence of \(B\)-representations \[0\to \chi^{(0,-1)}\to \mrm{Std}^{\vee}\to \chi^{(-1,0)}\to 0
	\] which gives rise to the short exact sequence \[
	0\to \omega_{\Fl}^{((1),-1)} \to \mrm{Std}^{\vee}\otimes\mO_{\Fl}\to \omega_{\Fl}^{((-1),-1)}\to 0.
	\] Pulling back the sequence to \(\mS(G_{\emptyset})\), we obtain the Hodge-Tate sequence \[0\to \omega^{((1),-1)}_{\mS_{K}(G_{\emptyset})}\hatotimes_{\mO_{\mS_{K}(G_{\emptyset})}} \hat{\mO}\to \mrm{Std}^{\vee}\otimes \hat{\mO}
	\to \omega^{((-1),-1)}_{\mS_{K}(G_{\emptyset})}\hatotimes_{\mO_{\mS_{K}(G_{\emptyset})}}\hat{\mO}(-1)\to 0.
	\] Also note that the horizontal action of \(d\mu=(1,0)\)
	on \(\omega^{\kw}_{\Fl}\)
	is given by \(\frac{k+w}{2}\)
	for \(\kw=((k),w)\). 
\end{rmk}

\begin{notation}\label{notationOlakw}
Let \(\kw\) be a \(T\)-regular multiweight.
We denote \[\mO^{\la,\kw}_{\SGTKp{T}}:=\RHom_{\ti{\mfk{p}}_{\mu_{T}}}\left(W_{T}^{\kw},\mO^{\la}_{\SGTKp{T}}\right)\in \QCoh(\SGTKpTo{T}^{\la,0})\]
where \(\mfk{p}_{\mu_{T}}\cong \fg_{T}\oplus \fb_{T^{c}}\) acts on \(\mO^{\la}_{\SGTKp{T}}\) by combining of \((\fg_{T},*_{\mrm{const}})\)
and \((\fb_{T^{c}},\theta^{Pan})\) as in \cite[Definition \ref{1-dfnHorActionPan}]{Jiang2026Shla},
and the action factors through \(\ti{\mfk{p}}_{\mu_{T}}\). 
Note that by definition, 
\(\mO^{\la,(\underline{0},0)}_{\SGT{T}}=\mO^{\la,0}_{\SGT{T}}:=\mO_{\SGTKpTo{T}^{\la,0}}\). 

\end{notation}
\begin{lem}\label{lemHodgeTateCompareOnTcase}
There is a natural isomorphism over \(\SGTKpTo{T}^{\la,0}\),
\begin{align}
(\pi^{\la,0}_{\sm,T})^{*}\left(\omega^{\kw,\sm}_{\SGTKp{T}}\right)\otimes 
(\pi_{\HT,T}^{\la,0})^{*}\left(
\omegaFL{T}{(\Bbbk,-w)_{T^{c}}}\right)
\cong  \mO^{\la,\kw}_{\SGTKp{T}}
\left(-\chi^{\kw}(d\mu_{T})\right),
\end{align} where 
\(\left(-\chi^{\kw}(d\mu_{T})\right)\) refers to the Tate twist. Concretely, \( 
-\chi^{\kw}(d\mu_{T})=-\sum_{\tau\in T^{c}}\frac{w-k_{\tau}}{2}. \)

\end{lem}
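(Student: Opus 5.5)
The plan is to reduce the statement to the general Hodge–Tate comparison for locally analytic functions proved in \cite{Jiang2026Shla}, applied to the Shimura datum \((G_{T},h_{T})\), and then to make the weights and Tate twist explicit. In \cite{Jiang2026Shla} there is an isomorphism over \(\SGTKpTo{T}^{\la,0}\) of the form
\[
(\pi^{\la,0}_{\sm,T})^{*}\mcal{P}^{\std,c}_{\mu_{T},\dR}(W)\otimes(\pi^{\la,0}_{\HT,T})^{*}\mcal{P}_{\mu_{T}}(W')\cong\RHom_{\ti{\mfk{p}}_{\mu_{T}}}(\cdots,\mO^{\la}_{\SGTKp{T}})(\text{twist}).
\]
This is the locally analytic refinement of the relative Hodge–Tate filtration, coming from the Cartesian property of \(\pi^{\la,0}_{\sm,T}\times\pi^{\la,0}_{\HT,T}\) in \cite[Corollary \ref{1-corCartesianFlandSMGeneral}]{Jiang2026Shla}. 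I will specialise it to \(W=W^{(s_{T^{c}}\Bbbk,-w)}_{T}\) and \(W'=\chi^{(-\Bbbk,w)}_{T^{c}}\).

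The first step is to identify the de Rham torsor after pullback to the locally analytic level. Over \(\SGTKpTo{T}^{\la,0}\), the pullback of \(\mPdR{T}\) and the pullback of \(\mcal{P}_{\mu_{T}}\) along \(\pi_{\HT}\) both become reductions of the same \(G^{c}_{T}\)-torsor; this is the trivial torsor twisted by the \(\BdR\)-comparison. Their relative position is governed by the horizontal action \(\theta^{Pan}\) of \(\mfk{h}_{T^{c}}\). For the factors in \(T\), both the parabolic and the opposite parabolic are all of \(G_{\tau}\). The \(V_{T}\)-parts are therefore matched via the constant action \((\fg_{T},*_{\mrm{const}})\), and taking \(\fg_{T}\)-invariants of \(V_{T}^{\vee}\otimes\mO^{\la}\) is exactly the \(\fg_{T}\)-part of the \(\RHom_{\ti{\mfk{p}}_{\mu_{T}}}\).

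The second step is a componentwise computation at each \(\tau\in T^{c}\). The de Rham line \(\chi_{\tau}^{(k_{\tau},w)}\) on \(\bar{B}_{\tau}\) tensored with the Hodge–Tate line \(\chi_{\tau}^{(-k_{\tau},w)}\) on \(B_{\tau}\) should become the \(\chi_{\tau}^{(-k_{\tau},-w)}\)-isotypic part of \(\mO^{\la}\) under \((\fb_{\tau},\theta^{Pan})\), up to a Tate twist. This is the piece \(\chi_{T^{c}}^{(-\Bbbk,-w)}\) in \(W^{\kw}_{T}\). The Tate twist is the weight of \(d\mu_{T}\), as in the Normalization remark for modular curves: there the Hodge–Tate sequence twists \(\omega^{((-1),-1)}\) by \((-1)\), and \(d\mu\) acts by \(\frac{k+w}{2}\). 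Summing over \(\tau\in T^{c}\) gives the stated twist \(-\sum_{\tau\in T^{c}}\frac{w-k_{\tau}}{2}\).

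I would check signs by reducing to the case \(F=\QQ\), \(T=\emptyset\), where the Normalization remark makes everything explicit. The general case is then a product over \(\tau\), because \(\FLT{T}\) and the torsors split as products.

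The main obstacle is bookkeeping rather than anything conceptual. The involved points are the sign conventions relating \(W^{(s_{T^{c}}\Bbbk,-w)}_{T}\) to \(W^{\kw}_{T}\), the passage from \(\fg^{c}\) to \(\ti{\fg}\) (the action factoring through \(\ti{\mfk{p}}_{\mu_{T}}\)), and the check that the derived \(\RHom\) is concentrated in degree \(0\). For the last point I would invoke the description of \(\mO^{\la}\) as locally \(\mO^{\la,0}\otimes\mO(\mcal{P}_{\mu_{T}})\)-like on which \(\ti{\mfk{p}}_{\mu_{T}}\) acts freely enough, which is the structural result of \cite{Jiang2026Shla}.
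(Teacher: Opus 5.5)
You have the right input (the locally analytic Hodge--Tate comparison of \cite{Jiang2026Shla}) and the right target (the \(W^{\kw}_{T}\)-isotypic part of \(\mO^{\la}\) under \(\ti{\mfk{p}}_{\mu_{T}}\)). But the step that actually proves the lemma is missing, and what you put in its place does not hold. The comparison available in \cite{Jiang2026Shla}, which is the one the paper uses, identifies torsors on \(\SGTKpTo{T}^{\la}\), not on \(\SGTKpTo{T}^{\la,0}\). Compatibly with the \(\ti{\mfk{p}}_{\mu_{T}}\)-actions of Notation~\ref{notationOlakw}, it gives
\[
(\pi^{\la}_{\sm,T})^{*}\omega^{\kw,\sm}_{\SGTKp{T}}\cong(\pi^{\la}_{\HT,T})^{*}\omegaFL{T}{(-\Bbbk,w)_{T^{c}}}\otimes_{\CC_{p}}V_{T}^{\kw,\vee}\left(-\chi^{\kw}(d\mu_{T})\right).
\]
The isomorphism over \(\SGTKpTo{T}^{\la,0}\) that you propose to quote is essentially the lemma itself. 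Your first step is also wrong as stated. On \(\SGTKpTo{T}^{\la,0}\), the de Rham bundle and the Hodge--Tate line bundle that agree on \(\SGTKpTo{T}^{\la}\) are not identified; they are not ``reductions of one torsor'' with matching Levi parts. The discrepancy between them is exactly \(\mO^{\la,\kw}_{\SGTKp{T}}\), which is what the lemma asserts. So that identification cannot serve as the starting point.

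The missing idea is the descent from \(\SGTKpTo{T}^{\la}\) to \(\SGTKpTo{T}^{\la,0}\). Push the displayed isomorphism forward along \(\pi^{\la}_{0,T}\) and apply \(\RHom_{\ti{\mfk{p}}_{\mu_{T}}}(1,-)\) to both sides.
\begin{itemize}
\item On the left you get \((\pi^{\la,0}_{\sm,T})^{*}\omega^{\kw,\sm}_{\SGTKp{T}}\), because \(\RHom_{\ti{\mfk{p}}_{\mu_{T}}}(1,\mO^{\la})=\mO^{\la,0}\), together with the projection formula.
\item On the right, your remark about the \(\fg_{T}\)-part, combined with the horizontal character of the line bundle on \(\fh_{T^{c}}\), cuts out \(\RHom_{\ti{\mfk{p}}_{\mu_{T}}}(W^{\kw}_{T},\mO^{\la})=\mO^{\la,\kw}_{\SGTKp{T}}\). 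This gives \((\pi^{\la,0}_{\HT,T})^{*}\omegaFL{T}{(-\Bbbk,w)_{T^{c}}}\otimes\mO^{\la,\kw}_{\SGTKp{T}}(-\chi^{\kw}(d\mu_{T}))\).
\end{itemize}
Dualizing the line bundle then yields the statement. Along this route you need no degree-\(0\) concentration check, since concentration is a consequence of the lemma (Remark~\ref{rmkOlaKWConcentrate}), not an input. The Tate twist is already present at the \(\la\)-level and needs no separate modular-curve sign check.

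There are also two small slips. First, the Hodge--Tate line in the lemma is \(\omegaFL{T}{(\Bbbk,-w)_{T^{c}}}=\mPHT{T}(\chi^{(-\Bbbk,-w)}_{T^{c}})\), so at \(\tau\) it is \(\chi^{(-k_{\tau},-w)}_{\tau}\), not \(\chi^{(-k_{\tau},w)}_{\tau}\). Second, your twist paragraph passes from the horizontal weight \(\frac{k+w}{2}\) to \(-\frac{w-k_{\tau}}{2}\) without justification.
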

\begin{proof}
By Hodge-Tate comparison (see for example \cite[Theorem \ref{1-thmIdenTwoTorsorsOverLa}]{Jiang2026Shla}), 
we have 
\[(\pi^{\la}_{\sm,T})^{*}\left(\omega^{\kw,\sm}_{\SGTKp{T}}\right)
\cong 
(\pi_{\HT,T}^{\la})^{*}\left(
\omegaFL{T}{(-\Bbbk,w)_{T^{c}}}\right)\otimes_{\CC_{p}} V_{T}^{\kw,\vee}
\left(-\chi^{\kw}(d\mu_{T})\right)\]
over \(\SGTKpTo{T}^{\la}\). 
Now by applying \( (h^{\la}_{0,T})_{*} \) and taking \(\RHom_{\ti{\p}_{\mu_{T}}}(1,-)\)
on both sides, 
with the action as in Notation \ref{notationOlakw}, we obtain \[
(\pi^{\la,0}_{\sm,T})^{*}\left(\omega^{\kw,\sm}_{\SGTKp{T}}\right)
\cong 
(\pi_{\HT,T}^{\la,0})^{*}\left(
\omegaFL{T}{(-\Bbbk,w)_{T^{c}}}\right)\otimes \mO^{\la,\kw}_{\SGTKp{T}}
\left(-\chi^{\kw}(d\mu_{T})\right)
\] Since \(\omega^{(-\Bbbk,w)_{T^{c}}}_{\FLT{T}}\)
is a line bundle over \(\FLT{T}\), we can dualize it to the other side and obtain the desired isomorphism. 
\end{proof}
\begin{rmk}\label{rmkOlaKWConcentrate}
In particular, combined with \cite[Lemma \ref{1-lemHorizalActNotSame}]{Jiang2026Shla},
this shows that \(\mO^{\la,\kw}_{\SGTKp{T}}\) is concentrated in degree \(0\) when regarded as a sheaf on the analytic site of \(\SGTKp{T}\).
\end{rmk}
We also need to twist \(\mO^{\la,\kw}_{\SGTKp{T}}\) by line bundles on \(\FLT{T}
\), 
so we introduce the following more general notation.
\begin{notation}\label{notationNablaKW}
Let \(\kw\) be a \(T\)-regular multiweight,
and \((\Bbbk',w')\) be a \(T^{c}\)-multiweight. We denote \[\nabla^{\kw,(\Bbbk',w')}_{\SGTKp{T}}:=(\pi^{\la,0}_{\sm,T})^{*}(\omega^{\kw,\sm}_{\SGTKp{T}})\otimes (\pi^{\la,0}_{\HT,T})^{*}(
\omegaFL{T}{(\Bbbk',w')})
\in \QCoh(\SGTKpTo{T}^{\la,0}).\]
In particular, by Lemma \ref{lemHodgeTateCompareOnTcase},
	\[\nabla_{\SGTKp{T}}^{(\Bbbk,w),(\Bbbk,-w)_{T^{c}}}\cong \mO^{\la,(\Bbbk,w)}_{\SGTKp{T}}\left(-\chi^{\kw}(d\mu_{T})\right).\]
\end{notation}


\subsection{Differential operators}\label{subsectionConstrDiffeOper} 
We now define various (log) differential operators on \( \nabla_{\SGTKp{T}}^{\kw,(\Bbbk',w')} \). Recall that
(log) differential operators can be realized in the category of quasicoherent sheaves on the analytic de Rham stacks by taking suitable push-forward (\cite[Lemma \ref{1-lemDiffOpeToLogStack}]{Jiang2026Shla}). 

We recall the following dual BGG resolution on \( \FLT{T}^{\std} \) and \( \FLT{T} \) from \cite[Corollary \ref{1-corFilterDualBGG}]{Jiang2026Shla}. Recall that \( \mcal{G}^{\std,c,+}_{\mu_{T}}:=\mcal{G}^{c,+}_{\mu_{T}^{-1}} \) 
over \( \FLT{T}^{\std}=\Fl_{G_{T},\mu_{T}^{-1}} \).
\begin{notation}[Theta operators]\label{notationDiffHilber}---

(1) (over \( \FLT{T}^{\std} \)) Let \( \kw \) be a regular multiweight with \( k_{\tau}\in\ZZ_{\le0} \)
for \( \tau\in T^{c} \). Then 
\begin{align}\label{alignStdBGGCompelxHilb}
\mcal{G}_{\mu_{T}}^{\std,c,+}(V^{(\Bbbk,-w)})\cong 
\left[M^{\std,0}\to M^{\std,1}\cdots \to M^{\std,|T^{c}|} \right]\in \QCoh((\FLT{T}^{\std})^{\pdR/\CC_{p},+})/G_{T}^{c}
\end{align}
where \( M^{\std,i}\cong \bigoplus_{I\subset T^{c},|I|=i}M_{I}^{\std}
\) is in cohomological degree \( i \), with \[
M_{I}^{\std}\cong (h_{\Fl^{\std},T}^{+})_{*}(\omega^{(s_{I}\cdot\Bbbk,w)}_{\FLT{T}^{\std}})\left\{\sum_{\tau\in I}(1-k_{\tau})+\sum_{\tau\in\Sigma_{\infty}}\frac{w+k_{\tau}}{2}\right\}. 
\] 

Moreover, by \cite[Corollary \ref{1-corPartialBGGDual}]{Jiang2026Shla}, the differential from \( M_{I}^{\std}\subset M^{\std,i} \) for \( |I|=i \)
to \( M_{J}^{\std}\subset M^{\std,j} \) for \( |J|=i+1 \) is zero unless \( I\subset J \). We denote the differential from \( M_{I}^{\std} \)
to \( M_{I\cup\{\tau\}}^{\std} \) for \( \tau\in T^{c}\backslash I \)
by \[
\theta^{1-k_{\tau}}_{\tau}:M_{I}^{\std}\to M_{I\cup\{\tau\}}^{\std}.
\]

(2) (over \( \SGTKTo{G}{K} \))
Let \( \bar{\mcal{G}}_{\mu_{T},\dR,K}^{c,+} \) be the de Rham \( G^{c} \)-torsor over \( \SGTKTo{T}{K}^{\pdR/\CC_{p},+}_{\log} \) (\cite[Lemma \ref{1-lemDRtorsorDescends}]{Jiang2026Shla}). 
By \cite[Proposition \ref{1-propBGGFilSV}]{Jiang2026Shla},
we have the dual BGG resolution \begin{align}\label{alignBGGCompelxSVHilb}
\bar{\mcal{G}}_{\mu_{T},\dR,K}^{c,+}(V^{(\Bbbk,-w)})\cong 
\left[M_{K}^{0}\to M_{K}^{1}\cdots \to M_{K}^{|T^{c}|} \right]\in \QCoh(\SGTKpTo{T}^{\pdR/\CC_{p},+,\sm}_{\log}),
\end{align} where \( M_{K}^{i}\cong \bigoplus_{I\subset T^{c},|I|=i}M_{K,I} \) is in cohomological degree \( i \), with \[
M_{K,I}\cong (h_{K,T}^{+})_{*}(\omega^{(s_{I}\cdot\Bbbk,w)}_{\SGTK{T}{K}})\left\{\sum_{\tau\in I}(1-k_{\tau})+\sum_{\tau\in\Sigma_{\infty}}\frac{w+k_{\tau}}{2}\right\}
\] for \( h_{K,T}^{+}:\SGTKTo{T}{K}\times [\mA^{1}/\GG_{m}]\to \SGTKTo{T}{K}^{\pdR/\CC_{p},+}_{\log} \).

As in (1), by \cite[Corollary \ref{1-corPartialBGGDual}]{Jiang2026Shla} and the construction in Proposition \ref{1-propBGGFilSV} loc. cit., the differential \( M_{K,I}\to M_{K,J} \) for \( |J|=|I|+1 \)
is zero unless \( J=I\cup \{\tau\} \), in which case we denote the differential as \( \theta^{1-k_{\tau}}_{\tau}:M_{K,I} \to M_{K,I\cup\{\tau\}}\).

(2)' (over \( \SGTKpTo{G}^{\sm} \)) Let \( \bar{\mcal{G}}_{\mu_{T},\dR}^{c,+} \)
denote the pull-back of \( \bar{\mcal{G}}_{\mu_{T},\dR,K}^{c,+} \)
to \( \SGTKpTo{T}^{\pdR/\CC_{p},+,\sm}_{\log} \). Then by \cite[Proposition \ref{1-propBGGFilSVsm}]{Jiang2026Shla},
we have \begin{align}\label{alignBGGCompelxSVsmHilb}
\bar{\mcal{G}}_{\mu_{T},\dR}^{c,+}(V^{(\Bbbk,-w)})\cong 
\left[M^{\sm,0}\to M^{\sm,1}\cdots \to M^{\sm,|T^{c}|} \right]\in \QCoh(\SGTKpTo{T}^{\pdR/\CC_{p},+,\sm}_{\log}),
\end{align} 
where \( M^{\sm,i}\cong \bigoplus_{I\subset T^{c},|I|=i}M_{I}^{\sm}
\) is in cohomological degree \( i \), with \[
M_{I}^{\sm}\cong (h_{T}^{\sm,+})_{*}(\omega^{(s_{I}\cdot\Bbbk,w),\sm}_{\SGTKp{T}})\left\{\sum_{\tau\in I}(1-k_{\tau})+\sum_{\tau\in\Sigma_{\infty}}\frac{w+k_{\tau}}{2}\right\}. 
\] 

As in (1) and (2), by \cite[Corollary \ref{1-corPartialBGGDual}]{Jiang2026Shla} and the construction in Proposition \ref{1-propBGGFilSVsm}, the differential \( M^{\sm}_{I}\to M^{\sm}_{J} \) for \( |J|=|I|+1 \)
is zero unless \( J=I\cup \{\tau\} \), in which case we denote the differential as \( \theta^{1-k_{\tau}}_{\tau}:M^{\sm}_{I} \to M^{\sm}_{I\cup\{\tau\}}\).

(3) Let \( (\Bbbk',w') \) be a \( T^{c} \)-multiweight with \( k_{\tau}\in\ZZ_{\le0} \) 
for \( \tau\in T^{c} \). Then similar to (1), by \cite[Proposition \ref{1-propFilterBGG}]{Jiang2026Shla}
\begin{align}\label{alignBGGCompelxHilb}
\mcal{G}_{\mu_{T}}^{+}(V^{(\Bbbk',w')}_{T^{c}})
\cong 
\left[M^{0}\to M^{1}\cdots \to M^{|T^{c}|} \right]\in \QCoh(G_{T}\backslash \FLT{T}^{\pdR/\CC_{p},+})
\end{align}
where \( M^{i}\cong \bigoplus_{I\subset T^{c},|I|=i}M_{I}
\) is in cohomological degree \( i \), with \[
M_{I}\cong (h_{\Fl,T}^{+})_{*}(\omega^{(s_{I}\cdot\Bbbk',-w')}_{\FLT{T}})\left\{\sum_{\tau\in I}(1-k_{\tau}')+\sum_{\tau\in\Sigma_{\infty}}\frac{w'+k_{\tau}'}{2}\right\}. 
\] 

Similarly to (1), the differential from \( M_{I}\to M_{J}\subset M^{j} \) for \( |J|=|I|+1 \) is zero unless \( I\subset J \), and we denote the differential from \( M_{I} \)
to \( M_{I\cup\{\tau\}} \) for \( \tau\in T^{c}\backslash I \)
by \[
\bar{\theta}^{1-k_{\tau}}_{\tau}:M_{I}\to M_{I\cup\{\tau\}}.
\]
Here we adopt the notation \( \bar{\theta} \) to parallel with the notation \( \bar{d} \)
in \cite{Pan2209.06II}.
\end{notation}

We will now use \cite[Lemma \ref{1-lemKunnFlSm}]{Jiang2026Shla} to extend both theta operators to \(\nabla_{\SGTKp{T}}^{\kw,(\Bbbk',w')}\):
\begin{dfn}[Differential operators \(d_{\tau}^{1-k_{\tau}}\) and \(\bar{d}_{\tau}^{1-k'_{\tau}}\)]\label{dfnDiffeOperatorsdanddbar}
Let \(\kw\) be a regular multiweight,
and \((\Bbbk',w')\) be a \(T^{c}\)-multiweight. Recall that by \cite[Lemma \ref{1-propNablaKWasPullBackDR} (2)]{Jiang2026Shla}, we have \[(h^{\la,0,\hpp}_{T})_{*}
\nabla^{\kw,(\Bbbk',w')}_{\SGTKp{T}}\cong (\pi_{\GM\HT,T}^{\pdR})^{*}\left(
	(h^{\sm,\hp}_{T})_{*}(\omega^{\kw,\sm}_{\SGTKp{T}})\boxtimes (h^{\hp}_{\Fl,T})_{*}(\omega^{(\Bbbk',w')}_{\FLT{T}})\right)\] in \( \QCoh((\SGTKpTo{T}^{\la,0})^{\pdR/\CC_{p},\hpp}_{\log}) \).
Consider the structure map \[s:
(\SGTKpTo{T}^{\la,0})^{\pdR/\CC_{p},\hpp}_{\log}\to [\mA^{1}/\GG_{m}]^{2},
\] and we write \( M\{a,b\}:=M\otimes s^{*}(\mO\{a\}\boxtimes \mO\{b\}) \) (\cite[Notation \ref{1-notationTwistByAGm}]{Jiang2026Shla}).

(1) If \(\tau\in T^{c}\)
and \(k_{\tau}\in\ZZ_{\le 0}\),
we define the \(d^{1-k_{\tau}}_{\tau}\)-operator  \[d_{\tau}^{1-k_{\tau}}:(h^{\la,0,\hpp}_{T})_{*}\nabla^{\kw,(\Bbbk',w')}_{\SGTKp{T}}\to (h^{\la,0,\hpp}_{T})_{*}\nabla^{(s_{\tau}\cdot\Bbbk,w),(\Bbbk',w')}_{\SGTKp{T}}\{1-k_{\tau},0\},\]
as the morphism induced by \(\theta_{\tau}^{1-k_{\tau}}\) in Notation \ref{notationDiffHilber} (2).

(2) If \(\tau\in T^{c}\), and \(k_{\tau}'\in\ZZ_{\le 0}\),
we denote the \(\bar{d}^{1-k'_{\tau}}_{\tau}\)-operator \[\bar{d}_{\tau}^{1-k_{\tau}'}:(h^{\la,0,\hpp}_{T})_{*}\nabla^{\kw,(\Bbbk',w')}_{\SGTKp{T}}\to (h^{\la,0,\hpp}_{T})_{*}\nabla^{(\Bbbk,w),(s_{\tau}\cdot\Bbbk',w')}_{\SGTKp{T}}\{0,1-k_{\tau}'\},\]
as the morphism induced by \(\bar{\theta}^{1-k'_{\tau}}_{\tau}\) in Notation \ref{notationDiffHilber} (3).
\end{dfn}

\begin{lem} \label{lemBasicNablaDiffe}
 All the operators \(d^{1-k_{\tau}}_{\tau}\)
	and \(\bar{d}^{1-k_{\tau'}'}_{\tau'}\) (when defined) commute with each other up to a sign.

	In particular, if \(k_{\tau}\in\ZZ_{\le 0}\),
	we have a commutative diagram over \((\SGTKpTo{T}^{\la,0})^{\pdR/\CC_{p},\hpp}_{\log}\) \[\begin{tikzcd}
		(h^{\la,0,\hpp}_{T})_{*}\mO^{\la,\kw}_{\SGTKp{T}}\left(-\chi^{\kw}(d\mu_{T})\right)
		\arrow[r,"d^{1-k_{\tau}}_{\tau}"]\arrow[d,"\bar{d}^{1-k_{\tau}}_{\tau}"]
		& (h^{\la,0,\hpp}_{T})_{*}\nabla_{\SGTKp{T}}^{(s_{\tau}\cdot\Bbbk,w),(\Bbbk,-w)}\{1-k_{\tau},0\}
		\arrow[d,"\bar{d}^{1-k_{\tau}}_{\tau}"]
		\\ (h^{\la,0,\hpp}_{T})_{*}\nabla_{\SGTKp{T}}^{(\Bbbk,w),(s_{\tau}\cdot\Bbbk,-w)}\{0,1-k_{\tau}\}
		\arrow[r,"-d^{1-k_{\tau}}_{\tau}"]
			& (h^{\la,0,\hpp}_{T})_{*}\mO^{\la,(s_{\tau}\cdot\Bbbk,w)}_{\SGTKp{T}}\left(-\chi^{(s_{\tau}\cdot\Bbbk,w)}(d\mu_{T})\right)\{1-k_{\tau},1-k_{\tau}\}.
	\end{tikzcd}\]

	Moreover, the composition \( \bar{d}_{\tau}^{1-k_{\tau}}\circ d_{\tau}^{1-k_{\tau}}\cong -d_{\tau}^{1-k_{\tau}}\circ \bar{d}_{\tau}^{1-k_{\tau}} \) satisfies the property that the map induced by adjuntion \[(h^{\la,0,\hpp}_{T})^{*}
	(h^{\la,0,\hpp}_{T})_{*}\mO^{\la,\kw}_{\SGTKp{T}}\left(-\chi^{\kw}(d\mu_{T})\right)\to 
	\mO^{\la,(s_{\tau}\cdot\Bbbk,w)}_{\SGTKp{T}}\left(-\chi^{(s_{\tau}\cdot\Bbbk,w)}(d\mu_{T})\right)\{1-k_{\tau},1-k_{\tau}\}
	\] induces a surjection on \[
	\gr^{(1-k_{\tau},1-k_{\tau})}\left((h^{\la,0,\hpp}_{T})^{*}
	(h^{\la,0,\hpp}_{T})_{*}\mO^{\la,\kw}_{\SGTKp{T}}\left(-\chi^{\kw}(d\mu_{T})\right)\right)
	\twoheadrightarrow \mO^{\la,(s_{\tau}\cdot\Bbbk,w)}_{\SGTKp{T}}\left(-\chi^{(s_{\tau}\cdot\Bbbk,w)}(d\mu_{T})\right).
	\]
\end{lem}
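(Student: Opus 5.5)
The argument reduces everything to the product structure furnished by the descent of \((h^{\la,0,\hpp}_{T})_{*}\nabla^{\kw,(\Bbbk',w')}_{\SGTKp{T}}\) along \(\pi^{\pdR}_{\GM\HT,T}\) recalled in Definition \ref{dfnDiffeOperatorsdanddbar}. By \cite[Lemma \ref{1-propNablaKWasPullBackDR} (2)]{Jiang2026Shla}, this sheaf is the pull-back of an external tensor product
\[
(h^{\sm,\hp}_{T})_{*}(\omega^{\kw,\sm}_{\SGTKp{T}})\boxtimes (h^{\hp}_{\Fl,T})_{*}(\omega^{(\Bbbk',w')}_{\FLT{T}})
\]
on \(\SGTKpTo{T}^{\pdR/\CC_{p},\hp,\sm}_{\log}\times\FLT{T}^{\pdR/\CC_{p},\hp}\). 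Every \(d^{1-k_\tau}_\tau\) is the pull-back of \(\theta^{1-k_\tau}_\tau\otimes\id\), and every \(\bar d^{1-k'_{\tau'}}_{\tau'}\) is the pull-back of \(\id\otimes\bar\theta^{1-k'_{\tau'}}_{\tau'}\).

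\textbf{Commutation.} Operators of different type act on different tensor factors, so they commute on the nose. In the displayed square the minus sign is just the usual Koszul convention for a double complex. Two operators of the same type, say \(\theta_\tau\) and \(\theta_{\tau'}\), are components of the differential of the single complex (\ref{alignBGGCompelxSVsmHilb}) (resp. (\ref{alignBGGCompelxHilb})). Since the differential squares to zero and, by \cite[Corollary \ref{1-corPartialBGGDual}]{Jiang2026Shla}, the only nonzero components are \(M_I\to M_{I\cup\{\tau\}}\), the composite \(M_I\to M_{I\cup\{\tau,\tau'\}}\) gives \(\theta_{\tau'}\theta_\tau+\theta_\tau\theta_{\tau'}=0\). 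This is commutation up to sign. The commutative diagram is then the special case \((\Bbbk',w')=(\Bbbk,-w)_{T^c}\), with the two corners identified with \(\mO^{\la}\) by Lemma \ref{lemHodgeTateCompareOnTcase} in its Notation \ref{notationNablaKW} form.

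\textbf{Surjectivity on graded pieces.} After the commutation, the only real content is the surjectivity statement. The plan is to compute \(\gr^{(1-k_\tau,1-k_\tau)}\) of \((h^{\la,0,\hpp}_{T})^{*}(h^{\la,0,\hpp}_{T})_{*}(-)\) through the same product decomposition. The bi-graded pieces are the tensor product of the graded pieces of the two filtrations. In each factor, the top graded piece that \(\theta_\tau\) (resp. \(\bar\theta_\tau\)) sees is the symbol of the operator. By the BGG construction, the symbol is an order \(1-k_\tau\) principal symbol, namely the map \(\Sym^{1-k_\tau}(T_\tau)\otimes\omega^{\Bbbk}\to\omega^{s_\tau\cdot\Bbbk}\) given by contraction with \((\Omega^1_\tau)^{\otimes(1-k_\tau)}\). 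This map is surjective because \(\Omega^{1}_{\SGTKTo{T}{K},\tau}\) and \(\Omega^1_{\Fl,\tau}\) are line bundles and the symbol of the BGG operator \(\theta^{1-k}\) on \(\mP^1\) (or its partial analogue) is the nonzero power map. I would cite the graded description of the filtered dual BGG complex in \cite[Proposition \ref{1-propFilterBGG}]{Jiang2026Shla} and \cite[Proposition \ref{1-propBGGFilSVsm}]{Jiang2026Shla} for the statement that on \(\gr\) the differentials become these \(\mO\)-linear symbol maps. The bi-graded symbol of the composite is then the tensor product of the two surjective symbol maps, hence surjective, and pulling back along \(\pi^{\la,0}_{\sm,T}\times\pi^{\la,0}_{\HT,T}\) keeps it surjective.

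\textbf{Main obstacle.} I expect the hardest step to be matching the \((h^{\hpp})^*(h^{\hpp})_*\) bi-graded piece with the tensor product of the one-variable graded pieces. This requires the ``étaleness'' of \(\pi^{\la,0}_{\sm}\times\pi^{\la,0}_{\HT}\) from \cite[Corollary \ref{1-corCartesianFlandSMGeneral}]{Jiang2026Shla}, which gives a Künneth-type formula via \cite[Lemma \ref{1-lemKunnFlSm}]{Jiang2026Shla}. The other delicate point is tracking the twists \(\{1-k_\tau,1-k_\tau\}\) and the Tate twist \(\chi^{\kw}(d\mu_T)\) so that the target is exactly \(\mO^{\la,(s_\tau\cdot\Bbbk,w)}\).
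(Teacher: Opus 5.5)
Your proposal follows the paper's proof. You reduce via the descent in Definition~\ref{dfnDiffeOperatorsdanddbar} to the product of the filtered de Rham stacks of $\FLT{T}$ and of $\SGTKpTo{T}$. The mixed commutation comes from the operators living on different factors, and the same-type commutation from the dual BGG complexes squaring to zero. The surjectivity on $\gr^{(1-k_\tau,1-k_\tau)}$ comes from the nonvanishing principal symbol of $\theta^{1-k_\tau}$ on $\mathbb{P}^1$, which is exactly what the paper extracts from its $\mathbb{P}^1$ example in \cite{Jiang2026Shla}.

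One caveat concerns the sign. Operators on different tensor factors commute on the nose, so the displayed square with $-d^{1-k_\tau}_\tau$ and the relation $\bar d^{1-k_\tau}_\tau\circ d^{1-k_\tau}_\tau\cong -d^{1-k_\tau}_\tau\circ\bar d^{1-k_\tau}_\tau$ hold only once Koszul signs are built into the operators, as in the total complex of Definition~\ref{dfnDRComplexNotation}~(4). The paper is equally loose on this point, and it does not matter downstream, where the composite is only ever used up to a nonzero scalar.
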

\begin{proof}
By Definition \ref{dfnDiffeOperatorsdanddbar}, it suffices to verify it over 
\( \FLT{T}^{\pdR/\CC_{p},\hp}\times (\mX^{\tor}_{K^{p}})^{\pdR/\CC_{p},+,\sm} \).
The commutativity  among \( \bar{d}_{\tau}^{1-k'_{\tau}} \) and that between \( \bar{d}_{\tau}^{1-k'_{\tau}} \) and \( d_{\tau}^{1-k_{\tau}} \) follow easily since they comes from different factors. The commutativity among \( d_{\tau}^{1-k_{\tau}} \) follows from the commutativity among \( \theta_{\tau}^{1-k_{\tau}} \) as they form a complex (\ref{alignBGGCompelxSVsmHilb}).
The last part follows from \cite[Example \ref{1-egThetaP1}]{Jiang2026Shla}.
\end{proof}

\subsubsection{Algebraic partial de Rham complexes}\label{subsecPartialDRComplex}

The main theorem that we want to prove is about the following partial de Rham complexes, which arises from the \( \ZZ^{T^{c}} \)-filtration in \cite[Corollary \ref{1-corPartialBGGDual}]{Jiang2026Shla}. 
\begin{dfn}[Algebraic partial de Rham complexes]\label{dfnDRComplexNotation}--

(1) (over \( \FLT{T}^{\std} \))
Let \(\kw\) be a regular multiweight
and \(I\subset T^{c}\),
such that \(k_{\tau}\in\ZZ_{\le 0}\) for \(\tau\in I\).

By \cite[Corollary \ref{1-corPartialBGGDual}]{Jiang2026Shla},
we have a \( \ZZ^{T^{c}} \)-filtration \( \Fil_{\mrm{BGG}^{\vee}}^{\bullet} \)on \( \mcal{G}^{\std,+}_{\mu_{T}}(V^{(\Bbbk,-w)}) \).
For any \( J\subset T^{c} \), we denote by \( \Fil_{\mrm{BGG}^{\vee},J}^{\bullet} \) the corresponding \( \ZZ^{J} \)-filtration.
We further denote \(\unl{1}_{\kw,J}= ((\unl{1}_{\kw,J})_{\tau})_{\tau\in J}\in\ZZ^{J} \) with \( (\unl{1}_{\kw,J})_{j}=0 \)
if \( k_{\tau}\le 0 \)
and \( =1 \) if \( k_{\tau}\ge 2 \).

We define the \emph{algebraic partial de Rham complex} on \(\FLT{T}^{\std}\) as  
\begin{align*}
\dR_{I}^{\alg}(\omega^{\kw}_{\FLT{T}^{\std}}):=\gr^{\unl{1}_{\kw,T^{c}\backslash I}}_{\mrm{BGG}^{\vee},T^{c}\backslash I}(\mcal{G}^{\std,+}_{\mu_{T}}(V^{(\Bbbk,-w)}))\left\{-
\sum_{\tau\in\Sigma_{\infty}}\frac{w+k_{\tau}}{2}
\right\}\left[|\unl{1}_{\kw,T^{c}\backslash I}|\right]\\
\in\QCoh(G\backslash \FLT{T}^{\std,\pdR/\CC_{p},+}),
\end{align*}
which still carries a \( \ZZ^{I} \)-filtration. Concretely, it is isomorphic to the complex \[
(h^{+}_{\Fl^{\std},T})_{*}\omega^{\kw}_{\Fl^{\std}_{G,\mu}}\to \bigoplus_{\tau\in I}(h^{+}_{\Fl^{\std},T})_{*}\omega^{(s_{\tau}\cdot\Bbbk,w)}_{\Fl^{\std}_{G,\mu}}\{1-k_{\tau}\}\to \cdots \to (h^{+}_{\Fl^{\std},T})_{*}\omega^{(s_{I}\cdot\Bbbk,w)}_{\Fl^{\std}_{G,\mu}}\{\sum_{\tau\in I}(1-k_{\tau})\}.
\]


(2) (Canonical extension over \( \SGTKpTo{T}^{\sm} \)) Let \( \kw \) and \( I \) be as in (1). Then by \cite[Proposition \ref{1-propPartialBGGDual}]{Jiang2026Shla} and the construction of Proposition \ref{1-propBGGFilSVsm} loc. cit., we have a \( \ZZ^{T^{c}} \)-filtration \( \Fil^{\bullet}_{\mrm{BGG}^{\vee}} \) on \( \bar{\mcal{G}}_{\mu_{T},\dR,K}^{c,+}(V^{(\Bbbk,-w)}) \). 
We define the \emph{algebraic partial de Rham complex} on \(\SGTKTo{T}{K}\) as  
\begin{align*}
\dR_{I}^{\alg}(\omega^{\kw,\sm}_{\SGTKp{T}})
:=\gr^{\unl{1}_{\kw,T^{c}\backslash I}}_{\mrm{BGG}^{\vee},T^{c}\backslash I}(\bar{\mcal{G}}^{c,+}_{\mu_{T},\dR}(V^{(\Bbbk,-w)}))\left\{-
\sum_{\tau\in\Sigma_{\infty}}\frac{w+k_{\tau}}{2}
\right\}\left[|\unl{1}_{\kw,T^{c}\backslash I}|\right]
\\\in \QCoh(\SGTKpTo{T}^{\pdR/\CC_{p},+,\sm}_{\log}),
\end{align*} which still carries a \( \ZZ^{I} \)-filtration.

Thus concretely, 
it is
represented by the complex 
\begin{align}\label{alignPartialDRcomplexHilbSm}
(h^{\sm,+}_{T})_{*}\omega^{\kw,\sm}_{\SGTKp{T}}\to  \bigoplus_{I'\subset I,\#I'=i}(h^{\sm,+}_{T})_{*}\omega^{(s_{I'}\cdot\Bbbk,w),\sm}_{\SGTKp{T}}\{1-k_{\tau}\}\to\cdots\\\to (h^{\sm,+}_{T})_{*}\omega^{(s_{I}\cdot\Bbbk,w),\sm}_{\SGTKp{T}}\{\sum_{\tau\in I}(1-k_{\tau})\},
\end{align}
where for \(I'\subset I'\coprod\{\tau\}\subset I\), the differential from the \(I'\)-th term to the \(I'\coprod\{\tau\}\)-th term is given by \( \theta^{1-k_{\tau}}_{\tau} \), and the other differentials are zero.



(2)' (Subcanonical extension over \( \SGTKpTo{T}^{\sm} \)) We define the \emph{algebraic partial de Rham complex with support} on \(\SGTKTo{T}{K}\) by \[ \dR_{I}^{\alg}(\omega^{\kw,\sm}_{\SGTKp{T},c}):=\dR_{I}^{\alg}(\omega^{\kw,\sm}_{\SGTKp{T}})\otimes (\pi^{\pdR,\sm}_{K^{p}K_{p,0}})^{*}(\mscr{I}_{K^{p}K_{p,0}}), \] for \( \pi^{\pdR,\sm}_{K^{p}K_{p,0}}:\SGTKpTo{T}^{\pdR/\CC_{p},+,\sm}_{\log}\to \SGTKTo{T}{K}^{\pdR/\CC_{p},+}_{\log} \),
where \( \mscr{I}_{K^{p}K_{p,0}} \) is the line bundle in \cite[Remark \ref{1-rmkSubCanonicalExtension}]{Jiang2026Shla}. 

By \cite[Lemma \ref{1-lemProjectionFormuaForPerf}]{Jiang2026Shla}, \( \dR_{I}^{\alg}(\omega^{\kw,\sm}_{\SGTKp{T},c}) \) has a concrete description as a subcomplex of (\ref{alignPartialDRcomplexHilbSm})  
\begin{align}\label{alignPartialDRcomplexHilbSmCompact}
(h^{\sm,+}_{T})_{*}\omega^{\kw,\sm}_{\SGTKp{T},c}\to  \bigoplus_{I'\subset I,\#I'=i}(h^{\sm,+}_{T})_{*}\omega^{(s_{I'}\cdot\Bbbk,w),\sm}_{\SGTKp{T},c}\{1-k_{\tau}\}\to\cdots\\\to (h^{\sm,+}_{T})_{*}\omega^{(s_{I}\cdot\Bbbk,w),\sm}_{\SGTKp{T},c}\{\sum_{\tau\in I}(1-k_{\tau})\},
\end{align}
where \(\omega^{\kw,\sm}_{\SGTKp{T},c}:=\omega^{\kw,\sm}_{\SGTKp{T}}\otimes_{\mO_{\SGTKTo{T}{K}}}\mscr{I}_{\SGTKTo{T}{K}}\). 

(3) (over \( \FLT{T} \)) Let \((\Bbbk',w')\) be a \(T^{c}\)-multiweight, and \(J\subset T^{c}\), such that \(k'_{\tau}\in\ZZ_{\le 0}\) for \(\tau\in J\).
As in (1), \( \mcal{G}^{+}_{\mu_{T}}(V^{(\Bbbk',w')}_{T^{c}}) \) admits a \( \ZZ^{T^{c}} \)-filtration, and
we define the \emph{algebraic partial de Rham complex} on \(\FLT{T}\) as \[\bar{\dR}_{J}(\omega^{(\Bbbk',w')}_{\FLT{T}}):=\gr^{\unl{1}_{(\Bbbk',w'),T^{c}\backslash J}}_{\mrm{BGG}^{\vee},T^{c}\backslash J}(\mcal{G}^{+}_{\mu_{T}}(V^{(\Bbbk',w')}_{T^{c}}))
\left\{-
\sum_{\tau\in\Sigma_{\infty}}\frac{w'+k_{\tau}'}{2}
\right\}\left[|\unl{1}_{\kw,T^{c}\backslash J}|\right]
\in
\QCoh(\FLT{T}^{\pdR/\CC_{p},+}),
\] which is concretely
represented by the complex 
\begin{align*}
(h^{+}_{\Fl,T})_{*}\omega^{(\Bbbk',-w')}_{\FLT{T}}\to \cdots\to \bigoplus_{J'\subset J,\#J'=j}(h^{+}_{\Fl,T})_{*}\omega^{(s_{J'}\cdot\Bbbk',-w')}_{\FLT{T}}\{\sum_{j\in J'}(1-k_{\tau}')\}\to\cdots\\
\to (h^{+}_{\Fl,T})_{*}\omega^{(s_{J}\cdot\Bbbk',-w')}_{\FLT{T}}\{\sum_{j\in J}(1-k_{\tau}')\},
\end{align*}
with connecting morphisms given by \(\bar{\theta}^{1-k_{\tau}'}_{\tau}\) from the \(J'\)-th term to the \(J'\coprod\{\tau\}\)-th term (for any \(J'\coprod\{\tau\}\subset J\)), and zero on the other terms.

(4) Given \(\kw\) and \(I\) as in (1), and \((\Bbbk',w')\) and \(J\) as in (3), we define the \emph{algebraic partial de Rham complex} on \(\SGTKpTo{T}^{\la,0}\)
as 
\begin{align*}
\dR_{I,J}^{\alg}(\nabla_{\SGTKp{T}}^{\kw,(\Bbbk',w')}):=
(\pi_{\GM\HT,T}^{\pdR,\hpp})^{*}\left(
	\dR_{I}^{\alg}(\omega^{\kw,\sm}_{\SGTKp{T}})\boxtimes\bar{\dR}_{J}^{\alg}(\omega^{(\Bbbk',w')}_{\FLT{T}})
\right)\\
\in \QCoh((\SGTKpTo{T}^{\la,0})^{\pdR/\CC_{p},\hpp}_{\log}).
\end{align*}
By \cite[Lemma \ref{1-lemKunnFlSm}]{Jiang2026Shla} and the arguments in (2) and (3) above, \( 
\dR_{I,J}^{\alg}(\nabla_{\SGTKp{T}}^{\kw,(\Bbbk',w')}) \) has a concrete description as a complex, whose \( i \)-th term is
\begin{align*}
\bigoplus_{(I'\subset I,J'\subset J):|I'|+|J'|=i}(h^{\la,0,\hpp}_{T})_{*}(\nabla^{(s_{I'}\cdot\Bbbk,w),(s_{J'}\cdot \Bbbk',w')}_{\SGTKp{T}})\left\{
	\sum_{\tau\in I'}(1-k_{\tau}),\sum_{\tau\in J'}(1-k_{\tau}')
\right\}
\end{align*} and whose differentials are induced by \( d_{\tau}^{1-k_{\tau}} \)
for \( \tau\in I \)
and \( \bar{d}_{\tau}^{1-k_{\tau}'} \) for \( \tau\in J \).

Similarly, we define the \emph{algebraic partial de Rham complex with support} on \(\SGTKpTo{T}^{\la,0}\)
as 
\begin{align*}
\dR_{I,J}^{\alg}(\nabla_{\SGTKp{T},c}^{\kw,(\Bbbk',w')}):=
(\pi_{\GM\HT,T}^{\pdR,\hpp})^{*}\left(
	\dR_{I}^{\alg}(\omega^{\kw,\sm}_{\SGTKp{T},c})\boxtimes\bar{\dR}_{J}^{\alg}(\omega^{(\Bbbk',w')}_{\FLT{T}})
\right)\\
\in \QCoh((\SGTKpTo{T}^{\la,0})^{\pdR/\CC_{p},\hpp}_{\log}).
\end{align*}
\end{dfn}

We will show that the partial de Rham cohomology, i.e. the cohomology of \( 
\dR_{I,J}^{\alg}(\nabla_{\SGTKp{T}}^{\kw,(\Bbbk',w')}) \) and \( 
\dR_{I,J}^{\alg}(\nabla_{\SGTKp{T},c}^{\kw,(\Bbbk',w')}) \), is classical as Hecke modules if \( I\cup J=T^{c} \). Let us make this precise.
\begin{dfn}[Classical Hecke modules]\label{dfnClassicalHecke}
Let \(\mscr{C}\) be a \(D(\bar{\QQ})\)-linear stable \(\infty\)-category, \(\TT^{S}_{\QQ}:=\TT^{S}\otimes_{\ZZ}\QQ\), let \(M\in \Mod_{\TT^{S}_{\QQ}}(\mscr{C})\), and let \(\kw\) be a regular multiweight.
Let \(\mcal{S}(M_{\kw})\) be the set of characters \(\chi_{f}:\TT^{S}_{\QQ}\to\bar{\QQ}\) that correspond to classical Hilbert modular \( \TT^{S} \)-eigenforms \(f\) of weight \((\max(2-\Bbbk,\Bbbk),w)\) that are unramified away from the finite set \( S \), 
with \(\max(2-\Bbbk,\Bbbk):=(\max(2-k_{\tau},k_{\tau}))_{\tau\in\Sigma_{\infty}}\). 

Then we say that
\(M\in\Mod_{\TT^{S}_{\QQ}}(\mscr{C})\)
is \emph{classical of weight \(\kw\) as a Hecke module}, if 
for any prime ideal \( \mfk{a}\subset \TT^{S}_{\bar{\QQ}} \), if \( \mfk{a}\ne \Ker(\chi_{f}) \)
 for any \( \chi_{f}\in \mS(M_{\kw}) \), then
\( M_{\mfk{a}}\cong 0 \).
\end{dfn}
The main theorem of this section is the following:
\begin{thm}\label{thmClassicalityCohomoDR}
	Let \(I,J\subset T^{c}\), \(\kw\) be a regular multiweight, and 
\((\Bbbk',w')\) be a \( T^{c} \)-multiweight,
such that \(k_{\tau}\in\ZZ_{\le 0}\) for \(\tau\in I\), and \( k'_{\tau}\in\ZZ_{\le0} \) for \( \tau\in J \).
	Assume that \(I\cup J=T^{c}\).
	Then
	the complexes
	\begin{align*}
	R\Gamma\left((\SGTKpTo{T}^{\la,0})^{\pdR/\CC_{p},\hpp}_{\log},\dR_{I,J}^{\alg}(\nabla_{\SGTKp{T}}^{(\Bbbk,w),(\Bbbk',w')})\right)\\R\Gamma\left((\SGTKpTo{T}^{\la,0})^{\pdR/\CC_{p},\hpp}_{\log},\dR_{I,J}^{\alg}(\nabla_{\SGTKp{T},c}^{(\Bbbk,w),(\Bbbk',w')})\right)
	\end{align*}
	are classical of weight \(\kw\) as Hecke modules (Definition \ref{dfnClassicalHecke}). 
\end{thm}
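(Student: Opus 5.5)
We argue by induction on \(|I|\), the number of \(d\)-directions, simultaneously for all \(T\subset\Sigma_{\infty}\) and all admissible weights. Both complexes (with and without support) are handled in parallel, so we only describe the first.

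\emph{Base case \(I=\emptyset\).} Then \(J=T^{c}\), and the Koszul complex of the \(\bar{d}_{\tau}\) for \(\tau\in T^{c}\) is the pull-back along \(\pi^{\pdR}_{\GM\HT,T}\) of the full dual BGG complex on \(\FLT{T}\), tensored with \(\omega^{\kw,\sm}\). By the ``\'etaleness'' of \(\pi^{\la,0}_{\sm,T}\times\pi^{\la,0}_{\HT,T}\) and the BGG resolution on the flag variety, this complex collapses to \((h^{\sm}_{T})_{*}\) of a smooth automorphic vector bundle. Its cohomology is therefore \(\varinjlim_{K_{p}}\) of the coherent cohomology of \(\omega^{\kw}\) (or \(\omega^{\kw}_{c}\)) on \(\SGTKTo{T}{\Kpp}\), and this is classical of weight \(\kw\) by the classical theory, i.e.\ BGG/Faltings comparison with Betti cohomology and Matsushima.

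\emph{Inductive step.} Fix \(\sigma\in I\), put \(J_{0}:=J\setminus\{\sigma\}\), and let \(\p\) be the place with \(\sigma\in\Sigma_{\infty/\p}\). We use excision along \(\pi_{\HT}\) for the \(\sigma\)-coordinate. Let \(j\) be the open complement in \(\Fl_{G_{T},\mu_{T},\sigma}\) of \(\mP^{1}(F_{\p})\) (when \(2\mid r_{\p,T}\); when \(2\nmid r_{\p,T}\) this locus is everything), and let \(i\) be the dagger neighbourhood of \(\mP^{1}(F_{\p})\).

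On the \(j\)-part we invoke the \(\{\sigma\}\)-basic product formula, Theorem \ref{thmProductFormulaQuate}, with \(T':=T\coprod\{\sigma\}\). Upgraded via \cite{Jiang2026Shla} to the locally analytic and de Rham stacks, it transports \(d_{\sigma}\) on \(\SGTKp{T}\) to a \(\bar{d}_{\sigma}\)-type operator on the local Shimura variety \(\mcal{M}_{T\to T'}\), while the remaining \(d_{\tau},\bar{d}_{\tau}\) for \(\tau\neq\sigma\) become the corresponding operators on \(\SGTKp{T'}\). This yields a formula of the shape \(C_{*}(G_{T'}(\QQ_{p})^{\sm},N\hatotimes T_{T'\to T})\). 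Here \(N\) is the partial de Rham cohomology on \(\SGTKp{T'}\) with \(I\setminus\{\sigma\}\), \(J_{0}\), and \((I\setminus\{\sigma\})\cup J_{0}=T^{\prime c}\). Since \(|I\setminus\{\sigma\}|<|I|\), the inductive hypothesis applies and \(N\) is classical. The local factor \(T_{T'\to T}\) commutes with \(\TT^{S}\), so the \(j\)-part is classical. For the Hecke-support comparison between \(G_{T'}\) and \(G_{T}\) we use Jacquet--Langlands, e.g.\ Corollary \ref{corDifferentSVfromHecke}.

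On the \(i\)-part, the cohomology is a \(\sigma\)-locally analytic parabolic induction from \(Q=\mrm{Stab}(\infty_{\sigma})\) of a smooth \(Q\)-module \(M^{\sm}\). Its Hecke support equals that of \(M^{\sm}\), which is the cohomology over \(\pi_{\HT}^{-1}(\infty_{\sigma}\times\cdots)\) of \(\dR_{\{\sigma\}}\) applied to the smooth complex.

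To control \(M^{\sm}\), we rerun the same excision for the purely smooth complex \(\dR^{\alg}_{\{\sigma\}\cup I_{0},J_{0}}\) built from \(\mO^{\sm}\) and the theta operators. Its global cohomology is \(\varinjlim_{K_{p}}\) of partial algebraic de Rham cohomology at finite level. That cohomology is classical via the Hodge-to-de Rham/BGG spectral sequence over \(\SGTKTo{T}{K}\), which is classical by the base-case argument. The \(j\)-part of this second excision is again classical by the inductive Jacquet--Langlands step. The fiber sequence then forces the \(i\)-part, hence \(M^{\sm}\), to be classical.

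\textbf{Main obstacle.} I expect the hardest step to be the identification on the \(j\)-part: that the Jacquet--Langlands isomorphism intertwines \(d_{\sigma}\) with a flag-variety operator and matches the de Rham \(G^{c}\)-torsors. I would attack it by descending to the period domains, Theorem \ref{thmLAstrucGMHTperIntro}, and computing there explicitly.
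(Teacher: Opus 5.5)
Your outline is the one sketched in the introduction for quadratic $F$, and it closes when $|I|=1$. For $|I|\ge 2$, however, your treatment of the closed piece has a genuine gap.

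If you excise only in the $\sigma$-coordinate, the restriction of $\dR^{\alg}_{I,J}$ over $\mP^{1}(F_{\p})^{\dagger}\times\Fl_{G_{T},\mu_{T},I\setminus\{\sigma\}}$ is not the parabolic induction of a \emph{smooth} $Q$-module. The directions $\tau\in I\setminus\{\sigma\}$ still carry $d_{\tau}$ on locally analytic functions, and there is no $\bar{d}_{\tau}$ to make them smooth. Concretely, by Lemma \ref{lemKunnthEasyPushForwToFl} the pushforward of $\dR^{\alg}_{I,J}$ to $\Fl^{\dR}_{G_{T},\mu_{T},I}$ is $A\otimes V_{J}\otimes(\beta_{\Fl,T,I})_{*}\omega^{(\Bbbk'|_{I},w')}$. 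Here $A$ is the pushforward of your purely smooth complex $\dR^{\alg}_{I}(\omega^{\kw,\sm}_{\SGTKp{T}})$, which up to $V_{T^{c}}$ is also the pushforward of $\dR^{\alg}_{I,T^{c}}$ (Example \ref{egdRITc}). On your closed piece, the last factor is harmless in the $\sigma$-direction. In the remaining directions, however, it is the non-constant sheaf $(\beta_{\Fl,T,I\setminus\{\sigma\}})_{*}\omega^{(\Bbbk'|_{I\setminus\{\sigma\}},w')}$ on $\Fl^{\dR}_{G_{T},\mu_{T},I\setminus\{\sigma\}}$. Global sections on that stack are not conservative: the pushforward of a line bundle with vanishing cohomology on a $\mP^{1}$-factor is a nonzero object with vanishing $R\Gamma$. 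Your second excision only gives classicality of $R\Gamma$ of the closed piece of $A$. That does not imply classicality of $R\Gamma$ of the closed piece of $\dR^{\alg}_{I,J}$. For that you would need $A$ to be classical \emph{as a sheaf} on that piece, and a global inductive statement cannot provide this.

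The paper repairs this in two ways.
\begin{itemize}
\item It strengthens the induction to Proposition \ref{propClassicalityCohoStronger}: classicality of $(\pi^{\pdR}_{\HT,T,I\setminus J})_{*}\dR^{\alg}_{I,J}$ in $\QCoh(\Fl^{\dR}_{G_{T},\mu_{T},I\setminus J}/G_{T}(\QQ_{p})^{\la})$. This condition is local on the flag variety and stable under tensoring.
\item It excises in all $I$-coordinates at once. The $I_{0}$-basic loci, for $I_{0}\subset\Sigma_{\infty/\p}\cap I$ with $1\le\#I_{0}\le 2$, cover the complement of $V_{T,I}=\prod_{\p}\mP^{1}(F_{\p})$. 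Pairs are needed to cut $\mP^{1}(F_{\p})^{2}$ down to the diagonal (Lemma \ref{lemNoEmptyBasicLocus}). On each such locus, Theorem \ref{thmJacquetLanglandsLA} together with the sheaf-level hypothesis for $T\coprod I_{0}$ gives classicality, hence classicality on the union by locality.
\end{itemize}
Over $V_{T,I}^{\dagger}$, a single $G_{T}(\QQ_{p})$-orbit, $\dR_{I,J}$ and $\dR_{I,T^{c}}$ differ only by vector spaces with trivial Hecke action. Moreover $q_{*}$ is conservative because $(V_{T,I}^{\dagger})^{\dR}$ is affine. So your reverse-engineering step goes through there.

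If you want to keep your single-coordinate excision, it seems enough to compare with $\dR^{\alg}_{I,J\cup\{\sigma\}}$ for an auxiliary $k'_{\sigma}\le 0$, rather than with the purely smooth complex. This complex differs from $\dR^{\alg}_{I,J}$ only in the $\sigma$-factor. Its global cohomology is classical by your induction after filtering away $d_{\sigma}$. Its open part is classical after filtering away $\bar{d}_{\sigma}$ and applying Jacquet--Langlands.

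Two smaller points.
\begin{itemize}
\item You must first reduce to $I\cap J=\emptyset$ by the Koszul filtration. Theorem \ref{thmJacquetLanglandsLA} allows no $\bar{d}$ in the transferred direction, so you cannot simply drop $\bar{d}_{\sigma}$ by setting $J_{0}=J\setminus\{\sigma\}$.
\item No comparison through Corollary \ref{corDifferentSVfromHecke} is needed, since classicality is always measured against Hilbert modular forms. The classical Jacquet--Langlands correspondence enters only in the base case for $T\neq\emptyset$.
\end{itemize}
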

The proof of this theorem will be given in \S
\ref{subsecProofOfClassicality}.
\subsubsection{Analytic partial de Rham complexes}
We will prove a locally analytic Jacquet-Langlands correspondence that relates \( \dR_{I,J}^{\alg}(\nabla^{\kw,(\Bbbk',w')}_{\SGTKp{T}}) \) for different \( T \). For this purpose, we actually have to work over the analytic de Rham stacks rather than the algebraic ones.
\begin{dfn}[Analytic partial de Rham complexes on flag varieties]---

(1)
For \( J \)
and \( (\Bbbk',w') \)
as in Definition \ref{dfnDRComplexNotation} (3), we define \[
\bar{\dR}_{J}(\omega^{(\Bbbk',w')}_{\FLT{T}}):=(g^{\hp}_{\Fl,T})_{*}(\bar{\dR}_{J}^{\alg}(\omega^{(\Bbbk',w')}_{\FLT{T}})),
\] where
\( g^{\hp}_{\Fl,T}:\FLT{T}^{\pdR/\CC_{p},\hp}\to \FLT{T}^{\dR/\CC_{p}} \) is as in \cite[Lemma \ref{1-lemDiffOpAlgvsAn}]{Jiang2026Shla}.

(2) For \( I \)
and \( \kw \) as in Definition \ref{dfnDRComplexNotation} (1), we define similarly \[
\dR_{I}(\omega^{(\Bbbk,w)}_{\FLT{T}^{\std}}):=(g^{\hp}_{\Fl^{\std},T})_{*}(\dR_{I}^{\alg}(\omega^{(\Bbbk,w)}_{\FLT{T}^{\std}}))\in \QCoh(G^{c,\an}\backslash \FLT{T}^{\std,\dR/\CC_{p}}),
\] where
\( g^{\hp}_{\Fl^{\std},T}:G^{c,\an}\backslash \FLT{T}^{\std,\pdR/\CC_{p},\hp}\to G^{c,\an}\backslash \FLT{T}^{\std,\dR/\CC_{p}} \) is as in \cite[Lemma \ref{1-lemDiffOpAlgvsAn}]{Jiang2026Shla}.
\end{dfn}
\begin{lem}\label{lemAnotherKunnethType}
Let \( (I,J,\kw,(\Bbbk',w')) \)
be as in Definition \ref{dfnDRComplexNotation} (4). Then the natural morphism \[
\dR_{I}^{\alg}(\omega^{\kw,\sm}_{\SGTKp{T}})\otimes (\pi_{\HT,T}^{\pdR})^{*}(\bar{\dR}_{J}(\omega^{(\Bbbk',w')}_{\FLT{T}}))\to
(p_{1,T})_{*}\dR_{I,J}^{\alg}(\nabla^{\kw,(\Bbbk',w')}_{\SGTKp{T}})
\] is an isomorphism, where \( p_{1,T} \) is as in the Cartesian diagram \[
\begin{tikzcd}
(\SGTKpTo{T}^{\la,0})^{\pdR/\CC_{p},\hpp}_{\log}\arrow[r,"p_{1,T}"]\arrow[d,"p_{2,T}"]
& \SGTKpTo{T}_{\log}^{\pdR/\CC_{p},\hp,\sm}\arrow[d,"\pi_{\HT,T}^{\pdR}"]\\
\FLT{T}^{\pdR/\CC_{p},\hp}
\arrow[r,"g_{\Fl,T}^{\hp}"]
&\FLT{T}^{\dR/\CC_{p}}.
\end{tikzcd}
\]
\end{lem}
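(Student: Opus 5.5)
The plan is to reduce the statement to a base-change/projection formula along the Cartesian square, after unwinding the definition of \(\dR_{I,J}^{\alg}\). By Definition \ref{dfnDRComplexNotation} (4), \(\dR_{I,J}^{\alg}(\nabla^{\kw,(\Bbbk',w')}_{\SGTKp{T}})\) is the pull-back along \(\pi^{\pdR,\hpp}_{\GM\HT,T}\) of the external product \(\dR_{I}^{\alg}(\omega^{\kw,\sm}_{\SGTKp{T}})\boxtimes\bar{\dR}_{J}^{\alg}(\omega^{(\Bbbk',w')}_{\FLT{T}})\). Using the factorisation of the period maps in the diagram of the set-up (the top square with \(h^{\la,0,++}_{T}\) and \(\pi^{\la,0}_{\HT,T}\times\pi^{\la,0}_{\sm,T}\)) together with the Cartesian square in the statement (which I take from \cite[Lemma \ref{1-lemKunnFlSm}]{Jiang2026Shla}, resp. the ``\'etaleness'' of \(\pi^{\la,0}_{\sm}\times\pi^{\la,0}_{\HT}\)), I would first rewrite this as
\[
\dR_{I,J}^{\alg}(\nabla^{\kw,(\Bbbk',w')}_{\SGTKp{T}})\cong p_{1,T}^{*}\dR_{I}^{\alg}(\omega^{\kw,\sm}_{\SGTKp{T}})\otimes p_{2,T}^{*}\bar{\dR}_{J}^{\alg}(\omega^{(\Bbbk',w')}_{\FLT{T}}).
\]
This is essentially formal once one knows the stack \((\SGTKpTo{T}^{\la,0})^{\pdR/\CC_{p},\hpp}_{\log}\) is the fibre product \(\SGTKpTo{T}^{\pdR/\CC_{p},\hp,\sm}_{\log}\times_{\FLT{T}^{\dR/\CC_{p}}}\FLT{T}^{\pdR/\CC_{p},\hp}\).

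Next I would apply the projection formula for \(p_{1,T}\):
\[
(p_{1,T})_{*}\bigl(p_{1,T}^{*}A\otimes p_{2,T}^{*}B\bigr)\cong A\otimes (p_{1,T})_{*}p_{2,T}^{*}B,
\]
and then base change in the Cartesian square, \((p_{1,T})_{*}p_{2,T}^{*}B\cong(\pi^{\pdR}_{\HT,T})^{*}(g^{\hp}_{\Fl,T})_{*}B\). With \(B=\bar{\dR}_{J}^{\alg}(\omega^{(\Bbbk',w')}_{\FLT{T}})\), the right-hand side is \((\pi^{\pdR}_{\HT,T})^{*}\bar{\dR}_{J}(\omega^{(\Bbbk',w')}_{\FLT{T}})\) by definition, giving the claim. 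It remains to check that the resulting isomorphism agrees with the natural map in the statement, which is the adjunction-induced one; this is a routine compatibility of the projection and base-change maps with the counit.

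The main obstacle is justifying the projection formula and base change for \(p_{1,T}\), i.e. that \(g^{\hp}_{\Fl,T}\) (hence \(p_{1,T}\)) is a morphism for which \(*\)-pushforward satisfies both. I would deduce this from \cite[Lemma \ref{1-lemDiffOpAlgvsAn}]{Jiang2026Shla}. There \(g^{\hp}\) is a quotient by a formal group, the completion of the filtered de Rham stack, so the pushforward is given by a Koszul/cohomology-of-a-group-object construction that is compatible with base change. It is also ``cohomologically proper'' enough that the projection formula holds for all (not only dualizable) quasicoherent sheaves.

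If the general formalism is not available, I would instead verify the claim using the explicit complexes. Both sides are finite complexes whose terms are twists of line bundles \((h^{\hp}_{\Fl,T})_{*}\omega^{(s_{J'}\cdot\Bbbk',-w')}_{\FLT{T}}\), and the filtration on \([\hat{\mA}^{1}/\GG_{m}]\) gives a complete filtration. Checking the map on graded pieces then reduces to the Künneth isomorphism \cite[Lemma \ref{1-lemKunnFlSm}]{Jiang2026Shla} for line bundles, and one concludes by completeness.
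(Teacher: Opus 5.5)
Your first step is fine (it is formal from Definition \ref{dfnDRComplexNotation} (4)), but the main route then depends entirely on the step you flag yourself, and that step is not established. You need the projection formula for \(p_{1,T}\) applied to \(A=\dR_{I}^{\alg}(\omega^{\kw,\sm}_{\SGTKp{T}})\), and base change for \((g^{\hp}_{\Fl,T})_*\). The object \(A\) is not dualizable in general. Its terms are \((h^{\sm,+}_{T})_*\omega^{(s_{I'}\cdot\Bbbk,w),\sm}_{\SGTKp{T}}\), which are pushforwards along \(h^{\sm,+}_{T}\) (induced-module type objects), not vector bundles on \(\SGTKpTo{T}^{\pdR/\CC_{p},\hp,\sm}_{\log}\). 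So the formal projection formula does not apply. Nothing available shows that \(g^{\hp}_{\Fl,T}\), hence \(p_{1,T}\), satisfies the projection formula and base change for arbitrary quasicoherent sheaves. Saying it is a quotient by a formal group and ``cohomologically proper enough'' is not a proof.

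The paper avoids both problems as follows.
\begin{itemize}
\item Using the finiteness of the complexes, it reduces to a single term \(I=J=\emptyset\). This means \(A=(h^{\sm,+}_{T})_*\mathcal{L}\) and \(B=(h^{\hp}_{\Fl,T})_*\mathcal{L}'\), with \(\mathcal{L}\), \(\mathcal{L}'\) vector bundles on \(\SGTKpTo{T}^{\sm}\) and \(\FLT{T}\).
\item The key observation is \(p_{1,T}\circ h^{\la,0,\hpp}_{T}\cong h^{\sm,+}_{T}\circ \pi^{\la,0}_{\sm,T}\). It rewrites \((p_{1,T})_*\) of the term as \((h^{\sm,+}_{T})_*(\pi^{\la,0}_{\sm,T})_*\bigl((\pi^{\la,0}_{\sm,T})^*\mathcal{L}\otimes(\pi^{\la,0}_{\HT,T})^*\mathcal{L}'\bigr)\).
\item The projection formula is now needed only along \(\pi^{\la,0}_{\sm,T}\), and only for the vector bundle \(\mathcal{L}\), so it is formal.
\item Base change is done in the \emph{underlying} Cartesian square \(\SGTKpTo{T}^{\la,0}\cong\SGTKpTo{T}^{\sm}\times_{\FLT{T}^{\dR}}\FLT{T}\) of \cite{Jiang2026Shla}, along \(\beta_{\Fl,T}\colon\FLT{T}\to\FLT{T}^{\dR}\), to which proper base change applies. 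This gives \((\pi^{\sm}_{\HT,T})^*(\beta_{\Fl,T})_*\mathcal{L}'\).
\item Only then is the Künneth lemma of \cite{Jiang2026Shla} used, to pull this factor out of \((h^{\sm,+}_{T})_*\).
\end{itemize}
So base change is never performed along \(g^{\hp}_{\Fl,T}\), and no projection formula for non-dualizable objects is needed.

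Your fallback, working term by term, points the right way, but as written it omits the essential ingredient. Künneth for line bundles does not by itself compute \((p_{1,T})_*\) of a term: you also need the Cartesian square at the level of \(\SGTKpTo{T}^{\la,0}\) and proper base change along \(\beta_{\Fl,T}\). The completeness argument for the \([\hat{\mA}^{1}/\GG_{m}]\)-filtration is unnecessary, since the stupid filtration of these finite complexes suffices.
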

\begin{proof}
By the complexes in Definition \ref{dfnDRComplexNotation}, it suffices to consider the case where \( I=J=\emptyset \). Concretely, since \( p_{1,T}\circ h^{\la,0,\hpp}_{T}\cong h^{\sm,+}_{T}\circ \pi^{\la,0}_{\sm,T} \), we are left to show that for \( \mcal{L}\in\Vect(\SGTKpTo{T}^{\sm}) \) and \( \mcal{L}'\in\Vect(\FLT{T}) \), 
 \[
(h^{\sm,+}_{T}\circ \pi^{\la,0}_{\sm,T})_{*}\left((\pi^{\la,0}_{\sm,T})^{*}(\mcal{L})\otimes (\pi^{\la,0}_{\HT,T})^{*}(\mcal{L}')\right)
\cong (h^{\sm,+}_{T})_{*}(\mcal{L})\otimes (\pi_{\HT,T}^{\pdR})^{*}(\beta_{\Fl,*}(\mcal{L}')).
\]
For this, the LHS is isomorphic to \( (h^{\sm,+}_{T})_{*}(\mcal{L}\otimes (\pi^{\la,0}_{\sm,T})_{*}\circ(\pi^{\la,0}_{\HT,T})^{*}(\mcal{L}')) \) by \cite[Lemma \ref{1-lemProjectionFormuaForPerf}]{Jiang2026Shla}. 
This is in turn isomorphic to \( (h^{\sm,+}_{T})_{*}(\mcal{L}\otimes (\pi_{\HT,T}^{\sm})^{*}\circ(\beta_{\Fl,T})_{*}(\mcal{L}')) \)
by proper base change (\cite[Lemma \ref{1-lemSmBaseChange} (2)]{Jiang2026Shla}), which is applicable by Theorem \ref{1-thmAnDRStackGeneral} (3) and Proposition \ref{1-propCartesianLA0} loc. cit. We then conclude by \cite[Lemma \ref{1-lemKunnFlSm} (3)]{Jiang2026Shla}.
\end{proof}
\begin{eg}\label{egdRITc}
Let \( (I,J=T^{c},\kw,(\Bbbk',w')) \)
be as in Definition \ref{dfnDRComplexNotation} (4).
By Corollary \ref{1-corFilterDualBGG} and Proposition \ref{1-propAnToAlgDRJuan} (2) of \cite{Jiang2026Shla}, we see that \(
\bar{\dR}_{T^{c}}(\omega^{(\Bbbk',w')}_{\FLT{T}})
\cong
V_{T^{c}}^{(\Bbbk',w')}\otimes \mO_{\FLT{T}^{\dR/\CC_{p}}}
\).
 Thus by Lemma \ref{lemAnotherKunnethType}, \[
(p_{1,T})_{*}
\dR_{I,T^{c}}^{\alg}(\nabla^{\kw,(\Bbbk',w')}_{\SGTKp{T}})
\cong \dR_{I}^{\alg}(\omega^{\kw,\sm}_{\SGTKp{T}})\otimes V^{(\Bbbk',w')}_{T^{c}}.
\] 
\end{eg}
Away from the toroidal boundaries, we can further push forward to \( \SGTKp{T}^{\dR} \).
Note that the boundaries only appear when \( T=\emptyset \). 
\begin{dfn}[Analytic partial de Rham complexes on \( \SGTKp{T}^{\dR} \)]
Let \( (I,J,\kw,(\Bbbk',w')) \) be
as in Definition \ref{dfnDRComplexNotation} (4). 
We define \[
\dR_{I,J}(\nabla^{\kw,(\Bbbk',w')}_{\SGTKp{T}}):=(g^{\sm,\hp}_{T})_{*}\left(((p_{1,T})_{*}
	\dR_{I,J}^{\alg}(\nabla^{\kw,(\Bbbk',w')}_{\SGTKp{T}}))|_{\SGTKp{T}^{\pdR/\CC_{p},\hp,\sm}}
\right),
\] where \( g^{\sm,\hp}_{T}:\SGTKp{T}^{\pdR/\CC_{p},\hp,\sm}\to \SGTKp{T}^{\dR} \). 
\end{dfn}

The partial de Rham complex has a reinterpretation in terms of Theorem \ref{1-thmLAstrucGMHTper} and Corollary \ref{1-corCartesianFlandSMGeneral} of \cite{Jiang2026Shla}:
\begin{dfn}
Consider the Cartesian diagram of solid stacks \[
\begin{tikzcd}
\SGTKp{T}^{\la,0}\arrow[r,"\pi_{\GM\HT,T}^{\la,0}"]\arrow[d,"\beta^{\la,0}_{T}"] & \Per_{G_{T},\mu_{T}}^{\dR/(\Fl_{G_{T},\mu_{T}}^{\std}\times_{\CC_{p}} \FLT{T})}/(G^{c,\an},*_{\GM,T})\arrow[d]\arrow[r,"\pi^{per}_{\GM,T}\times \pi^{per}_{\HT,T}"] & \Fl_{G_{T},\mu_{T}}^{\std}/G^{c,\an}\times_{\CC_{p}} \FLT{T}\arrow[d,"\beta_{\Fl^{\std},T}\times \beta_{\Fl,T}"] \\
\SGTKp{T}^{\dR}\arrow[r,"\pi_{\GM\HT,T}^{\dR}"] & \Per_{G_{T},\mu_{T}}^{\dR}/(G^{c,\an},*_{\GM,T})
\arrow[r,"\pi^{per,\dR}_{\GM,T}\times \pi^{per,\dR}_{\HT,T}"] & \Fl_{G_{T},\mu_{T}}^{\std,\dR}/G^{c,\an}\times_{\bar{\QQ}_{p}} \FLT{T}^{\dR}
\end{tikzcd}
\] 
in \cite[Corollary \ref{1-corCartesianFlandSMGeneral}]{Jiang2026Shla}.
Let \( (I,J,\kw,(\Bbbk',w')) \) be
as in Definition \ref{dfnDRComplexNotation} (4). We define \[
\dR_{I,J}(\nabla^{\kw,(\Bbbk',w')}_{\PerT{T}}):=(\pi^{per,\dR}_{\GM,T})^{*}(\dR_{I}(\omega^{\kw}_{\FLT{T}^{\std}}))\otimes (\pi^{per,\dR}_{\HT,T})^{*}(\bar{\dR}_{J}(\omega^{(\Bbbk',w')}_{\FLT{T}})).
\]
\end{dfn}
\begin{lem}\label{lemPartialDRasPullBack}
Let \( (I,J,\kw,(\Bbbk',w')) \) be
as in Definition \ref{dfnDRComplexNotation} (4).
Then we have a natural isomorphism \[
\dR_{I,J}(\nabla^{\kw,(\Bbbk',w')}_{\SGTKp{T}})\cong (\pi^{\dR}_{\GM\HT,T})^{*}\left(
	\dR_{I,J}(\nabla^{\kw,(\Bbbk',w')}_{\PerT{T}})
\right).
\]
\end{lem}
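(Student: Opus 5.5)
Both sides are pulled back from the two period factors, so the plan is to reduce the statement to two base-change identities along the Cartesian diagram, one for the Hodge-Tate factor and one for the Grothendieck-Messing factor, and then combine them by Künneth. Unwinding the definitions, the left side is
\[(g^{\sm,\hp}_{T})_{*}\bigl((p_{1,T})_{*}\dR_{I,J}^{\alg}(\nabla^{\kw,(\Bbbk',w')}_{\SGTKp{T}})\bigr),\]
and by Lemma \ref{lemAnotherKunnethType} the inner pushforward is
\[\dR_{I}^{\alg}(\omega^{\kw,\sm}_{\SGTKp{T}})\otimes(\pi^{\pdR}_{\HT,T})^{*}\bigl(\bar{\dR}_{J}(\omega^{(\Bbbk',w')}_{\FLT{T}})\bigr).\]
The right side is the tensor product of the pullbacks of \(\dR_{I}(\omega^{\kw}_{\FLT{T}^{\std}})\) along \(\pi^{per,\dR}_{\GM,T}\circ\pi^{\dR}_{\GM\HT,T}\) and of \(\bar{\dR}_{J}(\omega^{(\Bbbk',w')}_{\FLT{T}})\) along \(\pi^{per,\dR}_{\HT,T}\circ\pi^{\dR}_{\GM\HT,T}\), which is the analytic Hodge-Tate period map \(\SGTKp{T}^{\dR}\to\FLT{T}^{\dR}\).

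The first step handles the Hodge-Tate factor. On the algebraic stack, \(\pi^{\pdR}_{\HT,T}\) factors as \(g^{\sm,\hp}_{T}\) followed by the analytic Hodge-Tate map. The projection formula \cite[Lemma \ref{1-lemProjectionFormuaForPerf}]{Jiang2026Shla} then gives
\[(g^{\sm,\hp}_{T})_{*}\bigl(A\otimes(g^{\sm,\hp}_{T})^{*}B\bigr)\cong(g^{\sm,\hp}_{T})_{*}A\otimes B,\]
and applying it pulls the factor \(\bar{\dR}_{J}\) out of the pushforward. This is legitimate provided \(\bar{\dR}_{J}(\omega^{(\Bbbk',w')}_{\FLT{T}})\) is perfect, or at least dualizable, on \(\FLT{T}^{\dR}\). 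For \(J=T^{c}\) this holds by Example \ref{egdRITc}. In general it follows from the finite filtration by terms \(\beta_{\Fl,*}\) of line bundles, together with \cite[Proposition \ref{1-propAnToAlgDRJuan}]{Jiang2026Shla}.

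The second step, which I expect to be the main obstacle, is to show
\[(g^{\sm,\hp}_{T})_{*}\dR_{I}^{\alg}(\omega^{\kw,\sm}_{\SGTKp{T}})\cong(\pi^{per,\dR}_{\GM,T}\circ\pi^{\dR}_{\GM\HT,T})^{*}\dR_{I}(\omega^{\kw}_{\FLT{T}^{\std}}).\]
My route is as follows. The de Rham torsor \(\bar{\mcal{G}}^{c,+}_{\mu_{T},\dR}\) descends from the Grothendieck-Messing period map (\cite[Lemma \ref{1-lemDRtorsorDescends}]{Jiang2026Shla}), and the BGG filtrations on \(\SGTKp{T}\) and on \(\FLT{T}^{\std}\) are built by the same construction (\cite[Proposition \ref{1-propBGGFilSVsm}, Corollary \ref{1-corPartialBGGDual}]{Jiang2026Shla}). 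Hence \(\dR_{I}^{\alg}(\omega^{\kw,\sm}_{\SGTKp{T}})\) is the pullback of \(\dR_{I}^{\alg}(\omega^{\kw}_{\FLT{T}^{\std}})\) along the algebraic Grothendieck-Messing map. What then remains is a base-change statement: pushforward along \(g^{\hp}\) must commute with this pullback. I would obtain it by base change along the square formed by \(g^{\sm,\hp}_{T}\), \(g^{\hp}_{\Fl^{\std},T}\) and the two Grothendieck-Messing maps. The key input is \cite[Theorem \ref{1-thmLAstrucGMHTper}]{Jiang2026Shla}, i.e. the Cartesian property of the Grothendieck-Messing-Hodge-Tate period maps, together with \cite[Lemma \ref{1-lemSmBaseChange}]{Jiang2026Shla}. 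The difficulty is verifying that the filtered, completed pushforward is compatible with base change. If a direct argument fails, I would reduce to the graded pieces, which are pushforwards of line bundles, and invoke \cite[Lemma \ref{1-lemDiffOpAlgvsAn}]{Jiang2026Shla} for those.

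Finally, I would combine the two identifications to obtain the isomorphism of the statement. Naturality holds because each step consists of canonical projection-formula and base-change maps.
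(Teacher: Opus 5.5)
Your decomposition is the same as the paper's. You treat the Grothendieck--Messing factor by base change along the square formed by \(g^{\sm,\hp}_{T}\), \(g^{\hp}_{\Fl^{\std},T}\) and the two Grothendieck--Messing maps, and the Hodge--Tate factor by a projection-type identity. However, the justification of your first step does not hold.

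You invoke the projection formula for perfect objects along \(g^{\sm,\hp}_{T}\). This needs \(\bar{\dR}_{J}(\omega^{(\Bbbk',w')}_{\FLT{T}})\) to be dualizable on \(\FLT{T}^{\dR}\), and that fails for every \(J\neq T^{c}\). Write \(J_{1}:=T^{c}\setminus J\). The decomposition used just before Lemma \ref{lemKunnthEasyPushForwToFl} gives \(\bar{\dR}_{J}(\omega^{(\Bbbk',w')}_{\FLT{T}})\cong V^{(\Bbbk'|_{J},w')}_{J}\otimes p_{J_{1}}^{*}(\beta_{\Fl,T,J_{1}})_{*}(\omega^{(\Bbbk'|_{J_{1}},w')}_{\FLTT{T}{J_{1}^{c}}})\).

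Now \(\beta_{*}\) of a line bundle is never dualizable. Pullback along \(\beta\) is symmetric monoidal, hence preserves dualizability. But \(\beta^{*}\beta_{*}\) of a line bundle is its pushforward from the overconvergent germ of the diagonal, an \(\mO\)-module of infinite rank, whereas dualizable \(\mO\)-modules are perfect.

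Your filtration argument cannot repair this, because the graded pieces are themselves \(\beta_{*}\)'s of line bundles. The case \(J=T^{c}\) is special: there the total object happens to be a trivial bundle even though its graded pieces are not dualizable.

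What you actually need is the projection formula along \(g^{\sm,\hp}_{T}\) for a non-dualizable object, namely
\[
(g^{\sm,\hp}_{T})_{*}\bigl(\dR^{\alg}_{I}(\omega^{\kw,\sm}_{\SGTKp{T}})\bigr)\otimes(\pi^{\dR}_{\HT,T})^{*}\bigl(\bar{\dR}_{J}(\omega^{(\Bbbk',w')}_{\FLT{T}})\bigr)\cong(g^{\la,0,\hpp}_{T})_{*}\bigl(\dR^{\alg}_{I,J}(\nabla^{\kw,(\Bbbk',w')}_{\SGTKp{T}})\bigr).
\]
The paper takes exactly this from \cite[Lemma \ref{1-lemDiffOpAlgvsAn} (2)]{Jiang2026Shla}. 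Compare the proof of Lemma \ref{lemAnotherKunnethType}: there the perfect projection formula is used only for the vector bundle \(\mcal{L}\), and the factor \(\beta_{\Fl,*}(\mcal{L}')\) is handled separately by \cite[Lemma \ref{1-lemKunnFlSm} (3)]{Jiang2026Shla}. Without such an input, or a proof that \((g^{\sm,\hp}_{T})_{*}\) satisfies the projection formula for arbitrary objects, your first step is unproved.

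A smaller point concerns your second step. The square you describe is the right one, but its Cartesianness is not \cite[Theorem \ref{1-thmLAstrucGMHTper}]{Jiang2026Shla}. That theorem concerns \(\SGTKp{T}^{\la}\) over \(\SGTKp{T}^{\dR}\) and does not involve the algebraic de Rham stack \(\SGTKp{T}^{\pdR/\CC_{p},\hp,\sm}\).

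The paper obtains the Cartesian square by combining algebraic and analytic Grothendieck--Messing theory, \cite[Propositions \ref{1-propAlgGMtheory} and \ref{1-propAnalyticGMTheory}]{Jiang2026Shla}. The base-change map \((\pi^{\dR}_{\GM,T})^{*}\dR_{I}(\omega^{\kw}_{\FLT{T}^{\std}})\to(g^{\sm,\hp}_{T})_{*}\dR^{\alg}_{I}(\omega^{\kw,\sm}_{\SGTKp{T}})\) is then an isomorphism by \cite[Proposition \ref{1-propAnToAlgDRJuan} and Theorem \ref{1-thmAnDRStackGeneral} (6)]{Jiang2026Shla} together with proper base change \cite[Lemma \ref{1-lemSmBaseChange} (2)]{Jiang2026Shla}. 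No reduction to graded pieces is needed.
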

\begin{proof}
We will construct a natural isomorphism from the RHS to the LHS.
The composition \( (\pi_{\GM,T}^{per,\dR}\times \pi_{\HT,T}^{per,\dR})\circ \pi^{\dR}_{\GM\HT,T} \) factors as \[
\SGTKp{T}^{\dR}\xrightarrow{1\times \pi_{\HT,T}^{\dR}}
\SGTKp{T}^{\dR}\times \FLT{T}^{\dR}
\xrightarrow{\pi_{\GM,T}^{\dR}\times 1}\FLT{T}^{\std,\dR}/G^{c,\an}\times \FLT{T}^{\dR}.
\]
Consider the Cartesian diagram \[
\begin{tikzcd}
\SGTKp{T}^{\pdR/\CC_{p},\sm,+}
\arrow[r,"\pi_{\GM,T}^{\pdR}"]\arrow[d,"g^{\sm}_{T}"]
& \FLT{T}^{\std,\pdR/\CC_{p}}/G^{c,\an}\arrow[d,"g_{\Fl^{\std},T}"]
\\
\SGTKp{T}^{\dR}
\arrow[r,"\pi_{\GM,T}^{\dR}"]
& \FLT{T}^{\std,\dR}/G^{c,\an}.
\end{tikzcd}
\] obtained by combining Proposition \ref{1-propAlgGMtheory} and Proposition \ref{1-propAnalyticGMTheory} of \cite{Jiang2026Shla}. This gives a natural morphism \[ (\pi_{\GM,T}^{\dR})^{*}(\dR_{I}(\omega^{\kw}_{\FLT{T}}))\to (g^{\sm,\hp}_{T})_{*}(\dR^{\alg}_{I}(\omega^{\kw,\sm}_{\SGTKp{T}})), \]
which is an isomorphism by \cite[Proposition \ref{1-propAnToAlgDRJuan}, Theorem \ref{1-thmAnDRStackGeneral} (6)]{Jiang2026Shla} and the proper base change (\cite[Lemma \ref{1-lemSmBaseChange} (2)]{Jiang2026Shla}).

Thus we have natural morphisms 
\begin{align*}
 (\pi^{\dR}_{\GM\HT,T})^{*}\left(
	\dR_{I,J}(\nabla^{\kw,(\Bbbk',w')}_{\PerT{T}})
\right)&\cong (g^{\sm,\hp}_{T})_{*}(\dR^{\alg}_{I}(\omega^{\kw,\sm}_{\SGTKp{T}}))\otimes (\pi^{\dR}_{\HT,T})^{*}(\bar{\dR}_{J}(\omega^{(\Bbbk',w')}_{\FLT{T}}))
\\
&\cong (g^{\la,0,\hpp}_{T})_{*}(\dR_{I,J}^{\alg}(\nabla^{\kw,(\Bbbk',w')}_{\SGTKp{T}}))\cong 
\dR_{I,J}(\nabla^{\kw,(\Bbbk',w')}_{\SGTKp{T}}),
\end{align*}
where the second isomorphism is given by \cite[Lemma \ref{1-lemDiffOpAlgvsAn} (2)]{Jiang2026Shla}.
\end{proof}

\subsection{Locally analytic Jacquet-Langlands correspondence}
\label{subsecCompareUnderSpaces}
We fix \(T\subset \Sigma_{\infty}\)
and \(I\subset T^{c}\) non-empty. 
Let us denote \(T': =T\coprod I\subset \Sigma_{\infty}\). We will define a locally analytic Jacquet-Langlands correspondence relating the partial de Rham cohomology over \( \SGTKpTo{T} \)
and \( \SGTKpTo{T'} \) (Theorem \ref{thmJacquetLanglandsLA}).

\subsubsection{For analytic de Rham stacks}
Recall from
Definition \ref{dfnIbasicLocus} the notion of the \(I\)-basic locus \(\FLT{T}^{I-\bc}\subset \FLT{T}\). 
\begin{notation}
Denote \[\SGTKp{T}^{I-\bc}:=\pi_{\HT,T}^{-1}({\Fl_{G_{T},\mu_{T}}^{I-\bc}}),\] which is an open subspace of \(\SGTKpTo{T}\). 

Since for any \(I\ne \emptyset\),
\(I\)-basic locus does not meet the boundary by Lemma \ref{lemNoEmptyBasicLocus} and Proposition \ref{propGoodReducForNonOrd}, \( \SGTKp{T}^{I-\bc} \)
is an open subspace of \( \SGTKp{T} \), which is represented by a perfectoid space (\cite[Lemma \ref{1-lemTorCompacAsTateStack} (6)]{Jiang2026Shla}).
\end{notation}


Recall that by Theorem \ref{thmProductFormulaQuate}, there is a natural morphism \[\JL:=\JL_{T\to T'}:\SGTKp{T'}\times_{\CC_{p}}\mcal{M}_{T\to T'}\to \SGTKp{T}^{I-\bc},\]
which is a pro\'etale \(G_{T'}(\QQ_{p})\)-torsor, and which is equivariant for the action of \(G_{T}(\mA_{f})\).

We have the following results, translating the previous results into the setting of Tate stacks:
\begin{thm}\label{thmJLinTateSt}
We will use notation of Definition \ref{1-dfnIncarnationGroup} of \cite{Jiang2026Shla}.

(1)
We have a \( \underline{G_{T}(\QQ_{p})}_{\CC_{p}}\)-equivariant morphism of Tate stacks
\[\JL_{T\to T',K^{p}}:\SGTKp{T'}\times_{\CC_{p}}\mcal{M}_{T\to T'}
\to \SGTKp{T}^{I-\bc},
\] which is a \(\underline{G_{T'}(\QQ_{p})}_{\CC_{p}}\)-torsor. 

Moreover, after taking limit along \( K^{p}\subset G_{T}(\mA_{f}^{p})\cong G_{T'}(\mA_{f}^{p}) \), \( \varprojlim_{K^{p}}\JL_{T\to T',K^{p}} \) is \( G_{T}(\mA_{f}^{p})^{\sm}_{\CC_{p}} \)-equivariant.

(2) Taking 
\((-)^{\dR}\), we have a \(G_{T}(\QQ_{p})^{\sm}_{\bar{\QQ}_{p}}\)-equivariant morphism of Tate stacks over \( \bar{\QQ}_{p} \)
 \[\JL^{\dR}_{T\to T',K^{p}}:\SGTKp{T'}^{\dR}\times_{\bar{\QQ}_{p}}\mcal{M}_{T\to T'}^{\dR}\to \SGTKp{T}^{\dR,I-\bc}\]
which is a 
\(G_{T'}(\QQ_{p})^{\sm}\)-torsor. 

Moreover, after taking limit along \( K^{p}\subset G_{T}(\mA_{f}^{p})\cong G_{T'}(\mA_{f}^{p}) \), \( \varprojlim_{K^{p}}\JL_{T\to T',K^{p}}^{\dR} \) is \( G_{T}(\mA_{f}^{p})^{\sm}_{\bar{\QQ}_{p}} \)-equivariant.

(3) We have the following commutative diagram of Tate stacks: \begin{equation}
	\label{equaJLtransferTateSta}
	\begin{tikzcd}
	\SGTKp{T'}\times_{\CC_{p}}\mcal{M}_{T\to T'}\arrow[r]\arrow[rd,"r_{T'}\times r_{T,I}"']
	&
	(\SGTKp{T'}\times_{\CC_{p}}\mcal{M}_{T\to T'})/(1\subset \underline{G_{T'}(\QQ_{p})})^{\dagger}\arrow[r,"\JL_{T\to T',K^{p}}"]\arrow[d]
	& \SGTKp{T}^{I-\bc}\arrow[d,"r_{T}"]\\
	&
	\SGTKp{T'}^{\dR}\times_{\bar{\QQ}_{p}}\mcal{M}_{T\to T'}^{\dR}\arrow[r,"\JL_{T\to T',K^{p}}^{\dR}"]
	& \SGTKp{T}^{\dR,I-\bc},
\end{tikzcd}\end{equation}
where the square on the right is Cartesian, and 
the maps \(r_{T}\), \(r_{T'}\)
and \(r_{T,I}\)
are affine proper and descendable.
\end{thm}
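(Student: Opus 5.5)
The plan is to deduce all three parts formally from the perfectoid product formula (Theorem \ref{thmProductFormulaQuate}) together with the general properties of the functors from perfectoid spaces to Tate stacks and of the analytic de Rham stack recalled in \cite{Jiang2026Shla} and \cite{AnschützBoscoLeBrasCamargoScholze2025analyticrhamstacksfarguesfontaine}. The order is: first (1), then the top-right part of (3), and finally (2) together with the Cartesian square in (3).

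For (1), Theorem \ref{thmProductFormulaQuate} (1) already gives a $G_{T}(\QQ_{p})\times G_{T'}(\QQ_{p})$-equivariant isomorphism
\[
\SGTKp{T}\times_{\FLTT{T}{I^{c}}}\mcal{M}_{T\to T'}\cong \SGTKp{T'}\times_{\CC_{p}}\mcal{M}_{T\to T'}.
\]
By Lemma \ref{lemRZisAtorsorOverFl}, $\mcal{M}_{T\to T'}\to \FLTT{T}{I^{c}}^{\bc}$ is a pro\'etale $\underline{G_{T'}(\QQ_{p})}$-torsor. Its base change along $\SGTKp{T}^{I-\bc}\to\FLTT{T}{I^{c}}^{\bc}$ therefore exhibits $\JL$ as a pro\'etale $\underline{G_{T'}(\QQ_{p})}$-torsor of perfectoid spaces. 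I would then apply the functor from perfectoid spaces to Tate stacks over $\CC_{p}$. A profinite-set torsor $\underline{G}$ is sent to the torsor under the incarnation $\underline{G}_{\CC_{p}}$ of \cite[Definition \ref{1-dfnIncarnationGroup}]{Jiang2026Shla}, because this functor commutes with fiber products and with pro\'etale quotients by profinite groups. Equivariance in $K^{p}$ comes from the compatibility of towers in Theorem \ref{thmProductFormulaQuate} (1). After passing to the limit, the action of $G_{T}(\mA_{f}^{p})$ is smooth, since it is a colimit of actions at finite prime-to-$p$ levels.

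For (2) and (3), I would use that $(-)^{\dR}$ is a left adjoint that sends products over $\CC_{p}$ to products over $\bar\QQ_{p}$ (up to the Galois descent built into the analytic de Rham stack), and preserves open immersions. Applied to the torsor from (1), it therefore produces a torsor under $(\underline{G_{T'}(\QQ_{p})})^{\dR}$. The key input is the identification $(\underline{G})^{\dR}\cong G^{\sm}$ for a $p$-adic Lie group $G$, which should follow from \cite[Definition \ref{1-dfnIncarnationGroup}]{Jiang2026Shla}: the de Rham stack of a profinite set is the "smooth" incarnation, because a compact open subgroup collapses to its dagger germ at $1$ modulo the identity. The factorization through $(\SGTKp{T'}\times\mcal{M}_{T\to T'})/(1\subset\underline{G_{T'}(\QQ_{p})})^{\dagger}$ in (\ref{equaJLtransferTateSta}) records exactly this. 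To show the right square is Cartesian, I would note that the fiber of $r_{T}$ is computed by the relative de Rham stack. Since $\JL$ is a torsor with smooth quotient relation, the equivalence relation defining $(-)^{\dR}$ pulls back to the product of the de Rham relations on the two factors, modulo the dagger neighbourhood of $1$ in $\underline{G_{T'}(\QQ_{p})}$.

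Finally, affineness, properness and descendability of $r_{T},r_{T'},r_{T,I}$ should come from the general results on perfectoid spaces over $\CC_{p}$, namely \cite[Theorem \ref{1-thmAnDRStackGeneral}]{Jiang2026Shla}, applied to $\SGTKp{T}$, $\SGTKp{T'}$ and $\mcal{M}_{T\to T'}$. The main obstacle I expect is the Cartesian property, i.e. matching the de Rham equivalence relation on the target with that on the torsor, including the $G_{T'}(\QQ_{p})^{\sm}$ structure. I would first verify it \'etale locally, where the torsor trivializes over a compact open subgroup, and then check descendability via the $v$-cover given by $\JL$.
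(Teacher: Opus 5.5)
\textbf{Where your proposal matches the paper.} Your treatment of (1) agrees with the paper. The torsor comes from Theorem \ref{thmProductFormulaQuate}, i.e.\ from base change of Lemma \ref{lemRZisAtorsorOverFl}, and is transported to Tate stacks by the companion's comparison functor. Your treatment of the vertical maps in (3), quoted from \cite[Theorem \ref{1-thmAnDRStackGeneral}]{Jiang2026Shla}, also agrees.

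\textbf{The gap in (2).} You deduce that $\JL^{\dR}$ is a $G_{T'}(\QQ_{p})^{\sm}$-torsor from the assertion that $(-)^{\dR}$ is a left adjoint preserving products. Preservation of (fiber) products is harmless. Since $X^{\dR}(A)=X(A^{\dagger\text{-red}})$ is a precomposition, it commutes with limits, and this gives the action isomorphism. But a torsor must also be an effective epimorphism for the $!$-topology, and this is not formal. There is no general reason for precomposition with $A\mapsto A^{\dagger\text{-red}}$ to send $!$-covers to $!$-covers, since that would require lifting covers of $A^{\dagger\text{-red}}$ to covers of $A$. You give no argument for this surjectivity, and it is exactly the non-formal input in the paper.

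The paper obtains it from \cite[Theorem \ref{1-thmAnDRStackGeneral} (5)]{Jiang2026Shla}: the map $\SGTKp{T}\to\SGTKp{T}^{\dR}$ is a $!$-surjection. Hence the composite $\SGTKp{T'}\times_{\CC_{p}}\mcal{M}_{T\to T'}\to\SGTKp{T}^{I-\bc}\to\SGTKp{T}^{\dR,I-\bc}$ is a $!$-surjection. This composite factors through $\JL^{\dR}$, which is therefore surjective. The paper then applies \cite[Corollary \ref{1-corTorsorDRSmTorsor}]{Jiang2026Shla} to get the $G_{T'}(\QQ_{p})^{\sm}$-torsor structure. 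Your identification of $(\underline{G})^{\dR}$ with $G^{\sm}$ is not the issue; surjectivity is.

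\textbf{The Cartesian square in (3).} You single out the Cartesian property as the main obstacle and propose an \'etale-local comparison of de Rham equivalence relations. This detour is unnecessary. By the definition $G^{\sm}=\underline{G}/(1\subset\underline{G})^{\dagger}$, the map $(\SGTKp{T'}\times_{\CC_{p}}\mcal{M}_{T\to T'})/(1\subset\underline{G_{T'}(\QQ_{p})})^{\dagger}\to\SGTKp{T}^{I-\bc}$ is a $G_{T'}(\QQ_{p})^{\sm}$-torsor. By (2), the bottom row is a $G_{T'}(\QQ_{p})^{\sm}$-torsor. The square commutes equivariantly via the natural transformation $\mathrm{id}\to(-)^{\dR}$. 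An equivariant map between $G^{\sm}$-torsors covering $r_{T}$ is automatically a Cartesian square. So the real difficulty lies in the surjectivity of $\JL^{\dR}$, not in the Cartesian property.
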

\begin{rmk}
We will drop the subscripts sometimes when it does not cause confusion. 
\end{rmk}
\begin{proof}
By \cite[Theorem \ref{1-thmAnDRStackGeneral} (4)]{Jiang2026Shla} and Theorem \ref{thmProductFormulaQuate},
we know that \(\JL_{T\to T',K^{p}}\)
gives a \(\underline{G_{T'}(\QQ_{p})}\)-torsor of Tate stacks.
By \cite[Theorem \ref{1-thmAnDRStackGeneral} (5)]{Jiang2026Shla},
we know that \(\SGTKp{T}\to \SGTKp{T}^{\dR}\)
is a \(!\)-surjection. Now we can apply \cite[Corollary \ref{1-corTorsorDRSmTorsor}]{Jiang2026Shla},
which implies that \(\JL_{T\to T',K^{p}}^{\dR}\)
is a \(G_{T'}(\QQ_{p})^{\sm}_{\bar{\QQ}_{p}}\)-torsor. 
For (3), the big diagram commutes since there is a natural transformation \(\mrm{id}\to (-)^{\dR}\).
The induced map \[(\SGTKp{T'}\times_{\CC_{p}}\mcal{M}_{T\to T'})/(1\subset \underline{G_{T'}(\QQ_{p})})^{\dagger}\to \SGTKp{T}^{I-\bc}
\] is a \(G_{T'}(\QQ_{p})^{\sm}\)-torsor by \cite[Definition \ref{1-dfnIncarnationGroup}]{Jiang2026Shla}, and the right square is \(G_{T'}(\QQ_{p})^{\sm}\)-equivariant. 
The statements about the vertical maps are given by Theorem \ref{1-thmAnDRStackGeneral} (3) and (5) loc. cit.
\end{proof}
\begin{cor}\label{corIdeSMstructureSh}
We have an isomorphism in \( \QCoh(\SGTKp{T'}^{\dR}\times_{\bar{\QQ}_{p}}\mcal{M}_{T\to T'}^{\dR}) \) 
\[
\left(
	r_{T',*}(\mO_{\SGTKp{T'}})\boxtimes_{\CC_{p}} r_{T\to T',*}(\mO_{\mcal{M}_{T\to T'}})
\right)^{R-G_{T'}(\QQ_{p})-\sm}\cong 
(\JL^{\dR}_{T\to T',K^{p}})^{*}
r_{T,*}(\mO_{\SGTKp{T}}), 
\] where \( (-)^{R-G_{T'}(\QQ_{p})-\sm} \) is defined in the following sense: 

For any 
\( \mF\in \QCoh((\SGTKp{T'}^{\dR}\times_{\bar{\QQ}_{p}}\mcal{M}_{T\to T'}^{\dR})/\unl{G_{T'}(\QQ_{p})}) \), 
we define \[
\mF^{R-G_{T'}(\QQ_{p})-\sm}:=p_{12,*}\circ \beta^{*}(\mF)
\] for the morphisms as in 
\[
\begin{tikzcd}
\SGTKp{T'}^{\dR}\times_{\bar{\QQ}_{p}}\mcal{M}_{T\to T'}^{\dR}\times B(1\subset \unl{G_{T'}(\QQ_{p})})^{\dagger}\arrow[rd,"p_{12}"]
\arrow[d,"\beta"]
\\
(\SGTKp{T'}^{\dR}\times_{\bar{\QQ}_{p}}\mcal{M}_{T\to T'}^{\dR})/\unl{G_{T'}(\QQ_{p})} & 
\SGTKp{T'}^{\dR}\times_{\bar{\QQ}_{p}}\mcal{M}_{T\to T'}^{\dR}.
\end{tikzcd}
\]
\end{cor}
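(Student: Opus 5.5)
The plan is to read the statement off the Cartesian square (\ref{equaJLtransferTateSta}) of Theorem \ref{thmJLinTateSt} (3), using base change along the right square and then identifying the pull-back along the middle vertical map with the smooth-vector functor \((-)^{R-G_{T'}(\QQ_{p})-\sm}\).

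First I rewrite the left-hand object. Write \(X:=\SGTKp{T'}\times_{\CC_{p}}\mcal{M}_{T\to T'}\) and \(Y:=X/(1\subset \underline{G_{T'}(\QQ_{p})})^{\dagger}\). Since \(r_{T'}\) and \(r_{T,I}\) are affine, proper and descendable, and the Künneth formula for the product of analytic de Rham stacks holds over \(\bar{\QQ}_{p}\) (as in \cite[Lemma \ref{1-lemKunnFlSm}]{Jiang2026Shla}), I get
\[
(r_{T'}\times r_{T,I})_{*}\mO_{X}\cong r_{T',*}(\mO_{\SGTKp{T'}})\boxtimes r_{T\to T',*}(\mO_{\mcal{M}_{T\to T'}}),
\]
and this object carries a natural \(\unl{G_{T'}(\QQ_{p})}\)-equivariant structure, so it descends to \((\SGTKp{T'}^{\dR}\times\mcal{M}^{\dR}_{T\to T'})/\unl{G_{T'}(\QQ_{p})}\). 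The diagonal action is trivial on the de Rham stacks up to smooth incarnation, which makes this a well-defined input to \((-)^{R-G_{T'}(\QQ_{p})-\sm}\).

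Second, the right square of (\ref{equaJLtransferTateSta}) is Cartesian and \(r_{T}\) is affine proper. Proper base change (\cite[Lemma \ref{1-lemSmBaseChange} (2)]{Jiang2026Shla}) therefore gives
\[
(\JL^{\dR})^{*}r_{T,*}\mO_{\SGTKp{T}}\cong q_{*}\mO_{Y},
\]
where \(q:Y\to \SGTKp{T'}^{\dR}\times\mcal{M}^{\dR}_{T\to T'}\) is the middle vertical map.

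Third, I factor \(q\) as
\[
Y\to X/\unl{G_{T'}(\QQ_{p})}\times_{B\unl{G_{T'}(\QQ_{p})}}B(1\subset\unl{G_{T'}(\QQ_{p})})^{\dagger},
\]
followed by the projection to \(\SGTKp{T'}^{\dR}\times\mcal{M}^{\dR}_{T\to T'}\times B(1\subset \unl{G_{T'}(\QQ_{p})})^{\dagger}\) and then by \(p_{12}\). Applying base change once more to the square relating \(X/\unl{G_{T'}(\QQ_{p})}\to(\SGTKp{T'}^{\dR}\times\mcal{M}^{\dR}_{T\to T'})/\unl{G_{T'}(\QQ_{p})}\) with its pull-back along \(\beta\) identifies \(q_{*}\mO_{Y}\) with \(p_{12,*}\beta^{*}\) of the descended Künneth object. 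That is exactly the definition of \((-)^{R-G_{T'}(\QQ_{p})-\sm}\).

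The main obstacle is this third step. It needs the square with \(\beta\) to be Cartesian, which should follow from the definition of \((1\subset\unl{G})^{\dagger}\) in \cite[Definition \ref{1-dfnIncarnationGroup}]{Jiang2026Shla}. It also needs base change for \(\beta\), which is not proper but is a quotient by a group-germ; I would first try to get this from descendability of \(r_{T'}\times r_{T,I}\) together with the fact that \(\beta^{*}\) commutes with colimits and the relevant pushforwards are affine.
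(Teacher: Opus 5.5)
Your proposal is correct and follows the paper's proof, which is just the Cartesian right square of Theorem \ref{thmJLinTateSt} (3) plus proper base change; the paper leaves implicit the identification of \(q_{*}\mO_{Y}\) with \((-)^{R-G_{T'}(\QQ_{p})-\sm}\) (via the Künneth description and the definition \(p_{12,*}\circ\beta^{*}\)), and your write-up spells this out. The obstacle you flag in your third step is not a real one. Base change along \(\beta\) only requires the map being pushed forward, namely \(X/\unl{G_{T'}(\QQ_{p})}\to(\SGTKp{T'}^{\dR}\times_{\bar{\QQ}_{p}}\mcal{M}^{\dR}_{T\to T'})/\unl{G_{T'}(\QQ_{p})}\), to be affine proper, and it is, since its pull-back along the quotient map is \(r_{T'}\times r_{T,I}\). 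So \cite[Lemma \ref{1-lemSmBaseChange} (2)]{Jiang2026Shla} applies for arbitrary \(\beta\), exactly as in your second step.
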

\begin{proof}
This follows from the Cartesian square in Theorem \ref{thmJLinTateSt} (3), 
and the proper base change (\cite[Lemma \ref{1-lemSmBaseChange} (2)]{Jiang2026Shla}).
\end{proof}
\begin{notation}
We denote by \( \bar{\mcal{G}}^{c}_{T,\dR,K^{p}} \) the \( G^{c,\an} \)-torsor over \( \SGTKp{T}^{\dR} \), obtained by pulling back \( \bar{\mcal{G}}^{c,\an}_{\dR,\Kpp} \) considered in \cite[Proposition \ref{1-propAnalyticGMTheory}]{Jiang2026Shla}.

Let \( \BdRX{T} \) and \( \BdRX{T}^{\la} \) be \( \BdRKp \) and \(\BdRKp^{\la}\in \QCoh(\SGTKp{T}^{\dR}) \) 
considered in \cite[Notation \ref{1-notationBdROnAnalyticDR}]{Jiang2026Shla}. Similarly, let \( \BdRX{T\to T'} \) and \( \BdRX{T\to T'}^{\la} \) be \( \BdRX{\mcal{M}} \) and \( \BdRX{\mcal{M}}^{\la}\in \QCoh(\mcal{M}_{T\to T'}^{\dR}) \). 
\end{notation}
\begin{thm}\label{propCompatibleGcTorsor}
Consider the diagram \[\begin{tikzcd}
		&
	\SGTKp{T'}^{\dR}\times_{\bar{\QQ}_{p}} \mcal{M}_{T\to T'}^{\dR}\arrow[rd,"\JL_{T\to T',K^{p}}^{\dR}"]\arrow[ld,"p_{1}"]
	\\
	\SGTKp{T'}^{\dR} & & \SGTKp{T}^{\dR}.
	\end{tikzcd}.\]
	Then we have a natural isomorphism of \(
    G^{c,\an}
    \)-torsors \[
    \JL_{T\to T',K^{p}}^{\dR,*}(\bar{\mcal{G}}^{c}_{T,\dR,K^{p}})
    \cong p_{1}^{*}(\bar{\mcal{G}}^{c}_{T',\dR,K^{p}}).
    \]
\end{thm}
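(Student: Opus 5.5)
The plan is to build the isomorphism first on the perfectoid level, using the period sheaf description of the de Rham torsor, and only then descend it to the analytic de Rham stacks.

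\textbf{Step 1: recall the description of the torsor.} By \cite[Corollary \ref{1-corReformulateRiemmanHilbPerfdCase}]{Jiang2026Shla}, the pull-back of \(\bar{\mcal{G}}^{c}_{T,\dR,K^{p}}\) along \(r_{T}:\SGTKp{T}\to\SGTKp{T}^{\dR}\) is identified with an explicit torsor built from period sheaves. Concretely, it is the torsor of \(\BdRX{T}\)-linear tensor isomorphisms between two fibre functors on \(\mrm{Rep}(G^{c})\), taken in the horizontal \(\BdRX{T}\)-part. The first fibre functor is \(V\mapsto V\otimes\BdRX{T}\), coming from the pro-étale \(\unl{G_{T}(\QQ_{p})}\)-torsor. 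The second is the de Rham realisation \(V\mapsto (\mcal{P}_{\dR}(V)\otimes\mO\BdR)^{\nabla=0}\). The same description holds on \(\SGTKp{T'}\), and on \(\mcal{M}_{T\to T'}\) for the local \(\BdRX{T\to T'}\).

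\textbf{Step 2: construct the comparison on the perfectoid level.} On \(\SGTKp{T'}\times\mcal{M}_{T\to T'}\), Theorem \ref{thmProductFormulaQuate}, together with the Igusa-stack Cartesian diagram of Theorem \ref{thmCartesianDiagramQaternion}, identifies the de Rham \(\BdR^{+}\)-lattices. Via Beauville--Laszlo, a point of \(\SGTKp{T}\) gives a modification \(\mF_{T}\dashrightarrow\mF\). A point of \(\SGTKp{T'}\times\mcal{M}\) gives the factorisation \(\mF_{T}\to\mF_{T'}\to\mF\). The completion of \(\mF\) at the untilt divisor is the same in both descriptions. This completion is exactly the \(\BdR^{+}\)-lattice \(\mbb{M}_{0}=(\mcal{P}_{\dR}\otimes\mO\BdR^{+})^{\nabla=0}\) attached to the de Rham torsor (de Rham comparison, as in the proof of Theorem \ref{thmCartesianDiagram}).

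The modification \(\mF_{T}\to\mF_{T'}\) is an isomorphism away from the divisors \(S^{\#}_{\tau}\), \(\tau\in I\). After inverting \(\xi\), it therefore identifies \(\mF_{T}\otimes\BdR\) with \(\mF_{T'}\otimes\BdR\). Both trivial torsors become isomorphic to \(G^{c}\otimes\BdR\) via the fixed identification \(G_{T,\QQ_{p}}\otimes\bar{\QQ}_{p}\cong G_{T',\QQ_{p}}\otimes\bar{\QQ}_{p}\). This gives an isomorphism
\[
\JL^{*}r_{T}^{*}\bar{\mcal{G}}^{c}_{T,\dR}\cong p_{1}^{*}r_{T'}^{*}\bar{\mcal{G}}^{c}_{T',\dR}
\]
of \(G^{c}(\BdR)\)-torsors, compatible with the horizontal structures. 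Taking \(\nabla\)-invariants, or equivalently using Corollary \ref{corIdeSMstructureSh} for the structure sheaves, yields an isomorphism of \(G^{c,\an}\)-torsors on \(\SGTKp{T'}\times\mcal{M}_{T\to T'}\). By construction it is equivariant for \(\unl{G_{T'}(\QQ_{p})}\).

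\textbf{Step 3: descend to the de Rham stacks.} Both torsors in the statement are pulled back from the analytic de Rham stacks. The maps \(r_{T'}\times r_{T,I}\) are descendable, and the right square of (\ref{equaJLtransferTateSta}) is Cartesian. So it suffices to show that the isomorphism of Step 2 descends along
\[
\SGTKp{T'}\times\mcal{M}\to(\SGTKp{T'}\times\mcal{M})/(1\subset\unl{G_{T'}(\QQ_{p})})^{\dagger}\to\SGTKp{T'}^{\dR}\times\mcal{M}^{\dR}.
\]
The isomorphism is \(G_{T'}(\QQ_{p})\)-equivariant, hence it descends to the middle quotient. For the second map, descent amounts to compatibility with the canonical descent data (the connections) on both sides. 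By the characterisation in \cite[Corollary \ref{1-corReformulateRiemmanHilbPerfdCase}]{Jiang2026Shla}, these descent data are encoded in the identification of \(\BdR\)-lattices with \(\BdRX{}\)-periods. Our isomorphism was built purely from \(\BdR\)-modifications, so it respects them.

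\textbf{Main obstacle.} I expect the hardest point to be the last one: checking that the \(\BdR\)-linear identification of Step 2 is genuinely horizontal, i.e.\ compatible with the \(\BdRX{}\)-structure on both factors, including the contribution from \(\mcal{M}_{T\to T'}\). My first attempt is to verify this on \(\Spa(C,C^{+})\)-points and on pro-étale local charts, using the fully faithfulness of the Riemann--Hilbert functor of \cite{LiuZhu2017rigidity} for de Rham local systems.
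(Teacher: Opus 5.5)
Your proposal has a genuine gap. Everything you actually construct in Step 2 is an identification of \emph{pro-\'etale $\BdR$-local systems} on $X:=\SGTKp{T'}\times_{\CC_p}\mcal{M}_{T\to T'}$, and that is the easy part. The theorem is a statement about torsors on the analytic de Rham stacks over $\bar{\QQ}_p$. It therefore encodes the connections and the $\bar{\QQ}_p$-structure, which diamonds and $\BdR$-modifications do not see. The passage from one to the other is exactly what you postpone as the ``main obstacle''. The sentence in Step 3 (``built purely from $\BdR$-modifications, so it respects them'') simply assumes it.

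Several intermediate moves also fail as written.
\begin{enumerate}
\item Step 1 does not describe the torsor. The torsor of isomorphisms between $V\otimes\BdR$ and $(\mcal{P}_{\dR}(V)\otimes\mO\BdR)^{\nabla=0}$ is a $G^c(\BdR)$-torsor built from $\mcal{P}_{\dR}$ itself. On $X$ it is even canonically trivial by the de Rham comparison. So it does not determine $r_T^*\bar{\mcal{G}}^c_{T,\dR,K^p}$.
\item ``Taking $\nabla$-invariants'' goes in the wrong direction. It turns $\mcal{P}_{\dR}\otimes\mO\BdR$ into the $\BdR$-local system. Going back needs a Riemann--Hilbert functor, which Liu--Zhu provide only at finite level on the \'etale site. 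Even then it returns objects with $B_{\dR}$-coefficients, so a further descent to $\bar{\QQ}_p$ is still needed; the paper does this with $E_0(D_{\sen}(-))$.
\item The isomorphism $\mF_T\otimes\BdR\cong\mF_{T'}\otimes\BdR$ coming from the modification is not the constant identification $G_{T,\bar{\QQ}_p}\cong G_{T',\bar{\QQ}_p}$. It is a point-dependent $G(\BdR)$-valued function on $\mcal{M}_{T\to T'}$ intertwining the $G_T(\QQ_p)$- and $G_{T'}(\QQ_p)$-actions.
\item Liu--Zhu fully faithfulness is no direct help here. The two universal local systems are torsors for different inner forms. They become comparable only after $\otimes\BdR$ on the infinite-level space $X$, where no finite-level de Rham local system is available to invoke it.
\end{enumerate}

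The missing idea is to work on the de Rham stacks from the start, where horizontality is automatic, and to add a \emph{local} Riemann--Hilbert input on $\mcal{M}_{T\to T'}$. The paper's argument runs as follows.
\begin{enumerate}
\item Apply Corollary \ref{corIdeSMstructureSh} to $\BdR$, in the $t$-complete sense, to get $(p_1^*\BdRX{T'}\otimes_{B_{\dR}}p_2^*\BdRX{T\to T'})^{R-G_{T'}(\QQ_p)-\sm}\cong\JL^{\dR,*}\BdRX{T}$.
\item Pass to locally analytic vectors via \cite{JRC2021solid}, obtaining $R\Gamma((\fg,*_{T'}),p_1^*\BdRX{T'}^{\la}\otimes_{B_{\dR}}p_2^*\BdRX{T\to T'}^{\la})\cong\JL^{\dR,*}\BdRX{T}^{\la}$.
\item Use the correct form of the reformulated Riemann--Hilbert correspondence on $\SGTKp{T}^{\dR}$, and likewise for $T'$: $\bar{\mcal{G}}^c_{T,\dR,K^p}(V)\otimes B_{\dR}\cong\RHom_{(\fg,*_T)}(V^\vee,\BdRX{T}^{\la})$.
\item Combine this with the local statement $\RHom_{(\fg,*_T)}(V^\vee,\BdRX{T\to T'}^{\la})\cong(V,*_{T'})$ on $\mcal{M}^{\dR}_{T\to T'}$ \cite[Proposition \ref{1-propRHCorrLocalSV}]{Jiang2026Shla}. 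The chain yields $\JL^{\dR,*}(\bar{\mcal{G}}^c_{T,\dR,K^p}(V))\otimes B_{\dR}\cong p_1^*(\bar{\mcal{G}}^c_{T',\dR,K^p}(V))\otimes B_{\dR}$.
\item Apply $E_0(D_{\sen}(-))$ to remove $B_{\dR}$.
\end{enumerate}
The local statement is precisely what handles the contribution of $\mcal{M}_{T\to T'}$, the point you flag as hardest, and your proposal contains no substitute for it.
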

\begin{proof}
By Corollary \ref{corIdeSMstructureSh}, we have a natural isomorphism \[
(p_{1}^{*}\BdRX{T'}\otimes_{B_{\dR}}
p_{2}^{*}\BdRX{T\to T'})^{R-G_{T'}(\QQ_{p})-\sm}\cong \JL_{T\to T',K^{p}}^{\dR,*}(\BdRX{T}),
\] where all the constructions are taken in the \( t^{\bullet} \)-complete sense. Therefore, 
by \cite[Theorems 1.5 \& 1.7]{JRC2021solid}, \[
R\Gamma((\fg,*_{T'}),(p_{1}^{*}(\BdRX{T'}^{\la}))\otimes_{B_{\dR}}
p_{2}^{*}(\BdRX{T\to T'}^{\la}))\cong \JL_{T\to T',K^{p}}^{\dR,*}(\BdRX{T}^{\la}),
\] where the notation is as in \cite[Lemma \ref{1-lemBdRLATwosides}]{Jiang2026Shla}.
Therefore, for any \( V\in\Rep(G^{c}) \), 
\begin{align*}
\JL_{T\to T',K^{p}}^{\dR,*}&(\bar{\mcal{G}}^{c}_{T,\dR,K^{p}}(V))\otimes B_{\dR}\cong 
\JL_{T\to T',K^{p}}^{\dR,*}(
\RHom_{(\fg,*_{T})}(V^{\vee},\BdRX{T}^{\la}))\\
&\cong 
R\Gamma((\fg,*_{T'}),(p_{1}^{*}\BdRX{T'}^{\la})\otimes_{B_{\dR}}
p_{2}^{*}(\RHom_{(\fg,*_{T})}(V^{\vee},(\BdRX{T\to T'}^{\la}))))\\
&\cong 
R\Gamma((\fg,*_{T'}),(p_{1}^{*}\BdRX{T'}^{\la})\otimes
(V,*_{T'}))\\
&\cong p_{1}^*(\bar{\mcal{G}}^{c}_{T',\dR,K^{p}}(V))\otimes B_{\dR},
\end{align*} where the first and the last isomorphisms are given by \cite[Corollary \ref{1-corReformulateRiemmanHilbPerfdCase}]{Jiang2026Shla}
and the third isomorphism is given by \cite[Proposition \ref{1-propRHCorrLocalSV}]{Jiang2026Shla}.
We conclude by taking \( E_{0}(D_{\sen}(-)) \) 
on both sides. 
\end{proof}

\subsubsection{For Grothendieck-Messing-Hodge-Tate period domains}
The map \( \JL_{T\to T',K^{p}} \) is compatible with the Grothendieck-Messing-Hodge-Tate period maps as we now explain.
\begin{notation}
For \( \mcal{M}_{T\to T'} \) (resp. \( \SGTKp{T} \)), we denote by \( \Per_{T\to T'} \) (resp. \( \PerT{T} \))
the associated Grothendieck-Messing-Hodge-Tate period domain period domain in Definition \ref{1-dfnGrothendieck-Messing-Hodge-TateperioddomainLocal} (resp. Definition \ref{1-dfnGMHTperDomain}) of \cite{Jiang2026Shla}.
We denote the period maps of \( \mcal{M}_{T\to T'} \)
as \[
\begin{tikzcd}[column sep=0.5in]
	&\arrow[d,"\pi_{\GM\HT,T\to T'}^{\la}"]
	\mcal{M}_{T\to T'}^{\la}
	\arrow[dl,"\pi_{\HT,T\to T'}"']
	\arrow[dr,"\pi_{\GM,T\to T'}"]
	\\
\FLTT{T}{I^{c}}\arrow[d,"\beta_{\Fl,I}"]
& \Per_{T\to T'}\arrow[d,"\beta_{\Per,I}"]
\arrow[r,"\pi_{\GM,T\to T'}^{per}"']
\arrow[l,"\pi_{\HT,T\to T'}^{per}"]
& \FLTT{T'}{I^{c}}^{\std}\arrow[d,"\beta_{\Fl^{\std},I}"]
\\
\FLTT{T}{I^{c}}^{\dR}
& \Per_{T\to T'}^{\dR}
\arrow[r,"\pi_{\GM,T\to T'}^{per,\dR}"']
\arrow[l,"\pi_{\HT,T\to T'}^{per,\dR}"]
& \FLTT{T'}{I^{c}}^{\std,\dR}
\end{tikzcd}
\]


To make the notation of \( \PerT{T} \) symmetric as in the definition of \( \Per_{T\to T'} \), we write \[
\PerT{T}:=(G_{T}^{\an}\times G^{c,\an}_{\Sigma_{\infty}})/(P_{\mu_{T}}^{\an}\times_{M^{c,\an}_{\mu_{T}}}
P^{\mrm{std},c,\an}_{\mu_{T}}). 
\] \( \PerT{T} \)  carries an action of \( G_{T}^{\an}\times G_{\Sigma_{\infty}}^{c,\an} \), which  
corresponds to the action of \( *_{\HT} \) 
and \( *_{\GM} \) in \cite[Definition \ref{1-dfnGMHTperDomain}]{Jiang2026Shla} respectively.

We denote the Grothendieck-Messing-Hodge-Tate period map for \( \SGTKp{T} \) (\cite[Definition \ref{1-dfnGMHTperMap}]{Jiang2026Shla})
and \( \mcal{M}_{T\to T'} \) (\cite[Theorem \ref{1-thmGMHTPeriMapLocal}]{Jiang2026Shla}) by \( \pi_{\GM\HT,T,K^{p}} \) 
and \( \pi_{\GM\HT,T\to T'} \) 
respectively.
\end{notation}
\begin{lem}\label{lemJLMapForPer}
We define a Jacquet-Langlands morphism \(\JL_{T\to T'}^{per}\) for the period domains  
\begin{align*}
    \JL_{T\to T'}^{per}:\PerT{T'}
\times \Per_{T\to T'}\to \PerT{T}
\end{align*}
as follows: we have an isomorphism \[i:
(G_{T}\times G_{\Sigma_{\infty}}^{c})/(P_{\mu_{T}}\times_{M^{c}_{\mu_{T}}}P^{\mrm{std},c}_{\mu_{T}})
\cong (
P_{\mu_{I}}\times_{M_{\mu_{I}}}
P^{\mrm{std}}_{\mu_{I}}\times 1)\backslash 
(G_{T}\times G_{T'}\times G_{\Sigma_{\infty}}^{c})/(1\times P_{\mu_{T'}}\times_{M^{c}_{\mu_{T'}}}
P^{\mrm{std},c}_{\mu_{T'}}),
\] induced by \(
(g,g')\mapsto (g,1,(g')^{-1})
\).
Consider the morphism \begin{align*}
f:
	(G_{T'}\times G^{c}_{\Sigma_{\infty}})&/(P_{\mu_{T'}}\times_{M^{c}_{\mu_{T'}}}
P^{\mrm{std},c}_{\mu_{T'}})
\times 
	(G_{T}\times G_{T'})/(P_{\mu_{I}}\times_{M_{\mu_{I}}}
P^{\mrm{std}}_{\mu_{I}})\\
&\to (
P_{\mu_{I}}\times_{M_{\mu_{I}}}
P^{\mrm{std}}_{\mu_{I}}\times 1)\backslash 
(G_{T}\times G_{T'}\times G_{\Sigma_{\infty}}^{c})/(1\times P_{\mu_{T'}}\times_{M^{c}_{\mu_{T'}}}
P^{\mrm{std},c}_{\mu_{T'}}),
\end{align*}
 sending \(
((g'_{T'},g_{\Sigma_{\infty}}),(g_{T},g_{T'}))
\) to \(
((g_{T})^{-1},(g_{T'})^{-1}g'_{T'},g_{\Sigma_{\infty}})
\). 

Then we define \[
\JL^{per}_{T\to T'}:=f\circ i^{-1},
\]
which is a \(G_{T'}^{\an}\)-torsor. 
\end{lem}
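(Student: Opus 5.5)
The statement has three parts to check. The isomorphism $i$ should be well defined and an isomorphism. The morphism $f$ should be well defined, i.e. compatible with the right actions of the parabolics it quotients by. And the composite $f\circ i^{-1}$ should be a $G_{T'}^{\an}$-torsor. Throughout, I would work with the algebraic groups and homogeneous spaces and pass to analytifications only at the end. Analytification commutes with the quotients here, since all of them are quotients of smooth groups by smooth closed subgroups, realized as smooth varieties.

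For $i$, the right-hand side is a double quotient in which the middle factor $G_{T'}$ is acted on trivially on neither side except through the two parabolics. So I would first use the slice $g_{T'}=1$. Given a class $[(a,b,c)]$, the element $(p,q)\in P_{\mu_I}\times_{M_{\mu_I}}P^{\std}_{\mu_I}$ acts by $(a,b,c)\mapsto(pa,qb,c)$ on the left. The right action of $P_{\mu_{T'}}\times_{M^c_{\mu_{T'}}}P^{\std,c}_{\mu_{T'}}$ is on $(b,c)$. Here I use that over $\bar{\QQ}_p$ one has $G_T\cong G_{T'}$ after the fixed identifications $G_{T,\CC_p}\cong\prod_\tau G_\tau\cong G_{T',\CC_p}$. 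With this, $\mu_T=\mu_I\mu_{T'}$ componentwise, and this factorization underlies $i$.

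Concretely, I would check that the stabilizer computation matches. The map $(g,g')\mapsto(g,1,g'^{-1})$ identifies the left quotient by $P_{\mu_T}\times_{M^c_{\mu_T}}P^{\std,c}_{\mu_T}$ with the double quotient. This works because, componentwise in $\tau$, the parabolic $P_{\mu_T}$ is $B_\tau$ for $\tau\notin T$, and this splits as a $\tau\in I$ part coming from $P_{\mu_I}$ and a $\tau\in T'^c$ part coming from $P_{\mu_{T'}}$. The fibre-product conditions over the Levis glue accordingly. Formally, this reduces to checking the statement on $\bar{\QQ}_p$-points together with smoothness of both sides of equal dimension, which gives an isomorphism of smooth varieties. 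I would carry this out $\tau$ by $\tau$, since everything decomposes as a product over $\Sigma_\infty$.

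For $f$, I would check directly that right multiplication of $(g'_{T'},g_{\Sigma_\infty})$ by $P_{\mu_{T'}}\times_{M}P^{\std,c}_{\mu_{T'}}$ and of $(g_T,g_{T'})$ by $P_{\mu_I}\times_M P^{\std}_{\mu_I}$ are carried to the right and left actions respectively on the target. The inversions $g_T^{-1}$ and $g_{T'}^{-1}$ turn the right action into a left one, so the formula descends.

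For the torsor property, I would note that $f$ is invariant under the diagonal $G_{T'}$ action $g'_{T'}\mapsto hg'_{T'}$, $g_{T'}\mapsto g_{T'}h^{-1}$. I would show that $f$ identifies the target with the quotient by this free action: the map $(g'_{T'},g_{T'})\mapsto g_{T'}^{-1}g'_{T'}$ has fibres exactly the $G_{T'}$-orbits, and local sections exist because $G_{T'}\to\mathrm{pt}$ is split.

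The main obstacle I expect is keeping track of which fibre-product conditions over the Levi quotients, including the $c$ and $Z_c$ twists, match up in $i$. I would handle this by a rank-one computation for a single $\tau$ and then take the product over $\tau$.
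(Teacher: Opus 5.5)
Your overall route for $i$ is the paper's: reduce to a factor-by-factor check over $\tau\in\Sigma_\infty$. Your check that $f$ descends is also correct. The gap is in how you propose to handle what you call the main obstacle.

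It is not true that ``everything decomposes as a product over $\Sigma_\infty$''. The groups $G^{c}_{\Sigma_\infty}$, $M^{c}_{\mu_T}$ and $P^{\std,c}_{\mu_T}$ are quotients by $Z_c=\ker(\Nm_{F/\QQ})$. Over $\CC_p$ this is the norm-one torus $\{(z_\tau)_\tau:\prod_\tau z_\tau=1\}\subset\prod_\tau\GG_m$. It is not a product of subtori of the individual factors, so no rank-one computation can capture the fibre-product condition over $M^c$.

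The missing step, which is the first step of the paper's proof, is to remove $Z_c$ before decomposing. Since $Z_c$ is central, $P_{\mu_T}\times_{M^c_{\mu_T}}P^{\std,c}_{\mu_T}$ is the image of $(P_{\mu_T}\times_{M_{\mu_T}}P^{\std}_{\mu_T})\cdot(1\times Z_c)$. Hence both sides of $i$ are quotients of the corresponding spaces built from $G_{\Sigma_\infty}$, $P^{\std}$ and fibre products over $M$. The quotient is by the free right action of $Z_c$ on the $G_{\Sigma_\infty}$-coordinate, and it is compatible with $i$. Only these non-$c$ spaces split over $\tau$. Alternatively, you can absorb the $Z_c$-discrepancy globally, using that the $G^c$-coordinate is only defined modulo $Z_c$.

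The slice $g_{T'}=1$ is also not automatic. It needs $G_{T'}=P^{\std}_{\mu_{I^c}}\cdot P_{\mu_{T'}}$. Here $\mu_{I^c}$ is the cocharacter nontrivial exactly on $I$, which is what the statement's $\mu_I$ must mean (as in the definition of $\Per_{T\to T'}$); then $\mu_T=\mu_{I^c}\mu_{T'}$. The factorization holds because $P_{\mu_{T'},\tau}=G_\tau$ for $\tau\in I$ and $P^{\std}_{\mu_{I^c},\tau}=G_\tau$ for $\tau\notin I$. This is precisely the content of the ``easy'' per-$\tau$ check. After killing the middle coordinate, you compare the residual stabilizer with $P_{\mu_T}\times_{M_{\mu_T}}P^{\std}_{\mu_T}$.

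For the torsor property, which the paper leaves implicit, the action you write is wrong. The map $g_{T'}\mapsto g_{T'}h^{-1}$ does not descend to $\Per_{T\to T'}$, which is a quotient by a group acting on $G_{T'}$ from the right. It also does not preserve $g_{T'}^{-1}g'_{T'}$. The correct diagonal action is left multiplication on both, $g'_{T'}\mapsto hg'_{T'}$ and $g_{T'}\mapsto hg_{T'}$; this is also the action for which $\JL$ is a quotient.

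Local triviality does not come from a global splitting either. The section of $(x,z)\mapsto z^{-1}x$ is not equivariant for the parabolic fibre products, so it does not descend. Instead, argue upstairs: $(x,c,y,z)\mapsto(y^{-1},z^{-1}x,c)$ is a trivial $G_{T'}$-torsor onto $G_T\times G_{T'}\times G^{c}_{\Sigma_\infty}$. It commutes with the free actions of the two parabolic fibre products. Passing to the quotient gives a $G_{T'}$-torsor over the double coset space. It is locally trivial because the projection from $G_T\times G_{T'}\times G^{c}_{\Sigma_\infty}$ onto that space has local sections.
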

\begin{proof}
It suffices to show that \[
(G_{T}\times G_{\Sigma_{\infty}}^{c})/(P_{\mu_{T}}\times_{M^{c}_{\mu_{T}}}P^{\mrm{std},c}_{\mu_{T}})
\cong (G_{T}\times G_{T'}\times G_{\Sigma_{\infty}}^{c})/I.
\] Note that the LHS is isomorphic to \(
((G_{T}\times G_{\Sigma_{\infty}})/(P_{\mu_{T}}\times_{M^{c}_{\mu_{T}}}P^{\mrm{std}}_{\mu_{T}}))/1\times Z_{c}
\), and the RHS is isomorphic to \(((G_{T}\times G_{T'}\times G_{\Sigma_{\infty}})/\ti{I})/(1\times 1\times Z_{c})\), where \(\ti{I}\) denotes the subgroup generated by 
\(
1\times P_{\mu_{T'}}\times_{M_{\mu_{T'}}}
P^{\mrm{std}}_{\mu_{T'}}
\) and \(
P_{\mu_{I^{c}}}\times_{M_{\mu_{I^{c}}}}
P^{\mrm{std}}_{\mu_{I^{c}}}\times 1
\). 
So it suffices to show that \[
(G_{T}\times G_{\Sigma_{\infty}})/(P_{\mu_{T}}\times_{M^{c}_{\mu_{T}}}P^{\mrm{std}}_{\mu_{T}})\cong (G_{T}\times G_{T'}\times G_{\Sigma_{\infty}})/\ti{I}.
\] Now the desired isomorphism decomposes with respect to \(\tau\in\Sigma_{\infty}\), and one can verify the isomorphism on each \(\tau\)-factor without any difficulty. 
\end{proof}

\begin{thm}\label{thmJLCompatiblePeriodMaps}
In the setting of Theorem \ref{thmJLinTateSt}, the functor \(
\JL^{\dR}_{T\to T'}
\) is compatible with \(\JL^{per}_{T\to T'}\) in Lemma \ref{lemJLMapForPer}
along the period map \(\pi_{\GM\HT}^{\dR}\). 

More precisely, we have the following commutative diagrams 
\[
\begin{tikzcd}
    \SGTKp{T'}\times_{\CC_{p}} \mcal{M}_{T\to T'} \arrow[r,"\JL_{T\to T',K^{p}}"] \arrow[d, "\pi_{\GM\HT,T',K^{p}}\times \pi_{\GM\HT,T\to T'}"']
    & \SGTKp{T}\arrow[d,"\pi_{\GM\HT,T,K^{p}}"] \\
    (\PerT{T'}/G^{c,\an}_{\Sigma_{\infty}})\times_{\CC_{p}} \Per_{T\to T'} \arrow[r,"\JL_{T\to T'}^{per}"] & \PerT{T}/G^{c,\an}_{\Sigma_{\infty}},
\end{tikzcd}
\] and
\[
\begin{tikzcd}
    \SGTKp{T'}^{\dR}\times_{\bar{\QQ}_{p}} \mcal{M}_{T\to T'}^{\dR} \arrow[r,"\JL_{T\to T',K^{p}}^{\dR}"] \arrow[d, "\pi^{\dR}_{\GM\HT,T',K^{p}}\times \pi^{\dR}_{\GM\HT,T\to T'}"']
    & \SGTKp{T}^{\dR}\arrow[d,"\pi^{\dR}_{\GM\HT,T,K^{p}}"] \\
    (\PerT{T'}^{\dR}/G^{c,\an}_{\Sigma_{\infty}})\times_{\bar{\QQ}_{p}} \Per^{\dR}_{T\to T'} \arrow[r,"\JL_{T\to T'}^{per,\dR}"] & \PerT{T}^{\dR}/G^{c,\an}_{\Sigma_{\infty}},
\end{tikzcd}
\] Moreover, the two diagrams are compatible.
\end{thm}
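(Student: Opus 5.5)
The plan is to prove the perfectoid square first and then deduce the de Rham square by functoriality of $(-)^{\dR}$, as the compatibility is phrased. Recall from \cite{Jiang2026Shla} that the Grothendieck-Messing-Hodge-Tate period map $\pi_{\GM\HT,T,K^{p}}$ has two pieces of data. One is the Hodge-Tate period map to $\FLT{T}$, recording the $G_{T}$-side modification. The other is the de Rham $G^{c,\an}$-torsor $\bar{\mcal{G}}^{c}_{T,\dR,K^{p}}$, together with its Hodge filtration, a reduction to $P^{\std,c}_{\mu_{T}}$. These are glued along the common Levi $M^{c}_{\mu_{T}}$ via the Hodge-Tate comparison. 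The period domain $\PerT{T}$ is the moduli of exactly such data. So a map into $\PerT{T}/G^{c,\an}_{\Sigma_{\infty}}$ is determined by a $G^{c,\an}$-torsor, a point of $\FLT{T}$, and a compatible identification of the associated graded objects. The same description holds for $\Per_{T\to T'}$ with the pair $(G_{T},G_{T'})$ and the cocharacter $\mu_{I}$.

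\textbf{Step 1 (Hodge-Tate part).} By Theorem \ref{thmProductFormulaQuate} (2) and Lemma \ref{lemIdentifyFlVaroverM}, $\pi_{\HT,T}\circ\JL$ corresponds to $(\pi_{\HT,T'},\pi_{\HT,T\to T'})$ under the identification $\FLT{T}\times_{\FLTT{T}{I^{c}}}\mcal{M}_{T\to T'}\cong\Fl_{G_{T'},\mu_{T'}}\times\mcal{M}_{T\to T'}$. One then checks, factor by factor in $\tau\in\Sigma_{\infty}$ as in the proof of Lemma \ref{lemJLMapForPer}, that this identification is the $\FLT{T}$-component of $\JL^{per}_{T\to T'}$. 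The check should be elementary: for $\tau\in I$ the modification is read off from $\mcal{M}_{T\to T'}$, and for $\tau\in T'^{c}$ it is read off from $\SGTKp{T'}$ after transport by $\mcal{M}_{T\to T'}$, which explains the inverses in the formula for $f$.

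\textbf{Step 2 (de Rham torsor part).} Theorem \ref{propCompatibleGcTorsor} gives $\JL^{\dR,*}\bar{\mcal{G}}^{c}_{T,\dR,K^{p}}\cong p_{1}^{*}\bar{\mcal{G}}^{c}_{T',\dR,K^{p}}$. Pulled back to the perfectoid level, this matches the $G^{c,\an}_{\Sigma_{\infty}}$-components of the two sides. The Hodge filtrations and their compatibility with the Hodge-Tate filtrations must then be matched. Both arise from $\BdR^{+}$-lattices, namely from the $t^{\bullet}$-filtrations on $\BdR$ and $\OBdRlog$. Under the isomorphism $(p_{1}^{*}\BdRX{T'}\otimes p_{2}^{*}\BdRX{T\to T'})^{R-G_{T'}(\QQ_{p})-\sm}\cong\JL^{\dR,*}\BdRX{T}$ from Corollary \ref{corIdeSMstructureSh}, these lattices correspond as well. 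This is because the isomorphism is induced by the ring isomorphism of Theorem \ref{thmJLinTateSt} (3), which respects $\BdR^{+}$. The Hodge filtration of $\SGTKp{T}$ at the places $\tau\in T'^{c}$ therefore comes from $\SGTKp{T'}$, and at $\tau\in I$ it comes from the Grothendieck-Messing map of $\mcal{M}_{T\to T'}$. This matches the $\Fl^{\std}$-part of the formula for $\JL^{per}$. Together with Step 1, this yields the first commutative diagram.

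\textbf{Step 3 (de Rham stacks).} Apply $(-)^{\dR}$ to the first diagram and use the natural transformation $\mrm{id}\to(-)^{\dR}$. By Theorem \ref{thmLAstrucGMHTperIntro} and its local analogue, $\pi^{\dR}_{\GM\HT}$ is the descent of $\pi_{\GM\HT}$ along the $!$-surjections $r_{T}$ and $r_{T'}\times r_{T,I}$. The de Rham diagram then commutes after pullback along these maps. Since descendable maps detect morphisms into $\PerT{T}^{\dR}/G^{c,\an}_{\Sigma_{\infty}}$, it commutes before pullback, and compatibility of the two diagrams holds by construction.

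The main obstacle is Step 2. One must keep track of the $P^{\std,c}$-reduction and not only the underlying $G^{c,\an}$-torsor, and verify that the $M^{c}$-identifications agree on the overlap where the $\mu_{I}$-modification interacts with the Hodge-Tate filtration of $\SGTKp{T'}$. I would handle this by working on the graded pieces of the $t$-adic filtration and reducing to the rank-one computation on each $\tau$-factor.
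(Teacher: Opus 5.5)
There is a genuine gap, and it sits in Step 3. The target $\PerT{T}^{\dR}/G^{c,\an}_{\Sigma_{\infty}}$ is a quotient by the analytic group $G^{c,\an}_{\Sigma_\infty}$, not by its de Rham stack. So a morphism $\SGTKp{T}^{\dR}\to\PerT{T}^{\dR}/G^{c,\an}_{\Sigma_\infty}$ includes a $G^{c,\an}$-torsor on the analytic de Rham stack, i.e.\ the de Rham torsor $\bar{\mcal{G}}^{c}_{T,\dR,K^{p}}$ together with its connection.

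Applying $(-)^{\dR}$ to the perfectoid square only gives a square over $\PerT{T}^{\dR}/(G^{c,\an}_{\Sigma_\infty})^{\dR}$, which forgets exactly this datum. Pullback along the $!$-surjection $r_T$ does not recover it either: two torsors on $\SGTKp{T}^{\dR}$ with different descent data (connections) have isomorphic pullbacks to $\SGTKp{T}$. Hence your claim that descendable maps detect morphisms into $\PerT{T}^{\dR}/G^{c,\an}_{\Sigma_\infty}$ fails for this stacky target, and $\pi^{\dR}_{\GM\HT}$ is not determined by $\pi_{\GM\HT}$.

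The paper's first move is the fix. Apply Theorem \ref{propCompatibleGcTorsor} on the de Rham stacks themselves, not only its pullback as you do. Then both composites in the second square carry the same torsor $p_1^{*}\bar{\mcal{G}}^{c}_{T',\dR,K^{p}}$. What remains is a comparison of equivariant maps from the total spaces of the torsors into the de Rham stacks of the period domains. This can be checked after pulling back, via $\mathrm{id}\to(-)^{\dR}$, to $\mcal{G}^{c}_{T',\dR,K^{p}}\times\mcal{M}_{T\to T'}$ at the perfectoid level.

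Your Step 2 leaves the real content open. You flag the matching of the $P^{\std,c}$-reductions and the $M^{c}$-gluing as unresolved yourself. The lattice argument you propose would need the isomorphism of Theorem \ref{propCompatibleGcTorsor} to be filtered and compatible with the Hodge--Tate comparisons on graded pieces. That isomorphism is built from $R\Gamma((\fg,*_{T'}),-)$, the local Riemann--Hilbert comparison for $\mcal{M}_{T\to T'}$, and $E_{0}(D_{\sen}(-))$, and no such filtered compatibility is available. Also, the $\BdR$-level isomorphism you invoke appears in the proof of that theorem; Corollary \ref{corIdeSMstructureSh} only concerns $\mO$.

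The paper avoids all filtration bookkeeping. Once the torsors are identified, the diagram out of $\mcal{G}^{c}_{T',\dR,K^{p}}\times\mcal{M}_{T\to T'}$ commutes on diamonds directly from the moduli interpretation. The Hodge--Tate and Grothendieck--Messing data are both modifications of bundles on the Fargues--Fontaine curve, and $\JL$ in Theorem \ref{thmProductFormulaQuate} is built by composing such modifications. This diamond-level commutativity is then promoted to adic spaces: $\mcal{G}^{c}_{T,\dR,K^{p}}$ is smoothoid (not perfectoid), and diamondification is fully faithful on smoothoid adic spaces \cite[Lemma 2.6 (4)]{Heuer2022moduli}. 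That reduction to a moduli-theoretic check on diamonds is the idea your proposal is missing. Your Step 1 is fine and is a special case of it.
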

\begin{proof}

By Theorem \ref{propCompatibleGcTorsor}, we have an isomorphism \[
    \JL^{\dR,*}(\bar{\mcal{G}}^{c}_{T,\dR,K^{p}})
    \cong p_{1}^*(\bar{\mcal{G}}^{c}_{T',\dR,K^{p}}).
\] Therefore, it suffices to show that we have the following commutative diagram:
\[
\begin{tikzcd}[column sep=0.15in]
\bar{\mcal{G}}^{c}_{T',\dR,K^{p}}\times \mcal{M}_{T\to T'}^{\dR}\arrow[d,"\JL^{\dR}"]
\arrow[r] & \left(
	(G_{T'}^{\an}\times G^{c,\an}_{\Sigma_{\infty}})/(P_{\mu_{T'}}^{\an}
	\times_{M^{c,\an}_{\mu_{T'}}}
P^{\mrm{std},c,\an}_{\mu_{T'}})
\right)^{\dR}\times 
\left((G_{T}^{\an}\times G_{T'}^{\an})/(P_{\mu_{I^{c}}}^{\an}\times_{M_{\mu_{I^{c}}}^{\an}}
P^{\mrm{std},\an}_{\mu_{I^{c}}})
\right)^{\dR}
\arrow[d,"\JL^{per,\dR}"]  \\
    \bar{\mcal{G}}^{c}_{T,\dR,K^{p}}
\arrow[r] & \left((G_{T}^{\an}\times G^{c,\an}_{\Sigma_{\infty}})/(P_{\mu_{T}}^{\an}\times_{M^{c,\an}_{\mu_{T}}}
P^{\mrm{std},c,\an}_{\mu_{T}})\right)^{\dR}.
\end{tikzcd}
\] For this, it suffices to show the following diagram of perfectoid spaces commutes
\[
\begin{tikzcd}[column sep=0.15in]
\bar{\mcal{G}}^{c}_{T',\dR,K^{p}}\times \mcal{M}_{T\to T'}\arrow[d,"\JL"]
\arrow[r] & \left(
	(G_{T'}^{\an}\times G^{c,\an}_{\Sigma_{\infty}})/(P_{\mu_{T'}}^{\an}
	\times_{M^{c,\an}_{\mu_{T'}}}
P^{\mrm{std},c,\an}_{\mu_{T'}})
\right)\times 
\left((G_{T}^{\an}\times G_{T'}^{\an})/(P_{\mu_{I^{c}}}^{\an}\times_{M_{\mu_{I^{c}}}^{\an}}
P^{\mrm{std},\an}_{\mu_{I^{c}}})
\right)
\arrow[d,"\JL^{per}"]  \\
    \bar{\mcal{G}}^{c}_{T,\dR,K^{p}}
\arrow[r] & \left((G_{T}^{\an}\times G^{c,\an}_{\Sigma_{\infty}})/(P_{\mu_{T}}^{\an}\times_{M^{c,\an}_{\mu_{T}}}
P^{\mrm{std},c,\an}_{\mu_{T}})\right),
\end{tikzcd}
\]
where \(\mcal{G}_{T,\dR,K^{p}}^{c}\) denotes the pull-back of \(\bar{\mcal{G}}_{T,\dR,K^{p}}^{c}\) along \(
\SGTKp{T}\to \SGTKp{T}^{\dR}
\), which is a smoothoid adic space. 

The diagram commutes as diamonds by considering the moduli interpretation and the construction of \( \JL \) 
in Theorem \ref{thmProductFormulaQuate}. 
Then we conclude that the diagram of adic spaces also commutes, since the diamondification functor is fully faithful on smoothoid adic spaces by \cite[Lemma 2.6 (4)]{Heuer2022moduli}.
\end{proof}
\begin{rmk}
Alternatively, the commutativity of the diagram of the diamonds is sufficient for implying the commutativity for their analytic de Rham stacks by \cite{AnschützBoscoLeBrasCamargoScholze2025analyticrhamstacksfarguesfontaine}.
\end{rmk}

\subsubsection{For locally analytic Shimura varieties}
We have the following neat corollary, which will not be used later.
\begin{cor}\label{corJLlaIsGlaTorsor}
\( \JL_{T\to T',K^{p}} \) has a canonical factorization \[
\JL^{\la}_{T\to T',K^{p}}:\SGTKp{T'}^{\la}\times \mcal{M}_{T\to T'}^{\la}\to \SGTKp{T}^{\la},
\] which is a \( G_{T'}(\QQ_{p})^{\la} \)-torsor. 
\end{cor}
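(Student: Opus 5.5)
The plan is to deduce the factorization from the Cartesian diagram of Theorem \ref{thmLAstrucGMHTperIntro} (\cite[Theorem \ref{1-thmLAstrucGMHTper}]{Jiang2026Shla}) together with its local analogue for \(\mcal{M}_{T\to T'}\), using Theorem \ref{thmJLCompatiblePeriodMaps} to compare the de Rham data. Concretely, \(\SGTKp{T}^{\la}\) is identified with the fiber product \(\SGTKp{T}^{\dR}\times_{\PerT{T}^{\dR}/G^{c,\an}}\PerT{T}/G^{c,\an}\). Similarly, \(\SGTKp{T'}^{\la}\times\mcal{M}_{T\to T'}^{\la}\) is identified with
\[(\SGTKp{T'}^{\dR}\times\mcal{M}_{T\to T'}^{\dR})\times_{(\PerT{T'}^{\dR}/G^{c,\an})\times\Per_{T\to T'}^{\dR}}\left((\PerT{T'}/G^{c,\an})\times\Per_{T\to T'}\right).\]

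The morphism \(\JL^{\la}\) is then defined as the fiber product of three maps: \(\JL^{\dR}_{T\to T',K^{p}}\) on the de Rham stacks, \(\JL^{per}_{T\to T'}\) on the period domains, and \(\JL^{per,\dR}_{T\to T'}\) on their de Rham stacks. The second commutative square of Theorem \ref{thmJLCompatiblePeriodMaps} says precisely that these three maps are compatible, so they induce a map of fiber products. The compatibility of the two squares in that theorem, together with the natural transformation \(\mrm{id}\to(-)^{\dR}\) and the fact that \(\SGTKp{T}\to\SGTKp{T}^{\la}\) is induced by the period map, shows that precomposing with \(\SGTKp{T'}\times\mcal{M}_{T\to T'}\to\SGTKp{T'}^{\la}\times\mcal{M}_{T\to T'}^{\la}\) recovers \(\JL_{T\to T',K^{p}}\). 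This gives the factorization. For canonicity I would argue that \((-)^{\la}\) is functorial for maps compatible with the period data, so no choices enter.

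For the torsor statement, I would use that a fiber product of torsors over a compatible diagram of groups is a torsor. By Theorem \ref{thmJLinTateSt} (2), \(\JL^{\dR}\) is a \(G_{T'}(\QQ_{p})^{\sm}\)-torsor onto the \(I\)-basic locus. By Lemma \ref{lemJLMapForPer}, \(\JL^{per}\) is a \(G_{T'}^{\an}\)-torsor, and then \(\JL^{per,\dR}\) is a \(G_{T'}^{\an,\dR}\)-torsor, using that \((-)^{\dR}\) commutes with quotients by \(G^{\an}\) (via \cite{AnschützBoscoLeBrasCamargoScholze2025analyticrhamstacksfarguesfontaine}).

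Granting the identification \(G_{T'}(\QQ_{p})^{\la}\cong G_{T'}(\QQ_{p})^{\sm}\times_{G_{T'}^{\an,\dR}}G_{T'}^{\an}\) from \cite[Definition \ref{1-dfnIncarnationGroup}]{Jiang2026Shla}, the induced map of fiber products is a torsor under this fiber product group. The target of \(\JL^{\la}\) is then the \(I\)-basic open \(\SGTKp{T}^{\la,I-\bc}\subset\SGTKp{T}^{\la}\).

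The main obstacle is this group-theoretic step. One must check that the three group actions are intertwined by the two legs of the fiber product: the \(G_{T'}(\QQ_{p})^{\sm}\)-action on the de Rham side must map to \(G_{T'}^{\an,\dR}\) compatibly with the \(G_{T'}^{\an}\)-action on \(\Per_{T\to T'}\). My first attempt would be to verify this on perfectoid points, where both actions come from the \(G_{T'}(\QQ_{p})\)-action on \(\mcal{M}_{T\to T'}\) via \(\pi_{\GM\HT,T\to T'}\), and then pass to de Rham stacks by the same argument as in Theorem \ref{thmJLinTateSt}. I would also need that the relevant fiber product of torsors is formed in Tate stacks, so that the torsor property can be checked after the \(!\)-surjective base change \(\SGTKp{T}\to\SGTKp{T}^{\dR}\).
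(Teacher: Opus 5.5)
Your proposal is correct and follows essentially the paper's own argument. The paper likewise pulls back the de Rham square of Theorem \ref{thmJLCompatiblePeriodMaps} along \(\PerT{T}\to\PerT{T}^{\dR}\) and \(\Per_{T\to T'}\to\Per_{T\to T'}^{\dR}\), and uses the global and local Cartesian diagrams of \cite{Jiang2026Shla} (Theorem \ref{thmLAstrucGMHTperIntro} and its local analogue) to obtain \(\JL^{\la}\). It combines the torsor structures from Theorem \ref{thmJLinTateSt} and Lemma \ref{lemJLMapForPer}, and identifies \(G_{T'}\times_{G_{T'}^{\dR}}G_{T'}(\QQ_{p})^{\sm}\) with \(G_{T'}(\QQ_{p})^{\la}\); it leaves implicit the compatibility of the three group actions that you rightly flag as the check to be made.
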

\begin{proof}
This follows from \cite[Theorem \ref{1-thmLAstrucGMHTper} (2), Theorem \ref{1-thmGMHTPeriMapLocal} (2)]{Jiang2026Shla},
Theorem \ref{thmJLCompatiblePeriodMaps}, Lemma \ref{lemJLMapForPer},
and Theorem \ref{thmJLinTateSt}. 

More precisely, 
\( \JL^{\dR} \) is a \( G_{T'}(\QQ_{p})^{\sm} \)-torsor by  Theorem \ref{thmJLinTateSt}.
\( \JL^{per} \) is a \( G_{T'} \)-torsor by  Lemma \ref{lemJLMapForPer},
and thus \( \JL^{per,\dR} \) 
is a \( G_{T'}^{\dR} \)-torsor by \cite[Theorem \ref{1-thmAnDRStackGeneral} (1)]{Jiang2026Shla}. 
We can pulling back the diagram in Theorem \ref{thmJLCompatiblePeriodMaps} along \( \PerT{T}\to \PerT{T}^{\dR} \) and \( \Per_{T\to T'}\to \Per_{T\to T'}^{\dR} \), and  
using the Cartesian diagrams in \cite[Theorem \ref{1-thmLAstrucGMHTper} (2) and Theorem \ref{1-thmGMHTPeriMapLocal} (2)]{Jiang2026Shla}, we obtain the desired map \[
\JL^{\la}:\SGTKp{T'}^{\la}\times \mcal{M}_{T\to T'}^{\la}\to \SGTKp{T}^{\la},
\] which is a torsor for \( G_{T'}\times_{G_{T'}^{\dR}}G_{T'}(\QQ_{p})^{\sm} \), 
which is precisely \( G_{T'}(\QQ_{p})^{\la} \). 
\end{proof}
We conclude with the following locally analytic Jacquet-Langlands transfer, which is the main output of the lengthy preparation:
\begin{notation}\label{notationJsmMTT}
The space
\( \Per_{T\to T'}\cong \prod_{\tau\in\Sigma_{\infty}}\Per_{T\to T',\tau} \), where \[ \Per_{T\to T',\tau}:=(G_{T,\tau}\times G_{T',\tau})/(P_{\mu_{I^{c}},\tau}\times_{M_{\mu_{I^{c}},\tau}}P^{\std}_{\mu_{I^{c}},\tau}). \]
For any \( J\subset \Sigma_{\infty} \), 
we denote \(
\Per_{T\to T',J}
:=\prod_{\tau\in J}\Per_{T\to T',\tau}.
\) We denote the natural morphisms as \[
\begin{tikzcd}
\Per_{T\to T'}\arrow[r,"p_{T\to T',J}"]
\arrow[d,"\beta_{per,I}"]
& \Per_{T\to T',J} 
\arrow[r,"\pi_{\HT,T\to T'}^{per,J}"]
\arrow[d,"\beta_{per,I,J}"]
& 
\FLTT{T}{I^{c}}
\\
\Per_{T\to T'}^{\dR}\arrow[r,"p_{T\to T',J}^{\dR}"]
& \Per_{T\to T',J}^{\dR},
\end{tikzcd}
\] where \( \pi^{per,J}_{\HT,T\to T'} \)
is defined only when \( J\supset I\).

We define
\( \mcal{M}_{T\to T'}^{\la,J-*_{T'}-\sm} \) via the following pull-back diagram
\[
\begin{tikzcd}
\mcal{M}_{T\to T'}^{\la,J-*_{T'}-\sm}\arrow[d,"\beta^{J-*_{T'}-\sm}_{T\to T'}"]
\arrow[r] & \Per_{T\to T',J}^{\dR/\Fl_{G_{T},\mu_{I^{c}},I\cap J}} 
\times \Per_{T\to T',J^{c}}\arrow[d,"\beta^{per,J-*_{T'}-\sm}_{T\to T'}"]
\arrow[r,"\pi^{per,J-*_{T'}-\sm}_{\HT,T\to T'}"]
& \FLTT{T}{I^{c}}
\\
\mcal{M}_{T\to T'}^{\dR}
\arrow[r] & \Per_{T\to T'}^{\dR}.
\end{tikzcd}
\] In particular, 
we have a natural map \( \pi_{\HT,T\to T'}^{\la,J-*_{T'}-\sm}:\mcal{M}^{\la,J-*_{T'}-\sm}_{T\to T'} \to \FLTT{T}{I^{c}} \).

For any \( I \)-multiweight \( \kw \), we write \[\omega^{\kw,J-*_{T'}-\sm}_{T\to T'}:= (\beta_{T\to T'}^{J-*_{T'}-\sm})_{*}
	(\pi^{\la,J-*_{T'}-\sm}_{\HT,T\to T'})^{*}(\omega^{\kw}_{\FLTT{T}{I^{c}}}). \]
\end{notation}
\begin{thm}[Locally analytic Jacquet-Langlands transfer]\label{thmJacquetLanglandsLA}
	Let \(T\subset\Sigma_{\infty}\), \(I \subset J\subset T^{c}\), \(T':=T\coprod I\) and \(K\subset T^{\prime,c}\). Let \( \kw \) be a regular multiweight, and \( (\Bbbk',w') \) be a \( T^{c} \)-multiweight. Assume that \(k_{\tau}\in\ZZ_{\le 0}\)
	for \(\tau\in J\)
	and \(k'_{\tau}\in\ZZ_{\le0}\)
	for \(\tau\in K\).

	Consider the diagram \[\begin{tikzcd}
		&
	\SGTKp{T'}^{\dR}\times_{\bar{\QQ}_{p}} \mcal{M}_{T\to T'}^{\dR}\arrow[rd,"\JL^{\dR}_{T\to T',K^{p}}"]\arrow[ld,"p_{1}"]
	\\
	\SGTKp{T'}^{\dR} & & \SGTKp{T}^{\dR}.
	\end{tikzcd}\]

	Then
	there is a \( G_{T'}(\QQ_{p})^{\sm}\times G_{T}(\QQ_{p})^{\la}\)-equivariant isomorphism
	\begin{align*}
	(&\JL^{\dR}_{T\to T',K^{p}})^{*}\left(\dR_{J,K}(\nabla^{\kw,(\Bbbk',w')}_{\SGTKp{T}})\right)\\&\cong R\Gamma\left((\fg_{T^{\prime,c}},*_{T'}),\dR_{J\backslash I,K}(\nabla^{\kw,(\Bbbk'|_{T^{\prime,c}},w')}_{\SGTKp{T'}})
	\boxtimes 
	\omega^{(\Bbbk'|_{I},w'),T'-*_{T'}-\sm}_{T\to T'}
	\right),
	\end{align*}
	over \(\SGTKp{T'}^{\dR}\times\mcal{M}_{T\to T'}^{\dR}\).
	
	Moreover, after taking limit along \( K^{p}\subset G_{T}(\mA_{f}^{p})=G_{T'}(\mA_{f}^{p}) \), the induced isomorphism  is \( G_{T}(\mA_{f}^{p})^{\sm} \)-equivariant.
\end{thm}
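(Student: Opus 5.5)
The plan is to reduce the statement to a computation on period domains, using the descent of partial de Rham complexes to \(\Per^{\dR}\) together with the compatibility of \(\JL^{\dR}\) with the Grothendieck-Messing-Hodge-Tate period maps.

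First, by Lemma \ref{lemPartialDRasPullBack} we write
\[
\dR_{J,K}(\nabla^{\kw,(\Bbbk',w')}_{\SGTKp{T}})\cong (\pi^{\dR}_{\GM\HT,T})^{*}\dR_{J,K}(\nabla^{\kw,(\Bbbk',w')}_{\PerT{T}}).
\]
By Theorem \ref{thmJLCompatiblePeriodMaps}, the pull-back along \(\JL^{\dR}_{T\to T',K^{p}}\) becomes the pull-back along \(\pi^{\dR}_{\GM\HT,T',K^{p}}\times\pi^{\dR}_{\GM\HT,T\to T'}\) of \((\JL^{per,\dR}_{T\to T'})^{*}\dR_{J,K}(\nabla_{\PerT{T}})\). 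The compatibility of \(G^{c,\an}\)-torsors in Theorem \ref{propCompatibleGcTorsor} guarantees that the \(\GM\)-side factor, which is built from \(\bar{\mcal{G}}^{c}\), matches on both sides. The \(G_{T}(\QQ_{p})^{\la}\)- and \(G_{T'}(\QQ_{p})^{\sm}\)-equivariance, and the equivariance under \(G_{T}(\mA_{f}^{p})^{\sm}\) after passing to the limit over \(K^{p}\), are inherited from Theorem \ref{thmJLinTateSt}. The problem is thereby reduced to an identity on \(\PerT{T'}^{\dR}\times\Per^{\dR}_{T\to T'}\).

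Second, we compute \((\JL^{per,\dR})^{*}\) explicitly, using the description of \(\JL^{per}\) in Lemma \ref{lemJLMapForPer}. Everything decomposes over \(\tau\in\Sigma_{\infty}\), so the identity can be checked factor by factor.

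For \(\tau\in T'^{c}\), the \(\tau\)-factor of \(\PerT{T}\) is the same as that of \(\PerT{T'}\), and \(\Per_{T\to T',\tau}\) is trivial there. Hence the partial complexes in the directions \(\tau\in J\backslash I\) and \(\tau\in K\) pull back to the corresponding complexes over \(\PerT{T'}\).

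For \(\tau\in I\), the \(\GM\)-direction of \(\PerT{T}\) maps to the \(G_{T'}\)-side \(\GM\)-factor of \(\Per_{T\to T',\tau}\). The \(\HT\)-direction of \(\PerT{T}\) corresponds to \(\FLTT{T}{I^{c}}\), and the \(d_{\tau}\) operators become the relative de Rham operator of \(\Per_{T\to T',\tau}\) over \(\FLTT{T}{I^{c}}\). Taking the full \(d\)-complex in the \(\tau\)-directions (\(\tau\in I\subset J\)) then produces the relative de Rham pushforward, which is precisely the \(T'\)-\(*_{T'}\)-smooth realization \(\omega^{(\Bbbk'|_{I},w'),T'-*_{T'}-\sm}_{T\to T'}\).

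Along the way, the algebraic representation \(V_{I}^{\kw}\) appearing in the \(\GM\)-side BGG complex must be absorbed: it cancels against the \(\fg_{I}\)-invariants via the Poincaré-lemma identity of Example \ref{egdRITc}. We expect this relative Poincaré lemma on the period domain (Kashiwara-type: de Rham along the \(G_{T'}\)-fibre of \(\pi^{per}_{\HT,T\to T'}\)) to be the main obstacle, together with the bookkeeping of the \(\hp\)-filtration twists \(\{\cdot\}\) and of the Tate twists.

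Finally, the \(R\Gamma((\fg_{T'^{c}},*_{T'}),-)\) on the right-hand side should appear because \(\JL^{per}\) is a \(G_{T'}^{\an}\)-torsor. Descending a quasicoherent sheaf along it, while twisting by the diagonal action, amounts to taking derived \(\fg\)-invariants for the diagonal \(*_{T'}\)-action. On the \(I\)-factors this step is already performed by the smoothness realization, leaving only \(\fg_{T'^{c}}\).

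To conclude, we pull everything back along the Cartesian squares of \cite[Theorem \ref{1-thmLAstrucGMHTper}]{Jiang2026Shla} and invoke proper base change \cite[Lemma \ref{1-lemSmBaseChange}]{Jiang2026Shla}. This identifies the two sides over \(\SGTKp{T'}^{\dR}\times\mcal{M}^{\dR}_{T\to T'}\), and the equivariance statements follow from the naturality of each step.
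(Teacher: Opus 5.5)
Your reduction is the paper's route: Lemma \ref{lemPartialDRasPullBack}, transport along Theorem \ref{thmJLCompatiblePeriodMaps}, a \(\tau\)-by-\(\tau\) computation of \((\JL^{per,\dR}_{T\to T'})^{*}\), then proper base change. The problem is the factorwise computation, which is the real content (Lemma \ref{lemJLtransferLAonPer}). You assign the pieces to the wrong sides, and the result would not be the stated right-hand side.

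\textbf{The \(I\)-directions.} The sheaf \(\omega^{(\Bbbk'|_{I},w'),T'-*_{T'}-\sm}_{T\to T'}\) carries the Hodge--Tate weights \(\Bbbk'|_{I}\). So it cannot be produced by the \(d_{\tau}\)-complex, which is pulled back from the Grothendieck--Messing flag variety and carries \(\Bbbk|_{I}\). It is simply the HT-side line bundle \(\omega^{(\Bbbk',w')}_{\FLT{T}}\) in those directions; no \(\bar d_{\tau}\) acts there, since \(K\subset T^{\prime,c}\). On both sides the sheaf is \((\JL^{\dR}_{\HT,\tau})^{*}(\beta_{\Fl,\tau})_{*}\omega^{(\Bbbk'|_{\tau},w')}\).

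Nor does \(V_{I}\) cancel against \(\fg_{I}\)-invariants; it must survive. Indeed \(\omega^{\kw,\sm}_{\SGTKp{T'}}\) has rank \(\prod_{\tau\in T'}|k_{\tau}-1|\), which is larger than the rank of \(\omega^{\kw,\sm}_{\SGTKp{T}}\) by the factor \(\dim V_{I}\). The correct mechanism is as follows. Since \(I\subset J\), the full dual BGG complex in the \(I\)-directions resolves \(V_{I}\). Hence \(\dR_{J}(\omega^{\kw}_{\FLT{T}^{\std}})\) is the pull-back of \(\dR_{J\backslash I}(\omega^{\kw}_{\FLT{T'}^{\std}})\), whose coefficient \(W_{T'}\) contains the factor \(V_{I}\), along \(\FLT{T}^{\std,\dR}/G^{c,\an}\to\FLTT{T}{T'}^{\std,\dR}/G^{c,\an}\). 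Moreover \(\JL^{\dR}_{\GM}\) composed with this projection equals \(\pi^{per,\dR}_{\GM,T'}\circ p_{1}\). So the entire GM side pulls back to \(p_{1}^{*}\) of the corresponding object, with no Lie algebra cohomology. The Kashiwara-type relative Poincaré lemma along fibres of \(\pi^{per}_{\HT,T\to T'}\) that you flag as the main obstacle is not needed.

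\textbf{The \(T^{\prime,c}\)-directions.} Here \(\Per_{T\to T',\tau}\) is not trivial. Since \(\mu_{I^{c}}\) is trivial at such \(\tau\), one has \(\Per_{T\to T',\tau}=(G_{T,\tau}\times G_{T',\tau})/G_{\tau}\cong\GL_{2}\). The map \(\JL_{\HT,\tau}\) is the action map \((B\backslash\GL_{2})\times\GL_{2}\to B\backslash\GL_{2}\), a torsor for the diagonal \(G_{T',\tau}\).
\begin{itemize}
\item Via \(N\backslash\GL_{2}\) one has \(\JL_{\HT,\tau}^{*}\omega\cong p_{1}^{*}\omega\).
\item However, the square comparing \(\JL_{\HT,\tau}\) with \(\JL^{\dR}_{\HT,\tau}\) is not Cartesian.
\item Arguing as in Corollary \ref{corIdeSMstructureSh}, the pull-back of \((\beta_{\Fl,\tau})_{*}\omega\) is therefore \(R\Gamma((\fg_{\tau},*_{T'}),-)\) applied to \((\beta_{\Fl,\tau})_{*}\omega\boxtimes(\beta_{\GL_{2}})_{*}\mO_{\GL_{2}}\).
\end{itemize}
This is the sole origin of \(R\Gamma((\fg_{T^{\prime,c}},*_{T'}),-)\), and it affects only the HT side: the \(\bar d\)-complexes in \(K\) and the weights \(\Bbbk'|_{T^{\prime,c}}\).

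Consequently, your claim that in these directions the complexes in \(K\) pull back to the corresponding complexes over \(\PerT{T'}\) is false; it holds only for the \(d\)-complexes in \(J\backslash I\). The claim also contradicts your final paragraph, which needs a torsor precisely in the directions where you declared \(\Per_{T\to T',\tau}\) trivial. Finally, the invariants come from this failure of base change, not from descending along \(\JL^{per}\): the left-hand side is a pull-back.
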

\begin{proof}
By \cite[Theorem \ref{1-thmAnDRStackGeneral} (3)]{Jiang2026Shla},
we can apply proper base change (\cite[Lemma \ref{1-lemSmBaseChange} (2)]{Jiang2026Shla}) to the Cartesian diagram in Notation \ref{notationJsmMTT}. Then by Theorem \ref{thmJLCompatiblePeriodMaps}
and Lemma \ref{lemPartialDRasPullBack}, the proof 
boils down to the third isomorphism in Lemma \ref{lemJLtransferLAonPer}
below.
\end{proof}
\begin{lem}\label{lemJLtransferLAonPer}
Let \( T\subset \Sigma_{\infty} \), \( I\subset J\subset T^{c} \), \( K\subset T^{\prime,c} \), \( \kw \) and \( (\Bbbk',w') \)
as in Theorem \ref{thmJacquetLanglandsLA}.

Consider \[
\JL^{per,\dR}_{T\to T'}:
\PerT{T'}^{\dR}/G^{c,\an}_{\Sigma_{\infty}}
\times_{\bar{\QQ}_{p}}
\Per_{T\to T'}^{\dR}
\to 
\PerT{T}^{\dR}/G^{c,\an}_{\Sigma_{\infty}}
\] as in Theorem \ref{thmJLCompatiblePeriodMaps}. 

Then we have the following isomorphisms: \begin{align*}
(&\JL^{per,\dR}_{T\to T'})^{*}\left(
	(\pi_{\GM,T}^{per})^{*}(\dR_{J}(\omega_{\FLT{T}^{\std}}^{\kw}))
\right)
\cong p_{1}^{*}\left(
	(\pi^{per}_{\GM,T'})^{*}(\dR_{J\backslash I}(\omega_{\FLT{T'}^{\std}}^{\kw}))
\right),\\
(&\JL^{per,\dR}_{T\to T'})^{*}\left(
	(\pi_{\HT,T}^{per})^{*}(\bar{\dR}_{K}(\omega_{\FLT{T}}^{(\Bbbk',w')}))
\right)\\&\cong 
R\Gamma\left((\fg_{T^{\prime,c}},*_{T'}),
(\pi_{\HT,T}^{per})^{*}(\bar{\dR}_{K}(\omega_{\FLT{T}}^{(\Bbbk'|_{T^{\prime,c}},w')}))\boxtimes (\beta^{per,T'-*_{T'}-\sm}_{T\to T'})_{*} (\pi^{per,T'-*_{T'}-\sm}_{\HT,T\to T'})^{*}
(\omega^{(\Bbbk'|_{I},w')}_{\FLTT{T}{I}})
\right),\\
(&\JL^{per,\dR}_{T\to T'})^{*} \left(
	\dR_{J,K}(\nabla^{\kw,(\Bbbk',w')}_{\PerT{T}})\right)
\\&\cong 
R\Gamma\left((\fg_{T^{\prime,c}},*_{T'}),\dR_{J\backslash I,K}(\nabla_{\PerT{T'}}^{\kw,(\Bbbk'|_{T^{\prime,c}},w')})\boxtimes 
(\beta^{per,T'-*_{T'}-\sm}_{T\to T'})_{*} (\pi^{per,T'-*_{T'}-\sm}_{\HT,T\to T'})^{*}
(\omega^{(\Bbbk'|_{I},w')}_{\FLTT{T}{I}})
\right).
\end{align*}
\end{lem}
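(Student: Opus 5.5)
The plan is to reduce everything to the explicit group-theoretic description of $\JL^{per}_{T\to T'}$ in Lemma \ref{lemJLMapForPer}, working one embedding $\tau\in\Sigma_{\infty}$ at a time. Both sides of each isomorphism are external tensor products over $\tau$ of sheaves pulled back from the factors $\Per_{T,\tau}$, $\Per_{T',\tau}$, $\Per_{T\to T',\tau}$. The partial de Rham complexes are Koszul-type complexes built from $\theta^{1-k_\tau}_\tau$ and $\bar\theta^{1-k'_\tau}_\tau$ (Notation \ref{notationDiffHilber}, Definition \ref{dfnDRComplexNotation}), and these split compatibly. The third isomorphism follows formally from the first two by tensoring, using the definition of $\dR_{J,K}(\nabla_{\PerT{T}})$ as $(\pi^{per,\dR}_{\GM,T})^{*}\dR_J\otimes(\pi^{per,\dR}_{\HT,T})^{*}\bar\dR_K$. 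The point to check there is that $R\Gamma((\fg_{T^{\prime,c}},*_{T'}),-)$ commutes with tensoring by the $*_{T'}$-invariant factor coming from the first isomorphism. This should hold because that factor is pulled back from $\PerT{T'}$, where $*_{T'}$ acts trivially after the quotient.

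\textbf{Step 1: the Grothendieck–Messing side.} Under $f\circ i^{-1}$ the $G^{c}_{\Sigma_\infty}$-coordinate is preserved and the $P^{\std,c}_{\mu_T}$-reduction is carried to the $P^{\std,c}_{\mu_{T'}}$-reduction. So $\JL^{per}$ composed with $\pi^{per}_{\GM,T}$ factors through $\pi^{per}_{\GM,T'}\circ p_1$, followed by the natural map $\FLT{T'}^{\std}\to\FLT{T}^{\std}$ forgetting the $I$-factors. I will reorganize the dual BGG complex $\mcal{G}^{\std,c,+}_{\mu_T}(V^{(\Bbbk,-w)})$ by its $\ZZ^{T^c}$-filtration. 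On the $\tau\in I$ factors the Hodge cocharacter $\mu_{T'}$ is trivial, so the $I$-part of the de Rham stack is just $V_I\otimes\mO$. Passing to $\gr$ along $T^c\setminus J$ then identifies the pullback of $\dR_J$ with the pullback of $\dR_{J\setminus I}$. The graded degree shifts $\unl{1}_{\kw,\cdot}$ must match; they do, since $k_\tau\le0$ on $I\subset J$. This is a per-$\tau$ computation on $\GL_2$.

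\textbf{Step 2: the Hodge–Tate side.} For $\tau\in T^{\prime,c}$ the $\tau$-factor of $\Fl_{G_T,\mu_T}$ is identified with that of $\Fl_{G_{T'},\mu_{T'}}$ via Lemma \ref{lemIdentifyFlVaroverM}. On these factors the claim is a tautology, except that $\bar\dR$ on the $T$-side is $*_T$-equivariant while the $T'$-side carries the $*_{T'}$-structure. For $\tau\in I$, the $\tau$-factor of $\Fl_{G_T,\mu_T}$ is the target of $\pi_{\HT,T\to T'}$ on the local period domain, so the pullback of $\omega^{(k'_\tau,w')}$ lives on $\Per_{T\to T',\tau}$. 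Taking $\bar\dR$ along $\tau\in K\cap I$ means taking de Rham complexes relative to $\Fl_{G_T,\mu_{I^c},I\cap K}$. This is precisely the pushforward $(\beta^{per,T'-*_{T'}-\sm})_{*}(\pi^{per,T'-*_{T'}-\sm}_{\HT})^{*}$ from Notation \ref{notationJsmMTT}, by the projection formula and proper base change (\cite[Lemma \ref{1-lemSmBaseChange} (2)]{Jiang2026Shla}). The remaining $R\Gamma((\fg_{T^{\prime,c}},*_{T'}),-)$ arises because the $G_{T'}^{\an}$-torsor $\JL^{per}$, once passed to $(-)^{\dR}$, becomes a $G^{\dR}_{T'}$-torsor. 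Descending along it amounts to taking derived $\fg_{T'}$-invariants for the diagonal $*_{T'}$-action. On the $I$-components this action is already absorbed into the $*_{T'}$-smooth construction, which leaves $\fg_{T^{\prime,c}}$.

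\textbf{Main obstacle.} I expect the hardest part to be this descent step for $\tau\in I\cap K$. It requires identifying the relative de Rham complex along the $\FLTT{T}{I^c}$-direction on $\Per_{T\to T',\tau}$ with $*_{T'}$-smooth pushforward, and keeping track of the twists $\{\cdot\}$ and Tate twists. To do this I would compute explicitly on a single $\Per_{T\to T',\tau}\cong(\GL_2\times\GL_2)/(B\times_H\bar B)$, using \cite[Corollary \ref{1-corFilterDualBGG}]{Jiang2026Shla} for the BGG identification on $\mP^1$ and \cite[Theorem \ref{1-thmAnDRStackGeneral} (1)]{Jiang2026Shla} to compare $G^{\dR}$-invariants with Lie algebra cohomology.
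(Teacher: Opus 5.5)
Your overall route is the paper's: the third isomorphism is formal from the first two, and the first two are reduced, via the Grothendieck--Messing and Hodge--Tate projections, to statements about the induced flag-level maps, which are then checked one $\tau$ at a time. However, two steps are wrong as written; both are repairable.

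\textbf{Step~1.} The forgetful map goes the other way. Forgetting the $I$-factors is $q:\FLT{T}^{\std}\to\FLT{T'}^{\std}$. Moreover, $\pi^{per}_{\GM,T}\circ\JL^{per}_{T\to T'}$ does not factor through $\pi^{per}_{\GM,T'}\circ p_1$, since its $I$-components are governed by the Grothendieck--Messing map $\Per_{T\to T'}\to\FLTT{T'}{I^{c}}^{\std}$. The correct compatibility is the triangle $q\circ\pi^{per,\dR}_{\GM,T}\circ\JL^{per,\dR}_{T\to T'}=\pi^{per,\dR}_{\GM,T'}\circ p_1$. It must be combined with the following fact: because $I\subset J$, the $I$-directions of $\dR_J$ are full dual BGG resolutions, so $\dR_J(\omega^{\kw}_{\FLT{T}^{\std}})\cong q^{*}\dR_{J\backslash I}(\omega^{\kw}_{\FLT{T'}^{\std}})$. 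This is what your remark about $V_I\otimes\mO$ should say. No shift bookkeeping is needed, since $T^{c}\backslash J=T^{\prime,c}\backslash(J\backslash I)$.

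\textbf{Step~2.} You have misread the indices: $K\subset T^{\prime,c}$, so $I\cap K=\emptyset$ and the ``main obstacle'' you describe never occurs. The relative de Rham structure in $(\beta^{per,T'-*_{T'}-\sm}_{T\to T'})_{*}$ is taken along all of $I$, independently of $K$: this is Notation~\ref{notationJsmMTT} with $T'$ as its index set, so the relative directions are $I\cap T'=I$. For $\tau\in I$ the claim is simply proper base change along $\Per^{\dR/\Fl_{G_T,\mu_T,\tau}}_{T\to T',\tau}\cong\Per^{\dR}_{T\to T',\tau}\times_{\Fl^{\dR}_{G_T,\mu_T,\tau}}\Fl_{G_T,\mu_T,\tau}$, with no differentials or twists involved.

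The case with actual content is $\tau\in T^{\prime,c}$, which you call a tautology. There $\Per_{T\to T',\tau}\cong G_{\tau}$, and the flag-level map is the action map $(B\backslash\GL_2)\times\GL_2\to B\backslash\GL_2$, so the identification of $\tau$-factors is twisted by the $G_\tau$-coordinate. (Lemma~\ref{lemIdentifyFlVaroverM} concerns $\mcal{M}_{T\to T'}$, not the period domains.) The action map is compatible with the Hodge--Tate torsors $N\backslash\GL_2$. This identifies the pullback of $\omega$, and of the BGG complex, with $p_1^{*}$ of the same object. Pushing forward to the analytic de Rham stacks by the Cartesian-square/proper-base-change argument of Corollary~\ref{corIdeSMstructureSh} is what produces $R\Gamma((\fg_{\tau},*_{T'}),-)$.

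Finally, $\fg_T$ is absent because the $T'-*_{T'}-\sm$ construction takes de Rham along all of $T'=T\coprod I$, not only along $I$. For $\tau\in T$ both sides are structure sheaves.
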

\begin{proof}
The third isomorphism follows from the first and the second. 


We start with the second isomorphism. We have a commutative diagram \[
\begin{tikzcd}
\PerT{T'}^{\dR}/G^{c,\an}_{\Sigma_{\infty}}
\times_{\bar{\QQ}_{p}}
\Per_{T\to T'}^{\dR}
\arrow[r]\arrow[d,"\pi_{\HT,T'}^{per}\times 1"]
&
\PerT{T}^{\dR}/G^{c,\an}_{\Sigma_{\infty}}\arrow[d,"\pi_{\HT,T}^{per}"] 
\\ 
\FLT{T'}^{\dR}
\times_{\bar{\QQ}_{p}} \Per_{T\to T'}^{\dR}
\arrow[r,"\JL_{\HT}^{\dR}"]
& \FLT{T}^{\dR}.
\end{tikzcd}
\] where \( \JL_{\HT}^{\dR} \)
is induced by 
\begin{align*}
\JL_{\HT}:\FLT{T'}\times_{\CC_{p}}
\Per_{T\to T'}&
\to (\FLT{T'}\times_{\CC_{p}}
\Per_{T\to T'})/G_{T'}\\
&\cong (G_{T'}\times G_{T})/((P_{\mu_{I^{c}}}^{\std}\cap P_{\mu_{T'}})\times_{M_{\mu_{I^{c}}}}P_{\mu_{I^{c}}})\\
&\cong (G_{T'}\times G_{T})/(P_{\mu_{I^{c}}}^{\std}\times_{M_{\mu_{I^{c}}}}P_{\mu_{T}})
\to \FLT{T}.
\end{align*}
Therefore, it suffices to show that 
we have a \( G_{T'}^{\dR}\times G_{T}  \)-equivariant isomorphism
\begin{align*}
&(\JL^{\dR}_{\HT})^{*}(\bar{\dR}_{K}(\omega^{(\Bbbk',w')}_{\FLT{T}}))\\
&\cong R\Gamma\left(
	(\fg_{T^{\prime,c}},*_{T'}),
	\bar{\dR}_{K}(\omega^{(\Bbbk'|_{T^{\prime,c}},w')}_{\FLT{T'}})\boxtimes (\beta^{per,T'-*_{T'}-\sm}_{T\to T'})_{*} (\pi^{per,T'-*_{T'}-\sm}_{\HT,T\to T'})^{*}(\omega^{(\Bbbk'|_{I},w')}_{\FLTT{T}{I}})
\right).
\end{align*}
Now the spaces decomposes into a product over \( \tau\in\Sigma_{\infty} \),
and the corresponding sheaf also decomposes by \cite[Corollary \ref{1-corPartialBGGDual}]{Jiang2026Shla}. We now verify the isomorphisms for each \( \tau \)-factor separately.

For \( \tau\in T \),
we are considering \[
\AnSpec(\bar{\QQ}_{p,\square})\times_{\bar{\QQ}_{p}}
\Per_{T\to T',\tau}^{\dR}
\xrightarrow{\JL_{\HT,\tau}^{\dR}}\AnSpec(\bar{\QQ}_{p,\square}),
\] and the sheaves on both sides are the structure sheaves.

For \( \tau\in I \),
we are considering \[
\AnSpec(\bar{\QQ}_{p,\square})\times_{\bar{\QQ}_{p}} 
((G_{T',\tau}\times G_{T,\tau})/(P_{\mu_{I}^{c},\tau}^{\std}\times_{M_{\mu_{I^{c}},\tau}}P_{\mu_{I^{c}},\tau}))^{\dR}
\xrightarrow{\JL_{\HT,\tau}^{\dR}}
\Fl_{G_{T},\mu_{T},\tau}^{\dR},
\] where the last maps is induced by \( \pi_{\HT,T\to T',\tau} \). Then the sheaves on both sides are given by \( (\JL_{\HT,\tau}^{\dR})^{*}(\beta_{\Fl,\tau})_{*}(\omega^{(\Bbbk'|_{\{\tau\}},w')}_{\FLTT{T}{\{\tau\}^{c}}}) \)
for \( \beta_{\Fl,\tau}:\FLTT{T}{\{\tau\}^{c}}\to \FLTT{T}{\{\tau\}^{c}}^{\dR} \).

For \( \tau\in T^{\prime,c} \), we are considering \[
\begin{tikzcd}
\Fl_{G_{T'},\mu_{T'},\tau}
\times_{\CC_{p}} (
(G_{T',\tau}\times G_{T,\tau})/G_{T,\tau})\arrow[d]
\arrow[r,"\JL_{\HT,\tau}"] &
\Fl_{G_{T},\mu_{T},\tau}\arrow[d] 
\\
\Fl_{G_{T'},\mu_{T'},\tau}^{\dR}
\times_{\bar{\QQ}_{p}} (
(G_{T',\tau}\times G_{T,\tau})/G_{T,\tau})^{\dR}
\arrow[r,"\JL_{\HT,\tau}^{\dR}"] &
\Fl_{G_{T},\mu_{T},\tau}^{\dR}, 
\end{tikzcd}
\] where \( \JL_{\HT,\tau} \) is a \( G_{T',\tau}  \)-torsor for the diagonal action of \( G_{T',\tau} \) on the LHS. Concretely, \( \JL_{\HT,\tau} \) is the same as \[
(B\backslash \GL_{2})
\times \GL_{2}\to (B\backslash \GL_{2})
\] induced by the action of \( \GL_{2} \) on \( B\backslash \GL_{2} \).
In particular, this map is compatible with the Hodge-Tate torsors via \[
(N\backslash \GL_{2})\times \GL_{2}\to (N\backslash \GL_{2}).
\] Here \( B \) and \( N \)
denote the subgroups of upper-triangular matrices and its unipotent radical respectively. 
Thus \[
\JL_{\HT,\tau}^{*}(\omega_{\Fl_{G_{T},\mu_{T},\tau}}^{(\Bbbk'|_{\tau},w')})\cong p_{1}^{*}(\omega_{\Fl_{G_{T'},\mu_{T'},\tau}}^{(\Bbbk'|_{\tau},w')})
\]
We can then push-forward the isomorphism to the analytic de Rham stack, and conclude by the argument of Corollary \ref{corIdeSMstructureSh}.

We now continue to prove the first isomorphism. Similarly, we have a a commutative diagram \[
\begin{tikzcd}
\PerT{T'}^{\dR}/G^{c,\an}_{\Sigma_{\infty}}
\times_{\bar{\QQ}_{p}}
\Per_{T\to T'}^{\dR}
\arrow[r]\arrow[d,"\pi_{\HT,T'}^{per}\times 1"]
&
\PerT{T}^{\dR}/G^{c,\an}_{\Sigma_{\infty}}\arrow[d,"\pi_{\HT,T}^{per}"] 
\\ 
\PerT{T'}^{\dR}/G^{c,\an}_{\Sigma_{\infty}}
\times_{\bar{\QQ}_{p}}
\FLTT{T'}{I^{c}}^{\std}
\arrow[r,"\JL_{\GM}^{\dR}"]
& \FLTT{T}{T}^{\std,\dR}/G^{c,\an}_{\Sigma_{\infty}}
\end{tikzcd}
\] where \( \JL_{\GM}^{\dR} \)
is induced by \[
\JL_{\GM}:\PerT{T'}
\times \FLTT{T'}{I^{c}}^{\std}
\to \FLT{T}^{\std},
\] which is defined in a similar way as \( \JL_{\HT} \).

Now it suffices to show that \[
p_{1}^{*}\left(
	(\pi^{per}_{\GM,T'})^{*}(\dR_{J\backslash I}(\omega^{\kw}_{\FLT{T'}^{\std}}))
\right)\cong (\JL^{\dR}_{\GM})^{*}(\dR_{J}(\omega^{\kw}_{\FLT{T}^{\std}})).
\]
Since \( J\supset I \), \( \dR_{J}(\omega^{\kw}_{\FLT{T}^{\std}}) \)
is the pull-back of \( \dR_{J\backslash I}(\omega^{\kw}_{\FLT{T'}^{\std}}) \)
along \( \FLT{T}^{\std,\dR}/G_{\Sigma_{\infty}}\to \FLTT{T}{T'}^{\std,\dR}/G_{\Sigma} \) by the proof of \cite[Corollary \ref{1-corPartialBGGDual}]{Jiang2026Shla}. This shows the desired isomorphism, since we have a commutative diagram \[
\begin{tikzcd}
\PerT{T'}^{\dR}/G^{c,\an}_{\Sigma_{\infty}}
\times_{\bar{\QQ}_{p}}
\FLTT{T'}{I^{c}}^{\std}
\arrow[rr,"\JL_{\GM}^{\dR}"]\arrow[rd,"\pi_{\GM,T'}^{per,\dR}\circ p_{1}"]
& & \FLTT{T}{T}^{\std,\dR}/G^{c,\an}_{\Sigma_{\infty}}\arrow[ld]\\
& \FLTT{T}{T'}^{\std,\dR}/G^{c,\an}_{\Sigma_{\infty}}.
\end{tikzcd}
\]
\end{proof}

\subsection{Classicality of partial de Rham cohomology}\label{subsecProofOfClassicality}
We now prove Theorem \ref{thmClassicalityCohomoDR}.
The key ingredients will be Theorem \ref{thmJacquetLanglandsLA}.
In fact, to realize an inductive proof, we will start by proving the following stronger version (Proposition \ref{propClassicalityCohoStronger}), from which Theorem \ref{thmClassicalityCohomoDR} follows by taking \(R\Gamma(\Fl^{\dR}_{G_{T},\mu_{T},I\backslash J},-)\). 
\begin{lem}\label{lemHodgeTatepdRIsPrim}
For any \( J\subset T^{c} \),
we consider the morphism given by the composition \[
	\pi_{\HT,T,J}^{\pdR}:\SGTKpTo{T}^{\pdR,\sm}_{\log}
	\to \FLT{T}^{\dR}
	\xrightarrow{p_{J}} \Fl_{G_{T},\mu_{T},J}^{\dR}
	\] given by \cite[Corollary \ref{1-corCartesianHilbertFlandSm}]{Jiang2026Shla}.
	Then \( \pi_{\HT,T,J}^{\pdR} \) is prim.
\end{lem}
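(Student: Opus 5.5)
We reduce primness of \( \pi_{\HT,T,J}^{\pdR} \) to two simpler facts: primness of the projection \( p_{J}:\FLT{T}^{\dR}\to \Fl_{G_{T},\mu_{T},J}^{\dR} \), and primness of the map \( \SGTKpTo{T}^{\pdR,\sm}_{\log}\to \FLT{T}^{\dR} \) furnished by \cite[Corollary \ref{1-corCartesianHilbertFlandSm}]{Jiang2026Shla}. Prim morphisms are stable under composition, so it suffices to treat the two factors separately. We also use that primness is local on the target for the \(!\)-topology and stable under base change.

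For \( p_{J} \), we use the canonical product decomposition \( \FLT{T}\cong \Fl_{G_{T},\mu_{T},J}\times_{\CC_{p}}\Fl_{G_{T},\mu_{T},T^{c}\backslash J} \). Taking analytic de Rham stacks commutes with finite products over \( \CC_{p} \), by \cite[Theorem \ref{1-thmAnDRStackGeneral}]{Jiang2026Shla}. Hence \( p_{J} \) is the base change of the structure map \( (\prod_{\tau\in T^{c}\backslash J}\mP^{1})^{\dR}\to \AnSpec(\bar{\QQ}_{p,\square}) \). That structure map is prim because the analytic de Rham stack of a smooth proper rigid variety is cohomologically proper: \( \beta_{*} \) preserves compact objects and the structure map admits a dualizing object. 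This properness of smooth proper de Rham stacks is in \cite{AnschützBoscoLeBrasCamargoScholze2025analyticrhamstacksfarguesfontaine}, and is used in \cite{Jiang2026Shla} via Theorem \ref{1-thmAnDRStackGeneral} (3).

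For the first map, \cite[Corollary \ref{1-corCartesianHilbertFlandSm}]{Jiang2026Shla} identifies \( \pi^{\pdR}_{\HT,T} \) as fitting into a Cartesian square with the affine proper descendable map \( \FLT{T}\to \FLT{T}^{\dR} \), and with the map \( \SGTKpTo{T}^{\sm}\to\SGTKpTo{T}^{\pdR,\sm}_{\log} \). The plan is to check primness \(!\)-locally on the target after pulling back along the descendable cover \( \beta_{\Fl,T}:\FLT{T}\to\FLT{T}^{\dR} \). After this pullback the map becomes \( \pi_{\HT,T}^{\sm} \) (or its log-smooth refinement). Over an affinoid \( U\subset \FLT{T} \), the preimage is written as a colimit over \( K_{p} \) of \( \SGTKTo{T}{\Kpp}\times_{\FLT{T}^{\dR}}U \). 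We need to show that the pushforward of the structure sheaf along this map is compact, in the sense that it commutes with colimits and satisfies the projection formula. We will argue this from the proper base change of \cite[Lemma \ref{1-lemSmBaseChange} (2)]{Jiang2026Shla}, together with the fact that the finite-level toroidal Shimura varieties are proper, so their log de Rham stacks are prim over the point.

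The main obstacle is this last step. The smooth infinite-level stack is only an ind-object, so we must check that the relevant pushforwards commute with the colimit along \( K_{p} \) and that the transition maps preserve primness. We would first try to exhibit \( \SGTKpTo{T}^{\pdR,\sm}_{\log}\to\FLT{T}^{\dR} \) as a filtered colimit of prim maps with finite étale, hence prim, transitions. We would then invoke the criterion that primness is detected after descendable base change, as in the proof of \cite[Theorem \ref{1-thmAnDRStackGeneral} (3)]{Jiang2026Shla}, to pass to the colimit.
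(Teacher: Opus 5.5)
Your treatment of \(p_{J}\) is fine (though, as explained below, unnecessary). The first factor carries the whole content of the lemma, and there the argument breaks down.

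First, the pullback of \(\pi^{\pdR}_{\HT,T}\) along \(\beta_{\Fl,T}:\FLT{T}\to\FLT{T}^{\dR}\) is not a map out of the smooth tower. There is no Hodge--Tate map \(\SGTKpTo{T}^{\sm}\to\FLT{T}\), because the Hodge--Tate coordinates are locally analytic but not smooth vectors. The Cartesian square actually available is \(\SGTKpTo{T}^{\la,0}\cong\SGTKpTo{T}^{\sm}\times_{\FLT{T}^{\dR}}\FLT{T}\); this is the base change used in the proof of Lemma \ref{lemAnotherKunnethType}. So after pulling back you face an object covered by the horizontal locally analytic Shimura variety \(\SGTKpTo{T}^{\la,0}\), not a tower of finite-level varieties over \(\FLT{T}\).

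Second, and more seriously, \(\pi^{\pdR}_{\HT,T}\) is \(K_{p}\)-equivariant rather than \(K_{p}\)-invariant, so it does not factor through any finite level. Hence the fibre products \(\SGTKTo{T}{\Kpp}\times_{\FLT{T}^{\dR}}U\) do not exist. Neither does the filtered system of prim maps to \(\FLT{T}^{\dR}\) with finite \'etale transitions that you propose to pass to the limit over. Two smaller points: the infinite-level stack is a limit, not a colimit, of the finite-level ones; and ``the pushforward commutes with colimits and satisfies the projection formula'' is a consequence of primness, not a criterion for it. So the step you flag as the main obstacle has no working mechanism.

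The missing idea is to argue on the source rather than locally on the target. The paper obtains the lemma directly from \cite[Lemma \ref{1-lemSmDRStackProperty} (4) and Lemma \ref{1-lemFiniteLevelProper} (2)]{Jiang2026Shla}. These are a property of the smooth infinite-level log de Rham stack relative to \(\SGTKTo{T}{K}^{\pdR}_{\log}\), and properness at finite level (the toroidal compactification \(\SGTKTo{T}{K}\) is proper). Together they give primness of the structure map of \(\SGTKpTo{T}^{\pdR,\sm}_{\log}\), by factoring it through \(\SGTKTo{T}{K}^{\pdR}_{\log}\).

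Once the source is prim over the base, any map \(f\) from it to \(\Fl^{\dR}_{G_{T},\mu_{T},J}\) is prim by the graph factorization. The graph of \(f\) is a base change of the diagonal of \(\Fl^{\dR}_{G_{T},\mu_{T},J}\), and the projection onto \(\Fl^{\dR}_{G_{T},\mu_{T},J}\) is a base change of the structure map of the source. The diagonal of the de Rham stack of a smooth separated rigid variety is prim. Indeed, after pulling back along the descendable cover \(\Fl_{G_{T},\mu_{T},J}\times\Fl_{G_{T},\mu_{T},J}\to(\Fl_{G_{T},\mu_{T},J}\times\Fl_{G_{T},\mu_{T},J})^{\dR}\), it becomes the inclusion of the dagger neighbourhood of the diagonal. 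This route needs no Cartesian square with \(\FLT{T}\), and it makes your separate analysis of \(p_{J}\) superfluous.
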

\begin{proof}
The map \( \pi^{\pdR}_{\HT,T,J} \)
	is prim by \cite[Lemma \ref{1-lemSmDRStackProperty} (4) and Lemma \ref{1-lemFiniteLevelProper} (2)]{Jiang2026Shla}.
\end{proof}
For \(I\coprod J= T^{c}\), recall 
that by Lemma \ref{lemAnotherKunnethType}, we have an isomorphism over \( \SGTKp{T}^{\pdR/\CC_{p},\hp,\sm}_{\log} \)
\begin{align*}
	&(p_{1,T})_{*}\dR_{I,J}^{\alg}(\nabla^{\kw,(\Bbbk',w')}_{\SGTKp{T}})\\
	&\cong \dR_{I}^{\alg}(\omega^{\kw,\sm}_{T})\otimes \pi_{\HT,T}^{\pdR,*}(\bar{\dR}_{J}(\omega_{\FLT{T}}^{(\Bbbk',w')}))\\
	&\cong \dR_{I}^{\alg}(\omega^{\kw,\sm}_{T})\otimes 
	V^{(\Bbbk'|_{J},w')}_{J}
	\otimes \pi^{\pdR,*}_{\HT,T,I}((\beta_{\Fl,T,I})_{*}(\omega^{(\Bbbk'|_{I},w')}_{\FLTT{T}{I^{c}}})),
\end{align*}
for \(\beta_{\Fl,T,I}:\FLTT{T}{I^{c}}\to \FLTT{T}{I^{c}}^{\dR}\) and \(\pi^{\pdR}_{\HT,T,I}:\SGTKpTo{T}^{\pdR/\CC_{p},\hp,\sm}\to \Fl_{G_{T},\mu_{T},I}=\FLTT{T}{I^{c}}\), where the second isomorphism is given by   \cite[Corollary \ref{1-corFilterDualBGG}]{Jiang2026Shla}.
Then by adjuntion, we have a natural morphism in \( \QCoh(\FLTT{T}{I^{c}}^{\dR}/G_{T}(\QQ_{p})^{\la}) \)
\begin{align}\label{alignSeprateSomeAlso}
		(\pi_{\HT,T,I}^{\pdR})_{*}(\dR_{I}^{\alg}(\omega^{\kw,\sm}_{T}))
	\otimes V^{(\Bbbk'|_{J},w')}_{J}
	\otimes (\beta_{\Fl,T,I})_{*}(\omega^{(\Bbbk'|_{I},w')}_{\FLTT{T}{I^{c}}})
	\to 
	\\
	(\pi_{\HT,T,I}^{\dR})_{*}
	\left(\dR_{I,J}^{\alg}(\nabla^{\kw,(\Bbbk',w')}_{\SGTKp{T}})\right).
\end{align}
\begin{lem}\label{lemKunnthEasyPushForwToFl}
The natural morphism (\ref{alignSeprateSomeAlso}) is an isomorphism.
\end{lem}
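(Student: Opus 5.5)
The morphism (\ref{alignSeprateSomeAlso}) is a projection formula for $\pi^{\pdR}_{\HT,T,I}$, so the plan is to reduce it to the projection formula available for prim morphisms. By the displayed isomorphism preceding the statement (Lemma \ref{lemAnotherKunnethType} together with \cite[Corollary \ref{1-corFilterDualBGG}]{Jiang2026Shla}), the target is
\[
(\pi_{\HT,T,I}^{\pdR})_{*}\Big(\dR_{I}^{\alg}(\omega^{\kw,\sm}_{T})\otimes V^{(\Bbbk'|_{J},w')}_{J}\otimes (\pi^{\pdR}_{\HT,T,I})^{*}\big((\beta_{\Fl,T,I})_{*}\omega^{(\Bbbk'|_{I},w')}_{\FLTT{T}{I^{c}}}\big)\Big).
\]
The first pushforward along $p_{1,T}$ is already absorbed by Lemma \ref{lemAnotherKunnethType}, so only the pushforward along $\pi^{\pdR}_{\HT,T,I}$ remains. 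The map (\ref{alignSeprateSomeAlso}) is the adjunction map $f_*A\otimes B\to f_*(A\otimes f^*B)$ with $f=\pi^{\pdR}_{\HT,T,I}$, $A=\dR_I^{\alg}(\omega^{\kw,\sm}_T)$ and $B=V\otimes(\beta_{\Fl,T,I})_*\omega$. Since $V^{(\Bbbk'|_J,w')}_J$ is a finite-dimensional constant vector space, it pulls out trivially. The statement therefore reduces to the projection formula for $B=(\beta_{\Fl,T,I})_*\omega^{(\Bbbk'|_I,w')}$.

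By Lemma \ref{lemHodgeTatepdRIsPrim}, $\pi^{\pdR}_{\HT,T,I}$ is prim. The key input I would use is that for a prim morphism $f$, $f_*$ agrees with $f_!$ up to twisting by the codualizing object, and $f_!$ satisfies the projection formula against arbitrary quasicoherent sheaves (six-functor formalism for analytic stacks). Concretely, I would write $f_*(-)\cong f_!(-\otimes D_f)$ with $D_f$ the prim codualizing sheaf. Then
\[
f_*(A\otimes f^*B)\cong f_!(A\otimes D_f\otimes f^*B)\cong f_!(A\otimes D_f)\otimes B\cong f_*A\otimes B,
\]
and I would check that this chain of isomorphisms agrees with the adjunction map (\ref{alignSeprateSomeAlso}). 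This compatibility is a formal consequence of how the projection morphism is built from units and counits.

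I expect the main obstacle to be justifying the projection formula for the non-perfect object $(\beta_{\Fl,T,I})_*\omega$ on the de Rham stack, which is not dualizable. The first thing I would try is to note that $\beta_{\Fl,T,I}$ is proper and descendable, and to use base change to rewrite $f^*\beta_*\omega$ as a pushforward along the Cartesian square. This square is Cartesian by the Cartesian diagram of \cite[Corollary \ref{1-corCartesianHilbertFlandSm}]{Jiang2026Shla}, the same input used in the proof of Lemma \ref{lemAnotherKunnethType}, applied to the $I$-factor. The target then becomes a pushforward from $\SGTKpTo{T}^{\pdR,\sm}\times_{\Fl^{\dR}}\Fl$, where the projection formula holds for the line bundle $\omega$.

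If that route fails, I would fall back on the fact that $f_*$ for prim $f$ commutes with colimits. Both sides of (\ref{alignSeprateSomeAlso}) then commute with colimits in $B$, so it suffices to check the isomorphism on compact generators of $\QCoh(\FLTT{T}{I^{c}}^{\dR})$, where the projection formula for prim maps is standard.
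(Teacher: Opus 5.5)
Your route differs from the paper's. The paper filters both sides of (\ref{alignSeprateSomeAlso}) by the finite stupid truncations of the BGG-type complexes (\cite[Corollary \ref{1-corFilterDualBGG} and Proposition \ref{1-propBGGFilSVsm}]{Jiang2026Shla}). Each graded piece then involves only automorphic vector bundles on $\SGTKpTo{T}^{\sm}$ and a line bundle on the flag variety. The paper concludes from the Cartesian square of \cite[Proposition \ref{1-propCartesianLA0}]{Jiang2026Shla} and the K\"unneth formula \cite[Lemma \ref{1-lemKunnethProper}]{Jiang2026Shla}. Your first fallback, base change along the Cartesian square, is close to this, except that the paper applies it only after that reduction. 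You instead want a single projection formula for a prim map. That idea is sound. The obstacle you anticipate is also not one: for prim $f$ one has $f_*\simeq f_!(-\otimes D_f)$, and the $f_!$-projection formula holds for arbitrary $B$, dualizable or not. So neither fallback is needed for that step.

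The missing step is identifying which map must be prim. Lemma \ref{lemHodgeTatepdRIsPrim} concerns $\pi^{\pdR}_{\HT,T,I}$ as a map out of the \emph{unfiltered} stack $\SGTKpTo{T}^{\pdR,\sm}_{\log}$. By contrast, $A=\dR^{\alg}_{I}(\omega^{\kw,\sm}_{\SGTKp{T}})$ lives on the filtered stack $\SGTKpTo{T}^{\pdR/\CC_{p},\hp,\sm}_{\log}$ (Definition \ref{dfnDRComplexNotation} (2)). The pushforwards in (\ref{alignSeprateSomeAlso}) therefore also pass through the forgetful map $f^{\sm}_{T}$ to the unfiltered stack (cf.\ $u^{\hpp}_{T}=f^{\sm}_{T}\circ p_{1,T}$ in Definition \ref{dfnAbusePushToNoFilt}). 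Lemma \ref{lemAnotherKunnethType} absorbs $p_{1,T}$ but not $f^{\sm}_{T}$, so your claim that only the pushforward along $\pi^{\pdR}_{\HT,T,I}$ remains is not right. You additionally need
\[(f^{\sm}_{T})_{*}\bigl(A\otimes (f^{\sm}_{T})^{*}C\bigr)\simeq (f^{\sm}_{T})_{*}A\otimes C,\qquad C=(\pi^{\pdR}_{\HT,T,I})^{*}(\beta_{\Fl,T,I})_{*}\omega^{(\Bbbk'|_{I},w')}_{\FLTT{T}{I^{c}}},\]
for a non-dualizable $C$. Nothing you cite gives primness of $f^{\sm}_{T}$ or of the composite out of the filtered stack, and completeness of filtered objects can interfere here. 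The simplest repair is the paper's first step. The stupid filtration on $A$ is finite, both sides of (\ref{alignSeprateSomeAlso}) are exact in $A$, and the graded pieces are (sums of) pushforwards along $h^{\sm,\hp}_{T}$ of automorphic vector bundles twisted by $\mO\{a\}$, so the filtered direction becomes harmless. You are then reduced to the unfiltered map out of $\SGTKpTo{T}^{\sm}$. There either the paper's base change, or your prim argument combined with a projection formula along $h^{\sm}_{T}$, finishes the proof.
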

\begin{proof}
It suffices to show that it induces isomorphisms on the graded pieces for the filtrations defined by the stupid truncations (\cite[Corollary \ref{1-corFilterDualBGG} and Corollary \ref{1-propBGGFilSVsm}]{Jiang2026Shla}). Then it follows from \cite[Proposition \ref{1-propCartesianLA0} and Lemma \ref{1-lemKunnethProper}]{Jiang2026Shla}.
\end{proof}

\begin{dfn}\label{dfnAbusePushToNoFilt}
Consider the morphism \( u^{\hpp}_{T} \) given by the composition \[
u^{\hpp}_{T}:(\SGTKp{T}^{\la,0})^{\pdR/\CC_{p},\hpp}_{\log}\xrightarrow{p_{1,T}}
\SGTKp{T}^{\pdR/\CC_{p},\hp,\sm}_{\log}\xrightarrow{f^{\sm}_{T}} \SGTKp{T}^{\pdR/\CC_{p},\sm}_{\log},
\] where \( f^{\sm}_{T} \) is induced by the natural map in \cite[Remark \ref{1-rmkFilDRtoDR}]{Jiang2026Shla}.

By abuse of notation, for  \(\kw\), \((\Bbbk',w')\)
	and \(I,J\subset T^{c}\) be as in Definition \ref{dfnDRComplexNotation} (4), we write 
	\begin{align*}
	\dR_{I,J}^{\alg}(\nabla_{\SGTKp{T}}^{\kw,(\Bbbk',w')})&:=(u^{\hpp}_{T})_{*}\left(\dR_{I,J}^{\alg}(\nabla_{\SGTKp{T}}^{\kw,(\Bbbk',w')})\right),\\
	\dR_{I,J}^{\alg}(\nabla_{\SGTKp{T},c}^{\kw,(\Bbbk',w')})&:=(u^{\hpp}_{T})_{*}\left(\dR_{I,J}^{\alg}(\nabla_{\SGTKp{T},c}^{\kw,(\Bbbk',w')})\right).
	\end{align*}
\end{dfn}
\begin{prop}\label{propClassicalityCohoStronger}
	Let \(\kw\), \((\Bbbk',w')\)
	and \(I,J\subset T^{c}\) be as in Definition \ref{dfnDRComplexNotation} (4). Assume that \(I\cup J=T^{c}\).
	Then 
	\[(\pi_{\HT,T,I\backslash J}^{\pdR})_{*}\left(\dR_{I,J}^{\alg}(\nabla_{\SGTKp{T}}^{\kw,(\Bbbk',w')})\right) \;\&\; (\pi_{\HT,T,I\backslash J}^{\pdR})_{*}\left(\dR_{I,J}^{\alg}(\nabla_{\SGTKp{T},c}^{\kw,(\Bbbk',w')})\right)\]
	are classical of weight \(\kw\) as Hecke modules in the sense of Definition \ref{dfnClassicalHecke} by taking \[\mscr{C}:=\QCoh\left(\Fl_{G_{T},\mu_{T},I\backslash J}^{\dR}/G_{T}(\QQ_{p})^{\la}\right).\]
\end{prop}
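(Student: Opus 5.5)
The plan is to argue by induction on \(|T^{c}|\), and for fixed \(T\) by induction on \(|I\backslash J|\), proving both statements (with and without support) at once. Classicality is preserved under extensions, fibers, cofibers, colimits, and the functors \(\mrm{Act}\) and \(R\Gamma\) of smooth or locally analytic group (co)homology. It therefore suffices to filter the objects by pieces already known to be classical of weight \(\kw\).

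\textbf{Base case \(I\backslash J=\emptyset\).} Here \(J=T^{c}\) and \(\Fl_{G_{T},\mu_{T},I\backslash J}\) is a point. By Example \ref{egdRITc} and Lemma \ref{lemKunnthEasyPushForwToFl}, the pushforward is
\[
R\Gamma\bigl(\SGTKpTo{T}^{\pdR,\sm}_{\log},\dR_{I}^{\alg}(\omega^{\kw,\sm}_{\SGTKp{T}})\bigr)\otimes V^{(\Bbbk',w')}_{T^{c}}.
\]
The first factor is a colimit over \(K_{p}\) of finite-level hypercohomology of the partial dual BGG complexes. These are quasi-isomorphic, via the \(\ZZ^{T^{c}}\)-filtration of \(\bar{\mcal{G}}^{c,+}_{\mu_{T},\dR,K}(V^{(\Bbbk,-w)})\), to graded pieces of the (log) de Rham cohomology of the automorphic local system \(V^{(\Bbbk,-w)}\) or of coherent cohomology of automorphic bundles on \(\SGTKTo{T}{K}\). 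I would first reduce to the extreme cases \(I=\emptyset\) and \(I=T^{c}\) by the stupid filtration. Then I would use that coherent cohomology and de Rham cohomology (resp. compactly supported, via \(\mscr{I}\)) of these bundles have only eigensystems of classical Hilbert/quaternionic eigenforms of weight \(\max(2-\Bbbk,\Bbbk)\), by Faltings' BGG decomposition and Matsushima-type results. For \(T\ne\emptyset\), Jacquet-Langlands carries these eigensystems to \(\GL_{2}\).

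\textbf{Inductive step \(I\backslash J\ne\emptyset\).} I would use excision on \(\Fl^{\dR}_{G_{T},\mu_{T},I\backslash J}\). The open part is the complement of \(\prod_{\p}\mP^{1}(F_{\p})\), which by Lemma \ref{lemNoEmptyBasicLocus} is the union of the \(I'\)-basic loci for nonempty \(I'\subset I\backslash J\). A Mayer–Vietoris/Čech filtration over these opens reduces to each \(I'\)-basic locus and its intersections, which are again basic loci of the same type.

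\textbf{Basic loci.} On an \(I'\)-basic locus, Theorem \ref{thmJacquetLanglandsLA} (with \(T'=T\coprod I'\)) rewrites the pullback along the \(G_{T'}(\QQ_{p})^{\sm}\)-torsor \(\JL^{\dR}\) as
\[
R\Gamma\Bigl(\fg_{T^{\prime,c}},\dR_{I\backslash I',J}(\nabla_{\SGTKp{T'}})\boxtimes\omega_{T\to T'}\Bigr).
\]
Descending along the torsor expresses the cohomology as smooth \(G_{T'}(\QQ_{p})\)-homology of a tensor product with a local term from \(\mcal{M}_{T\to T'}\). The Hecke action comes only from the \(\SGTKp{T'}\) factor, which is classical by induction on \(|T^{c}|\), since \((I\backslash I')\cup J=T^{\prime,c}\).

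\textbf{Closed part.} Over dagger neighbourhoods of the \(F_{p}\)-rational points, the contribution is a \(\sigma\)-locally analytic parabolic induction \(\Ind_{Q}^{G_{T}(\QQ_{p})}\) of a smooth piece \(M^{\sm}\), built from \(\dR_{I\backslash J}(\mO^{\sm})\)-type complexes restricted to the fibre. To control \(M^{\sm}\), I would rerun the same excision for the auxiliary complex with \(\bar d\) replaced by smooth coefficients, namely \(\dR_{I\backslash J}\) of \(\omega^{\sm}\) twisted by \(V_{J}\). Its total cohomology is a colimit of finite-level partial de Rham cohomology, hence classical as in the base case. Its basic parts are classical by the smooth Jacquet–Langlands transfer and induction. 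The excision sequence then forces the rational-point part, i.e. \(M^{\sm}\), to be classical, and inducing back gives the claim for the closed part of the original complex.

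\textbf{Main obstacle.} This last reverse-engineering step is the hardest. One must check that the closed-part contributions for the original complex and for the auxiliary smooth complex are built from the same \(M^{\sm}\), up to Hecke-equivariant operations such as tensoring with \(\Omega^{1}_{\Fl}|_{\infty}\) and analytic induction. Matching the two excision sequences compatibly with the Hecke action is where I expect the real difficulty.
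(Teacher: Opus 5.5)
Your plan is the paper's proof, i.e.\ exactly the strategy sketched in \S\ref{subsubsec:Intro-Pf-Cla}: the base case reduces by the stupid filtration to coherent cohomology, so the de Rham/Faltings input is unnecessary; the basic loci are handled by Theorem~\ref{thmJacquetLanglandsLA} and induction; and the rational-point part is reverse-engineered from an auxiliary complex with classical global cohomology. The matching you flag as the main obstacle is immediate from Lemma~\ref{lemKunnthEasyPushForwToFl}: near $\infty_I$, both $\dR^{\alg}_{I,J}$ and the paper's auxiliary $\dR^{\alg}_{I,T^{c}}$ push forward to $(\pi^{\pdR}_{\HT,T,I})_{*}\dR_{I}^{\alg}(\omega^{\kw,\sm}_{\SGTKp{T}})|_{(\infty_I^{\dagger})^{\dR}}$ tensored with finite-dimensional spaces, and the passage from classicality of the closed-part global sections back to this restricted sheaf (no ``inducing back'' is needed) uses that $q_{*}$ is conservative and commutes with localization, since $(V_{T,I}^{\dagger})^{\dR}$ is affine and proper.
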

\begin{rmk}
Since the proof will be a bit long, let us first give a sketch of the proof: 

(1)
The case for \( (I,J) \) follows easily from the case for \( (I,J\backslash I) \); 

(2) For any \( I\coprod J=T^{c} \), and for any non-empty \( I_{0}\subset I \),
we apply the locally analytic Jacquet-Langlands transfer (Theorem \ref{thmJacquetLanglandsLA}) and the inductive hypothesis to conclude the classicality of \( \dR_{I,J} \) over the \( I_{0} \)-basic locus; 

(3) The union of all the \( I_{0} \)-basic loci for non-empty \( I_{0}\subset I \) 
covers everything but a cetain ``\( I \)-ordinary locus'', over which we can relate \( \dR_{I,J} \)
directly to \( \dR_{I,T^{c}} \); 

(4) To show the classicality of \( \dR_{I,T^{c}} \) over the \( I \)-ordinary locus, we reverse the procedure, and use the classicality over the \( I_{0} \)-basic locus, as well as the classicality of the global section of \( \dR_{I,T^{c}} \) (Example \ref{egdRITc}).
\end{rmk}
\begin{proof}
	We will prove the statement for \(\dR^{\alg}_{I,J}(\nabla_{\SGTKp{T}}^{\kw,(\Bbbk',w')})\). The proof works verbatim for \(\dR^{\alg}_{I,J}(\nabla_{\SGTKp{T},c}^{\kw,(\Bbbk',w')})\).
	We prove the result by induction on \(\#I\). 

	(1)
	First, we note that \(\dR^{\alg}_{I,J}(\nabla^{\kw,(\Bbbk',w')}_{\SGTKp{T}})\)
	is filtered by shifts of \( \dR_{I\backslash J,J}^{\alg}(\nabla^{(s_{K}\cdot \Bbbk,w),(\Bbbk',w')}_{\SGTKp{T}}) \) for \( K\subset I\cap J \).
	 One can see this immediately from the definition (Definition \ref{dfnDRComplexNotation}).

	So the case for \( (I,J) \) follows from that for \( (I\backslash J,J) \), and it suffices to consider the case where \(I\cap J=\emptyset\). This will be our ongoing assumption.

	(2)
	If \(I=\emptyset\), then \(J=T^{c}\) and \(\Fl^{\dR}_{G_{T},\mu_{T},\emptyset}=\AnSpec(\bar{\QQ}_{p,\square})\).
	By Example \ref{egdRITc}, \[\dR^{\alg}_{\emptyset,T^{c}}(\nabla^{\kw,(\Bbbk',w')}_{\SGTKp{T}})\cong \dR^{\alg}_{\emptyset}(\omega^{\kw,\sm}_{\SGTKp{T}})\otimes V_{T^{c}}^{(\Bbbk',w')}\in\QCoh(\SGTKp{T}^{\pdR/\CC_{p},\sm}_{\log}).\]
	Thus
	\begin{align*}
	(\pi_{\HT,T,\emptyset}^{\pdR})_{*}\left(\dR^{\alg}_{I,J}(\nabla_{\SGTKp{T}}^{\kw,(\Bbbk',w')})\right)&\cong 
	R\Gamma(\SGTKpTo{T}^{\sm},\omega^{\kw,\sm}_{\SGTKp{T}})
	\\
	&\cong \varinjlim_{K_{p}\subset G_{T}(\QQ_{p})}R\Gamma(\SGTKTo{T}{\Kpp},\omega^{\kw}_{\SGTKTo{T}{\Kpp}}),
	\end{align*} which is a classcial Hecke \(\TT^{S}\)-module of weight \(\kw\) by the description of the coherent cohomology of the automorphic vector bundles over the toroidal compactifications by \cite{Su2018coherent}, and the classical Jacquet-Langlands correspondence (\cite{JacquetLanglands2006automorphic}).
	Combined with (1),
	we know that the statement holds for \( (I,T^{c}) \) 
	for any \(I\subset T^{c}\). 


	(3)
	If \(I\ne \emptyset\), let us
	consider the following finite set: \[\mscr{P}_{I}:=\{(\p,I_{0}):\p\in\Sigma_{p},I_{0}\subset \Sigma_{\infty/\p}\cap I,1\le \#I_{0}\le 2\}.\]

	For any \((\p,I_{0})\in\mscr{P}_{I}
    \), \(\Fl_{G_{T},\mu_{T},I_{0}}=\Fl_{G_{T},\mu_{I_{0}^{c}}}\) (Notation \ref{notationFLGtMutI}) contains an open locus \(\Fl_{G_{T},\mu_{I_{0}^{c}}}^{\bc}\)
	(Definition \ref{dfnIbasicLocus}), and there is a natural map \[\Fl^{\dR}_{G_{T},\mu_{T},I}\xrightarrow{p_{I,I_{0}}} \Fl^{\dR}_{G_{T},\mu_{T},I_{0}}\supset \Fl^{\dR,\bc}_{G_{T},\mu_{I_{0}^{c}}}.
	\] 
	We denote \(\FLTT{T}{I^{c}}^{\dR,I_{0}-\bc}:=p_{I,I_{0}}^{-1}(\Fl^{\dR,\bc}_{G_{T},\mu_{I_{0}^{c}}})\).
	We claim that \[\left.(\pi_{\HT,T,I}^{\pdR})_{*}\left(\dR^{\alg}_{I,J}(\nabla_{\SGTKp{T}}^{\kw,(\Bbbk',w')})\right)\right|_{\FLTT{T}{I^{c}}^{\dR,I_{0}-\bc}}\in\QCoh(\FLTT{T}{I^{c}}^{\dR,I_{0}-\bc}/G_{T}(\QQ_{p})^{\la})\]
	is classical of weight \(\kw\)
	as a Hecke module.

	Note that \(I\)-basic locus does not meet the boundary by Lemma \ref{lemNoEmptyBasicLocus} and Proposition \ref{propGoodReducForNonOrd}. Therefore, it suffices to consider \( \pi^{\pdR}_{\HT,T,I}|_{\SGTKp{T}^{\pdR/\CC_{p},\sm}} \), which has a factorization \[
	\pi^{\pdR}_{\HT,T,I}:\SGTKp{T}^{\pdR/\CC_{p},\sm}
	\xrightarrow{g^{\sm}_{T}}
	\SGTKp{T}^{\dR}\xrightarrow{\pi^{\dR}_{\HT,T}} \FLT{T}^{\dR}\to \Fl_{G_{T},\mu_{T},I}.
	\] We denote by \( \pi^{dR}_{\HT,T,I} \) the composition of the last two maps. 
	Then 
	it suffices to show that 
	\[
	\left.(\pi_{\HT,T,I}^{\dR})_{*}\left(\dR_{I,J}(\nabla_{\SGTKp{T}}^{\kw,(\Bbbk',w')})\right)\right|_{\FLTT{T}{I^{c}}^{\dR,I_{0}-\bc}}\in\QCoh(\FLTT{T}{I^{c}}^{\dR,I_{0}-\bc}/G_{T}(\QQ_{p})^{\la})
	\]
	is classical of weight \(\kw\)
	as a Hecke module.

	We put \(T':=T\coprod I_{0}\).
	By Theorem \ref{thmJLinTateSt} and Theorem \ref{thmJacquetLanglandsLA},
	we have 
	\begin{itemize}
	\item a natural morphism \[\JL^{\dR}:\SGTKp{T'}^{\dR}\times\mcal{M}_{T\to T'}^{\dR}\to \SGTKp{T}^{\dR,I_{0}-\bc},\] 
	which
	is a \(G_{T'}(\QQ_{p})^{\sm}\)-torsor;
	\item a \( G_{T'}(\QQ_{p})^{\sm}\times G_{T}(\QQ_{p})^{\la} \)-equivariant isomorphism 
	\begin{align*}
	\JL&^{\dR,*}\left(\dR_{I,J}(\nabla^{\kw,(\Bbbk',w')}_{\SGTKp{T}})\right)\\&\cong R\Gamma\left((\fg_{T^{\prime,c}},*_{T'}),\dR_{I\backslash I_{0},J}(\nabla^{\kw,(\Bbbk'|_{T^{\prime,c}},w')}_{\SGTKp{T'}})
	\boxtimes \omega^{(\Bbbk'|_{I_{0}},w'),T'-*_{T'}-\sm}_{T\to T'}
	\right).
	\end{align*}
	\end{itemize}
	Moreover, when taking limits along \( K^{p} \), the tower of the isomorphisms is \( G_{T}(\mA_{f}^{p})^{\sm} \)-equivariant.

	Moreover,  by Theorem \ref{thmProductFormulaQuate} (2) and Lemma \ref{lemRZisAtorsorOverFl},
	we have a Cartesian diagram of diamonds 
	\[
	\begin{tikzcd}
	\SGTKp{T'}\times\mcal{M}_{T\to T'}\arrow[r,"\JL"]\arrow[d,"{(\pi_{\HT,T,I\backslash I_{0}},1)}"]
	& \SGTKp{T}^{\dR,I_{0}-\bc}\arrow[d,"\pi_{\HT,T,I}"]
	\\
	\FLTT{T'}{(I\backslash I_{0})^{c}}\times
	\mcal{M}_{T\to T'}\arrow[r,"\JL_{I_{0}}"]
	& \FLTT{T}{I^{c}}^{\dR,I_{0}-\bc},
	\end{tikzcd}
	\] and
	 we have a 
	Cartesian 
	commutative diagram \[
	\begin{tikzcd}
	\SGTKp{T'}^{\dR}\times\mcal{M}_{T\to T'}^{\dR}\arrow[r,"\JL^{\dR}"]\arrow[d,"{(\pi_{\HT,T,I\backslash I_{0}}^{\dR},1)}"]
	& \SGTKp{T}^{\dR,I_{0}-\bc}\arrow[d,"\pi_{\HT,T,I}^{\dR}"]
	\\
	\FLTT{T'}{(I\backslash I_{0})^{c}}^{\dR}\times
	\mcal{M}_{T\to T'}^{\dR}\arrow[r,"\JL^{\dR}_{I_{0}}"]
	& \FLTT{T}{I^{c}}^{\dR,I_{0}-\bc}. 
	\end{tikzcd}
	\] by \cite[Definition 4.5.3 \& Proposition 4.7.11 (3)]{AnschützBoscoLeBrasCamargoScholze2025analyticrhamstacksfarguesfontaine}.
	Note that the functor \( F \)
	in \cite[Proposition 4.7.11 (3)]{AnschützBoscoLeBrasCamargoScholze2025analyticrhamstacksfarguesfontaine} commutes with finite limits by the same argument as in \cite[Lemma \ref{1-lemSolidToTateAnalyticficationFunctor}]{Jiang2026Shla}.

	By \cite[Proposition \ref{1-propCartesianLA0}, Lemma \ref{1-lemFiniteLevelProper} (1), Theorem \ref{1-thmAnDRStackGeneral} (3), Theorem \ref{1-thmVariousDescent}]{Jiang2026Shla}, \( \pi^{\dR}_{\HT,T,I} \)
	is cohomogically co-smooth.
	Thus by \cite[Lemma \ref{1-lemSmBaseChange} (2)]{Jiang2026Shla}, we have a \( \TT^{S}\times G_{T}(\QQ_{p})^{\la} \)-equivariant isomorphism
	\begin{align*}
	&(\pi^{\dR}_{\HT,T,I})_{*}
	\dR_{I,J}(\nabla^{\kw,(\Bbbk',w')}_{\SGTKp{T}})|_{\FLTT{T}{I^{c}}^{\dR,I_{0}-\bc}}\\
	&\cong
	F_{T\to T'}\left( (\pi^{\dR}_{\HT,T,I})_{*}(\JL^{\dR})_{!}\left(
		\dR_{I\backslash I_{0},J}(\nabla^{\kw,(\Bbbk'|_{T^{\prime,c}},w')}_{\SGTKp{T'}})
	\boxtimes \omega^{(\Bbbk'|_{I_{0}},w'),T'-*_{T'}-\sm}_{T\to T'}
	\right)\right)\\
	&\cong F_{T\to T'}\left(
( \JL^{\dR}_{I_{0}})_{*}\left((\pi^{\dR}_{\HT,T,I\backslash I_{0}})_{!}
		\dR_{I\backslash I_{0},J}(\nabla^{\kw,(\Bbbk'|_{T^{\prime,c}},w')}_{\SGTKp{T'}})
	\boxtimes \omega^{(\Bbbk'|_{I_{0}},w'),T'-*_{T'}-\sm}_{T\to T'}
	\right)
	\right),
	\end{align*}
	where \( F_{T\to T'}:=C_{*}\left(G_{T'}(\QQ_{p})^{\sm},R\Gamma((\fg_{T^{\prime,c}},*_{T'}),-)\right) \) and \( C_{*}(G_{T'}(\QQ_{p})^{\sm},-):=v_{G_{T'}(\QQ_{p})^{\sm},!} \)
	for\[
	v_{G_{T'}(\QQ_{p})^{\sm}}:\AnSpec(\CC_{p})/G_{T'}(\QQ_{p})^{\sm}\to \AnSpec(\CC_{p}),
	\] and 
	the second isomorphism is given by \cite[Lemma \ref{1-lemKunnethProper}]{Jiang2026Shla}.
	By inductive hypothesis, \[ (\pi^{\dR}_{\HT,T,I\backslash I_{0}})_{*}
		\dR_{I\backslash I_{0},J}(\nabla^{\kw,(\Bbbk'|_{T^{\prime,c}},w')}_{\SGTKp{T'}}) \] is a classical \( \TT^{S} \)-module of weight \( \kw \), which implies the same for 
		\[ (\pi^{\dR}_{\HT,T,I})_{*}
	\dR_{I,J}(\nabla^{\kw,(\Bbbk',w')}_{\SGTKp{T}})|_{\FLTT{T}{I^{c}}^{\dR,I_{0}-\bc}}. \]

(3)' Let us denote by \(U_{T,I}\) the following open subspace of \(\FLTT{T}{I^{c}}=\Fl_{G_{T},\mu_{T},I}\) \[
U_{T,I}:= \bigcup_{I_{0}\in\mscr{P}_{I}} \FLTT{T}{I^{c}}^{I_{0}-\bc}\subset \Fl_{G_{T},\mu_{T},I}.
\]
Then by the results in the step (3), \begin{align}\label{alignVanishI}
\left.(\pi_{\HT,T,I}^{\pdR})_{*}\left(\dR_{I,J}^{\alg}(\nabla_{\SGTKp{T}}^{\kw,(\Bbbk',w')})\right)\right|_{U_{T,I}^{\dR}}
\end{align} is a classical \( \TT^{S} \)-module of weight \( \kw \).

(4)
Note that
by Lemma \ref{lemNoEmptyBasicLocus},
we know that \[U_{T,I}
=\begin{cases}
\Fl_{G_{T},\mu_{T},I}\backslash \im(\mP^{1}(F_{p})\to \Fl_{G_{T},\mu_{T},I}),\;&2\mid r_{\p,T},\;\forall \p\in\{\p\in\Sigma_{p}:\Sigma_{\infty/\p}\cap I\ne \emptyset\};\\
\Fl_{G_{T},\mu_{T},I},\;&2\nmid r_{\p,T},\;\exists \p\in\{\p\in\Sigma_{p}:\Sigma_{\infty/\p}\cap I\ne \emptyset\}.
\end{cases}\]

Therefore, for the rest of the proof, we assume that \(2|r_{\p,T}\) for all \(\p\) with \(\Sigma_{\infty/\p}\cap I\ne \emptyset\).
We
denote the complement of \(U_{T,I}\) in \(\Fl_{G_{T},\mu_{T},I}\) (\cite[Example \ref{1-egDaggerNBHD}]{Jiang2026Shla}) by 
\[V_{T,I}^{\dagger}:=\im(\mP^{1}(F_{p})\to \Fl_{G_{T},\mu_{T},I})^{\dagger}
\cong \prod_{\Sigma_{\infty/\p}\cap I\ne \emptyset }\left(\mP^{1}(F_{\p})\subset \Fl_{G_{T},\mu_{T},\Sigma_{\infty/\p}\cap I}\right)^{\dagger}.
\]
Consider the partition \[
j_{T,I}:U_{T,I}^{\dR}/G_{T}(\QQ_{p})^{\la}\hookrightarrow \Fl_{G_{T},\mu_{T},I}^{\dR}/G_{T}(\QQ_{p})^{\la}
\hookleftarrow (V_{T,I}^{\dagger})^{\dR}/G_{T}(\QQ_{p})^{\la}:i_{T,I}.
\]
Using the excision sequence
associated to \( (j_{T,I},i_{T,I}) \),
it remains to show that \[
(\pi^{\pdR}_{\HT,T,I})_{*}\dR^{\alg}_{I,J}(\nabla^{\kw,(\Bbbk',w')}_{\SGTKp{T}})|_{(V^{\dagger}_{T,I})^{\dR}/G_{T}(\QQ_{p})^{\la}}
\] is a classical \( \TT^{S} \)-module of weight \( \kw \).

We also write \(V_{T,I}:=\prod_{\Sigma_{\infty/\p}\cap I\ne \emptyset }\mP^{1}(F_{\p})
\), which
is a homogeneous space under the action of \(
G_{T}(\QQ_{p})
\), and by \cite[Definition \ref{1-dfnAnDRStack}]{Jiang2026Shla}, \(V_{T,I}^{\dR}\cong ((V_{T,I})^{\dagger})^{\dR}\).
For any point \(x\in V_{T,I}\), we write \(x^{\dagger}:=(\{x\}\subset \Fl_{G_{T},\mu_{T},I})^{\dagger}\) (\cite[Example \ref{1-egDaggerNBHD}]{Jiang2026Shla}). 
 Let we fix a point \(
\infty_{I}\in V_{T,I}
\), and denote its stabilizer by \(Q_{I}\subset G_{T}(\QQ_{p})\). 
Then \[
V_{T,I}^{\dagger}/G_{T}(\QQ_{p})^{\la}
\cong \infty_{I}^{\dagger}/(Q_{I}\subset G_{T})^{\dagger},
\] where \( (Q_{I}\subset G_{T})^{\dagger} \)
is as in \cite[Example \ref{1-egDaggerNBHD}]{Jiang2026Shla}.



By Lemma \ref{lemKunnthEasyPushForwToFl}, we have a natural isomorphism in \(\QCoh((V_{T,I}^{\dagger})^{\dR}/G_{T}(\QQ_{p})^{\la}) \cong \QCoh((\infty_{I}^{\dagger})^{\dR}/(Q_{I}\subset G_{T})^{\dagger}) \)
\begin{align}\label{alignSeprateSomeAlso2}
	&\left.(\pi_{\HT,T,I}^{\pdR})_{*}
	\left(\dR_{I,J}^{\alg}(\nabla^{\kw,(\Bbbk',w')}_{\SGTKp{T}})\right)
	\right|_{(\infty^{\dagger}_{I})^{\dR}}\\&\cong \left.
		(\pi_{\HT,T,I}^{\pdR})_{*}(\dR_{I}^{\alg}(\omega^{\kw,\sm}_{\SGTKp{T}}))\right
	|_{(\infty_{I}^{\dagger})^{\dR}}
	\otimes_{\CC_{p}} V^{(\Bbbk'|_{J},w')}_{J}
	\otimes_{\CC_{p}} 
	W^{(\Bbbk'|_{I},w')},
\end{align}
where \(\beta_{\Fl,T,I}|_{\infty_{I}^{\dagger}}:\infty_{I}^{\dagger}\to \infty_{I}^{\dagger,\dR}\cong \AnSpec(\bar{\QQ}_{p})\) and \[W^{(\Bbbk'|_{I},w')}:=(\beta_{\Fl,T,I}|_{\infty^{\dagger}_{I}})_{*}(\omega^{(\Bbbk'|_{I},w')}_{\FLTT{T}{I^{c}}}|_{\infty_{I}^{\dagger}})\in\QCoh(\bar{\QQ}_{p}).\]

Similarly, 
\begin{align}\label{alignSeparteSomeHH}
	&(\pi_{\HT,T,I}^{\pdR})_{*}\left(\dR^{\alg}_{I,T^{c}}(\nabla^{\kw,(\Bbbk',w')}_{\SGTKp{T}})\right)|_{(\infty^{\dagger}_{I})^{\dR}}
	\\&
	\cong \left.(\pi_{\HT,T,I}^{\pdR})_{*}\left(\dR_{I}^{\alg}(\omega^{\kw,\sm}_{\SGTKp{T}})
	\right)\right|_{(\infty_{I}^{\dagger})^{\dR}}
	\otimes_{\CC_{p}} V_{T'}^{(\Bbbk'|_{T'},w')}.
\end{align}

Let \[\mF_{I,T}:=(\pi_{\HT,T,I}^{\pdR})_{*}\left(\dR^{\alg}_{I,T^{c}}(\nabla^{\kw,(\Bbbk',w')}_{\SGTKp{T}})\right)\in\QCoh_{G_{T}(\QQ_{p})^{\la}}(\Fl_{G_{T},\mu_{T},I}^{\dR}).
\] and \(\mF_{I,T}|_{(\infty_{I}^{\dagger})^{\dR}}\in \QCoh((\infty_{I}^{\dagger})^{\dR}/(Q_{I}\subset G_{T}(\QQ_{p})^{\la})^{\dagger})\). 
By (\ref{alignSeprateSomeAlso2}) and the step (3)',
it remains to show that \( \mF_{I,T}|_{(\infty_{I}^{\dagger})^{\dR}} \) is classical of weight \( \kw \) as a \( \TT^{S} \)-module.
(5) 
Using the excision, we have a fiber sequence
\begin{align*}
	R\Gamma_{c}\left(U_{T,I}^{\dR},\mF_{I,T}\right)\to 
R\Gamma\left(\Fl_{G_{T},\mu_{T},I}^{\dR},\mF_{I,T}\right)\to 
R\Gamma\left((V_{T,I}^{\dagger})^{\dR},\mF_{I,T}\right).
\end{align*}

Now using (\ref{alignSeparteSomeHH}) and the filtration in the step (1), 
we know by the step (3)'
that \[R\Gamma_{c}(U^{\dR}_{T,I},\mF_{I,T})
\] is classical of weight \( \kw \) as a \( \TT^{S} \)-module.
On the other hand, \[R\Gamma(\Fl^{\dR}_{G_{T},\mu_{T},I},\mF_{I,T})
\] is classical of weight \( \kw \) as a \( \TT^{S} \)-module
by the step (2). Therefore, \[
R\Gamma\left((V_{T,I}^{\dagger})^{\dR},\mF_{I,T}\right)\cong q_{*}(\mF_{I,T}|_{(\infty_{T,I}^{\dagger})^{\dR}})
\] is also classical of weight \( \kw \) as a \( \TT^{S} \)-module,
where \[
q:(V_{T,I}^{\dagger})^{\dR}/G_{T}(\QQ_{p})^{\la}\cong (\infty_{T,I}^{\dagger})^{\dR}/(Q_{I}\subset G_{T})^{\dagger}
\to \AnSpec(\bar{\QQ}_{p,\square})/G_{T}(\QQ_{p})^{\la}.
\]
Note that the fiber of \( q \)
is \( (V_{T,I}^{\dagger})^{\dR} \), which is \emph{affine} and proper by \cite[Theorem \ref{1-thmAnDRStackGeneral} (5)]{Jiang2026Shla}. 
Thus \( q \) is prim by \cite[Theorem \ref{1-thmVariousDescent} (4)]{Jiang2026Shla}.
Moreover, \( q_{*} \) is conservative by the affine-ness of \( (V_{T,I}^{\dagger})^{\dR} \)
and proper base change (\cite[Lemma \ref{1-lemSmBaseChange} (2)]{Jiang2026Shla}).
As a result, any \( \mfk{a}\subset \TT^{S}_{\QQ} \), \[
q_{*}((\mF_{I,T}|_{(\infty_{T,I}^{\dagger})^{\dR}})_{\mfk{a}})\cong q_{*}((\mF_{I,T}|_{(\infty_{T,I}^{\dagger})^{\dR}}))_{\mfk{a}},
\] so we can conclude that \( (\mF_{I,T}|_{(\infty_{T,I}^{\dagger})^{\dR}})_{\mfk{a}}\cong 0 \) if \( q_{*}((\mF_{I,T}|_{(\infty_{T,I}^{\dagger})^{\dR}}))_{\mfk{a}}\cong 0 \). Hence \( \mF_{I,T}|_{(\infty_{T,I}^{\dagger})^{\dR}} \) is classical of weight \( \kw \), as desired.
\end{proof}

\section{Fontaine operator}\label{sectionFontaineOperator}

In this section, we define the geometric Fontaine operators, and relate them 
to the differential operators \(d_{\tau}^{-k_{\tau}+1}\)
and \(\bar{d}_{\tau}^{-k_{\tau}+1}\)
that we have defined over \(\SGTTo{T}^{\dR}\) (Theorem \ref{thmFontainEqDDbar}). 



In \S \ref{subsecFontaineOpe}, we will give a general discussion of Fontaine operators. 

In \S \ref{subsecHodgeTateNess}, we compute the graded pieces of \(\BdR^{\la}\) after fixing infinitesimal character. 

In \S \ref{subsecGeomeFontaine}, we define the geometric Fontaine operators, and state the main theorem relating the geometric Fontaine operators to differential operators introduced in \S \ref{subsectionConstrDiffeOper} (Theorem \ref{thmFontainEqDDbar}). 
The proof of Theorem \ref{thmFontainEqDDbar} is quite long and technical, and will be finished in \S \ref{subsecProofGeoFontaineUnique} and \S \ref{subsecProofGeoFontaineNonzero}. The key idea is to explore the extra structure (see Proposition \ref{propFontaineIsdRlinear} for details), which then pins down the operator up to a scalar. One key ingredient of the argument is the work of \cite{GuoLi2021period}.

Finally in \S \ref{subsecFontaineComplex}, we combine all the Fontaine operators into a complex, which we refer to \emph{Bernstein-Gelfand-Gelfand-Fontaine complex}. We will establish the classicality of the BGGF complex utilizing Theorems \ref{thmClassicalityCohomoDR} and \ref{thmFontainEqDDbar}. This will be the only output needed for proving the classicality theorem for Hilbert modular forms (Theorem \ref{thmClassicalityCompleteCoho}).

\begin{rmk}
Before we start, let us remark that with Theorem \ref{thmClassicalityCohomoDR} established, we will not use the formalism of analytic stacks from now on. Instead, we will work on
the analytic site of \( \SGTKpTo{T} \). The translation is done as follows: we have natural maps \[
\SGTKpTo{T}^{\la}\to \SGTKpTo{T}^{\la,0}\to \SGTKpTo{T}^{\sm}\to \SGTKpTo{T}_{\log}^{\pdR/\CC_{p},\sm}\to |\SGTKpTo{T}|,
\] and we will regard any quasicoherent sheaf on any of the four stacks on the left-hand side as a sheaf on the analytic site of \( \SGTKpTo{T} \) by \( * \)-pushing forward. 

More concretely, for any open subspace \( U\subset |\SGTKpTo{T}| \), by construction \( U \) defines an open subspace \( U^{\pdR/\CC_{p},\sm}_{\log}\subset \SGTKpTo{T}^{\pdR/\CC_{p},\sm}_{\log} \). For any \( \mF\in \QCoh(\SGTKpTo{T}^{\pdR/\CC_{p},\sm}_{\log}) \), we define a sheaf \( \mF\in D(\SGTKpTo{T}_{\an}) \)
by putting \[
\mF(U):=R\Gamma(U^{\pdR/\CC_{p},\sm}_{\log},\mF).
\] This convention also applies to \( \mF \) on the other stacks.
\end{rmk}

\subsection{Fontaine operator}\label{subsecFontaineOpe}

The goal of this subsection is to give a general discussion about the Fontaine operator. 
We start by introducing some notation.

\begin{notation}\label{notationLaforProObj}
	Let \(\mcal{K}\) be a \(p\)-adic Lie group.
	
    (1) For any solid \(\mcal{K}\)-representation \(M\) over \(\QQ_{p}\) equipped
	with a derived complete \(\mcal{K}\)-stable filtration \(\Fil^{\bullet}M\),
	we denote \[M^{R-\mcal{K}-\la}:=\Rlim_{i}(M/\Fil^{i}M)^{R-\mcal{K}-\la}\]
	endowed with the filtration \(\Fil^{\bullet}M^{R-\mcal{K}-\la}:=\Rlim_{i}(\Fil^{\bullet}M/\Fil^{i}M)^{R-\mcal{K}-\la}\).

(2)
	Let \(R\) be a solid ring over \(\QQ_{p}\), and let \(f\) be a non-zero divisor in \(R(*)\). Assume that we have an action of \(\mcal{K}\) on \(R\), 
	such that \(f\in R^{\mcal{K}-\la}(*)\).
	Let \(M\) be a solid semi-linear representation of \(\mcal{K}\) over \(R[1/f]\),
	equipped with a (derived) \(f\)-complete \(\mcal{K}\)-stable lattice \(M^{\circ}\subset M\) over \(R\),
	such that \(M=M^{\circ}[1/f]\). 
	Then we denote \[M^{R-\mcal{K}-\la}:=M^{\circ,R-\mcal{K}-\la}[1/f],\]
    where we equip \(M^{\circ}\) with the \(f\)-adic filtration, and \(
    M^{\circ,R-\mcal{K}-\la} 
    \) is defined as in (1).
\end{notation}
\begin{notation}
Let \(K\) be a finite extension of \(\QQ_{p}\).
Denote \(
    H_{K}:=\Gal(\bar{K}/K(\zeta_{p^{\infty}}))
    \) and \(\Gamma_{K}:=\Gal(K(\zeta_{p^{\infty}})/K)
    \). We will fix a generator \(\Theta_{K}\in \Lie(\Gamma_{K})\). We will omit the subscript if \(K=\QQ_{p}\).
\end{notation}
\begin{eg}
We have that \(\CC_{p}^{hH,R-\Gamma-\la}\cong \QQ_{p}(\zeta_{p^{\infty}})
\). We endow \(B_{\dR}^{+}\) with the \(t\)-adic filtration, and then we have by definition that \[B_{\dR}^{+,hH,R-\Gamma-\la}\cong \QQ_{p}(\zeta_{p^{\infty}})\llbracket t\rrbracket,\]
	and \[B_{\dR}^{hH,R-\Gamma-\la}\cong \QQ_{p}(\zeta_{p^{\infty}})((t)).\]   
\end{eg}
\begin{notation}\label{notationArithSenHT}
	Let \(K\) be a finite extension of \(\QQ_{p}\).
	Fix an embedding \(\bar{K}\subset \CC_{p}\).

	(1) Let \(V_{0}\) be a solid semi-linear representation of \(\Gal_{K}\) over \(\CC_{p}\). We say that \(V_{0}\) \emph{admits an arithmetic Sen operator}, if \[V_{0}\cong V_{0}^{hH_{K},R-\Gamma_{K}-\la}\hatotimes_{K(\zeta_{p^{\infty}})}\CC_{p}.\]

    We say that \(V_{0}\) is \emph{Hodge-Tate} 
	if \(V_{0}
    \) admits an arithmetic Sen operator, and
    \(V_{0}^{hH_{K},R-\Gamma_{K}-\la}=\bigoplus_{i=1}^{n}(V_{0}^{hH_{K},R-\Gamma_{K}-\la,\Theta_{K}=-a_{i}})\)
	for some finite set \(\{a_{i}:i=1,\ldots,n\}\subset \ZZ\). 
	In that case, those \(a_{i}\)
	with \(V_{0}^{hH_{K},R-\Gamma_{K}-\la,\Theta_{K}=-a_{i}}\ne 0\)
	are called \emph{Hodge-Tate weights} of \(V_{0}\). 

    Note that if \(K'/K\) is a finite extension, and \(V_{0}\) is a representation of \(\Gal_{K}\) as above that admits an arithmetic Sen operator, then \(V_{0}|_{\Gal_{K'}}\) also admits an arithmetic Sen operator.
	
	(2) Let \(V\) be a solid semi-linear representation of \(\Gal_{K}\) over \(B_{\dR}\), equipped with a derived \(t\)-adically complete \(\Gal_{K}\)-stable \(B_{\dR}^{+}\)-lattice \(V^{+}\).
	Assume that \(V^{+}/t\) is a semi-linear Hodge-Tate representation of \(\Gal_{K}\) over \(\CC_{p}\) of Hodge-Tate weights \(\{a_{i}:i=1,\ldots,n\}\).
    
	We consider the action of \(\Theta_{K}\)
	on \(V^{hH_{K},R-\Gamma_{K}-\la}\). 
	We denote by \(\Fil^{t}_{-m}V\) the \(t\)-adic filtration,
	given by \(\Fil^{t}_{-m}V:=t^{m}V^{+}\). Then the action of \(\Theta_{K}\) on \(\gr^{t}_{-m}V^{hH_{K},R-\Gamma_{K}-\la}\) is semi-simple with the set of eigenvalues being
	\(\{m-a_{i}:i=1,\ldots,n\}\). 

	In particular, we can take the part where the action of \(\Theta_{K}\) is nilpotent, which we denote as \(E_{0}(V^{hH_{K},R-\Gamma_{K}-\la})\). More precisely, 
	we can consider \(E_{0}((\Fil^{-m}V/\Fil^{n}V)^{hH_{K},R-\Gamma_{K}-\la})\), which stabilizes when \(m,n\) are sufficiently large,
	which we denote as \(E_{0}(V^{hH_{K},R-\Gamma_{K}-\la})\). 
    
    We will write \[\Darith(V):=\varinjlim_{K'/K}E_{0}(V^{hH_{K'},R-\Gamma_{K'}-\la}),\] where the colimit is taken over all the finite extensions \(K\subset K'\subset \CC_{p}\).
    
    Then on \(
    \Darith(V)
    \), there is a nilpotent action of \(\Theta_{K}\),
	which we refer to as the \emph{Fontaine operator}, and denote as \(\nabla_{K}
    \). Note that \(\Fil^{t}_{\bullet}V\) induces a finite filtration on \(
    \Darith(V)
    \), where the non-trivial graded pieces are \(
    \gr^{t}_{-a_{i}}(\Darith(V))\cong (V^{+}/t)^{\Theta_{K}=a_{i}}(-a_{i})
    \). Note that \(
    \nabla_{K}
    \) preserves the filtration, and \(\nabla_{K}|_{\gr^{t}_{-a_{i}}(\Darith(V))}=0
    \) for \(
    i=1,\ldots,n
    \).
\end{notation}
\begin{lem}[{\cite[Théorème 4.1]{Fontaine2004arithmetique}}]\label{lem:Fontaine0de Rham}
	If \(V\) is a continuous Hodge-Tate representation of \(\Gal_{K}\)
	over \(\QQ_{p}\) of finite dimension, 
	then \(V\) is de Rham
	if and only if the action of Fontaine operator \(\nabla_{K}\)
	on \(\Darith(V\otimes_{\QQ_{p}}B_{\dR})\)
	is zero.
\end{lem}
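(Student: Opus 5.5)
The plan is to reduce the statement to Fontaine's description of \(D_{\mrm{dR}}\) via the locally analytic vectors of \(B_{\dR}\) under \(\Gamma_{K}\). Let \(V\) be finite-dimensional and Hodge-Tate over \(\QQ_{p}\), with \(W:=V\otimes_{\QQ_{p}}B_{\dR}\) and lattice \(W^{+}:=V\otimes B_{\dR}^{+}\). Then \(W^{+}/t\cong V\otimes\CC_{p}\) is Hodge-Tate, and Notation \ref{notationArithSenHT} (2) applies. Since \(V\) is finite dimensional, \(W^{hH_{K}}\) agrees with \(V\otimes B_{\dR}^{H_{K}}\). By the example preceding the notation, its \(R\)-\(\Gamma_{K}\)-locally analytic part, taken through the \(t\)-adic filtration, is
\[
D_{\mrm{dif}}(V):=\big(V\otimes B_{\dR}\big)^{hH_{K},R-\Gamma_{K}-\la},
\]
a finite free \(K(\zeta_{p^{\infty}})((t))\)-module of rank \(\dim V\). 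This is Fontaine's \(D_{\mrm{dif}}\), obtained after localizing to \(\Gamma_{K}\)-locally analytic vectors. I would cite Fontaine (and Berger–Colmez for the locally analytic interpretation) to identify the derived locally analytic vectors with the classical ones. The higher derived terms vanish because \(\CC_{p}^{H_{K}}\) is an inductive limit of \(\Gamma_{K}\)-modules admitting Tate's normalized traces.

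On \(D_{\mrm{dif}}(V)\) the operator \(\Theta_{K}\) acts as a connection over \(K(\zeta_{p^{\infty}})((t))\) with \(\Theta_{K}(t)=t\). The Hodge-Tate hypothesis says that the semisimple part of \(\Theta_{K}\) on each graded piece has integer eigenvalues. Next I decompose into generalized eigenspaces for \(\Theta_{K}\) on each \(\Fil^{-m}/\Fil^{n}\). The \(E_{0}\)-part is then a finite-dimensional \(K_{\infty}:=K(\zeta_{p^{\infty}})\)-vector space of dimension \(\dim V\). Indeed, each graded piece contributes exactly its \(\Theta_{K}=0\) eigenspace, which has dimension equal to the multiplicity of the corresponding Hodge-Tate weight. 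Taking the colimit over \(K'\) gives \(\Darith(V\otimes B_{\dR})\), a \(\bar{K}\)-vector space of dimension \(\dim V\) with nilpotent \(\nabla_{K}\). This is Fontaine's \(D_{\mrm{pdR}}(V)\) together with its nilpotent endomorphism \(\nu\).

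The key step is the identification \(\ker\nabla_{K}=\big(V\otimes B_{\dR}\big)^{\Gal_{K'}}\otimes_{K'}\bar{K}\) for \(K'\) large, and I would argue it in two parts. A \(\Gal_{K'}\)-invariant vector is fixed by \(\Gamma_{K'}\), so it is killed by \(\Theta_{K}\) and lies in \(E_{0}\). Conversely, let \(x\in E_{0}\) with \(\Theta_{K}x=0\). Then \(x\) is locally analytic with zero derivative, so \(x\) is fixed by an open subgroup of \(\Gamma_{K'}\). After enlarging \(K'\), \(x\) is \(\Gal_{K'}\)-invariant. This gives \(\dim_{\bar{K}}\ker\nabla_{K}=\sup_{K'}\dim_{K'}D_{\dR,K'}(V)\). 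Since \(D_{\dR}\) is insensitive to finite extensions, the right side equals \(\dim_{K}D_{\dR}(V)\).

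Therefore \(V\) is de Rham, meaning \(\dim_{K}D_{\dR}(V)=\dim V\), if and only if \(\ker\nabla_{K}\) is all of \(\Darith\), that is, if and only if \(\nabla_{K}=0\). The main obstacle I expect is the bookkeeping in the solid/derived setting. One must check that the derived invariants \((-)^{hH_{K}}\) and derived locally analytic vectors \(R\)-\(\Gamma_{K}\)-\(\la\) of \(V\otimes B_{\dR}\) have no higher cohomology. They should agree with Fontaine's classical \(D_{\mrm{dif}}\) filtration-piece by filtration-piece. I would handle this by dévissage along the \(t\)-adic filtration, reducing to \(\CC_{p}(i)\), where it is Tate–Sen theory. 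After that, the argument is literally Fontaine's Théorème 4.1.
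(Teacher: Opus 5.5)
Your proposal is correct and follows essentially the same route as the paper. Both arguments compute \(\dim\Darith(V\otimes_{\QQ_{p}}B_{\dR})=\dim V\) from the graded pieces of the \(t\)-adic filtration. Both then recover \(D_{\dR}\) inside \(\Darith\) as the vectors killed by \(\nabla_{K}\), using that a locally analytic vector with zero derivative is smooth, together with Galois descent. Your formulation \(\ker\nabla_{K}=D_{\dR,K'}(V)\otimes_{K'}\bar{K}\) on the colimit is if anything more careful than the paper's dimension count over \(K(\zeta_{p^{\infty}})\).

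One harmless slip: \((V\otimes B_{\dR})^{hH_{K}}\) is not \(V\otimes B_{\dR}^{H_{K}}\) unless \(H_{K}\) acts trivially on \(V\). However, all you actually use is Fontaine's result that \(D_{\mrm{dif}}(V)\) is free of rank \(\dim V\) over \(K(\zeta_{p^{\infty}})((t))\), which is correct.
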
 
\begin{proof}
    Note that \(\Darith(-)\) coincides with \(
    D_{\mrm{pdR}}(-)
    \) loc. cit., and the statement is the content of \cite[Théorème 4.1]{Fontaine2004arithmetique}. Let us give a proof here. 
	By Sen theory, \(\dim_{K(\zeta_{p^{\infty}})}((V\otimes \CC_{p})^{hH,R-\Gamma_{K}-\la})=\dim_{\QQ_{p}}(V)\), and by the argument in Notation \ref{notationArithSenHT}, \[
	\dim_{K(\zeta_{p^{\infty}})}(\Darith(V\otimes B_{\dR}))=
	\dim_{K(\zeta_{p^{\infty}})}((V\otimes \CC_{p})^{hH,R-\Gamma_{K}-\la})=\dim_{\QQ_{p}}(V).
	\]
	Note that
	\[
	D_{\dR}(V)\cong (V\otimes B_{\dR})^{\Gal_{K}}
	\cong ((V\otimes B_{\dR})^{hH,R-\Gamma_{K}-\la})^{\Gamma_{K}}\cong \Darith(V\otimes B_{\dR})^{\Gamma_{K}},
	\] 
	where \(\Gamma_{K}\)
	acts on \(\Darith(V\otimes B_{\dR})\)
	locally analytically, and the element \(\nabla_{K}\) in the Lie algebra acts nilpotently.
	In particular, we have an injection \[
	D_{\dR}(V)\otimes_{K}K(\zeta_{p^{\infty}})
	\hookrightarrow \Darith(V\otimes B_{\dR}),
	\] where \(\nabla_{K}\) acts on the LHS trivially.
	If \(V\) is de Rham, then by comparing the dimension, this injection is forced to be an isomorphism, and thus \(\nabla_{K}\) acts on the RHS also trivially.
	On the other hand, if \(\nabla_{K}\) acts on \(\Darith(V\otimes B_{\dR})\)
	trivially,
	then the action of \(\Gamma_{K}\) is smooth, and then by Galois descent \[
	\dim_{K}(D_{\dR}(V))=\dim_{K}(\Darith(V\otimes B_{\dR})^{\Gamma_{K}})=\dim_{K(\zeta_{p^{\infty}})}(\Darith(V\otimes  B_{\dR}))=\dim_{\QQ_{p}}(V)
	\] and \(V\) is de Rham.
\end{proof}

\subsection{Hodge-Tate decomposition}\label{subsecHodgeTateNess}
We continue working with the Shimura datum \((G_{T},\mu_{T})\)
for \(T\subset \Sigma_{\infty}\) as in Section \ref{sectionGeoSV}. 
Recall that \(\SGTKp{T}\) denotes the associated perfectoid Shimura variety over \( \CC_{p} \),
and we denote by \(\SGTKpTo{T}\) the toroidal compactification associated to a fixed cone decomposition. Note that \(
\SGTKp{T}=\SGTKpTo{T}
\) unless \(T=\emptyset\). 

Let \(\kw\) be a regular multiweight. We will use the notation from \S \ref{subsectionConstrDiffeOper}, especially Notation \ref{notationSomeAlgRep}.
\begin{notation}
	For any solid \(\fg\)-representation \(V\) over \(\bar{\QQ}_{p}\), and for \(I\coprod J\subset \Sigma_{\infty}\),
	we denote \(V^{\lambda^{\kw}_{I},V^{\kw}_{J}}:=\RHom_{Z(\fg_{I})\times \fg_{J}}(\lambda_{I}^{(\Bbbk|_{I},w)}\boxtimes V_{J}^{(\Bbbk|_{J},w)}, V)\),
	where \(\lambda_{I}^{(\Bbbk|_{I},w)}:=\bigotimes_{\tau\in I}(\lambda^{(k_{\tau},w)}_{\tau}\circ\pr_{\tau})\) and \(V_{J}^{(\Bbbk|_{J},w)}:=\bigotimes_{\tau\in J}(V^{(k_{\tau},w)}_{\tau}\circ\pr_{\tau})\).

	We will simply write \( V^{\lambda^{\kw}} \)
	if \( I=\Sigma_{\infty} \)
	and \( J=\emptyset \). 
\end{notation}
Recall the sheaf \(\mO^{\la,\kw}_{\SGTKp{T}}\) from Notation \ref{notationOlakw}. 
\begin{prop} 
The sheaf \(\mO^{\la,\kw}_{\SGTKp{T}}\) admits an arithmetic Sen operator, and
	is Hodge-Tate, with Hodge-Tate weights \(-\chi^{\kw}(d\mu_{T})=\sum_{\tau\in T^{c}}\frac{k_{\tau}-w}{2}\), that is,
	the arithmetic Sen operator acts via \(\chi^{\kw}(d\mu_{T})\). 

Here we are using a sheaf version of Notation \ref{notationArithSenHT}. More precisely, for any open sub-diamond \(U\subset \SGTKpTo{T}\) that is stable under the action of some open subgroup \(\Gal_{K'}\subset \Gal_{K}\), \(
R\Gamma(U,\mO^{\la,\kw}_{\SGTKp{T}})
\) as a semi-linear \(\Gal_{K'}\)-representation admits an arithmetic Sen operator, and is Hodge-Tate of weight \(-\chi^{\kw}(d\mu_{T})\).
\end{prop}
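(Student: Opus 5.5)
The natural route is Lemma \ref{lemHodgeTateCompareOnTcase}. It gives a Galois-equivariant isomorphism
\[
(\pi^{\la,0}_{\sm,T})^{*}\bigl(\omega^{\kw,\sm}_{\SGTKp{T}}\bigr)\otimes(\pi^{\la,0}_{\HT,T})^{*}\bigl(\omega^{(\Bbbk,-w)_{T^{c}}}_{\FLT{T}}\bigr)\cong \mO^{\la,\kw}_{\SGTKp{T}}\bigl(-\chi^{\kw}(d\mu_{T})\bigr).
\]
It therefore suffices to show that the left-hand side, restricted to a $\Gal_{K'}$-stable open $U$, admits an arithmetic Sen operator that acts by zero, i.e. is Hodge--Tate of weight $0$. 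Undoing the Tate twist then shifts the eigenvalue of $\Theta_{K'}$ by $\chi^{\kw}(d\mu_{T})$, which is exactly the claimed weight. The normalization should be checked against the explicit modular-curve remark, where the horizontal action of $d\mu$ on $\omega^{\kw}_{\Fl}$ is $\frac{k+w}{2}$ and the Hodge--Tate sequence carries the twist $(-1)$.

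To show the left-hand side is of weight $0$, I would use the local structure of $\mO^{\la,0}_{\SGTKp{T}}$ from \cite[Proposition \ref{1-propCartesianLA0}]{Jiang2026Shla}. Locally, $\pi_{K_p}^{-1}\mO_{\SGTKTo{T}{K}}\hatotimes\pi_{\HT}^{-1}\mO_{\FLT{T}}\to\mO^{\la,0}$ is ``étale'', and $\mO^{\la,0}$ is a completed colimit over $K_p$ of modules of power series in the $\fn$-coordinate on $\Fl$ with coefficients in $\mO_{\SGTKTo{T}{\Kpp}}$, all defined over a finite extension $K'$. The vector bundles $\omega^{\kw,\sm}$ and $\omega_{\FLT{T}}$ are pulled back from the $K'$-rational objects $\SGTKTo{T}{\Kpp}$ and $\FLT{T}$. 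So on a small affinoid $U$ defined over $K'$, the sections $R\Gamma(U,-)$ are the completed base change to $\CC_p$ of a $K'$-rational LB-space of locally analytic functions. Taking $hH_{K'}$-invariants followed by $R$-$\Gamma_{K'}$-locally analytic vectors then returns this rational model tensored with $K'(\zeta_{p^\infty})$. On that model $\Theta_{K'}$ acts trivially, since the Galois action is through the coefficients only. This is precisely the geometric-Sen-theory input of \cite{Pan22,Juan2022.09locallyShi}, namely that the arithmetic Sen operator on $\mO^{\la}$ equals $-\theta^{Pan}(d\mu)$. Consequently it vanishes on $\mO^{\la,0}$, where the horizontal $\fh$-action is trivial. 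Both the Sen-operator formula and the vanishing of $\theta^{Pan}$ on $\mO^{\la,0}$ are recorded in \cite{Jiang2026Shla}.

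The main obstacle is making the sheaf version rigorous. One must show that the $R$-$\Gamma$-locally analytic vectors commute with the colimit over $K_p$ and with the completed tensor products, so that ``admits an arithmetic Sen operator'' holds on the nose rather than only after passing to graded pieces. One must also handle the derived $\RHom_{\ti{\mfk p}}$ in the definition, which Remark \ref{rmkOlaKWConcentrate} reduces to degree $0$. I would first try to cite the corresponding statement in \cite{Jiang2026Shla} for $\mO^{\la}$, which admits an arithmetic Sen operator. I would then pass to the $\chi$-isotypic part using the decomposition under the horizontal $\fh$-action, which commutes with the Galois action. Finally I would read off the weight from the identity $\Theta=\chi(d\mu_T)$ on that isotypic part.
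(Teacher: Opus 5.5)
Your proposal is correct and rests on the same input as the paper, whose proof is simply a citation of \cite[Theorem 6.3.5]{Juan2022.09locallyShi}: the arithmetic Sen operator on $\mO^{\la}$ equals $-\theta^{Pan}(d\mu)$, and on the $\kw$-isotypic part $\mO^{\la,\kw}$ this acts by $\chi^{\kw}(d\mu_{T})$, exactly as in your final paragraph. The detour through Lemma \ref{lemHodgeTateCompareOnTcase} (reducing to weight $0$ on $\mO^{\la,0}$ and recovering the weight from the Tate twist) is consistent but redundant once that theorem is invoked; note also that your ``rational model'' heuristic for $\mO^{\la,0}$ is not an independent proof of the vanishing there---that vanishing is precisely this geometric Sen theory statement.
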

\begin{proof}
This follows from \cite[Theorem 6.3.5]{Juan2022.09locallyShi}.
\end{proof}
\begin{prop}\label{propInfiniteCharToHorizontal} Let \(\kw\) be a regular multiweight. We have that
	\[\mO^{\la,\lambda^{\kw}_{T^{c}},V_{T}^{\kw}}_{\SGTKp{T}}\cong \bigoplus_{s\in W^{M_{\mu_{T}}}}\mO_{\SGTKp{T}}^{\la,(s\cdot \Bbbk,w)},\] 
	where \(W^{M_{\mu_{T}}}:=\{s_{I}=\prod_{\tau\in I}s_{\tau}:I\subset T^{c}\}\).

	In particular, 
	\(
    \mO^{\la,\lambda^{\kw}_{T^{c}},V^{\kw}_{T}}_{\SGTKp{T}}
    \) 
	concentrates in degree \(0\),
	and is Hodge-Tate
	of Hodge-Tate weights \(-\chi^{(s\cdot\Bbbk,w)}(d\mu_{T})\)
	for \(s\in W^{M_{\mu_{T}}}\).
\end{prop}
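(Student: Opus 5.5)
We reduce the computation of the generalized eigensheaf for the infinitesimal character to a statement about the horizontal Cartan action, as in Pan's analysis for modular curves, and treat the \(\tau\)-factors one at a time. For \(\tau\in T\) there is nothing to do: taking \(\RHom_{\fg_{T}}(V_{T}^{\kw},-)\) is the same operation on both sides, since \(W_{T}^{\kw}\) restricted to \(\fg_{T}\) is \(V_{T}^{(\Bbbk,w)_{T}}\), up to the duality convention built into Notation \ref{notationOlakw}. So the task is to show, for \(\tau\in T^{c}\), that fixing the generalized infinitesimal character \(\lambda_{\tau}^{(k_{\tau},w)}\) on \(\mO^{\la}\) (with the \(*_{\mrm{const}}\) action of \(Z(\fg_{\tau})\)) is the same as imposing \(\theta^{Pan}(\fh_{\tau})=\chi^{(k_{\tau},w)}_{\tau}\) or \(\chi^{(2-k_{\tau},w)}_{\tau}\), and that the resulting sum is direct.

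The first step is the relation between the centre and the horizontal action. I would use the identity from \cite{Jiang2026Shla} (the analogue of Pan's ``\(\theta_{\fh}\) and \(Z(\fg)\)'' lemma, coming from the fact that \(\fn^{0}_{\mu_{T}}\) acts trivially on \(\mO^{\la}\) via \(\theta^{Pan}\), i.e.\ \(\theta^{Pan}(\fn_{\tau})=0\)). Writing \(\Omega_{\tau}=\frac12\mrm{H}_{\tau}^{2}+\mrm{H}_{\tau}+\mrm{F}_{\tau}\mrm{E}_{\tau}\), the \(*_{\mrm{const}}\)-action of \(Z(\fg_{\tau})\) factors through the Harish-Chandra map to \(U(\fh_{\tau})\) acting via \(\theta^{Pan}\). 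Concretely, \(\mrm{Z}_{\tau}\) acts by \(\theta^{Pan}(\mrm{Z}_{\tau})\) and \(\Omega_{\tau}\) acts by \(\frac12 h^{2}+h\), where \(h=\theta^{Pan}(\mrm{H}_{\tau})\), possibly with a sign or \(\rho\)-shift depending on normalization. Hence \(\lambda^{(k_{\tau},w)}_{\tau}\) on \(\mO^{\la}\) corresponds exactly to \(\theta^{Pan}(\fh_{\tau})\) lying in the Weyl-dot orbit \(\{\chi^{(k_{\tau},w)},\chi^{(2-k_{\tau},w)}\}\).

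The second step decomposes into these two orbit points. Since \(\kw\) is regular, \(k_{\tau}\neq1\), so the two characters of \(\fh_{\tau}\) are distinct. The derived fibre over \(\lambda_{\tau}\) therefore splits as a direct sum of the generalized eigensheaves for the two characters; this uses that \(\CC_{p}[h]\to\CC_{p}[\Omega_{\tau}]\) is étale at \(h=-k_{\tau}\) exactly when \(k_{\tau}\neq1\). The remaining issue is the gap between generalized and honest eigensheaves, i.e.\ between \(\RHom\) from a character of \(Z(\fg)\) and \(\RHom\) from a character of \(\fh\). I would handle it by the étaleness above, so that \(\RHom_{Z(\fg_{\tau})}(\lambda_{\tau},-)\cong\bigoplus_{\chi}\RHom_{\fh_{\tau}}(\chi,-)\) on any module where \(Z(\fg_{\tau})\) acts through \(U(\fh_{\tau})\) via \(\theta^{Pan}\). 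Taking the product over \(\tau\in T^{c}\) gives the index set \(W^{M_{\mu_{T}}}\), with \(s_{I}\cdot\Bbbk\) realizing the choice in \(I\).

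The final assertions then follow directly. Degree-\(0\) concentration comes from Remark \ref{rmkOlaKWConcentrate} applied to each summand. The Hodge–Tate weights follow from the previous proposition applied to each \(\mO^{\la,(s\cdot\Bbbk,w)}\).

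The main obstacle is the first step: pinning down exactly which identity in \cite{Jiang2026Shla} expresses \(Z(\fg)\) through \(\theta^{Pan}(\fh)\), and checking the sign and shift conventions so that the dot action matches \(s_{\tau}\cdot k_{\tau}=2-k_{\tau}\). I would verify this against the modular curve normalization remark.
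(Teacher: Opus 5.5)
Your proposal is correct and follows essentially the paper's proof. The paper likewise deduces the splitting from the identity \(\Omega_\tau=\frac{1}{2}\theta(\mrm{H}_\tau)^2+\theta(\mrm{H}_\tau)\) for the horizontal Cartan action, where the two roots are distinct precisely because \(k_\tau\neq 1\), and it gets degree-\(0\) concentration from Remark \ref{rmkOlaKWConcentrate}. The paper obtains that identity by combining Pan's Corollary 4.2.8 with the comparison lemma in \cite{Jiang2026Shla} between Pilloni's \(\theta\) and \(\theta^{Pan}\); this comparison is what fixes the sign convention you flagged. Your étaleness argument simply makes explicit the passage from a character of \(Z(\fg_\tau)\) to the two characters of \(\fh_\tau\), which the paper imports from Pan.
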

\begin{proof}
	This follows from the fact that the Casmir element for \(\fg_{\tau}\cong\mfk{gl}_{2}\), \(\Omega_{\tau}=\frac{1}{2}\theta(\mrm{H}_{\tau})^{2}+\theta(\mrm{H}_{\tau})\), by combining \cite[Lemma \ref{1-lemCompareHorPilloniAndPan}]{Jiang2026Shla}and \cite[Corollary 4.2.8]{Pan22}. The concentration in degree \(0\) follows from Remark \ref{rmkOlaKWConcentrate}.
\end{proof}

\subsection{Geometric Fontaine operator}\label{subsecGeomeFontaine}
We denote by \(\BdR^{+}\) and \( \OBdRlog^{+} \) the filtered period sheaves on \(\SGTKTo{T}{K}_{\proket}\) as defined in \cite[Definitions 2.2.3 \& 2.2.10]{DLLZ2022logarithmicJAMS}. 
By abuse of notation, we denote by the same notation their restrictions to \( \SGTKpTo{T}_{\an} \).
Let \( \BdR:=\BdR^{+}[1/t] \) and \( \OBdRlog:=\OBdRlog^{+}[1/t] \). 
\begin{eg}
We apply Notation \ref{notationLaforProObj} (1) to \(\BdR^{+}\), and denote \[\BdR^{+,\la}:=\Rlim_{n}(\BdR^{+}/t^{n}\BdR^{+})^{\la}.\] 
Here \( (-)^{\la} \) is taken in the sense of \cite[Notation \ref{1-notationRlaConvention}]{Jiang2026Shla}.
We apply Notation \ref{notationLaforProObj} (2) to \(\BdR\), and thus \( \BdR^{\la}=\BdR^{+,\la}[1/t] \). 
In particular, \( \BdR^{\la} \) is filtered by \( \mO^{\la}_{\SGTKp{T}}(i) \). 
\end{eg}
\begin{prop}\label{corCompDartihBdR}
	Let \(\kw\) be a multiweight with \(k_{\tau}\in \ZZ_{\le 0}\).
	Then \( \Fil^{t}_{\bullet} \) on \(\BdR\) induces a finite filtration on
	\(\Darith(\BdR^{\la,\lambda^{\kw}_{T^{c}},V_{T}^{\kw}})\), 
	with the graded pieces
	being \[\gr^{m}\Darith(\BdR^{\la,\lambda^{\kw}_{T^{c}},V_{T}^{\kw}})\cong \bigoplus_{m=-\chi^{(s\cdot \Bbbk,w)}(d\mu_{T}),s\in W^{M_{\mu_{T}}}}
	\mO^{\la,(s\cdot\Bbbk,w),R-H-\sm,R-\Gamma-\la}_{\SGTKp{T}}\left(-\chi^{(s\cdot \Bbbk,w)}(d\mu_{T})\right).\]
	where \(\Darith(-)\) is as in Notation \ref{notationArithSenHT}.
\end{prop}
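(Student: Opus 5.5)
We reduce the computation to the \(t\)-adic graded pieces of \(\BdR^{\la}\) and then use Proposition \ref{propInfiniteCharToHorizontal}. By definition (Notation \ref{notationLaforProObj} and the example above), \(\BdR^{\la}=\BdR^{+,\la}[1/t]\) carries the filtration \(\Fil^{t}_{-m}\BdR^{\la}=t^{m}\BdR^{+,\la}\), with graded pieces \(\gr^{t}_{-m}\BdR^{\la}\cong \mO^{\la}_{\SGTKp{T}}(m)\). The first step is to check that \(R\Hom_{Z(\fg_{T^{c}})\times\fg_{T}}(\lambda^{\kw}_{T^{c}}\boxtimes V^{\kw}_{T},-)\) commutes with these graded pieces and with \(\Rlim\) along \(\BdR^{+}/t^{n}\). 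This holds because the \(\fg\)-action and the \(G(\QQ_{p})\)-action commute with multiplication by \(t\) (the element \(t\) is \(G(\QQ_{p})\)-invariant), and \(R\Hom\) out of a finite-dimensional module is exact and commutes with limits. We therefore obtain a complete filtration on \(\BdR^{\la,\lambda^{\kw}_{T^{c}},V^{\kw}_{T}}\) whose graded pieces are
\[
\mO^{\la,\lambda^{\kw}_{T^{c}},V^{\kw}_{T}}_{\SGTKp{T}}(m)\cong \bigoplus_{s\in W^{M_{\mu_{T}}}}\mO^{\la,(s\cdot\Bbbk,w)}_{\SGTKp{T}}(m),
\]
by Proposition \ref{propInfiniteCharToHorizontal}. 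Here \(\mO^{\la,(s\cdot\Bbbk,w)}_{\SGTKp{T}}\) is Hodge-Tate of weight \(-\chi^{(s\cdot\Bbbk,w)}(d\mu_{T})\), so the Sen operator acts on the \(m\)-th Tate twist by the scalar \(\chi^{(s\cdot\Bbbk,w)}(d\mu_{T})+m\), up to the sign convention of Notation \ref{notationArithSenHT}. Since \(W^{M_{\mu_{T}}}\) is finite, the lattice \(t^{0}\BdR^{+,\la}\) reduces mod \(t\) to a Hodge-Tate object with finitely many weights, so Notation \ref{notationArithSenHT} (2) applies in its sheaf version.

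Next we apply \(\Darith\), that is, \((-)^{hH_{K'},R-\Gamma_{K'}-\la}\) followed by taking the generalized \(0\)-eigenspace \(E_{0}\) of \(\Theta\) and a colimit over \(K'\). The functor \(\Darith\) is exact on the finite truncations \(\Fil^{-m}/\Fil^{n}\): the functor \(E_{0}\) is exact on objects with a locally finite generalized eigenspace decomposition, and taking \(H\)-invariants with \(\Gamma\)-analytic vectors is exact on objects admitting an arithmetic Sen operator, by the sheaf form of Sen theory from \cite{Juan2022.09locallyShi} already used in the previous proposition. For large \(m,n\), only finitely many graded pieces have a zero Sen weight. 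Hence the filtration induced on \(\Darith\) is finite, and its graded pieces are \(\Darith\) of the pieces \(\mO^{\la,(s\cdot\Bbbk,w)}(m)\) with weight zero, which forces \(m=-\chi^{(s\cdot\Bbbk,w)}(d\mu_{T})\). On such a piece, \(E_{0}\) of the \(H\)-invariant \(\Gamma\)-analytic vectors is the \(\Theta\)-kernel, which is the \(R\)-\(H\)-smooth, \(R\)-\(\Gamma\)-locally analytic part \(\mO^{\la,(s\cdot\Bbbk,w),R-H-\sm,R-\Gamma-\la}(-\chi^{(s\cdot\Bbbk,w)}(d\mu_{T}))\), after passing to the colimit over \(K'\). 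This gives the stated formula, with the indexing \(m\) matched to the twist.

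The main obstacle is the exactness and commutation of \(\Darith\) with the graded pieces in the derived, solid, sheaf setting, where \(\mO^{\la,(s\cdot\Bbbk,w)}\) is infinite-dimensional. Specifically, we must justify that the \(E_{0}\) projector splits the finite filtered pieces, so that no extension classes between different weights survive. We first try to show this using that \(\Theta\) acts semisimply with distinct integer eigenvalues on each graded piece. The weights differ by the nonzero integers \(\frac{k_{\tau}-(2-k_{\tau})}{2}\cdot(\pm1)\), which are nonzero by regularity since \(k_{\tau}\le 0\). This gives a polynomial idempotent in \(\Theta\) on each truncation \(\Fil^{-m}/\Fil^{n}\).
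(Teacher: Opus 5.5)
Your argument is correct and follows the route the paper has in mind. The paper writes out no separate proof: the proposition comes from applying the general discussion of Notation \ref{notationArithSenHT}~(2) to the lattice \(\BdR^{+,\la,\lambda^{\kw}_{T^{c}},V^{\kw}_{T}}\), whose reduction mod \(t\) is Hodge-Tate with weights \(-\chi^{(s\cdot\Bbbk,w)}(d\mu_{T})\) by Proposition \ref{propInfiniteCharToHorizontal}.

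One small correction: the weights for different \(s\in W^{M_{\mu_{T}}}\) need not be distinct. For parallel \(\kw\) they depend only on \(\#I\), and your difference \(k_{\tau}-1\neq 0\) only compares \(s\) with \(ss_{\tau}\). This is harmless, because isolating the generalized \(0\)-eigenspace of \(\Theta\) on a finite truncation only needs \(\Theta\) to act semisimply with finitely many integer eigenvalues on each graded piece. Regularity is really used inside Proposition \ref{propInfiniteCharToHorizontal}.
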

Now we consider the action of \(\nabla\) on \(
\Darith(\BdR^{\la,\lambda^{\kw}_{T^{c}},V^{\kw}_{T}})
\),
which is zero on \(\gr^{m}\Darith(\BdR^{\la,\lambda_{T^{c}}^{\kw},V_{T}^{\kw}})\). We fix a regular multiweight \(\kw\).
\begin{dfn}
	For any subset \(I_{0},J_{0}\subset T^{c}\), 
	we say that
	\(I_{0}\) and \(J_{0}\)
	are \emph{adjacent} in \(T^{c}\) for the weight \(\kw\)
	if \(\chi^{s_{I_{0}}\cdot\kw}(d\mu_{T})\)
	and \(\chi^{s_{I_{0}\cup\{\tau\}}\cdot\kw}(d\mu_{T})\)
	are adjacent in the set \(\{\chi^{s_{I}\cdot\kw}(d\mu_{T}):I\subset T^{c}\}\), and \(J_{0}=I_{0}\coprod \{\tau\}\) for some \(\tau\in T^{c}\backslash I_{0}\).
\end{dfn}
\begin{eg}
	If \(\kw\) is parallel, \(I_{0}\)
and \(I_{0}\cup\{\tau\}\)
are adjacent whenever \(\tau\notin I_{0}\).
\end{eg}
\begin{dfn}[Geometric Fontaine operators]\label{dfnFontaineOpe} Let \(\kw\) be a regular multiweight, such that \(k_{\tau}\in\ZZ_{\le 0}\)
for all \(\tau\in \Sigma_{\infty}\). 

	Assume that \(m_{0}<m_{1}\) are adjacent in the set \(\{\chi^{s_{I}\cdot\kw}(d\mu_{T}):I\subset T^{c}\}\). Then the action of \(\nabla\) on \(\Darith(\BdR^{\la,\lambda_{T^{c}}^{\kw},V_{T}^{\kw}})\)
	induces a unique monodromy map, which we refer to as \emph{geometric Fontaine operator}, and denote by \[N_{m_{0}\to m_{1}}:\gr^{m_{0}}\Darith(\BdR^{\la,\lambda_{T^{c}}^{\kw},V_{T}^{\kw}})\to \gr^{m_{1}}\Darith(\BdR^{\la,\lambda_{T^{c}}^{\kw},V_{T}^{\kw}}).
	\] 

	Now using the decompositions in Proposition \ref{corCompDartihBdR},
	for any \(I_{i}\subset T^{c}\) with \(m_{i}=-\chi^{(s_{I_{i}}\cdot\Bbbk,w)}(d\mu_{T})\), for \(i=0,1\), we have an induced morphism \[
	N_{I_{0}\to I_{1}}:\mO^{\la,(s_{I_{0}}\cdot\Bbbk,w)}_{\SGTKp{T}}\left(m_{0}\right)\to \mO^{\la,(s_{I_{1}}\cdot\Bbbk,w)}_{\SGTKp{T}}\left(m_{1}\right).
	\]

	If in addition \(I_{1}=I_{0}\cup \{\tau\}\), that is, \(I_{0}\) and \(I_{1}\) are adjacent, we will write \(N_{I_{0},\tau}:=N_{I_{0}\to I_{1}}\).
\end{dfn}

The main goal of this section is to calculate \(N_{I_{0}\to I_{1}}\),
and to relate it to the differential operators in \S \ref{subsectionConstrDiffeOper}.
Let us denote by \(F_{T}\) the reflex field of \((G_{T},\mu_{T})\) and \(\tau_{T,0}:F_{T}\hookrightarrow \CC\) the prefixed embedding.
Recall that we have fixed an isomorphism \(\bar{\QQ}_{p}\cong \CC\),
which induces an embedding \(\tau_{T,0}:F_{T}\hookrightarrow F_{T,\p_{T}}\hookrightarrow \bar{\QQ}_{p}\)
for a unique place \(\p_{T}\) of \(F_{T}\).
\begin{thm}\label{thmFontainEqDDbar}
	Let \(\kw\) be a regular multiweight. Let \(m_{0}<m_{1}\), and \(I_{0},I_{1}\)
	be as in Definition \ref{dfnFontaineOpe}. 
	
	(1) Then \(N_{I_{0}\to I_{1}}= 0\) unless \(I_{0}\) and \(I_{1}\) are adjacent for the weight \(\kw\). 

	(2) If \(I_{0},I_{1}\)
	are adjacent in \(T^{c}\)
	for the weight 
	\(\kw\),
	 and \(k_{\tau}\in\ZZ_{\le0}\), then \(I_{1}=I_{0}\coprod \{\tau\}\) for some \(\tau\in T^{c}\backslash I_{0}\), 
	 \[N_{I_{0},\tau}=N_{I_{0}\to I_{1}}=c\cdot d^{-k_{\tau}+1}_{\tau}\circ \bar{d}^{-k_{\tau}+1}_{\tau},\]
	where \(c\in F_{T,\p_{T}}^{\times}\), and \(d^{-k_{\tau}+1}_{\tau}\)
	and \(\bar{d}^{-k_{\tau}+1}_{\tau}\)
	are defined in \S \ref{subsectionConstrDiffeOper}. 
\end{thm}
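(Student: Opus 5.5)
We follow the strategy sketched in the introduction: show that each $N_{I_0\to I_1}$ is rigid, i.e.\ determined up to a scalar by extra structure, and then show that it is nonzero exactly in the claimed cases.

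\textbf{Step 1: enhance the operator.} Using the Poincar\'e lemma $\BdR\to \OBdRlog\otimes\dR_{\SGTKTo{T}{K}}$ and the bi-filtered structure of $\BdR$, we regard $\BdR^{\la,\lambda^{\kw}_{T^c},V^{\kw}_T}$ as a bi-filtered module over $\pi_{K_p}^{-1}(\dR_{\SGTKTo{T}{K}/\CC_p})\otimes\pi_{\HT}^{-1}(\dR_{\Fl/\CC_p})$. The Sen/Fontaine operator $\nabla$ commutes with all $G_T(\mA_f)$-equivariant, $\Gal$-equivariant structure. Consequently the induced map $N_{m_0\to m_1}$ on graded pieces is a $G_T(\mA_f)$-equivariant morphism of such bi-filtered modules. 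Concretely, through Proposition \ref{corCompDartihBdR} and Proposition \ref{propInfiniteCharToHorizontal}, $N_{I_0\to I_1}$ becomes a morphism of sheaves of the form $\mO^{\la,(s_{I_0}\cdot\Bbbk,w)}\to\mO^{\la,(s_{I_1}\cdot\Bbbk,w)}$. This morphism is linear over the horizontal $\fh$-action and is a differential operator relative to both $\mO_{\SGTKTo{T}{K}}$ and $\mO_{\Fl}$. Equivalently, after pushing to the bi-filtered de Rham stack $(\SGTKpTo{T}^{\la,0})^{\pdR,\hpp}_{\log}$, it is a morphism between $(h^{\la,0,\hpp}_T)_*$ of the corresponding line-bundle twists, shifted by $\{a,b\}$.

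\textbf{Step 2: classify such morphisms.} By the Cartesian description of $\SGTKpTo{T}^{\la,0}$ over $\FLT{T}\times\SGTKpTo{T}^{\sm}$ (the ``\'etale'' property and Lemma \ref{lemAnotherKunnethType}), Hom-spaces between these objects reduce to equivariant differential-operator Homs on $\FLT{T}^{\std}\times\FLT{T}$, i.e.\ to BGG-type Homs between Verma-type modules. By the dual BGG description in Notation \ref{notationDiffHilber}, the only nonzero $G^c$-equivariant differential operators between $\omega^{(s_{I_0}\cdot\Bbbk)}$ and $\omega^{(s_{I_1}\cdot\Bbbk)}$ occur when $I_1=I_0\coprod\{\tau\}$. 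In that case they factor as a $\theta^{1-k_\tau}_\tau$ on one factor times a $\bar\theta^{1-k_\tau}_\tau$ on the other, and the Hom-space is one-dimensional, spanned by $d^{1-k_\tau}_\tau\circ\bar d^{1-k_\tau}_\tau$.

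The filtration degrees must also match. $N$ raises the $t$-filtration by $m_1-m_0$, which forces the total twist $\{1-k_\tau,1-k_\tau\}$, and Hodge-Tate weights rule out any other pair. This gives part (1), and in part (2) it gives $N_{I_0,\tau}=c\cdot d_\tau\bar d_\tau$ with $c\in\CC_p$. Galois equivariance over $F_{T,\p_T}$ then places $c$ in $F_{T,\p_T}$. We expect the main obstacle to be making this uniqueness argument rigorous, namely controlling all derived Homs of bi-filtered modules over the period sheaves and not just their graded pieces. Following \cite{GuoLi2021period}, we would compute these Homs on graded pieces, where everything becomes coherent on $\FLT{T}\times\FLT{T}^{\std}$, and lift using the filtrations.

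\textbf{Step 3: nonvanishing.} It remains to show $c\neq 0$. We check this on graded pieces. Via geometric Sen theory the computation restricts to the flag variety: the extension class of $\gr$ of $\BdR^{\la}$ in the relevant two adjacent degrees is the pullback of the nontrivial extension computed in \cite{Jiang2023thetaModCur} for a single $\mP^1_\tau$. Lemma \ref{lemBasicNablaDiffe} shows that $d_\tau\bar d_\tau$ is surjective on the corresponding graded piece. Matching these two facts gives $c\ne0$.
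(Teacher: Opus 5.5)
Your three-step architecture is the paper's: de Rham linearity of $N$ via a Guo--Li type map of filtered algebras into $\BdR^{+}$, uniqueness up to a scalar, non-vanishing by reduction to the flag variety and \cite{Jiang2023thetaModCur}, then Galois descent of $c$. However, Step~2 has a genuine gap.

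The Cartesian description of $\SGTKpTo{T}^{\la,0}$ over $\FLT{T}\times\SGTKpTo{T}^{\sm}$ and Lemma~\ref{lemAnotherKunnethType} only split off the Hodge--Tate direction. They give no way to replace the Shimura variety by $\FLT{T}^{\std}$. The sheaf of operators in the Shimura direction is the pullback along $\pi_{\GM}$ of a $G^{c}$-equivariant bundle on $\FLT{T}^{\std}$. Its global sections are therefore sections of automorphic vector bundles, not $G^{c}$-invariant operators on $\FLT{T}^{\std}$. Already the $\mO$-linear ones form $H^{0}(\omega^{(\mathbb{K},W),\sm}\otimes\omega^{\kw,\sm,\vee})$ over the tower, which is infinite-dimensional.

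What collapses these Hom-spaces is a global input you never invoke, namely Lemma~\ref{lemCompuHomNabla}. Its core is that $H^{0}(\SGT{T}^{\sm},\omega^{\kw,\sm}_{\SGT{T}})^{G_{T}(\mA_{f})}$ vanishes for $\kw\neq(\underline{0},0)$ and equals $\CC_{p}$ for trivial weight. The proof uses archimedean uniformization, density of $G_{T}(\QQ)$ in $G_{T}(\RR)$, absence of $P_{\mu_{T}}(\RR)$-invariants in $V^{\kw}$, and transitivity of $G_{T}(\mA_{f})$ on $\pi_{0}$. The lemma also uses the horizontal $\fh$-action to match the $\FLT{T}$-weights.

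With this lemma the paper needs no Verma modules. By adjunction, a filtered map $\mrm{triv}^{(m_{0},m_{0})}(\cdot)\to\mrm{triv}^{(m_{1},m_{1})}(\cdot)$ is a map out of $\mO^{\sm}\hatotimes_{\dR(\mO^{\sm})}(\cdot)\hatotimes_{\dR(\mO_{\FLT{T}})}\mO_{\FLT{T}}$. The graded pieces of this object are the twists $\nabla^{(s_{I_{0}}\cdot\Bbbk+2\underline{i},w),(s_{I_{0}}\cdot\Bbbk+2\underline{j},-w)}_{\SGT{T}}$. Restrict a nonzero map to its maximal nonvanishing filtration index (its symbol) and apply Lemma~\ref{lemCompuHomNabla}. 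This forces:
\begin{itemize}
\item $I_{0}\subset I_{1}$, hence adjacency;
\item a symbol of bidegree $(1-k_{\tau},1-k_{\tau})$ on the $\tau$-component;
\item one-dimensionality, by subtracting the appropriate multiple of $\mcal{H}(d_{\tau}^{1-k_{\tau}}\circ\bar d_{\tau}^{1-k_{\tau}})$, which is nonzero by Lemma~\ref{lemBasicNablaDiffe}.
\end{itemize}

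Three smaller corrections.

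First, your claim that nonzero equivariant operators $\omega^{(s_{I_{0}}\cdot\Bbbk,w)}\to\omega^{(s_{I_{1}}\cdot\Bbbk,w)}$ exist only for $I_{1}=I_{0}\coprod\{\tau\}$ is false. Products of the $\theta_{\tau}$ give one for every $I_{1}\supseteq I_{0}$. Part~(1) survives only because adjacency of the values $m_{0}<m_{1}$ excludes $|I_{1}\setminus I_{0}|\geq 2$ once $I_{0}\subset I_{1}$.

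Second, in Step~1, $R$-linearity of $\nabla$ is not a matter of ``commuting with Galois-equivariant structure''. You need the filtered algebra map from the $\bar{\QQ}_{p}$-model $R^{\mrm{rat}}$, so that $\Theta$, acting as a filtration-preserving derivation, kills $R^{\mrm{rat}}$. To descend $\nabla$ to a filtered map between graded pieces you also need $\pi_{>0}\Hom(\mrm{triv}^{m_{0}}\Darith(\gr^{m_{0}}),\mrm{triv}^{m_{0}}\Darith(\gr^{m_{0}}))=0$. The paper obtains this from degree-$0$ concentration of $\gr^{0}(R)\otimes_{R}\Darith(\gr^{m_{0}})$.

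Third, Step~3 matches the paper in spirit, with two missing steps. One must first pass from $\BdR$ to $\OBdRlog$ using the $\dR(\mO^{\sm})$-linearity, so that geometric Sen theory applies. One must then check that the relevant $i_{\tau,*}i_{\tau}^{*}$-term stays nonzero after applying $\tVBfu$.
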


The proof of the theorem will be finished in two steps. We will first show in \S \ref{subsecProofGeoFontaineUnique} that \(N_{I_{0},\tau}\) is unique up to a scalar \(c\in \QQ_{p}\), and then we will in \S \ref{subsecProofGeoFontaineNonzero} prove that \(N_{I_{0},\tau}\) is not zero, which will then imply that \(c\ne 0\).

\subsection{Proof of Theorem \ref{thmFontainEqDDbar}: Uniqueness}
\label{subsecProofGeoFontaineUnique}
Let us start from the first step. We will work in the category of filtered modules over some filtered \(\mbb{E}_{\infty}\)-rings. 
Let us start by recalling some basic definitions. The readers can find details in \cite[\S 5.1]{BhattMorrowScholze2019topological}. 

\begin{dfn}[Trivial filtration functor]
Assume that \(\mscr{C}\) is presentable.
We have a functor \(\mrm{ev}_{\underline{i}}:\Fil^{\ZZ^{N}}(\mscr{C})\to \mscr{C},\;C\mapsto \Fil^{\underline{i}}C\),
which commutes with small limits and colimits. Thus by \cite[Corollary 5.5.2.9]{Lurie2009HTT}, \(\mrm{ev}_{\underline{i}}\) admits a left adjoint, which we denote as \(\mrm{triv}^{\underline{i}}:\mscr{C}\to \Fil^{\ZZ^{N}}(\mscr{C})\).
Concretely, given \(F\in \mscr{C}\), 
\[\Fil^{\underline{j}}\mrm{triv}^{\underline{i}}(F)=\begin{cases}
F,& \underline{j}\le \underline{i};\\
0,& \text{else}.
\end{cases}\] 

If in addition \(\mscr{C}\) is equipped with a symmetric monoidal structure, and if we are given \(R\in \mrm{CAlg}(\Fil^{\NN}(\mscr{C}))\), then since \(\gr^{0}(-)|_{\Fil^{\NN}(\mscr{C})}\) is symmetric monoidal, \(\gr^{0}(R)\) has an induced structure of \(\mbb{E}_{\infty}\)-algebra in \(\mscr{C}\) such that \(R\to \mrm{triv}^{0}(\gr^{0}(R))\) is a morphism in \(\mrm{CAlg}(\Fil^{\ZZ}(\mscr{C}))\). Then \(\mrm{triv}^{i}:\mscr{C}\to \Fil^{\ZZ}(\mscr{C})\) induces a functor \[\mrm{triv}^{i}:\Mod_{\gr^{0}(R)}(\mscr{C})\to \Mod_{\mrm{triv}^{0}(\gr^{0}(R))}(\Fil^{\ZZ}(\mscr{C}))\to  \Mod_{R}(\Fil^{\ZZ}(\mscr{C})).\]
\end{dfn}
\begin{prop}\label{propFontaineIsdRlinear}
	Let \(R\in \mrm{CAlg}(\Fil^{\NN}(D_{\an}(\SGTKpTo{T},\CC_{p})))\) be either 
	\(\dR(\mO^{\sm}_{\SGTKp{T}})\) or \(\pi_{\HT}^{-1}(\dR(\mO_{\FLT{T}}))\) equipped with the filtration induced by the stupid truncation of the de Rham complex. In particular, \(\gr^{0}(R)\cong \mO^{\sm}_{\SGTKp{T}}\) or \(\pi_{\HT}^{-1}(\mO_{\FLT{T}})\).

	Let \(m_{0}<m_{1}\) and \(I_{0},I_{1}\) be as in Definition \ref{dfnFontaineOpe}.
	Then the geometric Fontaine operator in Definition \ref{dfnFontaineOpe} can be naturally upgraded to a morphism of filtered \(R\)-modules 
	\[N_{I_{0}\to I_{1}}:\mrm{triv}^{m_{0}}\left(\mO^{\la,(s_{I_{0}}\cdot\Bbbk,w)}_{\SGTKp{T}}\left(m_{0}\right)\right)\to \mrm{triv}^{m_{1}}\left(
	\mO^{\la,(s_{I_{1}}\cdot\Bbbk,w)}_{\SGTKp{T}}\left(m_{1}\right)\right).\] The same statement also holds for \(N_{c,I_{0}\to I_{1}}\).
\end{prop}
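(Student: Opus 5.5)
The plan is to realize \(\BdR\), its locally analytic vectors, and the whole construction of \(N_{I_{0}\to I_{1}}\) inside the category of filtered \(R\)-modules, so that the monodromy map inherits \(R\)-linearity for free. First I show that \(\BdR^{+}\) on \(\SGTKpTo{T}_{\an}\) is naturally a module over both \(\dR(\mO^{\sm}_{\SGTKp{T}})\) and \(\pi_{\HT}^{-1}(\dR(\mO_{\FLT{T}}))\), compatibly with filtrations, where \(\BdR\) carries the \(t\)-adic filtration \(\Fil^{t}_{\bullet}\). For \(R=\dR(\mO^{\sm})\) this comes from the Poincaré lemma of \cite{Scholze13} and \cite{DLLZ2022logarithmicJAMS}: \(\BdR\simeq \OBdRlog\otimes\Omega^{\bullet}\) is filtered quasi-isomorphic to the de Rham complex of \(\OBdRlog\), and \(\OBdRlog\) is a filtered module over \(\pi_{K_{p}}^{-1}\mO_{\SGTKTo{T}{K}}\) with its connection. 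Passing to the colimit over \(K_{p}\) gives the \(\dR(\mO^{\sm})\)-module structure. For \(R=\pi_{\HT}^{-1}\dR(\mO_{\Fl})\) I would use the analogous Poincaré lemma for the Hodge-Tate side, following \cite{GuoLi2021period}: \(\mO_{\Fl}\) pulls back into \(\hat{\mO}\), it lifts to \(\BdR^{+}\) via the \(t\)-adic crystalline-type extension, and the differentials on \(\Fl\) act through the \(\fg\)-action. Both structures are \(\mbb{E}_{\infty}\)-module structures on the filtered object.

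Next, every step in the definition of \(N_{I_{0}\to I_{1}}\) is a functor on filtered \(R\)-modules. The operations \(\RHom_{Z(\fg_{T^{c}})\times\fg_{T}}(\lambda\boxtimes V,-)\), \((-)^{R-\la}\) (Notation \ref{notationLaforProObj}), \((-)^{hH_{K'}}\) and \(E_{0}\) all commute with the \(R\)-action. The point is that \(R\) is \(G(\QQ_{p})\)-smooth, respectively \(\fg\)-equivariantly defined through the horizontal action, and Galois invariant, so \(R^{\la}=R\) and \(\Darith\) preserves \(R\)-modules. Likewise the Fontaine operator \(\nabla=\Theta_{K}\) acts \(R\)-linearly, because \(R\) is defined over \(K'\) and \(\Gamma_{K'}\) acts trivially on it after a finite extension. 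So \(\Darith(\BdR^{\la,\lambda,V})\) with its finite \(t\)-filtration is a filtered \(R\)-module, and \(\nabla\) is a filtered \(R\)-linear endomorphism that shifts filtration and kills graded pieces. By Proposition \ref{corCompDartihBdR}, the two-step subquotient \(\Fil^{m_{0}}/\Fil^{m_{1}+1}\) is an extension of two graded pieces. On these pieces the filtered \(R\)-module structure factors through \(\gr^{0}R\), because each piece is concentrated in a single filtration degree, i.e. it is \(\mrm{triv}^{m_{i}}\) of the corresponding \(\mO^{\la,(s_{I_{i}}\cdot\Bbbk,w)}(m_{i})\). Then \(\nabla\) induces a filtered \(R\)-module map \(\mrm{triv}^{m_{0}}(\gr^{m_{0}})\to \mrm{triv}^{m_{1}}(\gr^{m_{1}})\). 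Projecting onto the \(I_{0}\) and \(I_{1}\) summands, which are \(R\)-stable since the decomposition is by horizontal weights and \(R\) commutes with \(\theta^{Pan}\), yields the upgraded \(N_{I_{0}\to I_{1}}\).

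The main obstacle is this last factorization. A priori the \(R\)-module structure on \(\Fil^{m_{0}}/\Fil^{m_{1}+1}\) lets \(\Fil^{1}R\) send \(\gr^{m_{0}}\) to \(\gr^{m_{0}+1}\), and this map need not vanish. So the correct statement is that \(\nabla\), as a map from the filtered module \(\Fil^{m_{0}}\) to \(\Fil^{m_{1}}\) shifted by the monodromy degree, is \(R\)-linear. Restricting it to \(\mrm{triv}^{m_{0}}\) via the counit \(\mrm{triv}^{m_{0}}\gr^{m_{0}}\to\) (the truncation) is where I must check compatibility. I would handle this by arguing degreewise with the adjunction \(\mrm{triv}\dashv\mrm{ev}\): a map out of \(\mrm{triv}^{m_{0}}(M)\) in filtered \(R\)-modules is the same as a \(\gr^{0}R\)-linear map into \(\Fil^{m_{0}}\) of the target. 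For \(N_{c}\) the same argument applies after tensoring with the ideal sheaf \(\mscr{I}\), which is an \(R\)-module.
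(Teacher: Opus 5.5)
Your skeleton matches the paper's: build a filtered \(\mbb{E}_{\infty}\)-map from \(R\) into \(\BdR^{+}\), note that \(\Theta\) is a derivation killing the image, and read \(N_{I_{0}\to I_{1}}\) off the two-step piece \(X:=\Darith(\Fil^{m_{0}}/\Fil^{m_{1}+1})\). There are, however, three genuine problems.

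\textbf{The first step fails as stated.} \(R\) is a \(\CC_{p}\)-algebra, and there is no continuous, Galois-equivariant filtered algebra map \(R\to\BdR^{+}\). Such a map would give a continuous ring section \(\CC_{p}\to\BdR^{+}/t^{2}\) of \(\theta\). None exists, because the canonical \(\bar{\QQ}_{p}\hookrightarrow\BdR^{+}/t^{2}\) is not \(p\)-adically continuous. Nor is \(R\) Galois-fixed, so \((-)^{hH_{K'}}\) does not preserve \(R\)-module structures. The paper instead takes the model \(R^{\mrm{rat}}\) over \(\bar{\QQ}_{p}\) and maps it to \(\BdR^{+}\). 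It then uses \(R^{R-H-\sm,R-\Gamma-\la}\cong R^{\mrm{rat}}\) and \(\Theta|_{R^{\mrm{rat}}}=0\), and only afterwards passes to \(\CC_{p}\). Your remark that ``\(R\) is defined over \(K'\)'' points at this, but it has to replace the claimed \(R\)-module structure on \(\BdR^{+}\), not sit beside it. With that correction, your Poincar\'e-lemma construction on the Shimura side is a fine substitute for Guo--Li, using \(\OBdRlog\) of the model over a finite extension.

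\textbf{The flag side is not actually constructed.}
\begin{itemize}
\item \(\pi_{\HT}^{-1}\mO_{\Fl}\to\hat{\mO}\) has no canonical algebra lift to \(\BdR^{+}\). Local lifts depend on choices of \(p\)-power roots of coordinates, which is exactly why \(\dR(\mO_{\Fl})\) appears.
\item Letting vector fields act ``through the \(\fg\)-action'' does not give a \(\dR(\mO_{\Fl})\)-module. Already on \(\mO^{\la}\) the stabilizer \(\fb^{0}\) acts through the nonzero horizontal action, so \(\fg\otimes\mO_{\Fl}\) does not act through \(T_{\Fl}\).
\end{itemize}
The missing idea is that \(\pi_{\HT}\) is not pro-\'etale, but \(\BdR^{+}\) of a perfectoid space depends only on that space. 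Guo--Li on \((\Fl^{\mrm{rat}})_{\proet}\) gives \(\nu^{-1}\dR(\mO_{\Fl^{\mrm{rat}}})\to\BdR^{+}\). For \(Y\) perfectoid and pro-\'etale over \(\Fl\), the diamond \(Y\times_{\Fl}\SGTKp{T}\) is pro-\'etale over \(\SGTKpTo{T}\), so these maps glue and descend. A Poincar\'e lemma for an \(\mO\mbb{B}_{\dR}\)-type sheaf relative to \(\pi_{\HT}\) would also work, but you would have to prove it.

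\textbf{Your last paragraph goes wrong.} The ``obstacle'' is not one. \(\Fil^{m_{1}}X\) and \(X/\Fil^{m_{1}}X\) are a filtered \(R\)-submodule and quotient on which \(\Fil^{\ge 1}R\) acts by zero for degree reasons. So \(\mrm{triv}^{m_{1}}\gr^{m_{1}}\to X\to\mrm{triv}^{m_{0}}\gr^{m_{0}}\) is a fibre sequence in \(\Mod_{R}(\Fil)\), and \(\Fil^{i}R\) carrying \(\gr^{m_{0}}\) into \(\gr^{m_{1}}\) is just its extension class.

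Your remedy is false. \(\mrm{triv}^{m_{0}}\gr^{m_{0}}\) is a quotient of \(X\), not a subobject. Moreover, maps out of \(\mrm{triv}^{m_{0}}(A)\) in filtered \(R\)-modules are not \(\gr^{0}R\)-linear maps into \(\Fil^{m_{0}}\) of the target. By base change along \(R\to\mrm{triv}^{0}(\gr^{0}R)\),
\[
\Hom_{\Mod_{R}(\Fil)}\big(\mrm{triv}^{m_{0}}A,\mrm{triv}^{m_{1}}B\big)\simeq\Hom_{\gr^{0}R}\big(Z/\Fil^{m_{1}+1}Z,B\big),\qquad Z:=\mrm{triv}^{0}(\gr^{0}R)\otimes_{R}\mrm{triv}^{m_{0}}A,
\]
and \(Z/\Fil^{m_{1}+1}Z\) has graded pieces \(\Sym^{i}\Omega^{1}\otimes A\) for \(0\le i\le m_{1}-m_{0}\). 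So such maps are differential operators of order \(\le m_{1}-m_{0}\). Your adjunction would force \(N_{I_{0}\to I_{1}}\) to be \(\gr^{0}R\)-linear. That contradicts \(N_{I_{0},\tau}=c\,d_{\tau}\circ\bar{d}_{\tau}\) in Theorem \ref{thmFontainEqDDbar}, and would strip the proposition of the content used in Proposition \ref{propUniquenessDRLinear}.

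What is needed instead:
\begin{itemize}
\item \(\nabla\) is a filtered \(R\)-linear endomorphism of \(X\) vanishing on graded pieces, so it factors through the quotient \(\mrm{triv}^{m_{0}}\gr^{m_{0}}\).
\item Its composite with the projection to that quotient vanishes, so the induced map lifts to the fibre \(\mrm{triv}^{m_{1}}\gr^{m_{1}}\).
\item The lift is unique because \(\pi_{>0}\Hom_{\Mod_{R}(\Fil)}(\mrm{triv}^{m_{0}}\gr^{m_{0}},\mrm{triv}^{m_{0}}\gr^{m_{0}})=0\). By adjunction this reduces to \(\gr^{0}R\otimes_{R}\Darith(\gr^{m_{0}})\) being concentrated in degree \(0\).
\end{itemize}
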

\begin{rmk}\label{rmkFontaineIsdRbilinear}
	We can consider \(R':=\dR(\mO^{\sm}_{\SGTKp{T}})\hatotimes \dR(\mO_{\FLT{T}})\) as a \(\ZZ^{\NN^{2}}\)-filtered algebra in \(D_{\an}(\SGTKpTo{T},\CC_{p})\). Then Proposition \ref{propFontaineIsdRlinear} is equivalent to the following statement:
	the geometric Fontaine operator in Definition \ref{dfnFontaineOpe} can be naturally upgraded to a morphism of \(\ZZ^{2}\)-filtered \(R'\)-modules 
	\[N_{I_{0}\to I_{1}}:\mrm{triv}^{(m_{0},m_{0})}\left(\mO^{\la,(s_{I_{0}}\cdot\Bbbk,w)}_{\SGTKp{T}}\left(m_{0}\right)\right)\to \mrm{triv}^{(m_{1},m_{1})}\left(
	\mO^{\la,(s_{I_{1}}\cdot\Bbbk,w)}_{\SGTKp{T}}\left(m_{1}\right)\right)
	\]
	The natural appearance of the \(\ZZ^{2}\)-filtration partially explains why the Fontaine operator is a differential operator of order 2
	as in \cite{Pan2209.06II}.
\end{rmk}
\begin{rmk}\label{rmkFontainDRLinNoTameLevel}
	Since the transition maps along the prime-to-\(p\) levels are finite \'etale, the geometric Fontaine operator above is clearly compatible when \(K^{p}\) varies, and we obtain a \(G_{T}(\mA_{f})\)-equivariant morphism of \(\ZZ^{2}\)-filtered \(\dR(\mO^{\sm}_{\SGT{T}})\hatotimes\dR(\mO_{\FLT{T}})\)-modules in \(D_{\an}(\SGTTo{T},\CC_{p})\) \[
	N_{I_{0}\to I_{1}}:\mrm{triv}^{(m_{0},m_{0})}\left(\mO^{\la,(s_{I_{0}}\cdot\Bbbk,w)}_{\SGT{T}}\left(m_{0}\right)\right)\to \mrm{triv}^{(m_{1},m_{1})}\left(
	\mO^{\la,(s_{I_{1}}\cdot\Bbbk,w)}_{\SGT{T}}\left(m_{1}\right)\right),
	\] where in this section, 
	\(\SGTTo{T}:=\varprojlim_{K^{p}}\SGTKpTo{T}\), where the limit is taken in the category of diamonds, and \( \mO^{\la,\kw}_{\SGT{T}}:=\varinjlim_{K^{p}}\pi_{K^{p}}^{-1}(\mO^{\la,\kw}_{\SGTKp{T}}) \) for \( \pi_{K^{p}}:\SGTTo{T}\to \SGTKpTo{T} \).
\end{rmk}
\begin{proof}
	We start by considering the case where \(R=\dR(\mO^{\sm}_{\SGTKp{T}})\). 
	Note that \(\SGTKpTo{T}\) is base changed from a finite extension of \(\QQ_{p}\) to \(\CC_{p}\). Let us define \(\SGTKTo{T}{K}^{\mrm{rat}}\) as the model of \(\SGTKTo{T}{K}\) defined over \(\bar{\QQ}_{p}\), and
	\( \SGTKpTo{T}^{\mrm{rat}}:=\varprojlim_{K_{p}}\SGTKTo{T}{K}^{\mrm{rat}} \) with \( \pi_{K_{p}}:\SGTKpTo{T}^{\mrm{rat}}\to \SGTKTo{T}{K}^{\mrm{rat}} \).
	Let \( \mO^{\sm,\mrm{rat}}_{\SGTKp{T}}:=\varinjlim_{K_{p}}\mO_{\SGTKTo{T}{K}^{\mrm{rat}}} \), which still carries a differential,
	and we write \(R^{\mrm{rat}}:=\dR(\mO^{\sm,\mrm{rat}}_{\SGTKp{T}})\in \mrm{CAlg}(\Fil^{\ZZ}(D_{\an}(\SGTKpTo{T}^{\mrm{rat}},\bar{\QQ}_{p})))\) such that \(R\cong R^{\mrm{rat}}\hatotimes_{\bar{\QQ}_{p}}\CC_{p}\). In particular, \(R^{R-H-\sm,R-\Gamma-\la}\cong R^{\mrm{rat}}\).

	By \cite{GuoLi2021period}, there is a morphism between filtered 
	algebras in \(D_{\proet}(\SGTKTo{T}{K}^{\mrm{rat}})\) \[\nu^{-1}\dR_{{\SGTKTo{T}{K}^{\mrm{rat}}}/\bar{\QQ}_{p}}\to \mbb{B}_{\dR,\SGTKTo{T}{K}^{\mrm{rat}}}^{+}=\widehat{\dR}^{\an}_{(\SGTKTo{T}{K}^{\mrm{rat}})_{\proet}/\bar{\QQ}_{p}},\]
	where \(\nu^{-1}\) denotes the pull-back from the \'etale site to the pro\'etale site. 
	We take the global section at \(\SGTKpTo{T}\), and then take colimit along the level \(K_{p}\). Then we obtain a morphism in \(\mrm{CAlg}(\Fil^{\ZZ}(D_{\an}(\SGTKpTo{T},\bar{\QQ}_{p})))\) \[
	R^{\mrm{rat}}\to \BdR^{+}.\]

	Since the action of \(G_{T}(\QQ_{p})\) on \(R^{\mrm{rat}}\)
	is smooth, we have a factorization \[R^{\mrm{rat}}\to \BdR^{\la}.\]
	We can further take \((-)^{R-H-\sm,R-\Gamma-\la}\) to obtain a morphism \[R^{\mrm{rat}}\to \BdR^{\la,R-H-\sm,R-\Gamma-\la}.\] 
	Note that the action of \(\Theta\in \Lie(\Gamma)\) on \(R^{\mrm{rat}}\) is zero. 
	Moreover, since the action of \(\Gamma\)
	preserves the algebra structure and the filtration,
	\(\Theta\)  acts on \(\BdR^{\la,R-H-\sm,\Gamma-\la}\) as a derivative, and preserves the filtration.
	As a result, 
	we know that \(\Theta\) can be upgraded to \[\Theta \in \End_{\Mod_{R^{\mrm{rat}}}(\Fil^{\ZZ}(D_{\an}(\SGTKpTo{T},\bar{\QQ}_{p})))}(\BdR^{\la,R-H-\sm,\Gamma-\la}).
	\] 
Therefore, the action of \(\nabla\) on \(\Darith(\BdR^{\la,\lambda^{\kw}_{T^{c}},V^{\kw}_{T}}) \) can be upgraded to \[\nabla \in \End_{\Mod_{R^{\mrm{rat}}}(\Fil^{\ZZ}(D_{\an}(\SGTKpTo{T},\bar{\QQ}_{p})))}(\Darith(\BdR^{\la,\lambda^{\kw}_{T^{c}},V^{\kw}_{T}})).
\]

We consider the action of \(\Theta\) on the following fiber sequence of filtered modules 
\begin{align*}
\mrm{triv}^{m_{1}}\Darith(\gr^{m_{1}}\BdR^{\la,\lambda^{\kw}_{T^{c}},V_{T}^{\kw}})\to 
\Darith(\Fil^{m_{0}}\BdR^{\la,\lambda^{\kw}_{T^{c}},V_{T}^{\kw}}/\Fil^{m_{1}+1})\\
\to \mrm{triv}^{m_{0}}
\Darith(\gr^{m_{0}}\BdR^{\la,\lambda^{\kw}_{T^{c}},V_{T}^{\kw}})\xrightarrow{+1}.
\end{align*}
Let us omit \(\BdR^{\la,\lambda^{\kw}_{T^{c}},V_{T}^{\kw}}\) from the notation for a moment.
Then taking \(\Hom(\mrm{triv}^{m_{0}}\Darith(\gr^{m_{0}}))\) in the category of filtered \(R^{\mrm{rat}}\)-modules, we have a fiber sequence 
\begin{align*}
	\Hom(\mrm{triv}^{m_{0}}\Darith(\gr^{m_{0}}),\mrm{triv}^{m_{1}}\Darith(\gr^{m_{1}}))\to \Hom(\mrm{triv}^{m_{0}}\Darith(\gr^{m_{0}}),\Darith(\Fil^{m_{0}}/\Fil^{m_{1}+1}))\\
	\to \Hom(\mrm{triv}^{m_{0}}\Darith(\gr^{m_{0}}),\mrm{triv}^{m_{0}}\Darith(\gr^{m_{0}}))\xrightarrow{+1}.
\end{align*}
Note that by adjunction, \begin{align*}
\Hom_{\Mod_{R^{\mrm{rat}}}(\Fil^{\ZZ}(D_{\an}(\SGTKpTo{T})))}(\mrm{triv}^{m_{0}}\Darith(\gr^{m_{0}}),\mrm{triv}^{m_{0}}\Darith(\gr^{m_{0}}))\\
\cong \Hom_{\Mod_{\gr^{0}(R)}(D_{\an}(\SGTKpTo{T}))}(\gr^{0}(R^{\mrm{rat}})\otimes_{R^{\mrm{rat}}}\Darith(\gr^{m_{0}}),\Darith(\gr^{m_{0}})).
\end{align*}
By Proposition \ref{corCompDartihBdR}, \(\Darith(\gr^{m_{0}})\) is concentrated in degree \(0\),
and by \cite[Lemma \ref{1-lemDiagonalLogProperty}]{Jiang2026Shla}, \(\gr^{0}(R^{\mrm{rat}})\otimes_{R^{\mrm{rat}}}\Darith(\gr^{m_{0}})\) is concentrated in degree \(0\),
so \[\pi_{>0}(\Hom(\mrm{triv}^{m_{0}}\Darith(\gr^{m_{0}}),\mrm{triv}^{m_{0}}\Darith(\gr^{m_{0}})))\cong 0.\]
Since \[\nabla\in \pi_{0}\Hom(\mrm{triv}^{m_{0}}\Darith(\gr^{m_{0}}),\Darith(\Fil^{m_{0}}/\Fil^{m_{1}+1})) \] is mapped to \[0 \in \pi_{0}\Hom(\mrm{triv}^{m_{0}}\Darith(\gr^{m_{0}}),\mrm{triv}^{m_{0}}\Darith(\gr^{m_{0}})),\]
we know that \(\nabla\) induces a unique element \[N_{m_{0}\to m_{1}}\in \pi_{0}\Hom_{\Mod_{R^{\mrm{rat}}}(\Fil^{\ZZ}D_{\an}(\SGTKpTo{T}))}(\mrm{triv}^{m_{0}}\Darith(\gr^{m_{0}}),\mrm{triv}^{m_{1}}\Darith(\gr^{m_{1}})),\]
which then by Proposition \ref{corCompDartihBdR} induces \[
N_{I_{0}\to I_{1}}:\mrm{triv}^{m_{0}}\left(\mO^{\la,(s_{I_{0}}\cdot\Bbbk,w)}_{\SGTKp{T}}\left(m_{0}\right)\right)\to
\mrm{triv}^{m_{1}}\left(\mO^{\la,(s_{I_{1}}\cdot\Bbbk,w)}_{\SGTKp{T}}\left(m_{1}\right)\right),\]
in \(\Mod_{R^{\mrm{rat}}}(\Fil^{\ZZ}D_{\an}(\SGTKpTo{T}))\)
as desired.

The statement for \(R=\pi_{\HT}^{-1}(\dR(\mO_{\FLT{T}}))\)
follows from the same proof. It suffices to construct a map of filtered algebras
in \(D_{\an}(\SGTKpTo{T},\bar{\QQ}_{p})\)
\[R^{\mrm{rat}}\to \BdR^{+},\] 
where \(R^{\mrm{rat}}:=\pi_{\HT}^{-1}\dR(\mO_{\FLT{T}^{\mrm{rat}}})\) where \(\FLT{T}^{\mrm{rat}}\) is the model of \(\FLT{T}\) over \(\bar{\QQ}_{p}\).

By \cite{GuoLi2021period},
we have \(\nu^{-1}(\dR(\mO_{\FLT{T}^{\mrm{rat}}}))\to \mbb{B}_{\dR,\FLT{T}^{\mrm{rat}}}^{+}\)
on the pro\'etale site of \(\FLT{T}^{\mrm{rat}}\), where \(\nu^{-1}\) denotes the pull-back from the \'etale site of \(\FLT{T}^{\mrm{rat}}\) to its pro\'etale site. 
Now for any perfectoid \(X\)
in the pro\'etale site of \(\FLT{T}^{\mrm{rat}}\),
we can consider \(X':=X\times_{\FLT{T}}\SGTKp{T}\) as a diamond,
then \(X'\) is pro\'etale over \(\SGTKpTo{T}\).
Hence there is a map of filtered algebras \[\nu^{\prime,-1}(\pi_{\HT}^{-1}\dR(\mO_{\FLT{T}^{\mrm{rat}}}))(X')
\cong \nu^{-1}(\dR(\mO_{\FLT{T}^{\mrm{rat}}}))(X')
\to  \mbb{B}_{\dR,\FLT{T}^{\mrm{rat}}}^{+}({X'})\cong \mbb{B}_{\dR,\SGTKpTo{T}}^{+}(X'),\]
where \(\nu^{\prime,-1}\) denotes the pull-back from the \'etale site of \(\SGTKpTo{T}\) to its pro\'etale site. 
Since such \(X'\) for varying \(X\) covers \(\SGTKp{T}\), and is closed under fiber product,
we obtain a morphism of filtered algebras in the pro\'etale site of \(\SGTKpTo{T}\) \[\nu^{\prime,-1}(\pi_{\HT}^{-1}\dR(\mO_{\FLT{T}^{\mrm{rat}}}))\to \mbb{B}_{\dR,\SGTKpTo{T}}^{+}\] 
by descent. Then we can push forward both sides to \((\SGTKpTo{T})_{\an}\), and by \cite[Corollary 3.17]{Scholze13},
we obtain a map of filtered algebras in \(D_{\an}(\SGTKpTo{T},\bar{\QQ}_{p})\)
\[R^{\mrm{rat}}=\pi_{\HT}^{-1}\dR(\mO_{\FLT{T}^{\mrm{rat}}})\to \BdR^{+},
\] as desired.
\end{proof}
Now we can use the additional structure to obtain the desired uniqueness.
\begin{lem}\label{lemRealizeToAnalyticSite}
We have a functor \[
\mcal{H}:\QCoh((\SGTKpTo{T}^{\la,0})^{\pdR/\CC_{p},\hpp}_{\log})\to \Mod_{\dR(\mO^{\sm}_{\SGTKp{T}})\hatotimes \dR(\mO_{\FLT{T}})}\Fil^{\ZZ^{2}}(D_{\an}(\SGTKpTo{T},\CC_{p})),
\] such that \( \mcal{H}:\mF\mapsto (U\mapsto R\Gamma((U^{\la,0})^{\pdR/\CC_{p},\hpp}_{\log},\mF)) \), where \( U\subset \SGTKpTo{T} \) is an open subspace, \[
(U^{\la,0})^{\pdR/\CC_{p},\hpp}_{\log}:=U^{\pdR/\CC_{p},\hp,\sm}_{\log}\times_{\FLT{T}^{\dR}}\FLT{T}^{\pdR/\CC_{p},\hp},
\] which is an open subspace of \( (\SGTKpTo{T}^{\la,0})^{\pdR/\CC_{p},\hpp}_{\log} \). In particular, \[
R\Gamma((\SGTKpTo{T}^{\la,0})^{\pdR/\CC_{p},\hpp}_{\log},\mF)
\cong R\Gamma_{\an}(\SGTKp{T},\mcal{H}(\mF)).
\]

Moreover, \[ \gr^{i,j}\left(\mO^{\sm}_{\SGTKp{T}}\hatotimes_{\dR_{\log}(\mO^{\sm}_{\SGTKp{T}})}\mcal{H}(\mF)\hatotimes_{\dR(\mO_{\FLT{T}})}\mO_{\FLT{T}}\right)\cong \gr^{i,j}((h_{T}^{\la,0,\hpp})^{*}\mF). \]
\end{lem}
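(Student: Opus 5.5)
We construct \(\mcal{H}\) by first building the bi-filtered structure on the algebraic de Rham stack over \([\mA^{1}/\GG_{m}]^{2}\), and then realizing it on the analytic site. Recall the structure map \(s:(\SGTKpTo{T}^{\la,0})^{\pdR/\CC_{p},\hpp}_{\log}\to [\hat{\mA}^{1}/\GG_{m}]^{2}\). By the Rees dictionary, quasicoherent sheaves over \([\mA^{1}/\GG_{m}]^{2}\) are \(\ZZ^{2}\)-filtered objects. So for an open \(U\subset \SGTKpTo{T}\) we define \(\mcal{H}(\mF)(U)\) as the \(\ZZ^{2}\)-filtered object \((s|_{U})_{*}\mF|_{(U^{\la,0})^{\pdR/\CC_{p},\hpp}_{\log}}\), read through Rees. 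Its underlying object is \(R\Gamma((U^{\la,0})^{\pdR/\CC_{p},\hpp}_{\log},\mF)\).

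The module structure comes from the projections. The open \((U^{\la,0})^{\pdR/\CC_{p},\hpp}_{\log}\) maps to \(U^{\pdR/\CC_{p},\hp,\sm}_{\log}\) and, through \(\pi_{\HT}\), to \(\FLT{T}^{\pdR/\CC_{p},\hp}\). These maps are compatible with the two \([\mA^{1}/\GG_{m}]\)-coordinates. Pushing forward their structure sheaves to \([\mA^{1}/\GG_{m}]\) recovers the Hodge-filtered (stupidly truncated) de Rham complexes \(\dR(\mO^{\sm}_{\SGTKp{T}})(U)\) and \(\dR(\mO_{\FLT{T}})(\pi_{\HT}(U))\), via the filtered de Rham stack formalism of \cite{Jiang2026Shla}. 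The latter is used through \(\pi_{\HT}^{-1}\) and sheafifies correctly since \(\pi_{\HT}(U)\) runs over a basis of opens. Lax monoidality of pushforward then gives \(\mcal{H}(\mF)(U)\) the structure of a module over \(\dR(\mO^{\sm}_{\SGTKp{T}})\hatotimes\dR(\mO_{\FLT{T}})\), functorially in \(U\) and \(\mF\).

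The sheaf property follows from descent for \(\QCoh\) along open covers of the analytic stack. This holds because open covers of \(|\SGTKpTo{T}|\) induce open covers of all the stacks in Notation~\ref{notationVariousSVResume}, by construction. The global-sections identity is then the special case \(U=\SGTKpTo{T}\), together with the fact that \(R\Gamma_{\an}\) of a sheaf defined by pushforward from the stack is computed by \(R\Gamma\) on the stack.

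For the graded statement, we base change along the augmentations \(\dR_{\log}(\mO^{\sm})\to\mO^{\sm}\) and \(\dR(\mO_{\Fl})\to\mO_{\Fl}\). These correspond to the closed immersions \(h^{\sm,\hp}_{T}\) and \(h^{\hp}_{\Fl,T}\), whose product, via the Cartesian square of Lemma~\ref{lemAnotherKunnethType}, is \(h^{\la,0,\hpp}_{T}\). The required identity is then a projection formula / base change for \(h^{\la,0,\hpp}_{T}\) along the filtration coordinates, and it can be checked after passing to associated graded. On \(\gr^{i,j}\), i.e.\ over \(B\GG_{m}^{2}\), \(\gr^{0}\) of each de Rham algebra is the structure sheaf, and the de Rham stack becomes the (log) tangent/Hodge stack. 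There the tensor product with \(\gr^{0}\) computes \(h^{*}\), once we know the relevant Koszul resolutions are finite. We get this finiteness from the diagonal property \cite[Lemma \ref{1-lemDiagonalLogProperty}]{Jiang2026Shla}.

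The main obstacle is making sure \(s_{*}\) commutes with the tensor products appearing in the base change, so that the projection formula actually holds. We plan to reduce this to the perfectness of \(\gr^{0}\) over the de Rham algebras, and to use prim-ness of the relevant maps (as in Lemma~\ref{lemHodgeTatepdRIsPrim}) to justify commuting pushforwards with colimits.
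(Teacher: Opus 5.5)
Your construction of $\mcal{H}$ is essentially right: pushforward to the analytic site read through Rees, module structure from the two projections, sheaf property from descent. The ``moreover'' part, however, has a genuine gap.

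That identity says that realizing on the analytic site turns $(h^{\la,0,\hpp}_{T})^{*}$ into base change along the augmentations $\dR_{\log}(\mO^{\sm}_{\SGTKp{T}})\to\mO^{\sm}_{\SGTKp{T}}$ and $\dR(\mO_{\FLT{T}})\to\mO_{\FLT{T}}$. This is not a projection formula for $s_{*}$: the tensor product is relative over the pushed-forward algebra, and $\mO^{\sm}_{\SGTKp{T}}$ is a module over that algebra, not something pulled back to the stack.

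Your plan to bridge this rests on perfectness of $\gr^{0}$ over the de Rham algebra, and that is false. On associated graded, $\gr\dR_{\log}(\mO^{\sm})\cong\bigoplus_{i}\Omega^{i,\sm}[-i]$ is an exterior algebra on $\Omega^{1,\sm}$ placed in cohomological degree $1$. The Koszul resolution of $\mO^{\sm}$ over it involves $\Sym^{n}\Omega^{1,\sm}$ for every $n\ge 0$. This is exactly what \cite[Lemma \ref{1-lemDiagonalLogProperty}]{Jiang2026Shla} records (graded pieces $\Sym^{i}\Omega^{1}\otimes M$). So that lemma gives finiteness only weight by weight, never perfectness.

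The primness you invoke is also attached to the wrong map. Lemma \ref{lemHodgeTatepdRIsPrim} concerns $\pi^{\pdR}_{\HT,T,J}$ to $\Fl^{\dR}_{G_{T},\mu_{T},J}$, not the map to $|\SGTKpTo{T}|$ through which $\mcal{H}$ is defined.

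The missing ingredient is the affineness of the filtered log de Rham stack over the analytic site, \cite[Lemma \ref{1-lemLogDRStackIsAffine}]{Jiang2026Shla}. Together with \cite[Lemma \ref{1-lemQCohSMsv} (3)]{Jiang2026Shla}, this is the entire content of the paper's proof. Over an affinoid $U$, affineness identifies quasicoherent sheaves on $U^{\pdR/\CC_{p},\hp,\sm}_{\log}$ with filtered modules over the Hodge-filtered de Rham complex. Under this identification $(h^{\sm,\hp}_{T})^{*}$ is $-\hatotimes_{\dR_{\log}(\mO^{\sm})}\mO^{\sm}$, and likewise on the flag-variety side. The realization on the analytic site is then computed sectionwise on affinoids, hence is compatible with these tensor products.

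With this in hand, your factorization of $h^{\la,0,\hpp}_{T}$ through the Cartesian square of Lemma \ref{lemAnotherKunnethType} makes the graded identity formal. Affineness is also what justifies your claim that the structure sheaves push forward to the filtered de Rham complexes.

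If you want to avoid affineness, you would have to replace ``perfectness'' by two things: a weight-by-weight d\'evissage of $h_{*}\mO$ into pieces pulled back from the base, and a proof that the realization commutes with the resulting colimits. Your proposal supplies neither.
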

\begin{proof}
This follows from \cite[Lemma \ref{1-lemLogDRStackIsAffine} and Lemma \ref{1-lemQCohSMsv} (3)]{Jiang2026Shla}.
\end{proof}
\begin{prop}\label{propUniquenessDRLinear}
	Let \(\kw\),
	\(m_{0}<m_{1}\) and \(I_{0},I_{1}\) be as in Definition \ref{dfnFontaineOpe}.
	Let \[\mscr{H}:=\pi_{0}\Hom
	\left(\mrm{triv}^{(m_{0},m_{0})}\left(\mO^{\la,(s_{I_{0}}\cdot\Bbbk,w)}_{\SGT{T}}
	\left(m_{0}\right)
	\right),
	\mrm{triv}^{(m_{1},m_{1})}\left(\mO^{\la,(s_{I_{1}}\cdot\Bbbk,w)}_{\SGT{T}}
	\left(m_{1}\right)\right)
	\right)^{G_{T}(\mA_{f})},
	\]
	where \(\Hom(-)\) is taken in the category of 
	\(\ZZ^{2}\)-filtered \(\dR(\mO^{\sm}_{\SGT{T}})\hatotimes \dR(\mO_{\FLT{T}})\)-modules in \(D_{\an}(\SGTTo{T},\CC_{p})\).

	Then \[
	\mscr{H}=\begin{cases}
	0,&\text{\(I_{0}\) and \(I_{1}\) are not adjacent};\\
	\CC_{p}\cdot \mcal{H}(d^{-k_{\tau}+1}_{\tau}\circ \bar{d}^{-k_{\tau}+1}_{\tau})\ne 0,&\text{\(I_{0}\) and \(I_{1}\) are adjacent, i.e. }I_{1}=I_{0}\coprod \{\tau\},
	\end{cases}
	\] where \( d^{-k_{\tau}+1}_{\tau}\) and \( \bar{d}^{-k_{\tau}+1}_{\tau} \) 
	are defined in Notation \ref{notationDiffHilber}.
\end{prop}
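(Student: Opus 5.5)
The plan is to compute \(\mscr{H}\) by adjunction, reducing a \(\ZZ^{2}\)-filtered module morphism to a \(G_{T}(\mA_{f})\)-equivariant morphism out of a graded piece, and then to identify that graded piece with the bi-filtered de Rham stack picture of \S\ref{subsectionConstrDiffeOper}.

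\textbf{Step 1 (adjunction).} Write \(R'=\dR(\mO^{\sm}_{\SGT{T}})\hatotimes\dR(\mO_{\FLT{T}})\) and \(M_{i}:=\mO^{\la,(s_{I_{i}}\cdot\Bbbk,w)}_{\SGT{T}}(m_{i})\). Since \(\mrm{triv}^{(m_{1},m_{1})}(M_{1})\) is a module over \(\mrm{triv}^{0}(\gr^{0}R')\), morphisms of filtered \(R'\)-modules into it are the same as morphisms of \(\gr^{0}R'\)-modules
\[
\gr^{(m_{1},m_{1})}\bigl(\gr^{0}R'\otimes_{R'}\mrm{triv}^{(m_{0},m_{0})}M_{0}\bigr)\to M_{1}.
\]
The source is the degree-\((m_{1}-m_{0},m_{1}-m_{0})\) piece of the Koszul-dual object, that is, of the ``jet'' module of \(M_{0}\) relative to both \(d\) and \(\bar d\). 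Concretely, it is \(M_{0}\otimes\Sym^{a}(\mcal{T}_{\SGTKTo{T}{K}})\otimes\Sym^{a}(\mcal{T}_{\Fl})\) with \(a=m_{1}-m_{0}\), twisted appropriately and placed in a cohomological degree. This is the same computation as in Lemma \ref{lemBasicNablaDiffe}, where \(\gr^{(1-k_{\tau},1-k_{\tau})}\) of \((h^{\la,0,\hpp}_{T})^{*}(h^{\la,0,\hpp}_{T})_{*}\) appears. To make this rigorous I would pass through the functor \(\mcal{H}\) of Lemma \ref{lemRealizeToAnalyticSite}: both triv-objects are \(\mcal{H}\) of explicit objects of \(\QCoh((\SGTKpTo{T}^{\la,0})^{\pdR/\CC_{p},\hpp}_{\log})\), and the last statement of that lemma identifies the relevant \(\gr^{i,j}\) with \(\gr^{i,j}((h^{\la,0,\hpp}_{T})^{*}\mF)\).

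\textbf{Step 2 (reduction to a weight computation).} Both source and target are built from \(\pi_{\sm}^{*}\)(automorphic bundles) \(\otimes\) \(\pi_{\HT}^{*}\)(line bundles on \(\Fl\)), by Lemma \ref{lemHodgeTateCompareOnTcase}. A \(G_{T}(\mA_{f})\)-invariant morphism of such sheaves on \(\SGT{T}\) (no tame level) should therefore descend, via the Cartesian ``étaleness'' of \(\pi^{\la,0}_{\sm}\times\pi^{\la,0}_{\HT}\), to a \(G_{T}^{c}\)-equivariant morphism between homogeneous bundles on \(\FLT{T}^{\std}\times\FLT{T}\) modulo the diagonal \(G^{c}_{T}\)-action. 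For that descent I would use the \(G(\mA_{f})\)-invariance together with the density of Hecke orbits. The problem then becomes a Lie-algebraic one: compute
\[
\Hom_{P^{\std}_{\mu_{T}}\times_{M}P_{\mu_{T}}}\Bigl(\text{weight of }M_{1},\ \Sym^{a}(\fg/\mfk{p}^{\std})\otimes\Sym^{a}(\fg/\mfk{p})\otimes\text{weight of }M_{0}\Bigr),
\]
which splits as a product over \(\tau\in\Sigma_{\infty}\).

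\textbf{Step 3 (case analysis).} In each \(\tau\)-factor with \(\tau\in T^{c}\) we are on \(\mP^{1}\times\mP^{1}\) with the open orbit, so weights must match componentwise. The difference of the weights \(s_{I_{1}}\cdot\Bbbk\) and \(s_{I_{0}}\cdot\Bbbk\), together with the \(\fh\)-actions, forces \(I_{1}=I_{0}\coprod\{\tau\}\) and \(a=1-k_{\tau}\). Otherwise the space vanishes; this includes the case where \(I_{0},I_{1}\) are non-adjacent, since then the total degree does not match any single-\(\tau\) jump. When \(I_{1}=I_{0}\coprod\{\tau\}\), the Hom is one-dimensional, spanned by the top-degree theta operators \(\theta^{1-k_{\tau}}_{\tau}\boxtimes\bar\theta^{1-k_{\tau}}_{\tau}\). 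These are the unique \(\mfk{sl}_{2}\)-equivariant maps \(\omega^{k}\to\omega^{2-k}\) of the right order, by the dual BGG description of Notation \ref{notationDiffHilber}. Pulling back gives \(\mcal{H}(d^{-k_{\tau}+1}_{\tau}\circ\bar d^{-k_{\tau}+1}_{\tau})\). Its nonvanishing follows from the surjectivity on graded pieces in Lemma \ref{lemBasicNablaDiffe}.

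\textbf{Main obstacle.} I expect Step 2 to be the hardest: justifying that a \(G(\mA_{f})\)-invariant filtered map on the locally analytic sheaves is determined by a homogeneous computation, with no contribution from higher \(\pi_{>0}\) terms or exotic nonequivariant sections. I would first try to show that the jet module in Step 1 is concentrated in degree \(0\), using \cite[Lemma \ref{1-lemDiagonalLogProperty}]{Jiang2026Shla} as in the proof of Proposition \ref{propFontaineIsdRlinear}. I would then argue that \(\mcal{H}om(M_{0}\otimes\text{jets},M_{1})\) is locally \(\mO^{\la,0}\) twisted by a character, and that \(G(\mA_{f})\)-invariant global sections of \(\mO^{\la,0}\)-type sheaves of a fixed weight reduce to constants, by the connectedness and Hecke-density argument.
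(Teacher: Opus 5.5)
Your outline (adjunction, graded pieces, weight matching, nonvanishing via Lemma \ref{lemBasicNablaDiffe} and $\mcal{H}$) parallels the paper, but two load-bearing steps are wrong or missing.

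\textbf{Step 1 is not an adjunction.} After base change, $\mscr{H}$ consists of invariant filtered maps $Y\to\mrm{triv}^{(m_1,m_1)}(M_1)$, where $Y=\mO^{\sm}_{\SGT{T}}\hatotimes_{\dR(\mO^{\sm}_{\SGT{T}})}\mrm{triv}^{(m_0,m_0)}(M_0)\hatotimes_{\dR(\mO_{\FLT{T}})}\mO_{\FLT{T}}$. Such a map is only required to vanish on $\Fil^{\underline{j}}Y$ for $\underline{j}\not\le(m_1,m_1)$. It therefore sees every piece $\gr^{(m_0+i,m_0+j)}Y\cong\Sym^{i}\Omega^{1,\sm}\otimes M_0\otimes\Sym^{j}\Omega^{1}_{\FLT{T}}$ with $0\le i,j\le m_1-m_0$, not just the top one. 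These are symmetric powers of differentials, i.e.\ jets. With tangent bundles, as you write, the weights would shift by $-2$ and could never match those of $M_1$. So you must kill invariant maps out of all lower pieces and then do a d\'evissage.

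The paper does this element-wise. For a nonzero invariant $f$, take the maximal $\underline{t}$ with $f|_{\Fil^{\underline{t}}}\ne0$; then $f|_{\gr^{\underline{t}}}$ is a nonzero invariant map. The weight lemma forces $I_0\subset I_1$, hence $I_1=I_0\coprod\{\tau\}$ because $m_0<m_1$ are adjacent values. It also forces $\underline{t}=(m_1,m_1)$, with only the component $\bigl((1-k_\tau)\delta_\tau,(1-k_\tau)\delta_\tau\bigr)$ contributing, through a scalar $c_f$. Applying the same argument to $f-(c_f/c_{f'})f'$ gives $\dim\mscr{H}\le1$.

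\textbf{The weight lemma is missing.} The needed statement is that $G_T(\mA_f)$-invariant $\mO^{\sm}_{\SGT{T}}\hatotimes\mO_{\FLT{T}}$-linear maps $\nabla^{\kw,(\Bbbk',w')}_{\SGT{T}}\to\nabla^{(\mathbb{K},W),(\mathbb{K}',W')}_{\SGT{T}}$ vanish unless both pairs of weights agree, and form a line when they do. This is exactly the ``main obstacle'' you leave open, and the tools you name do not supply it. Connectedness only handles weight zero. Density of Hecke orbits bounds an invariant section by its value at one point but does not make that value vanish; you need an input that sees the weight.

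The paper needs no descent to a period domain. The horizontal action forces $(\Bbbk',w')=(\mathbb{K}',W')$. After twisting, one is left with $H^0(\omega^{(\mathbb{K},W),\sm}\otimes\omega^{\kw,\sm,\vee})^{G_T(\mA_f)}$. By archimedean uniformization, an invariant section over the whole tower is a $G_T(\QQ)$-equivariant section over $X_T$, hence $G_T(\RR)$-equivariant by density. Its fibre at a point is then a $P_{\mu_T}(\RR)$-invariant vector in an algebraic representation, which vanishes unless the weight is trivial.

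Your homogeneous model is also misdescribed. The relevant base is $\FLT{T}^{\std}/G^c_T\times\FLT{T}$, with $G^c_T$ acting only on the first factor and $G_T(\QQ_p)$ on the second. It is not $\mathbb{P}^1\times\mathbb{P}^1$ modulo a diagonal action. On the open diagonal orbit the stabilizer is a torus, which detects only one combination of the two weights, so ``weights must match componentwise'' would not follow from that picture.
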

\begin{proof}
	By adjunction, \begin{align*}
		f\in \pi_{0}\Hom\left(
			\mrm{triv}^{(m_{0},m_{0})}\left(\mO^{\la,(s_{I_{0}}\cdot\Bbbk,w)}_{\SGT{T}}\left(m_{0}\right)\right),
	\mrm{triv}^{(m_{1},m_{1})}\left(\mO^{\la,(s_{I_{1}}\cdot\Bbbk,w)}_{\SGT{T}}\left(m_{1}\right)\right)
		\right)
	\end{align*}
	is equivalent to a morphism of \(\ZZ^{2}\)-filtered \(\mrm{triv}^{(0,0)}(\mO^{\sm}_{\SGT{T}}\otimes\mO_{\FLT{T}})\)-modules 
		\begin{align*}
	f:
			\mO^{\sm}_{\SGT{T}}\hatotimes_{\dR(\mO^{\sm}_{\SGT{T}})}
			\mrm{triv}^{(m_{0},m_{0})}\left(\mO^{\la,(s_{I_{0}}\cdot\Bbbk,w)}_{\SGT{T}}\left(m_{0}\right)\right)
			\hatotimes_{\dR(\mO_{\FLT{T}})}\mO_{\FLT{T}}\\
			\to 
			\mrm{triv}^{(m_{1},m_{1})}\left(\mO^{\la,(s_{I_{1}}\cdot\Bbbk,w)}_{\SGT{T}}\left(m_{1}\right)\right).
		\end{align*}

	We will use Notation \ref{notationNablaKW}. In particular, we have that \[\nabla^{\kw,(\Bbbk,-w)_{T^{c}}}_{\SGT{T}}\cong \mO^{\la,\kw}_{\SGT{T}}\left(-\chi^{\kw}(d\mu_{T})
		\right).\]

	By \cite[Lemma \ref{1-lemDiagonalLogProperty}]{Jiang2026Shla},
	the non-trivial graded pieces of 
	\[\mO^{\sm}_{\SGT{T}}\hatotimes_{\dR(\mO^{\sm}_{\SGT{T}})}
			\mrm{triv}^{(m_{0},m_{0})}\left(\mO^{\la,(s_{I_{0}}\cdot\Bbbk,w)}_{\SGT{T}}\left(m_{0}\right)\right)
			\hatotimes_{\dR(\mO_{\FLT{T}})}\mO_{\FLT{T}}\]
	are precisely  
	\begin{align}\label{alignCompuGradeNabla}
		\gr^{(m_{0}+i,m_{0}+j)}&\cong \Sym^{i}\Omega^{1,\sm}_{\SGT{T}}\otimes_{\mO^{\sm}_{\SGT{T}}}
	\mO^{\la,(s_{I_{0}}\cdot\Bbbk,w)}_{\SGT{T}}\left(m_{0}\right)
	\otimes_{\mO_{\FLT{T}}}\Sym^{i}\Omega^{1}_{\FLT{T}} 
	\\
	&\cong \bigoplus_{|\underline{i}|=i,\underline{i}\in(\ZZ_{\ge 0})^{T^{c}},|\underline{j}|=j,\underline{j}\in(\ZZ_{\ge 0})^{T^{c}}}
	\nabla_{\SGT{T}}^{(s_{I_{0}}\cdot\Bbbk+2\underline{i},w),(s_{I_{0}}\cdot\Bbbk+2\underline{j},-w)_{T^{c}}},
	\end{align}
	for \(i,j\in \NN\).

\begin{lem}\label{lemCompuHomNabla}
	Let \(\kw\) and \((\mbb{K},W)\) be multiweights, and let \((\Bbbk',w')\) and \((\mbb{K}',W')\) be \(T^{c}\)-multiweights. Then 
	we have 
	\[\pi_{0}\Hom_{\mO^{\sm}_{\SGT{T}}\hatotimes\mO_{\FLT{T}}}\left(
		\nabla^{\kw,(\Bbbk',w')}_{\SGT{T}},\nabla^{(\mathbb{K},W),(\mathbb{K}',W')}_{\SGT{T}}
	\right)^{G_{T}(\mA_{f})}=0\]
	unless \(\kw=(\Bbbk,W)\)
	and \((\Bbbk',w')=(\mbb{K}',W')\), 
	in which case the space is of dimension \(1\). Here the \(\Hom(-)\) is taken in the category of \(\mO^{\sm}_{\SGT{T}}\hatotimes\mO_{\FLT{T}}\)-modules.
\end{lem}
\begin{proof}[Proof of the lemma]
Assume that there exists such a \(G_{T}(\mA_{f})\)-equivariant map. By considering the horizontal action, we show easily that \((\Bbbk',w')=(\mbb{K}',W')\). So by twisting, 
	we can assume that \((\Bbbk',w')=(\Bbbk',W')=(\underline{0},0)\). 
	
	Then we have
	\begin{align*}
		\pi_{0}\Hom_{\mO^{\sm}_{\SGT{T}}\otimes\mO_{\FLT{T}}}\left(
		\nabla^{\kw,(\underline{0},0)}_{\SGT{T}},\nabla^{(\mathbb{K},W),(\underline{0},0)}_{\SGT{T}}
	\right)^{G_{T}(\mA_{f})}&\cong \pi_{0}\Hom_{\mO^{\sm}_{\SGT{T}}}(\omega^{\kw,\sm}_{\SGT{T}},\omega^{(\mbb{K},W),\sm}_{\SGT{T}})^{G_{T}(\mA_{f})}
	\\&\cong 
	H^{0}(\omega^{(\mbb{K},W),\sm}\otimes \omega^{(\Bbbk,w),\sm,\vee})^{G_{T}(\mA_{f})}.
	\end{align*}
	Now it suffices to show that \[H^{0}(\omega^{\kw,\sm}_{\SGT{T}})^{G_{T}(\mA_{f})}\cong \begin{cases}
		0, & \kw\ne (\underline{0},0)\\
		\CC_{p}, & \kw= (\underline{0},0).
	\end{cases}\]
	The case for \(\kw= (\underline{0},0)\)
	follows from the fact that the action of \(G_{T}(\mA_{f})\)
	on \(\pi_{0}(\SGT{T})\) is transitive. 
    
	If \(\kw\ne (\underline{0},0)\),
	the space is \(0\) by the following lemma.
    \begin{lem}
        \(H^{0}(\SGT{T}^{\sm},\omega^{\kw,\sm}_{\SGT{T}})^{G_{T}(\mA_{f})}\cong 0\)
	if \(\kw\ne (\underline{0},0)\).
    \end{lem}
    \begin{proof}
        Such an element will give rise to a quaternionic automorphic form
	of weight \(\kw\).
	We use the archimedean uniformization of the Shimura variety. Then such a form corresponds to a section \(f_{K}\) of the vector bundle \(\omega^{\kw}
    \) over \(G_{T}(\QQ)\backslash (X_{T}\times G_{T}(\mA_{f})/K)\) for all \(K\subset G_{T}(\mA_{f})\)
	that is compatible along pull-backs between different levels. So we know that it defines a section over \(X_{T}\times G_{T}(\mA_{f})\)  that is invariant for the action of the group 
	\(G_{T}(\QQ)\times G_{T}
   (\mA_{f}) \), where \(G_T(\mA_{f})\) only acts on the second factor. Equivalently, it defines a \(G_{T}(\QQ)\)-equivariant section over \(X_T\). By density, we know that it is also equivariant for the action of \(G_{T}(\RR)\).
	Looking at the fiber at a fixed point in \(X_{T}\), this corresponds 
	to a \(P_{\mu_{T}}(\RR)\)-invariant vector in \(V^{\kw}\),
	which must be zero unless \(\kw=(\underline{0},0)\).
    \end{proof}
\end{proof}
Assume that we are given a \emph{non-zero} \(G_{T}(\mA_{f})\)-equivariant morphism of \(\ZZ^{2}\)-filtered \(\mrm{triv}^{(0,0)}(\mO^{\sm}_{\SGT{T}}\hatotimes
\mO_{\FLT{T}})\)-modules 
		\begin{align*}
	f:
			\mO^{\sm}_{\SGT{T}}\hatotimes_{\dR(\mO^{\sm}_{\SGT{T}})}
			\mrm{triv}^{(m_{0},m_{0})}\left(\mO^{\la,(s_{I_{0}}\cdot\Bbbk,w)}_{\SGT{T}}\left(m_{0}\right)\right)
			\hatotimes_{\dR(\mO_{\FLT{T}})}\mO_{\FLT{T}}\\
			\to 
			\mrm{triv}^{(m_{1},m_{1})}\left(\mO^{\la,(s_{I_{1}}\cdot\Bbbk,w)}_{\SGT{T}}\left(m_{1}\right)\right).
		\end{align*}

Let \(\underline{t}=(t_{0},t_{1})\) be a maximal element in \(\ZZ^{2}\) such that \(f|_{\Fil^{\underline{j}}}=0\) for any \(\underline{j}>\underline{t}\), and \(f|_{\Fil^{\underline{t}}}\ne 0\). 
Such \(\underline{t}\) exists, because \(f\) is \(\ZZ^{2}\)-filtered, and we know moreover that \((m_{0},m_{0})\le \underline{t}\le (m_{1},m_{1})\). Then by (\ref{alignCompuGradeNabla}), we have a \emph{non-zero} \(G_{T}(\mA_{f})\)-equivariant morphism of \(\mO^{\sm}_{\SGT{T}}\hatotimes\mO_{\FLT{T}}\)-modules \[
f|_{\gr^{\underline{t}}}:\bigoplus_{|\underline{i}|=t_{0}-m_{0},\underline{i}\in(\NN)^{T^{c}},|\underline{j}|=t_{1}-m_{0},\underline{j}\in(\NN)^{T^{c}}}
	\nabla_{\SGT{T}}^{(s_{I_{0}}\cdot\Bbbk+2\underline{i},w),(s_{I_{0}}\cdot\Bbbk+2\underline{j},-w)_{T^{c}}}\to \nabla_{\SGT{T}}^{(s_{J_{0}}\cdot\Bbbk,w),(s_{J_{0}}\cdot \Bbbk,-w)_{T^{c}}}
	\]

Using Lemma \ref{lemCompuHomNabla}, 
this is impossible unless \(I_{0}\subset J_{0}\), that is, unless \(I_{0}\) and \(J_{0}\) are adjacent for the weight \(\kw\). 

Now we assume that \(I_{0}\) and \(J_{0}\) are adjacent, and \(J_{0}=I_{0}\coprod\{\tau\}\). Then by Lemma \ref{lemCompuHomNabla},
we know that \(\underline{t}=(m_{0}+1-k_{\tau},m_{0}+1-k_{\tau})=(m_{1},m_{1})\),
and the restriction of \(f|_{\gr^{(m_{1},m_{1})}}\) to the factor corresponding to \((\underline{i},\underline{j})=((1-k_{\tau})\cdot \delta_{\tau},(1-k_{\tau})\cdot \delta_{\tau})\) is given by multiplication by a scalar \(c_{f}\in\CC_{p}\). If \(f'\) is another such morphism, with the corresponding scalar being \(c_{f'}\in\CC_{p}\), then 
\(f-(c_{f}/c_{f'})\cdot f'\) is zero on the very same factor, and is thus forced to be zero by Lemma \ref{lemCompuHomNabla} and (\ref{alignCompuGradeNabla}). Therefore, \(\mscr{H}\) is of dimension at most \(1\).
On the other hand, \(\mcal{H}(d_{\tau}^{-k_{\tau}+1}\circ \bar{d}_{\tau}^{-k'_{\tau}+1})\)
gives a non-zero element by Lemma \ref{lemBasicNablaDiffe} and Lemma \ref{lemRealizeToAnalyticSite}. 
Therefore,
\(\mscr{H}\) is non-zero, and it is generated by \(\mcal{H}(d_{\tau}^{-k_{\tau}+1}\circ \bar{d}_{\tau}^{-k'_{\tau}+1})\).
\end{proof}

\subsection{Proof of Theorem \ref{thmFontainEqDDbar}: Non-vanishing}\label{subsecProofGeoFontaineNonzero}
We now conclude the proof of Theorem \ref{thmFontainEqDDbar}.
Given Proposition \ref{propUniquenessDRLinear} and Proposition \ref{propFontaineIsdRlinear} (as well as Remark \ref{rmkFontainDRLinNoTameLevel}),
it suffices to show that \(N_{I_{0},\tau}\) is not zero.
\begin{prop}\label{propFontaineNonZero}
	Let the notation be as in Notation \ref{dfnFontaineOpe}. Then
	\(N_{I_{0},\tau}\) is not zero.
\end{prop}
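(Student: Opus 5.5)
Since $N_{I_0,\tau}$ is the extension class of $\nabla$ between two adjacent graded pieces, it is nonzero as soon as it is nonzero on some open subset or stalk. The plan is therefore to localize to a region where the computation reduces to a flag variety and has already been done. The introduction already indicates the route: reduce via geometric Sen theory to a calculation on the flag variety, which was done in \cite{Jiang2023thetaModCur}.

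The first step is to localize. By Proposition \ref{propUniquenessDRLinear}, $N_{I_0,\tau}=c\cdot\mcal{H}(d^{-k_\tau+1}_\tau\circ\bar d^{-k_\tau+1}_\tau)$ for some $c\in\CC_p$, and the right-hand side is nonzero. So it suffices to find one open $U\subset\SGTKpTo{T}$ (stable under an open subgroup of $\Gal_K$) on which $N_{I_0,\tau}\neq0$. I would take $U$ to be the preimage under $\pi_{\HT}$ of a small open of $\FLT{T}$ away from the $\tau$-ordinary locus, where everything becomes a product. Using the bi-filtered structure (Remark \ref{rmkFontaineIsdRbilinear}), I would further reduce to the $\tau$-factor. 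All other factors only contribute horizontal twists on which $\Theta$ acts semisimply, so the extension class of $\Theta$ between $\gr^{m_0}$ and $\gr^{m_1}$ is the class of the $\tau$-factor tensored with the identity on the others. This reduces the claim to the case $|T^c|=1$, $I_0=\emptyset$, i.e. a $\GL_2$-type statement in the single direction $\tau$.

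The second step is to compute via geometric Sen theory. Pan's description \cite{Pan22} and \cite[Theorem 6.3.5]{Juan2022.09locallyShi} say that $\mO^{\la}$ is locally, over an affinoid of $\Fl_\tau$ with coordinate $x$, generated over $\mO^{\sm}\hatotimes\mO_{\Fl}$ by the locally analytic functions of $e_1,e_2,t$-type. They also say that the arithmetic Sen operator is $\theta(\mrm{H}_\tau)$-related, namely $\Theta=-\theta(d\mu)$ up to sign. Lifting to $\BdR^{+,\la}/t^{2-k_\tau}$, I would write $\BdR^{\la}$ locally as $\mO^{\la}\llbracket t\rrbracket$ with a period element $x_{\dR}$ (the de Rham lift of $x$), satisfying $x-x_{\dR}\in t\BdR^+$. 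The extension class of $\Theta$ from $\gr^{m_0}$ to $\gr^{m_1}$ is then computed by expanding $f(x_{\dR})$ Taylor-wise in $t$ (a Fa\`a di Bruno / binomial computation). The $(1-k_\tau)$-th coefficient produces $\partial_x^{1-k_\tau}$ composed with the vertical derivative, i.e. the operator $d\bar d$ up to a nonzero rational factor. This is exactly the $\theta$-operator calculation on the flag variety in \cite{Jiang2023thetaModCur} (cf. Lemma \ref{lemBasicNablaDiffe} and \cite[Example \ref{1-egThetaP1}]{Jiang2026Shla}), and I would simply import it.

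The main obstacle is the second step: making the local splitting of $\BdR^{\la}$ compatible with the Sen operator. One needs to check that the leading coefficient is a nonzero multiple of a factorial, so it does not vanish in characteristic $0$, and that no cancellation occurs from the horizontal twists. To control this, I would test on a specific section. Take a local section of the form $x^{n}\cdot e$, with $e$ a smooth lowest-weight vector, and follow it through $\Darith$. Checking that its image in $\gr^{m_1}$ is a nonzero multiple of a nonvanishing section suffices, since by the first step nonvanishing at one point yields $c\neq0$.
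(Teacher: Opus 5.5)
The logic of your first step is fine (nonvanishing on one Galois-stable open already gives $N_{I_0,\tau}\neq0$). However, there is a genuine gap at the only substantive point: you never supply a way to compute the nilpotent part of the Sen operator $\Theta$ on $\Darith(\BdR^{\la,\lambda^{\kw}_{T^{c}},V^{\kw}_{T}})$.

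Knowing that $\Theta$ acts through $\theta(d\mu_T)$ on $\mO^{\la}$ only controls the semisimple action on the graded pieces (Proposition \ref{corCompDartihBdR}). $N_{I_0,\tau}$ is extension data, and in your splitting it is governed by how $\Theta$ acts on the chosen lifts of $x$, of the smooth coefficients and of the weight vectors. Such lifts to $\BdR^{+}$ exist only locally and non-canonically. What is canonical is the map $\dR(\mO^{\sm,\mrm{rat}}_{\SGTKp{T}})\to\BdR^{+}$ of filtered $\mbb{E}_\infty$-algebras from \cite{GuoLi2021period}, not a ring lift. So ``$\BdR^{\la}\cong\mO^{\la}\llbracket t\rrbracket$'' is not a structure in which $\Theta$ can be computed for free. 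Determining $\Theta$ on these lifts is exactly the ``main obstacle'' you leave open. Your sketch also asserts, but does not explain, where the $d_\tau$-factor (differentiation along the Shimura variety relative to $\mO_{\FLT{T}}$) comes from in a Taylor expansion in $x_{\dR}$ and $t$.

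What you propose to import does not close this. Lemma \ref{lemBasicNablaDiffe} and the $\mP^1$ example of \cite{Jiang2026Shla} compute $d^{1-k_\tau}_\tau\circ\bar d^{1-k_\tau}_\tau$; this is what shows $\mcal{H}(d_\tau\circ\bar d_\tau)\neq0$ in Proposition \ref{propUniquenessDRLinear}. They say nothing about $\Theta$, so using them is circular: the point is to show that the Galois-theoretic $N_{I_0,\tau}$ is a \emph{nonzero} multiple of that operator. Likewise, ``everything becomes a product'' over an open of $\FLT{T}$ has no meaning on $\SGTKpTo{T}$ or in $\BdR^{\la}$.

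The missing bridge is the one the paper uses.
\begin{enumerate}
\item By the $\dR(\mO^{\sm}_{\SGTKp{T}})$-linearity of Proposition \ref{propFontaineIsdRlinear}, base change $N_{I_0,\tau}$ along $\dR(\mO^{\sm})\to\mO^{\sm}$. This passes to $\Darith(\OBdRlog^{\la,\lambda^{\kw}_{T^{c}},V^{\kw}_{T}})$ and reduces the claim to $\gr^{m_1}(N_{I_0,\tau})\neq0$.
\item Geometric Sen theory gives $\gr^0\OBdRlog\cong\pi_{\HT}^{*}\underline{\mO(M_{\mu_T}\backslash P_{\mu_T})}$, hence $\gr^0\OBdRlog^{\la,\lambda^{\kw}_{T^{c}},V^{\kw}_{T}}\cong\tVBfu(\mF_{\kw})$ with $\Theta$ acting by $\theta(d\mu_T)$.
\item Only on this flag-variety model is the $\tau$-factor reduction legitimate. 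There $\mF_{\kw}\cong(\bigotimes_{\tau'}\pr_{\tau'}^{*}\mF_{\kw,\tau'})\otimes V^{\kw,\vee}_{T}$, and $\theta(d\mu_{\tau'})$ acts by constants for $\tau'\neq\tau$ on the relevant subquotient.
\item The relevant computation of \cite{Jiang2023thetaModCur} concerns the monodromy of $\theta(d\mu)$ on this explicit sheaf, not $d\circ\bar d$. Apply $\RHom_{U\fg_\tau/Z\fg_\tau}(M_{\kw,\tau},-)$ and \cite[Lemma 4.15]{Jiang2023thetaModCur}. The monodromy becomes the projection of an extension of $\tVBfu(\mcal{Y})$ by $i_{\tau,*}i_\tau^{*}\tVBfu(\mcal{Y})[-1]$ onto the latter, which is nonzero because $i_{\tau,*}i_\tau^{*}\tVBfu(\mcal{Y})\neq0$.
\end{enumerate}
This detecting term is supported on $\infty_\tau^\dagger$, the overconvergent neighbourhood of the rational point $\infty_\tau$. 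That is inside the locus your first step throws away, so this computation cannot be imported on your chosen $U$. A fully explicit Pan-style computation as in \cite[\S 6.5]{Pan2209.06II} would be a legitimate alternative, but your proposal does not carry it out.
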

\begin{notation}
	Define the \(t\)-adic filtration on \( \BdR \) by \(\Fil^{t}_{\bullet}\BdR:=t^{-\bullet}\BdR^{+}\), and denote the graded pieces as \( \gr^{t}_{\bullet} \).
	We define similarly \( (\OBdRlog,\Fil^{t}_{\bullet}) \) using \( \OBdRlog^{+} \).

	To distinguish, we will refer to the filtrations on \(\OBdRlog\) in \cite{Scholze13} and \cite{DLLZ2022logarithmicJAMS} as \emph{Hodge filtrations}. Note that the \(t\)-adic filtration and the Hodge filtration coincide on \(\BdR\). 
\end{notation}
\begin{proof}
	Recall that \[N_{I_{0},\tau}:\mO^{\la,(s_{I_{0}}\cdot\Bbbk,w)}_{\SGTKp{T}}\left(m_{0}\right)\to 
	\mO^{\la,(s_{I_{0}\cup\{\tau\}}\cdot\Bbbk,w)}_{\SGTKp{T}}\left(m_{1}\right)\]
	is induced by the action of \(\nabla\)
	on \(\Darith(\BdR^{\la,\lambda^{\kw}_{T^{c}},V_{T}^{\kw}})\).
	Note that
	\(N_{I_{0},\tau}\) is \(\dR(\mO^{\sm}_{\SGTKp{T}})\)-linear,
	and can be extended to \[N_{I_{0},\tau}:\mO^{\la,(s_{I_{0}}\cdot\Bbbk,w)}_{\SGTKp{T}}\left(m_{0}\right)\hatotimes_{\dR(\mO^{\sm}_{\SGTKp{T}})}\mO^{\sm}_{\SGTKp{T}}\to 
	\mO^{\la,(s_{I_{0}\cup\{\tau\}}\cdot\Bbbk,w)}_{\SGTKp{T}}\left(m_{1}\right)\hatotimes_{\dR(\mO^{\sm}_{\SGTKp{T}})}\mO^{\sm}_{\SGTKp{T}},\]
	which is filtered, with the graded pieces given by \[\gr^{i}(N_{I_{0},\tau}):\mO^{\la,(s_{I_{0}}\cdot\Bbbk,w)}_{\SGTKp{T}}\left(m_{0}\right)\otimes\Sym^{i-m_{0}}\Omega^{1,\sm}\to
	\mO^{\la,(s_{I_{0}\cup\{\tau\}}\cdot\Bbbk,w)}_{\SGTKp{T}}\left(m_{1}\right)\otimes \Sym^{i-m_{1}}\Omega^{1,\sm}
	 \]
	for \(i\in\NN_{\ge m_{1}}\).
	Clearly, for our purpose, it suffices to show that \(\gr^{m_{1}}(N_{I_{0},\tau})\ne 0\).

Now we note that \(\gr^{i}(N_{I_{0},\tau})\) can be reinterpreted as follows: the action of \(\nabla\) on \(\Darith(\BdR^{\la,\lambda^{\kw}_{T^{c}},V^{\kw}_{T}})\)
	induces an action on \[\Darith(\OBdRlog^{\la,\lambda^{\kw}_{T^{c}},V_{T}^{\kw}})
	\cong
	\Darith(\BdR^{\la,\lambda^{\kw}_{T^{c}},V_{T}^{\kw}})\hatotimes_{\dR(\mO^{\sm}_{\SGTKp{T}})}\mO^{\sm}_{\SGTKp{T}},
	\] and thus also on \(\gr^{i}\Darith(\OBdRlog^{\la,\lambda^{\kw}_{T^{c}},V^{\kw}_{T}})\).
Note that \(\gr^{i}\OBdRlog^{\la}\)
admits an ascending filtration induced by the \(t\)-adic filtration
of \(\OBdRlog^{+,\la}\),
such that \[\gr^{t}_{j}\gr^{i}\OBdRlog^{\la}\cong \mO^{\la}(-j)\otimes_{\mO^{\sm}} \Sym^{j+i}\Omega^{1,\sm},\] for \(j\ge -i\).
Then \[\gr^{t}_{j}\Darith(\gr^{i}\OBdRlog^{\la,\lambda^{\kw}_{T^{c}},V^{\kw}_{T}})\otimes_{\QQ_{p}(\zeta_{p^{\infty}})}\CC_{p}\cong \bigoplus_{j=\chi^{(s\cdot \Bbbk,w)}(d\mu_{T}),s\in W^{M_{\mu_{T}}}}
\mO^{\la,(s\cdot\Bbbk,w)}_{\SGTKp{T}}(-j)\otimes_{\mO^{\sm}} \Sym^{j+i}\Omega^{1,\sm},\]
and the action of \(\Theta\) is zero on \(\gr^{t}_{j}\), and 
then the action on the extension between \(\gr^{t}_{-m_{0}}\)
and \(\gr^{t}_{-m_{1}}\)
induces 
\begin{align}
	\gr^{i}(N_{I_{0},\tau}):\mO^{\la,(s_{I_{0}}\cdot\Bbbk,w)}_{\SGTKp{T}}\left(m_{0}\right)\otimes\Sym^{i-m_{0}}\Omega^{1,\sm}\to
	\mO^{\la,(s_{I_{0}\cup\{\tau\}}\cdot\Bbbk,w)}_{\SGTKp{T}}\left(m_{1}\right)\otimes \Sym^{i-m_{1}}\Omega^{1,\sm}.
\end{align}
We can further decompose the target and the source.
\(\Sym^{i-m_{0}}\Omega^{1,\sm}\)
has a decomposition into \(\Omega^{1,\sm,\underline{l}}:=\bigotimes_{\tau\in T^{c}}\Omega^{1,\sm}_{\tau}\),
with \(\underline{l}=(l_{\tau})_{\tau\in T^{c}}\in \NN^{T^{c}}\)
with \(|\underline{l}|=i-m_{0}\). 

Let us fix one such \(\underline{l}\) such that \(l_{\tau}\ge m_{1}-m_{0}\), and then we claim that
the induced map \begin{align}\label{alignMapInFaltingsExt}
	\gr^{i}(N_{I_{0},\tau}):\mO^{\la,(s_{I_{0}}\cdot\Bbbk,w)}_{\SGTKp{T}}\left(m_{0}\right)\otimes \Omega^{1,\sm,\underline{l}}\to
	\mO^{\la,(s_{I_{0}\cup\{\tau\}}\cdot\Bbbk,w)}_{\SGTKp{T}}\left(m_{1}\right)\otimes \Omega^{1,\sm,\underline{l}-(m_{1}-m_{0})1_{\tau}}.
\end{align}
is not zero.

Now there are many ways to conclude the proof. For example, one can simply mimic the proof in \cite[\S 6.5]{Pan2209.06II}. For simplicity, we take a different approach. We will take \(\fb\)-cohomology, and reduce the problem to a calculation over the flag variety, and then reduce to the curve case that is treated in \cite{Jiang2023thetaModCur}.

Recall that by \cite{Juan2022.09locallyShi},
we know that \[\gr^{0}\OBdRlog\cong \pi_{\HT}^{*}(\underline{\mO(M_{\mu_{T}}
\backslash P_{\mu_{T}})}),\]
where \(\mO(M_{\mu_{T}}
\backslash P_{\mu_{T}})\)
is a \(P_{\mu_{T}}\)-representation by letting \(P_{\mu_{T}}\) act by right multiplication.
Note that \(M_{\mu_{T}}\backslash P_{\mu_{T}}\cong N_{\mu_{T}}\cong \prod_{\tau\in T^{c}}N_{\mu_{T},\tau}\).
We will denote by 
\(\mO(M_{\mu_{T}}\backslash P_{\mu_{T}})^{\mrm{deg}=\underline{l}}\)
as the subspace generated by \(\underline{x}^{\underline{l}}\),
with \(\underline{x}:=(x_{\tau})\) with \(N_{\mu_{T},\tau}=\Spec \CC_{p}[x_{\tau}]\).

So \[\gr^{0}\OBdRlog^{\la}\cong \tVBfu\left((\underline{\mO(M_{\mu_{T}}\backslash P_{\mu_{T}})}\otimes \mO_{G,1})^{\fn^{0}}\right),\]
and \[\gr^{0}\OBdRlog^{\la,\lambda_{T^{c}}^{\kw},V_{T}^{\kw}}\cong \tVBfu\left((\underline{\mO(M_{\mu_{T}}\backslash P_{\mu_{T}})}\otimes \mO_{G,1}^{(*_{2},\lambda_{T^{c}}^{\kw},V_{T}^{\kw})})^{\fn^{0}}\right).\]
Then by \cite{Juan2022.09locallyShi}, the action of \(\Theta\) in induced by \(\theta(d\mu_{T})\) on the right-hand side, and \[\Darith(\gr^{i}\OBdRlog^{\la,\lambda_{T^{c}}^{\kw},V_{T}^{\kw}})\hatotimes_{\QQ_{p}(\zeta_{p^{\infty}})}\CC_{p}\cong \tVBfu\left(E_{-i}((\underline{\mO(M_{\mu_{T}}\backslash P_{\mu_{T}})}\otimes \mO_{G,1}^{(*_{2},\lambda_{T^{c}}^{\kw},V_{T}^{\kw})})^{\fn^{0}})\right),\]
where on the right-hand side,
\(E_{-i}\) denotes the generalized eigenspace for \(\theta(d\mu_{T})\),
where \(\theta\) is the horizontal action induced by \(*_{1,3}\),
with \(*_{3}\) denotes the action of \(\fg\) on \(\underline{\mO(M_{\mu_{T}}\backslash P_{\mu_{T}})}\).

Let us denote \[\mF_{\kw}:=(\underline{\mO(M_{\mu_{T}}\backslash P_{\mu_{T}})}\otimes \mO_{G,1}^{(*_{2},\lambda_{T^{c}}^{\kw},V_{T}^{\kw})})^{\fn^{0}}\]
on \(\FLT{T}\)
and \[\mF_{\kw,\tau}:=(\underline{\mO(M_{\mu_{T},\tau}\backslash P_{\mu_{T},\tau})}\otimes \mO_{G_{\tau},1}^{(*_{2},\lambda_{\tau}^{\kw})})^{\fn_{\tau}^{0}}\]
on \(\Fl_{G_{T},\mu_{T},\tau}\) for \(\tau\in T^{c}\).
Then we know \[\mF_{\kw}\cong (\bigotimes_{\tau\in T^{c}}\pr_{\tau}^{*}\mF_{\kw,\tau})\otimes V_{T}^{\kw,\vee},\]
and we have the action of \(\theta(d\mu_{\tau})\) on \(\mF_{\kw,\tau}\),
such that the action of \(\theta(d\mu_{T})\) on \(\mF_{\kw}\)
is given by \[\theta(d\mu_{T})=\sum_{\tau\in T^{c}}\theta(d\mu_{\tau}).\]

We can consider the degree filtration on \(\mO(M_{\mu_{T}}\backslash P_{\mu_{T}})\), which we also denote as \(\gr_{\bullet}^{t}\), then \[\gr_{-j}^{t}\mF_{\kw}\cong \underline{\mO(M_{\mu_{T}}\backslash P_{\mu_{T}})^{\mrm{deg}=j}}\otimes C^{\la,\lambda^{\kw}_{T^{c}},V_{T}^{\kw}}\cong \bigoplus_{j=\chi^{(s\cdot \Bbbk,w)}(d\mu_{T}),s\in W^{M_{\mu_{T}}}} \underline{\mO(M_{\mu_{T}}\backslash P_{\mu_{T}})^{\mrm{deg}=j}}\otimes C^{\la,(s_{I_{0}}\cdot\Bbbk,w)}\]
Then the isomorphism \[\gr^{i}\OBdRlog^{\la,\lambda_{T^{c}}^{\kw},V_{T}^{\kw}}\cong \tVBfu\left(\mF_{\kw}\right) \] is compatible with the \(t\)-adic filtration on both sides, inducing 
\[\gr^{t}_{-j}\gr^{i}\OBdRlog^{\la,\lambda_{T^{c}}^{\kw},V_{T}^{\kw}}\cong \tVBfu\left(\gr^{t}_{-j}\mF_{\kw}\right)(i)\]
 and  \[\mO^{\la,(s_{I_{0}}\cdot\Bbbk,w)}_{\SGTKp{T}}(m_{0})\otimes \Sym^{i-m_{0}}\Omega^{1,\sm}\cong \tVBfu(\underline{\mO(M_{\mu_{T}}\backslash P_{\mu_{T}})^{\mrm{deg}=i-m_{0}}}\otimes C^{\la,(s_{I_{0}}\cdot \Bbbk,w)})(i),\]
 and \[\mO^{\la,(s_{I_{0}}\cdot\Bbbk,w)}_{\SGTKp{T}}(m_{0})\otimes \Omega^{1,\sm,\underline{l}}\cong \tVBfu(\underline{\mO(M_{\mu_{T}}\backslash P_{\mu_{T}})^{\mrm{deg}=\underline{l}}}\otimes C^{\la,(s_{I_{0}}\cdot \Bbbk,w)})(i).\]
 Therefore, the map (\ref{alignMapInFaltingsExt})
 is induced by the monodromy map \[\underline{\mO(M_{\mu_{T}}\backslash P_{\mu_{T}})^{\mrm{deg}=\underline{l}}}\otimes C^{\la,(s_{I_{0}}\cdot \Bbbk,w)}\to 
 \underline{\mO(M_{\mu_{T}}\backslash P_{\mu_{T}})^{\mrm{deg}=\underline{l}-(m_{1}-m_{0})1_{\tau}}}\otimes C^{\la,(s_{I_{0}\cup \{\tau\}}\cdot \Bbbk,w)}\]
 induced by the action of 
 \(\theta(d\mu_{T})+i\) acting on 
 \(E_{-i}(\mF_{\kw})\). 

 Now consider the following  subquotient of \(\mF_{\kw}\) \[\mF_{\kw}^{\underline{l}^{\tau}}:= 
 V_{T}^{\kw}\otimes \bigotimes_{\tau'\in T^{c}\backslash \tau}\pr_{\tau'}^{*}\left(\underline{\mO(M_{\mu_{T},\tau'}\backslash P_{\mu_{T},\tau'})^{\mrm{deg}=l_{\tau'}}}
 \otimes C^{\la,((s_{I_{0}}\cdot\Bbbk)_{\tau'},w)}_{\tau'}\right)\otimes \pr_{\tau}^{*}(\mF_{\kw,\tau}), \]
 then the concerned graded pieces are both realized in the subquotient.
 Moreover, for \(\tau'\in T^{c}\backslash \{\tau\}\),
 the action of \(\theta(d\mu_{\tau'})\) on \(\mF^{\underline{l}^{\tau}}_{\kw}\)
 is given by the constant \(-l_{\tau'}+\chi^{(s_{I_{0}}\cdot\Bbbk,w)}(d\mu_{\tau'})\). So we have \[\theta(d\mu_{T})+i=\theta(d\mu_{\tau})+l_{\tau}-\chi^{(s_{I_{0}}\cdot\Bbbk,w)}(d\mu_{\tau}),\]
 where the right-hand side only acts on the \(\tau\)-th term \(\mF_{\kw,\tau}\), so we are reduced to the case of dimension \(1\) case.
 Also note that we have a fiber sequence \begin{align*} 0\to
\underline{\mO(M_{\mu_{T},\tau}\backslash P_{\mu_{T},\tau})^{\mrm{deg}=l_{\tau}-k_{\tau}+1}}
 \otimes C^{\la,((s_{I_{0}\cup\{\tau\}}\cdot\Bbbk)_{\tau},w)}_{\tau}
 \to 
 E_{\chi^{(s_{I_{0}}\cdot \Bbbk,w)}(d\mu_{\tau})-l_{\tau}}(\mF_{\kw,\tau})
 \\\to\underline{\mO(M_{\mu_{T},\tau}\backslash P_{\mu_{T},\tau})^{\mrm{deg}=l_{\tau}}}
 \otimes C^{\la,((s_{I_{0}}\cdot\Bbbk)_{\tau},w)}_{\tau}
\to 0,
 \end{align*}
 and we have \[E_{-i}(\mF^{\underline{l}^{\tau}}_{\kw})\cong 
 V_{T}^{\kw}\otimes \bigotimes_{\tau'\in T^{c}\backslash \tau}\pr_{\tau'}^{*}\left(\underline{\mO(M_{\mu_{T},\tau'}\backslash P_{\mu_{T},\tau'})^{\mrm{deg}=l_{\tau'}}}
 \otimes C^{\la,((s_{I_{0}}\cdot\Bbbk)_{\tau'},w)}_{\tau'}\right)\otimes \pr_{\tau}^{*}(
	E_{\chi^{(s_{I_{0}}\cdot \Bbbk)}(d\mu_{\tau})-l_{\tau}}(\mF_{\kw,\tau})),\]
 which after taking \(\tVBfu(-)\)
gives the extension between \(\mO^{\la,(s_{I_{0}}\cdot\Bbbk,w)}_{\SGTKp{T}}\left(m_{0}\right)\otimes \Omega^{1,\sm,\underline{l}}\)
and \(\mO^{\la,(s_{I_{0}\cup\{\tau\}}\cdot\Bbbk,w)}_{\SGTKp{T}}\left(m_{1}\right)\otimes \Omega^{1,\sm,\underline{l}-(m_{1}-m_{0})1_{\tau}}\). 

Now we need to study the action of \(\theta(d\mu_{\tau})+l_{\tau}-\chi^{(s_{I_{0}}\cdot\Bbbk,w)}(d\mu_{\tau})\)
on \(E_{\chi^{(s_{I_{0}}\cdot \Bbbk,w)}(d\mu_{\tau})-l_{\tau}}(\mF_{\kw,\tau})\),
and show that the corresponding monodromy operator is not zero even after applying \(\tVBfu\).
This question can be simplified after we take \(\fb\)-cohomology
as in \cite{Jiang2023thetaModCur}. Recall the following lemma:
\begin{lem}[{\cite{Pilloni22}}, {\cite[Lemma 4.15]{Jiang2023thetaModCur}}]
	In this lemma, we denote \(G=\GL_{2,\QQ}\supset B=H N\),
	where \(B\) is the subgroup of upper triangular matrices,
	and \(N\) is its nilpotent radical and \(H\) is the diagonal subgroup. Denote by \(\fg,\fb,\fn\) their Lie algebras. Let \(\Fl=B\backslash G\cong\mP^{1}\), and we fix the Bruhat stratification 
	\[\Fl\cong \mA^{1}\coprod \{\infty\}.\] Denote by \(i:\infty^{\dagger}\hookrightarrow \Fl\).
	We also denote \(C^{\la}:=(\mO_{\Fl}\otimes\mO_{G,1})^{(\fn^{0},*_{1,3})}\) and denote \(*_{2}\) (resp. \(*_{1}\), resp. \(*_{3}\)) the action of \(\fg\) induced by right multiplication (resp. left multiplication, resp. on \(\mO_{Fl}\)).
	Fix \((a,b)\in\ZZ^{\oplus}\)
	with \(a\ge b\).
	
	(1) We have \[\RHom_{\fb}((a,b),(C^{\la},*_{2}))\cong \mcal{N}_{b}\oplus \mcal{N}_{1+a},\]
	with \(\mcal{N}_{1+a}\cong i_{*}i^{*}\omega^{(1-b,-1-a)}_{\Fl}[-1]\)
	and \(\mcal{N}_{b}\)
	lies in a distinguished triangle \[\omega^{(-a,-b)}_{\Fl}\to \mcal{N}_{b}\to i_{*}i^{*}\omega^{(-a,-b)}_{\Fl}[-1].\]
	Here \(\omega^{(a,b)}_{\Fl}\) refers the line bundle over \(\Fl\)
	associated to the character \(\chi^{(a,b)}\). This differs from the notation in \cite{Jiang2023thetaModCur} by \(w_{0}\). The convention here makes \(\Omega^{1}_{\Fl}\cong \omega^{(1,-1)}_{\Fl}\).

	(2) Consider \(\mF:=(\underline{\mO(H\backslash B)}\otimes \mO_{G,1})^{\fn^{0}}\),
	then \(\mF\) is filtered by the degree in \(\mO(H\backslash B)\),
	with \(\mF^{\mrm{deg}=i}\cong \omega^{(-i,i)}_{\Fl}\otimes C^{\la}\),
	and then \(\RHom_{\fb}((a,b),(\mF,*_{2}))\)
	is filtered by \(\omega^{(-i,i)}_{\Fl}\otimes \RHom((a,b),C^{\la})\),
	and we can consider the action of \(\theta(d\mu)\),
	with \(d\mu=(1,0)\in\fh\),
	and \(\theta\) the horizontal action induced by \(*_{1,3}\). 
	Then for any \(k\ge 1+a\), there is a fiber sequence \[\RHom_{\fb}((a,b),\mF^{\mrm{deg}=k-a-1})^{\theta(d\mu)=-k}\to E_{-k}(\RHom_{\fb}((a,b),\mF))\to \RHom_{\fb}((a,b),\mF^{\mrm{deg}=k-b})^{\theta(d\mu)=-k}\xrightarrow{+1},\]
	with \[\RHom_{\fb}((a,b),\mF^{\mrm{deg}=k-a-1})^{\theta(d\mu)=-k}\cong i_{*}i^{*}\omega^{(-k,k-a-b)}_{\Fl}[-1],\]
	and a fiber sequence \[\omega^{(-k,k-a-b)}_{\Fl}\to \RHom_{\fb}((a,b),\mF^{\mrm{deg}=k-b})^{\theta(d\mu)=-k}
	\to i_{*}i^{*}\omega^{(-k,k-a-b)}_{\Fl}[-1]\xrightarrow{+1}\]
	such that the action of \(\theta(d\mu)+k\)
	on \(E_{-k}(\RHom_{\fb}((a,b),\mF))\)
	is given by a unique monodromy morphism \[\RHom_{\fb}((a,b),\mF^{\mrm{deg}=k-b})^{\theta(d\mu)=-k}
	\to i_{*}i^{*}\omega^{(-k,k-a-b)}_{\Fl}[-1]\cong  \RHom_{\fb}((a,b),\mF^{\mrm{deg}=k-a-1})^{\theta(d\mu)=-k}. \]
\end{lem}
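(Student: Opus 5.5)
This is a local computation on \(\mP^{1}\) with the Bruhat stratification \(\mA^{1}\coprod\{\infty\}\). I would treat part (1) by computing on each stratum and gluing through the excision triangle for \(i:\infty^{\dagger}\hookrightarrow\Fl\). Part (2) then follows from (1) by twisting along the degree filtration of \(\mF\) and tracking the eigenvalues of \(\theta(d\mu)\).

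For (1), I would first work over the big cell \(\mA^{1}\). There \(C^{\la}\) is explicitly described by Pan's coordinates: locally it is \(\mO_{\Fl}\hatotimes\) (germs of analytic functions in the unipotent and torus directions). Computing \(\RHom_{\fb}((a,b),-)\) reduces to \(\fn\)-cohomology followed by taking an \(\fh\)-eigenspace. The \(\fn\)-invariants of the locally analytic germs in the \(N\)-variable are only the constants. This gives \(\omega^{(-a,-b)}_{\Fl}\), and \(H^{1}(\fn,-)\) vanishes on the open cell.

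At \(\infty^{\dagger}\) I would use the opposite chart. Both \(H^{0}\) and \(H^{1}(\fn,-)\) contribute. By the Kostant-type computation, \(H^{1}(\fn,\cdot)\) produces the two weights \((a,b)\) and \(s\cdot(a,b)=(b-1,a+1)\), twisted appropriately. This yields \(i_{*}i^{*}\omega^{(-a,-b)}[-1]\) and \(i_{*}i^{*}\omega^{(1-b,-1-a)}[-1]\).

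The splitting into \(\mcal{N}_{b}\oplus\mcal{N}_{1+a}\) comes from the generalized eigenspaces of \(\theta(\mrm{H})\) (equivalently \(\theta(d\mu)\)). These differ because \(a\ge b\) forces \(b\ne 1+a\). Hence the horizontal action separates the two summands.

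For (2), filter \(\mF\) by degree with \(\mF^{\mrm{deg}=i}\cong\omega^{(-i,i)}\otimes C^{\la}\). Apply (1) to each graded piece and record \(\theta(d\mu)\) on each summand: on \(\omega^{(-i,i)}\otimes\mcal{N}_{b}\) it acts by \(-(i+b)\), and on \(\omega^{(-i,i)}\otimes\mcal{N}_{1+a}\) it acts by \(-(i+1+a)\). The pieces of generalized eigenvalue \(-k\) are therefore exactly \(i=k-b\) with \(\mcal{N}_{b}\), and \(i=k-a-1\) with \(\mcal{N}_{1+a}\).

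This gives the displayed fiber sequences after substituting the twists. For example, \(\omega^{(-(k-a-1),k-a-1)}\otimes\omega^{(1-b,-1-a)}=\omega^{(-k+a+2-b,\dots)}\). I would double-check this bookkeeping against the normalization \(\Omega^{1}\cong\omega^{(1,-1)}\) so that it lands on \(\omega^{(-k,k-a-b)}\).

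The nilpotent part \(\theta(d\mu)+k\) kills both graded pieces, since it acts semisimply on each. It therefore factors uniquely through a map from the quotient to the sub, shifted by one. Uniqueness holds because \(\Hom\) between the relevant pieces has no negative-degree contributions, by the same \(\pi_{>0}\)-vanishing argument used in Proposition \ref{propFontaineIsdRlinear}.

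The main obstacle is the stalk computation at \(\infty^{\dagger}\) in (1), namely the correct identification of \(H^{1}(\fn,C^{\la})\) there with both weight contributions. I would handle it by reducing to \(\mathfrak{sl}_2\) and using the known description of germs of locally analytic functions at \(\infty\) from \cite{Pan22}. Alternatively, I would simply invoke \cite[Lemma 4.15]{Jiang2023thetaModCur} and \cite{Pilloni22}, adjusting for the \(w_{0}\) change of convention.
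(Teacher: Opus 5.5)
Your sketch of (1) is acceptable as an outline, and it falls back on the same citation the paper uses. Your argument for (2), however, has a genuine gap: it never identifies the monodromy.

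Filtering $\mF$ by degree and applying (1) to the graded pieces gives you only two things. It gives the graded pieces of $E_{-k}(\RHom_{\fb}((a,b),\mF))$. It also gives the existence of \emph{some} map $N$, from the quotient $\RHom_{\fb}((a,b),\mF^{\mrm{deg}=k-b})^{\theta(d\mu)=-k}$ to the sub $\RHom_{\fb}((a,b),\mF^{\mrm{deg}=k-a-1})^{\theta(d\mu)=-k}$, through which $\theta(d\mu)+k$ factors.

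The lemma claims much more: $N$ \emph{is} the projection onto $i_{*}i^{*}\omega^{(-k,k-a-b)}_{\Fl}[-1]$ from the second fiber sequence, followed by the isomorphism with the sub. In particular $N\neq 0$; it kills the $\omega^{(-k,k-a-b)}_{\Fl}$-part and is an isomorphism on the part supported at $\infty$. This is exactly what Proposition~\ref{propFontaineNonZero} uses to conclude $\gr^{i}(N_{I_{0},\tau})\neq 0$.

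Your method cannot detect this. The graded pieces, their $\RHom_{\fb}$'s and the $\theta(d\mu)$-eigenvalues are identical for the split object $\bigoplus_{i}\omega^{(-i,i)}_{\Fl}\otimes C^{\la}$. On that object $\theta(d\mu)$ is semisimple and $N=0$. The monodromy comes from the degree-lowering $\fn$-action on $\mO(H\backslash B)$, iterated across the $a+1-b$ degrees separating the two contributing pieces, so the extension itself has to be computed.

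The paper does this in three steps. It twists by a central character to reduce to $b=0$. It then uses the $B$-equivariant isomorphism $\Sym^{k}V\otimes\mO_{\Fl}\cong\underline{\mO(H\backslash B)^{\mrm{deg}\le k}}\otimes\omega^{(k,0)}_{\Fl}$. This turns $\mF^{\mrm{deg}\le k}$, up to twist, into $\Sym^{k}V\otimes C^{\la}$, the situation treated in \cite[Lemma 4.15]{Jiang2023thetaModCur}.

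A secondary point: your uniqueness argument is not literally the $\pi_{>0}$-vanishing of Proposition~\ref{propFontaineIsdRlinear}, because the quotient is not concentrated in a single degree.

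On the bookkeeping you flagged: read the weights of (1) literally and twist naively. The sub becomes $i_{*}i^{*}\omega^{(-k+a+2-b,\,k-2a-2)}_{\Fl}[-1]$. The part of the quotient at $\infty$ becomes $i_{*}i^{*}\omega^{(-k+b-a,\,k-2b)}_{\Fl}[-1]$. These never agree when $a\ge b$, and $-k$ is not the first entry of the former.

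Everything closes up if you read (1) in the $w_{0}$-flipped convention of \cite{Jiang2023thetaModCur}. That means $H^{0}(\mcal{N}_{b})=\omega^{(-b,-a)}_{\Fl}$ and $\mcal{N}_{1+a}=i_{*}i^{*}\omega^{(-1-a,1-b)}_{\Fl}[-1]$, with $\theta(d\mu)$ given by the first entry. Then both pieces become $\omega^{(-k,k-a-b)}_{\Fl}$ with $\theta(d\mu)=-k$, matching your eigenvalues $-(i+b)$ and $-(i+1+a)$.
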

\begin{proof}[Proof of the lemma]
	Everything is essentially \cite[Lemma 4.15]{Jiang2023thetaModCur}.
	Twisting the action by an element in the center, we can reduce to the case where \(b=0\). Finally, 
	It suffices to note that over \(\Fl\), there is an isomorphism \[\Sym^{k}V\otimes\mO_{\Fl}\cong \underline{\mO(H\backslash B)^{\mrm{deg}\le k}}\otimes \omega^{(k,0)}_{\Fl},\]
	as one can verify the isomorphism for the corresponding 
	\(B\)-representations.
\end{proof}
Given the lemma, we get back to the original proof.
We can apply \(\RHom_{U\fg_{\tau}/Z\fg_{\tau}}(M_{\kw,\tau},-)\)
with \(M_{\kw,\tau}:=U\fg_{\tau}\otimes_{U\fb_{\tau}}\chi^{\left(\frac{-w-k_{\tau}}{2},\frac{-w+k_{\tau}}{2}\right)}\) to \(\mF^{\underline{l}^{\tau}}_{\kw}\). 
Then we have \[\RHom_{U\fg_{\tau}/Z\fg_{\tau}}(M_{\kw,\tau},\mF_{\kw})\cong 
\RHom_{U\fb_{\tau}}(\chi^{\left(\frac{-w-k_{\tau}}{2},\frac{-w+k_{\tau}}{2}\right)},(\underline{\mO(M_{\mu_{T}}\backslash P_{\mu_{T}})}\otimes \mO_{G,1}^{(*_{2},\lambda_{T^{c}\backslash \{\tau\}}^{\kw},V_{T}^{\kw})})^{\fn^{0}}). \]
We can track the proof above,
and then we have a fiber sequence 
\[\mcal{X}_{1}\to \RHom_{U\fg_{\tau}/Z\fg_{\tau}}(M_{\kw,\tau},E_{\chi^{(s_{I_{0}}\cdot \Bbbk,w)}(d\mu_{\tau})-l_{\tau}}(\mF_{\kw,\tau}))\to \mcal{X}_{2}\xrightarrow{+1}\]
such that \(\mcal{X}_{1}\cong i_{\tau,*}i^{*}_{\tau}\omega^{\left(\frac{w+k_{\tau}}{2}-l_{\tau},\frac{w-k_{\tau}}{2}+l_{\tau}\right)}_{\Fl_{\tau}}[-1]\)
and a fiber sequence \[\omega^{\left(\frac{w+k_{\tau}}{2}-l_{\tau},\frac{w-k_{\tau}}{2}+l_{\tau}\right)}_{\Fl_{\tau}}\to \mcal{X}_{2}\to i_{\tau,*}i^{*}_{\tau}\omega^{\left(\frac{w+k_{\tau}}{2}-l_{\tau},\frac{w-k_{\tau}}{2}+l_{\tau}\right)}_{\Fl_{\tau}}[-1],\]
with \(i_{\tau}:\infty_{\tau}^{\dagger}\hookrightarrow \Fl_{\tau}\).
This follows from the lemma as \(k_{\tau}\in\ZZ_{\le 0}\). Also note that \(\chi^{(s_{I_{0}}\cdot\Bbbk,w)}(d\mu_{\tau})=\frac{w+k_{\tau}}{2}\). More importantly, the action of \(\theta(d\mu_{\tau})-\chi^{(s_{I_{0}}\cdot \Bbbk,w)(d\mu_{\tau})}+l_{\tau}\) is induced by the map \[\mcal{X}_{2}\to i_{\tau,*}i^{*}_{\tau}\omega^{\left(\frac{w+k_{\tau}}{2}-l_{\tau},\frac{w-k_{\tau}}{2}+l_{\tau}\right)}_{\Fl_{\tau}}[-1]\cong \mcal{X}_{1}.\]

Now we can consider \(\pr_{\tau}^{*}(-)\)
and tensor with \[V_{T}^{\kw}\otimes \bigotimes_{\tau'\in T^{c}\backslash \tau}\pr_{\tau'}^{*}\left(\underline{\mO(M_{\mu_{T},\tau'}\backslash P_{\mu_{T},\tau'})^{\mrm{deg}=l_{\tau'}}}
 \otimes C^{\la,((s_{I_{0}}\cdot\Bbbk)_{\tau'},w)}_{\tau'}\right)\]
 and then apply \(\tVBfu\). Let 
 \[\mcal{Y}:=V_{T}^{\kw}\otimes \bigotimes_{\tau'\in T^{c}\backslash \tau}\pr_{\tau'}^{*}\left(\underline{\mO(M_{\mu_{T},\tau'}\backslash P_{\mu_{T},\tau'})^{\mrm{deg}=l_{\tau'}}}
 \otimes C^{\la,((s_{I_{0}}\cdot\Bbbk)_{\tau'},w)}_{\tau'}\right)\otimes \pr_{\tau}^{*}\omega_{\Fl_{\tau}}^{\left(\frac{w+k_{\tau}}{2}-l_{\tau},\frac{w-k_{\tau}}{2}+l_{\tau}\right)},
 \]
 and then
 we know that in (\ref{alignMapInFaltingsExt}),
 \[\RHom_{U\fg_{\tau}/Z\fg_{\tau}}\left(M_{\kw,\tau},\mO^{\la,(s_{I_{0}}\cdot\Bbbk,w)}_{\SGTKp{T}}\left(m_{0}\right)\otimes \Omega^{1,\sm,\underline{l}}\right)\cong \left[\tVBfu(\mcal{Y})-i_{\tau,*}i_{\tau}^{*}\tVBfu(\mcal{Y})[-1]\right](i),\]
 and \[\RHom_{U\fg_{\tau}/Z\fg_{\tau}}\left(M_{\kw,\tau},
	\mO^{\la,(s_{I_{0}\cup\{\tau\}}\cdot\Bbbk,w)}_{\SGTKp{T}}\left(m_{1}\right)\otimes \Omega^{1,\sm,\underline{l}-(m_{1}-m_{0})1_{\tau}}\right)\cong i_{\tau,*}i_{\tau}^{*}\tVBfu(\mcal{Y})[-1](i)\]
 where we write \([-]\)
 for an extension of two objects, and 
 \(i_{\tau}\) denotes \(i_{\tau}:\pi_{\HT}^{*}(\pr_{\tau}^{*}(\infty_{\tau}^{\dagger}))\hookrightarrow \SGTKpTo{T}\). 
 Then by the argument above, the morphism \(\gr^{i}(N_{I_{0},\tau})\) (\ref{alignMapInFaltingsExt})
 coincides with the projection map \[\left[\tVBfu(\mcal{Y})-i_{\tau,*}i_{\tau}^{*}\tVBfu(\mcal{Y})[-1]\right](i)\to i_{\tau,*}i_{\tau}^{*}\tVBfu(\mcal{Y})[-1](i).\]
 Note that by \cite[Theorem \ref{1-thmMainThmGeomSenVB}]{Jiang2026Shla}, \(\tVBfu(\mcal{Y})\) is an ON Banach module over \(\SGTKpTo{T}^{\sm}\), satisfying \[\tVBfu(\mcal{Y})\otimes_{\mO^{\sm}_{\SGTKp{T}}}\mO_{\SGTKp{T}}\cong \mcal{Y}\otimes_{\mO_{\Fl}}\mO_{\SGTKp{T}},\] 
 and \[i_{*}i^{*}\tVBfu(\mcal{Y})\otimes_{\mO^{\sm}_{\SGTKp{T}}}\mO_{\SGTKp{T}}\cong i_{*}i^{*}\mcal{Y}\otimes_{\mO_{\Fl}}\mO_{\SGTKp{T}},\] Note that \(\mcal{Y}\) is finite projective over \(\bigotimes_{\tau'\in T^{c}\backslash \tau}\pr_{\tau'}^{*}C^{\la}_{\tau'}\). Moreover, since \(C^{\la}_{\tau'}\) is locally over \(\Fl_{G_{T},\mu_{T},\tau'}\) of the form \(\varinjlim_{n}\mO_{\Fl_{G_{T},\mu_{T},\tau}}\langle t_{\tau}/p^{n} \rangle\),
 the natural map \(\mO_{\Fl}\to \bigotimes_{\tau'\in T^{c}\backslash \tau}\pr_{\tau'}^{*}C^{\la}_{\tau'}\) locally over \(\Fl\) admits a splitting. So
 \(i_{*}i^{*}\mcal{Y}\otimes_{\mO_{\Fl}}\mO_{\SGTKp{T}}\)
 is non-zero,
 and thus \(i_{*}i^{*}\tVBfu(\mcal{Y})\) is not zero. So we can conclude that the map \(\gr^{i}(N_{I_{0},\tau})\)
 is not zero.
\end{proof}
\begin{proof}[Proof of Theorem \ref{thmFontainEqDDbar}]
	Combining Proposition \ref{propUniquenessDRLinear}, Proposition \ref{propFontaineIsdRlinear}, 
	Remark \ref{rmkFontainDRLinNoTameLevel} and Proposition \ref{propFontaineNonZero}, we know that \(N_{I_{0},\tau}=c\cdot d_{\tau}^{-k_{\tau}+1}\circ \bar{d}_{\tau}^{-k_{\tau}+1}\) for \(c\in\CC_{p}^{\times}\). Since both morphisms are \(\Gal_{F_{T},\p_{T}}\)-equivariant, we know that \(c\in F_{T,\p_{T}}^{\times}\).
\end{proof}

\subsection{Bernstein-Gelfand-Gelfand-Fontaine complex}
\label{subsecFontaineComplex}
The construction of \cite[Definition \ref{1-dfnBGGFcomplex}]{Jiang2026Shla} specializes to the following complex in the quaternionic setting: 
\begin{dfn}\label{dfnFontainComplex}
	We fix a regular \(T^{c}\)-parallel multiweight \(\kw\). 
	Using \(N_{I_{0},\tau}\), 
	we can define the \emph{Bernstein-Gelfand-Gelfand-Fontaine complex} by 
	\begin{align*}
	&\mrm{Font}(\BdR^{\la,\lambda^{\kw}_{T^{c}},V_{T}^{\kw}}):=\\&\left[
	\mO^{\la,\kw}_{\SGTKp{T}}\to\bigoplus_{\tau\in T^{c}}\mO^{\la,(s_{\tau}\cdot\Bbbk,w)}_{\SGTKp{T}}\to \cdots \bigoplus_{|I_{0}|=i,I_{0}\subset T^{c}}\mO^{\la,(s_{I_{0}}\cdot\Bbbk,w)}_{\SGTKp{T}}\to \cdots \to \mO^{\la,(s_{\Sigma_{\infty}}\cdot\Bbbk,w)}_{\SGTKp{T}}
\right],
	\end{align*}
with transition maps given by \(N_{I_{0},\tau}\) 
for any \(I_{0}\subsetneq I_{0}\cup\{\tau\} \subset T^{c}\) and zero otherwise. 
By Theorem \ref{thmFontainEqDDbar}
	and the commutativity in Lemma \ref{lemBasicNablaDiffe}, we can and we do modify the transition maps by a scalar in \(F_{T,\p_{T}}^{\times}\) such that \(\mrm{Font}(\BdR^{\la,\lambda^{\kw}_{T^{c}},V_{T}^{\kw}})\) indeed forms a complex.
\end{dfn}
The following theorem gives an affirmative answer to \cite[Question \ref{1-conjFontaineComplexIsAutomorphic} (1)]{Jiang2026Shla} in the quaternionic case.
\begin{thm}\label{thmFontainComplexClassical}
	Assume that \(\kw\) is a regular multiweight, such that \(k_{\tau}\le 0\) for \(\tau\in\Sigma_{\infty}\). Then
	the complex
	 \(\mrm{Font}(\BdR^{\la,\lambda^{\kw}_{T^{c}},V_{T}^{\kw}})\) 
	is classical of weight \(\kw\) as Hecke modules (Definition \ref{dfnClassicalHecke}). 
\end{thm}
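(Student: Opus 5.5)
The plan is to filter the Bernstein-Gelfand-Gelfand-Fontaine complex so that its graded pieces are algebraic partial de Rham complexes, and then read off classicality from Theorem \ref{thmClassicalityCohomoDR}. By Theorem \ref{thmFontainEqDDbar}, after the rescaling in Definition \ref{dfnFontainComplex}, each transition map satisfies \(N_{I_{0},\tau}=d^{1-k_{\tau}}_{\tau}\circ\bar{d}^{1-k_{\tau}}_{\tau}\). So \(\mrm{Font}(\BdR^{\la,\lambda^{\kw}_{T^{c}},V_{T}^{\kw}})\) is, up to twists, the Koszul complex of the commuting operators \(\{d_{\tau}\bar{d}_{\tau}\}_{\tau\in T^{c}}\) acting on \(\mO^{\la,\kw}_{\SGTKp{T}}\). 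Commutation up to sign is supplied by Lemma \ref{lemBasicNablaDiffe}, which also gives the key compatibility \(\bar{d}_{\tau}d_{\tau}=-d_{\tau}\bar{d}_{\tau}\).

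For a single \(\tau\), the two-term complex \([\mO^{\la,\kw}\xrightarrow{d_{\tau}\bar{d}_{\tau}}\mO^{\la,(s_{\tau}\cdot\Bbbk,w)}]\) is an extension. One piece is the fiber of \(\bar{d}_{\tau}\), the other is a shifted fiber of \(d_{\tau}\) on the twisted term \(\nabla^{\kw,(s_{\tau}\cdot\Bbbk,-w)}\). This is the octahedral axiom applied to the composite \(d_{\tau}\circ\bar{d}_{\tau}\), using Notation \ref{notationNablaKW} to identify the middle object. The same triangle also exists with the roles of \(d\) and \(\bar{d}\) swapped.

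Taking the tensor product of these filtrations over \(\tau\in T^{c}\) yields a finite filtration of \(\mrm{Font}\), pictured as the cube generalizing the square (\ref{equationBiFilter}). Its graded pieces are, up to shifts and Tate twists, the complexes \(\dR^{\alg}_{I,J}(\nabla^{\kw',(\Bbbk'',w')}_{\SGTKp{T}})\) with \(I\coprod J=T^{c}\). Here \(I\) records the directions in which the \(d\)-factor was chosen and \(J\) those in which the \(\bar{d}\)-factor was chosen. The weights \(\kw'\) and \((\Bbbk'',w')\) are the appropriate \(s_{J}\)-twists: the \(\bar{d}\) directions have already been moved in the flag variety variable, and the parity assumption \(k_{\tau}\le 0\) makes the required nonpositivity hold.

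Concretely, I would realize this filtration on the stack \((\SGTKpTo{T}^{\la,0})^{\pdR/\CC_{p},\hpp}_{\log}\) as a \(\ZZ^{T^{c}}\)-indexed filtration. I would build it from the \(\ZZ^{2}\)-filtration data of Definition \ref{dfnDRComplexNotation}(4), applying \(\mcal{H}\) of Lemma \ref{lemRealizeToAnalyticSite} to compare with the analytic-site complex. Once the filtration is in place, each graded piece has classical cohomology of weight \(\kw\) by Theorem \ref{thmClassicalityCohomoDR}, since \(I\cup J=T^{c}\). Localizing at a prime \(\mfk{a}\) outside the classical support is exact, so it kills every graded piece. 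Because the filtration is finite, it kills \(R\Gamma(\mrm{Font})_{\mfk{a}}\) as well.

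The main obstacle is making the filtration genuinely compatible with the Koszul differentials. The filtration steps must be subcomplexes, which requires the sign conventions of Lemma \ref{lemBasicNablaDiffe} and the rescaling constants \(c\in F_{T,\p_{T}}^{\times}\) to be chosen coherently across all \(\tau\). One also has to check that the weights appearing are the same \(\kw\) up to the Weyl twists \(s_{I}\cdot\Bbbk\), so that classicality holds in the same sense throughout. I would first treat \(|T^{c}|=1\) explicitly via the octahedral axiom, then build the general case by taking products of the one-variable filtered complexes, since the operators for distinct \(\tau\) live on different tensor factors.
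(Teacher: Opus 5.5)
Your proposal is essentially the paper's own proof: the paper likewise applies the composite triangle \(\Fib(d_{\tau}^{1-k_{\tau}})\to \Fib(\bar{d}_{\tau}^{1-k_{\tau}}\circ d_{\tau}^{1-k_{\tau}})\to \Fib(\bar{d}_{\tau}^{1-k_{\tau}})\) repeatedly over \(\tau\in T^{c}\). It uses Theorem \ref{thmFontainEqDDbar} to identify the differentials of \(\mrm{Font}\), obtains a finite filtration by partial de Rham complexes \(\dR_{I,J}\) with \(I\coprod J=T^{c}\), and concludes by Theorem \ref{thmClassicalityCohomoDR}. One harmless slip in your weight bookkeeping: for the composite \(d_{\tau}\circ\bar{d}_{\tau}\) it is the flag-variety weight in the \(d\)-directions \(I\) that becomes \(s_{I}\cdot\Bbbk\), while for \(\bar{d}_{\tau}\circ d_{\tau}\) it is the automorphic weight in the \(\bar{d}\)-directions \(J\) that becomes \(s_{J}\cdot\Bbbk\); neither affects the hypotheses of Theorem \ref{thmClassicalityCohomoDR}, since the required nonpositivity only involves unmoved weights and classicality of weight \(\kw\) depends only on \(\max(2-\Bbbk,\Bbbk)\).
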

\begin{proof}
	This follows from Theorem \ref{thmClassicalityCohomoDR} and Theorem \ref{thmFontainEqDDbar} as follows: there is a distinguished triangle \[\Fib(d_{\tau}^{1-k_{\tau}})\to \Fib(\bar{d}_{\tau}^{1-k_{\tau}}\circ d_{\tau}^{1-k_{\tau}})\to \Fib(\bar{d}_{\tau}^{1-k_{\tau}}).\]
	Using this triangle repeatedly, and Theorem \ref{thmFontainEqDDbar},
	we know that \(\mrm{Font}(\BdR^{\la,\lambda_{T^{c}}^{\kw},V_{T}^{\kw}})\)
	is filtered by \(\dR(\nabla_{\SGTKp{T}}^{(\Bbbk',w'),(\Bbbk'',w'')})\)
	and thus has classical 
	global cohomology by Theorem \ref{thmClassicalityCohomoDR}.
\end{proof}

\section{Application to classicality}\label{sectionAppplicationClassicality}

In this section, as a global application, we show a classicality theorem for Hecke eigenclasses in the completed cohomology of Hilbert modular varieties (Theorem \ref{thmClassicalityCompleteCoho}). As a corollary, we also prove some cases of the Langlands-Clozel-Fontaine-Mazur conjecture for \( \GL_{2} \) over totally real fields (Theorem \ref{thmConjecFontaineMazur}).
In fact, it is not difficult to also prove some classicality results for other quaternionic Shimura varieties using Theorem \ref{thmFontainComplexClassical}, which we omit here for brevity.
\subsection{Classicality theorem}\label{subsec:ClassiThm}
For this section, we fix \( G:=\mrm{Res}_{F/\QQ}(\GL_{2}) \), and we consider the tower of Hilbert modular varieties \( \mrm{Sh}_{K}(G) \) as in \S \ref{subsectionQuaternUniSV} for \( T=\emptyset \). Let \( d:=[F:\QQ] \).
	We fix a neat tame level \(K^{p}\subset G(\mA_{f}^{p})\).
	Let \(S\) be a finite set of places of \(\QQ\),
	including \(p\) and \(\infty\)
	and all the ramified places of \(K^{p}\) or \(F \).
	Denote by \(\TT^{S}\) the spherical Hecke algebra over \(\ZZ_{p}\) generated by the spherical Hecke operators of \(G\) at finite places away from \(S\). 
\begin{dfn}\label{dfn:CompletedCohomo}
	We define the completed cohomology (resp. with compact support) defined by \cite{Emerton06} for the tower of the Shimura varieties \(\{\mS_{K^{p}K_{p}}(G)\}\) as 
	\begin{align*}
	R\Gamma(K^{p},L)&:=\Rlim_{n}\varinjlim_{K_{p}}R\Gamma_{\et}(\mS_{\Kpp}(G),\ZZ/p^{n}),
	\\
	(\text{resp. }R\Gamma_{c}(K^{p},L)&:=\Rlim_{n}\varinjlim_{K_{p}}R\Gamma_{c,\et}(\mS_{\Kpp}(G),\ZZ/p^{n})).
	\end{align*}
	We denote its \( i \)-th cohomology group by \( \ti{H}^{i}(K^{p},L) \) (resp. \( \ti{H}^{i}_{c}(K^{p},L) \)).
	It carries an action of \( \GL_{2}(F_{p})\times \TT^{S}\times \Gal_{\QQ} \). 
\end{dfn}
\begin{dfn}\label{dfnAssociatedGaloisRep}
Let \(\rho:\Gal_{F,S}\to \GL_{2}(L)\) be a continuous representation.
 We denote by \(\chi=\chi_{\rho}:\TT^{S}[1/p]\to L\)  the associated character determined by the Eichler-Shimura relation (\ref{alignEicherShimuraDetermin}), that is, \(\chi(S_{\mfk{l}}):=\mrm{Nm}(\mfk{l})^{-1}\det(\rho(\Fr_{\mfk{l}}))\) and \(\chi(T_{\mfk{l}}):=\mrm{Tr}(\Fr_{\mfk{l}})\).
\end{dfn}
\begin{thm}\label{thmClassicalityCompleteCoho}
	Let \(\rho:\Gal_{F,S}\to \GL_{2}(L)\) be a continuous representation,
	with the associated character \( \chi \) determined by the Eichler-Shimura relation (\ref{alignEicherShimuraDetermin}).

	We assume that 
	
	(1) \(\ti{H}^{d}(K^{p},L)[\chi]:=\Hom_{\TT^{S}}(\chi,\ti{H}^{d}(K^{p},L))\ne 0\);

	(2) \(\rho\) is absolutely irreducible;
	
	(3) There exist \(k\in \ZZ_{\le 0},w\in\ZZ,k\equiv w\pmod{2}\), such that for each \(\tau\in \Sigma_{\infty}\), \(\rho\) is \(\tau\)-de Rham, and has 
	\(\tau\)-Hodge-Tate weights \(\frac{k-w}{2}\) and \(\frac{2-k-w}{2}\).

	Then the character \(\chi\) is classical,
	i.e. there exists a Hilbert modular eigenform \(f\) of parallel weight \( (2-k,w) \),
	such that \(\TT^{S}\) acts on \(f\) via \(\chi\). In other words, 
	\(\rho\cong \rho_{f}\).
\end{thm}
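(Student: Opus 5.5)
We argue by contradiction, following the strategy of \cite{Pan2209.06II} as outlined in the introduction. Suppose \(\chi\) is not classical of parallel weight \((2-k,w)\). Let \(\mfk{a}:=\Ker(\chi)\subset\TT^{S}[1/p]\), and let \(\kw\) be the parallel multiweight \(k_{\tau}=k\) for all \(\tau\); it is regular because \(k\le 0\). We pass to locally analytic vectors: \(\ti{H}^{d}(K^{p},L)[\chi]\ne 0\) implies \(\ti{H}^{d}(K^{p},L)^{\la}[\chi]\ne 0\), since locally analytic vectors are dense in an admissible Banach representation. We then fix the infinitesimal character. By the Eichler-Shimura relation and the Hodge-Tate weights of \(\rho\), the \(Z(\fg)\)-action on the \(\chi\)-part should be given by \(\lambda^{\kw}\). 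By the primitive comparison we may therefore work with
\[
R\Gamma_{\an}(\SGTKpTo{},\mO^{\la,\lambda^{\kw}})_{\mfk{a}},
\]
which by Proposition \ref{propInfiniteCharToHorizontal} decomposes as \(\bigoplus_{I\subset\Sigma_{\infty}}R\Gamma_{\an}(\SGTKpTo{},\mO^{\la,(s_{I}\cdot\Bbbk,w)})_{\mfk{a}}\). This is the Hodge-Tate decomposition of Proposition \ref{propIntroFixInfCharc}.

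The next step transfers the de Rham condition on \(\rho\) to a vanishing statement for the geometric Fontaine operators. The obstacle is that the Galois action on \(\ti{H}^{d}[\chi]\) is a priori only a \(\Gal_{\QQ}\)-action and is expected to look like \(\oIndFQ\rho\otimes\Pi^{\la}\). We therefore invoke the plectic action of \cite{JiangPan2025Plectic} (Theorem \ref{thmPadicPlectic}). It upgrades the arithmetic Sen operator to a family of \(\tau\)-partial Sen operators, and upgrades \(\Darith\) on \(\BdR^{\la,\lambda^{\kw}}[\chi]\) to a family of \(\tau\)-partial Fontaine operators \(N_{\tau}\). Under the Hodge-Tate decomposition these \(N_{\tau}\) are the operators \(N_{I_{0},\tau}\) of Definition \ref{dfnFontaineOpe}; this uses parallel weight, so that every pair \(I_{0}\subset I_{0}\coprod\{\tau\}\) is adjacent. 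Because \(\rho\) is \(\tau\)-de Rham for every \(\tau\), Lemma \ref{lem:Fontaine0de Rham} (applied \(\tau\)-partially, after restricting to \(\Gal_{F_{\p}}\)) shows that every \(N_{I_{0},\tau}\) vanishes on the \(\chi\)-isotypic part.

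If needed, we reduce to strongly irreducible \(\rho\): when \(\rho\) is absolutely irreducible but not strongly irreducible, it is induced from a quadratic extension, and such \(\rho\) can be handled separately by automorphic induction from CM forms.

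Finally we derive the contradiction. Vanishing of all \(N_{I_{0},\tau}\) on \(\chi\)-parts means the \(\chi\)-localization of the cohomology of the BGGF complex \(\mrm{Font}(\BdR^{\la,\lambda^{\kw}})\) of Definition \ref{dfnFontainComplex} contains \(R\Gamma(\mO^{\la,\kw})[\chi]\) in its bottom degree, more precisely in \(H^{d}\) at the \(\emptyset\)-term. We expect to show that this part is nonzero, using \(\ti{H}^{d}[\chi]\ne 0\) and irreducibility of \(\rho\). Irreducibility is what forces the \(\chi\)-eigenspace to occur in every Hodge-Tate graded piece \(I\), and in particular in \(I=\emptyset\). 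One also needs a degree count: by Theorem \ref{thmFontainEqDDbar} the kernel of all \(N\) is the first term of the Koszul filtration, so a nonzero eigenclass in \(\ti{H}^{d}\) gives a nonzero class of \(R\Gamma(\mrm{Font})_{\mfk{a}}\). Theorem \ref{thmFontainComplexClassical}, however, says \(R\Gamma(\mrm{Font})_{\mfk{a}}=0\) for nonclassical \(\mfk{a}\), so \(\chi\) must be classical of weight \((2-k,w)\).

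The main obstacle is this middle-to-final step: controlling the spectral sequence from the Koszul complex to hypercohomology, so that a nonzero \(N\)-invariant eigenclass in degree \(d\) really survives in \(R\Gamma(\mrm{Font})_{\mfk{a}}\). We plan to use the vanishing of \(\ti{H}^{i}[\chi]\) for \(i\ne d\), which follows from absolute irreducibility via the known concentration of completed cohomology localized at non-Eisenstein ideals (Appendix \ref{secUniversalDeterminant} and \cite{Chenevier2014determinants}).
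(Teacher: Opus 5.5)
Your architecture matches the paper's: the CM/strongly irreducible dichotomy, vanishing of the geometric Fontaine operators on the $\chi$-part from de Rhamness, and a contradiction with Theorem \ref{thmFontainComplexClassical}. However, the step fixing the infinitesimal character has no mechanism as written, and it is exactly the nonvanishing on which the contradiction rests.

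The Eichler--Shimura relation says nothing about $Z(\fg)$. The arithmetic Sen operator of $\Gal_{\QQ_p}$ only sees $\sum_\tau\theta(d\mu_\tau)$, which cannot determine the individual Casimirs $\Omega_\tau$. The paper (Proposition \ref{propFixInfCharc}) uses the compatibility in Theorem \ref{thmPadicPlectic}(2) between the $\tau$-partial Sen operator of $\Ind^{plec}_F\rho$ and $\theta(d\mu_\tau)$. This forces $\theta(\mrm{H}_\tau)$ to act semisimply with eigenvalues in $\{-k,k-2\}$. Adding the central character coming from $\det\rho$ (Lemma \ref{lem:smallZaction}), $\Omega_\tau=\frac12\theta(\mrm{H}_\tau)^2+\theta(\mrm{H}_\tau)$ acts by $\lambda^{\kw}(\Omega_\tau)$.

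Conversely, you do not need any identification of $\tau$-partial Fontaine operators with the $N_{I_0,\tau}$, and the proof does not use one. The $N_{I_0,\tau}$ are by definition components of the monodromy of the full operator $\nabla$ between adjacent graded pieces. Since $\oIndFQ\rho$ is de Rham, Lemma \ref{lem:Fontaine0de Rham}, applied through $\RHom_{R^{ps}}(\chi,\ti{H}^d(K^p,L)^{\la})\cong\Ind^{plec}_F\rho\otimes\Pi^{\la}(\rho)$ and primitive comparison, already makes every $R\Gamma(N_{I_0,\tau})[\chi]$ null-homotopic. In the non-strongly-irreducible case you also need Lemma \ref{lemCMorStrongIrr} to know the quadratic extension is CM before invoking automorphic induction.

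Your final step is set up on the wrong object and with the wrong input. The operators vanish only after applying $[\chi]=\RHom_{R^{ps}}(\chi,-)$, not after localizing at $\mfk{a}$. On the localization, the Galois group acts through the family over $R^{ps}[1/p]_{\p_\chi}$, i.e. through deformations of $\Ind^{plec}_F\rho$ that need not be de Rham. So the argument must be run for $R\Gamma(\mrm{Font})[\chi]$, which vanishes because the localization does.

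The degree control does not come from the appendix, which only builds the determinant; absolute irreducibility of $\rho$ does not make $\bar\rho$ non-Eisenstein. It is Theorem \ref{thmPadicPlectic}(1): $R\Gamma(K^p,\QQ_p)_{\p_\chi}\cong\ti{H}^d(K^p,\QQ_p)_{\p_\chi}[-d]$. Even so, the derived functors $[\chi]$ and $(-)^{\lambda^{\kw}}$ may spread cohomology over degrees $\ge d$, so there is no one-row degeneration.

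What does work is this. All $E_1$-terms sit in degrees $\ge d$, so the $\emptyset$-column in degree $d$ receives no differentials and supports none with $r\ge 2$, while $d_1=0$. Its nonvanishing comes not from ``irreducibility'' but again from Theorem \ref{thmPadicPlectic}(2). Each Hodge--Tate piece is a line of $\Ind^{plec}_F\rho\otimes\CC_p$ tensored with the same $\Pi^{\la}(\rho)$-factor, whose degree-$0$ part is nonzero by hypothesis (1) and Proposition \ref{propFixInfCharc}.

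The paper avoids the spectral sequence. From the nullity of the differentials it identifies $R\Gamma(\mrm{Font})[\chi]$ with $\bigoplus_{I_0}R\Gamma(\mO^{\la,(s_{I_0}\cdot\Bbbk,w)})[\chi][-\#I_0]$. Vanishing then forces $R\Gamma(\mO^{\la,\lambda^{\kw}})[\chi]=0$, contradicting primitive comparison and Proposition \ref{propFixInfCharc}. This needs only the total $\lambda^{\kw}$-part to be nonzero, not the $\emptyset$-term.
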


Let us sketch here the idea of the proof of Theorem \ref{thmClassicalityCompleteCoho}, which follows closely the idea of \cite{Pan2209.06II}. The key ingredient is Theorem \ref{thmFontainComplexClassical}. To apply the theorem, we need to show that the de Rhamness condition
will force the Hecke eigenvalue to appear in the cohomology of the Bernstein-Gelfand-Gelfand-Fontaine complex. 

There are some subtleties in the implementation of the strategy:

(1)
Before we can reach the Bernstein-Gelfand-Gelfand-Fontaine complex, we need to determine the infinitesimal character of the Hecke eigenspace. 

(2) We expect to see the tensor induction of the Galois representation rather than the Galois representation itself in the completed cohomology. Hence an additional semisimplicity result is required in the case where \( \oIndFQ\rho \) is reducible.

Both (1) and (2) (in the strongly irreducible case) will be solved using the plectic action constructed in a joint work with Lue Pan (\cite{JiangPan2025Plectic}). We will summarize the relevant result in \S \ref{subsectionPlecticGaloisAction}. We finish the proof of the main theorem (Theorem \ref{thmClassicalityCompleteCoho}) in \S \ref{subsecProofMainThm}. 

\begin{rmk}
For (1), a result about infinitesimal character has been obtained by 
\cite{DospinescuPaskunasSchaen2020infinitesimalFamily} under weakly non-Eisenstein assumption. For (2), the semisimplicity could also be proven by adapting the argument of \cite{Nekovar2018eichler}. On the other hand, our method in \cite{JiangPan2025Plectic} has no assumption on the reduction \( \bar{\rho} \), and establishes an isomorphism \[
\ti{H}^{d}(K^{p},L)^{\la}[\chi]\cong \oIndFQ\rho\otimes \Pi^{\la}(\rho)
\] for some locally analytic representation \( \Pi^{\la}(\rho) \). 
\end{rmk}




\subsection{Plectic Galois action}\label{subsectionPlecticGaloisAction}
We will need the following result in a joint work with Lue Pan \cite{JiangPan2025Plectic} on a partial construction of the plectic Galois action conjectured in \cite{NekovarScholl2017Plectic}. See the introduction of \cite{JiangPan2025Plectic} for a more detailed exposition on the plectic conjecture. 

Recall that we have fixed \(\tau_{0}:F\hookrightarrow \bar{\QQ}\subset \CC\cong^{\iota} \bar{\QQ}_{p}\), and \( \Sigma_{\infty} \)
denotes the set of archimedean places of \( F \). 
Using \( \tau_{0} \),
we have a bijection \( \Sigma_{\infty}\cong \Gal_{\QQ}/\Gal_{F} \). Here we identify \( \Gal_{F}:=\mrm{Aut}(\QQ/\tau_{0}(F)) \).
\begin{dfn}\label{dfnPlecGalois}
(1)
We define the \emph{plectic Galois group} of \( F \)
over \( \QQ \) as \[
\Gal^{plec}_{F}:=\mrm{Aut}(F\otimes_{\QQ}\bar{\QQ}/F).
\]
Note that \( \Gal_{\QQ} \) is canonically a subgroup of \( \Gal^{plec}_{F} \) by acting on the factor \( \bar{\QQ} \).
Using the isomorphism \( F\otimes_{\QQ}\bar{\QQ}\cong \prod_{\tau\in\Sigma_{\infty}}\bar{\QQ} \),
we have a canonical exact sequence \[
1\to \prod_{\tau\in\Sigma}\mrm{Aut}(\bar{\QQ}/\tau(F))\to \Gal^{plec}_{F}\to \mrm{Aut}(\Sigma_{\infty})\to 1.
\]
We denote by \( \Gal_{F}^{plec,\circ} \)
the kernel of \( \Gal_{F}^{plec}\to \mrm{Aut}(\Sigma_{\infty}) \).


(2)
Let \( \bar{\QQ}^{S-\ur} \) be the maximal subfield of \( \bar{\QQ} \) that is unramified over \( \QQ \) away from \( S \), and then \( \tau \) factors through \( \bar{\QQ}^{S-\ur} \) for any \( \tau\in\Sigma_{\infty} \). We define \( \Gal_{F,S}:=\mrm{Aut}(\bar{\QQ}^{S-\ur}/\tau_{0}(F))\subset \Gal_{\QQ,S} \), and \(
\Gal_{F,S}^{plec}:=\mrm{Aut}(F\otimes_{\QQ}\bar{\QQ}^{S-\ur}/F).
\) Then \( \Gal_{\QQ,S}\subset \Gal_{F,S}^{plec} \), and we have a canonical exact sequence
\[
1\to \prod_{\tau\in\Sigma_{\infty}}\Gal(\bar{\QQ}^{S-\ur}/\tau(F))\to \Gal_{F,S}^{plec}\to \mrm{Aut}(\Sigma_{\infty})\to 1.
\] After fixing a set of representative of \( \Gal_{\QQ,S}/\Gal_{F,S} \) in \( \Gal_{\QQ,S} \), we have an isomorphism 
\( \Gal_{F,S}^{plec}\cong \Gal_{F,S}^{\Sigma_{\infty}}\rtimes \mrm{Aut}(\Sigma_{\infty}) \).
\end{dfn}


\begin{conj}[{\cite[Conjecture 6.1]{NekovarScholl2017Plectic}}]
There exists a ``natural'' action of \( \Gal_{F,S}^{plec} \) extending that of \( \Gal_{\QQ,S} \) to an  on \( 
R\Gamma_{\et}(\SGTK{}{\Kpp}_{\bar{\QQ}},\ZZ/p^{n}) \).
\end{conj}
Let us elaborate more on the conjectured property of being ``natural''. We fix \( S \) and \( K^{p} \) as in \S \ref{subsec:ClassiThm}. 
In Definition \ref{dfnPadcHeckeAlg} and Theorem \ref{thmConstrDeterminantHilbert}, we construct a \( p \)-adic Hecke algebra \( \hat{\TT}^{S} \) acting on \( R\Gamma_{\et}(\SGTK{}{K}_{\bar{\QQ}},\ZZ/p^{n}) \)
and a
universal determinant \( D:\ZZ_{p,\square}[\Gal_{F,S}]\to \hat{\TT}^{S} \) (in the sense of \cite{Chenevier2014determinants}),
such that for any \( \ell\notin S \)
and \( \mfk{l}|\ell \),
\[
D(t-\Fr_{\mfk{l}})=t^{2}-T_{\mfk{l}}\cdot t+\Nm(\mfk{l})S_{\mfk{l}}.
\]

Recall from Definition \ref{dfnPadcHeckeAlg} that \( \hat{\TT}^{S} \)
has only finitely many maximal open ideals \( \{\mm_{i}\}_{i=1}^{m} \), and we have \( D/\mm_{i}:\FF_{i,\square}[\Gal_{F,S}]\to \FF_{i} \) for \( \FF_{i}:=\hat{\TT}^{S}/\mm_{i} \).
We enlarge the field of coefficient \( L \) such that \( W(\FF_{i})\subset L \) for all \( i \). 
\begin{notation}\label{notationPseudoDefor}
We define \( R_{i}^{ps} \) to be the universal deformation ring of \( D/\mm_{i} \) over \( W(\FF_{i}) \), which is a Noetherian local ring with a maximal ideal \( \mm^{ps}_{i} \).
 We can find a surjection \( W(\FF_{i})[[x_{1},\ldots,x_{m}]]\twoheadrightarrow R_{i}^{ps} \).

We define an analytic ring structure \( R_{i,\square}^{ps} \) on \( R_{i}^{ps} \)
to be the one induced from \( \ZZ_{\square} \). 
Concretely, by \cite[Proposition 6.3]{CS19},
for any profinite set \( S=\varprojlim_{j}S_{j} \) for \( S_{j} \) finite sets,
\[
R_{i,\square}^{ps}[S]\cong \varprojlim_{n,j}R_{i}^{ps}/(\mm^{ps}_{i})^{n}[S_{j}].
\]

We further define \( R^{ps}:=\prod_{i=1}^{m}R_{i}^{ps} \), and \( R^{ps}_{\square}:=\prod_{i=1}^{m}R^{ps}_{i,\square} \). 
\end{notation}
\begin{notation}
Denote by \( D^{u} \) the universal determinant \( D^{u}:R^{ps}_{\square}[\Gal_{F,S}]\to R^{ps} \). 
In the joint work with Lue Pan (\cite[Propsition 3.9]{JiangPan2025Plectic}), we constructed from \( D^{u} \) a \( 2^{d} \)-dimensional determinant \[
D^{plec}:R^{ps}_{\square}[\Gal_{F,S}^{plec}][1/p]\to R^{ps}[1/p],
\] which is the family version of the plectic induction \( \rho\mapsto \Ind_{F}^{plec}\rho \) (\cite[Definition 3.2]{JiangPan2025Plectic}), whose restriction to \( \Gal_{\QQ,S}\subset \Gal_{F,S}^{plec} \) is the usual tensor induction \( \oIndFQ\rho \). 
\end{notation}

\begin{dfn}[{\cite[\S 1.17]{Chenevier2014determinants}}]
Let \( D:R\to A \) be a continuous \( d \)-dimensional determinant, where both \( R \)
and \( A \) are equipped with profinite topology. 
For any \( B\in\mrm{CAlg}_{A} \) and \( r\in R\otimes_{A}B \), we denote its \emph{characteristic polynomial} (in \(x\)) as \(D(x\cdot 1-r)=:\sum_{i=0}^{d(D)}(-1)^{i}\wedge_{i}(r)\cdot x^{d-i},\;\wedge_{i}(r)\in B.\)

We define the \emph{Cayley-Hamilton ideal} \( \CH(D) \)
of \( D \) to be the closed 2-sided ideal of \( R \)
generated by the coefficients of the polynomials
 in $t_{1},\ldots,t_{n}$ 
of the form 
 $$\sum_{i=0}^{d(D)}(-1)^{i}\wedge_{i}(r)\cdot r^{d-i}$$ for $r=\sum_{j=1}^{n}r_{j}t_{j}$,
  for varying \( n\in\NN \) and $r_{j}\in R$ for \( j=1,\ldots,n \). 
\end{dfn}

Given the existence of \( D \) (Theorem \ref{thmConstrDeterminantHilbert}),
we have a natural morphism \( R_{i}^{ps}\to \hat{\TT}^{S}_{\mm_{i}} \), and \( R^{ps}\to \hat{\TT}^{S} \). Then we have an action of \( R^{ps}_{\square}[\Gal_{\QQ,S}] \)
on \( R\Gamma_{\et}(\SGTK{}{\Kpp},\ZZ/p^{n}) \), and thus an action of \( R^{ps}_{\square}[\Gal_{\QQ,S}][1/p] \) on \( R\Gamma(K^{p},\QQ_{p}) \).
By being ``natural'', we expect to extend the action of \( R^{ps}_{\square}[\Gal_{\QQ,S}][1/p] \) to that of \( R^{ps}_{\square}[\Gal_{F,S}^{plec}][1/p] \), which further factors through \( R^{ps}_{\square}[\Gal_{F,S}^{plec}][1/p]/\CH(D^{plec}) \).

\begin{dfn}[Partial Sen operator, {\cite[Definition 3.14]{JiangPan2025Plectic}}]\label{dfnPartialSen}


For \( \tau\in\Sigma_{\infty} \), the composition \( \iota\circ \tau \) determines a place \( \tau_{p} \) of \( F \) over \( p \) with an embedding \( \tau:\tau(F)_{\tau_{p}}\hookrightarrow \bar{\QQ}_{p} \). 

Given a continuous representation \( (\rho^{plec},V^{plec}):\Gal_{F}^{plec}\to \GL_{n}(L) \), we consider the restriction to \( \Gal(\bar{\QQ}_{p}/\tau(F)_{\tau_{p}})\subset \Gal(\bar{\QQ}/\tau(F)) \), 
which define by Sen theory a Sen operator \( \Theta_{\tau}\in \End(V^{plec})\otimes_{\QQ_{p}}\CC_{p} \), 
which we refer to as the \emph{\( \tau \)-partial Sen operator} of \( (\rho^{plec},V^{plec}) \).
\end{dfn}
By the proof of \cite[Proposition 3.18]{JiangPan2025Plectic}, if \( \rho:\Gal_{F,S}\to \GL_{2}(L) \) is \( \tau \)-Hodge-Tate of weights \( a_{\tau},b_{\tau} \), then the \( \tau \)-partial Sen operator \( \Theta_{\tau} \)
of \( \Ind^{plec}\rho \) is semisimple with eigenvalues \( -a_{\tau},-b_{\tau} \). 

\begin{dfn}
Let \( \Gamma \) be a profinite group.
We say that a continuous representation \( \rho:\Gamma\to \GL_{n}(L) \) is \emph{strongly irreducible} if for any open subgroup \( U\subset \Gamma \), \( \rho|_{U} \)
is absolutely irreducible.
\end{dfn}


\begin{thm}[\cite{JiangPan2025Plectic}]\label{thmPadicPlectic}
	Assume that we are given \(\chi:R^{ps}[1/p]\to L \) that corresponds to a \emph{strongly irreducible} representation \( \rho:\Gal_{F,S}\to \GL_{2}(L) \),
	such that 
	\( \rho \)
	has distinguished \( \tau \)-Hodge-Tate weights for some \( \tau\in\Sigma_{\infty} \).
	Let \(\p_{\chi}:=\Ker(\chi)\).

	(1) We have an isomorphism \[
	R\Gamma(K^{p},\QQ_{p})_{\p_{\chi}}\cong \ti{H}^{d}(K^{p},\QQ_{p})_{\p_{\chi}}[-d].
	\] and
	there exists an action of \[ (R^{ps}_{\square}[\Gal_{F,S}^{plec}][1/p]/\CH(D^{plec}))_{\p_{\chi}}\cong M_{2^{d}}(R^{ps}[1/p]_{\p_{\chi}}) \] on
	\( \ti{H}^{d}(K^{p},\QQ_{p})^{\la}_{\p_{\chi}} \) extending that of \( R^{ps}_{\square}[\Gal_{\QQ,S}] \).


	(2) There is a \( L_{\square}[\Gal_{F,S}^{plec}]/\CH(\Ind^{plec}_{F}\rho)\times \GL_{2}(F_{p}) \)-equivariant isomorphism \[
	\RHom_{R^{ps}}(\chi,\ti{H}^{d}(K^{p},L)^{\la})\cong \Ind_{F}^{plec}\rho\otimes \Pi^{\la}(\rho)
	\] for some (possibly derived) locally analytic representation \( \Pi^{\la}(\rho) \) of \( \GL_{2}(F_{p}) \).
	
	Moreover,
	along the isomorphism, after base change to \( \CC_{p} \), the action of the \( \tau \)-partial Sen operator \( \Theta_{\tau} \) on the RHS is induced by \( -\theta^{Pan}(d\mu_{\tau})=\theta(d\mu_{\tau}) \) (\cite[Definition \ref{1-dfnHorActionPan} \& \ref{1-dfnHorActionPilloni}]{Jiang2026Shla}) on the LHS. Here \( d\mu_{\tau}\in\mrm{Lie}(\mm_{\mu_{\emptyset}})\cong \bigoplus_{\tau\in\Sigma_{\infty}}(\CC_{p}^{\oplus 2}) \) is \( (1,0) \)
	at the \( \tau \)-component, and \( (0,0) \) at other components.
\end{thm}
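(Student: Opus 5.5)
The statement to be planned for is Theorem \ref{thmPadicPlectic}, which the paper quotes from the joint work \cite{JiangPan2025Plectic}. The plan is to reduce everything to statements about the Hecke-localized completed cohomology and the plectic determinant $D^{plec}$, and to get the extra structure from the $\ell$-adic Hecke action of \cite{FarguesScholze2021geometrization} at auxiliary primes.

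\textbf{Step 1: concentration in degree $d$.} Localize $R\Gamma(K^{p},\QQ_{p})$ at $\p_{\chi}$. Since $\rho$ is absolutely irreducible, $\p_{\chi}$ is non-Eisenstein, so the boundary cohomology of the toroidal compactification vanishes after localization, and $R\Gamma_{c}$ and $R\Gamma$ agree at $\p_{\chi}$. Completed cohomology localized at a non-Eisenstein ideal then lives in degrees $\le d$ by Scholze's perfectoid vanishing, and in degrees $\ge d$ by Poincaré duality for $R\Gamma_{c}$. This gives $R\Gamma(K^{p},\QQ_{p})_{\p_{\chi}}\cong \ti{H}^{d}(K^{p},\QQ_{p})_{\p_{\chi}}[-d]$. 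A cleaner route is to run the argument with finite coefficients, where Caraiani--Scholze type arguments and the Igusa stack of Theorem \ref{thmCartesianDiagramQaternion} apply, and then pass to the limit.

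\textbf{Step 2: the plectic action.} For each $\tau\in\Sigma_{\infty}$, the $\tau$-factor of $\Gal^{plec,\circ}_{F,S}$ should act through the $\tau$-component of the Hecke stack $\mathrm{Hck}$ attached to the decomposition $V_{\mu}=\bigotimes_{\tau}\mrm{Std}\circ\pr_{\tau}$ (as in Lemma \ref{lemWeilRestrictionLpar} and (\ref{alignCohoVSSpectral})), using the Fargues--Scholze excursion operators at the relevant primes. Concretely, $\mfk{I}_{K^{p}}$ is a sheaf on $\Bun_{G}$ for which the excursion operators for $\Gal_{F_{\p}}^{\Sigma_{\infty/\p}}$ act factorwise. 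Since the Galois action on $j^{*}T_{V}\mfk{I}$ is the diagonal one, the factorwise actions extend the $\Gal_{\QQ}$-action on cohomology locally at $p$. Globalizing this to all of $\Gal^{plec}_{F,S}$ requires choosing, via Chebotarev as in Corollary \ref{corDirectSummandVaryL}, auxiliary primes $\ell$ at which the cohomology can be computed by $\ell$-adic Fargues--Scholze, and then checking compatibility across the choices. Once the action exists, $D^{plec}$ is multiplicative on it, so Cayley--Hamilton for $D^{plec}$ holds on $\ti{H}^{d}_{\p_{\chi}}$ by density of Frobenii. Strong irreducibility of $\rho$ makes $\Ind^{plec}_{F}\rho$ absolutely irreducible, so by \cite{Chenevier2014determinants} the localized Cayley--Hamilton quotient is the Azumaya algebra $M_{2^{d}}(R^{ps}[1/p]_{\p_{\chi}})$.

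\textbf{Step 3: factorization and the Sen operator.} Given the Azumaya structure, Morita equivalence yields $\RHom_{R^{ps}}(\chi,\ti{H}^{d,\la})\cong \Ind^{plec}_{F}\rho\otimes \Pi^{\la}(\rho)$ with $\Pi^{\la}(\rho):=\Hom_{\Gal^{plec}}(\Ind^{plec}\rho,-)$. This isomorphism is automatically $\GL_{2}(F_{p})$-equivariant, because the $\GL_{2}(F_{p})$-action commutes with the plectic one. For the Sen operator, compute the partial Sen operator $\Theta_{\tau}$ through the geometric Sen theory of \cite{Juan2022.09locallyShi} applied to the $\tau$-factor. The Hodge--Tate period map for $\Fl_{\tau}$ decomposes, so $\Theta_{\tau}$ should equal $\theta(d\mu_{\tau})$; this is checked on the $\gr$-pieces of $\mO^{\la}$, where both sides act by the same scalars, and the distinguished $\tau$-weights are used to separate the eigenvalues.

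The main obstacle is Step 2: producing a genuinely global action of $\Gal^{plec}_{F,S}$, rather than only local pieces, and proving it is independent of the auxiliary choices.
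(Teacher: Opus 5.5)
Your Step 2 is a genuine gap, and it is more than a matter of ``checking compatibility across the choices.''

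\textbf{Step 2.} Multi-leg Hecke and excursion operators of \cite{FarguesScholze2021geometrization} at an auxiliary prime $\ell$ (via an Igusa stack at $\ell$) give at best an action of a \emph{local} plectic group at $\ell$; in the unramified case these are partial Frobenii. Chebotarev density only controls conjugacy classes of Frobenius elements. It cannot assemble actions of local groups at different primes into an action of the profinite group $\Gal^{plec}_{F,S}$, because the relations between operators coming from different primes are exactly what is unknown. An argument of this type would prove the Nekov\'a\v{r}--Scholl conjecture on all of $\ti{H}^{d}_{\p_{\chi}}$ with no Hodge--Tate hypothesis. For the same reason, Cayley--Hamilton for $D^{plec}$ cannot follow ``by density of Frobenii'': these lie in $\Gal_{\QQ,S}$, a closed proper subgroup of $\Gal^{plec}_{F,S}$ when $d\geq 2$.

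What is missing is a mechanism that avoids gluing. Suppose the $\Gal_{\QQ,S}$-action on the localized cohomology factors through $R^{ps}_{\square}[\Gal_{\QQ,S}][1/p]/\CH(D^{plec}|_{\Gal_{\QQ,S}})$. Suppose also there are additional operators, commuting with $\GL_{2}(F_{p})\times\TT^{S}$, that together with $\Gal_{\QQ,S}$ generate a copy of $M_{2^{d}}(R^{ps}[1/p]_{\p_{\chi}})$, possibly after extending scalars to $\CC_{p}$ and descending. Then the $\Gal^{plec}_{F,S}$-action is forced by Morita theory, even when $\oIndFQ\rho$ is reducible.

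The formulation of the theorem indicates where such operators are meant to come from. The action is asserted only on locally analytic vectors, distinct $\tau$-Hodge--Tate weights are required at a single $\tau$, and $\Theta_{\tau}$ is matched with $\theta(d\mu_{\tau})$. This points to the geometric partial Sen operators given by the horizontal $\fh$-action. A single non-scalar $\Theta_{\tau}$, together with the large Lie algebra of $\rho(\Gal_{F,S})$ that strong irreducibility provides via Sen's theorem, is what can generate the full matrix algebra. Note that absolute irreducibility of $\rho$ already makes $\Ind^{plec}_{F}\rho$ irreducible. So your stated use of strong irreducibility is not where it matters, and your plan uses neither this nor the weight hypothesis in an essential way.

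\textbf{Step 3.} This also makes Step 3 circular as written. Geometric Sen theory identifies only the full arithmetic Sen operator with $\theta(d\mu)=\sum_{\tau}\theta(d\mu_{\tau})$. Nothing lets you split it into $\tau$-pieces for a plectic action built $\ell$-adically. The paper itself gives no argument here beyond citing \cite[Theorems 1.2 and 4.35]{JiangPan2025Plectic}.

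\textbf{Step 1.} Step 1 also fails as stated. Poincar\'e duality for completed cohomology does not pair $\ti{H}^{i}$ with $\ti{H}^{2d-i}_{c}$. It is a spectral sequence $\mathrm{Ext}^{i}_{\Lambda}(\ti{H}_{j},\Lambda)\Rightarrow \ti{H}^{BM}_{2d-i-j}$ over the Iwasawa algebra $\Lambda$ of $K_{p}$. It relates completed homology $\ti{H}_{j}$ to the dual $\ti{H}^{BM}_{\ast}$ of $\ti{H}^{\ast}_{c}$. Fed with Scholze's vanishing of $\ti{H}^{k}_{c}$ for $k>d$, it yields at best lower bounds $\mathrm{codim}_{\Lambda}\ti{H}_{j}\geq d-j$ for $j<d$, not vanishing. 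Your alternative, torsion vanishing of Caraiani--Scholze type via Igusa stacks, needs a genericity condition on the residual maximal ideal. The theorem imposes no condition on $\bar{\rho}$ and localizes at a characteristic-$0$ point $\p_{\chi}$. So concentration in degree $d$ also needs an input you have not supplied.
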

\begin{proof}
This is the content of \cite[Theorem 1.2]{JiangPan2025Plectic}. The compatibility with partial Sen operators is established in \cite[Theorem 4.35]{JiangPan2025Plectic}. Note that in terms of \( \theta^{Pan} \), we are considering \( (-1,0) \) here, instead of \( (0,1) \) in \cite{JiangPan2025Plectic}, due to the homological convention in this paper.
\end{proof}
Conjecturally, the statement should hold assuming only the absolute irreducibility of \( \rho \) rather than the strong irreducibility. However, the strong irreducibility assumption does not cause any trouble for our proof of classicality, thanks to the following dichotomy between the strongly irreducible case and the CM case:
\begin{lem}\label{lemCMorStrongIrr}
Let 
\(\rho:\Gal_{F,S}\to \GL_{2}(L)\) be an absolutely irreducible continuous Galois representation. Assume that 
there exists \( \tau \) such that the \( \tau \)-Hodge-Tate weights are distinct. Then
\begin{itemize}
\item either \( \rho \) is strongly irreducible;
\item or \( \rho \) has multiplication, that is, there exists a quadratic extension \( E/F \) that is unramified away from \( S \), and a continuous character \( \eta:\Gal_{E,S}\to \GL_{1}(L) \), such that \( \rho\cong \Ind_{\Gal_{F,S}}^{\Gal_{E,S}}(\eta) \).
\end{itemize}
If in addition \( \rho \) is de Rham of regular \( \tau \)-Hodge-Tate weights for \emph{all} \( \tau \), 
then in the second case, \( E \) has to be a CM extension, that is, \( \rho \) has complex multiplication.
\end{lem}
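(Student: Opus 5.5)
Suppose \(\rho\) is not strongly irreducible. Then there is an open subgroup \(U\subset\Gal_{F,S}\) with \(\rho|_U\) not absolutely irreducible. Replacing \(U\) by its normal core, we may assume \(U\) is open and normal. By Clifford theory applied to the absolutely irreducible \(\rho\) (after extending scalars to \(\bar{\QQ}_p\)), \(\rho|_U\) is semisimple and its constituents are permuted transitively by \(\Gal_{F,S}/U\). Since \(\dim\rho=2\), there are only two possibilities:
\begin{itemize}
\item[(a)] \(\rho|_U\cong\psi_1\oplus\psi_2\) with distinct characters \(\psi_1\ne\psi_2\);
\item[(b)] \(\rho|_U\cong\psi\oplus\psi\) is isotypic.
\end{itemize}

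In case (a), the stabilizer of \(\psi_1\) is a subgroup \(H\supset U\) of index 2. Let \(E\) be the corresponding quadratic extension of \(F\); it is unramified outside \(S\) because \(U\subset\Gal_{F,S}\). Frobenius reciprocity then gives \(\rho\cong\Ind_{\Gal_{E,S}}^{\Gal_{F,S}}\eta\), where \(\eta\) extends \(\psi_1\). The coefficient field may need enlarging, and the statement's \(\GL_1(L)\) is to be read up to such an extension.

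In case (b), \(\rho(U)\) consists of scalars. Hence the projective image of \(\rho\) is finite, being a quotient of \(\Gal_{F,S}/U\). This is where the hypothesis on Hodge--Tate weights enters. Let \(\chi_U\) be the scalar character on \(U\). For the distinguished \(\tau\), restrict to \(U\cap\Gal_{F_{\tau_p}}\). The \(\tau\)-Hodge--Tate weights are unchanged under passage to an open subgroup, and for \(\rho|_U=\chi_U\oplus\chi_U\) they coincide. This contradicts their distinctness, so case (b) cannot occur. (Equivalently, a finite projective image forces a twist of \(\rho\) to have finite image, which has equal Hodge--Tate weights.) Hence \(\rho\) is induced from \(E\). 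The main obstacle here is handling the Sen weights for partial embeddings. One has to check that the \(\tau\)-Sen operator is compatible with restriction to open subgroups and with twisting, which is standard Sen theory applied over \(F_{\tau_p}\).

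For the CM statement, assume \(\rho\) is de Rham with regular \(\tau\)-weights for every \(\tau\). Then \(\eta\) is de Rham at all places above \(p\), so by Serre/Weil it is an algebraic Hecke character of \(E\) with some infinity type \((n_\sigma)_{\sigma:E\hookrightarrow\bar{\QQ}_p}\). For \(\tau\) an embedding of \(F\) with lifts \(\sigma,\sigma'\) to \(E\), the \(\tau\)-weights of \(\Ind\eta\) are \(\{n_\sigma,n_{\sigma'}\}\), so regularity gives \(n_\sigma\ne n_{\sigma'}\) for all \(\tau\). On the other hand, an algebraic Hecke character of \(E\) has infinity type satisfying \(n_\sigma+n_{\bar\sigma}=\mathrm{const}\) for a CM field. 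If instead \(E\) is not CM, then it has no CM subfield, and the infinity type factors through the norm to \(\QQ\), hence is parallel. Suppose \(E\) had a real place, or more generally were not CM. Then the unit-group condition (Weil) forces \(n_\sigma=n_{\sigma'}\) for the two lifts of some \(\tau\). Indeed, if \(E/F\) is not CM, then \(\mO_E^\times\) has strictly larger rank than \(\mO_F^\times\), and these extra units force the infinity type to be invariant under \(\Gal(E/F)\) at some archimedean place. This contradicts regularity, so \(E\) is CM.
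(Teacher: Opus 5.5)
Your proof is correct, but it reaches the dichotomy by a different route from the paper.

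\textbf{The dichotomy.} The paper works with the Lie algebra of $\rho(U)$, in three steps:
\begin{enumerate}
\item Reducibility of $\rho|_U$ puts $\Lie(\rho(U))$ inside a Borel.
\item Normality of $U$ and irreducibility of $\rho$ force $\Lie(\rho(U))$ into a Cartan, since otherwise the normalizer would be upper-triangular.
\item Sen's theorem says the Lie algebra of the image contains the $\tau$-Sen operator, which is non-scalar by regularity. Hence $\Lie(\rho(U))$ surjects onto the Cartan of $\mathfrak{pgl}_2$, so $\rho(\Gal_{F,S})\subset T(L)\cup w_0T(L)$, and $E$ is cut out by $\rho^{-1}(T)$.
\end{enumerate}
You instead use Clifford theory to split into two cases: the induced case, and the isotypic case, where the projective image is finite. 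You rule out the isotypic case by an elementary observation: $\tau$-Hodge--Tate weights do not change on open subgroups, while $\chi_U\oplus\chi_U$ has equal weights.

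Your route avoids Sen's theorem on the Lie algebra of the image altogether. It also makes clear that the Hodge--Tate hypothesis is used only to exclude finite projective image. The paper's route gives directly the more concrete information that the whole image lies in the normalizer of a torus.

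\textbf{The CM part.} Here you use the same mechanism as the paper: the constraint that the unit theorem imposes on infinity types of algebraic Hecke characters, as in Lemma~\ref{lem:characterCMfield}. The difference is that you quote the full Weil--Serre description of infinity types, whereas the paper proves only the weaker statement it needs.

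Two small points on this part. Your claim that ``not CM implies no CM subfield'' is false for general number fields. It holds here because a non-CM quadratic extension of a totally real field has a real place, above which both lifts of $\tau$ are real. Once parallelism is established, your final paragraph about extra units is redundant.
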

\begin{rmk}
By class field theory, for fixed \( S \), there are only finitely many such quadratic extensions.
\end{rmk}
\begin{proof}
Assume that \( \rho \) is not strongly irreducible. Then there exists an open normal subgroup \( U\subset \Gal_{F,S} \), such that \( \rho|_{U} \) is reducible. Consider the Lie algebra \( \Lie(\rho(U))\subset \mfk{gl}_{2,L} \). Since \( \rho \) is reducible as a representation of \( \Lie(\rho(U)) \), \( \Lie(\rho(U)) \) must be contained in a Borel, which we conjugate (up to extending \( L \)) to be the sub-Lie algebra of upper-triangular matrices \( \fb_{L}\subset \mfk{gl}_{2,L} \). 

We claim that \( \Lie(\rho(U)) \) actually factors through the torus; otherwise, the normalizer of \( \Lie(\rho(U)) \) in \( \GL_{2}(L) \) will also be contained in the subgroup of upper-triangular matrices, and thus \( \rho \) cannot be irreducible. 
Since there exists \( \tau \) such that the \( \tau \)-Hodge-Tate weights are regular, by \cite{Sen1973lie}, we know that the map from \( \Lie(\rho(U')) \) to the maximal torus of \( \Lie(\PGL_{2}) \) is surjective. Therefore,
\( \rho(\Gal_{F,S}) \) is contained in the normalizer of the Cartan subalgebra of \( \mfk{sl}_{2,L} \), that is, \[
\rho(\Gal_{F,S})\subset T(L)\cup w_{0}T(L),
\] where \( T \) is the maximal torus of \( \GL_{2} \) of diagonal matrices, and \( w_{0}:=\begin{pmatrix}
0&1\\1&0
\end{pmatrix} \).
Let \( U':=\rho^{-1}(T)\subset \Gal_{F,S} \) denote the pre-image of the torus of diagonal matrices along \( \rho \). 

Since \( \rho \) is irreducible, \( U'\ne \Gal_{F,S} \). So
\( U' \) is an open subgroup of index \( 2 \), and thus corresponds to a quadratic extension \( E/F \) unramified away from \( S \). The restriction  \( \rho|_{U'} \) decomposes as \( \eta_{1}\oplus \eta_{2} \), 
for \( \eta_{i}:U'=\Gal_{E,S}\to \GL_{1}(L) \), and by adjunction, we have a non-zero map \( \Ind_{\Gal_{E,S}}^{\Gal_{F,S}}(\eta_{1})\to \rho \), which has to be an isomorphism since \( \rho \) is irreducible.

Now if we further assume that \( \rho \) is de Rham of regular \( \tau \)-Hodge-Tate weights for \emph{all} \( \tau \), then 
we conclude by Lemma \ref{lem:characterCMfield} below.
\end{proof}
\begin{lem}\label{lem:characterCMfield}
Let \( E \) be a number field that is unramified away from \( S \), and let \( \eta:\Gal_{E,S}\to \GL_{1}(L) \) be a continuous de Rham representation. 
For any \( \tau:E\hookrightarrow \CC\cong^{\iota}\bar{\QQ}_{p} \), denote by \( -a_{\tau}\in\ZZ \) the \( \tau \)-Hodge-Tate weight of \( \eta \).
Let \( \Sigma_{\infty,E} \)
denote the set of archimedean places of \( E \).
For \( \nu\in \Sigma_{\infty,E} \), we choose an embedding \( \tau_{\nu}:E_{\nu}\hookrightarrow \CC \).

Then there exists \( w\in \ZZ \), such that for any \( \nu\in\Sigma_{\infty,E} \), \( a_{\tau_{v}}=w \) if \( E_{v}\cong \RR \), and \( a_{\tau_{v}}+a_{\tau_{v}^{c}}=w \) if \( E_{v}\cong \CC \).
\end{lem}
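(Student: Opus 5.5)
The plan is to reduce to algebraic Hecke characters, i.e. Weil's classification: a continuous de Rham character of $\Gal_{E,S}$ corresponds via class field theory to an algebraic Hecke character of $E$, and the infinity-type of an algebraic Hecke character is constrained by Weil's (Artin's) theorem on units. So I would first use class field theory to view $\eta$ as a continuous character $\eta:\mA_{E,f}^{\times}/\overline{E^{\times}}\to L^{\times}$. Then I would use that de Rham (equivalently Hodge--Tate) characters of $\Gal_{E_{\p}}$ are, up to a finite-order character, a product of Lubin--Tate characters $\prod_{\tau}\tau^{a_{\tau}}$ (Tate--Sen / Serre's description of locally algebraic abelian representations). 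This gives an open subgroup $U_{\p}\subset\mO_{E_{\p}}^{\times}$ on which $\eta$ restricts to $x\mapsto\prod_{\tau|\p}\tau(x)^{-a_{\tau}}$, with sign normalized by the Hodge--Tate convention of the paper (cyclotomic character has weight $-1$, matching the character $\mathrm{Nm}^{-1}$ up to finite order).

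Next, I would restrict to global units. Choose an open compact $U=\prod_{v}U_{v}\subset\hat{\mO}_{E}^{\times}$ with $U_{v}$ for $v|p$ as above and $\eta|_{U_{v}}=1$ for $v\nmid p$ (possible by continuity, since $\eta$ has open kernel away from $p$ in the $\ell$-adic topology on a pro-$\ell'$ part; more precisely, continuity plus the profinite structure lets one pass to a finite index subgroup). Then for every $\epsilon$ in the finite-index subgroup $\Gamma:=E^{\times}\cap U\subset\mO_{E}^{\times}$, triviality of $\eta$ on $E^{\times}$ gives $\prod_{\tau:E\hookrightarrow\bar{\QQ}_{p}}\tau(\epsilon)^{a_{\tau}}=1$. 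Transporting via $\iota$ to complex embeddings, we get $\prod_{\tau}|\tau(\epsilon)|^{a_{\tau}}$ trivial and more precisely the algebraic character $\lambda=\sum a_{\tau}\tau\in X^{*}(\Res_{E/\QQ}\GG_{m})$ kills a finite-index subgroup of $\mO_{E}^{\times}$, hence (Zariski density argument) kills its Zariski closure.

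The main step is then the lattice computation, which is the standard proof that infinity-types of algebraic Hecke characters factor through the maximal CM subfield: by Dirichlet's unit theorem, $\log|\Gamma|$ spans the trace-zero hyperplane in $\prod_{\nu}\RR$, so applying $\log|\cdot|$ to the identity gives $\sum_{\nu}n_{\nu}(a_{\tau_\nu}+a_{\bar\tau_\nu})\log|\epsilon|_{\nu}$-type relations; concretely $\sum_{\nu}c_{\nu}\log|\tau_{\nu}(\epsilon)|=0$ for all $\epsilon\in\Gamma$ with $c_{\nu}=a_{\tau_\nu}$ at real places and $c_{\nu}=a_{\tau_\nu}+a_{\tau_\nu^{c}}$ at complex places. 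Since the vectors $(\log|\tau_\nu(\epsilon)|)_{\nu}$ (weighted by local degree) span the hyperplane $\{\sum_\nu e_\nu x_\nu=0\}$, the vector $(c_\nu/e_\nu)$ must be proportional to the constant vector, i.e.\ $c_{\nu}/e_\nu$ constant; rescaling gives a common integer $w$ (integrality of $w$ from $a_\tau\in\ZZ$ and a real or complex place). The main obstacle I expect is not this linear algebra but making the passage from $p$-adic to complex embeddings clean: the identity $\prod\tau(\epsilon)^{a_\tau}=1$ holds in $\bar{\QQ}_p$ for algebraic $\epsilon$, hence is an identity of algebraic numbers, and applying $\iota^{-1}$ gives it in $\CC$; taking absolute values then yields the real-logarithm relation. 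One also checks the independence of the choice of $\tau_\nu$ at complex places, which is automatic since the condition is symmetric in $\tau_\nu,\tau_\nu^{c}$.
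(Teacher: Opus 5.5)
Your route is the paper's: class field theory, local algebraicity of $\eta$ at $p$ from de Rhamness, triviality of $x\mapsto\prod_\tau\tau(x)^{a_\tau}$ on a finite-index subgroup of $\mO_E^{\times}$, transport through $\iota$, and Dirichlet's unit theorem. The paper phrases the last step as ``the Zariski closure of $\mO^{\times}_{E,U}$ in $(\RR^{\times})^{\Sigma_{\infty,E}}$ is $\{\prod_\nu x_\nu=1\}$''; you phrase it with logarithms, and these are the same argument. One small imprecision: under class field theory, $\eta|_{\mA_{E,f}^{\times}}$ is only trivial on the closure of the \emph{totally positive} elements of $E^{\times}$, because of the component group at the real places. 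So $\Gamma$ should be the totally positive part of $E^{\times}\cap U$, which is still of finite index and changes nothing.

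The step that does not go through is the final ``rescaling''. Your linear algebra correctly gives $c_\nu=\lambda e_\nu$ for a constant $\lambda$. That is, $a_{\tau_\nu}=\lambda$ at real places and $a_{\tau_\nu}+a_{\tau_\nu^{c}}=2\lambda$ at complex places, and no rescaling produces a single $w$ when $E$ has places of both kinds.

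This is a flaw in the statement rather than in your method. For $E=\QQ(2^{1/3})$ and $\eta=\chi_{\mrm{cycl}}|_{\Gal_{E}}$ one has $a_\tau=1$ for every $\tau$. Hence $a=1$ at the real place and $a+a^{c}=2$ at the complex place, so the lemma as written is false in mixed signature.

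The paper's proof contains the same factor-$2$ slip. It uses the exponent $a_{\tau_\nu}+a_{\tau_\nu^{c}}$ at a complex place, which is what taking absolute values of $\prod_\tau\tau(x)^{a_\tau}=1$ gives for $x_\nu=|\tau_\nu(x)|$. At the same time it asserts that the units lie in $\{\prod_\nu x_\nu=1\}$, which holds only for the normalized coordinates $x_\nu=|\tau_\nu(x)|^{e_\nu}$.

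What your argument actually proves is the lemma as stated when $E$ is totally real (with $w=\lambda$) or totally imaginary (with $w=2\lambda$). In general it proves the version with $2w$ in place of $w$ at the complex places. That corrected version is all Lemma \ref{lemCMorStrongIrr} needs. There, if $E$ had a real place above some $\tau\in\Sigma_\infty$, the two real places above $\tau$ would carry the same $a$, contradicting regularity of the $\tau$-Hodge--Tate weights of $\rho$.
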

\begin{proof}
We enlarge \( L \) such that it contains all the embedding \( E\hookrightarrow \bar{\QQ}_{p} \). 
By class field theory, \( \eta \)
corresponds to a continuous character \[
\eta:E^{\times}\backslash \mA_{E}^{\times}/
\left(\prod_{\ell\ne p}U_{\ell}\times (E\otimes_{\QQ} \RR)^{\times,\circ}\right)\to L^{\times},
\]  where for \( \ell\ne p \), \( U_{\ell} \)
is an open subgroup of \((\ZZ_{\ell}\otimes \mO_{E})^{\times} \), and \( U_{\ell}=(\ZZ_{\ell}\otimes \mO_{E})^{\times} \) if \( \ell\notin S \).
Using the finiteness of the class number, the LHS contains \( (\ZZ_{p}\otimes \mO_{E})^{\times}/\mO_{E,U^{p}}^{\times} \) as a finite index open subgroup, where \(
\mO_{E,U^{p}}^{\times}:=\{x\in \mO_{E}^{\times}:x\in U_{\ell},\forall \ell\ne p\}\subset \mO_{E}^{\times}.
\)
Since \( \eta \) is de Rham, the character \( \eta \) is locally algebraic, that is, there exists an open subgroup \( U_{p}\subset (\ZZ_{p}\otimes \mO_{E})^{\times} \), 
such that \[
\eta(x)=\prod_{\tau:E\hookrightarrow L}\tau(x)^{a_{\tau}},\forall x\in U_{p}.
\] Let \(
\mO_{E,U}^{\times}:=\{x\in \mO_{E}^{\times}:x\in U_{\ell},\forall \ell\}\subset \mO_{E,U^{p}}^{\times}.
\)
Then \(x\mapsto 
\prod_{\tau}\tau(x)^{a_{\tau}}
\) vanishes on \( 
\mO_{E,U}^{\times} \). 

Using \( \iota:\bar{\QQ}_{p}\cong \CC \), we know that \(x\mapsto 
\prod_{\tau:E\hookrightarrow \CC}\tau(x)^{a_{\tau}}
\) vanishes on \( 
\mO_{E,U}^{\times} \). 
We consider the morphism \[
\ti{\eta}:
(\RR^{\times})^{\Sigma_{\infty,E}}\to R^{\times},\;
((x_{\nu})_{E_{\nu}\cong \CC},
(x_{\nu})_{E_{\nu}\cong \RR})_{\nu\in\Sigma_{\infty,E}}\mapsto \prod_{E_{\nu}\cong \CC}x_{\nu}^{a_{\tau_{\nu}}+a_{\tau_{\nu}^{c}}}\cdot \prod_{E_{\nu}\cong \RR}x_{\nu}^{a_{\tau_{\nu}}}.
\]
By the argument above, \( \ti{\eta}|_{\mO^{\times}_{E,U}}=1 \), and thus it is also trivial on the Zariski closure of \( \mO^{\times}_{E,U} \).
On the other hand, by Minkowski's theorem, \( \mO_{E}^{\times} \)
is a lattice in \( (\RR^{\times})^{\Sigma_{\infty,E},\prod=1} \), and  \( \mO_{E,U}^{\times} \) is a subgroup of \( \mO_{E}^{\times} \) of finite index. Thus the Zariski closure of \( \mO^{\times}_{E,U} \) is \( (\RR^{\times})^{\Sigma_{\infty,E},\prod=1} \). Hence, there exists \( w\in \ZZ \), such that \( a_{\tau_{v}}=w \) for \( E_{v}\cong \RR \), and \( a_{\tau_{v}}+a_{\tau_{v}^{c}}=w \) for \( E_{v}\cong \CC \).
\end{proof}
In the strongly irreducible
case, we can fix the infinitesimal character on the Hecke eigenspace using Theorem \ref{thmPadicPlectic}. 
\begin{prop}\label{propFixInfCharc}
	Let \(\rho:\Gal_{F,S}\to \GL_{2}(L)\) be a continuous strongly irreducible representation, with \(\chi:\TT^{S}[1/p]\to L\) the associated character determined by the Eichler-Shimura relation (\ref{alignEicherShimuraDetermin}).
	We assume that \( \rho \) has distinct \( \tau \)-Hodge-Tate weights for all \( \tau \). 

	Then there exists a regular multiweight  \(\kw\), \(\rho\) is \(\tau\)-de Rham of \( \tau \)-Hodge-Tate weights \(\frac{k_{\tau}-w}{2}\) and \(\frac{2-k_{\tau}-w}{2}\) for all \( \tau\in\Sigma_{\infty} \), and \( Z(\fg) \) acts on \(
	\ti{H}^{d}(K^{p},L)^{\la}[\chi]
	\) via \( \lambda^{\kw} \).
\end{prop}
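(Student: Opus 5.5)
The plan is to read off the infinitesimal character from the $\tau$-partial Sen operators. Theorem \ref{thmPadicPlectic} (2) gives, for each $\tau$, a Sen operator on the eigenspace, computed by the horizontal action. On the locally analytic side, the horizontal action of $\fh_{\tau}$ is tied to $Z(\fg_{\tau})$ by Pan's formula for the Casimir, already used in Proposition \ref{propInfiniteCharToHorizontal}. The strategy is therefore to transport the known Sen spectrum of $\Ind^{plec}_{F}\rho$ through these identities.

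\emph{Step 1: setup.} Since $\rho$ is strongly irreducible with distinct $\tau$-Hodge-Tate weights, Theorem \ref{thmPadicPlectic} applies and gives
\[
\RHom_{R^{ps}}(\chi,\ti{H}^{d}(K^{p},L)^{\la})\cong \Ind^{plec}_{F}\rho\otimes\Pi^{\la}(\rho).
\]
On this isomorphism $Z(\fg)$ acts only on the factor $\Pi^{\la}(\rho)$, because it commutes with the plectic Galois action. After base change to $\CC_{p}$, the partial Sen operator $\Theta_{\tau}$ acts only on $\Ind^{plec}_{F}\rho$ and equals $\theta(d\mu_{\tau})$.

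\emph{Step 2: a polynomial identity.} Write $a_{\tau}\neq b_{\tau}$ for the $\tau$-Hodge-Tate weights of $\rho$. By the proof of \cite[Proposition 3.18]{JiangPan2025Plectic}, $\Theta_{\tau}$ is semisimple on $\Ind^{plec}_{F}\rho$ with both eigenvalues $-a_{\tau},-b_{\tau}$ occurring. On $\mO^{\la}$ one has $\theta(\mrm{H}_{\tau})=2\theta(d\mu_{\tau})-\theta(\mrm{Z}_{\tau})$, together with $\Omega_{\tau}=\tfrac12\theta(\mrm{H}_{\tau})^{2}+\theta(\mrm{H}_{\tau})$ (\cite[Corollary 4.2.8]{Pan22} and \cite[Lemma \ref{1-lemCompareHorPilloniAndPan}]{Jiang2026Shla}). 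These give a monic quadratic relation $P_{\tau}(\Theta_{\tau})=0$ whose coefficients lie in $\CC_{p}[\mrm{Z}_{\tau},\Omega_{\tau}]$. Under the tensor decomposition, with $\Theta_{\tau}$ acting on the first factor and $Z(\fg_{\tau})$ on the second, this relation forces the following. The endomorphisms $\mrm{Z}_{\tau}$ and $\Omega_{\tau}$ of $\Pi^{\la}(\rho)$ satisfy identities saying that the two distinct numbers $-a_{\tau},-b_{\tau}$ are the two roots of $P_{\tau}$. Hence $\mrm{Z}_{\tau}$ acts by the scalar determined by $a_{\tau}+b_{\tau}$, and $\Omega_{\tau}$ acts by the scalar determined by $a_{\tau}b_{\tau}$. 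The scalar (not merely generalized) action comes from applying this argument on each isotypic piece for the semisimple $\Theta_{\tau}$.

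\emph{Step 3: weights.} Set $w_{\tau}:=-(a_{\tau}+b_{\tau})+1$ and $k_{\tau}:=\min$ of the two values of $\pm(a_{\tau}-b_{\tau})+1$ that are $\le 0$; this is possible since $a_{\tau}\neq b_{\tau}$. Then $k_{\tau}\neq1$ and the weights are $\frac{k_{\tau}-w_{\tau}}{2},\frac{2-k_{\tau}-w_{\tau}}{2}$. To get integrality and a single $w$, I would use that $\det\rho$ is the Galois character attached to the central Hecke character ($\chi(S_{\mfk l})$). It is then an abelian Hodge-Tate character, hence locally algebraic and de Rham. Lemma \ref{lem:characterCMfield} applied to the totally real $F$ forces $a_{\tau}+b_{\tau}$ to be independent of $\tau$, so $w_{\tau}=w$. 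The congruence $k_{\tau}\equiv w \pmod 2$ then follows automatically. Matching the scalars of Step 2 with $\lambda^{(k_{\tau},w)}_{\tau}$ is a normalization check against Notation \ref{notationSomeAlgRep}, and it also yields the $\tau$-de Rham/Hodge-Tate description in the statement.

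I expect the main obstacle to be Step 2. One must justify that the quadratic relation between $\theta(d\mu_{\tau})$ and $Z(\fg_{\tau})$ holds on the derived $\chi$-eigenspace, and not just on $\mO^{\la}$. One must also pass from generalized eigenvalues to honest scalar action. I would first try to deduce both from the semisimplicity of $\Theta_{\tau}$ on $\Ind^{plec}_{F}\rho$ together with the fact that $Z(\fg)$ commutes with the plectic action. A secondary issue is integrality of $a_{\tau},b_{\tau}$, which is handled via the determinant as in Step 3.
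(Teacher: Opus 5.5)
Your argument is correct, but it pins down the center by a different mechanism than the paper.

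\textbf{The paper's route.} It first shows, in an auxiliary lemma inside the proof, that $\mathrm{Z}_{\tau}$ acts on $\ti{H}^{d}(K^{p},L)^{\la}[\chi]$ by the constant $-w$. This combines the parallelity of the weight of $\det\rho$ (Lemma~\ref{lem:characterCMfield}) with \cite[Theorem 4.35]{JiangPan2025Plectic}. It then obtains the eigenvalues $-k_{\tau},k_{\tau}-2$ of $\theta(\mathrm{H}_{\tau})=2\theta(d\mu_{\tau})-\theta(\mathrm{Z}_{\tau})$ from those of $\theta(d\mu_{\tau})$, and substitutes them into Pan's Casimir formula.

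\textbf{Your route.} You read Pan's formula as a quadratic relation for $\Theta_{\tau}$ with coefficients in $Z(\fg_{\tau})$, and you exploit the tensor splitting. On the two eigenspaces of $\Theta_{\tau}$ you get, on $\Pi^{\la}(\rho)$,
\[
\Omega_{\tau}=\tfrac{1}{2}(2e_{i}-\theta(\mathrm{Z}_{\tau}))^{2}+(2e_{i}-\theta(\mathrm{Z}_{\tau})),\qquad e_{1}=-a_{\tau}\neq e_{2}=-b_{\tau}.
\]
Subtracting gives $2(e_{1}-e_{2})\bigl(e_{1}+e_{2}+1-\theta(\mathrm{Z}_{\tau})\bigr)=0$. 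Hence $\theta(\mathrm{Z}_{\tau})=1-a_{\tau}-b_{\tau}$ is forced to be a scalar, and then so is $\Omega_{\tau}$. So you never need the separate computation of the $\mathrm{Z}_{\tau}$-action, and $\det\rho$ enters only to make $w$ independent of $\tau$.

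\textbf{Comparison.} Your route is more self-contained and shows exactly where distinctness of the weights is used, namely to cancel $e_{1}-e_{2}$. The paper's route is shorter given the cited input, and it determines the central character without any regularity hypothesis.

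A few points to tighten:
\begin{enumerate}
\item The $\tau$-de Rham part of the conclusion cannot come out of your argument. The paper does not derive it either: its proof simply assumes $\rho$ is de Rham, as holds in the application to Theorem~\ref{thmClassicalityCompleteCoho}. What you actually prove is the description of the Hodge--Tate weights and of the infinitesimal character, so you should not claim the argument ``yields'' de Rhamness.
\item The determinant only controls $a_{\tau}+b_{\tau}$, so integrality of each $a_{\tau},b_{\tau}$ must come from $\rho$ being Hodge--Tate, not from $\det\rho$. Granted that, $\det\rho$ is a Hodge--Tate character, hence locally algebraic and de Rham, and Lemma~\ref{lem:characterCMfield} applies as you say.
\item $Z(\fg)$ acts only on $\Pi^{\la}(\rho)$ because the isomorphism of Theorem~\ref{thmPadicPlectic} is $\GL_{2}(F_{p})$-equivariant, not merely because it commutes with the plectic action.
\item State explicitly the identity $\theta(\mathrm{Z}_{\tau})=-\mathrm{Z}_{\tau}$, relating the horizontal action of the center to the $\fg$-action (with the paper's sign conventions). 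This is what places the coefficients of your quadratic relation in $Z(\fg_{\tau})$. The paper uses the same identity when passing from ``$\mathrm{Z}_{\tau}$ acts by $-w$'' to ``$\theta(\mathrm{Z}_{\tau})=w$''.
\end{enumerate}
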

\begin{proof}
Denote \(\mfk{z}:=\Lie(Z(G))\otimes L\cong \bigoplus_{\tau\in \Sigma_{\infty}}\mfk{z}_{\tau}\) with \(\mfk{z}_{\tau}\cong L\). Denote by \(\mrm{Z}_{\tau}\) the generator of \(\mfk{z}_{\tau}\),
	given by \(\mrm{diag}\{1,1\}\)
	on the \(\tau\)-th component and zero on the other components.
	\begin{lem}\label{lem:smallZaction}
		For each \(\tau\in\Sigma_{\infty}\),
		the action of \(\mrm{Z}_{\tau}\)
		on \(\ti{H}^{d}(K^{p},L)^{\la}[\chi]\) is given by the constant \(-w\). 
	\end{lem}
	\begin{proof}[Proof of the lemma]
	By assumption (2), \( \det(\rho) \) is de Rham. By Lemma \ref{lem:characterCMfield}, there exists \( w\in \ZZ \) such that \( \det(\rho) \)
	is parallel of Hodge-Tate weight \( 1-w \).
	The action of \( Z_{\tau} \) is given by \cite[Theorem 4.35]{JiangPan2025Plectic}.
	\end{proof}

Now since \(\rho\)
is de Rham, and 
\(\det(\rho)\) is of weight \(w-1\),
we can find \(k_{\tau}\in\ZZ_{\le 0}\) for \(\tau\in \Sigma_{\infty}\),
such that \(\rho\)
is Hodge-Tate of \(\tau\)-weights \(\frac{k_{\tau}-w}{2}\) and \(\frac{2-k_{\tau}-w}{2}\) for \(\tau\in \Sigma_{\infty}\).
We now claim
the tuple \(((k_{\tau})_{\tau\in\Sigma_{\infty}},w)\) meets the requirement. 
By Theorem \ref{thmPadicPlectic},
\(\theta(d\mu_{\tau})\)
acts with eigenvalues
\(\frac{w-k_{\tau}}{2}\)
and \(\frac{w+k_{\tau}}{2}-1\).
By Lemma \ref{lem:smallZaction}, \(\theta(\mrm{Z}_{\tau})=w\). Thus
we know that \(\theta(\mrm{H}_{\tau})\) also 
acts semisimply with eigenvalues \(-k_{\tau}\)
or \(k_{\tau}-2\).
Thus
\(\Omega_{\tau}\) acts via \[\Omega_{\tau}=\frac{1}{2}\theta(\mrm{H}_{\tau})^{2}+\theta(\mrm{H}_{\tau})=\frac{1}{2}k_{\tau}^{2}-k_{\tau}=\lambda^{\kw}(\Omega_{\tau}),\]
as desired.
\end{proof}
\subsection{Proof of the classicality theorem}\label{subsecProofMainThm}
We now start to prove Theorem \ref{thmClassicalityCompleteCoho}. The key input is Theorem \ref{thmFontainComplexClassical}, which is related to the completed cohomology via the following primitive comparison of \cite{Scholze13}.
\begin{prop}\label{propPrimitiveComparisonBdR}
	There exists a canonical isomorphism 
	\[
	R\Gamma(K^{p},\QQ_{p})^{R-\la}\hatotimes_{\QQ_{p}} B_{\dR}^{+}/t^{n}\cong R\Gamma_{\an}(\mX^{\tor}_{K^{p}},\BdR^{+,\la}/t^{n}).
	\]
\end{prop}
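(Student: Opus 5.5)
The plan is to reduce the statement to the primitive comparison theorem of Scholze at finite level, pass to infinite level, and then take locally analytic vectors. We argue by induction on \(n\), with the case \(n=1\) being the \(\mO\)-comparison. Since \(\BdR^{+}/t^{n}\) is a successive extension of \(\hat{\mO}(i)\), \(0\le i<n\), and both sides are exact in the coefficient sheaf, the five lemma lets us pass from \(n\) to \(n+1\). This requires that the maps we construct be compatible with the \(t\)-adic filtration.

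\emph{Step 1: finite level.} For each \(K_{p}\), the proper log smooth rigid space \(\SGTKTo{}{\Kpp}\) admits the log primitive comparison (Scholze, Diao–Lan–Liu–Zhu). It gives an almost isomorphism
\[
R\Gamma_{\et}(\SGTKTo{}{\Kpp},\ZZ/p^{m})\otimes\mO_{\CC_{p}}/p^{m}\cong R\Gamma_{\proket}(\SGTKTo{}{\Kpp},\mO^{+}/p^{m}).
\]
The Kummer-étale cohomology on the left agrees with \(R\Gamma_{c}\) or \(R\Gamma\) of the open Shimura variety, by purity for the boundary divisor; we use the version matching \(R\Gamma(K^{p},-)\). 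Replacing \(\mO^{+}/p^{m}\) by \(\mathbb{A}_{\inf}/(p^{m},\xi^{n})\) and using the same devissage gives the \(\BdR^{+}/t^{n}\) version integrally, almost.

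\emph{Step 2: infinite level.} Take \(\varinjlim_{K_{p}}\), then \(\Rlim_{m}\), then invert \(p\). Since \(\SGTKpTo{}\sim\varprojlim\SGTKTo{}{\Kpp}\) and this limit is affinoid perfectoid locally, the colimit of pro-Kummer-étale cohomology becomes analytic cohomology on \(\SGTKpTo{}\) of the completed sheaf. This is the argument of \cite[Theorem 4.4.6]{Pan22} and \cite{Juan2022.09locallyShi}. The result is
\[
R\Gamma(K^{p},\QQ_{p})\hatotimes B_{\dR}^{+}/t^{n}\cong R\Gamma_{\an}(\SGTKpTo{},\BdR^{+}/t^{n}),
\]
which is \(G(\QQ_{p})\)-equivariant. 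The almost ambiguity disappears after inverting \(p\), and solid completeness handles the completed tensor product.

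\emph{Step 3: locally analytic vectors.} Apply \((-)^{R-\la}\), meaning derived \(K_{p}\)-locally analytic vectors, to both sides. On the left, \(B_{\dR}^{+}/t^{n}\) carries the trivial \(G(\QQ_{p})\)-action and is a finite extension of copies of \(\CC_{p}\), so \((-)^{R-\la}\) commutes with \(\hatotimes B_{\dR}^{+}/t^{n}\). On the right, \((-)^{R-\la}\) commutes with \(R\Gamma_{\an}\). Following the \(\mO^{\la}\) case in \cite{Jiang2026Shla}, one checks this on a basis of \(K_{p}\)-stable affinoid perfectoid opens coming from finite-level affinoids. On each such open, \((-)^{R-\la}\) of the sections equals sections of the sheaf \((-)^{\la}\), since the higher locally analytic cohomology of \(\BdR^{+}/t^{n}\) vanishes there. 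That vanishing follows by devissage from the known vanishing for \(\hat{\mO}\), via the geometric Sen theory already used in the paper.

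The main obstacle is this last commutation. The \(t\)-adic extensions in \(\BdR^{+}/t^{n}\) are not split as \(G(\QQ_{p})\)-sheaves, so the vanishing must be checked compatibly with the filtration. Moreover, the devissage uses that \((-)^{R-\la}\) is exact on fiber sequences of solid representations, which it is. The rest is bookkeeping with the derived definitions in Notation \ref{notationLaforProObj}.
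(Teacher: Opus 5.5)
Your proposal is correct and follows essentially the paper's route. The paper also takes a natural map compatible with the \(t\)-adic filtrations and checks it on the graded pieces \(\hat{\mO}(i)\), where it reduces to the locally analytic \(\mO\)-comparison \cite[Theorem 6.2.6]{Juan2022.09locallyShi}; your Steps 1--3 just make explicit how that filtered map arises (finite-level log primitive comparison with \(\mathbb{A}_{\inf}/(p^{m},\xi^{n})\)-coefficients, passage to infinite level, and the exact functor \((-)^{R-\la}\)), which the paper leaves implicit.
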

\begin{proof}
	There exists a natural map that is compatible with the \(t\)-adic filtration on both sides.
	Then it suffices to check the isomorphism on the level of graded pieces, which reduces to \cite[Theorem 6.2.6]{Juan2022.09locallyShi}
\end{proof}


\begin{proof}[Proof of Theorem \ref{thmClassicalityCompleteCoho}]
The case where \(\rho\) admits CM can be reduced to global class field theory. Therefore, by Lemma \ref{lemCMorStrongIrr}, we assume without loss of generality that \(\rho\) is strongly irreducible.

We will denote by the subscript \( L \) the base change \( -\otimes_{\QQ_{p}}L \).
Let \[
-[\chi]:=\RHom_{R^{ps}}(\chi,-),\;(-)^{\lambda^{\kw}}:=\RHom_{Z\fg}(\lambda^{\kw},-).
\] 
Since \( R^{ps} \) is Noetherian, \( \chi \)
is pseudo-coherent as a \( R^{ps} \)-module (\cite[Tag 066E]{stacks-project}), and thus \( -[\chi] \)
commutes with colimits when restricted to uniformly left bounded objects. Let \( \p_{\chi}:=\Ker(R^{ps}[1/p]\to L) \).
Then by Theorem \ref{thmPadicPlectic}, 
\begin{align*}
R\Gamma(K^{p},L)^{R-\la,\lambda^{\kw}}[\chi]
\cong (R\Gamma(K^{p},L)^{R-\la,\lambda^{\kw}}[\chi])_{\p_{\chi}}
&\cong ((R\Gamma(K^{p},L)^{R-\la})_{\p_{\chi}})^{\lambda^{\kw}}[\chi]\\
&\cong (\ti{H}^{d}(K^{p},L)^{\la}[\chi])^{\lambda^{\kw}}.
\end{align*}

By primitive comparison (Proposition \ref{propPrimitiveComparisonBdR}),
we have that 
\begin{align*}
R\Gamma_{\an}(\SGTKpTo{},\BdR^{\la,\lambda^{\kw}})_{L}[\chi]&\cong (\ti{H}^{d}(K^{p},L)^{\la}[\chi])^{\lambda^{\kw}}
\hatotimes_{\QQ_{p}}B_{\dR}.
\end{align*}
On the 
other hand, we know by Theorem \ref{thmPadicPlectic} that 
\[
	\ti{H}^{d}(K^{p},L)^{\la}[\chi]\cong \oInd_{F}^{\QQ}\rho\otimes \Pi^{\la}(\rho).\]
Since \(\rho\) is de Rham, \(\oInd^{\QQ}_{F}\rho\)
is also de Rham.
Thus by Lemma \ref{lem:Fontaine0de Rham},
the morphism \(R\Gamma_{\an}(\SGTKpTo{},N_{I_{0},\tau})[\chi]\) is homotopic to \( 0 \), for \(N_{I_{0},\tau}\) as in Notation \ref{dfnFontaineOpe}.

Let us assume by contradiction that \(\chi\) is not associated to a classical Hilbert modular form of weight \((\max(2-\Bbbk,\Bbbk),w)\).
By Theorem \ref{thmFontainComplexClassical}, 
we know that \[R\Gamma_{\an}(\SGTKpTo{},\mrm{Font}(\BdR^{\la,\lambda^{\kw}}))_{L}[\chi]\cong 0.\]
However, by the definition of Bernstein-Gelfand-Gelfand-Fontaine complex (Definition \ref{dfnFontainComplex}), \begin{align*}
&\mrm{Font}(\BdR^{\la,\lambda^{\kw}}):= \\&\left[
	\mO^{\la,\kw}_{\SGTKpTo{}}\to\bigoplus_{\tau\in \Sigma_{\infty}}\mO^{\la,(s_{\tau}\cdot\Bbbk,w)}_{\SGTKpTo{}}\to \cdots \bigoplus_{|I_{0}|=i,I_{0}\subset \Sigma_{\infty}}\mO^{\la,(s_{I_{0}}\cdot\Bbbk,w)}_{\SGTKpTo{}}\to \cdots \to \mO^{\la,(s_{\Sigma_{\infty}}\cdot\Bbbk,w)}_{\SGTKpTo{}}\right],
\end{align*}
where the differentials are defined using \(N_{I_{0},\tau}\). 

We have showed that
the differentials are homotopic to zero after taking \(R\Gamma_{\an}(\SGTKpTo{},-)_{L}[\chi]\). 
As a result,
we know that \[R\Gamma_{\an}\left(\SGTKpTo{},\mrm{Font}(\BdR^{\la,\lambda^{\kw}})\right)_{L}[\chi]\cong 
\bigoplus_{I_{0}\subset \Sigma_{\infty}}R\Gamma_{\an}\left(\SGTKpTo{},\mO^{\la,(s_{I_{0}}\cdot \Bbbk,w)}_{\SGTKpTo{}}\right)_{L}[\chi][-\#I_{0}]\cong 0.
\]
By Proposition \ref{propInfiniteCharToHorizontal},  \(\mO^{\la,\lambda^{\kw}}_{\SGTKpTo{}}\cong \bigoplus_{s\in W}\mO_{\SGTKpTo{}}^{\la,(s\cdot \Bbbk,w)},\) and thus
\[R\Gamma_{\an}(\SGTKpTo{},\mO^{\la,\lambda^{\kw}}_{\SGTKpTo{}})_{L}[\chi]\cong 0.\]
However, by primitive comparison (by \cite[Theorem 6.2.6]{Juan2022.09locallyShi} or by taking \(n=1\) in Proposition \ref{propPrimitiveComparisonBdR}),
\[R\Gamma(\SGTKpTo{},\mO^{\la,\lambda^{\kw}}_{\SGTKpTo{}})_{L}[\chi]\cong R\Gamma(\SGTKp{T},L)^{R-\la,\lambda^{\kw}}[\chi]\otimes_{\QQ_{p}}\CC_{p},\]
and the RHS is non-zero by Proposition \ref{propFixInfCharc}, which gives a contradiction.
\end{proof}

\subsection{Langlands-Clozel-Fontaine-Mazur conjecture}
In \cite{BreuilHerzigHuMorraSchraen2023GKdim}, the authors prove the following remarkable theorem:
\begin{thm}[{\cite[Corollary 8.5.1]{BreuilHerzigHuMorraSchraen2023GKdim}}]\label{thmBHHMSFlatness}
Assume that \(p\) is inert in \(F\). Let \(D/F\) be a quaternion algebra which is split at all places over \(p\)
and at precisely \(d_{D}\) infinite place of \(F\) for \(d_{D}\in\{0,1\}\). Let \(\bar{\rho}:\Gal_{F}\to \GL_{2}(\FF)\) for a finite field \(\FF/\FF_{p}\). Let \(\psi:\Gal_{F}\to W(\FF)^{\times}\) be the Teichmüller lift of \(\chi_{\mrm{cycl}}\det(\bar{\rho})\). 
Let \(S_{D}\) (resp. \(S_{\bar{\rho}}\)) denote the set of finite places of \(F\) where \(D\) (resp. \(\bar{\rho}\)) ramifies. Let \(S\) and \(w_{1}\)
be as in \cite[100 (ii)-(iii)]{BreuilHerzigHuMorraSchraen2023GKdim}. 

Let \(\{\mrm{Sh}_{V}(G_{D})\}_{V\subset G_{D}(\mA_{f})}\) be the tower of Shimura varieties (of dimension \(d_{D}\)) associated to \(G_{D}:=\mrm{Res}_{F/\QQ}D^{\times}\).
We fix \(V^{p}\subset G_{D}(\mA_{f}^{p})\) that is unramified away from \(S\cup\{w_{1}\}\), and let \(\ti{H}^{i}(\mrm{Sh}_{V^{p}}(G_{D}),C)\)
be the completed cohomology of the tower defined in \cite[Definition \ref{1-dfnCompleteCoh}]{Jiang2026Shla} for any \(C/\ZZ_{p}\). 
Let \(\TT^{S\cup\{w_{1}\}}\) denote the spherical Hecke algebra over \(W(\FF)\), and let \(\mm\subset \TT^{S}\)
be the kernel of the character \(\TT^{S\cup\{w_{1}\}}\to \FF\) determined by \(\bar{\rho}\) via
(\ref{alignEicherShimuraDetermin}). 
Assume that \begin{align}\label{alignResiduModularity}
\ti{H}^{d_{D}}(\mrm{Sh}_{V^{p}}(G_{D}),\FF)[\mm]\ne 0.
\end{align}

Let \(\hat{\TT}^{\psi}_{\bar{\rho}}\)
denote the ``big'' spherical Hecke algebra acting on \(\ti{H}^{d_{D}}(\mrm{Sh}_{V^{p}}(G_{D}),W(\FF)/p^{n})^{\psi}_{\mm}\) as in \cite[124]{BreuilHerzigHuMorraSchraen2023GKdim}. Let \(R^{\psi}_{\bar{\rho},S\cup \{w_{1}\}}\)
denotes the deformation ring of \(\bar{\rho}\), parametrizing deformations \(\rho:\Gal_{F}\to \GL_{2}(R)\) with which have the fixed determinant \(\psi\chi_{\mrm{cycl}}^{-1}\)
and are unramified away from \(S\cup \{w_{1}\}\). 

Assume that 


(1) \(\bar{\rho}|_{\Gal_{F(\sqrt[p]{1})}}\)
is absolutely irreducible. 

(2) 
\(\bar{\rho}|_{\Gal_{F_{p}}}\) is semisimple generic in the sense of \cite[p.100 (i)]{BreuilHerzigHuMorraSchraen2023GKdim}.

(3) For any \(w\in S_{D}\cup S_{\bar{\rho}}\) with \(w\nmid p\),
the universal framed deformation ring of \(\bar{\rho}|_{\Gal_{F_{w}}}\) over \(W(\FF)\)
is formally smooth over \(W(\FF)\).

Then \(R^{\psi}_{\bar{\rho},S\cup\{w_{1}\}}\cong \hat{\TT}^{\psi}_{\bar{\rho}}\), 
and \(\Hom_{W(\FF)}\left(\ti{H}^{d_{V}}(\mrm{Sh}_{V^{p}}(G_{D}),W(\FF))^{\psi}_{\mm},W(\FF)\right)\)
is \emph{faithfully flat} as a \(R^{\psi}_{\bar{\rho},S\cup \{w_{1}\}}\)-module. 
Moreover, \(R^{\psi}_{\bar{\rho},S\cup \{w_{1}\}}\) is a complete intersection.
\end{thm}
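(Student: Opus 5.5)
This statement is quoted from \cite[Corollary 8.5.1]{BreuilHerzigHuMorraSchraen2023GKdim}, so the plan is to reduce it to the results established there rather than to reprove it. I would only check that our hypotheses and normalizations match theirs. The route inside loc. cit. is a Taylor--Wiles--Kisin patching argument in the style of \cite{GeeNewton2022patching}, which I outline here.

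First I would set up the patching datum. Hypothesis (1), that $\bar\rho|_{\Gal_{F(\sqrt[p]{1})}}$ is absolutely irreducible, supplies Taylor--Wiles primes. Using these, one patches the dual completed cohomology
\[ \Hom_{W(\FF)}\bigl(\ti{H}^{d_{D}}(\mrm{Sh}_{V^{p}}(G_{D}),W(\FF))^{\psi}_{\mm},W(\FF)\bigr) \]
into a module $M_\infty$ over $R_\infty=R^{\mrm{loc}}[[x_1,\dots,x_g]]$. Here $R^{\mrm{loc}}$ is the completed tensor product of the local framed deformation rings, namely the one at $p$ and those at $w\in S_D\cup S_{\bar\rho}$. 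By hypothesis (3) together with genericity at $p$, $R_\infty$ is formally smooth over $W(\FF)$, hence regular. The auxiliary place $w_1$ is added to make the level neat.

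The key step, and the main obstacle, is to show that $M_\infty$ is flat over $R_\infty$. Because $R_\infty$ is regular local, Auslander--Buchsbaum reduces flatness to a depth/dimension statement. This is exactly where loc. cit. uses the Gelfand--Kirillov dimension bound: for the mod $p$ representation $\pi(\bar\rho)$ of $\GL_2(F_p)$, hypothesis (2) (semisimple generic) gives $\dim_{\mrm{GK}}\pi(\bar\rho)=[F_p:\QQ_p]$. This is combined with the Cohen--Macaulay property of the associated graded module over $\FF[[K_1]]$. Together they give a dimension upper bound that matches the lower bound coming from patching, and Miracle Flatness then yields flatness. I would take this input directly from the paper and not redo it.

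Next, one quotients by the patching variables. This shows that the unpatched module is faithfully flat over $R^{\psi}_{\bar\rho,S\cup\{w_1\}}$. Nonvanishing follows from hypothesis (\ref{alignResiduModularity}) together with $R_\infty$ being local. It also shows that $R^{\psi}_{\bar\rho,S\cup\{w_1\}}$ is the quotient of a regular ring by a regular sequence, so it is a complete intersection. Finally, the map $R^{\psi}_{\bar\rho,S\cup\{w_1\}}\to\hat{\TT}^{\psi}_{\bar\rho}$ is surjective. Since the dual completed cohomology is faithful over $\hat{\TT}^{\psi}_{\bar\rho}$, faithful flatness over $R$ forces this map to be injective, hence an isomorphism.
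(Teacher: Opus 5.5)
Your proposal is correct and agrees with the paper, which gives no independent proof and simply quotes \cite[Corollary 8.5.1]{BreuilHerzigHuMorraSchraen2023GKdim}. The only adjustment it records is the normalization $\epsilon=1$, hence $(-)^{\psi}$ rather than $(-)^{\psi^{-1}}$, which is exactly the compatibility check you propose. Your sketch of the Gee--Newton patching argument behind loc.\ cit.\ is supplementary and broadly right, with one slip: in the miracle-flatness step the Cohen--Macaulay property actually used is that of the patched module $M_\infty$ (coming from its projectivity over $S_\infty[[K]]$), not of the graded module over $\FF[[K_1]]$, and the GK-dimension bound is what controls the fibre $M_\infty/\mathfrak{m}_{R_\infty}M_\infty$.
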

\begin{rmk}
To be consistent with our normalization, we will use the normalization \(\epsilon=1\)
rather than \(\epsilon=-1\) to define the canonical model. This explains why we have take \(\ti{H}^{d_{D}}(-)^{\psi}\)
instead of \((-)^{\psi^{-1}}\)
as in \cite{BreuilHerzigHuMorraSchraen2023GKdim}. See \cite[Lemma 3.17, Lemma 3.12]{CornutVatsal2005cm} for a detailed argument around the normalization.
\end{rmk}
\begin{cor}[Promodularity]\label{corPromodular}
Assume that \(p\) is inert in \(F\), \(F\) is totally real, and \(d:=[F:\QQ]\).
We take \(T=\Sigma_{\infty}\)
if \(2|d\)
and taking \(T=\Sigma_{\infty}\backslash \{\tau_{0}\}\)
for some \(\tau_{0}\in\Sigma_{\infty}\) if \(2\nmid d\). We will take \(D=B\)
in Theorem \ref{thmBHHMSFlatness}. 

Let \(\FF\) be a finite extension of \(\FF_{p}\), \(L/\QQ_{p}\) be a finite extension with residue field \(\FF\), and let
\(\rho:\Gal_{F}\to \GL_{2}(L)\) be a continuous representation.
Assume that \(\bar{\rho}:\Gal_{F}\to \GL_{2}(\FF)\) satisfies (\ref{alignResiduModularity})
and the assumptions (1)(2)(3) of Theorem \ref{thmBHHMSFlatness}. 
Let \(S\),
\(w_{1},V^{p}\) be as in Theorem \ref{thmBHHMSFlatness}, and assume that \(\rho\) is unramified away from \(S\cup w_{1}\), and \(\det(\rho)=\psi\chi_{\mrm{cycl}}^{-1}\)
for \(\psi\) as in Theorem \ref{thmBHHMSFlatness}. 

Let \(\chi:\TT^{S\cup \{w_{1}\}}\to \mO_{L}\)
be the character determined by \(\rho\)
via (\ref{alignEicherShimuraDetermin}). Then \[\ti{H}^{d-\#T}(\mrm{Sh}_{V^{p}}(G),\mO_{L})^{\psi}[\chi]\ne 0.
\]
\end{cor}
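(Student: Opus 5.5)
The plan is to deduce the corollary from the faithful flatness in Theorem \ref{thmBHHMSFlatness}, applied to the quaternion algebra \(D=B_{T}\), and then translate from the quaternionic Shimura variety back to \(G\) if needed. First we check that \(B_{T}\) satisfies the hypotheses of Theorem \ref{thmBHHMSFlatness}. Since \(p\) is inert, \(\Sigma_{\infty/\p}=\Sigma_{\infty}\) for the unique \(\p\mid p\), so \(r_{\p,T}=\#T\). This number is even in both cases (\(T=\Sigma_{\infty}\) with \(d\) even, or \(T=\Sigma_{\infty}\setminus\{\tau_{0}\}\) with \(d\) odd). Hence \(T_{p}=\emptyset\), and \(B_{T}\) is split at \(p\) and split at exactly \(d_{D}=d-\#T\in\{0,1\}\) infinite places, as required. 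Note that \(d-\#T=d_{D}\) is precisely the degree appearing in the statement; I read the \(G\) there as \(G_{T}=G_{D}\).

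Next, let \(M:=\Hom_{W(\FF)}(\ti{H}^{d_{D}}(\mrm{Sh}_{V^{p}}(G_{D}),W(\FF))^{\psi}_{\mm},W(\FF))\). By Theorem \ref{thmBHHMSFlatness}, \(M\) is faithfully flat over \(R:=R^{\psi}_{\bar{\rho},S\cup\{w_{1}\}}\cong\hat{\TT}^{\psi}_{\bar{\rho}}\). The representation \(\rho\) is a lift of \(\bar{\rho}\) with determinant \(\psi\chi_{\mrm{cycl}}^{-1}\) that is unramified outside \(S\cup\{w_{1}\}\). After choosing an \(\mO_{L}\)-lattice, it defines a point \(x:R\to\mO_{L}\) whose restriction to \(\TT^{S\cup\{w_{1}\}}\) is \(\chi\), because the Eichler–Shimura relation (\ref{alignEicherShimuraDetermin}) identifies the images of \(T_{\mfk{l}}\) and \(S_{\mfk{l}}\) in \(R\) with the traces and determinants of Frobenius. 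By faithful flatness, \(M\otimes_{R,x}\mO_{L}\neq0\), and in fact \(M\otimes_{R,x}L\ne0\) since \(M\otimes_{R,x}\mO_{L}\) is flat over \(\mO_{L}\).

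We then dualize to pass from coinvariants to invariants. Set \(H:=\ti{H}^{d_{D}}(\mrm{Sh}_{V^{p}}(G_{D}),\mO_{L})^{\psi}_{\mm}\), which is \(p\)-adically complete, separated and \(p\)-torsion free (the torsion-freeness holds in degree \(d_{D}\le1\)). Schikhof/Pontryagin-type duality for such modules gives
\[\Hom_{\mO_{L}}(M\otimes_{R,x}\mO_{L},\mO_{L})\cong H[\ker x]=H[\chi].\]
A nonzero finitely generated flat \(\mO_{L}\)-module has a nonzero \(\mO_{L}\)-dual, so \(H[\chi]\ne0\). Finally, \(H[\chi]\) is contained in the unlocalized \(\ti{H}^{d-\#T}(\mrm{Sh}_{V^{p}}(G_{T}),\mO_{L})^{\psi}[\chi]\), which is therefore nonzero.

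The main obstacle is the duality step. We need \(M\otimes_{R}\mO_{L}\) to be finite over \(\mO_{L}\) (or at least to have a nonzero dual). We also need the base change of coefficients from \(W(\FF)\) to \(\mO_{L}\) to commute with taking \(\chi\)-eigenspaces. I would first try to handle both via compactness: use that \(M\) is a compact \(R\)-module and that \(R/\ker x\) is finite over \(W(\FF)\).
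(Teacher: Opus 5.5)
Your proposal is correct and is essentially the paper's argument: view \(\rho\) as a point \(x\) of \(R^{\psi}_{\bar{\rho},S\cup\{w_{1}\}}\cong\hat{\TT}^{\psi}_{\bar{\rho}}\), use faithful flatness to get \(M\otimes_{R,x}\mO_{L}\neq0\), and dualize back (the paper's ``\(\Hom(-,\mO_{L})\) in \(\mrm{Solid}_{W(\FF)}\)''). Your reading of \(G\) as \(G_{T}\) and your check that \(\#T\) is even, so that \(B_{T}\) is split at \(p\), are details the paper leaves implicit.

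In the last step, \(M\otimes_{R,x}\mO_{L}\) is generally not finitely generated over \(\mO_{L}\), so the compactness route you suggest is the right one. It is compact because \(\mO_{L}\) is finitely presented over the Noetherian ring \(R\), and torsion-free by flatness, hence topologically \(\prod_{I}\mO_{L}\), which has a nonzero continuous dual.
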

\begin{proof}
By universal property,
\(\rho\) determines a character \(\chi:R^{\psi}_{\bar{\rho},S\cup \{w_{1}\}}\to \mO_{L}\). By 
\(R^{\psi}_{\bar{\rho},S\cup\{w_{1}\}}\cong \hat{\TT}^{\psi}_{\bar{\rho}}\)
showed in Theorem \ref{thmBHHMSFlatness}, this gives a character \(\chi:\hat{\TT}^{\psi}_{\bar{\rho}}\to \mO_{L}\), which is compatible with \(\chi:\TT^{S\cup \{w_{1}\}}\to \mO_{L}\).
By Theorem \ref{thmBHHMSFlatness}, \(\Hom_{W(\FF)}\left(\ti{H}^{d-\#T}(\mrm{Sh}_{V^{p}}(G),W(\FF))^{\psi}_{\mm},W(\FF)\right)\) is faithfully flat over \(\hat{\TT}^{\psi}_{\bar{\rho}}\). 
Therefore, \[
\Hom_{W(\FF)}\left(\ti{H}^{d-\#T}(\mrm{Sh}_{V^{p}}(G),W(\FF))^{\psi}_{\mm},W(\FF)\right)\otimes_{\hat{\TT}^{\psi}_{\bar{\rho}},\chi}\mO_{L}\cong 
\ne 0,
\] and taking \(\Hom(-,\mO_{L})\) in \(\mrm{Solid}_{W(\FF)}\), 
we see that \[\ti{H}^{d-\#T}(\mrm{Sh}_{V^{p}}(G),\mO_{L})^{\psi}[\chi]\ne 0.
\]
\end{proof}

We can thus prove some cases of the Langlands-Clozel-Fontaine-Mazur conjecture:
\begin{thm}\label{thmConjecFontaineMazur}
Let \(F\) be totally real number field of degree \(d\). Assume that \(p\) is inert in \(F\).
Let \(L/\QQ_{p}\) be a finite extension with residue field \(\FF\), and let
\(\rho:\Gal_{F,S}\to \GL_{2}(L)\) be a continuous representation.
Let \(\bar{\rho}:\Gal_{F}\to \GL_{2}(\FF)\) be the reduction of \(\rho\).

Assume that 

(1) \(\bar{\rho}\) is residual modular, that is, there exists a Hilbert modular eigenform \(f_{0}\)
such that \(\bar{\rho}\cong \bar{\rho}_{f_{0}}\).

(2) \(\bar{\rho}|_{\Gal_{F(\sqrt[p]{1})}}\)
is absolutely irreducible.

(3) \(\bar{\rho}|_{\Gal_{F_{p}}}\) is semisimple generic in the sense of \cite[p.100 (i)]{BreuilHerzigHuMorraSchraen2023GKdim}.

(4) For any \(w\in S\backslash \{p\}\),
the universal framed deformation ring of \(\bar{\rho}|_{\Gal_{F_{w}}}\) over \(W(\FF)\)
is formally smooth over \(W(\FF)\).

(5) Let \(\psi\) denote the Teichmüller lift of \(\chi_{\mrm{cycl}}\det(\bar{\rho})\), and we 
assume that \(\det(\rho)=\psi \chi_{\mrm{cycl}}^{-1}\).

Let \(\chi:\TT^{S}\to \mO_{L}\) be the character determined by \(\rho\) via (\ref{alignEicherShimuraDetermin}).

Then there exists a neat open compact subgroup \(K^{p}\subset G_{\emptyset}(\mA_{f}^{p})\), and a finite set \(S'\supset S\) such that \(K^{p}\) is unramified away from \(S'\), and \[\ti{H}^{d}(\mrm{Sh}_{K^{p}}(G_{\emptyset}),L)[\chi|_{\TT^{S'}}]\ne 0.
\]

If we assume in addition that 

(6) \(\rho\) is de Rham of regular parallel weights.

Then there exists a Hilbert modular eigenforms \(f\)
such that \(\rho\cong \rho_{f}\).
\end{thm}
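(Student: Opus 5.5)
The argument has two stages. First I show that $\chi$ occurs in the completed cohomology of the Hilbert modular variety at some tame level. Then I apply Theorem \ref{thmClassicalityCompleteCoho}.

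\textbf{Stage 1: nonvanishing on a definite or indefinite quaternionic variety.} Assumption (1) gives $\bar{\rho}\cong\bar{\rho}_{f_{0}}$. By the classical Jacquet--Langlands correspondence, $\bar{\rho}$ then appears in the mod $p$ cohomology of the quaternionic Shimura variety $\mrm{Sh}(G_{T})$. Here $T=\Sigma_{\infty}$ if $2\mid d$, and $T=\Sigma_{\infty}\backslash\{\tau_{0}\}$ otherwise, so that $B_{T}$ is split above $p$. This uses that $p$ is inert, so $r_{\p,T}=\#T$ is even. Choosing $w_{1}$ and $V^{p}$ as in Theorem \ref{thmBHHMSFlatness}, and twisting $f_{0}$ so that the determinant is $\psi\chi_{\mrm{cycl}}^{-1}$, I obtain the residual modularity hypothesis (\ref{alignResiduModularity}). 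Assumptions (2)--(5) are exactly the remaining hypotheses of Theorem \ref{thmBHHMSFlatness}. Corollary \ref{corPromodular} therefore gives $\ti{H}^{d-\#T}(\mrm{Sh}_{V^{p}}(G_{T}),\mO_{L})^{\psi}[\chi]\neq0$. After inverting $p$, the same nonvanishing holds with $L$-coefficients.

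\textbf{Stage 2: transfer to the Hilbert modular variety.} I transfer from $G_{T}$ to $G_{\emptyset}$ using the $\ell$-adic results of \S\ref{subsecAppliLadicJLcorr}. Since $T$ has even cardinality split among primes above $p$ (or after the choice in Corollary \ref{corDirectSummandVaryL}), Corollary \ref{corDirectSummandVaryL} with $(T,T')=(\emptyset,T)$ supplies an auxiliary finite set $S'$ and pro-$\ell$ levels at $\ell\in S'$. At these levels, $R\Gamma_{c}(\mrm{Sh}_{K}(G_{T}))[d-\#T]$ is a Hecke-equivariant direct summand of $R\Gamma_{c}(\mrm{Sh}_{K}(G_{\emptyset}))[d]$, naturally in $K$ and in the coefficients $\ZZ/p^{n}$ (note $p\notin S'$). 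Passing to $\varinjlim_{K_{p}}$ and $\Rlim_{n}$ shows that $R\Gamma_{c}(K^{p},L)_{\p_{\chi}}$ of the Hilbert tower is nonzero after restricting $\chi$ to $\TT^{S'}$. Here $K^{p}$ is taken small enough at $S'$, and I use that $\mS(G_{T})$ is proper.

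It remains to move to degree $d$ and to non-compact support. By Theorem \ref{thmPadicPlectic}(1), the localization at $\p_{\chi}$ is concentrated in degree $d$, since $\rho$ is absolutely irreducible with distinct Hodge--Tate weights. The CM case is handled separately by Lemma \ref{lemCMorStrongIrr} and class field theory. Compactly supported and full cohomology agree after localizing at a non-Eisenstein ideal, because the boundary contributes only reducible Galois representations. This gives $\ti{H}^{d}(\mrm{Sh}_{K^{p}}(G_{\emptyset}),L)[\chi|_{\TT^{S'}}]\neq0$.

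\textbf{Conclusion.} Under (6), $\rho$ is de Rham of regular parallel weights. After twisting, these are the weights $\{\frac{k-w}{2},\frac{2-k-w}{2}\}$ with $k\le0$ required in Theorem \ref{thmClassicalityCompleteCoho}(3). Since $\rho$ is absolutely irreducible, Theorem \ref{thmClassicalityCompleteCoho} applied with $S'$ in place of $S$ produces $f$ with $\rho\cong\rho_{f}$.

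\textbf{Expected main obstacle.} The hardest step is the bookkeeping in Stage 2. It requires choosing $S'$ disjoint from $S$, compatible with $w_{1}$, and such that the $\ell$-adic direct summand statement passes to $p$-adically completed cohomology. One must also ensure that the localization at the Hecke eigensystem commutes with the limits. If the direct-summand route fails, I would instead use the spectral action of Corollary \ref{corDifferentSVfromHecke} to deduce nonvanishing of the localization directly.
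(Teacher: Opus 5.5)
Your Stage~2 and conclusion follow the paper: Corollary~\ref{corPromodular} on $G_T$, then Corollary~\ref{corDirectSummandVaryL} back to $G_\emptyset$, then Theorem~\ref{thmClassicalityCompleteCoho}. Your Stage~1, however, takes a genuinely different route.

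\textbf{Stage~1.} The paper does not use the classical Jacquet--Langlands correspondence. It raises the weight of $f_0$ by Hasse invariants and gets $H^{d}_{\et}(\mrm{Sh}_{K^pK_p'}(G_\emptyset)_{\bar{\QQ}},\FF)_{\mm}\neq 0$ from the complex Hodge decomposition. It then passes to compact support by Poincar\'e duality, and moves the nonvanishing to $G_T$ using the $\ell$-adic spectral action at an auxiliary prime (Corollary~\ref{corDifferentSVfromHecke}). That corollary needs no parity condition in the direction $\emptyset\to T$. Your route is legitimate exactly because $p$ is inert and $\#T$ is even: then $B_T$ is unramified at every finite place, so no discrete-series or level-raising condition arises. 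It gives a purely classical Stage~1 with no auxiliary prime, while the paper stays inside its own geometric machinery. You should still state two bookkeeping points explicitly. First, $f_0$ must be replaced by an eigenform of cohomological weight before applying Jacquet--Langlands (Hasse invariants plus a Deligne--Serre lift). Second, a nonzero mod~$p$ eigenclass with algebraic coefficients yields $\ti H^{d-\#T}(\mrm{Sh}_{V^p}(G_T),\FF)[\mm]\neq 0$ after shrinking to a pro-$p$ level at $p$, where the coefficient system becomes unipotent.

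\textbf{Stage~2.} One step is misjustified. You invoke Theorem~\ref{thmPadicPlectic}(1) to concentrate the localization in degree $d$, citing distinct Hodge--Tate weights and irreducibility. But the first conclusion of the theorem assumes only (1)--(5), so $\rho$ need not be Hodge--Tate or strongly irreducible, and that theorem is unavailable.

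The step is also unnecessary. Corollary~\ref{corDirectSummandVaryL} places $R\Gamma_c(\mrm{Sh}_K(G_T))[d-\#T]$ inside $R\Gamma_c(\mrm{Sh}_K(G_\emptyset))[d]$, naturally in $K$ and in $\ZZ/p^n$. After $\varinjlim_{K_p}$ and $\Rlim_n$, this exhibits $\ti H^{d-\#T}(\mrm{Sh}_{K^p}(G_T),L)$ as a $\TT^{S\cup S'}$-equivariant direct summand of $\ti H^{d}_c(\mrm{Sh}_{K^p}(G_\emptyset),L)$. If you track the $[\chi]$-eigenspace degree by degree, rather than the localized complex, you land in degree $d$ directly, and the CM case plays no role in this part.

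Your passage from $\ti H^d_c$ to $\ti H^d$ is sound. You localize at the maximal ideal $\mm$, which is non-Eisenstein because (2) makes $\bar\rho$ absolutely irreducible. This replaces the paper's appeal to the degree bound $[0,\lfloor\frac{d-1}{2}\rfloor]$ for the boundary cohomology.
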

\begin{rmk}
It should be sufficient to assume that \(p\) is unramified (rather than inert) in \(F\). See \cite[Remark 8.5.2]{BreuilHerzigHuMorraSchraen2023GKdim}.
\end{rmk}
\begin{rmk}
Following the strategy of \cite{GeeNewton2022patching},
\cite{BreuilHerzigHuMorraSchraen2023GKdim} proves the ``big \( R= \) big \( T \)'' theorem by calculating the Gelfand-Kirillov dimension of the completed cohomolgy of Shimura sets and curves. We then deduce the promodularity for the completed cohomolgy of Hilbert modular varieties using \( \ell \)-adic Hecke action of \cite{FarguesScholze2021geometrization}. It is also possible to calculate directly the Gelfand-Kirillov dimension of Hilbert modular varieties, which will also implies the ``big \( R= \) big \( T \)'' theorem. This project is the subject of Nischay Reddy's research, supervised by Florian Herzig.
\end{rmk}
\begin{proof}
Let \(\mm:=\Ker(\TT^{S}\xrightarrow{\chi}\mO_{L}\twoheadrightarrow\FF)\).
If \(\bar{\rho}\)
is residual modular for a form \(f\) of level \(\Kpp\), 
by twisting using Hasse invariant, we can assume that \(f\) is of regular weights.
Then by comparing with complex Hodge decomposition, there exists \(K_{p}'\) such that \[H^{d}_{\et}(\mrm{Sh}_{K^{p}K_{p}'}(G_{\emptyset})_{\bar{\QQ}},\FF)_{\mm}\ne 0. 
\] 
By Poincaré duality,
 we know that 
\[H^{d}_{c,\et}(\mrm{Sh}_{K^{p}K_{p}'}(G_{\emptyset}),\FF)_{\mm}\ne 0. 
\] 
Then applying Corollay \ref{corDifferentSVfromHecke} to an auxiliary place \(w\ne p\), we know that
\[H^{d-\#T}_{\et}
(\mrm{Sh}_{K^{p}K_{p}''}
(G)_{\bar{\QQ}},\FF)_{\mm}\ne 0
\] for \(T\) as in Corollary \ref{corPromodular}. Then by Corollary \ref{corPromodular}, we know that \[H^{d-\#T}
(\mrm{Sh}_{K^{p}}
(G),L)[\chi]\ne 0.
\]
Now we can apply Corollary \ref{corDirectSummandVaryL} to show that (up to shrinking \(K^{p}\), and restricting \(\chi\) to \(\TT^{S'}\) for a large finite set \(S'\supset S\)) \[
H^{d}(\mrm{Sh}_{K^{p}}(G),L)[\chi|_{\TT^{S'}}]\cong
H^{d}_{c}(\mrm{Sh}_{K^{p}}(G),L)[\chi|_{\TT^{S'}}]\ne 0,
\] 
where the last isomorphism follows by noting that the completed cohomology of the boundary is supported in degree \([0,\floor*{\frac{d-1}{2}}]\) by \cite{Waldschmidt1981transcendance} (in fact, the trivial bounds \([0,d-1]\) suffices for this proof). See \cite{JiangPan2025Plectic} for details.

If \(\rho\) is in addition de Rham of regular parallel weights, we can then apply Theorem \ref{thmClassicalityCompleteCoho} to conclude. 
\end{proof}

\appendix
\section{Universal determinants on Hilbert modular varieties}\label{secUniversalDeterminant}

In this appendix, we will define the \(p\)-adic Hecke algebra \(\hat{\TT}^{S}\) acting on completed cohomology of Hilbert modular varieties, 
and define a \(2\)-dimensional determinant of \(\Gal_{F}\)
valued in \(\hat{\TT}^{S}\) in the sense of \cite{Chenevier2014determinants}. The result should be well-known following the strategy of \cite[Theorem 4.3.1]{Scholze15}, but we give an account here due to the lack of references, especially in the abelian type case. 

Let \( G \), \( K^{p} \), \( S \), and \( \TT^{S} \) be as in \S \ref{subsec:ClassiThm}. 
\begin{dfn}[\cite{Emerton06}, \cite{Emerton2014completed}]\label{dfnPadcHeckeAlg}
We fix an open compact subgroup \(K_{p,0}\subset G(\QQ_{p})\). 
For any open compact subgroup \(K_{p}\subset K_{p,0}\) and \(n\in\NN\), 
let \(\TT^{S}_{K_{p},n}\)
denote the image of \(\TT^{S}\) in \[\End_{\ZZ/p^{n}[K_{p,0}/K_{p}]}\left(R\Gamma_{\et,c}(\mrm{Sh}_{\Kpp}(G)_{\bar{\QQ}},\ZZ/p^{n})\right),\] and define \[\hat{\TT}^{S}:=\varprojlim_{K_{p},n}\TT^{S}_{K_{p},n},
\] equipped with the limit topology. 
Note that by \cite[Corollary 9.11]{DospinescuPaskunasSchaen2020infinitesimalFamily}, it has only finitely many open maximal ideals \(\mm\). 
Also note that by Poincaré duality, we can also use \(R\Gamma_{\et}(\mrm{Sh}_{\Kpp}(G)_{\bar{\QQ}},\ZZ/p^{n})\) instead
in the definition.
\end{dfn}
Then \(\hat{\TT}^{S}\) acts continuously 
on the completed cohomology \(R\Gamma(K^{p},\ZZ_{p})\) (Definition \ref{dfn:CompletedCohomo}). 

\begin{thm}\label{thmConstrDeterminantHilbert}
There exists a \(2\)-dimensional continuous determinant \(D:\ZZ_{p,\square}[\Gal_{F,S}]\to \hat{\TT}^{S}\) in the sense of \cite{Chenevier2014determinants}, such that for any \(\mfk{l}\notin S\),
\begin{align}\label{alignEicherShimuraDetermin}
D(t-\Fr_{\mfk{l}})=t^{2}-
T_{\mfk{l}}\cdot t+\Nm(\mfk{l})S_{\mfk{l}},
\end{align}
Here, concretely \(\ZZ_{p,\square}[\Gal_{F,S}]\cong \varprojlim_{U}\ZZ_{p}[\Gal_{F,S}/U]\),
where the limit is taken over all the open subgroups \(U\subset \Gal_{F,S}\),
and \(S_{\mfk{l}}=\mrm{diag}(\varpi_{\mfk{l}}^{-1},\varpi_{\mfk{l}}^{-1})\) and \(T_{\mfk{l}}=G(\mO_{F_{\mfk{l}}})\mrm{diag}(1,\varpi_{\mfk{l}}^{-1})G(\mO_{F_{\mfk{l}}})\).
\end{thm}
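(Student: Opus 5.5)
The plan is to follow the strategy of \cite[Theorem 4.3.1]{Scholze15}: build the determinant on each finite-level Hecke algebra \(\TT^{S}_{K_{p},n}\) from classical Galois representations of Hilbert eigenforms, and then pass to the limit using continuity of determinants in the sense of \cite{Chenevier2014determinants}.

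Fix \(K_{p}\subset K_{p,0}\) and \(n\). The first step is to realize \(R\Gamma_{\et,c}(\mrm{Sh}_{\Kpp}(G)_{\bar{\QQ}},\ZZ/p^{n})\) as a quotient of characteristic-zero coherent cohomology, following \cite{Scholze15}. Concretely, the cohomology of \(\ZZ/p^{n}\) on the perfectoid tower is computed by \(\mO^{+}/p^{n}\), which is almost \(\varinjlim\) of coherent cohomology of \(\omega^{\otimes k}\)-twists obtained by multiplying by powers of a Hasse-type (fake Hasse) invariant on the minimal/toroidal compactification. From this one gets that every \(\TT^{S}\)-eigensystem in \(\TT^{S}_{K_{p},n}\) is congruent modulo \(p^{n}\) to one arising in \(H^{*}\) of automorphic vector bundles of some weight at level \(K^{p}K_{p}'\). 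More precisely, \(\TT^{S}_{K_{p},n}\) is a quotient of a subring of \(\prod_{f}\mO_{L_{f}}\), where \(f\) runs over cuspidal (and Eisenstein) Hilbert eigenforms of tame level \(K^{p}\) and varying weight. This uses the description of coherent cohomology by \cite{Su2018coherent} together with the vanishing of boundary contributions up to Eisenstein classes.

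For each cuspidal \(f\) there is the Galois representation \(\rho_{f}\) of Carayol, Taylor and Blasius--Rogawski satisfying the Eichler--Shimura relation. Its determinant \(\det(t-\rho_{f}(\cdot))\) gives a \(2\)-dimensional determinant \(\ZZ_{p,\square}[\Gal_{F,S}]\to \mO_{L_{f}}\). Eisenstein systems give \(\chi_{1}\oplus\chi_{2}\), which are sums of Hecke characters via class field theory. The product over \(f\) is then a determinant valued in \(\prod_{f}\mO_{L_{f}}\). Its characteristic polynomials of \(\Fr_{\mfk{l}}\) are \(t^{2}-T_{\mfk{l}}t+\Nm(\mfk{l})S_{\mfk{l}}\), which lie in the image of \(\TT^{S}\). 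By Chebotarev, the Frobenii are dense, and determinants are determined by characteristic polynomials (\cite[Lemma 1.12, Cor.~1.14]{Chenevier2014determinants}). Hence the determinant factors through the closed subring generated by \(\TT^{S}\), and then through its quotient \(\TT^{S}_{K_{p},n}\), after checking that Chenevier's polynomial-law coefficients lie in the closure of the image of \(\TT^{S}\).

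By uniqueness, the resulting determinants \(D_{K_{p},n}\) are compatible under the transition maps. Taking \(\varprojlim\) gives a continuous \(D:\ZZ_{p,\square}[\Gal_{F,S}]\to\hat{\TT}^{S}\) satisfying (\ref{alignEicherShimuraDetermin}), since continuous determinants into profinite rings form a limit-preserving functor \cite[\S 2.30]{Chenevier2014determinants}.

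The main obstacle is the first step in the abelian-type setting. Scholze's argument is written for Hodge type, so we must transfer it, either via the unitary Shimura varieties \(G'_{\ti{T}}\) using Lemma \ref{lemUnitaryToQuaternion} and Deligne's connected-component description, or directly via the perfectoid Hilbert tower with its Hodge--Tate map and fake Hasse invariants. A second, milder point is controlling the toroidal boundary so that boundary classes contribute only reducible (Eisenstein) determinants, which is also why compactly supported and ordinary cohomology can be used interchangeably via Poincaré duality.
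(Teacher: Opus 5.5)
Your proposal is the same argument as the paper's, namely Scholze's proof of \cite[Theorem 4.3.1]{Scholze15}. Its ingredients are primitive comparison on a perfectoid compactification; a \v{C}ech complex on a good cover whose terms are, via fake Hasse invariants, colimits of \(H^{0}\) of ample twists of the boundary ideal sheaf; classical Galois representations; and Chenevier's factorization through a continuous map \(\TT^{S}_{\mrm{cl}}\to\TT^{S}_{K_{p},n}\). The step you flag as the main obstacle is handled in the paper by the Boxer--Pilloni compactification \(\mS^{*}_{\overline{K}}(G)\), built from the PEL group \(G^{*}\) by Deligne's method with \(\omega\) descended along a finite group. Since only \(H^{0}(\mS^{*}_{K}(G),\ti{\mcal{I}}_{K}\otimes\omega_{K}^{\otimes m})\) occurs, i.e.\ cusp forms of parallel weight, you need neither Eisenstein determinants nor \cite{Su2018coherent}. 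The reducedness implicit in your ``quotient of a subring of \(\prod_{f}\mO_{L_{f}}\)'' then comes from the Petersson product.
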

We will use the compactification \(\mrm{Sh}^{*}_{\overline{K}}\)(G) of \(\mrm{Sh}_{K}(G)\) considered in \cite[\S 4.4]{BoxerPilloni2021higherColeman} which admits a finite surjection \(\mrm{Sh}^{*}_{K}(G)\to \mrm{Sh}^{*}_{\overline{K}}(G)\), where \(\mrm{Sh}^{*}_{K}(G)\) is the minimal compactification of \(\mrm{Sh}^{*}_{\overline{K}}(G)\). Concretely, 
let \(G^{*}:=\mrm{det}^{-1}(T_{\QQ})\)
for \(\mrm{det}:G\to T_{F}\), and \(G^{*}\) defines a tower of Shimura varieties of PEL type \(\mrm{Sh}_{K}(G^{*})\), which admits a closed immersion \(\mrm{Sh}_{K}(G^{*})\hookrightarrow \mrm{Sh}_{K'}(\mrm{GSp}_{2d})\) for \(K\subset K'\cap G^{*}(\mA_{f})\). Then \(\mrm{Sh}^{*}_{\overline{K}}(G^{*})\)
denotes the Zariski closure of \(\mrm{Sh}_{K}(G^{*})\) in \(\mrm{Sh}_{K'}^{*}(\mrm{GSp}_{2d})\). Then using the standard construction relating \(\mrm{Sh}_{K'}(G^{*})\)
and \(\mrm{Sh}_{K}(G)\) as in \cite[\S 2]{Deligne1979varietesInterpretationModulaire}, we construct the compactification \(\mrm{Sh}^{*}_{\overline{K}}(G)\). 
Let \(\mS_{\overline{K}}^{*}(G)\)
be the analytification of \(\mrm{Sh}_{\overline{K}}^{*}(G)\)
over \(\CC_{p}\).

By \cite[Proposition 4.4.53]{BoxerPilloni2021higherColeman} (generalizing \cite[Theorem 4.1.1]{Scholze15}), this compactification has the benefit that there is a perfectoid space \(\mS_{\overline{K^{p}}}^{*}(G)\) such that  \[\mS_{\overline{K^{p}}}^{*}(G)
\sim \varprojlim_{K_{p}}\mS_{\overline{\Kpp}}^{*}(G).
\] 
Note that this is in general stronger than an isomorphism as diamonds. 
By
\cite[Proposition 4.4.53]{BoxerPilloni2021higherColeman}, there is a Hodge-Tate period map \[\pi_{\HT}:\mS^{*}_{\overline{K^{p}}}(G)\to \Fl_{G,\mu}\cong \prod_{\tau\in\Sigma_{\infty}}\mP^{1},
\] such that \(\Fl_{G,\mu}\)
admits a finite analytic cover \(\{U_{i}\}\)
by open affinoid subspaces, such that \(\forall i\), \(\pi_{\HT}^{-1}(U_{i})\)
is a \emph{good affinoid perfectoid open subspace} in the sense of \cite[Definition 4.4.3]{BoxerPilloni2021higherColeman}, i.e. for \(K_{p}\) sufficiently small,
\(\pi_{\HT}^{-1}(U_{i})\)
descends to an open affinoid subspace \(U_{i,\Kpp}\subset \mS^{*}_{\overline{\Kpp}}(G)\),
and we have an isomorphism over \(\mO_{\CC_{p}}/p^{n}\) \begin{align}\label{alignFunctionIsDens}
H^{0}(\pi_{\HT}^{-1}(U_{i}),\mO^{+}_{\mS^{*}_{\overline{K^{p}}}(G)}/p^{n})\cong \varinjlim_{K_{p}}
H^{0}(U_{i,\Kpp},\mO^{+}_{\mS^{*}_{\overline{\Kpp}}(G)}/p^{n}). 
\end{align} Note that \(U_{i,\Kpp}=\Spa(R_{i,K_{p}},R_{i,K_{p}}^{+})\) and \(\pi_{\HT}^{-1}(U_{i})\) are uniform, and \(R_{i,K_{p}}^{+}=R_{i,K_{p}}^{\circ}\) by \cite[Proposition 2.4.15]{Huber1993bewertungsspektrum}, so this definition indeed coincides with the definition in \cite{BoxerPilloni2021higherColeman}.

 We will refer to such a cover \(\{U_{i}\}\)
as \emph{a good cover} for \(\mS^{*}_{\overline{K^{p}}}(G)\).
\begin{construction}[A good cover for Hilbert modular varieties]
\label{constructionGoodCoverHilbert}
For \(\mS^{*}_{\overline{K^{p}}}(G^{*})\),
we have the following concrete choice of the cover \(\{U_{i}\}\):
we have \(\Fl_{G,\mu}\cong \prod_{\tau\in \Sigma_{\infty}}\mP^{1}\),
and fixing the maximal torus \(T\subset G\), we denote by \(0_{\tau}\) and \(\infty_{\tau}\)
the unique fixing points for the action of \(T\) on the \(\tau\)-component \(\mP^{1}_{\tau}\). We can choose two sections \(s_{\tau,0},s_{\tau,\infty}\in H^{0}(\mP^{1},\omega_{\mP^{1}})\) 
such that \(\infty_{\tau}\in U_{\tau,\infty}:=\{\frac{|s_{\tau,0}|}{|s_{\tau,\infty}|}\le 1\}\), \(0_{\tau}\in U_{\tau,0}:=\{\frac{|s_{\tau,\infty}|}{|s_{\tau,0}|}\le 1\}\), and \(\{U_{\tau,\infty},U_{\tau,0}\}\)
covers \(\mP^{1}_{\tau}\).

Let \(\omega_{\Fl_{G,\mu}}\) be the tautological ample line bundle,
and for any \(i=(i_{\tau})_{\tau\in\Sigma_{\infty}}\in \{0,\infty\}^{\Sigma_{\infty}}\), \(s_{i}:=\prod_{\tau}s_{\tau,i_{\tau}}\in H^{0}(\Fl_{G,\mu},\omega_{\Fl_{G,\mu}})\), and define \[U_{i}:=\prod_{\tau\in\Sigma_{\infty}}U_{\tau,i_{\tau}}.
\] Then \(\{U_{i}\}_{i\in\{0,\infty\}^{\Sigma_{\infty}}}\)
forms an open cover of \(\Fl_{G,\mu}\),
\(s_{i}\)
is invertible on \(U_{i}\), and for any \(i,j\), the open inclusion
\(U_{ij}:=U_{i}\cap U_{j}\subset U_{i}\)
is characterized by the condition \(|s_{i}|=|s_{j}|\). 

We claim that \(\{U_{i}\}_{i\in\{0,\infty\}^{\Sigma_{\infty}}}\) gives a good cover for \(\mS^{*}_{\overline{K^{p}}}(G^{*})\). Comparing with \cite[Theorem 4.1.1 (i)]{Scholze15}, it suffices to show that along \(\Fl_{G,\mu}\hookrightarrow \Fl_{\mrm{GSp}_{2d}}\), each \(U_{i}\)
is mapped into \(\Fl_{\mrm{GSp}_{2d},J}\)
for some \(J\subset\{1,2,\cdots,2d\}\), where the latter is defined in \cite[1008]{Scholze15}. 
We can identify \(\{1,2,\cdots,2d\}\)
with \(\coprod_{\tau}\{0_{\tau},\infty_{\tau}\}\)
and we see easily that \(U_{i}\)
is mapped into \(\Fl_{\mrm{GSp}_{2d},\coprod_{\tau}\{i_{\tau}\}}\).

By the construction of \(\pi_{\HT}\)
for \(\mS^{*}_{\overline{K^{p}}}(G)\) in \cite{BoxerPilloni2021higherColeman},
the cover \(\{U_{i}\}\) is also a good cover for \(\mS^{*}_{\overline{K^{p}}}(G)\). 
\end{construction}

\begin{construction}[Descent of canonical line bundle]
By \cite[Theorem 4.1.1 (v)]{Scholze15}, we have a canonical line bundle \(\omega_{\mS^{*}_{\overline{\Kpp}}(G^{*})}\) over \(\mS^{*}_{\overline{\Kpp}}(G^{*})\), whose pull-back to \(\mS^{*}_{\overline{K^{p}}}(G^{*})\) 
is \(\mscr{A}_{G^{*}}\)-equivariantly isomorphic to the pull-back of \(\omega_{\Fl}\) along \(\pi_{\HT}:\mS^{*}_{\overline{\Kpp}}(G^{*})\to \Fl_{G,\mu}\). Concretely, \(\omega_{\mS^{*}_{\overline{\Kpp}}(G^{*})}|_{\mS_{\Kpp}(G^{*})}\) is the top exterior product of the relative differentials of the universal abelian schemes.  


For \(i\in\{0,\infty\}^{\Sigma_{\infty}}\),
we denote by \(U_{i,\Kpp}\) the descent of \(\pi_{\HT}^{-1}(U_{i})\cap \mS^{*}(G^{*})\)
to \(\mS^{*}_{\overline{\Kpp}}(G^{*})\). Then by (\ref{alignFunctionIsDens}), for any \(n\ge 1\), \(i,j\in \{0,\infty\}^{\Sigma_{\infty}}\),
there exist a small enough subgroup \(K_{p}\), and sections \(s_{j}^{(i)}\in H^{0}(U_{i,\Kpp},\omega_{\mS^{*}_{\overline{\Kpp}}(G^{*})})\)
such that \(\frac{|s_{j}^{(i)}-s_{j}|}{|s_{i}|}\le |p|^{n}\). 
This implies in particular that \(s_{i}^{(i)}\)
is an invertible section. 

We claim that the line bundle \(\omega_{\mS^{*}_{\overline{\Kpp}}(G^{*})}\) 
desends to a line bundle \(\omega_{\mS^{*}_{\overline{\Kpp}}(G)}\)
over \(\mS^{*}_{\overline{\Kpp}}(G)\). For this, we can fix a connected component, and verify the descent locally. Then by the construction of \(\mS^{*}_{\overline{\Kpp}}(G)\)
in \cite{BoxerPilloni2021higherColeman}, \[\mS^{*,0}_{\overline{\Kpp}}(G)\cong 
\mS^{*,0}_{\overline{\Kpp}}(G^{*})/\Delta,
\] where \(\Delta\) is a finite group, which we can assume to be independent of \(K_{p}\) by \cite[Theorem 4.4.43 (6)]{BoxerPilloni2021higherColeman} by taking \(K_{p}\) sufficiently small. 
Here, the quotient of adic spaces exists by \cite{Hansen2016quotients}, and \(\omega_{\mS^{*}_{\overline{\Kpp}}(G^{*})}|_{\mS^{*,0}_{\overline{\Kpp}}(G^{*})}\)
is \(\Delta\)-equivariant. 
We denote \(U_{i,\Kpp}^{0}:=U_{i}\cap \mS^{*,0}_{\overline{\Kpp}}(G^{*})\). 
Since \(s_{j}\) is \(\Delta\)-invariant,
by averaging over \(\Delta\), we can also choose \(s_{j}^{(i)}\) such that \(s_{j}^{(i)}|_{U_{i,\Kpp}^{0}}\)
is \(\Delta\)-invariant; in particular \(s_{i}^{(i)}\) is an invertible \(\Delta\)-invariant section. This implies the desired descent, that is, there is a unique line bundle \(\omega_{\mS^{*}_{\overline{\Kpp}}(G)}\) over \(\mS^{*,0}_{\overline{\Kpp}}(G)\)
such that its pull-back to \(\mS^{*,0}_{\overline{\Kpp}}(G^{*})\)
is \(\Delta\)-equivariantly isomorphic to \(\omega_{\mS^{*}_{\overline{\Kpp}}(G^{*})}\). We can then extend it to a line bundle \(\omega_{\mS^{*}_{\overline{\Kpp}}(G)}\)
over \(\mS^{*}_{\overline{\Kpp}}(G)\) by the standard construction following \cite[\S 2]{Deligne1979varietesInterpretationModulaire}.
\end{construction}
\begin{construction}[A formal model of the minimal compactification]\label{constructionFormalModelMinimalCompactification}
For \(\Kpp\subset G(\mA_{f})\) with \(
K_{p}\) sufficiently small, 
by abuse of notation, we denote the open subspace \(U_{i,\Kpp}\subset \mS^{*}_{\overline{\Kpp}}(G)\)
to be the descent of \(\pi_{\HT}^{-1}(U_{i})\subset \mS^{*}_{\overline{K^{p}}}(G)\). 
Then using (\ref{alignFunctionIsDens}),
we can find a sequence of open subgroups \(\{K_{p,n}\}_{n\in\ZZ_{\ge 1}}\), and
there exist \(s^{(i)}_{j,n}\in H^{0}(U_{i,K^{p}K_{p,n}},\omega_{\mS^{*}_{\overline{\Kpp}}(G)})\) for \(i,j\in \{0,\infty\}^{\Sigma_{\infty}}\), such that \(\frac{|s_{j,n}^{(i)}-s_{j}|}{|s_{i}|}\le |p|^{n}\). 

We denote by \(\ti{U}_{i,\Kpp}\) the preimage of \(U_{i,\Kpp}\)
along the finite morphism \(\mS^{*}_{\Kpp}(G)\to \mS^{*}_{\overline{\Kpp}}(G)\), and denote by \(\omega_{\Kpp}\)
the pull-back of \(\omega_{\mS^{*}_{\overline{\Kpp}}(G)}\). We also denote by \(s^{(i)}_{j,n}\in H^{0}(\ti{U}_{i,K^{p}K_{p,n}},\omega_{K^{p}K_{p,n}})\)
the pull-back of \(s^{(i)}_{j,n}\). 

We can now define a formal model of \(\mS^{*}_{K^{p}K_{p,n}}(G)\) using \cite[Lemma 2.1.1]{Scholze15} as in \cite[1029-1030]{Scholze15}. This implies that \(\mS^{*}_{K^{p}K_{p,n}}(G)\)
equipped with the line bundle \(\omega_{K^{p}K_{p,n}}\), 
and the cover \(\{\ti{U}_{i,K^{p}K_{p,n}}\}\)
admits a flat proper formal model \(\mfk{S}^{*}_{K^{p}K_{p,n}}(G)\) over \(\Spf(\mO_{\CC_{p}})\) equipped with an ample line bundle \(\mfk{w}_{K^{p}K_{p,n}}\) and an affine cover \(\{\mfk{U}_{i,K^{p}K_{p,n}}\}\). Moreover, for fixed \(j\in\{0,\infty\}^{\Sigma_{\infty}}\), \(s^{(i)}_{j,n}\in H^{0}(\mfk{U}_{i,K^{p}K_{p,n}},\mfk{w}_{K^{p}K_{p,n}})\) and they
glue to a section (``fake Hasse invariant'') \[\bar{s}_{j,n}\in H^{0}(\mfk{S}^{*}_{K^{p}K_{p,n}}(G),\mfk{w}_{K^{p}K_{p,n}}/p^{n})
\] Note that \(\mfk{U}_{j,K^{p}K_{p,n}}=U(\bar{s}_{j,n})\), i.e. the locus where \(\bar{s}_{j,n}\)
is invertible. 
\end{construction}
We can compare the ideal sheaves of \(\mS^{*}_{\overline{K}}(G)\) and \(\mS^{*}_{K}(G)\) that define the boundaries.
\begin{lem}\label{lemIdealSheafDense}
Denote by \(\mcal{Z}_{\overline{K}}\), 
and \(\mcal{Z}_{K}\) the reduced complements of \(\mS_{K}(G)\)
in \(\mS^{*}_{\overline{K}}(G)\) and \(\mS^{*}_{K}(G)\). We define \(\mcal{Z}_{\overline{K^{p}}}:=\varprojlim_{K_{p}}\mcal{Z}_{\overline{\Kpp}}\).
Let \(\mcal{I}^{+}_{K}\subset \mO^{+}_{\mS^{*}_{\overline{K}}(G)}\) (resp. \(\ti{\mcal{I}}^{+}_{K}\subset \mO^{+}_{\mS^{*}_{K}(G)}\), resp. 
\(I^{+}_{K^{p}}\subset \mO^{+}_{\mS^{*}_{\overline{K^{p}}}(G)}\)) be the ideal of functions vanishing on \(\mcal{Z}_{\overline{K}}\) (resp. \(\mcal{Z}_{K}\), resp. \(\mcal{Z}_{\overline{K^{p}}}\)). 

Let \(U_{i}\), \(U_{i,\Kpp}\), and \(\ti{U}_{i,\Kpp}\) be as in Construction \ref{constructionGoodCoverHilbert} and Construction \ref{constructionFormalModelMinimalCompactification}. Then we have 
isomorphisms \[
\varinjlim_{K_{p}}H^{0}_{\an}(U_{i,\Kpp},\mcal{I}^{+}_{\Kpp}/p^{n})
\cong 
\varinjlim_{K_{p}}H^{0}_{\an}(\ti{U}_{i,\Kpp},\ti{\mcal{I}}^{+}_{\Kpp}/p^{n})
\cong H^{0}_{\an}(U_{i},\mcal{I}_{K^{p}}^{+}/p^{n}).
\] Moreover, the last isomorphism is compatible with \(\TT^{S}\)-actions.
\end{lem}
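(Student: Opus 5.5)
I would follow the strategy of \cite[Theorem 4.1.1, Lemma 4.2.2]{Scholze15} (the ``ideal sheaf of the boundary'' version of the almost-density statement (\ref{alignFunctionIsDens})), adapted to the compactification of \cite{BoxerPilloni2021higherColeman}. The first step is to compare the two finite-level towers. The map \(\ti{U}_{i,\Kpp}\to U_{i,\Kpp}\) is finite surjective and an isomorphism away from the boundary, and \(\mS^{*}_{\overline{K}}(G)\) is obtained from \(\mS^{*}_{K}(G)\) by a finite ``partial normalization'' (both come from the same Zariski closure in \(\mrm{Sh}^{*}_{K'}(\mrm{GSp}_{2d})\), up to the Deligne construction and quotients by the finite groups \(\Delta\)). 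Hence pull-back gives an injection \(H^{0}(U_{i,\Kpp},\mcal{I}^{+}_{\Kpp}/p^{n})\to H^{0}(\ti{U}_{i,\Kpp},\ti{\mcal{I}}^{+}_{\Kpp}/p^{n})\), at least up to bounded \(p\)-power torsion. The key point to check is that in the colimit over \(K_{p}\) this becomes an isomorphism. I would argue that the two towers are cofinal in each other up to finite covers whose \(\mO^{+}/p^{n}\) become almost isomorphic in the limit, as in \cite[Theorem 4.1.1]{Scholze15}, where the perfectoid limit of the minimal compactification is independent of the choice of normal or non-normal models. Concretely, both limits are the same perfectoid space \(\mS^{*}_{\overline{K^{p}}}(G)\), since the normalization map becomes a universal homeomorphism that is an isomorphism of perfectoid spaces after completing.

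The second step is the comparison with infinite level. I would first establish the statement for \(\mO^{+}/p^{n}\); this is (\ref{alignFunctionIsDens}) on the good cover of Construction \ref{constructionGoodCoverHilbert}. I would then identify \(\mcal{I}^{+}/p^{n}\) as the kernel of restriction to the boundary: \(0\to \mcal{I}^{+}\to \mO^{+}\to \mO^{+}_{\mcal{Z}}\), reduced modulo \(p^{n}\). The boundary \(\mcal{Z}_{\overline{K^{p}}}\) is profinite over \(\CC_{p}\)-points (a union of cusps), and its limit is again affinoid perfectoid, with \(\varinjlim_{K_{p}}H^{0}(\mcal{Z}_{\overline{\Kpp}}\cap U_{i,\Kpp},\mO^{+}/p^{n})\cong H^{0}(\mcal{Z}_{\overline{K^{p}}}\cap U_{i},\mO^{+}/p^{n})\) almost. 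Taking filtered colimits, which are exact, and using the five lemma on these exact sequences gives the middle-to-right isomorphism. Surjectivity on the right requires almost vanishing of \(H^{1}\) of \(\mcal{I}^{+}/p^{n}\) on affinoid (perfectoid) opens. At finite level I would avoid this by working with the formal models of Construction \ref{constructionFormalModelMinimalCompactification}: there \(\mfk{U}_{i}\) is affine, so the ideal sequence is exact on sections. Hecke compatibility is then formal, since all maps are pull-backs along the \(G(\mA_{f}^{p})\)-equivariant tower and the boundary ideals are preserved by prime-to-\(p\) Hecke correspondences.

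I expect the main obstacle to be identifying the ideal sheaves modulo \(p^{n}\) with genuine kernels compatibly with the limit. In particular one must show that \(\mcal{I}^{+}_{K^{p}}/p^{n}\) is almost the colimit, and not merely that its image is dense, because reduction mod \(p^{n}\) does not commute with taking kernels. I would handle this as in \cite[Lemma 4.2.2 / Corollary 4.2.3]{Scholze15}: use that the boundary is cut out, locally on \(U_{i}\), by the vanishing of finitely many \(q\)-expansion coefficients, or via the \(\GSp\)-embedding by Scholze's explicit description near the boundary. The torsion-freeness and \(p\)-adic completeness of the \(\mO^{+}\)-modules then give exactness after reducing mod \(p^{n}\), up to almost zero modules.
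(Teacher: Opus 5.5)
There is a genuine gap, located exactly at the step you flag as the ``main obstacle''. In Scholze's argument (\cite[Lemma IV.3.3]{Scholze15}), which the paper follows, the comparison of the two finite-level towers and the injectivity of the maps are formal. The only non-formal input is the almost surjectivity of
\[
\varinjlim_{K_{p}}H^{0}_{\an}(U_{i,\Kpp},\mcal{I}^{+}_{\Kpp}/p^{n})\to H^{0}_{\an}(U_{i},\mcal{I}^{+}_{K^{p}}/p^{n}),
\]
and your Step 2 does not prove it.

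The exact-sequence/five-lemma route fails as stated. At finite level, the restriction \(\mO^{+}(U_{i,\Kpp})\to \mO^{+}(\mcal{Z}_{\overline{\Kpp}}\cap U_{i,\Kpp})\) to the finitely many cusps is surjective only up to a power of \(p\), and that power depends on \(K_{p}\). On the other hand, approximating an infinite-level section to precision \(p^{m}\) forces you to a level that depends on \(m\), so the two requirements chase each other. Strong Zariski closedness at infinite level only tells you that the colimit of the finite-level images is dense in \(\mO^{+}(\mcal{Z}_{\overline{K^{p}}}\cap U_{i})\). That is not enough to compare the boundary terms modulo \(p^{n}\).

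Your fallbacks do not close this. The number of cusps in \(U_{i,\Kpp}\) grows with \(K_{p}\), so there is no uniform ``finitely many \(q\)-expansion coefficients''. The \(\mrm{GSp}_{2d}\)-embedding, where Scholze's analysis near the boundary applies, exists for \(\mS^{*}_{\overline{K}}(G^{*})\). It does not exist for the abelian-type \(\mS^{*}_{\overline{K}}(G)\), whose connected components are quotients \(\mS^{*,0}_{\overline{K}}(G^{*})/\Delta\). Your Step 1 claim that the normalization becomes an isomorphism of perfectoid spaces in the limit is likewise only heuristic, but it is also not needed.

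The missing idea is to reduce to the PEL group \(G^{*}=\det^{-1}(\GG_{m})\) and then descend along \(\Delta\).
\begin{enumerate}
\item For \(G^{*}\), \cite[Lemma IV.3.3]{Scholze15} gives that \(\varinjlim_{K'_{p}}\mcal{I}^{+,\prime}_{K^{p,\prime}K'_{p}}[1/p]\) is dense in \(\mcal{I}^{+,\prime}_{K^{p,\prime}}[1/p]\).
\item By \cite[Theorem 4.4.43 (6)]{BoxerPilloni2021higherColeman}, there is a finite group \(\Delta\), independent of \(K_{p}\) once \(K_{p}\) is small, with \(\mcal{I}^{+}_{\Kpp}=(\mcal{I}^{+,\prime}_{K^{p,\prime}(K_{p}\cap G^{*}(\QQ_{p}))})^{\Delta}\) and \(\mcal{I}^{+}_{K^{p}}=(\mcal{I}^{+,\prime}_{K^{p,\prime}})^{\Delta}\). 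This holds on connected components and then spreads out by Deligne's construction.
\item The averaging projector \(|\Delta|^{-1}\sum_{\delta\in\Delta}\delta\), which is harmless after inverting \(p\), transports the density from \(G^{*}\) to \(G\).
\item A finite-level function whose pull-back to infinite level is power-bounded is already power-bounded, since \(|U_{i}|\to|U_{i,\Kpp}|\) is surjective. So intersecting with \(\mO^{+}\) turns the density into the required almost surjection modulo \(p^{n}\).
\end{enumerate}
The Hecke compatibility is then formal, as you say.
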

\begin{proof}
This follows the same proof of \cite[Lemma IV.3.3]{Scholze15}. The only non-formal input in the proof loc. cit. is the almost surjectivity of the composition \[\varinjlim_{K_{p}}H^{0}(U_{i,\Kpp},\mcal{I}^{+}_{\Kpp}/p^{n})
\to H^{0}(U_{i},\mcal{I}_{K^{p}}^{+}/p^{n}).
\] By the construction in \cite[\S 4.4]{BoxerPilloni2021higherColeman},
this follows from the surjectivity in the Hodge type case (\cite[Lemma IV.3.3]{Scholze15}), and \cite[Theorem 4.4.43 (6)]{BoxerPilloni2021higherColeman}. 

More precisely, if we denote the corresponding ideal sheaves for \(\mS^{*}(G^{*})\) by \(\mcal{I}^{+,\prime}_{K^{p,\prime}K_{p}'}\) and \(\mcal{I}^{+,\prime}_{K^{p,\prime}}\), then by \cite[Theorem 4.4.43 (6)]{BoxerPilloni2021higherColeman}, there is a finite group \(\Delta\), such that \(I^{+}_{\Kpp}\)
is the \(\Delta\)-invariant subspace of \(I^{+,\prime}_{K^{p,\prime}(K_{p}\cap G^{*}(\QQ_{p}))}\), and \(I^{+}_{K^{p}}\)
is the \(\Delta\)-invariant subspace of \(I^{+,\prime}_{K^{p,\prime}}\).
By \cite[Lemma IV.3.3]{Scholze15}, \(\varinjlim_{K_{p}'}I^{+}_{K^{p,\prime}K_{p}'}[1/p]\) is dense in \(I^{+}_{K^{p,\prime}}[1/p]\), and then
we have the density of \(\varinjlim_{K_{p}}I^{+}_{\Kpp}[1/p]\) in \(I^{+}_{K^{p}}[1/p]\)
by taking \(\Delta\)-coinvariant projection. Intersecting with \(\mO^{+}_{\mS^{*}_{\overline{K^{p}}}(G)}\), we obtain the desired almost surjection.
\end{proof}

\begin{prop}[Primitive comparison]\label{propPrimitiveComparisonScholze}
We 
have an almost isomorphism \[\varinjlim_{K_{p}}R\Gamma_{\et,c}(\mrm{Sh}_{\Kpp}(G)_{\bar{\QQ}},\ZZ/p^{n})\otimes_{\ZZ/p^{n}}\mO_{\CC_{p}}/p^{n}\cong R\Gamma_{\an}(\mS_{\overline{K^{p}}}^{*}(G),\mcal{I}^{+}_{K^{p}}/p^{n}).\]
\end{prop}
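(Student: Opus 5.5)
We follow the strategy of \cite[Theorem IV.2.1]{Scholze15}, transported to the compactification \(\mS^{*}_{\overline{K}}(G)\) of \cite{BoxerPilloni2021higherColeman}. Write \(j_{K}:\mS_{K}(G)\hookrightarrow \mS^{*}_{\overline{K}}(G)\) for the open immersion and \(\mcal{Z}_{\overline{K}}\) for the reduced boundary. Since \(\mS^{*}_{\overline{K}}(G)\) is proper, compactly supported étale cohomology of \(\mrm{Sh}_{K}(G)_{\bar{\QQ}}\) equals \(R\Gamma_{\et}(\mS^{*}_{\overline{K}}(G),j_{K,!}\ZZ/p^{n})\). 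This uses the comparison of algebraic and analytic étale cohomology with compact support, together with invariance under the extension \(\bar{\QQ}_{p}\to\CC_{p}\).

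The first step is to show that the fiber sequence \(j_{K,!}\ZZ/p^{n}\to \ZZ/p^{n}\to i_{*}\ZZ/p^{n}\) identifies \(R\Gamma_{\et}(\mS^{*}_{\overline{K}}(G),j_{K,!}\ZZ/p^{n})\) with the fiber of \(R\Gamma_{\et}(\mS^{*}_{\overline{K}}(G),\ZZ/p^{n})\to R\Gamma_{\et}(\mcal{Z}_{\overline{K}},\ZZ/p^{n})\). The second step is to apply Scholze's primitive comparison theorem \cite[Theorem 5.1]{Scholze13}. It holds for proper rigid spaces over \(\CC_{p}\), possibly singular, via the extension to proper spaces (e.g. by \(v\)-descent / resolution; \cite[Theorem 3.13]{Scholze13} uses properness only). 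Applying it to both \(\mS^{*}_{\overline{K}}(G)\) and the proper space \(\mcal{Z}_{\overline{K}}\) gives, almost,
\[R\Gamma_{\et,c}(\mS_{K}(G),\ZZ/p^{n})\otimes\mO_{\CC_{p}}/p^{n}\cong R\Gamma_{\et}(\mS^{*}_{\overline{K}}(G),\mcal{I}^{+}_{K}/p^{n}),\]
after identifying the fiber of \(\mO^{+}/p^{n}\to i_{*}\mO^{+}_{\mcal{Z}}/p^{n}\) with \(\mcal{I}^{+}_{K}/p^{n}\) (almost). This identification requires \(\mO^{+}_{\mS^{*}}\to i_{*}\mO^{+}_{\mcal{Z}}\) to be almost surjective mod \(p^{n}\), which holds étale locally on affinoids because the boundary is reduced. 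The third step is to pass to the colimit over \(K_{p}\); colimits commute with the tensor product.

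The remaining comparison is between \(\varinjlim_{K_{p}}R\Gamma_{\et}(\mS^{*}_{\overline{\Kpp}},\mcal{I}^{+}_{\Kpp}/p^{n})\) and \(R\Gamma_{\an}(\mS^{*}_{\overline{K^{p}}}(G),\mcal{I}^{+}_{K^{p}}/p^{n})\). At infinite level the space is perfectoid, so étale and analytic cohomology of \(\mO^{+}/p^{n}\)-type sheaves agree almost. We compute both sides with the Čech complex of the good cover \(\{U_{i}\}\) of Construction \ref{constructionGoodCoverHilbert}, pulled back via \(\pi_{\HT}\); all finite intersections remain good affinoid perfectoid. On each such affinoid the higher cohomology of \(\mcal{I}^{+}_{K^{p}}/p^{n}\) almost vanishes. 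This follows by the argument of \cite[Lemma IV.3.3, proof of Theorem IV.2.1]{Scholze15}, using the corresponding vanishing for \(\mO^{+}/p^{n}\) on affinoid perfectoids. The degree-zero terms match the colimit of finite-level sections by Lemma \ref{lemIdealSheafDense}. At finite level, the colimit of étale cohomology on \(U_{i,\Kpp}\) equals the cohomology on the limit, since étale cohomology of \(\mO^{+}/p^{n}\) commutes with limits of qcqs spaces \cite[Corollary 7.18]{Scholze12}.

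The main obstacle is the almost vanishing of higher cohomology of \(\mcal{I}^{+}_{K^{p}}/p^{n}\) on the good affinoid perfectoids in the abelian-type setting. Our plan is to reduce to the PEL case \(G^{*}\), where Scholze's argument applies directly. The reduction goes through the description \(\mS^{*,0}_{\overline{\Kpp}}(G)\cong\mS^{*,0}_{\overline{\Kpp}}(G^{*})/\Delta\) with \(\Delta\) finite, as in the proof of Lemma \ref{lemIdealSheafDense}. Taking \(\Delta\)-invariants is exact after inverting \(|\Delta|\); if \(p\mid|\Delta|\), we instead use that \(\Delta\) acts freely after shrinking the level (\cite[Theorem 4.4.43]{BoxerPilloni2021higherColeman}), so the quotient is étale and descent applies.
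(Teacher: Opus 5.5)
Your finite-level steps are sound and match the paper's (that is, Scholze's) argument. Incidentally, the surjectivity of $\mO^{+}\to i_{*}\mO^{+}_{\mcal{Z}}$ at finite level holds stalkwise for any Zariski closed immersion: lift a section to $\mO$ and use $|f(z)|=|g(z)|\le 1$. So reducedness plays no role there.

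The gap is in the passage to infinite level. Corollary 7.18 of \cite{Scholze12} concerns $\mO^{+}/p^{n}$, not $\mcal{I}^{+}/p^{n}$. If you push the finite-level fiber sequences to the limit, $\varinjlim_{K_{p}}R\Gamma_{\et}(\mS^{*}_{\overline{\Kpp}}(G),\mcal{I}^{+}_{\Kpp}/p^{n})$ computes (almost) the cohomology of $\ker\bigl(\mO^{+}/p^{n}\to i_{*}\mO^{+}_{\mcal{Z}_{\overline{K^{p}}}}/p^{n}\bigr)$ on $\mS^{*}_{\overline{K^{p}}}(G)$. This kernel is almost $\mcal{I}^{+}_{K^{p}}/p^{n}$ only if $\mO^{+}\to i_{*}\mO^{+}_{\mcal{Z}}$ is almost surjective \emph{at infinite level}.

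Likewise, on a good affinoid perfectoid $U=\pi_{\HT}^{-1}(U_{I})$, the almost vanishing of $H^{>0}(U,\mcal{I}^{+}_{K^{p}}/p^{n})$ does not follow from the vanishing for $\mO^{+}/p^{n}$ on $U$ and on $U\cap\mcal{Z}$. Via $\mcal{I}^{+}\to\mO^{+}\to i_{*}\mO^{+}_{\mcal{Z}}$, it amounts to the almost surjectivity of $\mO^{+}(U)/p^{n}\to\mO^{+}(U\cap\mcal{Z})/p^{n}$, and of the corresponding sheaf map. That is exactly the statement that the boundary is \emph{strongly Zariski closed}.

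This is not automatic even though the boundary is zero-dimensional. At infinite level $U\cap\mcal{Z}$ is a profinite set, and its continuous $\mO_{\CC_{p}}$-valued functions must be interpolated by bounded functions on $U$. At each finite level, the analogous restriction map on an affinoid generally has a nonzero (bounded) cokernel. This is the one non-formal input the paper flags, and it comes from \cite[Remark 7.5]{BhattScholze2022prisms}: every Zariski closed subset of an affinoid perfectoid space is strongly Zariski closed. It applies directly to $\pi_{\HT}^{-1}(U_{I})$, where the boundary is the preimage of a Zariski closed subset of $U_{I,\Kpp}$. With it, your degree-$0$ concentration and the finite-to-infinite comparison both follow, and Lemma \ref{lemIdealSheafDense} finishes the \v{C}ech argument.

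Your proposed fix, reducing to $G^{*}$ via $\Delta$, does not address this. The PEL case needs exactly the same strong Zariski closedness, now for the boundary of the closure of $\mS(G^{*})$ in the Siegel minimal compactification. So nothing is gained, and the abelian-type space can be treated directly.

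Moreover, your fallback for $p\mid|\Delta|$ rests on a false claim: $\Delta$ does not act freely along the boundary, because the cusps have nontrivial stabilizers. For example, $\mrm{diag}(\epsilon,1)$ with $\epsilon\in\mO_{F}^{\times}$ totally positive acts by $z\mapsto\epsilon z$ near the cusp $\infty$ and fixes it. Hence the quotient map fails to be \'etale precisely along $\mcal{Z}$, which is where $\mcal{I}^{+}$ carries its information.
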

\begin{proof}
This follows the same proof of \cite[Theorem 4.2.1]{Scholze15}. Note that it is necessary to know that the boundary is strongly Zariski closed, which follows from \cite[Remark 7.5]{BhattScholze2022prisms}.
\end{proof}

\begin{lem}\label{lemInjectiveFromToAlmost}
Let \(F,F'\in D(\ZZ/p^{n})\) be (derived) discrete \(\ZZ/p^{n}\)-modules. Then \[\Hom_{\ZZ/p^{n}}(F,F')\to \Hom_{\mO_{\CC_{p}}^{a}/p^{n}}(F\otimes_{\ZZ/p^{n}}\mO_{\CC_{p}}/p^{n},F'\otimes_{\ZZ/p^{n}}\mO_{\CC_{p}}/p^{n}) \]
is injective and continuous, i.e. the map of condensed \(\ZZ/p^{n}\)-modules. Here the source is endowed with the weak topology, and the target is endowed with the weak topology for the \(!\)-realization of almost modules as in \cite[1028]{Scholze15}, i.e. by identifying \[\Hom_{\mO_{\CC_{p}}^{a}/p^{n}}(M,N)\cong \Hom_{\mO_{\CC_{p}}/p^{n}}(M\otimes_{\mO_{\CC_{p}}}\mm_{\CC_{p}},N\otimes_{\mO_{\CC_{p}}}\mm_{\CC_{p}}),
\] and endowing \(M\otimes_{\mO_{\CC_{p}}}\mm_{\CC_{p}}\)
and \(N\otimes_{\mO_{\CC_{p}}}\mm_{\CC_{p}}\)
with discrete topology. 
\end{lem}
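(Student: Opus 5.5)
The plan is to prove continuity and injectivity separately, reducing both to the faithful flatness of \(\mO_{\CC_{p}}/p^{n}\) over \(\ZZ/p^{n}\), together with the structure of \(\mm_{\CC_{p}}\) as a colimit of free rank one modules.

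For continuity, I would write the weak topology on \(\Hom_{\ZZ/p^{n}}(F,F')\) as the limit topology, \(\Hom(F,F')=\varprojlim_{F_{0}\subset F}\Hom(F_{0},F')\) with \(F'\) discrete. Here \(F_{0}\) runs over finitely generated submodules; in the derived setting one uses a presentation of \(F\) as a filtered colimit of perfect complexes. The target has the analogous description with \(F\) replaced by \(F\otimes\mm_{\CC_{p}}\). This module is a filtered colimit of the finitely generated pieces \(F_{0}\otimes p^{\epsilon}\mO_{\CC_{p}}\), with \(\epsilon\in\QQ_{>0}\). The map sends a homomorphism \(f\) to \(f\otimes\mathrm{id}\). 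The restriction of \(f\otimes\mathrm{id}\) to \(F_{0}\otimes p^{\epsilon}\mO_{\CC_{p}}\) depends only on \(f|_{F_{0}}\). Hence each projection of the target map factors through a projection of the source, and the map of condensed modules is a limit of maps between discrete objects. This gives continuity.

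For injectivity, suppose \(f\otimes\mathrm{id}\) is almost zero after tensoring with \(\mO_{\CC_{p}}/p^{n}\). Then \(f\otimes\mathrm{id}_{\mm}\) vanishes on \(F\otimes_{\ZZ/p^{n}}\mm_{\CC_{p}}/p^{n}\). I would pick \(\epsilon\) with \(p^{\epsilon}\mO_{\CC_{p}}/p^{n}\subset\mm_{\CC_{p}}/p^{n}\), which is free of rank one over \(\mO_{\CC_{p}}/p^{n-\epsilon}\); for small \(\epsilon\) it still contains a copy of \(\ZZ/p^{n}\)-torsion of full length. It is cleaner, though, to use that \(\mm_{\CC_{p}}\) is flat, hence \(\ZZ_{p}\)-flat, and faithfully so modulo \(p^{n}\) as an \(\mO_{\CC_{p}}\)-module. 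Indeed \(\mm_{\CC_{p}}/p^{n}\) is a filtered colimit of the free modules \(p^{\epsilon}\mO_{\CC_{p}}/p^{n}\), and \(\mO_{\CC_{p}}/p^{n}\) is free over \(\ZZ/p^{n}\) because \(\mO_{\CC_{p}}\) is \(p\)-torsion free. So \(\mm_{\CC_{p}}/p^{n}\) is a flat \(\ZZ/p^{n}\)-module. It is also faithful: \(\mm_{\CC_{p}}\otimes\FF_{p}\neq0\), since the element \(p^{1/2}\) is nonzero mod \(p\).

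Therefore \(-\otimes_{\ZZ/p^{n}}\mm_{\CC_{p}}/p^{n}\) is faithfully flat on discrete \(\ZZ/p^{n}\)-modules. I would apply this to the image of \(f\), or on homotopy groups in the derived case: a morphism \(f\) with \(f\otimes\mm/p^{n}=0\) has zero image on each \(\pi_{i}\). For discrete \(F,F'\) concentrated in degree \(0\), this forces \(f=0\). For genuinely derived objects, \(\pi_{0}\Hom\) can contain phantom-type maps that are zero on homotopy. Here I would use that \(\mm_{\CC_{p}}/p^{n}\) is free over \(\ZZ/p^{n}\)-components, hence the tensor functor is a direct sum of copies of the identity up to filtered colimit. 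Then \(f\) is a retract of \(f\otimes\mm/p^{n}\) once we choose a splitting \(\ZZ/p^{n}\hookrightarrow\mO_{\CC_{p}}/p^{n}\) with a section, for instance given by a \(\ZZ/p^{n}\)-basis of \(\mO_{\CC_{p}}/p^{n}\) containing \(1\), composed with the inclusion \(p^{\epsilon}\).

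The main obstacle is this last retraction step in the derived setting. There, the almost-module \(!\)-realization introduces \(\mm_{\CC_{p}}\) rather than \(\mO_{\CC_{p}}\), and one must produce a \(\ZZ/p^{n}\)-linear section compatible with multiplication by \(p^{\epsilon}\). I would handle it by passing to the colimit over \(\epsilon\to0\), using that the retractions are compatible for a fixed basis element.
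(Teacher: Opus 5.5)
Your continuity argument is essentially the paper's; the paper simply reduces to \(F\in\Perf(\ZZ/p^{n})\), where the source is discrete. For injectivity, your final idea of recovering \(g\) from \(g\otimes\id\) by composing with a splitting is also exactly the paper's. As written, however, the injectivity argument has a gap: you never construct the splitting. Instead you call it the main obstacle and propose a colimit over \(\epsilon\to 0\) of ``compatible'' retractions. That remedy does not make sense. A \(\ZZ/p^{n}\)-linear retraction \(r_{\epsilon'}\) with \(r_{\epsilon'}(p^{\epsilon'})=1\) places no constraint on \(r_{\epsilon'}(p^{\epsilon})\), because \(p^{\epsilon-\epsilon'}\) is not a scalar in \(\ZZ/p^{n}\). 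In any case no colimit is needed. The faithful-flatness detour does not help in \(D(\ZZ/p^{n})\) either, as you note yourself, since it only sees homotopy groups.

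The missing observation is that the splitting only has to be \(\ZZ/p^{n}\)-linear. It need not be \(\mO_{\CC_{p}}\)-linear or compatible with multiplication by \(p^{\epsilon}\). You already showed that \(\mm_{\CC_{p}}/p^{n}\) is flat over \(\ZZ/p^{n}\). Flat modules over \(\ZZ/p^{n}\) are free (lift an \(\FF_{p}\)-basis of \(\mm_{\CC_{p}}/p\)), so pick any basis vector \(\iota:\ZZ/p^{n}\to\mm_{\CC_{p}}/p^{n}\) and let \(r\) be the corresponding coordinate, so that \(r\circ\iota=\id\). Alternatively, \(\ZZ/p^{n}\) is self-injective, so the embedding \(1\mapsto p^{\epsilon}\) with \(0<\epsilon<1\) splits.

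Then \(\phi\mapsto(\id_{F'}\otimes r)\circ\phi\circ(\id_{F}\otimes\iota)\) is a \(\ZZ/p^{n}\)-linear left inverse of \(g\mapsto g\otimes\id_{\mm_{\CC_{p}}/p^{n}}\), because \((\id_{F'}\otimes r)\circ(g\otimes\id)\circ(\id_{F}\otimes\iota)=g\otimes(r\circ\iota)=g\). This uses only functoriality of \(-\otimes_{\ZZ/p^{n}}-\) on \(D(\ZZ/p^{n})\). It therefore works for arbitrary complexes \(F,F'\) and on the whole mapping complex, with no separate treatment of degree-\(0\) modules versus derived objects.
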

\begin{rmk}
In this setting, the weak topology is the correct topology for the internal Hom in the category of condensed sets. 
\end{rmk}
\begin{proof}
We note that \[
\Hom_{\mO_{\CC_{p}}^{a}/p^{n}}(F\otimes_{\ZZ/p^{n}}\mO_{\CC_{p}}/p^{n},F'\otimes_{\ZZ/p^{n}}\mO_{\CC_{p}}/p^{n})\cong \Hom_{\mO_{\CC_{p}}/p^{n}}(F\otimes_{\ZZ/p^{n}}\mm_{\CC_{p}}/p^{n},F'\otimes_{\ZZ/p^{n}}\mm_{\CC_{p}}/p^{n}),
\] and that \(\mm_{\CC_{p}}/p^{n}\)
is free over \(\ZZ/p^{n}\) by lifting a basis of \(\mm_{\CC_{p}}/p\)
over \(\FF_{p}\). In particular, we can fix a direct summand \[\ZZ/p^{n}\xhookrightarrow{\iota}\mm_{\CC_{p}}/p^{n}\xrightarrow{p}\ZZ/p^{n},
\] such that the composition is the identity, and
then we have a splitting to the map above by \[\Hom_{\mO_{\CC_{p}}/p^{n}}(F\otimes_{\ZZ/p^{n}}\mm_{\CC_{p}}/p^{n},F'\otimes_{\ZZ/p^{n}}\mm_{\CC_{p}}/p^{n})\to \Hom_{\ZZ/p^{n}}(F,F'),\;f\mapsto p\circ f\circ \iota. 
\] This implies the injectivity. 
For continuity,
we can reduce to the case when \(F\in \Perf(\ZZ/p^{n})\), and then the continuity is trivial since the source is endowed with discrete topology.
\end{proof}

\begin{proof}[Proof of Theorem \ref{thmConstrDeterminantHilbert}]
The proof is verbatim the same as that of \cite[Theorem 4.3.1, Corollary 5.1.11]{Scholze15} except that we choose to work with the complexes, which annhilates the appearance of the nilpotent ideals in \cite[Corollary 5.2.6]{Scholze15}. 

By Proposition \ref{propPrimitiveComparisonScholze}, we have a \(G(\QQ_{p})\)-equivariant almost isomorphism \begin{align}\label{alignPrimitiveEg}
R\Gamma_{c}(K^{p},\ZZ/p^{n})\otimes_{\ZZ/p^{n}}\mO_{\CC_{p}}/p^{n}\cong R\Gamma_{\an}(\mS_{\overline{K^{p}}}^{*}(G),\mcal{I}^{+}_{K^{p}}/p^{n}).
\end{align} 
We have
\[\TT^{S}\to \End_{\ZZ/p^{n}[K_{p,0}]}\left(R\Gamma_{c}(K^{p},\ZZ/p^{n})\right)
\xhookrightarrow{i} \End_{(\mO_{\CC_{p}}/p^{n})^{a}[K_{p,0}]}\left(R\Gamma_{\an}(\mS_{\overline{K^{p}}}^{*}(G),\mcal{I}^{+}_{K_{p}}/p^{n})\right).
\]
Note that \(i\) is injective and continuous by Lemma \ref{lemInjectiveFromToAlmost}. 

By affinoidness of \(\pi_{\HT}^{-1}(U_{i})\) and the perfectoidness of \(\mS^{*}_{\overline{K^{p}}}(G)\) (\cite[Proposition 4.4.53]{BoxerPilloni2021higherColeman}), \(R\Gamma_{\an}(\mS_{\overline{K^{p}}}^{*}(G),\mcal{I}^{+}_{K^{p}}/p^{n})\)
can be calculated by a Cech complex given by the cover \(\{\pi_{\HT}^{-1}(U_{i})\}\).
For any \(I\subset\{0,\infty\}^{\Sigma_{\infty}}\), write \(U_{I}:=\bigcap_{i\in I}U_{i}\), and \(\ti{U}_{I,\Kpp}:=\bigcap_{i\in I}\ti{U}_{i,\Kpp}\),
then we have an almost isomorphism  \begin{align}\label{alignALmostPurityToCech}
R\Gamma_{\an}(\pi_{\HT}^{-1}(U_{I}),\mcal{I}^{+}_{K^{p}}/p^{n})
\cong H^{0}_{\an}(\pi_{\HT}^{-1}(U_{I}),\mcal{I}^{+}_{K^{p}}/p^{n})\cong \varinjlim_{K_{p}}H^{0}_{\an}(\ti{U}_{I,\Kpp},\ti{\mcal{I}}^{+}_{\Kpp}/p^{n}),
\end{align}
 by \cite[Theorem 6.3]{Scholze12} and the fact that the boundary is strongly Zariski closed by \cite[Remark 7.5]{BhattScholze2022prisms}.
The second isomorphism
is given by Lemma \ref{lemIdealSheafDense}.

By \cite[Lemma 2.1.2]{Scholze15}, \(\ti{\mcal{I}}_{K^{p}K_{p,n}}\) has a formal model as a coherent ideal sheaf \(\ti{\mfk{I}}_{K^{p}K_{p,n}}\) over \(\mfk{S}^{*}_{K^{p}K_{p,n}}(G)\) as in Construction \ref{constructionFormalModelMinimalCompactification}. Then
\begin{align}\label{alignUItoWholeS*}
H^{0}_{\an}(\ti{U}_{I,K^{p}K_{p,n}},\ti{\mcal{I}}^{+}_{K^{p}K_{p,n}}/p^{n})
\cong 
H^{0}(\mfk{U}_{I,K^{p}K_{p,n}},\ti{\mfk{I}}^{+}_{K^{p}K_{p,n}}/p^{n})\\
\cong \varinjlim_{m} H^{0}(\mfk{S}^{*}_{K^{p}K_{p,n}},\ti{\mfk{I}}^{+}_{K^{p}K_{p,n}}\otimes \mfk{w}_{K^{p}K_{p,n}}^{\otimes m\cdot\#(I)}/p^{n})\\
\cong \varinjlim_{m} H^{0}(\mfk{S}^{*}_{K^{p}K_{p,n}},\ti{\mfk{I}}^{+}_{K^{p}K_{p,n}}\otimes \mfk{w}_{K^{p}K_{p,n}}^{\otimes m\cdot\#(I)})/p^{n},
\end{align} 
 where the transition maps are given by multiplication by \(\prod_{i\in I}s_{I}\), and the last isomorphism follows from the ampleness of \(\mfk{w}_{K^{p}K_{p,n}}\). Moreover, 
\(H^{0}(\mfk{S}^{*}_{K^{p}K_{p,n}},\ti{\mfk{I}}^{+}_{K^{p}K_{p,n}}\otimes \mfk{w}_{K^{p}K_{p,n}}^{\otimes m\cdot\#(I)})\)
is \(p\)-torsion free, and \[H^{0}(\mfk{S}^{*}_{K^{p}K_{p,n}},\ti{\mfk{I}}^{+}_{K^{p}K_{p,n}}\otimes \mfk{w}_{K^{p}K_{p,n}}^{\otimes m\cdot\#(I)})[1/p]\cong H^{0}(\mS^{*}_{K^{p}K_{p,n}},
\ti{\mcal{I}}_{K^{p}K_{p,n}}\otimes
\omega_{K^{p}K_{p,n}}^{\otimes m\cdot\#(I)}),
\] with \(\ti{\mcal{I}}_{K^{p}K_{p,n}}:=\ti{\mcal{I}}_{K^{p}K_{p,n}}^{+}[1/p]\). 


\begin{notation}
Let \(\TT^{S,\prime}_{K_{p},m}\) denote the \(\ZZ_{p}\)-algebra generated by the image of \(\TT^{S}\) in \[\End\left(H^{0}(\mfk{S}^{*}_{K^{p}K_{p,n}},\ti{\mfk{I}}^{+}_{K^{p}K_{p,n}}\otimes \mfk{w}_{K^{p}K_{p,n}}^{\otimes m})\right),\] 
which is flat over \(\ZZ_{p}\).
\end{notation}
Note that \(H^{0}(\mS^{*}_{\Kpp},\ti{I}_{\Kpp}\otimes \omega^{\otimes m}_{\Kpp})\)
admits a finite dimensional model over \(\QQ_{p}\) on which \(\TT^{S}\) acts. 
By descent from Petersson inner product product (over \(\CC\)), we see that \(\TT^{S,\prime}_{K_{p},m}[1/p]\) is finite \'etale over \(\QQ_{p}\), and in particular \(\TT^{S,\prime}_{K_{p},m}\)
is reduced and finite flat over \(\ZZ_{p}\).

For an eigenform \(f\in H^{0}(\mS^{*}_{\Kpp},\ti{I}_{\Kpp}\otimes \omega^{\otimes m}_{\Kpp})\), \(f\) gives rise to a cuspidal Hilbert modular forms of parallel weight \(m\) by pulling back to the toroidal compactified Hilbert modular varieties. Note that the pull-back of \(\omega_{\Kpp}\) is precisely the automorphic line bundle on \(\mS^{\tor}_{\Kpp}(G)\)
corresponding to parallel weight 1 forms, which follows from the similar statement for \(\mS^{*}_{\Kpp}(G^{*})\). 
Therefore, by 
\cite{Carayol86rep}, \cite{RT83}, \cite{BR93}, \cite{Taylor89galois}, \(f\) has an associated \(2\)-dimensional Galois representation \(\rho_{f}\)
satisfying the Eichler-Shimura relation. 
Thus by \cite[Example 2.32]{Chenevier2014determinants}, we obtain the following:
\begin{lem}\label{lemExistDeterminantMiddle}
There is a continuous \(2\)-dimensional determinant of \(\ZZ_{p,\square}[\Gal_{F,S}]\) valued in \(\TT^{S,\prime}_{K_{p},m}\)
satisfying (\ref{alignEicherShimuraDetermin}).
\end{lem}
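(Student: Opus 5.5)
The plan is to assemble the determinant eigenform by eigenform and then descend it to integral coefficients. Since $\TT^{S,\prime}_{K_{p},m}$ is reduced and finite flat over $\ZZ_{p}$, and $\TT^{S,\prime}_{K_{p},m}[1/p]$ is finite étale over $\QQ_{p}$, I first fix a finite extension $L$ over which all its $\bar{\QQ}_{p}$-points are defined. This gives an injection
\[
\TT^{S,\prime}_{K_{p},m}\hookrightarrow \prod_{f}\mO_{L},
\]
where $f$ runs over the finitely many eigensystems $\chi_{f}$ occurring in $H^{0}(\mS^{*}_{\Kpp},\ti{\mcal{I}}_{\Kpp}\otimes\omega^{\otimes m}_{\Kpp})$. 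Each $\chi_{f}$ comes from a cuspidal Hilbert eigenform of parallel weight $m$, via pullback to the toroidal compactification as explained above. By the cited constructions (Carayol, Rogawski--Tunnell, Blasius--Rogawski, Taylor) there is a continuous $\rho_{f}:\Gal_{F,S}\to\GL_{2}(\mO_{L})$, after conjugating to a stable lattice, with $\det(t-\rho_{f}(\Fr_{\mfk{l}}))=t^{2}-\chi_{f}(T_{\mfk{l}})t+\Nm(\mfk{l})\chi_{f}(S_{\mfk{l}})$ for $\mfk{l}\notin S$. Taking the product of the $\det\circ\rho_{f}$ gives a continuous $2$-dimensional determinant $D_{L}:\ZZ_{p,\square}[\Gal_{F,S}]\to\prod_{f}\mO_{L}$.

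Next I show that $D_{L}$ factors through $\TT^{S,\prime}_{K_{p},m}$. A $2$-dimensional determinant is determined by its characteristic polynomial coefficients, namely the trace $\Lambda_{1}$ and the determinant $\Lambda_{2}$ of each $g\in\Gal_{F,S}$, and it extends multiplicatively (Chenevier, Example 2.32, for $d=2$ with $2$ not necessarily invertible, using the $\Lambda_{1},\Lambda_{2}$ description). So it suffices to show that $\Lambda_{1}(g)$ and $\Lambda_{2}(g)$ lie in the closed subring $\TT^{S,\prime}_{K_{p},m}$. For Frobenius elements $g=\Fr_{\mfk{l}}$ these coefficients are $T_{\mfk{l}}$ and $\Nm(\mfk{l})S_{\mfk{l}}$, which do lie in it. Frobenius elements are dense by Chebotarev, $\Lambda_{1}$ and $\Lambda_{2}$ are continuous, and $\TT^{S,\prime}_{K_{p},m}$ is closed because it is finite over $\ZZ_{p}$ inside $\prod\mO_{L}$. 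So both functions take values in $\TT^{S,\prime}_{K_{p},m}$. Finally, because the inclusion into $\prod_{f}\mO_{L}$ is injective, the determinant axioms (multiplicativity and polynomial laws) hold in $\TT^{S,\prime}_{K_{p},m}$, and continuity is inherited from the product. Relation (\ref{alignEicherShimuraDetermin}) holds by construction.

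The main obstacle is guaranteeing that every eigensystem really is attached to a genuine cuspidal Hilbert eigenform of parallel weight $m$ satisfying local--global compatibility at $\mfk{l}\notin S$, so that the cited Galois representations apply. For weight $m\ge2$ this is standard. The low-weight and boundary subtleties are what I expect to need care. I would handle them by taking $m$ large: multiplying by the fake Hasse invariants makes large $m$ suffice for the later colimit, and cuspidality comes from the twist by $\ti{\mcal{I}}$.
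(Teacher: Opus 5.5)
Your proposal is correct and follows the paper's argument: reducedness and finite flatness of $\TT^{S,\prime}_{K_{p},m}$, Galois representations attached to the cuspidal parallel-weight-$m$ eigenforms via the cited works, and descent of the resulting product determinant to $\TT^{S,\prime}_{K_{p},m}$. Your Chebotarev-plus-closedness argument, using the $\Lambda_{1},\Lambda_{2}$ description for $d=2$, is exactly the content of Chenevier's Example~2.32, which the paper simply invokes; and your hedge about low weights is unnecessary, since the paper applies the same citations for every $m$ and only large $m$ is needed for the later colimit anyway.
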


\begin{dfn}[{\cite[Theorem 4.3.1]{Scholze15}}]
We define \(\TT^{S}_{\mrm{cl}}\)
to be the ring \(\TT^{S}\otimes_{\ZZ}\ZZ_{p}\)
endowed with the weakest topology such that \(\TT^{S}\otimes_{\ZZ}\ZZ_{p}\twoheadrightarrow \TT^{S,\prime}_{K_{p},m}\) is continuous for all \(K_{p}\) and \(m\).
\end{dfn}
Then by (\ref{alignUItoWholeS*}), \(\TT^{S}_{\mrm{cl}}\) acts continously on \(
H^{0}_{\an}(\ti{U}_{I,K^{p}K_{p,n}},\ti{\mcal{I}}^{+}_{K^{p}K_{p,n}}/p^{n}),\)
and thus using the \v Cech complex and (\ref{alignALmostPurityToCech}),
we obtain a continuous map \[\TT^{S}_{\mrm{cl}}\to \End_{\mO_{\CC_{p}}^{a}/p^{n}[K_{p,0}]}\left(
R\Gamma_{\an}(\mS_{\overline{K^{p}}}^{*}(G),\mcal{I}^{+}_{K^{p}}/p^{n})
\right).
\]
Using Lemma \ref{lemInjectiveFromToAlmost} and (\ref{alignPrimitiveEg}), we obtain a continuous map \[\TT^{S}_{\mrm{cl}}\to \End_{\ZZ/p^{n}[K_{p,0}]}\left(
R\Gamma_{c}(K^{p},\ZZ/p^{n})
\right), 
\]
For open normal subgroup \(K_{p}\subset K_{p,0}\), by finite \'etale descent, we have a continuous map \[\TT^{S}_{\mrm{cl}}\to \End_{\ZZ/p^{n}[K_{p,0}/K_{p}]}\left(
R\Gamma_{\et,c}(\mrm{Sh}_{\Kpp}(G),\ZZ/p^{n})
\right), 
\] i.e. we have a continuous map \[f:\TT^{S}_{\mrm{cl}}\to \TT^{S}_{K_{p},n}.
\] By the definition of \(\TT^{S}_{\mrm{cl}}\), there exist \(K_{p}'\)
and a finite set \(\{m_{i}\}\)
such that \[\bigcap_{i}\Ker(\TT^{S}_{\mrm{cl}}\to \TT^{S,\prime}_{K_{p}',m_{i}})\subset \Ker(\TT^{S}_{\mrm{cl}}\to \TT^{S}_{K_{p},n}).\]
 and thus Theorem \ref{thmConstrDeterminantHilbert}
follows from Lemma \ref{lemExistDeterminantMiddle} and \cite[Example 2.32]{Chenevier2014determinants}.


\end{proof}

\printbibliography

\end{document}